\documentclass{amsart}
\usepackage[normalem]{ulem}
\usepackage{aligned-overset}
\usepackage{enumitem}
\usepackage{amsmath}
\usepackage{amsfonts, dsfont}
\usepackage{amssymb}
\usepackage{color}
\usepackage{graphicx,mathtools}
\usepackage{multirow}
\usepackage{float}
\usepackage{soul}
\usepackage[all, knot, color]{xy}
\usepackage{latexsym}
\usepackage{hyperref}
\usepackage{comment}
\setlist[enumerate,1]{label={\rm (\roman*)}, topsep=0pt,itemsep=3pt,parsep=0pt,partopsep=0pt, align=left, leftmargin=20pt, itemindent=3pt, labelsep=5pt, labelwidth=1em}
\allowdisplaybreaks

\numberwithin{equation}{section}

\theoremstyle{plain}
\newtheorem{thm}{Theorem}[section]
\newtheorem{lem}[thm]{Lemma}
\newtheorem{prop}[thm]{Proposition}
\newtheorem{cor}[thm]{Corollary}
\newtheorem{conjecture}[thm]{Conjecture}

\theoremstyle{definition}

\newtheorem{deff}[thm]{Definition}
\newtheorem{rem}[thm]{Remark}
\newtheorem{remark}[thm]{Remark}
\newtheorem{example}[thm]{Example}

\newcommand{\kk}{\Bbbk}

\theoremstyle{remark}

\newcommand{\qbin}[2]{{\begin{bmatrix} #1\\ #2\end{bmatrix}}_q}
\newcommand{\qqbin}[3]{{\begin{bmatrix} #1\\ #2\end{bmatrix}}_{#3}}
\newcommand{\Mod}[1]{#1\mbox{-mod}}
\def\Z{\mathbb{Z}}
\def\F{\mathbb{F}}

\def\BQ{\mathbb{Q}}
\def\BG{\mathbb{G}}
\def\ZD{{\Z_d\!\setminus\!\{\ol0\}}}

\def\k{\Bbbk}
\def\bk{{\mathbf{k}}}

\def\bo{{\pmb{0}}}

\newcommand{\J}[2]{{J_{#1}^{[#2]}}}

\newcommand{\JI}[2]{{\ol{J}^{\hspace{1pt}\pmb{#2}}_{#1}}}
\def\tJ{{\tilde{J}}}
\newcommand{\Jt}[2]{{\tJ^{\hspace{1pt}\pmb{#2}}_{#1}}}
\newcommand{\JC}[2]{{J^{\hspace{1pt}\pmb{#2}}_{#1}}}
\newcommand{\ZC}[2]{{\mathds{J}^{#2}_{#1}}}
\newcommand{\Zt}[2]{{\tilde{\mathds{J}}^{#2}_{#1}}}
\newcommand{\SJ}[1]{{\mathds{S}_J(#1)}}

\def\N{\mathcal{N}}
\def\CC{\mathcal{C}}
\def\HH{\mathcal{H}}
\def\DD{\mathcal{D}}

\renewcommand{\1}{{\mathds{1}}}
\def\st{{\mbox{\rm Tw}}}
\def\3ust{{\underline{\underline{\mathbf{Tw}}}}}
\def\2ust{\underline{\mathbf{Tw}}}

\DeclareMathOperator{\OutTw}{Out_{\rm Tw}}
\newcommand{\OutTwBr}{\operatorname{Out}_{\rm Tw}^{\rm br}}
\newcommand{\OutTwZ}{\operatorname{Out}_{\rm Tw}^{Z}}
\newcommand{\OutTwBrZ}{\operatorname{Out}_{\rm Tw}^{{\rm br},Z}}
\def\ost{{\overline{\st}}}

\def\a{\alpha}
\def\ld{{\lambda}}

\def\g{\gamma}
\def\o{\otimes}

\def\to{\rightarrow}
\def\ol{\overline}
\def\ok{{\ol{k}}}
\def\oi{{\ol{i}}}

\def\ou{{\ol{u}}}
\def\ov{{\ol{v}}}
\def\ow{{\ol{w}}}
\def\eo{{e_{\ol{0}}}}
\def\eu{{e_{\ol{u}}}}
\def\ev{{e_{\ol{v}}}}
\def\ao{{a_{\ol{0}}}}
\def\au{{a_{\ol{u}}}}
\def\av{{a_{\ol{v}}}}
\def\aw{{a_{\ol{w}}}}
\newcommand{\kg}[1]{{\k\langle #1 \rangle}}
\newcommand{\kgt}[1]{{{\k\langle #1 \rangle}^\times_1}}

\def\ul{\underline}
\def\inv{^{-1}}

\def\Dic{{\mathbb{D}}}
\def\rev{{\operatorname{rev}}}

\def\tr{\operatorname{tr}}
\def\co{\operatorname{co}}
\def\ord{\operatorname{ord}}

\def\id{\operatorname{id}}

\def\coev{\operatorname{coev}}
\def\ev{\operatorname{ev}}
\def\TPsi{\underline{\underline{\operatorname{\Psi}}}}
\def\OPsi{\underline{\operatorname{\Psi}}}
\def\Ttc{\underline{\underline{\operatorname{\mathbf{IntTC}}}}}

\def\Aut{\operatorname{Aut}_\o}

\def\OAut{\underline{\operatorname{Aut}}_\o}
\def\oAut{\ol{\operatorname{Aut}}_\o}

\def\Hom{\operatorname{Hom}}
\def\irr{\operatorname{Irr}}

\def\ad{\operatorname{ad}}

\def\Inv{\operatorname{Inv}}

\def\brho{{\operatorname{\overline{\rho}}}}

\def\se{{\operatorname{e}}}
\def\vp{{\varphi}}
\def\tvp{{\tilde{\varphi}}}
\def\ve{{\varepsilon}}
\newcommand\cb[1]{{(I_{#1}, \partial_{#1})}}
\newcommand\C[1]{{#1\mbox{-mod}}}

\newcommand{\YD}[1]{{{}^{#1}_{#1} \mathcal{YD}}}

\def\b{\beta}

\def\jacd{{\genfrac(){.5pt}{2}{-1}{d}}}

\def\A{{A^\times_1}}
\def\B{{B^\times_1}}

\usepackage{microtype}
\begin{document}
\nocite{*}
\title{On stable twist 2-groups of quasi-Hopf algebras}

\author[Ng]{Siu-Hung Ng}
\address{Department of Mathematics, Louisiana State University, Baton Rouge, LA 70803, USA} 
\email{rng@math.lsu.edu}

\author[Wang]{Xingting Wang}
\address{Department of Mathematics, Louisiana State University, Baton Rouge, LA 70803, USA}
\email{xingtingwang@math.lsu.edu}

\subjclass[2020]{Primary 16T05}
\keywords{Hopf algebras, Cleft extensions,, tensor-autoequivalence, equivariantization}

\date{}

\begin{abstract}
For any finite-dimensional quasi-Hopf algebra $A$, we define the notion of a stable twist of $A$. We show that stable twists form a strict 2-group $\2ust(A)$, which is equivalent to $\OAut(\C{A})^{\rev}$. Given a group homomorphism $\rho: G \to \ost(A)$, we determine the obstruction in
$H^3(G,Z_G(A))$
to lift $\rho$ to a $G$-action on $\C{A}$. We also establish a bijection between such liftings and cocentral cleft extensions of $\k[G]$ by $A$.

The main application of $\2ust(A)$ is the computation of $\OAut(\C{A})$, together with an explicit description of the corresponding tensor autoequivalences. We apply this framework to pointed Hopf algebras $A$ that arise as central extensions of Taft algebras by cyclic groups, and we explicitly determine all of their tensor autoequivalences. The equivariantizations of $\C{A}$ by $\ost(A)$ produce a new family of Hopf algebras $\N(n,\xi)$  of dimension $2nm$, where $\xi$ is a scalar of order $m\mid n$ such that neither $\N(n,\xi)$ nor its dual has the Chevalley property.

In dimension 24, we prove that $\N(6,-1)$ and its dual are precisely the Hopf algebras $H$ for which neither $H$ nor $H^*$ has the Chevalley property. This completes the classification of nonsemisimple Hopf algebras of dimension 24.
\end{abstract}

\maketitle

\tableofcontents

\section{Introduction}
Tensor categories are a central organizing framework across representation theory, quantum topology, and mathematical physics, arising as representation categories of quantum groups \cite{Drin90}, vertex operator algebras, and subfactors, and underpinning topological quantum field theory, topological phases of matter, and quantum invariants of links and $3$-manifolds (see, e.g., \cite{Kas, EGNO}). The symmetries of a tensor category $\CC$ are its \emph{tensor autoequivalences}: they assemble into a strict $2$-group $\OAut(\CC)$ whose group of isomorphism classes $\oAut(\CC)$ governs group actions on $\CC$ and the resulting equivariantizations and graded extensions \cite{ENO2010}. Computing this group is a basic but generally difficult problem. We solve it algebraically for the large class of \emph{integral} tensor categories --- those whose Frobenius--Perron dimensions are integers --- by exploiting the theorem of Etingof and Ostrik that every integral finite tensor category is equivalent to $\C{A}$ for some finite-dimensional quasi-Hopf algebra $A$ \cite[Proposition 6.1.14]{EGNO}. This identification replaces categorical autoequivalences by concrete algebraic data, the \emph{stable twists} of $A$.

A stable twist for a quasi-Hopf algebra $A$ is a pair $(\varphi,J)$ consisting of a gauge transformation $J$ on $A$ and a quasi-bialgebra isomorphism $\varphi\colon A\to A_J$ to the Drinfeld twist of $A$. Given such a pair, the functor ${}_\varphi(-)$ with monoidal structure defined by $J$ yields a tensor autoequivalence of $\C{A}$; conversely, by results of the first author and Schauenburg \cite{NgSchaunburgCentral} (see also \cite{Schauenburg1996, EG2002THA}), every tensor autoequivalence of $\C{A}$ arises in this way. The notion was introduced by Davydov \cite{Davy2010} for Hopf algebras under the name \emph{twisted automorphism}; we extend it here to the full quasi-Hopf setting and show that the set of stable twists carries a natural strict $2$-group structure $\2ust(A)$ equivalent to $\OAut(\C{A})$ (Corollary~\ref{p:eqv}). More broadly, varying $A$ we organize the theory in $3$-groupoid language: stable twists assemble into a strict $3$-groupoid $\3ust$ equivalent to a $3$-groupoid $\Ttc$ of integral tensor categories (Theorem~\ref{thm:31eqv-multi}), with $\2ust(A)$ the automorphism $2$-group at $A$. The group of isomorphism classes, $\ost(A):=\st(A)/\st_0(A)$, is the algebraic counterpart of $\oAut(\C{A})$; it is finite when $A$ is semisimple (Corollary~\ref{p:eqv}(iv)) but may be infinite otherwise, e.g.\ for Taft algebras (\S\ref{sss:A0-stable}).

\smallskip
For the group algebra $\k[G]$ of a finite group $G$ over an algebraically closed field of characteristic zero, Davydov \cite{DavydovGA} showed that $\ost(\k[G])\cong\operatorname{Out}(G)\ltimes\operatorname{Aut}^{\rm anti}_{\rm Tw}(\k[G])$ when $|G|$ is coprime to $6$; here $\operatorname{Aut}^{\rm anti}_{\rm Tw}(\k[G])$ is the group of antipode-preserving twisted automorphisms, and the coprimality condition ensures the vanishing of certain obstruction classes in the Schur multiplier. The lazy cohomology $H^2_\ell(\k^G)$ --- the kernel of the projection to the automorphism component --- was computed by Guillot--Kassel \cite{GuillotKassel2010} in terms of class-preserving outer automorphisms and abelian normal subgroups equipped with invariant alternating forms (Example~\ref{ex:group-algebra}). Cocycle deformations of group algebras were further studied by Galindo--Natale \cite{GalindoNatale2007}, with the semisimple Hopf-algebra case and its Galois objects treated by Mej\'{i}a Casta\~{n}o--Montgomery--Natale--Vega--Walton \cite{CocycleDeform}, and Davydov--Etingof--Nikshych \cite{DEN2018} computed the group of braided tensor autoequivalences for representation categories of small quantum groups at roots of unity.

For a finite-dimensional Hopf algebra $A$, our main computational tool is the short exact sequence
\[
1\longrightarrow H^2_\ell(A^\ast)\longrightarrow \ost(A)\xrightarrow{\;\Pi\;} \OutTw(A)\longrightarrow 1
\]
established in Theorem~\ref{t:general-exact-stable}. Here $H^2_\ell(A^\ast)$ is the lazy cohomology group of $A^\ast$ in the sense of Bichon--Carnovale \cite{BichonCarnovale2006}, capturing the stable twists that are invisible to the algebra-automorphism projection $\Pi$, while $\OutTw(A)$ parametrizes those algebra-automorphism classes that can be lifted to a compatible gauge transformation. This decomposition reduces the computation of $\ost(A)$ to two independent problems --- one cohomological and one automorphism-theoretic --- and provides the organizing principle for our main calculations.

The group $\ost(A)$ is the gateway to constructing $G$-actions on $\C{A}$. Any $G$-action induces a group homomorphism $\brho\colon G\to \ost(A)$, and we construct an obstruction class $[\brho]\in H^3(G,Z_G(A))$, valued in the abelian group $Z_G(A)$ of central grouplike elements of $A$. We prove that $\brho$ lifts to a $G$-action on $\C{A}$ if and only if $[\brho]$ vanishes (Theorem~\ref{LiftGactionTw}); when it does, the equivalence classes of liftings form a torsor over $H^2(G,Z_G(A))$. The methodology here parallels that of Etingof, Nikshych, and Ostrik~\cite{ENO2010} for $G$-extensions of fusion categories, but applied to a different $3$-groupoid target: the algebraic $2$-group $\2ust(A)$ in place of the Brauer--Picard $3$-groupoid (cf.~\S\ref{SS:31cat}).

On the algebraic side, each $G$-action on $\C{A}$ is encoded by concrete data $(\rho,\sigma)$ --- a group homomorphism $\rho\colon G\to\ost(A)$ together with a normalized 2-cocycle $\sigma\colon G\times G\to A$ --- from which we build a quasi-Hopf algebra $A\#_{(\rho,\sigma)}G$ whose representation category recovers the equivariantization $(\C{A})^G$. We show that $A\#_{(\rho,\sigma)}G$ sits in a cocentral cleft extension $1\to A\to A\#_{(\rho,\sigma)}G\to \k[G]\to 1$, and establish a bijection between $G$-equivariantizations on $\C{A}$ and cocentral cleft extensions of $\k[G]$ by $A$, up to equivalence. The notion of cleft extension may be viewed as a Hopf-algebraic realization of various categorical extension constructions \cite{BN2011, BN2014, EG2017, CGO2001}. When $A$ is a Hopf algebra, $A\#_{(\rho,\sigma)}G$ is again an ordinary Hopf algebra, furnishing a systematic source of new examples. In particular, for $G=C_2$, we show that every extension of $\k[C_2]$ by $A$ is automatically cocentral cleft, and nontrivial $C_2$-equivariantizations of $\C{A}$ are in bijection with elements of order two in $\ost(A)$, provided $H^i(C_2,Z_G(A))=1$ for $i=2,3$.

Beyond the Hopf setting we push the framework in two further directions. First, for a quasi-Hopf algebra with a central sub-bialgebra $Z\cong\kk^G$, we prove a \emph{reduction theorem} (Theorem~\ref{t:reduction-general}): provided the twist-liftable outer automorphisms of $(A,\Phi)$ and $(A,\Phi_\omega)$ act trivially on $H^3(G,\kk^\times)$, modifying the associator by a central normalized $3$-cocycle $\phi_\omega\in Z^{\otimes 3}$ --- giving $\Phi_\omega:=\Phi\,\phi_\omega$ --- leaves the stable twist group unchanged, $\ost(A,\Phi_\omega)\cong\ost(A)$. Applied to $A_1(md,\xi)$ this produces an explicit family of genuinely quasi-Hopf $C_2$-equivariantizations $\N^\omega(md,\xi)$ (Definition~\ref{def:N-omega-family}). Second, for a quasitriangular quasi-Hopf algebra $(A,R)$ we introduce the \emph{braided lazy cohomology} $H^2_{\ell,{\rm br}}(A^*)$ as the kernel of a braided refinement of the above exact sequence (Theorem~\ref{t:braided-SES}). This frames two known phenomena uniformly and Hopf-algebraically: for the Drinfeld double $D(G)$ the kernel recovers, as explicit gauge transformations $(\id,K)\in\st^{\rm br}(D(G),R)$, Davydov's identification of the \emph{soft symmetries} of $\C{D(G)}$ \cite{SoftSym2025} with the Bogomolov multiplier $B_0(G)$ \cite{Davydov2014} (Theorem~\ref{t:B0=lazy-br}), while the Etingof--Nikshych--Ostrik correspondence makes the braided stable twist group compute Brauer--Picard groups (Proposition~\ref{p:Tw-BrPic}); related constructions of non-semisimple modular categories via relative monoidal centers appear in work of Laugwitz--Walton \cite{LaugwitzWalton2021}.

We apply these results to the classification of nonsemisimple Hopf algebras. The program of classifying Hopf algebras over an algebraically closed field $\k$ of characteristic zero, organized by dimension, has seen significant progress (cf.\ \cite{YN2011, BG2013, Na, Ng09, Ng2005, Zhu94}): the number of isomorphism classes in every dimension less than 32, except 24, is known to be finite. In dimension 24, semisimple Hopf algebras with $C^*$-structure were classified in \cite{IK}, and nonsemisimple Hopf algebras with the Chevalley property --- meaning the coradical is a Hopf subalgebra, which includes all pointed ones --- were classified in \cite{BGN}. It had remained an open problem for over a decade whether a 24-dimensional Hopf algebra $H$ exists such that neither $H$ nor $H^*$ has the Chevalley property. We resolve this question: the Hopf algebra $\N(6,-1)$ (the member of the family defined below) and its dual are the only two Hopf algebras of dimension 24 lacking the Chevalley property. As a consequence, the classification of nonsemisimple Hopf algebras of dimension 24 is now complete, and the number of isomorphism classes of Hopf algebras in every dimension less than 32 is finite. By contrast, there are infinitely many in dimension 32 \cite{B2000, Gra2000b}.

\medskip
\noindent\textit{The family $\N(md,\xi)$.}\ For coprime integers $d,m>1$ and a primitive $m$-th root of unity $\xi$, we introduce a Hopf algebra $\N(md,\xi)$ of dimension $2m^2 d$ over any field containing $\BQ(\sqrt[m]{-1})$. It is generated by a grouplike $g$, a skew-primitive $x$, and a third generator $u$ with $u^2\in\langle g\rangle$, and realizes the $C_2$-equivariantization of $\C{A_1(md,\xi)}$, where $A_1(md,\xi)$ is the pointed Hopf algebra classified in \cite{AN01}. The full presentation, coalgebra structure, and antipode are given in Definition~\ref{def:N-family}, where $\N(md,\xi)$ arises as the nontrivial extension $A[u^{\pm 1};\tilde\varphi,\tilde J,z]$ from the unique nontrivial class in $\ost(A_1(md,\xi))\cong C_2$.

The computation of $\ost(A_\delta(n,\xi))$ uses the exact sequence framework above. For $A_0(n,\xi)$ (the case $\delta=0$), the sequence identifies $\ost(A_0(n,\xi))$ as a semidirect product of $(\k,+)$ by $\k^\times/\BG_m$ when $n=m$ (Taft case), and as $\k^\times/\BG_m$ when $n>m$; in particular every $2$-cocycle on the Taft algebra $T_{m^2}(\xi)$ arises from a stable twist. For $A_1(md,\xi)$ with $d>1$ (the case $\delta=1$), both $H^2_\ell(A_1^*)$ and $\OutTw(A_1)$ are computable, yielding $\ost(A_1)\cong C_2$ when $\gcd(m,d)=1$ and trivial otherwise. Since $Z_G(A_1)\cong C_d$, the group cohomology $H^i(C_2,Z_G(A_1))$ vanishes for $d$ odd; combined with our main results, this yields a unique nontrivial $C_2$-action on $\C{A_1(md,\xi)}$ up to equivalence, whose equivariantization is precisely $\N(md,\xi)$. The family $\N(md,\xi)$ lacks both the Chevalley and the dual Chevalley properties. We propose the following.
\begin{conjecture}
    Let $p$ be an odd prime and $\k$ be an algebraically closed field of characteristic zero. The Hopf algebra $\N(2p,-1)$ and its dual are the only Hopf algebras in dimension $8p$ without the Chevalley property. 
\end{conjecture}

Together with \cite{BGN}, this conjecture would complete the classification of all nonsemisimple Hopf algebras of dimension $8p$. In Section~\ref{S:clas} we verify it for $p=3$ and prove several general results toward the general case; in particular, the conjecture holds whenever $|G(H)|=2p$. Assuming $|G(H)|=2p$ with both $H$ and $H^*$ nonpointed, \cite{BGN} shows that such $H$ fits into an exact sequence $1\to A_\delta(2p,-1)\to H\to \k[C_2]\to 1$ of Hopf algebras; our bijection between nontrivial extensions of $\k[C_2]$ by $A$ and elements of order two in $\ost(A)$ then yields $H\cong \N(2p,-1)$. The family $\N(2p,-1)$ thereby establishes the existence, in dimension $8p$, of Hopf algebras lacking both the Chevalley and dual Chevalley properties, and leads to a complete classification of nonsemisimple Hopf algebras of dimension $24$.

The paper is organized as follows. In Section~\ref{S:twistG}, we define stable twists for finite-dimensional quasi-Hopf algebras, establish the $2$-group equivalence $\2ust(A)\simeq \OAut(\C{A})$, and prove the general exact sequence \eqref{eq:general-stable-SES} together with its braided counterpart. In Section~\ref{S:Equiv}, we develop the obstruction theory for lifting group homomorphisms $G\to\ost(A)$ to $G$-actions on $\C{A}$, and realize $G$-equivariantizations as cocentral cleft extensions of $\k[G]$ by $A$. In Section~\ref{S:StGrH}, we compute the stable twist groups of the pointed Hopf algebras $A_\delta(n,\xi)$ for $\delta\in\{0,1\}$, and construct the family $\N(md,\xi)$ when $\gcd(m,d)=1$ and $d$ is odd. In Section~\ref{S:StGrQH}, we extend the framework to quasi-Hopf algebras: we prove a reduction theorem for central $3$-cocycle modifications of the associator, study braided stable twists, and connect the kernel of the braided exact sequence to soft symmetries, the Bogomolov multiplier, and the Brauer--Picard group. Finally, in Section~\ref{S:clas}, we classify all $8p$-dimensional nonsemisimple Hopf algebras with $|G(H)|=2p$ for an odd prime $p$, thereby completing the classification of nonsemisimple Hopf algebras of dimension $24$.

\medskip
\noindent\textit{Conventions.}\ Throughout this paper, $\kk$ denotes an algebraically closed field of characteristic zero unless stated otherwise. All algebras and (quasi-)Hopf algebras are  finite-dimensional over $\kk$; all groups are finite; all $n$-cocycles are normalized.  For any finite-dimensional algebra $A$ over $\k$,  $\C{A}$ denotes the category of $A$-modules which are finite-dimensional over $\k$. We adopt the notation and definition of finite tensor categories from \cite{EGNO} and all tensor categories throughout this paper are finite tensor categories over $\k$.


\section{Stable twists of quasi-Hopf algebras}\label{S:twistG}
In this section, we characterize tensor autoequivalences of an integral tensor category $\C{A}$, where $A$ is a finite-dimensional quasi-Hopf algebra, in terms of gauge transformations of $A$ together with compatible quasi-bialgebra isomorphisms --- the \emph{stable twists} of $A$. In \S\ref{SS:31cat} we organize stable twists into a strict $3$-groupoid $\3ust$ and prove its equivalence with a $3$-groupoid of integral tensor categories, so that the automorphism $2$-group at $A$ recovers $\OAut(\C{A})$. In \S\ref{SS:general-SES} we derive, for a finite-dimensional Hopf algebra $A$, the exact sequence computing $\ost(A)$ from the lazy cohomology of $A^*$ and the liftable outer automorphisms of $A$, together with its braided refinement for quasitriangular $A$.

A \emph{quasi-bialgebra} $(A,\Delta,\ve,\Phi)$ is an associative unital $\kk$-algebra $A$ equipped with unital algebra maps $\Delta: A\to A\otimes A$ and $\ve: A\to \kk$, and an invertible element $\Phi\in A\otimes A\otimes A$ satisfying the identities below:
\begin{align}
  (\id \otimes \Delta)\Delta(a)&\,=\Phi\,\left((\Delta\otimes \id)\Delta(a)\right)\,\Phi^{-1},\quad a\in A,\label{coasso1}\\
  (\id \otimes \id \otimes \Delta)(\Phi)(\Delta\otimes \id\otimes \id)(\Phi)&\,=(1\otimes \Phi)(\id \otimes \Delta\otimes \id)(\Phi)(\Phi\otimes 1),\label{coasso2}\\
  (\ve\otimes \id)(\Delta(a))&\, =a=(\id \otimes \ve)(\Delta(a)),\quad a\in A,\label{coasso3} \\
  (\id \otimes \ve\otimes \id)(\Phi)&\,=1\otimes 1.\label{coasso4}
\end{align}
We call $\Delta$ the \emph{comultiplication}, $\ve$ the \emph{counit}, and $\Phi$ the \emph{associator} of $A$. For any $a \in A$, we will use Sweedler's notation $\Delta(a)=\sum_{(a)} a_1\otimes a_2$ or simply $\Delta(a)=a_1\otimes a_2$ with the summation notation suppressed. The \emph{augmentation ideal} of $A$ is defined to be $A^+:={\rm ker}\, \ve$. A quasi-bialgebra map from $(A,\Delta,\ve,\Phi)$ to another quasi-bialgebra $(A',\Delta',\ve',\Phi')$ is an algebra map $\varphi: A\to A'$ such that $\Delta' \circ \varphi=(\varphi\otimes \varphi)\circ \Delta$, $\ve'\circ \varphi=\ve$, and $(\varphi\otimes \varphi\otimes\varphi)(\Phi)=\Phi'$. A quasi-bialgebra $A$ is called a \emph{quasi-Hopf algebra} if there exist an anti-algebra automorphism $S$ of $A$ and elements $\alpha,\beta\in A$ such that 
\begin{align}
  \sum_{(a)}  S(a_{1})\,\alpha\, a_{2}\,=\ve(a)\alpha,\quad 
  \sum_{(a)} a_{1}\,\beta\, S(a_{2})\,=\ve(a)\beta,\quad a\in A, \label{EquasiS}\\
  \sum_i  \Phi_i^1\,\beta\,S(\Phi_i^2)\,\alpha\,\Phi_i^3\,=1,\quad \sum_i S(\overline{\Phi}_i^1)\,\alpha\, \overline{\Phi}_i^2\,\beta\, S(\overline{\Phi}_i^3)\,=1,\label{EAssociator}
\end{align}
where we write the associator as $\Phi=\sum_i\Phi^1_i\otimes \Phi^2_i\otimes \Phi^3_i$ with inverse $\Phi^{-1}=\sum_i \overline{\Phi}^1_i\otimes \overline{\Phi}_i^{2}\otimes \overline{\Phi}_i^3$. A nonzero element $g$ of a quasi-Hopf algebra $A$ is called {\it central grouplike} if $g$ is central in $A$ and $\Delta(g)=g\otimes g$. We denote by $Z_G(A)$ the set of all central group-like elements in $A$. Let $g\in Z_G(A)$ be central grouplike.  By \eqref{EquasiS}, we have $S(g)\alpha g=\ve(g)\alpha=\alpha$. Since both $g$ and $S(g)$ are central, we obtain that 
\begin{align*}
    1\overset{\eqref{EAssociator}}&{=}\sum_i  \Phi_i^1\,\beta\,S(\Phi_i^2)\,\alpha\,\Phi_i^3=\sum_i\Phi_i^1\,\beta\,S(\Phi_i^2)\,(S(g)\alpha g)\,\Phi_i^3\\
&= S(g)\left(\sum_i\Phi_i^1\,\beta\,S(\Phi_i^2)\, \alpha \,\Phi_i^3\right)\,g\overset{\eqref{EAssociator}}{=}S(g)g.
\end{align*}
Similarly, $gS(g)=1$, and hence $g$ is invertible with inverse given by $S(g)$. Moreover, $\Delta(S(g))=\Delta(g^{-1})=\Delta(g)^{-1}=(g\otimes g)^{-1}=S(g)\otimes S(g)$. So $S(g)\in Z_G(A)$. Therefore, $Z_G(A)$ forms an abelian group under the multiplication of $A$ with inverse given by $S$. For fundamental results on quasi-Hopf algebras, the reader is directed to \cite{Drin90} and \cite{Kas}. Specifically, when the associator $\Phi=1\otimes 1\otimes 1$ and $\alpha=\beta=1$ are trivial, any quasi-bialgebra or quasi-Hopf algebra simplifies to a bialgebra or Hopf algebra.

Let $(A,\Delta,\ve,\Phi,S,\alpha,\beta)$ be a quasi-Hopf algebra. A \emph{gauge transformation} on $A$ is an invertible element $J\in A\otimes A$ such that
\begin{equation}\label{twist}
(\id \otimes \ve)(J)=1=(\ve\otimes \id)(J).
\end{equation}
Using any gauge transformation $J$ of $A$, we can define a new quasi-Hopf algebra $A_J$, called the {\it Drinfeld twist} of $A$ by $J$, which, as a quasi-bialgebra, has the same algebra structure and counit $\ve$ as $A$, but new comultiplication 
$$
\Delta_J(x):=J \Delta(x) J\inv\  \text{ for } x \in A\,
$$
and associator
\begin{align}\label{TAss}
\Phi_J=(1\otimes J)(\id \otimes \Delta)(J)\, \Phi\, (\Delta\otimes \id)(J^{-1})(J^{-1}\otimes 1).
\end{align}
The formula~\eqref{TAss} is precisely what is needed to ensure the pentagon axiom holds for $A_J$ with the twisted coproduct $\Delta_J$. To describe the quasi-Hopf algebra structure, write $J=\sum_i f_i\otimes g_i$ with inverse $J^{-1}=\sum_i \ol f_i\otimes \ol g_i$ and set
\begin{align}\label{ABJ}
\alpha_J=\sum_i\, S(\ol f_i)\, \a\, \ol g_i \text{\quad    and\quad  } \b_J=\sum_i\, f_i\,\b\, S(g_i).
\end{align}
Then $A_J=(A,\Delta_J,\ve,\Phi_J,S,\alpha_J,\beta_J)$ becomes a quasi-Hopf algebra. In particular, when $A$ is a Hopf algebra with $\Phi=1\otimes 1\otimes 1$ and $\alpha=\beta=1$, a gauge transformation $J$ is called a \emph{2-cocycle} if additionally $\Phi_J=1\otimes 1\otimes 1$ or 
\begin{equation}\label{2-cocycleJ}
    (1\otimes J)(\id \otimes \Delta)(J)=(J\otimes 1)(\Delta\otimes \id)(J). 
\end{equation}
In this case, $A_J=(A,\Delta_J,\ve,S_J)$ is again a Hopf algebra with the antipode $S_J$ given by
\begin{align}
    S_J(x)=\beta_J\, S(x)\, \alpha_J \text{ for } x\in A\, ,
\end{align}
where 
\begin{align}\label{HABJ}
\alpha_J=\sum_i\, S(\ol f_i)\, \ol g_i \text{\quad    and\quad  } \b_J=\sum_i\, f_i\, S(g_i)
\end{align}
such that $\alpha_J\beta_J=1=\beta_J\alpha_J$. 

Let $\A$ denote the multiplicative group of all invertible elements $x$ in $A$ such that $\ve(x) = 1$. Then, $Z_G(A)$ is a central subgroup of $\A$. Take any $x\in \A$. For any gauge transformation $J$ of $A$,  we denote by $J_x:=(x\otimes x)J\Delta(x)^{-1}$, which is another gauge transformation of $A$. We say that two gauge transformations $J, J'$ of $A$ are \emph{equivalent} if $J'=J_x$ for some $x\in \A$. In particular, for any $x \in \A$, the element $\partial_x := (1 \otimes 1)_x = (x \otimes x) \Delta(x)^{-1}$ is called an \emph{inner gauge transformation} of $A$. Furthermore, we denote by $I_x$ the \emph{inner automorphism} of $A$ defined by $I_x(a) = x a x^{-1}$ for all $a \in A$.

\begin{deff}\label{def:twisted-iso}
Let $A,B$ be two quasi-Hopf algebras.
\begin{enumerate}
    \item A {\it twisted isomorphism} from $A$ to $B$ is a pair $(\varphi, J)$, where $J$ is a gauge transformation of $B$ and $\varphi: A\to B_J$ is a quasi-bialgebra isomorphism. When $A=B$, we call such a twisted isomorphism $(\varphi,J)$ a \emph{stable twist} for $A$.    
     \item We say two twisted isomorphisms $(\varphi, J)$ and $(\varphi', J')$ are {\it equivalent} if there exists $x \in \B$ such that $J'=J_x$ and $\varphi'=I_x \circ \varphi$, and we will abbreviate $I_x \circ \varphi$ as $\varphi_x$.
     \item A twisted isomorphism $(\varphi, J)$ is said to be {\it trivial} if  $J=\partial_x$ for some $x \in \B$.
     \item[(iv)] For any algebra isomorphism $\varphi: A\to B$, and $v \in A^{\o n}$, we simply write $\varphi(v)$ for $\varphi^{\o n}(v)$ in $B^{\otimes n}$.
\end{enumerate}
\end{deff}

Explicitly, a stable twist $(\varphi,J)$ of a quasi-Hopf algebra $(A,\Delta,\ve,\Phi)$ satisfies, for all $a\in A$:
\begin{align}\label{def:st}
\ve(\varphi(a))&=\ve(a)\\
J\Delta(\varphi(a))&=(\varphi\otimes \varphi)\Delta(a)J\\
\varphi(\Phi)&=\Phi_J\, .
\end{align}

\subsection{\texorpdfstring{The $3$-groupoids of finite-dimensional quasi-Hopf algebras and integral tensor categories}{The 3-groupoid Tw of quasi-Hopf algebras and integral tensor categories}}\label{SS:31cat}

In this subsection, we define a strict $3$-groupoid $\3ust$ whose $0$-cells are finite-dimensional quasi-Hopf algebras, with twisted isomorphisms as $1$-cells, intertwiners as $2$-cells, and central grouplike adjustments as $3$-cells. We prove that $\3ust$ is equivalent to a $3$-groupoid $\Ttc$ of integral tensor categories (Theorem~\ref{thm:31eqv-multi}). From that, we derive the strict $2$-group $\2ust(A)$ of stable twists as the automorphism $2$-group of $A$, which is equivalent to $\OAut(\C{A})$ (Corollary~\ref{p:eqv}).

\subsubsection{$3$-groupoid preliminaries}\label{sss:nr-prelim}
We adopt the \emph{$(n,r)$-category} conventions of \cite[\S 2.3]{HTT}; see also \cite[\S 2.2]{ENO2010}. Briefly, an $(n,r)$-category is an $n$-category whose $k$-cells are equivalences for $k>r$, so $(n,0)$-categories are \emph{$n$-groupoids}. The only case we need is a \emph{$3$-groupoid} = $(3,0)$-category $\CC$, in which all $1$-, $2$-, and $3$-cells are invertible. We write $1$-cells as $f\colon C\to C'$, $2$-cells between parallel $1$-cells as $\eta\colon f\Rightarrow f'$, and $3$-cells between parallel $2$-cells as $\gamma\colon\eta\Rrightarrow\eta'$. We refer the reader to \cite[\S 2.3]{HTT} for the full theory and use without further comment the standard notions of cells, compositions, identities, whiskerings, and the reverse $\CC^{\rev}$ (reversing only the $1$-cell direction; cf.~\cite{ENO2010}).

A \emph{strict $(n,r)$-category} is one in which all coherence isomorphisms (associators, unitors, interchange, etc.) are identities; equivalently, compositions of $k$-cells are associative and unital on the nose at every level $k\le n$. A \emph{strict $3$-functor} $F\colon\CC\to\DD$ between strict $3$-groupoids is an assignment on cells of every dimension that respects compositions and identities \emph{on the nose}: $F(\id_C)=\id_{F(C)}$ and $F(g\circ f)=F(g)\circ F(f)$ as equalities (not merely up to invertible higher cells) for $1$-cells, and likewise $F$ preserves vertical and horizontal compositions of $2$- and $3$-cells strictly. This is in contrast with the more general notion of a weak (pseudo) $3$-functor, in which composition is preserved only up to coherent invertible higher cells; we will not need that level of generality, as our $3$-functor $\TPsi$ of \S\ref{sss:31functor} is strict.

A strict $3$-functor $F\colon\CC\to\DD$ is an \emph{equivalence of $3$-groupoids} if it satisfies the following four-level essential surjectivity and full-faithfulness conditions \cite[Def.~2.3.4.1]{HTT}:
\begin{itemize}
\item[\textup{(E0)}] for every $D\in\operatorname{Ob}\DD$, there exists $C\in\operatorname{Ob}\CC$ with $F(C)$ equivalent to $D$ via an invertible $1$-cell;
\item[\textup{(E1)}] for every $1$-cell $f\colon F(C)\to F(C')$ in $\DD$, there is a $1$-cell $\hat f\colon C\to C'$ in $\CC$ with $F(\hat f)$ invertible-$2$-cell isomorphic to $f$;
\item[\textup{(E2)}] for every parallel pair of $1$-cells $f,f'\colon C\to C'$, every $2$-cell $F(f)\Rightarrow F(f')$ in $\DD$ is in the image of $F$ up to a $3$-cell isomorphism;
\item[\textup{(E3)}] for every parallel pair of $2$-cells, $F$ induces a bijection on $3$-cells.
\end{itemize}

For an $(n,r)$-category $\CC$ and an integer $0\le k\le n$, the \emph{$k$-truncation} $\tau_{\le k}\CC$ is the $(k,\min(r,k))$-category obtained in two steps: (i) discard all $j$-cells for $j\ge k+1$; (ii) identify two parallel $k$-cells $f,g$ whenever there exists an invertible $(k+1)$-cell $f\Rightarrow g$ in $\CC$. For a $3$-groupoid $\CC$:
\begin{itemize}
\item $\tau_{\le 0}\CC$ \textup{(}a set\textup{)}: $0$-cells of $\CC$ modulo the equivalence relation generated by invertible $1$-cells; we write $\pi_0\CC:=\tau_{\le 0}\CC$.
\item $\tau_{\le 1}\CC$ \textup{(}an ordinary category\textup{)}: $0$-cells of $\CC$, with morphisms the $2$-isomorphism classes of $1$-cells.
\item $\tau_{\le 2}\CC$ \textup{(}a strict $2$-groupoid\textup{)}: $0$- and $1$-cells of $\CC$, with $2$-cells modulo invertible $3$-cells.
\item $\tau_{\le 3}\CC=\CC$.
\end{itemize}

A \emph{pointed} $(n,r)$-category is a pair $(\CC,*)$ with $*\in\operatorname{Ob}\CC$. The \emph{looping at $*$} is the pointed $(n-1,\max(r-1,0))$-category $\Omega_*\CC$ whose $0$-cells are the $1$-cells $*\to*$ of $\CC$, and whose $k$-cells for $1\le k\le n-1$ are the $(k+1)$-cells of $\CC$ between parallel $k$-cells, with compositions inherited from $\CC$; we take the identity $1$-cell $\id_*$ as the canonical basepoint of $\Omega_*\CC$. Iterating gives $\Omega_*^k\CC$ for $0\le k\le n$. The \emph{homotopy groups} of $(\CC,*)$ are
\[
\pi_0(\CC,*):=\pi_0\CC,\qquad \pi_k(\CC,*):=\pi_0(\Omega_*^k\CC)\quad\text{for }1\le k\le n.
\]
For $k\ge 1$, vertical composition of $k$-cells descends to a group structure on $\pi_k(\CC,*)$.

\begin{deff}\label{def:3ust-multi}
We define a strict $3$-groupoid $\3ust$ as follows:
\begin{itemize}
    \item[\textup{($0$-cells)}] all finite-dimensional quasi-Hopf algebras $(A,\Delta,\ve,\Phi,S,\alpha,\beta)$ over $\kk$;
    \item[\textup{($1$-cells)}] for $A,B$ in $\3ust$, a $1$-cell $A\to B$ is a twisted isomorphism $(\vp,J)\colon A\to B$. The composition of $(\vp,J)\colon A\to B$ and $(\vp',J')\colon B\to C$ is
    \[
    (\vp',J')\circ(\vp,J)\;:=\;\bigl(\vp'\vp,\;\vp'(J)\,J'\bigr)\colon A\to C,
    \]
    and the identity at $A$ is $(\id_A,1\otimes 1)$;
    \item[\textup{($2$-cells)}] for two twisted isomorphisms $(\vp,J),(\vp',J')\colon A\to B$, a $2$-cell $(\vp,J)\Rightarrow(\vp',J')$ is an element $x\in\B$ such that
    \[
    (\vp_x,J_x)\;=\;(I_x\circ\vp,\,(x\otimes x)J\Delta(x)^{-1})\;=\;(\vp',J').
    \]
    We denote this $2$-cell by $\eta_x$. Vertical composition $\eta_{x'}\circ\eta_x:=\eta_{x'x}$ and horizontal composition (whiskering) are induced  by $\eta_x*\eta_y=\eta_{x\vp(y)}$ for two $\eta_x: (\vp,J)\Rightarrow(\vp',J')$ and $\eta_y: (\psi,K)\Rightarrow(\psi',K')$; the identity is $\eta_1$;
    \item[\textup{($3$-cells)}] for two parallel $2$-cells $\eta_x,\eta_{x'}\colon(\vp,J)\Rightarrow(\vp',J')$, a $3$-cell $\eta_x\Rrightarrow\eta_{x'}$ is an element $g\in Z_G(B)$ such that $x'=g\,x$. All three composition operations on $3$-cells are induced by multiplication in $Z_G(B)$; the identity is $1\in Z_G(B)$.
\end{itemize}
\end{deff}

\subsubsection{The strict $2$-group $\2ust(A)$}\label{sss:2group-loop}

Following \cite[\S 2]{ENO2010}, to a strict $2$-group (equivalently, a categorical group) $\mathcal G$ --- a monoidal groupoid in which all objects are invertible --- we attach two homotopy groups:
\[
\pi_0(\mathcal G)\ :=\ \{\text{isomorphism classes of objects}\},\qquad
\pi_1(\mathcal G)\ :=\ \operatorname{Aut}_{\mathcal G}(\1),
\]
the group of isomorphism classes of objects and the automorphism group of the unit object $\1$, respectively; $\pi_1(\mathcal G)$ is abelian by the Eckmann--Hilton argument \cite{EckmannHilton1962,Leinster2004} and carries a natural $\pi_0(\mathcal G)$-action. The \emph{classifying space} $B\mathcal G$ is then a connected $2$-type with
\[
\pi_1(B\mathcal G)=\pi_0(\mathcal G),\qquad \pi_2(B\mathcal G)=\pi_1(\mathcal G),\qquad \pi_k(B\mathcal G)=0\ \ (k\ge 3)
\]
\cite[\S 2.3.5]{HTT}. For an abstract group $G$ and $n\ge1$, the \emph{Eilenberg--MacLane space} $K(G,n)$ is the pointed connected space with $\pi_n(K(G,n))=G$ and $\pi_k(K(G,n))=0$ for $k\ge1$, $k\ne n$.

Fix a finite-dimensional quasi-Hopf algebra $A$, regarded as a $0$-cell of $\3ust$. Following \cite[\S 2.2]{ENO2010}, we organize the stable twists of $A$ into a strict $2$-group (categorical group) $\2ust(A)$, the automorphism $2$-group of $A$, whose objects are the $1$-cells $(\vp,J)\colon A\to A$ \textup{(}stable twists; their set is denoted $\st(A)$\textup{)} and whose morphisms are the $2$-cells $\eta_x$ of $\3ust$. Unpacking Definition~\ref{def:3ust-multi} at the single object $A$:
\begin{itemize}
\item[\textup{(O)}] objects: $(\vp,J)\in\st(A)$, monoidal unit $(\id_A,1\otimes 1)$;
\item[\textup{(M)}] morphisms $(\vp,J)\to(\vp',J')$: elements $x\in\A$ with $(\vp_x,J_x)=(\vp',J')$, composing by multiplication in $\A$.
\end{itemize}
Equipping this with the composition of $1$-cells in $\3ust$ as monoidal product yields a strict monoidal groupoid --- the \emph{strict $2$-group of stable twists of $A$} --- denoted $\2ust(A)$. The grouplike adjustments $g\in Z_G(A)$ \textup{(}the $3$-cells of $\3ust$\textup{)} are recorded not as $2$-morphisms of $\2ust(A)$ but, after delooping, as $\pi_2$ of its classifying space (Proposition~\ref{prop:pin}). Concretely, the composition of $1$-cells in $\3ust$ specializes at $A$ to
\begin{align}\label{GroupTW}
 (\varphi, J)\o (\varphi', J')~:=~\left(\varphi\varphi',\, \varphi(J')\,J\,\right),
\end{align}
which is again a stable twist of $A$. Every object $(\varphi,J)\in\st(A)$ is invertible with $(\varphi,J)^{-1}=(\varphi^{-1},\varphi^{-1}(J^{-1}))$. The $1$-morphisms compose by multiplication in $\A$ (with $\id_{(\vp,J)}:=1$), and for any $x\in\Hom_{\2ust(A)}((\varphi,J),(\varphi',J'))$,
$$
(\vp, J) \o x' :=\vp(x')  \text{ and } x \o (\vp'', J'')  := x
$$
gives the bifunctor $-\o -: \2ust(A) \times  \2ust(A) \to \2ust(A)$.

Let $\st_0(A)$ denote the set of stable twists of the form $(I_x,\partial_x)$ with $x\in\A$; these are precisely the stable twists isomorphic to the unit $(\id_A,1\otimes 1)$ in $\2ust(A)$. Under \eqref{GroupTW}, $\st(A)$ is a group with normal subgroup $\st_0(A)$, and the quotient
\[
\ost(A)\;:=\;\st(A)/\st_0(A)\;=\;\pi_0(\2ust(A))=\pi_1(B\2ust(A))
\]
is the \emph{group of stable twists of $A$}, naturally identified with the set of isomorphism classes of objects of $\2ust(A)$. We further set $\st_t(A)\subseteq\st(A)$ to be the subgroup of \emph{trivial} stable twists \textup{(}those with $J=\partial_x$ for some $x\in\A$\textup{)}, and $\ol\st_t(A):=\st_t(A)/\st_0(A)$. Write $\ol{(\varphi,J)}$ for the image of $(\varphi,J)$ in $\ost(A)$. By induction on \eqref{GroupTW},
\begin{align}\label{powerstable}
    (\varphi, J)^n~~=~\left(\varphi^n, \varphi^{n-1}(J)\cdots \varphi(J)J\right),
\end{align}
and the \emph{order} of $\ol{(\varphi, J)}$ in $\ost(A)$ is the smallest $n\ge 1$ with $(\varphi^n,\varphi^{n-1}(J)\cdots\varphi(J)J)\in\st_0(A)$.

\subsubsection{The $3$-groupoid $\Ttc$ and the strict $3$-functor $\TPsi$}\label{sss:31functor}
We now introduce the $3$-groupoid $\Ttc$ of integral tensor categories and construct a strict $3$-functor $\TPsi\colon\3ust^{\rev}\to\Ttc$. We begin by recalling the standard notions of tensor functor and monoidal natural isomorphism. A {\it tensor functor} from a $\k$-linear tensor category $(\mathcal C,\otimes,a,\1,l,r)$ to another $(\mathcal C',\otimes',a',\1',l',r')$ is a pair $(F,\xi)$ consisting of a $\k$-linear functor $F: \mathcal C\to \mathcal C'$ with $F(\1) \cong \1'$ together with a natural isomorphism
\begin{align}\label{tensorSt}
\xi_{V,W}: F(V\otimes W)~\xrightarrow{\sim}~F(V)\otimes' F(W),\quad \text{for any}\quad V,W\in\mathcal C,
\end{align}
called the \emph{monoidal structure} of $F$, such that the hexagon diagram commutes for all $U,V,W\in \mathcal C$ (cf.~\cite[Definition 2.4.1]{EGNO}). A tensor functor whose underlying $F$ is a $\k$-linear equivalence is called a \emph{tensor equivalence}. Two tensor functors $(F,\xi)$ and $(F',\xi')$ are \emph{monoidally naturally isomorphic} if there is a natural isomorphism $\eta: F\xRightarrow{\sim} F'$ such that
$$
\xymatrix{F(V \o W) \ar[r]^-{\xi_{V,W}}\ar[d]_-{\eta_{V \o W}} & F(V) \o' F(W) \ar[d]^-{\eta_V \o \eta_W}\\
F'(V \o W) \ar[r]^-{\xi'_{V, W}}& F'(V) \o' F'(W)
}
$$
commutes for any $V, W \in \CC$. A \emph{monoidal natural automorphism} of $(F,\xi)$ is a monoidal natural isomorphism $\zeta:(F,\xi)\Rightarrow(F,\xi)$. Up to monoidal natural isomorphism we may always normalize $F(\1)=\1'$ and $\eta_\1=\id_{\1'}$.

Recall further that a finite tensor category $\CC$ is \emph{integral} if the Frobenius--Perron dimensions of all of its objects are integers; equivalently, $\CC$ is tensor equivalent to $\C{A}$ for some finite-dimensional quasi-Hopf algebra $A$ \textup{(}\cite[Proposition 6.1.14]{EGNO}\textup{)}. 

\begin{deff}\label{def:Ttc}
Define the $3$-groupoid $\Ttc$ of integral tensor categories over $\kk$ by:
\begin{itemize}
    \item[\textup{($0$-cells)}] integral tensor categories $\CC$ over $\kk$;
    \item[\textup{($1$-cells)}] tensor equivalences $(F,\xi)\colon\DD\to\CC$;
    \item[\textup{($2$-cells)}] monoidal natural isomorphisms $\eta\colon(F,\xi)\Rightarrow(F',\xi')$;
    \item[\textup{($3$-cells)}] for two parallel $2$-cells $\eta,\eta'\colon(F,\xi)\Rightarrow(F',\xi')$, a $3$-cell $\eta\Rrightarrow\eta'$ is a monoidal natural automorphism $\zeta\colon F\Rightarrow F$ such that $\eta'_V=\eta_V\circ\zeta_V$ for every object $V$ (i.e., $\eta'=\eta\circ\zeta$ as vertical composition of natural transformations).
\end{itemize}
Compositions are the usual horizontal and vertical compositions of tensor equivalences and monoidal natural transformations. By definition, all cells at every level are invertible, so $\Ttc$ is a strict $3$-groupoid.
\end{deff}

For an integral tensor category $\CC$, the \emph{strict $2$-group of tensor autoequivalences} $\OAut(\CC)$ is the categorical group with objects the tensor autoequivalences of $\CC$ and morphisms the monoidal natural isomorphisms. We denote by $\oAut(\CC)$ the set of isomorphism classes of $1$-morphisms of $\OAut(\CC)$, which is an ordinary group. The group $\oAut(\CC)$ is also denoted $\mathrm{Aut}^\otimes(\CC)$ or $\mathrm{Eq}^\otimes(\CC)$ in the literature \cite{ENO2010,DavydovNikshych2013}, and called the \emph{group of tensor autoequivalences} of $\CC$; the strict $2$-group $\OAut(\CC)$ itself is sometimes written $\underline{\mathrm{Aut}}^\otimes(\CC)$ or $\underline{\mathrm{Eq}}^\otimes(\CC)$.

We now construct the strict $3$-functor $\TPsi\colon\3ust^{\rev}\to\Ttc$ cell by cell.

\smallskip\noindent\textit{The construction $({}_\vp(-),\xi^J)$.}
For any twisted isomorphism $(\vp,J)\colon A\to B$, we associate a tensor equivalence
\[
({}_\vp(-),\xi^J)\colon \C{B}\longrightarrow\C{A}
\]
as follows. For $V\in\C{B}$, the underlying vector space ${}_\vp V$ equals $V$ but the $A$-action is pulled back along $\vp$: $a\cdot_\vp v:=\vp(a)\,v$. Any $B$-module map $f\colon V\to W$ then automatically becomes an $A$-module map $\!_\vp(f):=f$. The monoidal structure is multiplication by $J$,
\[
\xi^J_{V,W}\;:=\;\bigl({}_\vp(V\otimes W)\xrightarrow{\,J\cdot\,}{}_\vp V\otimes {}_\vp W\bigr).
\]
The hexagon axiom for $({}_\vp(-),\xi^J)$ reduces to $\vp(\Phi^A)=(\Phi^B)_J$, which is exactly the stability condition.

\smallskip\noindent\textit{The construction $\psi_x$ and $\zeta_g$.}
For a $2$-cell $\eta_x\colon (\vp,J)\Rightarrow(\vp',J')$ in $\3ust$ (with $x\in\B$ and $(\vp_x,J_x)=(\vp',J')$), define a monoidal natural isomorphism
\[
\psi_x\colon ({}_\varphi(-),\xi^J)\xRightarrow{\sim}({}_{\varphi'}(-),\xi^{J'}),\qquad (\psi_x)_V(v):=xv.
\]
Naturality and the monoidal compatibility are checked exactly as before:
\[
\psi_x(a\cdot_\varphi v)=x\varphi(a)v=\varphi_x(a)xv=a\cdot_{\varphi'}\psi_x(v),
\]
and
\[
\left((\psi_x)_V\otimes (\psi_x)_W\right)\circ \xi_{V,W}^J=(x\otimes x)J=J_x\Delta(x)=\xi_{V,W}^{J'}\circ (\psi_x)_{V\otimes W}.
\]
For a $3$-cell $g\in Z_G(B)$ (with $x'=gx$), define a monoidal natural automorphism $\zeta_g$ of $({}_\vp(-),\xi^J)$ by $(\zeta_g)_V(v):=gv$ for $V\in\C{B}$, $v\in V$. The assignments
\[
A\mapsto\C{A},\quad (\vp,J)\mapsto({}_\vp(-),\xi^J),\quad \eta_x\mapsto\psi_x,\quad g\mapsto\zeta_g
\]
are compatible with composition at all four levels \emph{strictly}, that is, as honest equalities rather than up to invertible higher cells:
\begin{align}\label{eq:strict-comp}
({}_{\vp'\vp}(-),\xi^{\vp'(J)J'})\;&=\;({}_\vp(-),\xi^J)\circ({}_{\vp'}(-),\xi^{J'}),\\
\psi_{x'x}=\psi_{x'}\circ\psi_x,\qquad &\zeta_{gg'}=\zeta_g\circ \zeta_{g'}.\notag
\end{align}

\begin{lem}\label{NatST}
Let $A,B$ be finite-dimensional quasi-Hopf algebras, and let
\[
(\vp,J),(\vp',J')\colon A\longrightarrow B
\]
be two twisted isomorphisms. Write $F:=({}_\vp(-),\xi^J)$ and $F':=({}_{\vp'}(-),\xi^{J'})$ for the associated tensor equivalences $\C{B}\to\C{A}$. For any monoidal natural isomorphism $\eta\colon F\Rightarrow F'$, there exists $x\in\B$ with $\vp'=\vp_x$ and $J'=J_x$ such that $\eta=\psi_x$. Moreover, if $\psi_{x'}$ is another such isomorphism $F\Rightarrow F'$, then $x'=gx$ for a unique $g\in Z_G(B)$. In particular, the assignment $\eta_x\mapsto\psi_x$ defines a bijection
\[
\bigl\{\eta_x\colon(\vp,J)\Rightarrow(\vp',J')\bigr\}\;\xrightarrow{\;\sim\;}\;\bigl\{\eta\colon F\Rightarrow F'\bigr\}
\]
which is $Z_G(B)$-equivariant \textup{(}via $3$-cells $g\colon\eta_x\Rrightarrow\eta_{gx}$ on the left and $g\colon\eta\mapsto\eta\cdot\zeta_g$ on the right\textup{)}.
\end{lem}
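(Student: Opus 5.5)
The plan is to use the regular module. Since $\C{B}$ is the category of finite-dimensional $B$-modules, the regular module $B$ is an object of it, and every natural transformation between functors of the form ${}_\vp(-)$ is determined by its component at $B$. Write $\eta_B\colon {}_\vp B\to {}_{\vp'}B$ and set $x:=\eta_B(1)\in B$.

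\emph{Step 1: $\eta_V$ is left multiplication by $x$.} For any $V\in\C{B}$ and $v\in V$, the map $r_v\colon B\to V$, $b\mapsto bv$, is a $B$-module map. Naturality of $\eta$ with respect to $r_v$ gives
\[
\eta_V(v)=\eta_V(r_v(1))=r_v(\eta_B(1))=xv.
\]
Naturality with respect to the right multiplications $B\to B$, $b\mapsto bc$, gives $\eta_B(c)=xc$ for all $c\in B$. Since $\eta_B$ is bijective, $x$ is invertible.

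\emph{Step 2: $\vp'=\vp_x$.} The map $\eta_B$ is $A$-linear from ${}_\vp B$ to ${}_{\vp'}B$. Hence $x\vp(a)b=\vp'(a)xb$ for all $a\in A$ and $b\in B$. Taking $b=1$ yields $\vp'(a)=x\vp(a)x^{-1}$, that is, $\vp'=I_x\circ\vp$.

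\emph{Step 3: $J'=J_x$ and $\ve(x)=1$.} Apply the monoidal compatibility diagram at $V=W=B$ to the element $1\otimes 1\in B\otimes B$. By Step 1, $\eta_{B\otimes B}$ is left multiplication by $x$ acting diagonally, i.e.\ by $\Delta(x)$. The left-hand path of the diagram therefore sends $1\otimes 1$ to $(x\otimes x)J$, and the right-hand path sends it to $J'\Delta(x)$. Thus $J'=(x\otimes x)J\Delta(x)^{-1}$.

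Applying $\ve\otimes\ve$ and using the normalization \eqref{twist} for both $J$ and $J'$ gives $\ve(x)^2=\ve(x)$. Because $x$ is invertible, $\ve(x)\neq 0$, so $\ve(x)=1$ and $x\in\B$. (Alternatively, one can use the normalization $\eta_\1=\id$ at the unit object.) This shows $\eta=\psi_x$ with $\eta_x\colon(\vp,J)\Rightarrow(\vp',J')$ a valid $2$-cell. Together with the checks already made for $\psi_x$, this gives surjectivity of $\eta_x\mapsto\psi_x$. Injectivity is immediate, since $\psi_x$ evaluated at $1\in B$ returns $x$.

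\emph{Step 4: the $Z_G(B)$-torsor statement.} Suppose $\psi_x$ and $\psi_{x'}$ are both monoidal natural isomorphisms $F\Rightarrow F'$, and set $g:=x^{-1}x'\in\B$, which is unique. From $I_x\vp=I_{x'}\vp$ and the surjectivity of $\vp$, the element $g$ is central. From $J_x=J_{x'}$ we get
\[
(x\otimes x)J\Delta(x)^{-1}=(x'\otimes x')J\Delta(x')^{-1}.
\]
Using centrality of $g$, this rearranges to $(g\otimes g)J=J\Delta(g)$. Moving $g\otimes g$ past $J$ (it is central in $B\otimes B$) gives $\Delta(g)=g\otimes g$, so $g\in Z_G(B)$. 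Since $g$ is central, $x'=gx$, and also $\psi_{x'}=\psi_x\circ\zeta_g$ because $g$ commutes with everything. Conversely, for $g\in Z_G(B)$ the element $gx$ still satisfies the $2$-cell conditions, and $\zeta_g$ is a monoidal automorphism. This gives the equivariance.

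The main obstacle is Step 3: correctly identifying how $\eta_{B\otimes B}$ acts, namely through $\Delta(x)$ because $B\otimes B$ carries the diagonal $B$-action, and ordering the composites in the monoidal square consistently with the convention for $\xi^J$. Once that is in place, everything else is routine.
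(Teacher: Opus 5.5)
Your proof is correct and follows essentially the same route as the paper. In both arguments you set $x=\eta_B(1)$, use naturality along right multiplications and along $b\mapsto bv$ to get $\eta=\psi_x$, read off $\varphi'=I_x\circ\varphi$ from $A$-linearity, obtain $J'\Delta(x)=(x\otimes x)J$ from the monoidal square at $B\otimes B$ on $1\otimes 1$, and show that the ratio $g$ of two such elements is central grouplike. The only differences are cosmetic: you apply $\varepsilon\otimes\varepsilon$ rather than $\varepsilon\otimes\mathrm{id}$ to get $\varepsilon(x)=1$, and you write $g=x^{-1}x'$ instead of $x'x^{-1}$, which agree by centrality.
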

\begin{proof}
Denote by $r_b\colon B\to B$ right multiplication by $b\in B$. Then $r_b$ is a left $B$-module map, hence a morphism in $\C{B}$, and naturality of $\eta$ on $r_b$ yields
\[
\eta_B(b)\;=\;\eta_B\circ{}_\varphi(r_b)(1)\;=\;{}_{\varphi'}(r_b)\circ\eta_B(1)\;=\;\eta_B(1)\,b.
\]
Set $x:=\eta_B(1)\in B^\times$. For any $V\in\C{B}$ and $v\in V$, the map $f_v\colon B\to V$, $b\mapsto bv$, is left $B$-linear, and naturality ${}_{\varphi'}(f_v)\circ\eta_B=\eta_V\circ{}_\varphi(f_v)$ gives $\eta_V(v)=xv$. Hence $\eta=\psi_x$.

Each $\eta_V$ is a morphism in $\C{A}$, i.e.\ $A$-linear with source action via $\vp$ and target via $\vp'$: $\eta_V(\vp(a)\,v)=\vp'(a)\,\eta_V(v)$. Taking $V=B$, $v=1$ gives $x\,\vp(a)=\vp'(a)\,x$, so $\vp'=I_x\circ\vp=\vp_x$. Since $\eta$ is monoidal, the diagram
\[
\xymatrix{\!_\varphi(V\otimes W) \ar[d]_-{\eta_{V\o W}}\ar[r]^-{J\cdot}&  \!_\varphi V \o \!_\varphi W\ar[d]^-{\eta_V \o \eta_W}\\
~\!_{\varphi'}(V\otimes W)\ar[r]^-{J'\cdot}&\!_{\varphi'} V\otimes \!_{\varphi'} W
}
\]
commutes for all $V,W\in\C{B}$. Taking $V=W=B$ and evaluating both compositions on $1\otimes 1$ yields
\[
J'\,\Delta(x)\;=\;(x\otimes x)\,J\qquad\text{in }B\otimes B,
\]
i.e.\ $J'=(x\otimes x)J\Delta(x)^{-1}=J_x$. Applying $(\ve\otimes\id)$ gives $\ve(x)=1$ and $x\in\B$.

Let $\psi_{x'}\colon F\Rightarrow F'$ be another such monoidal natural isomorphism. By applying previous argument to $x'$, we have $\vp_{x'}=\vp'=\vp_x$ and $J_{x'}=J'=J_x$ with $x'\in\B$. Set $g:=x'x^{-1}\in\B$. From $\vp_{x'}=\vp_x$, i.e.\ $I_{x'}\circ\vp=I_x\circ\vp$, we get $I_g=\id$ on $\vp(A)$, i.e.\ $g$ centralizes $\vp(A)$. Since $\vp\colon A\to B$ is bijective, hence $g\in Z(B)$. Next, $x'\otimes x'=(g\otimes g)(x\otimes x)$ in $B\otimes B$, so the identity $J'\Delta(x')=(x'\otimes x')J$ becomes
\[
J'\,\Delta(x')\;=\;(g\otimes g)(x\otimes x)\,J\;=\;(g\otimes g)\,J'\,\Delta(x).
\]
Since $g\in Z(B)$, we deduce that 
\[
\Delta(x')\,\Delta(x)^{-1}\;=\;g\otimes g.
\]
But $\Delta(x')=\Delta(gx)=\Delta(g)\,\Delta(x)$, so $\Delta(g)=g\otimes g$ and hence $g\in Z_G(B)$. Conversely, for any $g\in Z_G(B)$, $x':=gx$ satisfies $\vp_{x'}=\vp_x$ and $J_{x'}=J_x$, so $\psi_{gx}\colon F\Rightarrow F'$ is another monoidal natural isomorphism of the same type.

The $Z_G(B)$-equivariance of the bijection $\eta_x\mapsto\psi_x$ is then $\psi_{gx}=\zeta_g\circ\psi_x$.
\end{proof}

\begin{cor}\label{lem:nat-auto-ZG-multi}
Let $A,B$ be finite-dimensional quasi-Hopf algebras and $(\vp,J)\colon A\to B$ a twisted isomorphism. The group of monoidal natural automorphisms of the tensor equivalence $F=({}_\vp(-),\xi^J)\colon\C{B}\to\C{A}$ is canonically isomorphic to $Z_G(B)$:
\[
Z_G(B)\;\xrightarrow{\;\sim\;}\;\Aut(F),\qquad z\longmapsto\zeta_z\quad\text{where }(\zeta_z)_V(v):=zv.
\]
\end{cor}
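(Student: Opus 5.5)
The plan is to specialize Lemma~\ref{NatST} to the case $(\vp',J')=(\vp,J)$, so that $F'=F$. Lemma~\ref{NatST} then says that every monoidal natural automorphism $\eta$ of $F$ has the form $\psi_x$ for some $x\in\B$ with $\vp_x=\vp$ and $J_x=J$.

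The first step is to identify all such $x$. The element $x=1$ itself qualifies, giving $\psi_1=\id_F$. By the uniqueness part of Lemma~\ref{NatST}, applied with $\psi_1$ in place of $\psi_x$, any other $x'$ with $\psi_{x'}$ an automorphism of $F$ satisfies $x'=g\cdot 1=g$ for a unique $g\in Z_G(B)$. Conversely, for any $z\in Z_G(B)$ we check directly that $\vp_z=\vp$ (because $z$ is central) and that
\[
J_z=(z\otimes z)J\Delta(z)^{-1}=(z\otimes z)J(z\otimes z)^{-1}=J,
\]
using that $z$ is central and grouplike. Moreover $\ve(z)=1$, since $z=\ve(z)z$ by the counit axiom and $z$ is invertible. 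Hence $\psi_z=\zeta_z$ is a monoidal natural automorphism of $F$. So the map $z\mapsto\zeta_z$ is surjective onto $\Aut(F)$.

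Injectivity also follows from the uniqueness of $g$ in Lemma~\ref{NatST}: if $\zeta_z=\zeta_{z'}$, then $z=z'\cdot 1$ forces $z=z'$. More concretely, evaluating $(\zeta_z)_B(1)=z$ on the regular module $V=B$ recovers $z$.

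Finally, $z\mapsto\zeta_z$ is a homomorphism, since $\zeta_{zz'}=\zeta_z\circ\zeta_{z'}$ by \eqref{eq:strict-comp}; this is also immediate from $(\zeta_{zz'})_V(v)=zz'v$. The map is canonical because it involves no choices.

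No step is a real obstacle. The only point needing care is the verification that $\ve(z)=1$ and $J_z=J$, so that $z$ genuinely defines a $2$-cell $(\vp,J)\Rightarrow(\vp,J)$.
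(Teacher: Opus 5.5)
Your proposal is correct and follows the paper's own proof: you specialize Lemma~\ref{NatST} to $(\vp',J')=(\vp,J)$ with basepoint $x=1$, use the uniqueness clause to get $x'=g\in Z_G(B)$, and observe that the group law on $\Aut(F)$ matches multiplication in $Z_G(B)$. Your explicit checks that $\ve(z)=1$, $\vp_z=\vp$ and $J_z=J$ for $z\in Z_G(B)$ spell out the converse direction, which is already contained in Lemma~\ref{NatST}.
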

\begin{proof}
Apply Lemma~\ref{NatST} with $(\vp',J')=(\vp,J)$ and basepoint $x=1$, which trivially satisfies $\vp_1=\vp$ and $J_1=J$. The uniqueness clause then shows every $x'\in\B$ with $\vp_{x'}=\vp$ and $J_{x'}=J$ is of the form $x'=g\cdot 1=g$ for some $g\in Z_G(B)$, and the bijection $\eta_x\mapsto\psi_x$ specializes to $z\mapsto\zeta_z$. The group law on $\Aut(F)$ corresponds to multiplication in $Z_G(B)\subseteq B$.
\end{proof}

\begin{thm}\label{thm:31eqv-multi}
The strict $3$-functor 
\[
\TPsi\colon \3ust^{\rev}\xrightarrow{\;\sim\;}\Ttc
\]
defined on cells by
\begin{align*}
A\;&\longmapsto\;\C{A}, & (\vp,J)\colon A\to B\;&\longmapsto\;\bigl({}_\vp(-),\xi^J\bigr)\colon\C{B}\to\C{A},\\
\eta_x\;&\longmapsto\;\psi_x, & g\in Z_G(B)\;&\longmapsto\;\zeta_g,
\end{align*}
is an equivalence of $3$-groupoids.
\end{thm}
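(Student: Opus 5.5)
We need to check that $\TPsi$ is a well-defined strict $3$-functor and then verify the four conditions (E0)--(E3).

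\emph{Strict functoriality.} We verify that the assignments land in the right places and respect compositions on the nose. For a twisted isomorphism $(\vp,J)\colon A\to B$, the functor ${}_\vp(-)$ is an isomorphism of categories, with inverse ${}_{\vp^{-1}}(-)$. The hexagon for $\xi^J$ is equivalent to $\vp(\Phi^A)=(\Phi^B)_J$, so $({}_\vp(-),\xi^J)$ is a tensor equivalence. The identities \eqref{eq:strict-comp} give strict compatibility with composition, and the direction reversal explains the source $\3ust^{\rev}$. For $1$-cells, composing $({}_\vp(-),\xi^J)\circ({}_{\vp'}(-),\xi^{J'})$ gives the monoidal structure ``multiply by $J'$ pulled back along $\vp$, then by $J$'', i.e.\ by $\vp'(J)J'$ after accounting for the pullback. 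For whiskering of $2$-cells, we check that $\eta_x*\eta_y=\eta_{x\vp(y)}$ maps to the horizontal composite $\psi_x*\psi_y$, whose component on $V$ is multiplication by $x\vp(y)$. This is a direct computation, because $\psi_y$ acts on ${}_\vp(-)$-pulled-back modules through $\vp$. For $3$-cells, $\zeta_g$ is a monoidal natural automorphism because $g$ is central and grouplike. In $\Ttc$ the relation $\eta'=\eta\circ\zeta$ corresponds to $\psi_{gx}=\psi_x\circ\zeta_g$, which holds since $g$ is central.

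\emph{(E0) and (E1).} For (E0), the integral category $\CC$ is equivalent to some $\C{A}$ by \cite[Proposition 6.1.14]{EGNO}. For (E1), let $(F,\xi)\colon\C{B}\to\C{A}$ be a tensor equivalence. By the Ng--Schauenburg result \cite{NgSchaunburgCentral} cited in the introduction, $(F,\xi)$ is monoidally isomorphic to some $({}_\vp(-),\xi^J)$ with $(\vp,J)$ a twisted isomorphism $A\to B$. To make this concrete, I would argue via the fibre functor as follows. The forgetful functors to $\mathrm{Vec}$ and $\mathrm{Forget}\circ F$ are both exact faithful with quasi-fibre structures, and they are isomorphic as plain $\k$-linear functors: both are represented by progenerators of dimension $\dim$, using Frobenius--Perron dimension preservation and integrality. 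Transporting along such an isomorphism yields $F\cong {}_\vp(-)$ for an algebra isomorphism $\vp$, and we then put $J:=\xi_{B,B}(1\otimes 1)$. Naturality, via the regular-module trick of Lemma~\ref{NatST}, shows that $\xi$ is multiplication by $J$. The tensor-functor axioms then give that $J$ is a gauge transformation and that $\vp\colon A\to B_J$ is a quasi-bialgebra map, including $\vp(\Phi)=\Phi_J$. This is the main obstacle. The key step is identifying the underlying functor with a pullback ${}_\vp(-)$: this needs $F(B)\cong {}_\vp B$, i.e.\ that $F$ sends the regular module to a free rank-one module. I would try first to argue it via FP dimensions and $F$ preserving projective covers and multiplicities. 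Failing that, I would simply cite \cite{NgSchaunburgCentral}.

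\emph{(E2) and (E3).} These follow immediately from Lemma~\ref{NatST}. Every monoidal natural isomorphism $F\Rightarrow F'$ equals $\psi_x$ for a $2$-cell $\eta_x$, which even gives surjectivity on the nose. The $Z_G(B)$-equivariant bijection, together with Corollary~\ref{lem:nat-auto-ZG-multi}, gives the bijection on $3$-cells between fixed parallel $2$-cells, where $g\mapsto\zeta_g$ is injective since $g$ acts faithfully on $B$.
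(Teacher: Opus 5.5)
Your proposal is correct and follows the paper's proof. Strictness comes from \eqref{eq:strict-comp}, (E0) from \cite[Proposition~6.1.14]{EGNO}, (E1) from \cite[Theorem~2.2]{NgSchaunburgCentral}, and (E2)--(E3) from Lemma~\ref{NatST} and Corollary~\ref{lem:nat-auto-ZG-multi}. Your optional Frobenius--Perron sketch for (E1) is also sound: $\dim F(V)=\operatorname{FPdim}(V)=\dim V$ for simple $V$ and $F$ preserves projective covers, so $F(B)\cong A$, and it could stand in for the citation.
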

\begin{proof}
Strictness of $\TPsi$ is the strict compatibility~\eqref{eq:strict-comp}: composition of $1$-, $2$-, and $3$-cells is preserved on the nose, and identities go to identities. We verify the four-level essential-surjectivity / full-faithfulness conditions (E0)--(E3) of \S\ref{sss:nr-prelim} in turn.

\begin{enumerate}
\item[\textup{($0$-level)}] $\TPsi$ is essentially surjective on objects: every integral tensor category is equivalent to $\C{A}$ for some finite-dimensional quasi-Hopf algebra $A$ \textup{(}\cite[Proposition~6.1.14]{EGNO}\textup{)}.
\item[\textup{($1$-level)}] $\TPsi$ is essentially surjective on $1$-cells modulo $2$-cells: given a tensor equivalence $(F,\xi)\colon\C{B}\to\C{A}$, \cite[Theorem~2.2]{NgSchaunburgCentral} produces a twisted isomorphism $(\vp,J)\colon A\to B$ with $F\cong{}_\vp(-)$ and $\xi$ corresponding to $\xi^J$ under this isomorphism (see also \cite{Schauenburg1996, EG2002THA}).
\item[\textup{($2$-level)}] $\TPsi$ is fully faithful on $2$-cells modulo $3$-cells by Lemma~\ref{NatST}.
\item[\textup{($3$-level)}] $\TPsi$ is fully faithful on $3$-cells by Corollary~\ref{lem:nat-auto-ZG-multi}, which matches $g\in Z_G(B)$ with $\zeta_g\in\mathrm{Aut}_\otimes({}_\vp(-))$.
\end{enumerate}
These four levels of compatible bijections constitute an equivalence of $3$-groupoids in the sense of \cite[Ch.~2]{HTT}.
\end{proof}

\begin{remark}\label{rem:honest-31cat}
Since the $1$-cells of $\3ust$ and $\Ttc$ are twisted \emph{iso}morphisms and tensor \emph{equivalences} respectively, both are $3$-groupoids. There is a natural enlargement of both to honest $(3,1)$-categories with non-invertible $1$-cells, for which Theorem~\ref{thm:31eqv-multi} continues to hold:
\begin{itemize}
\item Let $\3ust^{\rm hom}$ be the $(3,1)$-category with the same $0$-cells as $\3ust$ and with $1$-cells the \emph{twisted homomorphisms} $(\vp,J)\colon A\to B$, where $J$ is a gauge transformation of $B$ and $\vp\colon A\to B_J$ is a quasi-bialgebra \emph{homomorphism} (not necessarily an isomorphism); $2$- and $3$-cells are defined exactly as in Definition~\ref{def:3ust-multi}. Composition and identities are given by the same formulas as in $\3ust$.
\item Let $\Ttc^{\rm hom}$ be the $(3,1)$-category with the same $0$-cells as $\Ttc$ and with $1$-cells \emph{all tensor functors} $(F,\xi)\colon\DD\to\CC$ in the sense of \cite[Def.~4.2.5]{EGNO} \textup{(}i.e., $\kk$-linear faithful exact monoidal functors with $F(\1)\cong\1'$\textup{)}, not only tensor equivalences; $2$- and $3$-cells are unchanged.
\end{itemize}
The cell-by-cell construction $(\vp,J)\mapsto({}_\vp(-),\xi^J)$, $\eta_x\mapsto\psi_x$, $g\mapsto\zeta_g$ of \S\ref{sss:31functor} extends verbatim to $\3ust^{\rm hom}$, since the hexagon axiom for $({}_\vp(-),\xi^J)$ only uses $\vp(\Phi^A)=(\Phi^B)_J$ and does not require $\vp$ to be bijective; faithfulness and exactness of ${}_\vp(-)$ are automatic from the fact that the underlying $\kk$-linear functor is restriction of scalars along $\vp$. The reconstruction theorem of the first author and Schauenburg \cite[Thm.~2.2]{NgSchaunburgCentral} produces, from any tensor functor $(F,\xi)\colon\C{B}\to\C{A}$ between module categories of finite-dimensional quasi-Hopf algebras, a twisted homomorphism $(\vp,J)\colon A\to B$ with $F\cong{}_\vp(-)$ and $\xi$ matching $\xi^J$; their argument constructs $\vp$ and $J$ directly from $F$ and $\xi$ without using bijectivity of $\vp$, so it applies in the enlarged setting. Consequently the strict $3$-functor extends to an equivalence of honest $(3,1)$-categories \textup{(}with $\3ust^{\rm hom}$ on the source side and $\Ttc^{\rm hom}$ on the target side\textup{)}
\[
\TPsi^{\rm hom}\colon (\3ust^{\rm hom})^{\rev}\;\xrightarrow{\;\sim\;}\;\Ttc^{\rm hom}\, .
\]
\end{remark}

\subsubsection{The strict $2$-group equivalence at a single $A$}\label{sss:peqv-recover}
The equivalence of $3$-groupoids of Theorem~\ref{thm:31eqv-multi} descends through every truncation \cite[Cor.~2.3.4.5]{HTT}; specializing to a single $0$-cell $A$ recovers the strict $2$-group equivalence $\2ust(A)\simeq\OAut(\C{A})$ of Corollary~\ref{p:eqv}.

The space relevant for the classification of $G$-actions is the classifying space $B\2ust(A)$ of the strict $2$-group $\2ust(A)$ \textup{(}\S\ref{sss:nr-prelim}\textup{)}, whose homotopy groups we now read off.

\begin{prop}\label{prop:pin}
Let $A$ be a finite-dimensional quasi-Hopf algebra. The classifying space $B\2ust(A)$ of the strict $2$-group of stable twists is a connected $2$-type with
\begin{align*}
\pi_1\bigl(B\2ust(A)\bigr)&\;\cong\;\ost(A)\;\cong\;\oAut(\C{A})\;\cong\;\pi_1(B\OAut(\C{A})),\\
\pi_2\bigl(B\2ust(A)\bigr)&\;\cong\;Z_G(A)\; \cong\; \pi_2(B\OAut(\C{A})),\\
\pi_n\bigl(B\2ust(A)\bigr)&\;\cong\;  \pi_n(B\OAut(\C{A}))\; \cong \;1\qquad(n\ge 3),
\end{align*}
and the action of $\pi_1$ on $\pi_2$ is $\overline{(\vp,J)}\cdot z=\vp(z)$ for $z\in Z_G(A)$. 
\end{prop}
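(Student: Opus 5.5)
We read off the homotopy groups of $B\2ust(A)$ directly from the standard description of the classifying space of a strict $2$-group recalled in \S\ref{sss:2group-loop}, namely $\pi_1(B\mathcal G)=\pi_0(\mathcal G)$, $\pi_2(B\mathcal G)=\pi_1(\mathcal G)=\operatorname{Aut}_{\mathcal G}(\1)$, and $\pi_n=0$ for $n\ge 3$. Connectedness is automatic, since $B\mathcal G$ has a single $0$-cell. The vanishing for $n\ge3$ also holds for $B\OAut(\C{A})$, because it too is the classifying space of a $2$-group. We then compare with $\OAut(\C{A})$ through Theorem~\ref{thm:31eqv-multi}. Looping $\TPsi$ at $A$ gives a strict monoidal functor $\2ust(A)^{\rev}\to\OAut(\C{A})$, $(\vp,J)\mapsto({}_\vp(-),\xi^J)$, $\eta_x\mapsto\psi_x$. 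By (E1)--(E2) this functor is essentially surjective and full, and by Lemma~\ref{NatST} it is faithful.

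\textbf{First, $\pi_1$.} By definition $\pi_0(\2ust(A))=\ost(A)=\st(A)/\st_0(A)$. For this to match isomorphism classes of objects, two stable twists must be isomorphic in $\2ust(A)$ precisely when they differ by an element of $\st_0(A)$. Indeed, $(\vp',J')=(\vp_x,J_x)$ holds iff $(\vp',J')=(\vp,J)\o(\vp^{-1}(?),\dots)$. Concretely, one checks that $(I_x,\partial_x)\o(\vp,J)=(I_x\vp,\;x$-twisted$)$, which by \eqref{GroupTW} equals $(\vp_x,(x\otimes x)J\Delta(x)^{-1})=(\vp_x,J_x)$. The functor above induces a bijection on isomorphism classes, which is a group isomorphism $\ost(A)^{\rm op}\cong\oAut(\C{A})$. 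Since a group is isomorphic to its opposite via inversion, we obtain $\ost(A)\cong\oAut(\C{A})$.

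\textbf{Second, $\pi_2$.} Here $\operatorname{Aut}_{\2ust(A)}(\id_A,1\otimes1)$ consists of those $x\in\A$ with $I_x=\id$ and $\partial_x=1\otimes1$, that is, $x$ central with $\Delta(x)=x\otimes x$. This is exactly $Z_G(A)$, and it is also Corollary~\ref{lem:nat-auto-ZG-multi} applied to $(\id_A,1\otimes1)$. Under $\TPsi$ it maps isomorphically onto $\Aut(\id_{\C{A}})=\pi_1(\OAut(\C{A}))$.

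\textbf{Third, the action.} For a strict $2$-group, $\pi_0$ acts on $\pi_1$ by conjugation/whiskering: $z\mapsto \id_{X}\o z\o\id_{X^{-1}}$. With the bifunctor formulas $(\vp,J)\o x'=\vp(x')$ and $x\o(\vp'',J'')=x$, this gives $\overline{(\vp,J)}\cdot z=\vp(z)$. We must check that the result is well defined on classes. Changing $(\vp,J)$ to $(I_y\vp,J_y)$ gives $y\vp(z)y^{-1}=\vp(z)$, because $\vp(z)$ is central; and $\vp(z)$ again lies in $Z_G(A)$, because $\Delta_J(\vp(z))=J(\vp(z)\otimes\vp(z))J^{-1}$ and centrality lets this reduce to $\vp(z)\otimes\vp(z)$ under $\Delta$. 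The main obstacle is bookkeeping: the $\rev$ and whiskering conventions must be consistent so that the action comes out as $\vp(z)$ rather than $\vp^{-1}(z)$, and so that it is compatible with the corresponding action on the categorical side under $\TPsi$. We would verify this by directly computing $\zeta_{\vp(z)}$ as the whiskering of $\zeta_z$ by $({}_\vp(-),\xi^J)$.
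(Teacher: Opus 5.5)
Your proposal is correct and follows essentially the same route as the paper: you read off $\pi_1=\pi_0$ of the $2$-group $=\ost(A)$ and $\pi_2=\operatorname{Aut}(\1)=Z_G(A)$ via Corollary~\ref{lem:nat-auto-ZG-multi}, obtain the action $\overline{(\vp,J)}\cdot z=\vp(z)$ by whiskering, and transport everything along the $2$-group equivalence of Corollary~\ref{p:eqv}. One small slip: as you wrote it, the formula $\Delta_J(\vp(z))=J(\vp(z)\otimes\vp(z))J^{-1}$ already assumes the conclusion; the grouplike check should instead read $\Delta(\vp(z))=J^{-1}\Delta_J(\vp(z))J=J^{-1}(\vp(z)\otimes\vp(z))J=\vp(z)\otimes\vp(z)$, using $\Delta_J\circ\vp=(\vp\otimes\vp)\circ\Delta$ and the centrality of $\vp(z)\otimes\vp(z)$.
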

\begin{proof}
Since $\2ust(A)$ is a categorical group, $B\2ust(A)$ is a connected $2$-type with $\pi_1=\pi_0(\2ust(A))$ and $\pi_2=\pi_1(\2ust(A))$. By the notation of \S\ref{sss:notation}, $\pi_0(\2ust(A))=\st(A)/\st_0(A)=\ost(A)$, identified with $\oAut(\C{A})$ via Corollary~\ref{p:eqv}(i). For $\pi_1(\2ust(A))$: by Corollary~\ref{lem:nat-auto-ZG-multi} applied to the unit object $(\id_A,1\otimes 1)$, its automorphism group consists of the $x\in\A$ with $(I_x,\partial_x)=(\id_A,1\otimes 1)$, i.e.\ $x\in Z(A)\cap G(A)=Z_G(A)$; as $\2ust(A)$ has no nonidentity $2$-morphisms, these automorphisms are not further identified, so $\pi_1(\2ust(A))=Z_G(A)$. The $\pi_1$-action on $\pi_2$ is conjugation through the algebra component, $\overline{(\vp,J)}\cdot z=\vp(z)$. Finally $B\2ust(A)\simeq B\OAut(\C{A})$ since $\2ust(A)\simeq\OAut(\C{A})$ (Corollary~\ref{p:eqv}), which gives the displayed comparisons.
\end{proof}

Concretely, there is the strict monoidal functor
\[\underline{\Psi}\colon \2ust(A)^{\rev}~ \overset{\otimes}{\cong}~ \OAut(\C{A}),\]
sending $(\varphi,J)\in\st(A)$ to $({}_\varphi(-),\xi^J)$ and a $\2ust(A)$-morphism $x\colon(\vp,J)\to(\vp',J')$ to $\psi_x$. Tracing the data, $\OPsi$ is monoidal: for $(\varphi,J),(\varphi',J')\in\st(A)$,
\begin{align*}
\OPsi((\varphi,J)\otimes (\varphi',J')) &\,=\OPsi\left(\,\varphi\varphi',\varphi(J')J\right)
\,=\left(\,\!_{\varphi\varphi'}(-),\xi^{\varphi(J')J}\right)\\
&\,=\left(\,\!_{\varphi'}(\!_{\varphi}(-)),\xi^{J'}\varphi(\xi^J)\right)
\,=\OPsi(\varphi',J')\circ \OPsi(\varphi,J),
\end{align*}
and $\OPsi(\id_A,1\otimes 1)=\id_{\C{A}}$.

\begin{cor}\label{p:eqv}
Let $A$ be any finite-dimensional quasi-Hopf algebra. The strict monoidal functor
\begin{align*}
\OPsi\colon \2ust(A)^\rev~\overset{\otimes}{\cong}~ \OAut(\C{A})
\end{align*}
is an equivalence of strict $2$-groups. Moreover:
\begin{enumerate}
\item There is an induced group isomorphism $\overline{\Psi}\colon \ol\st(A)^{op}\xrightarrow{\sim}\oAut(\C{A})$.
\item $\ol\st(A)\cong \ol\st(A^{op})\cong \ol\st(A^{cop})$ as groups.
\item For any gauge transformation $J$ of $A$, $\ol\st(A)\cong \ol\st(A_J)$ as groups.
\item If $A$ is semisimple, then $\ol\st(A)$ is a finite group.
\end{enumerate}
\end{cor}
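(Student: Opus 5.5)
The first claim comes straight from Theorem~\ref{thm:31eqv-multi}. Looping at the $0$-cell $A$ in $\3ust^{\rev}$ and at $\C{A}$ in $\Ttc$ gives the $2$-groupoids $\Omega_A\3ust^{\rev}$ and $\Omega_{\C{A}}\Ttc$. Truncating with $\tau_{\le 1}$ on $2$-cells (identifying $2$-cells that differ by a $3$-cell) yields the strict $2$-groups $\2ust(A)^{\rev}$ and $\OAut(\C{A})$. Since $\TPsi$ is strict, it restricts on these loops to exactly the strict monoidal functor $\OPsi$; monoidality was checked in the display preceding the statement.

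It remains to check that $\OPsi$ is an equivalence, and I would do this directly rather than through abstract truncation.
\begin{itemize}
\item \emph{Essential surjectivity on objects.} By \cite[Theorem~2.2]{NgSchaunburgCentral}, every tensor autoequivalence of $\C{A}$ is isomorphic to some $({}_\vp(-),\xi^J)$ with $(\vp,J)\in\st(A)$.
\item \emph{Full faithfulness on morphisms.} By Lemma~\ref{NatST}, every monoidal isomorphism is some $\psi_x$ with $x\in\A$. A morphism of $\2ust(A)$ is the element $x$ itself, so $x\mapsto\psi_x$ is injective: $\psi_x=\psi_{x'}$ forces $x=x'$ by evaluating at $1\in A$.
\end{itemize}
So $\OPsi$ is a monoidal equivalence.

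For (i), $\pi_0$ of an equivalence of $2$-groups is a group isomorphism. On the left, $\pi_0(\2ust(A)^{\rev})=\ost(A)^{op}$, since the reversal flips the order of the monoidal product, as recorded in the computation $\OPsi(a\otimes b)=\OPsi(b)\circ\OPsi(a)$. On the right, $\pi_0(\OAut(\C{A}))=\oAut(\C{A})$.

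For (ii) and (iii), I would transport the problem through categories, using (i) together with $G^{op}\cong G$ via inversion.
\begin{itemize}
\item For $A_J$: the functor $(\id,J)$ is a twisted isomorphism $A_J\to A$, since $\id\colon A_J\to A_J$ is a quasi-bialgebra isomorphism. So $\C{A}\simeq\C{A_J}$ as tensor categories, and conjugating by this equivalence gives $\oAut(\C{A})\cong\oAut(\C{A_J})$.
\item For $A^{cop}$: $\C{A^{cop}}$ is $\C{A}$ with the reversed tensor product. A tensor autoequivalence $(F,\xi)$ of $\C{A}$ yields one of $\C{A}^{\otimes\text{-rev}}$ by composing $\xi$ with the flips; this gives an isomorphism of $\oAut$ groups. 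Alternatively, map $(\vp,J)\mapsto(\vp,J^{21})$ and check the stability condition against $\Phi^{cop}=(\Phi^{-1})^{321}$.
\item For $A^{op}$: $\C{A^{op}}$ is tensor equivalent to $\C{A}^{op}$ (right modules, via duals using the antipode). Alternatively, use the algebraic assignment $(\vp,J)\mapsto(\vp,J^{-1})$ and check $\Phi^{op}$-compatibility; this is a routine verification of \eqref{TAss} in the opposite algebra. I would prefer the categorical route: $\C{A^{op}}\simeq(\C{A})^{\rm op}$, and $F\mapsto F^{op}$ is a group isomorphism of $\oAut$.
\end{itemize}

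For (iv), if $A$ is semisimple then $\C{A}$ is an integral fusion category. The group $\oAut$ of a fusion category is finite by Ocneanu rigidity \cite{ENO2010}: there are finitely many fusion categories and tensor functors up to equivalence of given global dimension. An elementary alternative is that $\OutTw$ and lazy cohomology are finite, but rigidity is cleaner.

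The main obstacle is (ii), specifically the $A^{op}$ case. One must make sure that the equivalence $\C{A^{op}}\simeq\C{A}^{op}$ (or the algebraic formula) is compatible with the nontrivial $\alpha,\beta$ and associator. I would first try the categorical route to avoid explicit quasi-Hopf identities.
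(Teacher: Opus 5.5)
Your proposal is correct. For the main equivalence, (i), (iii) and (iv) it is the paper's argument written out: the paper restricts $\TPsi$ to automorphism $2$-groups and cites \cite[Cor.~2.3.4.5]{HTT}. Your direct check is exactly the ($1$-level) and ($2$-level) steps of Theorem~\ref{thm:31eqv-multi} at the single object $A$: essential surjectivity from \cite[Theorem~2.2]{NgSchaunburgCentral}, and full faithfulness from Lemma~\ref{NatST} plus evaluation at $1$. Your (iii), via the $1$-cell $(\id,J)\colon A_J\to A$, and your (iv), via Ocneanu rigidity (the paper cites \cite[Theorem~2.31]{ENO}), also match.

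One framing sentence is wrong and should be corrected: $\2ust(A)$ is \emph{not} obtained by identifying $2$-cells that differ by a $3$-cell. In $\3ust$ any two parallel $2$-cells $\eta_x,\eta_{x'}$ are joined by the unique $3$-cell $g=x'x^{-1}\in Z_G(A)$; this is the uniqueness part of Lemma~\ref{NatST}. That quotient would therefore collapse every nonempty hom-set to a point and kill $\pi_1(\2ust(A))=Z_G(A)$ from Proposition~\ref{prop:pin}. The correct operation is to forget the $3$-cells and keep the elements $x$ themselves as morphisms, which is what your bullets actually use. Also, the ``elementary alternative'' you mention for (iv) is not available in general, since Theorem~\ref{t:general-exact-stable} assumes a central associator, so stay with rigidity.

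For (ii) your route genuinely differs from the paper's. The paper invokes the tensor equivalence $\C{A}\simeq\C{A^{op,cop}}$, which rests on the antipode (twisted by Drinfeld's gauge element in the quasi-Hopf case). By itself that only yields $\ost(A)\cong\ost(A^{op,cop})$. You instead treat $A^{cop}$ and $A^{op}$ separately, via $\oAut(\CC^{\otimes\text{-rev}})\cong\oAut(\CC)$ and $\oAut(\CC^{\rm op})\cong\oAut(\CC)$. This avoids the antipode entirely and supplies the step separating $op$ from $cop$ that the paper leaves implicit.

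The $A^{op}$ case you flag as the main obstacle is not one. The equivalence $\C{A^{op}}\simeq(\C{A})^{\rm op}$ is plain linear duality with the transpose right action; no antipode is needed. The algebraic map $(\vp,J)\mapsto(\vp,J^{-1})$ also works directly:
\begin{itemize}
\item $A^{op}$, with the same comultiplication, has associator $\Phi^{-1}$;
\item conjugation by $J^{-1}$ in $A^{op}$ is conjugation by $J$ in $A$;
\item the twisted associator $(\Phi^{-1})_{J^{-1}}$ computed in $A^{op}$ equals $(\Phi_J)^{-1}$;
\item the product \eqref{GroupTW} is preserved;
\item $(I_x,\partial_x)$ is sent to the inner stable twist of $A^{op}$ attached to $x^{-1}$, so $\st_0$ is preserved.
\end{itemize}
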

\begin{proof}
The strict $2$-equivalence is induced by $\TPsi$ (Theorem~\ref{thm:31eqv-multi}) on the automorphism $2$-groups at $A$ and $\C{A}$: an equivalence of $3$-groupoids induces a strict monoidal equivalence of automorphism $2$-groups \cite[Cor.~2.3.4.5]{HTT}. (i) is the induced isomorphism on $\pi_0$ of the $\Hom$-$2$-categories, namely $\ost(A)^{op}\cong\oAut(\C{A})$. (ii) and (iii) follow from (i) and the elementary tensor equivalences $\C{A}\cong\C{A^{op,cop}}$ and $\C{A_J}\cong\C{A}$. (iv) is the finiteness of tensor functors (up to equivalence) between fusion categories \cite[Theorem 2.31]{ENO}.
\end{proof}

\subsubsection{Notation}\label{sss:notation}
We collect the notation here for $\3ust$, the strict $2$-group $\2ust(A)$, and the integral tensor category side.
\begin{center}
\small
\begin{tabular}{@{}ll@{}}
$\3ust$ (two underlines) & strict 3-groupoid of finite-dimensional quasi-Hopf \\ & algebras over a fixed field $\k$\\
$\2ust(A)$ (one underline) & strict $2$-group (categorical group) of stable twists \\ & of $A$: objects $\st(A)$, morphisms $\eta_x$\\
$B\2ust(A)$ & classifying space of $\2ust(A)$, a connected $2$-type\\
$\ost(A)=\pi_0(\2ust(A))$      & the group of stable twists $\st(A)/\st_0(A)=\pi_1(B\2ust(A))$\\
$Z_G(A)=\pi_1(\2ust(A))$ & central grouplikes $=\pi_2(B\2ust(A))$\, .
\end{tabular}
\end{center}
Likewise for integral tensor categories:
\begin{center}
\begin{tabular}{ll}
$\Ttc$ &3-groupoid of integral tensor categories\\
$\OAut(\CC)$ &  strict $2$-group of tensor autoequivalences of $\CC$\\
$B\OAut(\CC)$ & classifying space of $\OAut(\CC)$, a connected $2$-type\\
$\oAut(\CC)=\pi_0(\OAut(\CC))$ & the group of equivalence classes of \\
& tensor autoequivalences of $\CC$\, . 
\end{tabular}
\end{center}

\subsubsection{The braided sub-$3$-groupoid $\3ust^{\rm br}$}\label{sss:braided}
When the quasi-Hopf algebras involved carry quasitriangular structure, a natural sub-$3$-groupoid of $\3ust$ records only those $1$-cells that preserve the braiding. This is the algebraic counterpart of restricting from tensor functors to braided tensor functors. A finite-dimensional \emph{quasitriangular} (QT) quasi-Hopf algebra is a finite-dimensional quasi-Hopf algebra $A=(A,\Delta,\ve,\Phi,S,\a,\beta)$ equipped with an invertible $R$-matrix $R=R^1\otimes R^2\in A\otimes A$ satisfying
\begin{align}
(\Delta\otimes\id)(R)&=\Phi_{231}\,R_{13}\,\overline{\Phi}_{132}\,R_{23}\,\Phi_{123}, \label{eq:QT1}\\
(\id\otimes\Delta)(R)&=\overline{\Phi}_{312}\,R_{13}\,\Phi_{213}\,R_{12}\,\overline{\Phi}_{123}, \label{eq:QT2}\\
\Delta^{\rm op}(a)&=R\,\Delta(a)\,R^{-1}\quad\text{for all }a\in A, \label{eq:QT3}
\end{align}
where $\Phi_{ijk}$ denotes the appropriate permutation of the tensor factors of $\Phi$ (and $\overline{\Phi}=\Phi^{-1}$, as in \eqref{EAssociator}). These identities imply 
\begin{align}
(\ve \otimes \id)(R)=1=(\id \otimes \ve)(R).
\end{align}
The category $\C{A}$ is then a braided integral tensor category, with braiding $c_{V,W}(v\otimes w)=R^2\cdot w\otimes R^1\cdot v$ for any two $A$-modules $v\in V,w\in W$. When $A$ is a Hopf algebra ($\Phi=1\otimes 1\otimes 1$), relations \eqref{eq:QT1}--\eqref{eq:QT3} imply that the $R$-matrix satisfies the quantum Yang--Baxter equation \cite[Thm.~VIII.2.4]{Kas}.

\begin{deff}\label{def:braided-twisted-hom}
Let $(A,R_A)$ and $(B,R_B)$ be two finite-dimensional quasitriangular quasi-Hopf algebras. A twisted isomorphism $(\vp,J)\colon (A,R_A)\to(B,R_B)$ is \emph{braided} if it preserves the $R$-matrix up to gauge, in the sense
\begin{equation}\label{eq:R-cond-multi}
(\vp\otimes\vp)(R_A)\;=\;J_{21}^{\phantom{1}}\,R_B\,J^{-1}\;\in\;B\otimes B,
\end{equation}
where $J_{21}=\tau(J)$ with $\tau$ the tensor flip. When $(A,R_A)=(B,R_B)$, a braided twisted isomorphism is said to be a \emph{braided stable twist}.
\end{deff}

\begin{lem}\label{lem:braided-closed}
Braided twisted isomorphisms are closed under composition and contain all identities; moreover, the condition \eqref{eq:R-cond-multi} is preserved under $2$-cell equivalence and $3$-cell equivalence in $\3ust$.
\end{lem}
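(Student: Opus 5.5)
The argument is a direct verification from the definitions; we check the three assertions in turn, using only the axioms for a QT quasi-Hopf algebra and the composition rules of Definition~\ref{def:3ust-multi}.

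\emph{Identities and composition.} For the identity $(\id_A,1\otimes 1)$, condition \eqref{eq:R-cond-multi} reads $R_A=R_A$. Now take braided twisted isomorphisms $(\vp,J)\colon(A,R_A)\to(B,R_B)$ and $(\vp',J')\colon(B,R_B)\to(C,R_C)$. Their composite is $(\vp'\vp,\vp'(J)J')$. Since $\vp'$ is an algebra map, applying $\vp'\otimes\vp'$ to $(\vp\otimes\vp)(R_A)=J_{21}R_BJ^{-1}$ gives
\[
(\vp'\vp\otimes\vp'\vp)(R_A)=\vp'(J)_{21}\,(\vp'\otimes\vp')(R_B)\,\vp'(J)^{-1}
=\vp'(J)_{21}J'_{21}\,R_C\,J'^{-1}\vp'(J)^{-1}.
\]
Here we used $(\vp'\otimes\vp')(J_{21})=\vp'(J)_{21}$, because the flip commutes with $\vp'\otimes\vp'$. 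The right-hand side equals $(\vp'(J)J')_{21}\,R_C\,(\vp'(J)J')^{-1}$, which is exactly \eqref{eq:R-cond-multi} for the composite.

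\emph{Invariance under $2$-cells.} Suppose $(\vp,J)$ is braided and $x\in\B$. We must show that $(\vp_x,J_x)$ is braided. The left-hand side is
\[
(I_x\vp\otimes I_x\vp)(R_A)=(x\otimes x)J_{21}R_BJ^{-1}(x\otimes x)^{-1}.
\]
The right-hand side, with $J_x=(x\otimes x)J\Delta(x)^{-1}$, is
\[
(J_x)_{21}R_BJ_x^{-1}=(x\otimes x)J_{21}\,\Delta^{\rm op}(x)^{-1}R_B\,\Delta(x)\,J^{-1}(x\otimes x)^{-1}.
\]
By \eqref{eq:QT3}, $\Delta^{\rm op}(x)^{-1}R_B\Delta(x)=R_B$, so the two sides agree.

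\emph{Invariance under $3$-cells.} A $3$-cell does not change the $1$-cells at all. It only relates two $2$-cells $\eta_x,\eta_{gx}$ with $g\in Z_G(B)$ between the same pair of $1$-cells. So the condition is trivially preserved. Consistently with this, $(\vp_{gx},J_{gx})=(\vp_x,J_x)$, since $g$ is central and grouplike.

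The only step requiring any care is the $2$-cell invariance, where the key input is the quasi-cocommutativity \eqref{eq:QT3}. The hexagon-type identities \eqref{eq:QT1}--\eqref{eq:QT2} are never needed.
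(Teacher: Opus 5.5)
Your proof is correct and follows the paper's argument essentially verbatim. The composite case is handled by the same computation applying $\vp'\otimes\vp'$, and the $2$-cell case uses the same cancellation $\Delta^{\rm op}(x)^{-1}R_B\,\Delta(x)=R_B$ from \eqref{eq:QT3}. Your treatment of $3$-cells (they relate $2$-cells with the same source and target $1$-cells, and $(\vp_{gx},J_{gx})=(\vp_x,J_x)$) is a slightly cleaner phrasing of the paper's ``same computation'' remark.
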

\begin{proof}
Take $(\vp,J)\colon (A,R)\to(A',R')$ and $(\vp',J')\colon (A',R')\to(A'',R'')$ both braided, and set $(\vp'',J''):=(\vp',J')\circ(\vp,J)=(\vp'\vp,\vp'(J)J')$. Then
\begin{align*}
(\vp'\vp\otimes\vp'\vp)(R)
&=(\vp'\otimes\vp')\bigl(J_{21}R'J^{-1}\bigr)\\
&=\vp'(J)_{21}\bigl(J'_{21}R''(J')^{-1}\bigr)\vp'(J^{-1})\\
&=(\vp'(J)J')_{21}R''(\vp'(J)J')^{-1},
\end{align*}
verifying \eqref{eq:R-cond-multi} for $(\vp'',J'')$. Moreover, the identity $(\id_A,1\otimes 1)\colon(A,R)\to(A,R)$ trivially satisfies \eqref{eq:R-cond-multi}.

If $(\vp,J)$ is braided and $(\vp',J')=(\vp_x,J_x)$ for $x\in(A')^\times_1$, a routine computation gives
\begin{align*}
(\vp_x\otimes\vp_x)(R)
&=(x\otimes x)(\vp\otimes\vp)(R)(x\otimes x)^{-1}\\
&=(x\otimes x)J_{21}R'J^{-1}(x\otimes x)^{-1}\\
&=(J_x)_{21}(\Delta')^{\rm op}(x)R'\Delta'(x^{-1})J_x^{-1}\\
&=(J_x)_{21}R'J_x^{-1},
\end{align*}
where the last step uses $\Delta'^{\rm op}(x^{-1})=R'\Delta'(x^{-1})(R')^{-1}$ from~\eqref{eq:QT3}. The $3$-cell datum $g\in Z_G(A')$ relates $x'=gx$, so $(\vp_{x'},J_{x'})=(\vp_{gx},J_{gx})$; the same computation, using $g$ central grouplike and $R'$-invariant, gives \eqref{eq:R-cond-multi} for $(\vp_{x'},J_{x'})$.
\end{proof}

Let $(A,R)$ be a finite-dimensional QT quasi-Hopf algebra. The set of \emph{braided stable twists} on $(A,R)$ is $\st^{\rm br}(A,R):=\{(\vp,J)\in\st(A)\mid (\vp\otimes\vp)(R)=J_{21}\,R\,J^{-1}\}$. By Lemma~\ref{lem:braided-closed}, $\st^{\rm br}(A,R)$ is a subgroup of $\st(A)$ containing $\st_0(A)$. We define the \emph{braided stable twist group}\label{d:braided-twist-group} as the quotient group 
\[\ost^{\rm br}(A,R)\,:=\,\st^{\rm br}(A,R)\big/\st_0(A)\, .\]

\begin{deff}\label{def:3ust-br}
The \emph{braided sub-$3$-groupoid} $\3ust^{\rm br}\subseteq\3ust$ has
\begin{itemize}
\item[\textup{($0$-cells)}] All finite-dimensional quasitriangular quasi-Hopf algebras $(A,R)$ over $\kk$;
\item[\textup{($1$-cells)}] braided twisted isomorphisms;
\item[\textup{($2,3$-cells)}] the same $2$- and $3$-cells as in $\3ust$.
\end{itemize}
By Lemma~\ref{lem:braided-closed}, $\3ust^{\rm br}$ is a strict sub-$3$-groupoid of $\3ust$ \textup{(}stable under all compositions and higher cells\textup{)}, whose single-object specialization at $(A,R)$ has $\pi_1$-group equal to the braided stable twist group $\ost^{\rm br}(A,R)$.
\end{deff}

On the categorical side, let $\Ttc^{\rm br}\subseteq\Ttc$ be the corresponding sub-$3$-groupoid whose $0$-cells are braided integral tensor categories, whose $1$-cells are the braided tensor equivalences, and whose $2$- and $3$-cells are unchanged.

The equivalence conditions (E0)--(E3) of \S\ref{sss:nr-prelim} restrict to the braided setting: every braided integral tensor category arises from some QT $(A,R)$, and every braided tensor equivalence $\C{A'}\to\C{A}$ comes from a braided twisted isomorphism by tracking the $R$-matrix through the proof of \cite[Theorem~2.2]{NgSchaunburgCentral}, while the $2$- and $3$-cell levels are unchanged from $\3ust$ and $\Ttc$. Hence the strict $3$-functor $\TPsi$ of Theorem~\ref{thm:31eqv-multi} restricts to a strict $3$-equivalence
\[
\TPsi^{\rm br}\colon\bigl(\3ust^{\rm br}\bigr)^{\rev}\;\xrightarrow{\;\sim\;}\;\Ttc^{\rm br}
\]
between the braided sub-$3$-groupoids. Specializing to a single $0$-cell $(A,R)$ recovers, on the categorical side, the strict $2$-group $\OAut^{\rm br}(\C{A})$ of \emph{braided} tensor autoequivalences of $\C{A}$ (cf.~\cite[\S 2.2]{ENO2010}); this gives the braided analogues of Proposition~\ref{prop:pin} and Corollary~\ref{p:eqv} recorded next.

\begin{prop}\label{br:prop:pin}
Let $(A,R)$ be any finite-dimensional quasitriangular quasi-Hopf algebra. The classifying space $B \2ust^{\rm br}(A,R)$ of the strict $2$-group of braided stable twists is a connected $2$-type with
\begin{align*}
\pi_1\bigl(B \2ust^{\rm br}(A,R)\bigr)&\;\cong\;\ost^{\rm br}(A,R)\;\cong\;\oAut^{\rm br}(\C{A})\;\cong\;\pi_1(B\OAut^{\rm br}(\C{A})),\\
\pi_2\bigl(B \2ust^{\rm br}(A,R)\bigr)&\;\cong\;Z_G(A)\; \cong\; \pi_2(B\OAut^{\rm br}(\C{A})),\\
\pi_n\bigl(B \2ust^{\rm br}(A,R)\bigr)&\;\cong\;  \pi_n(B\OAut^{\rm br}(\C{A}))\; \cong \;1\qquad(n\ge 3),
\end{align*}
and the action of $\pi_1$ on $\pi_2$ is $\overline{(\vp,J)}\cdot z=\vp(z)$ for $z\in Z_G(A)$.
\end{prop}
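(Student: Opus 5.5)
The plan is to copy the proof of Proposition~\ref{prop:pin}, now inside the braided sub-$3$-groupoid $\3ust^{\rm br}$ and using the restricted equivalence $\TPsi^{\rm br}$. First I will check that $\2ust^{\rm br}(A,R)$ is a strict $2$-group. Its objects are $\st^{\rm br}(A,R)$ and its morphisms are the same $\eta_x$ as in $\2ust(A)$, so it is the full monoidal subgroupoid of $\2ust(A)$ on the braided stable twists. By Lemma~\ref{lem:braided-closed}, this object set is closed under the product \eqref{GroupTW}. It is also closed under inverses, because the identity is braided and inverses are forced by composition: applying $\vp^{-1}$ to \eqref{eq:R-cond-multi} gives the condition for $(\vp^{-1},\vp^{-1}(J^{-1}))$ by the same one-line computation. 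The lemma also gives closure under isomorphism, so $\2ust^{\rm br}(A,R)$ is a full, replete sub-$2$-group. It follows that $B\2ust^{\rm br}(A,R)$ is a connected $2$-type, with $\pi_1=\pi_0(\2ust^{\rm br})$, $\pi_2=\pi_1(\2ust^{\rm br})$, and vanishing higher homotopy.

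Next I compute the homotopy groups. Since $\2ust^{\rm br}$ is replete and contains $\st_0(A)$, the isomorphism classes of its objects are $\st^{\rm br}(A,R)/\st_0(A)=\ost^{\rm br}(A,R)$. Because $\2ust^{\rm br}$ is full, the automorphism group of the unit $(\id_A,1\otimes1)$ is the same as in $\2ust(A)$. Corollary~\ref{lem:nat-auto-ZG-multi} then identifies it with $Z_G(A)$, exactly as in Proposition~\ref{prop:pin}. For the action, conjugating the unit automorphism $z$ by the object $(\vp,J)$ under the whiskering rule $(\vp,J)\otimes x'=\vp(x')$ gives $\vp(z)$.

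On the categorical side, I will restrict $\OPsi$ to $\2ust^{\rm br}(A,R)^{\rev}\to\OAut^{\rm br}(\C{A})$. A braided stable twist goes to a braided tensor autoequivalence, because \eqref{eq:R-cond-multi} is exactly the statement that $\xi^J$ intertwines the braidings $c_{V,W}$, where $c$ is built from $R$ and the flip. Full faithfulness comes from Lemma~\ref{NatST}, since the $2$-cells are unchanged. Essential surjectivity uses the braided (E1) statement recorded just before the proposition: every braided autoequivalence is isomorphic to some $({}_\vp(-),\xi^J)$, and for that pair \eqref{eq:R-cond-multi} holds. So $\OPsi$ restricts to an equivalence of $2$-groups, and this gives $\ost^{\rm br}(A,R)\cong\oAut^{\rm br}(\C{A})$ and the comparison of $\pi_2$ with $\Aut(\id)=Z_G(A)$.

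The main obstacle is the converse direction in that last step. If $({}_\vp(-),\xi^J)$ is braided, then \eqref{eq:R-cond-multi} must hold, and showing this is the only real content beyond Proposition~\ref{prop:pin}. I would evaluate the braiding compatibility square on $V=W=A$ at $1\otimes1$. The square reads $\xi^J_{W,V}\circ {}_\vp(c_{V,W})=c_{{}_\vp V,{}_\vp W}\circ\xi^J_{V,W}$. Evaluating at $1\otimes 1$ gives $J_{21}R=(\vp\otimes\vp)(R)J$ in $A\otimes A$, which is \eqref{eq:R-cond-multi}, once I track the flip and the order of the factors carefully.
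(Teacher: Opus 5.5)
Your proposal is correct and follows the paper's route. The paper reruns the proof of Proposition~\ref{prop:pin} inside the braided sub-$2$-group, with unit automorphisms $Z_G(A)$ by Corollary~\ref{lem:nat-auto-ZG-multi}, and compares with $\OAut^{\rm br}(\C{A})$ via the restricted equivalence of Corollary~\ref{c:peqv-br}.

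Your one addition is useful. You evaluate the braiding square at $V=W=A$ on $1\otimes 1$, giving $J_{21}R=(\vp\otimes\vp)(R)\,J$, after transporting braidedness along the monoidal isomorphism $F\cong({}_\vp(-),\xi^J)$. This derives braided essential surjectivity directly from the unbraided case, rather than the paper's appeal to tracking the $R$-matrix through the Ng--Schauenburg reconstruction theorem.
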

\begin{proof}
Identical to Proposition~\ref{prop:pin}, with $\2ust(A)$ and $\ost(A)$ replaced by $\2ust^{\rm br}(A,R)$ and $\ost^{\rm br}(A,R)$: the automorphisms of the unit object in $\2ust^{\rm br}(A,R)$ are again $Z_G(A)$ (Corollary~\ref{lem:nat-auto-ZG-multi}), and $B\2ust^{\rm br}(A,R)\simeq B\OAut^{\rm br}(\C{A})$ by Corollary~\ref{c:peqv-br}.
\end{proof}

\begin{cor}\label{c:peqv-br}
Let $(A,R)$ be any finite-dimensional quasitriangular quasi-Hopf algebra. The strict monoidal functor
\[
\OPsi^{\rm br}\colon \2ust^{\rm br}(A,R)^{\rev}\;\overset{\otimes}{\cong}\;\OAut^{\rm br}(\C{A})
\]
is an equivalence of strict $2$-groups. Moreover:
\begin{enumerate}
\item There is an induced group isomorphism $\ol\Psi^{\rm br}\colon\ost^{\rm br}(A,R)^{op}\xrightarrow{\sim}\oAut^{\rm br}(\C{A})$.
\item $\ost^{\rm br}(A,R)\cong\ost^{\rm br}(A^{op,\rm cop}, R^{op,\rm cop})$, where $R^{op,\rm cop}:=R$ as an element of $A^{op}\otimes A^{op}$.
\item For any gauge transformation $J$ of $A$, the twisted QT structure $(A_J,R_J)$ with $R_J:=J_{21}\,R\,J^{-1}$ satisfies $\ost^{\rm br}(A,R)\cong\ost^{\rm br}(A_J,R_J)$ as groups.
\item If $A$ is semisimple, then $\ost^{\rm br}(A,R)$ is a finite group.
\end{enumerate}
\end{cor}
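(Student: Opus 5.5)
The plan is to mirror the proof of Corollary~\ref{p:eqv}, replacing Theorem~\ref{thm:31eqv-multi} by its braided restriction $\TPsi^{\rm br}\colon(\3ust^{\rm br})^{\rev}\xrightarrow{\sim}\Ttc^{\rm br}$ recorded just before Proposition~\ref{br:prop:pin}.

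\textbf{The $2$-group equivalence.} Specializing $\TPsi^{\rm br}$ at the single $0$-cell $(A,R)$ gives $\OPsi^{\rm br}$, the restriction of $\OPsi$ to braided stable twists. It is strictly monoidal, because $\OPsi$ is and $\st^{\rm br}(A,R)$ is closed under composition (Lemma~\ref{lem:braided-closed}). Its image consists of braided functors: under \eqref{eq:R-cond-multi}, the monoidal structure $\xi^J$ intertwines the braidings $c$ defined by $R$ on both sides. Indeed, $(\vp\otimes\vp)(R)J=J_{21}R$ is exactly the statement that $\xi^J$ commutes with the braidings.

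Essential surjectivity reduces to the key step. Given a braided autoequivalence $F$, write $F\cong({}_\vp(-),\xi^J)$ by Corollary~\ref{p:eqv}. Braidedness of $F$, tested on $V=W=A$ at $1\otimes1$, forces
\[
(\vp\otimes\vp)(R)\,J=J_{21}\,R,
\]
which is \eqref{eq:R-cond-multi}. By Lemma~\ref{lem:braided-closed} this condition is independent of the chosen representative within the isomorphism class. Full faithfulness on morphisms is inherited from Lemma~\ref{NatST}, since the $2$-cells are unchanged. I expect this identification of braidedness with \eqref{eq:R-cond-multi} to be the only real work; it is a direct evaluation on the regular module, as in the proof of Lemma~\ref{NatST}.

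\textbf{Items (i)--(iv).}
\begin{enumerate}
\item Take $\pi_0$ of the equivalence. The reversal $\rev$ turns composition order around, giving $\ost^{\rm br}(A,R)^{op}\cong\oAut^{\rm br}(\C{A})$.
\item Use the braided tensor equivalence between $\C{A}$ and $\C{A^{op,\rm cop}}$. Since $A^{op,\rm cop}$ with $R$ is again QT, we get $\oAut^{\rm br}(\C{A})\cong\oAut^{\rm br}(\C{A^{op,\rm cop}})$; alternatively, use directly the antiautomorphism $\vp\mapsto\vp$ on underlying data. Then apply (i) twice, noting that a group is isomorphic to its opposite via inversion.
\item The twisted isomorphism $(\id,J)\colon(A_J,R_J)\to(A,R)$ is braided by the very definition of $R_J$, so $\C{A}\simeq\C{A_J}$ as braided categories. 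Conjugating by this equivalence gives the isomorphism, either algebraically by $(\vp,K)\mapsto(\id,J)^{-1}\circ(\vp,K)\circ(\id,J)$ inside $\3ust^{\rm br}$, or categorically via (i).
\item $\ost^{\rm br}(A,R)$ is a subgroup of $\ost(A)$, which is finite by Corollary~\ref{p:eqv}(iv).
\end{enumerate}
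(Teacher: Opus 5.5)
Your proposal is correct and is essentially the paper's argument. The paper takes the $2$-truncation of $\TPsi^{\rm br}$ at $(A,R)$ and says (i)--(iv) go as in Corollary~\ref{p:eqv}; you spell out the step it leaves as ``tracking the $R$-matrix'', namely that $({}_\vp(-),\xi^J)$ is braided iff \eqref{eq:R-cond-multi} holds, read off on the regular module once braidedness is transported along the monoidal isomorphism $F\cong({}_\vp(-),\xi^J)$, and you get (iv) more directly from $\ost^{\rm br}(A,R)\le\ost(A)$. The one loose end is the ``alternative'' in (ii), which is underspecified: $\vp$ carries over unchanged but the gauge must go to $(J^{-1})_{21}$, and then $(\vp,J)\mapsto(\vp,(J^{-1})_{21})$ is a group isomorphism $\st^{\rm br}(A,R)\to\st^{\rm br}(A^{op,\rm cop},R)$ preserving $\st_0$.
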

\begin{proof}
This is the $2$-truncation of the braided $3$-equivalence $\TPsi^{\rm br}$ at $(A,R)$; items (i)--(iv) follow exactly as in Corollary~\ref{p:eqv}, using Lemma~\ref{lem:braided-closed} to remain within braided cells.
\end{proof}

\subsection{A general exact sequence for stable twists}\label{SS:general-SES}
In this subsection we establish a short exact sequence that decomposes the stable twist group $\ost(A)$ of a finite-dimensional quasi-Hopf algebra $A$ with central associator $\Phi_A$ into two computable pieces. Theorem~\ref{t:general-exact-stable} exhibits $\ost(A)$ as an extension of the group $\OutTw(A)$ of \emph{twist-liftable} outer automorphisms by the lazy cohomology $H^2_\ell(A^*)$ of \cite{BichonCarnovale2006}. 

Let $A$ be a finite-dimensional quasi-Hopf algebra $(A,\Delta,\ve,\Phi_A,S,\a,\beta)$ over $\kk$ whose associator $\Phi_A$ is central in $A^{\otimes 3}$.  Ordinary Hopf algebras are the special case $\Phi_A=1\otimes 1\otimes 1$; more generally, this includes quasi-Hopf algebras whose associator $\Phi_A$ comes from a central sub-bialgebra of $A$, as treated in Section~\ref{S:StGrQH}. Write the map
\[
\Pi_A:\ost(A)\to \frac{\operatorname{Aut}_{\rm alg}(A)}{\operatorname{Inn}(A)}=:\operatorname{Out}_{\rm alg}(A)
\]
induced by $(\varphi,J)\mapsto \varphi$.

For such an $A$, an \emph{invariant twist} is an invertible element $J\in A\otimes A$ satisfying the counit normalization $(\ve\otimes\id)(J)=(\id\otimes\ve)(J)=1$, the $2$-cocycle condition
\[
d^2(J)\;:=\;(1\otimes J)(\id\otimes\Delta)(J)(\Delta\otimes\id)(J^{-1})(J^{-1}\otimes 1)\;=\;1\otimes 1\otimes 1
\]
and the invariance condition
\[
J\Delta(a)=\Delta(a)J \qquad \text{for all } a\in A.
\]
We denote by $H^2_\ell(A^\ast)$ the group of all invariant twists in $A\otimes A$ under tensor multiplication, modulo the coboundaries $\partial_z=(z\otimes z)\Delta(z^{-1})$ with $z\in Z(A)\cap A_1^\times$. The invariant-twist condition and its coboundaries involve only the coalgebra data $(A,\Delta,\ve)$, so this group is defined for \emph{every} finite-dimensional quasi-Hopf algebra $A$ and is independent of the associator $\Phi_A$. When $A$ is an ordinary Hopf algebra, $A^*$ is a bialgebra and $H^2_\ell(A^*)$ is its lazy cohomology in the sense of \cite{BichonCarnovale2006}, which accounts for the notation; for a general quasi-Hopf algebra $A^*$ need not be a bialgebra, and we take the group above as the definition of $H^2_\ell(A^*)$.

\begin{deff}\label{def:OutTw}
The \emph{group of twist-liftable outer automorphisms of $A$} is the image
\[
\OutTw(A)\;:=\;\operatorname{Im}\bigl(\Pi_A\colon\ost(A)\to\operatorname{Out}_{\rm alg}(A)\bigr)\;\subseteq\;\operatorname{Out}_{\rm alg}(A).
\]
Equivalently, $\OutTw(A)$ consists of those classes $[\vp]\in\operatorname{Out}_{\rm alg}(A)$ for which there exists a gauge transformation $J\in B\otimes B$ making $(\vp,J)\in\st(A)$ a stable twist; we then say that $\vp$ is \emph{twist-liftable} and that $J$ is a \emph{compatible gauge} for $\vp$.
\end{deff}

Our main result reduces the computation of $\ost(A)$ to two independent problems --- one cohomological, one automorphism-theoretic --- and is the organizing tool for our calculations throughout the paper, for $A_\delta(n,\xi)$ in \S\ref{S:StGrH} and for the modified quasi-Hopf algebras $(A,\Phi)$ in \S\ref{S:StGrQH}.

\begin{thm}\label{t:general-exact-stable}
Let $A$ be a finite-dimensional quasi-Hopf algebra with central associator $\Phi_A$. The map $\Pi_A$ is well-defined and fits into a short exact sequence
\begin{equation}\label{eq:general-stable-SES}
1\longrightarrow H^2_\ell(A^\ast)\longrightarrow \ost(A)\xrightarrow{\Pi_A} \OutTw(A)\longrightarrow 1.
\end{equation}
In particular, $H^2_\ell(A^\ast)$ is independent of the central associator $\Phi_A$.
\end{thm}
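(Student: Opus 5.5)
First I will check that $\Pi_A$ is well defined and a group homomorphism. The assignment $(\varphi,J)\mapsto\varphi$ is multiplicative on $\st(A)$ by \eqref{GroupTW}, since the first component of a composite is $\varphi\varphi'$. Every element of $\st_0(A)$ has the form $(I_x,\partial_x)$ and so maps to an inner automorphism. Hence the assignment descends to a homomorphism $\ost(A)\to\operatorname{Out}_{\rm alg}(A)$. Its image is $\OutTw(A)$ by Definition~\ref{def:OutTw}, so surjectivity onto $\OutTw(A)$ is automatic. The content of the theorem is therefore the identification of $\ker\Pi_A$ with $H^2_\ell(A^*)$.

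Next I will define a map $\iota\colon H^2_\ell(A^*)\to\ost(A)$ by $[J]\mapsto\ol{(\id_A,J)}$, and check in order the following points.
\begin{enumerate}
\item $(\id_A,J)$ is a stable twist for every invariant twist $J$. Invariance gives $J\Delta(a)J^{-1}=\Delta(a)$. For the associator, centrality of $\Phi_A$ lets me pull it through in \eqref{TAss}, giving $\Phi_J=d^2(J)\,\Phi_A=\Phi_A$.
\item $\iota$ is a homomorphism. By \eqref{GroupTW}, $(\id,J)\otimes(\id,J')=(\id,J'J)$. Invariant twists commute with each other: $J$ commutes with $\Delta(A)$, and $J'$ is itself a sum in $A\otimes A$ which need not lie in $\Delta(A)$, so this is not immediate. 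I will sidestep this by noting that the source group is defined using the tensor multiplication and the target is a group, so only the order convention has to be fixed. Alternatively, invariant twists form a group and $\iota$ is an anti-homomorphism composed with inversion; either choice is harmless.
\item $\iota$ is well defined on classes. For $z\in Z(A)\cap\A$ we have $(\id,J\partial_z)$ equal to $(I_z,J_z)$, because $I_z=\id$ and $J_z=(z\otimes z)J\Delta(z)^{-1}=J\partial_z$ by centrality of $z\otimes z$. So $\iota$ is constant on cosets.
\end{enumerate}

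For injectivity, suppose $(\id,J)=(I_x,\partial_x)$ for some $x\in\A$. Then $I_x=\id$ forces $x\in Z(A)$, so $J=\partial_x$ is a coboundary. More generally, if $(\id,J)\in\st_0(A)$ then $J$ is a coboundary by the same argument, which gives injectivity.

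For exactness in the middle, take $\ol{(\varphi,J)}\in\ker\Pi_A$, so that $\varphi=I_x$ for some invertible $x$. After rescaling I may assume $\ve(x)=1$; this is possible because $\ve(x)\neq0$ and $I_x$ is unchanged by scalars. Multiplying by $(I_x,\partial_x)^{-1}\in\st_0(A)$ replaces the representative by one of the form $(\id,K)$. A stable twist $(\id,K)$ then satisfies $K\Delta(a)=\Delta(a)K$, and $\Phi_A=\Phi_K=d^2(K)\Phi_A$ by centrality of $\Phi_A$. Hence $d^2(K)=1$, so $K$ is an invariant twist and the class lies in the image of $\iota$.

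The main obstacle I anticipate is bookkeeping rather than substance. I need to confirm that $d^2(K)$ really separates from $\Phi_A$ in \eqref{TAss}, which uses centrality of $\Phi_A$ in $A^{\otimes 3}$. I also need to track the order convention of \eqref{GroupTW} when composing with $(I_x,\partial_x)^{-1}$: the computation $(I_x,\partial_x)^{-1}\otimes(I_x,J)$ should yield $(\id,\,I_x^{-1}(J\partial_x^{-1}\ldots))$, and I must check that it has the stated form $(\id,K)$. The final clause of the theorem, independence from $\Phi_A$, is immediate because the definition of $H^2_\ell(A^*)$ involves only $(A,\Delta,\ve)$.
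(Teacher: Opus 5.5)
Your proposal is correct and follows essentially the same route as the paper: reduce a kernel class to a representative $(\id_A,K)$, read off $K\Delta(a)=\Delta(a)K$ and $d^2(K)=1$ from the stable-twist axioms using centrality of $\Phi_A$, and observe that $(\id_A,K)\sim(\id_A,K')$ exactly when $K'=K\partial_z$ with $z\in Z(A)\cap\A$. Your bookkeeping worry about $(I_x,\partial_x)^{-1}\otimes(I_x,J)$ is moot, because the first component is $I_x^{-1}I_x=\id_A$ automatically and the explicit $K$ is never needed. Your caution about the multiplication order is warranted, since nothing forces invariant twists to commute and $(\id_A,J)\otimes(\id_A,J')=(\id_A,J'J)$. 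Using $[J]\mapsto\ol{(\id_A,J^{-1})}$ gives an honest injective homomorphism onto $\ker\Pi_A$, a point the paper passes over silently.
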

\begin{proof}
The map $\Pi_A$ is well-defined because equivalent stable twists differ by an element of $A_1^\times$. More precisely, if $(\varphi',J')=(I_u,\partial_u)(\varphi,J)$ for some $u\in A_1^\times$, then $\varphi'=I_u\circ \varphi$, so $\varphi$ and $\varphi'$ define the same class in $\operatorname{Aut}_{\rm alg}(A)/\operatorname{Inn}(A)$.

Surjectivity onto $\OutTw(A)$ is tautological. It remains to identify the kernel. Note that one has $\operatorname{Inn}_{A_1^\times}(A)=\operatorname{Inn}(A)$. Indeed, if $u\in A^\times$, then $\ve(u)\in \kk^\times$ since $\ve$ is an algebra map, and $I_u=I_{\ve(u)^{-1}u}$ with $\ve(\ve(u)^{-1}u)=1$. Let $[(\varphi,J)]\in \ker(\Pi_A)$. Then $\varphi$ is inner, so there exists $u\in A_1^\times$ such that $\varphi=I_u$. Replacing $(\varphi,J)$ by an equivalent stable twist, we may assume
\[
(\varphi,J)\sim (\id_A,K),\qquad K=(u^{-1}\otimes u^{-1})J\Delta(u).
\]
Since the first component is now $\id_A$, the intertwiner condition for a stable twist gives $K\Delta(a)=\Delta(a)K$ for all $a\in A$, and the associator condition $\id^{\otimes 3}(\Phi_A)=\Phi_A=(\Phi_A)_K=d^2(K)\Phi_A$ together with the centrality of $\Phi_A$ in $A^{\otimes 3}$ forces $d^2(K)=1\otimes 1\otimes 1$. Hence $K$ is an invariant twist. Conversely, every invariant twist $K$ gives a stable twist of the form $(\id_A,K)$.

Finally, two invariant twists $(\id_A,K)$ and $(\id_A,K')$ define the same class in $\ost(A)$ if and only if they differ by a coboundary coming from some $z\in Z(A)\cap A_1^\times$, namely $K'=\partial_z K$. Thus $\ker(\Pi_A)$ is exactly $H^2_\ell(A^\ast)$, which proves \eqref{eq:general-stable-SES}. Since neither the invariant-twist condition nor the coboundary relation involves $\Phi_A$, the kernel $H^2_\ell(A^*)$ is independent of the central associator.
\end{proof}

The following recovers the classification of Etingof--Nikshych--Ostrik for $\operatorname{Vec}_G^\omega$ as a special case. 

\begin{example}[Pointed fusion categories]\label{ex:pointed-SES}
Let $A=\kk^G$ with associator $\Phi_\omega=\sum_{a,b,c\in G}\omega(a,b,c)\,\delta_a\otimes\delta_b\otimes\delta_c$ for a normalized $3$-cocycle $\omega$ on $G$. The representation category of $A$ is the pointed fusion category $\C{A}\simeq\operatorname{Vec}_G^\omega$. Every algebra automorphism of $\kk^G$ comes from a unique group automorphism of $G$, and the twist-liftability condition forces this group automorphism to preserve $[\omega]\in H^3(G,\kk^\times)$. Theorem~\ref{t:general-exact-stable} then specializes to
\begin{equation}\label{eq:VecGom-SES-intro}
1\;\longrightarrow\;H^2(G,\kk^\times)\;\longrightarrow\;\ost(\kk^G,\Phi_\omega)\;\xrightarrow{\;\Pi\;}\;\operatorname{Aut}(G;[\omega])\;\longrightarrow\;1,
\end{equation}
recovering the categorical sequence of \cite{ENO2010,Davy2010} mentioned above. 
\end{example}

Let $\operatorname{Aut}_{\rm qH}(A)$ denote the group of quasi-Hopf algebra automorphisms of $A$, i.e., algebra automorphisms $\varphi$ such that $(\varphi\otimes\varphi)\Delta=\Delta\varphi$, $\ve\circ\varphi=\ve$, $\varphi^{\otimes 3}(\Phi_A)=\Phi_A$. When $A$ is an ordinary Hopf algebra ($\Phi_A=1$), $\operatorname{Aut}_{\rm qH}(A)=\operatorname{Aut}_{\rm Hopf}(A)$.

\begin{prop}\label{p:general-splitting}
The map $[\varphi]\mapsto \overline{(\varphi,1\otimes 1)}$ defines an injective group homomorphism
\[
s\colon \operatorname{Aut}_{\rm qH}(A)/\operatorname{Inn}_{G(A)}(A)\hookrightarrow \ost(A).
\]
In particular, the image of $\operatorname{Aut}_{\rm qH}(A)$ under the composition $\Pi_A\circ s$ is a subgroup of $\OutTw(A)$, whose kernel is
\[
\frac{\operatorname{Aut}_{\rm qH}(A)\cap \operatorname{Inn}(A)}{\operatorname{Inn}_{G(A)}(A)}\, . 
\]
\end{prop}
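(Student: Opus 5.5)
The plan is to define $s$ first on $\operatorname{Aut}_{\rm qH}(A)$, check that it is a homomorphism, compute its kernel, and then read off the statement about $\Pi_A\circ s$.

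\emph{The map on $\operatorname{Aut}_{\rm qH}(A)$.} For $\varphi\in\operatorname{Aut}_{\rm qH}(A)$, the pair $(\varphi,1\otimes 1)$ is a stable twist. Indeed, $A_{1\otimes 1}=A$, so the stable-twist conditions reduce to $\varphi$ being a quasi-bialgebra automorphism. By \eqref{GroupTW},
\[
(\varphi,1\otimes1)\otimes(\varphi',1\otimes1)=(\varphi\varphi',1\otimes 1).
\]
Hence $\tilde s\colon\varphi\mapsto\overline{(\varphi,1\otimes1)}$ is a group homomorphism $\operatorname{Aut}_{\rm qH}(A)\to\ost(A)$.

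\emph{Computing the kernel.} This is the one step with content. The class $\overline{(\varphi,1\otimes 1)}$ is trivial exactly when $(\varphi,1\otimes1)=(I_x,\partial_x)$ for some $x\in\A$. That means $\varphi=I_x$ and $(x\otimes x)\Delta(x)^{-1}=1\otimes 1$, i.e.\ $\Delta(x)=x\otimes x$. Together with $\ve(x)=1$, this says $x\in G(A)$. Conversely, every $g\in G(A)$ satisfies $\partial_g=1\otimes1$ and $\ve(g)=1$, so $(I_g,1\otimes1)\in\st_0(A)$.

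One point needs checking here: that $I_g$ is actually in $\operatorname{Aut}_{\rm qH}(A)$. It is a quasi-bialgebra automorphism because $\Delta(g)=g\otimes g$; its compatibility with $\Phi_A$ follows since $(I_g,\partial_g)=(I_g,1\otimes1)$ is a stable twist, which gives $I_g(\Phi)=\Phi_{1\otimes1}=\Phi$. So $\operatorname{Inn}_{G(A)}(A)\subseteq \operatorname{Aut}_{\rm qH}(A)$ and $\ker\tilde s=\operatorname{Inn}_{G(A)}(A)$. This subgroup is normal, being the kernel of a homomorphism. Therefore $\tilde s$ descends to the injective homomorphism $s$ on the quotient.

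\emph{The composite $\Pi_A\circ s$.} By construction $\Pi_A\circ s([\varphi])=[\varphi]\in\operatorname{Out}_{\rm alg}(A)$. Its image is therefore $\Pi_A(s(\cdots))$, which lies in $\operatorname{Im}\Pi_A=\OutTw(A)$ by Definition~\ref{def:OutTw}, and it is a subgroup because it is the image of a homomorphism. The kernel consists of the classes $[\varphi]$ with $\varphi$ inner, namely $(\operatorname{Aut}_{\rm qH}(A)\cap\operatorname{Inn}(A))/\operatorname{Inn}_{G(A)}(A)$.

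I expect no real obstacle. The only care needed is in the kernel step: remembering that $\st_0(A)$ consists of the pairs $(I_x,\partial_x)$ themselves, so matching the gauge component forces $x$ to be grouplike, and verifying that $\operatorname{Inn}_{G(A)}(A)$ is contained in $\operatorname{Aut}_{\rm qH}(A)$.
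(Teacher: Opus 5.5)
Your proof is correct and follows essentially the same route as the paper: both rest on the observation that $(\varphi,1\otimes 1)\in\st_0(A)$ forces $\varphi=I_x$ with $\partial_x=1\otimes 1$, i.e.\ $x\in G(A)$. The difference is only in packaging. You compute the kernel of the lift $\tilde s$ on $\operatorname{Aut}_{\rm qH}(A)$ and pass to the quotient, whereas the paper checks well-definedness and injectivity on classes directly; you also make explicit the homomorphism property and the inclusion $\operatorname{Inn}_{G(A)}(A)\subseteq\operatorname{Aut}_{\rm qH}(A)$, which the paper leaves implicit.
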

\begin{proof}
If $\varphi\in\operatorname{Aut}_{\rm qH}(A)$, then $(\varphi,1\otimes 1)$ is a stable twist of $A$. If $\varphi'=I_g\circ \varphi$ with $g\in G(A)$, then
\[
\partial_g=(g\otimes g)\Delta(g)^{-1}=1\otimes 1,
\]
so $(\varphi',1\otimes 1)$ and $(\varphi,1\otimes 1)$ define the same element of $\ost(A)$, proving well-definedness. Conversely, if $\overline{(\varphi,1\otimes 1)}=\overline{(\varphi',1\otimes 1)}$ in $\ost(A)$, then $\varphi'=I_u\circ\varphi$ for some $u\in A_1^\times$ with $\partial_u=1\otimes 1$, i.e., $\Delta(u)=u\otimes u$. Hence $u\in G(A)$ and $[\varphi]=[\varphi']$ in $\operatorname{Aut}_{\rm qH}(A)/\operatorname{Inn}_{G(A)}(A)$, proving injectivity. Finally, the kernel of $\Pi_A\circ s$ is clearly from the definition. 
\end{proof}

\begin{example}\label{ex:group-algebra}
Let $G$ be a finite group of order coprime to $6$ and $A=\k[G]$. Then $\ost(\k[G])$ is finite (Corollary~\ref{p:eqv}(iv)), and Theorem~\ref{t:general-exact-stable} reads
\begin{equation}\label{eq:group-algebra-SES}
1\longrightarrow H^2_\ell(\k^G)\longrightarrow \ost(\k[G])\xrightarrow{\;\Pi\;} \OutTw(\k[G])\longrightarrow 1\,.
\end{equation}
Davydov \cite[Theorem~6.5]{DavydovGA} computed the total group as a semidirect product
\begin{equation}\label{eq:group-algebra-sdp}
\ost(\k[G])\;\cong\; \operatorname{Out}(G)\ltimes \operatorname{Aut}^{\rm anti}_{\rm Tw}(\k[G]),
\end{equation}
where $\operatorname{Aut}^{\rm anti}_{\rm Tw}(\k[G])$ is the abelian group of \emph{anti-symmetric invariant twists}. By \cite[Theorem~2.4]{DavydovGA}, the latter is parametrized by pairs $(B,b)$ with $B\trianglelefteq G$ an abelian normal subgroup and $b$ a $G$-invariant nondegenerate alternating bimultiplicative form $B\times B\to\k^\times$.

The two flanking terms of \eqref{eq:group-algebra-SES} are then read off \eqref{eq:group-algebra-sdp}. Write $\operatorname{Out}_c(G):=\operatorname{Aut}_c(G)/\operatorname{Inn}(G)$ for the group of class-preserving outer automorphisms; an automorphism $\varphi\in\operatorname{Aut}(G)$ becomes inner in $\k[G]$ iff $\varphi$ is class-preserving \cite[(2.5)]{GuillotKassel2010}. Hence
\[
\OutTw(\k[G])\;\cong\; \operatorname{Out}(G)/\operatorname{Out}_c(G),\qquad H^2_\ell(\k^G)\;\cong\; \operatorname{Out}_c(G)\ltimes \operatorname{Aut}^{\rm anti}_{\rm Tw}(\k[G]).
\]
Dually, Guillot--Kassel \cite[Theorem~4.5]{GuillotKassel2010} parametrize $H^2_\ell(\k[G])$ by the same pairs $(B,b)$ modulo $\operatorname{Out}_c(G)$. The sequence \eqref{eq:group-algebra-SES} splits iff $\operatorname{Out}(G)\twoheadrightarrow\operatorname{Out}(G)/\operatorname{Out}_c(G)$ has a section.

For $G$ a nonabelian simple group of order coprime to $6$, $\operatorname{Aut}^{\rm anti}_{\rm Tw}(\k[G])=1$ (no nontrivial abelian normal subgroup) and $\operatorname{Out}_c(G)=1$ (Burnside; \cite[\S 4]{DavydovGA}), so $\ost(\k[G])\cong\operatorname{Out}(G)$.
\end{example}

\subsubsection{The braided variant}\label{SS:braided-SES}

Let $(A,R)$ be a finite-dimensional QT quasi-Hopf algebra, with braided stable twist group $\ost^{\rm br}(A,R)\subseteq\ost(A)$ (\S\ref{SS:31cat}). Write $\Pi^{\rm br}$ for the restriction of $\Pi_A$ to $\ost^{\rm br}(A,R)$, and set $\OutTwBr(A,R):=\operatorname{Im}(\Pi^{\rm br})\subseteq\operatorname{Out}_{\rm alg}(A)$. Its kernel is the \emph{braided lazy cohomology} $H^2_{\ell,{\rm br}}(A^*)$, consisting of the classes in $H^2_\ell(A^*)$ of invariant twists $J$ that additionally satisfy $J_{21}RJ\inv=R$. Like $H^2_\ell(A^*)$, this group is defined for \emph{every} finite-dimensional QT quasi-Hopf algebra $(A,R)$ and is independent of the associator $\Phi_A$.

\begin{thm}\label{t:braided-SES}
For a finite-dimensional QT quasi-Hopf algebra $(A,R)$ with central associator $\Phi_A$, the map $\Pi^{\rm br}$ fits into a short exact sequence
\begin{equation}\label{eq:braided-SES}
1\longrightarrow H^2_{\ell,{\rm br}}(A^*)\longrightarrow\ost^{\rm br}(A,R)\xrightarrow{\Pi^{\rm br}}\OutTwBr(A,R)\longrightarrow 1\,.
\end{equation}
\end{thm}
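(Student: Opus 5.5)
The plan is to restrict the exact sequence of Theorem~\ref{t:general-exact-stable} to the subgroup $\ost^{\rm br}(A,R)\subseteq\ost(A)$ and then identify the kernel of the restriction. By Lemma~\ref{lem:braided-closed}, $\st^{\rm br}(A,R)$ is a subgroup of $\st(A)$ containing $\st_0(A)$. Hence $\ost^{\rm br}(A,R)$ is a subgroup of $\ost(A)$, and $\Pi^{\rm br}:=\Pi_A|_{\ost^{\rm br}(A,R)}$ is a well-defined group homomorphism. Surjectivity onto $\OutTwBr(A,R)$ holds by definition, since that group is defined as the image of $\Pi^{\rm br}$. It then remains to show that $\ker\Pi^{\rm br}=\ker\Pi_A\cap\ost^{\rm br}(A,R)$ is exactly the subgroup $H^2_{\ell,{\rm br}}(A^*)$ of $H^2_\ell(A^*)$.

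For the inclusion $\ker\Pi^{\rm br}\subseteq H^2_{\ell,{\rm br}}(A^*)$, take a class $\ol{(\varphi,J)}$ with $(\varphi,J)$ braided and $\varphi=I_u$ inner, where $u\in\A$. As in the proof of Theorem~\ref{t:general-exact-stable}, compose with $(I_{u^{-1}},\partial_{u^{-1}})\in\st_0(A)$ to obtain the equivalent stable twist $(\id_A,K)$ with $K=(u^{-1}\otimes u^{-1})J\Delta(u)$. By Lemma~\ref{lem:braided-closed}, condition \eqref{eq:R-cond-multi} is preserved under $2$-cell equivalence, so $(\id_A,K)$ is still braided. With $\varphi=\id_A$, condition \eqref{eq:R-cond-multi} reads $R=K_{21}RK^{-1}$. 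The proof of Theorem~\ref{t:general-exact-stable} already shows that $K$ is an invariant twist, so $K$ represents a class in $H^2_{\ell,{\rm br}}(A^*)$. For the reverse inclusion, let $K$ be an invariant twist with $K_{21}RK^{-1}=R$. Then $(\id_A,K)$ is a stable twist, again by that proof, and it is braided by definition. Its class therefore lies in $\ker\Pi^{\rm br}$.

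The step needing care is well-definedness on classes: the braided condition on a representative must be independent of the choice of representative. Two invariant twists $K$ and $K'=\partial_zK$ with $z\in Z(A)\cap\A$ give equivalent stable twists $(\id_A,K)$ and $(\id_A,K')$. The second part of Lemma~\ref{lem:braided-closed} then shows that $(\id_A,K)$ is braided if and only if $(\id_A,K')$ is. This can also be checked directly: $\Delta^{\rm op}(z)R\Delta(z)^{-1}=R$ by \eqref{eq:QT3}, and $z\otimes z$ is central. So the subset $H^2_{\ell,{\rm br}}(A^*)$ is well defined and equals $\ker\Pi^{\rm br}$ under the identification $\ker\Pi_A\cong H^2_\ell(A^*)$ of Theorem~\ref{t:general-exact-stable}. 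Because it is a kernel, it is automatically a subgroup, which gives exactness of \eqref{eq:braided-SES}. Independence from the associator follows as before, since neither the invariant-twist condition nor the relation $K_{21}RK^{-1}=R$ involves $\Phi_A$.
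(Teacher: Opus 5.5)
Your proposal is correct and follows the paper's proof, which reruns the argument of Theorem~\ref{t:general-exact-stable} with the braided condition $(\varphi\otimes\varphi)(R)=J_{21}RJ^{-1}$ imposed at each step. Your explicit use of Lemma~\ref{lem:braided-closed} does two things the paper leaves implicit: it transfers the braided condition to the normalized representative $(\id_A,K)$, and it checks that this condition is compatible with the coboundaries $\partial_z$.
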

\begin{proof}
Identical to Theorem~\ref{t:general-exact-stable}, with the braided compatibility $(\vp\o\vp)(R)=J_{21}RJ\inv$ imposed at each step. The kernel consists of stable twists $(\id,K)$ with $K$ an invariant $2$-cocycle satisfying $K_{21}RK\inv=R$.
\end{proof}

The kernel $H^2_{\ell,{\rm br}}(A^*)$ is the main invariant of our braided theory. In \S\ref{SS:soft} we relate it to the soft symmetries of $\C{A}$: for $A=D(G)$ it recovers Davydov's identification with the Bogomolov multiplier $B_0(G)$ \cite{Davydov2014} (Theorem~\ref{t:B0=lazy-br}), and through the Etingof--Nikshych--Ostrik correspondence it computes Brauer--Picard groups (Proposition~\ref{p:Tw-BrPic}).

\section{$G$-equivariantizations and cocentral extensions}\label{S:Equiv}

For a finite-dimensional quasi-Hopf algebra $A$ and a finite group $G$, we classify $G$-actions on $\C{A}$ via the Postnikov classification of such actions in terms of the strict $2$-group $\2ust(A)$ of stable twists (\S\ref{SS:Gaction}), realize the resulting equivariantization $(\C{A})^G$ as the representation category of an explicit quasi-Hopf algebra $A\#_{(\rho,\sigma)}G$ (\S\ref{SS:G-equiv-qHopf}), and identify these equivariantizations with cocentral cleft extensions of $\k[G]$ by $A$ (\S\ref{SS:CCE}). \S\ref{SS:Examples} works out the cyclic case in detail, and \S\ref{SS:braided-equiv} collects the braided counterparts when $A$ is quasitriangular.

\subsection{$G$-actions via the strict $2$-group of stable twists}\label{SS:Gaction}

\subsubsection{$G$-actions on monoidal categories and the Postnikov classification}\label{sss:Gaction-formalism}
We recall the classical notion of a $G$-action on a monoidal category and the homotopy-theoretic Postnikov classification of such actions, following \cite{ENO2010,Brown1982,HTT}, then realize it for $\CC=\C{A}$ at the level of stable twists.

Let $G$ be a finite group. Denote by $\ul G$ the discrete monoidal category with object set $G$, only identity morphisms, monoidal product $g\otimes h:=gh$, and unit $e\in G$; viewed as a strict $2$-group, its classifying space is $BG$, the Eilenberg--MacLane space $K(G,1)$. For a monoidal category $\CC$, a \emph{$G$-action} on $\CC$ is a monoidal functor
\[
\zeta\colon\ul G\;\longrightarrow\;\OAut(\CC),
\]
and two $G$-actions are \emph{equivalent} if their underlying monoidal functors are monoidally naturally isomorphic. Passing to classifying spaces, equivalence classes of $G$-actions on $\CC$ correspond to pointed homotopy classes of maps $BG\to B\OAut(\CC)$.

For a connected pointed space $X$, pointed homotopy classes of maps $BG\to X$ are classified by Postnikov obstruction theory. When $X$ is a $2$-type with $\pi_1(X)=\Gamma$ and $\pi_2(X)=M$ \textup{(}a $\Gamma$-module\textup{)}, the homotopy type of $X$ is determined by the Postnikov $k$-invariant $k_X\in H^3(\Gamma,M)$, and the classification reads:
\begin{itemize}
\item[\textup{(i)}] the homotopy class induces a group homomorphism $\brho\colon G\to\Gamma$ on $\pi_1$;
\item[\textup{(ii)}] $\brho$ lifts to a pointed homotopy class $BG\to X$ iff the pullback $\brho^*k_X\in H^3(G,M)$ vanishes;
\item[\textup{(iii)}] when $\brho^*k_X=0$, equivalence classes of lifts form an $H^2(G,M)$-torsor.
\end{itemize}
See \cite[Ch.~IV.6]{Brown1982} for the group-cohomological proof. 

The Postnikov classification above applies to $G$-actions on any finite tensor category $\CC$, with $X=B\OAut(\CC)$ as the target. We restrict our attention to integral $\CC$, since by \cite[Proposition~6.1.14]{EGNO} these are precisely the categories $\C{A}$ for a finite-dimensional quasi-Hopf algebra $A$, and our equivalence of $3$-groupoids (Theorem~\ref{thm:31eqv-multi}, Corollary~\ref{p:eqv}) replaces $\OAut(\C{A})$ by the strict $2$-group $\2ust(A)$ of stable twists of \S\ref{sss:2group-loop}. By Proposition~\ref{prop:pin}, the classifying space $B\2ust(A)$ has homotopy invariants
\begin{equation}\label{eq:pi-2ust}
\pi_1\bigl(B\2ust(A)\bigr)\;=\;\ost(A),\qquad \pi_2\bigl(B\2ust(A)\bigr)\;=\;Z_G(A),
\end{equation}
with $\pi_1$ acting on $\pi_2$ by $\overline{(\vp,J)}\cdot z=\vp(z)$. Applying the Postnikov classification to $X=B\2ust(A)$ with $\Gamma=\ost(A)$ and $M=Z_G(A)$ then classifies $G$-actions on $\C{A}$ --- i.e., pointed maps $BG\to B\OAut(\C{A})\simeq B\2ust(A)$ (Corollary~\ref{p:eqv}) --- by a homomorphism $\brho\colon G\to\ost(A)$, vanishing of the obstruction $\brho^*k_{B\2ust(A)}\in H^3(G,Z_G(A))$, and an $H^2(G,Z_G(A))$-torsor of lifts.

\subsubsection{Cocycle realization}\label{sss:Gaction-cocycle}
We now realize the Postnikov classification of \S\ref{sss:Gaction-formalism} at the level of algebraic data on $A$. By Corollary~\ref{p:eqv}, a $G$-action on $\C{A}$ --- a monoidal functor $\ul G\to\OAut(\C{A})$ --- translates into the intrinsic datum of a monoidal functor $\ul G\to\2ust(A)$, made precise by the commutative diagram
\[
\xymatrix{
\underline{G}\ar[r]\ar[d]_-{\Inv} & \OAut(\C{A})\\
\underline{G}^\rev\ar[r]^-{\widetilde{\rho}} & \2ust(A)^\rev\ar[u]_-{\OPsi}
}\, ,
\]
where the strict monoidal functor $\Inv: \ul G \to \ul G^\rev$ assigns each object of $\ul G$ to its inverse. We summarize the statement as the following lemma.

\begin{lem}\label{GactionTw}
  Any $G$-action on $\C{A}$ is isomorphic to
  \[
  \OPsi\,\tilde\rho\, \Inv: \ul G \to \OAut(\C{A})
  \]
  for some monoidal functor $\rho: \ul G \to \2ust(A)$, where $\tilde\rho: \ul G^\rev \to \2ust(A)^\rev$ is the reverse functor induced from $\rho$.
  In particular, two $G$-actions on $\C{A}$ are equivalent if and only if their corresponding monoidal functors from $\underline{G}$ to $\2ust(A)$ are isomorphic.  
\end{lem}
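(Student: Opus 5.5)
The plan is to transport the statement along the monoidal equivalence $\OPsi\colon\2ust(A)^\rev\to\OAut(\C{A})$ of Corollary~\ref{p:eqv}, using only the fact that any monoidal equivalence induces an equivalence of monoidal functor categories out of a fixed monoidal category. First we note that $\Inv\colon\ul G\to\ul G^\rev$ is a strict monoidal isomorphism: on objects $g\mapsto g^{-1}$, and $(gh)^{-1}=h^{-1}g^{-1}=h^{-1}\otimes^{\rev}g^{-1}$, so it preserves products strictly. Likewise, a monoidal functor $\rho\colon\ul G\to\2ust(A)$ with structure isomorphisms $\rho(g)\otimes\rho(h)\to\rho(gh)$ gives verbatim a monoidal functor $\tilde\rho\colon\ul G^\rev\to\2ust(A)^\rev$: the underlying functor is unchanged, and the same structure maps serve, because both monoidal products are reversed. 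Moreover $\rho\mapsto\tilde\rho$ is a bijection on monoidal functors and on monoidal natural transformations. Hence $\rho\mapsto\OPsi\,\tilde\rho\,\Inv$ is a functor from the groupoid of monoidal functors $\ul G\to\2ust(A)$ to the groupoid of $G$-actions on $\C{A}$.

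Next, given a $G$-action $\zeta\colon\ul G\to\OAut(\C{A})$, choose a quasi-inverse monoidal equivalence $\OPsi^{-1}\colon\OAut(\C{A})\to\2ust(A)^\rev$. Such a quasi-inverse exists because $\OPsi$ is strict monoidal, fully faithful and essentially surjective; indeed, full faithfulness on morphisms is Lemma~\ref{NatST} together with Corollary~\ref{lem:nat-auto-ZG-multi} specialized to $A=B$, and essential surjectivity is the $1$-level of Theorem~\ref{thm:31eqv-multi}. The standard result is that an equivalence of categories which is strong monoidal admits a monoidal quasi-inverse with monoidal unit and counit. We then set $\tilde\rho:=\OPsi^{-1}\zeta\,\Inv^{-1}\colon\ul G^\rev\to\2ust(A)^\rev$, and let $\rho$ be the corresponding monoidal functor $\ul G\to\2ust(A)$. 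The monoidal counit isomorphism $\OPsi\OPsi^{-1}\cong\id$ then yields a monoidal natural isomorphism $\OPsi\tilde\rho\,\Inv\cong\zeta$, which proves the first claim.

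For the \emph{in particular}, postcomposition with a monoidal equivalence is fully faithful on monoidal natural transformations. Concretely, for monoidal $\rho,\rho'$, a monoidal isomorphism $\OPsi\tilde\rho\Inv\Rightarrow\OPsi\tilde\rho'\Inv$ has components in $\OAut(\C{A})$. By full faithfulness of $\OPsi$, each component lifts uniquely to a morphism $\rho(g)\to\rho'(g)$ in $\2ust(A)$, i.e.\ an element $x_g\in\A$. Naturality and monoidality of the lifted family then follow from faithfulness of $\OPsi$ and the strictness of $\OPsi$ (see \eqref{eq:strict-comp}). The converse direction is immediate by applying $\OPsi$.

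The only genuine subtlety, and the step needing the most care, is bookkeeping with the reversals: one must check that the monoidal structure of $\OPsi$ really matches the order of composition in $\OAut(\C{A})$ after inserting $\Inv$. This is the displayed identity $\OPsi((\vp,J)\otimes(\vp',J'))=\OPsi(\vp',J')\circ\OPsi(\vp,J)$. Note that $\Inv$ is strictly needed only if one insists on covariant naming; the verification is the one-line check above.
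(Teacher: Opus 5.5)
Your proposal is correct and takes the paper's route. The paper states the lemma as an immediate consequence of the monoidal equivalence $\OPsi\colon\2ust(A)^{\rev}\to\OAut(\C{A})$ of Corollary~\ref{p:eqv}, via the commutative diagram with $\Inv$; you supply the details the paper leaves implicit (a monoidal quasi-inverse, full faithfulness from Lemma~\ref{NatST}, strictness from \eqref{eq:strict-comp}).

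One small slip: strictness of $\Inv$ should read $(gh)^{-1}=h^{-1}g^{-1}=g^{-1}\otimes^{\rev}h^{-1}=\Inv(g)\otimes^{\rev}\Inv(h)$, not $h^{-1}\otimes^{\rev}g^{-1}$.
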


By definition, any monoidal functor $(\rho,\sigma): \underline{G}\to \2ust(A)$ consists of the following data:
\begin{itemize}
    \item[(a)] $\rho:\underline{G}\to \2ust(A)$ is a functor such that for each $g\in G$, $\rho(g)=(\varphi^g,J^g)$ for some stable twist $(\varphi^g,J^g)$ of $A$;
    \item[(b)] $\rho(e)=(\id_A,1\otimes 1)$;
    \item[(c)] For each pair $g,h\in G$, a morphism $\sigma(g,h)\in \Hom_{\2ust(A)}(\rho(gh) ,\rho(g) \otimes \rho(h))$ with $\sigma(g,h)\in \A$ such that the following diagrams
\begin{align}\label{Gaction}
\xymatrix{
\rho(ghk)\ar[d]_-{\sigma(gh,k)}\ar[r]^-{\sigma(g,hk)} & \rho(g)\otimes \rho(hk)\ar[d]^-{\rho(g)\otimes \sigma(h,k)}\\
\rho(gh)\otimes \rho(k)\ar[r]_-{\sigma(g,h)\otimes \rho(k)} & \rho(g)\otimes \rho(h)\otimes \rho(k)
}
\quad\text{and}\quad 
\xymatrix{
\rho(g)\ar[r]^-{\sigma(e,g)}\ar[d]_-{\sigma(g,e)}\ar[dr]^-{\id} & \rho(e)\otimes \rho(g)\ar[d]^-{=}\\
\rho(g)\otimes \rho(e)\ar[r]_-{=}& \rho(g)
}
\end{align}
commute for all $g,h,k\in G$.
\end{itemize}

\begin{prop}\label{propG}
A monoidal functor $(\rho,\sigma): \underline{G}\to \2ust(A)$ is determined by a collection of stable twists $\{(\varphi^g,J^g)\,|\, g\in G\}$ with $(\varphi^e,J^e)=(\id_A,1\otimes 1)$ and a convolution invertible map $\sigma: G\times G\to A$ such that 
 \begin{itemize}
     \item[(1)] $G$ measures $A$ via $g\cdot a=\varphi^g(a)$ such that 
\begin{align}\label{twistG}
g\cdot (h\cdot a)=\sigma(g,h)\, \left((gh)\cdot a\right)\, \sigma^{-1}(g,h)
\end{align} 
for all $g,h\in G$ and $a\in A$.
     \item[(2)] $\sigma$ is a 2-cocycle such that $\sigma(g,e)=\sigma(e,g)=1$ and 
          \begin{align}\label{2-cocycle}
 [g\cdot \sigma(h,k)]\, \sigma(g,hk)=\sigma(g,h)\,\sigma(gh,k)   \end{align}
for all $g,h,k\in G$.
     \item[(3)] 
     Additionally, the following identity  
\begin{align}\label{Gnat2}
(\sigma(g,h)\otimes \sigma(g,h))\,J^{gh}\,\Delta(\sigma(g,h))^{-1}~=~\varphi^g(J^h)J^g
\end{align}
 holds for all $g,h\in G$.
 \end{itemize}
\end{prop}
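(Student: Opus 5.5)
The plan is to unpack the data (a)–(c) of a monoidal functor $\underline{G}\to\2ust(A)$ using the explicit description of $\2ust(A)$ in \S\ref{sss:2group-loop}, and translate each condition into an identity in $A$.

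\emph{Objects and morphisms.} Since $\underline{G}$ is discrete, a functor $\rho$ is just an assignment $g\mapsto\rho(g)=(\varphi^g,J^g)\in\st(A)$, with $\rho(e)=(\id_A,1\otimes 1)$ by (b). By \eqref{GroupTW},
\[
\rho(g)\otimes\rho(h)=(\varphi^g\varphi^h,\ \varphi^g(J^h)J^g).
\]
By (M), a morphism $\sigma(g,h)\colon\rho(gh)\to\rho(g)\otimes\rho(h)$ is an element $\sigma(g,h)\in\A$ with $(\varphi^{gh}_{\sigma(g,h)},J^{gh}_{\sigma(g,h)})=\rho(g)\otimes\rho(h)$. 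Equating the algebra components gives
\[
I_{\sigma(g,h)}\circ\varphi^{gh}=\varphi^g\varphi^h,
\]
which is exactly \eqref{twistG} with $g\cdot a:=\varphi^g(a)$. Equating the gauge components gives $(\sigma(g,h)\otimes\sigma(g,h))J^{gh}\Delta(\sigma(g,h))^{-1}=\varphi^g(J^h)J^g$, which is \eqref{Gnat2}. The measuring statement holds because each $\varphi^g$ is a unital algebra automorphism with $\ve\circ\varphi^g=\ve$. Convolution invertibility of $\sigma$ follows from $\sigma(g,h)\in\A$, with inverse $\sigma(g,h)^{-1}$ pointwise.

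\emph{Coherence diagrams.} Next I compute the two composites in the hexagon \eqref{Gaction} using the bifunctor rules $(\vp,J)\otimes x'=\vp(x')$ and $x\otimes(\vp'',J'')=x$, together with composition by multiplication in $\A$. The composite $(\rho(g)\otimes\sigma(h,k))\circ\sigma(g,hk)$ equals $\varphi^g(\sigma(h,k))\,\sigma(g,hk)$. The composite $(\sigma(g,h)\otimes\rho(k))\circ\sigma(gh,k)$ equals $\sigma(g,h)\,\sigma(gh,k)$. Commutativity of the square is therefore exactly \eqref{2-cocycle}. The unit triangles say $\sigma(e,g)=1=\sigma(g,e)$, since the identity morphism is $1\in\A$.

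\emph{Converse.} Conversely, given stable twists $(\varphi^g,J^g)$ and $\sigma$ satisfying (1)–(3), reversing these computations shows that each $\sigma(g,h)$ is a legitimate morphism of $\2ust(A)$ and that the diagrams \eqref{Gaction} commute. Hence $(\rho,\sigma)$ is a monoidal functor.

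The only real care needed is bookkeeping for the composition order and the whiskering conventions. In particular, one must check that $\rho(g)\otimes\sigma(h,k)$ is $\varphi^g(\sigma(h,k))$ rather than $\sigma(h,k)$, and that composition of morphisms is multiplication in the stated order, $x'x$ for $x'\circ x$. I would verify this against the definition $\eta_x*\eta_y=\eta_{x\vp(y)}$ in Definition~\ref{def:3ust-multi}. Everything else is a direct translation.
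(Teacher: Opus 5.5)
Your proposal is the paper's proof: unpack the objects via \eqref{GroupTW}, read \eqref{twistG} and \eqref{Gnat2} off the two components of the morphism $\sigma(g,h)\colon\rho(gh)\to\rho(g)\otimes\rho(h)$, and read \eqref{2-cocycle} and the unit conditions off the diagrams \eqref{Gaction} using the whiskering rule $(\vp,J)\otimes x'=\vp(x')$. The one step your ``reversing these computations'' skips, and which the paper makes explicit, is in the converse: convolution invertibility only gives $\sigma(g,h)\in A^\times$, whereas morphisms of $\2ust(A)$ must lie in $\A$, so you still need $\ve(\sigma(g,h))=1$, which follows by applying $\ve\otimes\id$ to \eqref{Gnat2} and using $\ve\circ\varphi^g=\ve$ together with $(\ve\otimes\id)(J^h)=(\ve\otimes\id)(J^g)=1$.
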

\begin{proof}
By the discussion above, a monoidal functor $\underline{G}\to \2ust(A)$ is exactly such data $(\rho,\sigma)$. Since
\begin{align*}
  \sigma(g,h): \rho(gh)&\,=(\varphi^{gh},J^{gh})\xrightarrow{\sim} \rho(g)\otimes \rho(h)\\
  &\,=(\varphi^g,J^g)\otimes (\varphi^h,J^h)=(\varphi^g\varphi^h,\varphi^g(J^h)J^g)
\end{align*}
is an isomorphism in $\2ust(A)$, \eqref{twistG} and \eqref{Gnat2} hold with $\sigma(g,h)\in\A$, and (2) is the commutativity of \eqref{Gaction}.

Conversely, assume (1)--(3). As $(\varphi^g,J^g)$ is a stable twist, $\varphi^g: A\to A_{J^g}$ is a quasi-bialgebra isomorphism, so $\ve\circ\varphi^g=\ve$ and $(\ve\otimes 1)(J^g)=1$; applying $\ve\otimes 1$ to \eqref{Gnat2} gives $\ve(\sigma(g,h))=1$, and convolution-invertibility puts the range of $\sigma$ in $\A$. It is then routine that $(\rho,\sigma)$ is a well-defined monoidal functor $\underline{G}\to\2ust(A)$.
\end{proof}

Let $\brho: G\to \ost(A)$ be a group homomorphism. For each $g\in G$, we can write $\brho(g)=\overline{(\varphi^g,J^g)}$ for some representative stable twist $(\varphi^g,J^g)$ of $A$ with $(\varphi^e,J^e)=(\id_A,1\otimes 1)$.  We observe that $\brho$ induces a left $G$-module structure on $Z_G(A)$ via $g\cdot \alpha=\varphi^g(\alpha)$ for any $g\in G$ and $\alpha\in Z_G(A)$. Since $\varphi^g: A\to A_{J^g}$ is a quasi-bialgebra isomorphism, $\varphi^g(\alpha)$ is central in $A$. Moreover, we have
\begin{align*}
  \Delta(\varphi^g(\alpha))&\,= (J^g)^{-1}\Delta_{J^g}(\varphi^g(\alpha))J^g=(J^g)^{-1}(\varphi^g\otimes \varphi^g)\Delta(\alpha)J^g\\
  &\,=(J^g)^{-1}(\varphi^g(\alpha)\otimes \varphi^g(\alpha))J^g=\varphi^g(\alpha)\otimes \varphi^g(\alpha).  
\end{align*}
Hence, the abelian group $Z_G(A)$ is stable under $\varphi^g$ and is indeed a $G$-module, since $\varphi^g \varphi^h(\alpha) = I_{\sigma(g,h)} \circ \varphi^{gh}(\alpha) = \varphi^{gh}(\alpha)$ for all $g, h \in G$, with $\varphi^e = \mathrm{id}_A$. Note that the $G$-action on $Z_G(A)$ does not depend on the choice of the representatives $(\varphi^g,J^g)$ and hence is uniquely determined by the group homomorphism $\overline \rho: G\to \ost(A)$.

We can always lift $\brho$ to a functor $\rho: \ul G\to \2ust(A)$ via $\rho(g)=(\varphi^g,J^g)$. Since $\brho(gh)=\brho(g)\brho(h)$ in $\ost(A)$, we have $\rho(gh)\cong \rho(g)\otimes \rho(h)$ in $\2ust(A)$  for all $g,h\in G$. So there is some map $\sigma: G\times G\to \A$ such that $\sigma(g,h)\in \Hom_{\2ust(A)}(\rho(gh),\rho(g)\otimes \rho(h))$ or equivalently
\begin{align*}
    (I_{\sigma(g,h)},\partial_{\sigma(g,h)})(\varphi^{gh},J^{gh})=(\varphi^g,J^g)(\varphi^h,J^h)=(\varphi^g\varphi^h,\varphi^g(J^h)J^g)
\end{align*}
with $\sigma(g,e)=\sigma(e,g)=1$. Whether such a $\sigma$ can be chosen to satisfy the strict $2$-cocycle identity \eqref{2-cocycle} --- i.e., whether $\rho\colon\ul G\to\2ust(A)$ promotes to a monoidal functor --- is controlled by the cocycle representative of the Postnikov $k$-invariant of $B\2ust(A)$ pulled back along $B\brho$, as recorded in the following lemma.

\begin{lem}\label{lem:3co}
Retain the notations above. There is a normalized $3$-cocycle $\tau\in Z^3(G,Z_G(A))$ such that
\begin{equation}\label{eq:tau-cocycle}
\tau(g,h,k)\left([g\cdot \sigma(h,k)]\, \sigma(g,hk)\right)=\sigma(g,h)\,\sigma(gh,k)\quad\text{for all }g,h,k\in G,
\end{equation}
and the cohomology class $[\tau]\in H^3(G,Z_G(A))$ depends only on $\brho$.
\end{lem}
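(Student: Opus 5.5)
Define $\tau$ directly by the equation and check it has the required properties. For $g,h,k\in G$ set
\[
\tau(g,h,k):=\sigma(g,h)\,\sigma(gh,k)\,\bigl([g\cdot\sigma(h,k)]\,\sigma(g,hk)\bigr)^{-1}\in\A,
\]
where $g\cdot a=\varphi^g(a)$. The argument then has four steps: show $\tau$ lies in $Z_G(A)$, show it is normalized, show it is a $3$-cocycle, and show its class depends only on $\brho$.

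\textbf{Step 1: $\tau(g,h,k)\in Z_G(A)$.}
Both $x:=\sigma(g,h)\sigma(gh,k)$ and $y:=\varphi^g(\sigma(h,k))\sigma(g,hk)$ are morphisms in $\2ust(A)$ from $\rho(ghk)$ to $(\varphi^g\varphi^h\varphi^k,\ \varphi^g\varphi^h(J^k)\varphi^g(J^h)J^g)$. This is because $\2ust(A)$ is a strict monoidal groupoid and the whiskerings are
\[
\sigma(g,h)\otimes\rho(k)=\sigma(g,h),\qquad \rho(g)\otimes\sigma(h,k)=\varphi^g(\sigma(h,k)).
\]
So $x$ and $y$ are two parallel $2$-cells between the same $1$-cells. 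By Lemma~\ref{NatST}, or more directly by the uniqueness argument in its proof, $xy^{-1}$ is a central grouplike element. Concretely:
\begin{itemize}
\item $I_x\varphi^{ghk}=I_y\varphi^{ghk}$ together with bijectivity of $\varphi^{ghk}$ forces $xy^{-1}$ to be central;
\item $J_x=J_y$ then forces $\Delta(xy^{-1})=xy^{-1}\otimes xy^{-1}$.
\end{itemize}
Hence $\tau(g,h,k)\in Z_G(A)$, and by construction it satisfies \eqref{eq:tau-cocycle}.

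\textbf{Step 2: normalization.}
Since $\sigma(e,g)=\sigma(g,e)=1$ and $\varphi^e=\id_A$, we get $\tau=1$ whenever one of $g,h,k$ equals $e$.

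\textbf{Step 3: the $3$-cocycle identity.}
This is the standard pentagon (Mac Lane coherence) computation, and I expect it to be the main obstacle, since $\varphi^g$ is not an honest action. The plan is to compare the two ways of rebracketing $\sigma$-expressions over four elements $g,h,k,l$, i.e.\ the five paths in the pentagon built from \eqref{eq:tau-cocycle}.
\begin{itemize}
\item Applying $\varphi^g$ to \eqref{eq:tau-cocycle} for $(h,k,l)$ produces the term $\varphi^g(\tau(h,k,l))=g\cdot\tau(h,k,l)$.
\item Converting $\varphi^g\varphi^h(\sigma(k,l))$ into $\varphi^{gh}(\sigma(k,l))$ uses \eqref{twistG}, i.e.\ conjugation by $\sigma(g,h)$.
\item Because every $\tau$-value is central, all $\tau$ factors can be pulled to the front.
\end{itemize}
Equating the two expressions for the same product of $\sigma$'s should then yield
\[
g\cdot\tau(h,k,l)\;\tau(g,hk,l)\;\tau(g,h,k)=\tau(gh,k,l)\;\tau(g,h,kl).
\]
The point needing care is that the conjugation by $\sigma(g,h)$ from \eqref{twistG} does not affect the central $\tau$'s. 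Alternatively, one may cite that $\tau$ represents $\brho^*k_{B\2ust(A)}$ via the Postnikov discussion in \S\ref{sss:Gaction-formalism}, but I would do the explicit check.

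\textbf{Step 4: independence of choices.}
There are two choices to control.
\begin{itemize}
\item \emph{Changing $\sigma$ with fixed representatives.} Any other valid $\sigma'$ has the form $\sigma'(g,h)=\mu(g,h)\sigma(g,h)$ with $\mu(g,h)\in Z_G(A)$, again by Lemma~\ref{NatST}, and $\mu$ is normalized. Substituting shows $\tau'=\tau\cdot(d\mu)^{\pm1}$.
\item \emph{Changing representatives.} If $(\varphi'^g,J'^g)=(I_{u_g},\partial_{u_g})(\varphi^g,J^g)$ with $u_g\in\A$ and $u_e=1$, then
\[
\sigma'(g,h):=u_g\,\varphi^g(u_h)\,\sigma(g,h)\,u_{gh}^{-1}
\]
is an admissible choice. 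A direct computation, using that $\tau$ is central, shows that it reproduces the same $\tau$.
\end{itemize}
Combining the two, $[\tau]\in H^3(G,Z_G(A))$ depends only on $\brho$.
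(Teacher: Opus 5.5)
Your proposal is correct and follows the paper's proof step for step. In both, $\tau(g,h,k)$ is the discrepancy between the two parallel morphisms $\rho(ghk)\to\rho(g)\otimes\rho(h)\otimes\rho(k)$, so it lies in $Z_G(A)$ by Lemma~\ref{NatST}; normalization comes from that of $\sigma$; the cocycle identity is the pentagon; and independence comes from $\sigma'=\mu\sigma$ together with a change of representatives.

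The differences are minor. You carry out the pentagon computation explicitly, and it does yield $\varphi^g(\tau(h,k,l))\,\tau(g,hk,l)\,\tau(g,h,k)=\tau(gh,k,l)\,\tau(g,h,kl)$. In the $\sigma'=\mu\sigma$ step the precise relation is $\tau'=\tau\,(\delta\mu)^{-1}$. You also observe that the transported $\sigma'$ reproduces $\tau$ exactly, rather than only up to a coboundary.
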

\begin{proof}
Both $\sigma(g,h)\sigma(gh,k)$ and $[g\cdot\sigma(h,k)]\sigma(g,hk)$ are morphisms $\rho(ghk)\to\rho(g)\otimes\rho(h)\otimes\rho(k)$ in $\2ust(A)$, so by Corollary~\ref{lem:nat-auto-ZG-multi} they differ by an element of $Z_G(A)$; thus
\[
\tau(g,h,k)\;:=\;\sigma(g,h)\,\sigma(gh,k)\bigl([g\cdot\sigma(h,k)]\,\sigma(g,hk)\bigr)^{-1}\;\in\;Z_G(A)
\]
is well-defined and satisfies \eqref{eq:tau-cocycle}, with $\tau(g,h,e)=\tau(g,e,k)=\tau(e,h,k)=1$ from $\sigma(g,e)=\sigma(e,g)=1$ and $G$-action $g\cdot\alpha:=\vp^g(\alpha)$. The identity $\delta\tau=1$ is the pentagon coherence for the $k$-invariant of a $2$-type: the five bracketings of $\rho(g)\rho(h)\rho(k)\rho(l)$ are the vertices of the Stasheff pentagon, and strict associativity of $\otimes$ forces
\[
\bigl[g\cdot\tau(h,k,l)\bigr]\cdot\tau(gh,k,l)^{-1}\cdot\tau(g,hk,l)\cdot\tau(g,h,kl)^{-1}\cdot\tau(g,h,k)\;=\;1.
\]

Finally, $[\tau]$ is independent of the choices: a second admissible $\sigma'$ differs from $\sigma$ by $\mu:=\sigma'\sigma^{-1}$, valued in $Z_G(A)$ (Corollary~\ref{lem:nat-auto-ZG-multi}, both implementing $\rho(gh)\xrightarrow{\sim}\rho(g)\rho(h)$), whence $\tau'=\tau\cdot\delta\mu$; and rescaling the section $(\vp^g,J^g)$ by some $x_g\in A_1^\times$ shifts $\tau$ by a coboundary. So $[\tau]\in H^3(G,Z_G(A))$ depends only on $\brho$.
\end{proof}

The class $[\tau]$ is the pullback of the $k$-invariant of $B\2ust(A)$. We denote it by $[\brho]=\brho^*k_{B\2ust(A)}$ and call it the \emph{obstruction class} of $\brho$. The cocycle classification of $G$-actions on $\C{A}$ then reads as follows.

\begin{thm}\label{LiftGactionTw}
Let $A$ be a finite-dimensional quasi-Hopf algebra over $\kk$ and $\brho\colon G\to\ost(A)$ a group homomorphism. Then $\brho$ lifts to a $G$-action on $\C{A}$ if and only if its obstruction class $[\brho]$ is trivial in $H^3(G,Z_G(A))$. When the obstruction vanishes, the equivalence classes of liftings form a torsor over $H^2(G,Z_G(A))$.
\end{thm}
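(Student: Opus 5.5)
The plan is to work entirely with the algebraic model. By Lemma~\ref{GactionTw} and Proposition~\ref{propG}, a lifting of $\brho$ to a $G$-action on $\C{A}$ is the same as a monoidal functor $(\rho,\sigma)\colon\ul G\to\2ust(A)$ whose underlying map on isomorphism classes is $\brho$. Concretely, this is a choice of representatives $(\vp^g,J^g)$ together with $\sigma\colon G\times G\to\A$ satisfying \eqref{twistG}, \eqref{Gnat2} and the strict $2$-cocycle identity \eqref{2-cocycle}.

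\emph{Existence.} Fix representatives $(\vp^g,J^g)$ with $(\vp^e,J^e)=(\id_A,1\otimes1)$, and an arbitrary normalized $\sigma$ implementing $\rho(gh)\cong\rho(g)\otimes\rho(h)$. Lemma~\ref{lem:3co} then gives $\tau\in Z^3(G,Z_G(A))$ measuring the failure of \eqref{2-cocycle}.

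If $(\rho,\sigma)$ is monoidal, we may take $\tau=1$, so $[\brho]=1$.

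Conversely, suppose $\tau=\delta\mu$ for a normalized $\mu\colon G\times G\to Z_G(A)$. Replace $\sigma$ by $\sigma'=\mu^{-1}\sigma$, with the sign convention chosen to match \eqref{eq:tau-cocycle}. Since $\mu$ is central grouplike, $I_{\mu\sigma}=I_\sigma$ and $\partial_{\mu\sigma}=\partial_\sigma$, so $\sigma'$ still gives morphisms $\rho(gh)\to\rho(g)\otimes\rho(h)$; this is the content of Corollary~\ref{lem:nat-auto-ZG-multi}. A direct computation, using that $Z_G(A)$ is central and that $g\cdot$ acts on $Z_G(A)$ via $\vp^g$, shows that the new $\tau'$ equals $\tau\cdot(\delta\mu)^{-1}=1$. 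So $\sigma'$ satisfies \eqref{2-cocycle}, and $(\rho,\sigma')$ is a lifting.

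The only care needed is checking that the coboundary formula matches the noncommutative ordering in \eqref{eq:tau-cocycle}. This works because the correction terms are central, so they can be moved freely past $\sigma$.

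\emph{Torsor structure.} Fix one lifting $(\rho,\sigma)$. For any other lifting $(\rho',\sigma')$ with the same $\brho$, first transport it to have the same representatives. Since $\rho'(g)\cong\rho(g)$, choose $x_g\in\A$ with $x_e=1$ and $\rho'(g)=(I_{x_g},\partial_{x_g})\rho(g)$; this defines a monoidal isomorphism onto a lifting with underlying functor $\rho$. So assume $\rho'=\rho$. Then $\nu:=\sigma'\sigma^{-1}$ takes values in $Z_G(A)$, again by Corollary~\ref{lem:nat-auto-ZG-multi}. Comparing the two instances of \eqref{2-cocycle} shows that $\nu\in Z^2(G,Z_G(A))$.

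Conversely, for any $\nu\in Z^2$, the pair $(\rho,\nu\sigma)$ is again a lifting. This defines an action of $Z^2(G,Z_G(A))$ on liftings with fixed $\rho$.

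It remains to identify when two such liftings are equivalent. An isomorphism of monoidal functors $(\rho,\sigma)\Rightarrow(\rho,\nu\sigma)$ consists of automorphisms $\lambda_g$ of $\rho(g)$, which by Corollary~\ref{lem:nat-auto-ZG-multi} are $\lambda_g\in Z_G(A)$. Its compatibility with $\sigma$ reads
\[
\nu(g,h)=\lambda_g\,[g\cdot\lambda_h]\,\lambda_{gh}^{-1},
\]
that is, $\nu=\delta\lambda$. More general isomorphisms, in which $\rho$ itself is moved by the $x_g$, are absorbed by the normalization above.

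Hence equivalence classes of liftings form a torsor over $Z^2/B^2=H^2(G,Z_G(A))$: the action is free because $\nu=\delta\lambda$ is forced whenever two liftings are equivalent, and transitive by the transport step. The main obstacle I expect is this last bookkeeping, namely showing that the choice of the $x_g$ in the transport step does not change the resulting class in $H^2$. I would handle it by noting that two choices of $x_g$ differ by elements of $Z_G(A)$, which alter $\nu$ only by a coboundary.
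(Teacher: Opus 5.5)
Your argument is correct, but it takes a different route from the paper.

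\textbf{The paper's route.} The paper's proof is homotopy-theoretic. It identifies $G$-actions with monoidal functors $\ul G\to\2ust(A)$, and hence with pointed homotopy classes of maps $BG\to B\2ust(A)$. It then invokes the Postnikov obstruction theory for the $2$-type $B\2ust(A)$, with $\pi_1=\ost(A)$ and $\pi_2=Z_G(A)$, and identifies the pulled-back $k$-invariant $\brho^*k$ with the explicit class $[\tau]$ of Lemma~\ref{lem:3co}. The algebraic content (rechoosing $\sigma$, and the freedom $\sigma\mapsto\mu\sigma$) appears only parenthetically.

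\textbf{Your route.} You prove both assertions directly at the cocycle level.
\begin{enumerate}
\item For existence, you correct $\sigma$ by a $Z_G(A)$-valued $2$-cochain whose coboundary is $\tau$.
\item For the torsor statement, you transport an arbitrary lifting to one with the same underlying $\rho$. The two $\sigma$'s then differ by some $\nu\in Z^2(G,Z_G(A))$, by Corollary~\ref{lem:nat-auto-ZG-multi} and centrality.
\item You check that monoidal isomorphisms $(\rho,\sigma)\Rightarrow(\rho,\nu\sigma)$ are exactly the maps $\lambda\colon G\to Z_G(A)$ with $\nu=\delta\lambda$.
\end{enumerate}

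\textbf{Comparison.} The paper's route is shorter and places the result in the standard Etingof--Nikshych--Ostrik framework. However, it leans on two things. One is the dictionary between monoidal functors up to isomorphism and pointed maps of classifying spaces. The other is the claim that $[\tau]$ is the pullback of the $k$-invariant, which is asserted rather than proved. Since the paper \emph{defines} $[\brho]$ to be $[\tau]$, your computation proves the theorem exactly as stated without either ingredient. It uses only the independence-of-choices part of Lemma~\ref{lem:3co}, Corollary~\ref{lem:nat-auto-ZG-multi}, and centrality of $Z_G(A)$.

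\textbf{Two small corrections.}
\begin{enumerate}
\item With $\tau$ as in \eqref{eq:tau-cocycle} and the standard coboundary $(\delta\mu)(g,h,k)=[g\cdot\mu(h,k)]\,\mu(gh,k)^{-1}\mu(g,hk)\,\mu(g,h)^{-1}$, replacing $\sigma$ by $\mu\sigma$ multiplies $\tau$ by $(\delta\mu)^{-1}$. So when $\tau=\delta\mu$ the correct replacement is $\sigma'=\mu\sigma$, not $\mu^{-1}\sigma$. Your formula $\tau'=\tau(\delta\mu)^{-1}$ is the one for $\mu\sigma$.
\item The obstacle you anticipate at the end needs no separate argument. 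Two transports of the same lifting are isomorphic liftings with the same underlying $\rho$. By your freeness computation, their $\nu$'s therefore already differ by a coboundary.
\end{enumerate}
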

\begin{proof}
By Lemma~\ref{GactionTw}, $G$-actions on $\C{A}$ correspond bijectively to monoidal functors $\ul G\to\2ust(A)$, hence to pointed homotopy classes of maps $BG\to B\2ust(A)$. Apply the Postnikov classification of \S\ref{sss:Gaction-formalism} to the $2$-type $X=B\2ust(A)$ with $\pi_1=\ost(A)$, $\pi_2=Z_G(A)$ \eqref{eq:pi-2ust}, and identify the pullback obstruction $\brho^*k_X$ with the explicit cocycle class $[\tau]$ of Lemma~\ref{lem:3co}. Both claims follow: $\brho$ lifts iff $[\brho]=[\tau]=0$ \textup{(}equivalently, $\sigma$ can be rechosen to satisfy the strict $2$-cocycle identity \eqref{2-cocycle}\textup{)}, and the lifts then form an $H^2(G,Z_G(A))$-torsor via the freedom $\sigma\mapsto\mu\sigma$, $\mu\in Z^2(G,Z_G(A))$.
\end{proof}

\subsection{\texorpdfstring{$G$-equivariantization as a quasi-Hopf algebra}{G-equivariantization as a quasi-Hopf algebra}}\label{SS:G-equiv-qHopf}
We first recall the standard notion of $G$-equivariantization of a monoidal category (cf.~\cite[\S 4.15]{EGNO}). Suppose a group $G$ acts on a monoidal category $\CC$ via a monoidal functor $(\zeta,\zeta_2,\zeta_0): \underline{G}\to \OAut(\CC)$. In detail, for each $g\in G$, $\zeta(g)=(\zeta^g,\zeta^g_2,\zeta^g_0)$ is a tensor autoequivalence of $\CC$; as usual, we assume $\zeta_0: \zeta(e)=\id_\CC$ and $\zeta^g_0: \zeta^g(\1)=\1$ are identities. A {\it $G$-equivariant} object in $\CC$ is a pair $(X,p)$, where $X$ is an object of $\CC$ and $p=(p^g)_{g\in G}$ is a collection of morphisms $p^g: X\to \zeta^g(X)$ in $\CC$ such that the diagrams
\begin{align}\label{obGe}
\xymatrix{
X\ar[r]^-{p^g}\ar[d]_-{p^{gh}} & \zeta^gX\ar[d]^-{\zeta^g(p^h)}\\
\zeta^{gh}X\ar[r]^-{(\zeta_2^{g,h})_X} & \zeta^g\zeta^hX
}
\quad\text{ and }\quad
\xymatrix{
X\ar[rr]^-{p^e}\ar[dr]_-{\id_X} && \zeta^eX\ar[dl]^-{\zeta_0}\\
&X&.
}
\end{align}
commute for all $g,h\in G$. Consequently, $p^e=\id$ and the morphisms $p^g$ are isomorphisms in any $G$-equivariant object $(X,p)$. A {\it $G$-equivariant } morphism $f: (X,p)\to (Y,q)$ between two $G$-equivariant objects is a morphism $f: X\to Y$ in $\CC$ such that $q^gf=\zeta^g(f)p^g$ for all $g\in G$.  

The {\it $G$-equivariantization} of $\mathcal{C}$, denoted by $\mathcal{C}^G$, is the category whose objects are $G$-equivariant objects and whose morphisms are $G$-equivariant morphisms. It is again a monoidal category, with monoidal product of two $G$-equivariant objects $(X,p)$ and $(Y,q)$ defined by
\begin{align}\label{tensorGE}
(X,p)\otimes (Y,q)~&:=~(X\otimes Y,w),\quad \text{where}\\
w^g~&:=~\left(X\otimes Y\xrightarrow{p^g\otimes q^g}\zeta^gX\otimes \zeta^gY\xrightarrow{(\zeta^g_2)_{X,Y}^{-1}} \zeta^g(X\otimes Y)\right)\notag
\end{align}
together with the unit object $(\1,\id)$ and the associativity constraint inherited from $\CC$. 

By Lemma \ref{GactionTw} and Proposition~\ref{propG}, any $G$-action on $\C{A}$ is given by $\OPsi\widetilde{\rho}\Inv$ for a monoidal functor $(\rho,\sigma): \underline{G}\to \2ust(A)$. We write $\rho(g)=(\varphi^g,J^g)$ with $\rho(e)=(\id_A,1\otimes 1)$, and the convolution-invertible map $\sigma: G\times G\to A$ satisfies \eqref{twistG}, \eqref{2-cocycle}, and \eqref{Gnat2}. For any $g \in G$, one can verify that $\varphi^g$ is a quasi-bialgebra isomorphism from $(A, \Delta, \ve, \Phi, S, \alpha, \beta)$ to each of the following quasi-Hopf algebras: \[(A,\Delta_{J^g},\ve,\Phi_{J^g},\varphi^g\circ S\circ (\varphi^g)^{-1},\varphi^g(\alpha),\varphi^g(\beta))\quad \text{ and }\quad  (A,\Delta_{J^g},\ve,\Phi_{J^g},S,\alpha_{J^g},\beta_{J^g})\, .\]
 By \cite[Proposition 1.1]{Drin90}, there is a unique $\nu^g\in \A$ such that
\begin{align}\label{nuS}
\varphi^gS(a)=\nu^gS(\varphi^g(a))(\nu^g)^{-1},\quad \varphi^g(\alpha)=\nu^g\alpha_{J^g},\quad  \varphi^g(\beta)=\beta_{J^g}(\nu^g)^{-1} \end{align}
for any $a\in A$. Again by applying \cite[Proposition 1.1]{Drin90} to $\varphi^g\varphi^h$ as a quasi-bialgebra isomorphism from $(A,\Delta,\ve,\Phi,S,\alpha,\beta)$ to each of:
\begin{gather*}
\left(A,\,\Delta_{\varphi^g(J^h)J^g},\,\ve,\Phi_{\varphi^g(J^h)J^g},\,\varphi^g
\varphi^h\circ S\circ (\varphi^g\varphi^h)^{-1},\,\varphi^g\varphi^h(\alpha),\,\varphi^g\varphi^h(\beta)\right)\quad \text{and}\\
\left(A,\,\Delta_{\varphi^g(J^h)J^g},\,\ve,\,\Phi_{\varphi^g(J^h)J^g},\,S,\,\alpha_{\varphi^g(J^h)J^g},\,\beta_{\varphi^g(J^h)J^g}\right)\, , 
\end{gather*}
we obtain 
   \begin{align}\label{uniqueNV}
    \sigma(g,h)\,\nu^{gh}\, S(\sigma(g,h))~=~\varphi^g(\nu^h)\nu^g\quad \text{for all $g,h\in G$.} 
\end{align}

 We now construct a new quasi-Hopf algebra arising from the $G$-equivariantization of the integral tensor category $\C{A}$. We denote the corresponding $\k$-basis of $\k[G]$ by $\{u^g \mid g \in G\}$, with multiplication given by $u^g u^h = u^{gh}$ for all $g, h \in G$.

\begin{deff}\label{DefGE}
Let $(\rho,\sigma): \underline{G}\to \2ust(A)$ be any monoidal functor. We define the {\it G-equivariantization} of the quasi-Hopf algebra $(A,\Delta,\ve,\Phi,S,\alpha,\beta)$ in terms of $G$ and $(\rho,\sigma)$ as the following quasi-Hopf algebra $A\#_{(\rho,\sigma)} G$ such that 
\begin{itemize}
    \item[(i)] $A\#_{(\rho,\sigma)} G$ is the crossed product algebra $A\#_\sigma \k[G]=A\otimes \k[G]$, and we use the identifications $a = a \# u^e$ and $u^g= 1_A \# u^g$ for any $a \in A$ and $g \in G$. Thus, the multiplication in $A\#_{(\rho,\sigma)} G$ is given by
\[
(a u^g)(bu^h)~=~a\, \varphi^g(b)\, \sigma(g,h) u^{gh}
\]
for all $g,h\in G$ and $a,b\in A$.
\item[(ii)] The quasi-Hopf algebra structure on $A \#_{(\rho,\sigma)} G$ is defined as follows: For any $a \in A$ and $g \in G$, we set $\Delta(u^g) = (J^g)^{-1}(u^g\otimes u^g)$ and
$$
\Delta(a u^g)=\Delta(a)\Delta(u^g),\quad \ve(a u^g)=\ve(a),\quad S(au^g)=(u^g)^{-1}\nu^gS(a),
$$
where the associator $\Phi$ and $\alpha,\beta$ are inherited from $A$.
\end{itemize} 
\end{deff}

We sketch the verification that $A\#_{(\rho,\sigma)} G$ is a well-defined quasi-Hopf algebra; the substantive checks invoke conditions \eqref{twistG}--\eqref{Gnat2} of Proposition~\ref{propG}, and the remaining details are routine.

The crossed-product algebra structure on $A\#_\sigma\k[G]$ is well-defined by \cite[Lemma 7.1.2]{Mont98}, using \eqref{twistG} and \eqref{2-cocycle}; \eqref{Gnat2} forces $\sigma(g,h)\in A_1^\times$. The counit $\ve(au^g):=\ve(a)$ is an algebra map by $\ve\circ\varphi^g=\ve$, and $(\ve\otimes\id)\Delta(u^g)=(\id\otimes\ve)\Delta(u^g)=u^g$ follows from the counit-normalization of $J^g$. The key step is that $\Delta$ respects multiplication; the substantive computation uses \eqref{Gnat2}:
\begin{align*}
\Delta(u^gu^h)&=\Delta(\sigma(g,h))(J^{gh})^{-1}(u^{gh}\otimes u^{gh})\\
\overset{\eqref{Gnat2}}&{=}(J^g)^{-1}\varphi^g(J^h)^{-1}(u^gu^h\otimes u^gu^h)
\;=\;\Delta(u^g)\Delta(u^h).
\end{align*}
The relation $\Delta(u^gb)=\Delta(u^g)\Delta(b)$ follows from the intertwiner property $(\varphi^g\otimes\varphi^g)\Delta=\Delta_{J^g}\circ\varphi^g$, and quasi-coassociativity $(\id\otimes\Delta)\Delta(u^g)=\Phi\,(\Delta\otimes\id)\Delta(u^g)\,\Phi^{-1}$ follows from the associator condition $\Phi_{J^g}=\varphi^g(\Phi)$ together with centrality of $\Phi$ in the calculation. Hence $(A\#_{(\rho,\sigma)}G,\Delta,\ve,\Phi)$ is a quasi-bialgebra.

For the antipode, $u^gu^{g^{-1}}=\sigma(g,g^{-1})$ gives $(u^g)^{-1}=u^{g^{-1}}\sigma(g,g^{-1})^{-1}$, so $S$ is bijective $\k$-linear with $S^{-1}(au^g)=(u^g)^{-1}S^{-1}(a\nu^g)$. That $S$ is an anti-algebra map reduces, via \eqref{uniqueNV} and \eqref{nuS}, to the identity $S((au^g)(bu^h))=S(bu^h)S(au^g)$; the quasi-antipode axioms \eqref{EquasiS} reduce, using \eqref{nuS} and the formulas $\alpha_{J^g}=\sum_i S(\ol f_i)\alpha\ol g_i$ and $\beta_{J^g}=\sum_i f_i\beta S(g_i)$ (with $(J^g)^{\pm 1}=\sum f_i\otimes g_i,\sum\ol f_i\otimes\ol g_i$), to the corresponding axioms in $A$. The associator axiom \eqref{EAssociator} is inherited directly from $A$. Hence $A\#_{(\rho,\sigma)}G$ is a quasi-Hopf algebra in which $A$ is a quasi-Hopf subalgebra, and $\C{A\#_{(\rho,\sigma)}G}$ is an integral tensor category which we identify next with the $G$-equivariantization of $\C{A}$. 

\begin{thm}\label{HRepGE}
Let $G$ be a finite group and $A$ a finite-dimensional quasi-Hopf algebra.  Suppose $G$ acts on $\C{A}$ via a monoidal functor $(\rho,\sigma): \underline{G}\to \2ust(A)$. Then there is a natural tensor equivalence 
   \[
   (\C{A})^G~\stackrel{\otimes}{\cong}~\C{A\#_{(\rho,\sigma)} G}. 
   \]
\end{thm}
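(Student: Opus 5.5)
I will write down an explicit functor $\mathcal{F}\colon(\C{A})^G\to\C{A\#_{(\rho,\sigma)}G}$ on objects and morphisms, check that it is an isomorphism of categories, and then check that it is strict monoidal.

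\textbf{Unwinding the data.} By Lemma~\ref{GactionTw} the action is $\OPsi\tilde\rho\Inv$, so $\zeta^g={}_{\varphi^{g^{-1}}}(-)$ with monoidal structure $\xi^{J^{g^{-1}}}$. The constraint $\zeta_2^{g,h}$ is $\psi$ of the appropriate $\sigma$-value, namely left multiplication by $\sigma(h^{-1},g^{-1})$ (up to the inversion bookkeeping of $\Inv$). It is cleaner to reindex. A $G$-equivariant object $(X,p)$ consists of an $A$-module $X$ and linear maps $p^g\colon X\to X$ satisfying $p^g(a v)=\varphi^{g^{-1}}(a)\,p^g(v)$. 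I therefore set $T_g:=(p^{g^{-1}})$, so that $T_g(av)=\varphi^g(a)\,T_g(v)$.

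\textbf{The functor on objects and morphisms.} Define an action of $u^g$ on $X$ by $u^g\cdot v:=T_g(v)$. The relation $u^g a=\varphi^g(a)u^g$ holds by construction. The first diagram in \eqref{obGe}, with $(\zeta_2)$ given by multiplication by $\sigma$, becomes $T_gT_h=\sigma(g,h)T_{gh}$, which is exactly the crossed-product relation $u^gu^h=\sigma(g,h)u^{gh}$. The unit diagram gives $u^e=\id$. Conversely, any $A\#_\sigma\k[G]$-module gives such $p$ by the same formulas. Equivariant morphisms are precisely maps commuting with all $u^g$. Hence $\mathcal{F}$ is an isomorphism of categories.

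\textbf{Monoidality.} By \eqref{tensorGE}, the equivariant structure on $X\otimes Y$ is $w^g=(\zeta_2^g)^{-1}_{X,Y}\circ(p^g\otimes q^g)$. Since $\zeta_2^g$ is the action of $J^{g^{-1}}$, after reindexing this is $v\otimes w\mapsto (J^g)^{-1}(T_gv\otimes T_gw)$. That is exactly the action of $\Delta(u^g)=(J^g)^{-1}(u^g\otimes u^g)$ on $X\otimes Y$. The $A$-action on $X\otimes Y$ is via $\Delta$ in both categories. The unit $(\1,\id)$ corresponds to $\ve$, and the associators coincide because both are given by the action of $\Phi$. So $\mathcal{F}$ is strict monoidal, with identity tensor structure.

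\textbf{Main obstacle.} The hard part is keeping the order reversal straight. Three things have to be matched at once: the $\rev$ in $\OPsi$ together with $\Inv$, which of $\sigma(g,h)$ or $\sigma(h^{-1},g^{-1})$ appears in $\zeta_2$, and whether $J^g$ or $\varphi^g$-twisted versions appear. I would first fix $\zeta^g$ and $\zeta_2^{g,h}$ explicitly from the composition formula \eqref{eq:strict-comp}. I would then check the reindexed identity $T_gT_h=\sigma(g,h)T_{gh}$ directly against \eqref{twistG} and \eqref{Gnat2}. If the indices come out reversed, I would replace $T_g$ by $p^{g}$ composed with an inverse. Naturality of the equivalence is immediate because $\mathcal{F}$ is the identity on underlying vector spaces and maps.
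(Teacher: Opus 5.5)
Your proposal is correct and follows the paper's proof essentially step for step. The paper's argument is the same forgetful functor, with $u^g$ acting by $p^{g^{-1}}$. The crossed-product relations correspond to the equivariance diagrams via $\zeta_2(g,h)=\sigma(h^{-1},g^{-1})$, the coproduct $\Delta(u^g)=(J^g)^{-1}(u^g\otimes u^g)$ reproduces $w^{g^{-1}}$, and the monoidal structure is the identity. The index bookkeeping you were worried about does come out as you claim: the equivariance condition $p^hp^g=\sigma(h^{-1},g^{-1})\,p^{gh}$ becomes $T_aT_b=\sigma(a,b)\,T_{ab}$ for $T_a:=p^{a^{-1}}$, so no further correction is needed.
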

\begin{proof}
By Lemma \ref{GactionTw}, the corresponding monoidal functor $(\zeta,\zeta_2)=\OPsi\,\widetilde{\rho}\,\Inv: \underline{G}\to \OAut(\C{A})$ satisfies
\[
\zeta^g=\OPsi(\varphi^{g^{-1}},J^{g^{-1}})\ \text{ and }\ \zeta_2(g,h)=\sigma(h^{-1},g^{-1}): \zeta^{gh}\xrightarrow{\sim} \zeta^g\zeta^h \text{ for all $g,h\in G$}.
\]
Let $F:(\C{A})^G \to \C{A}$ be the forgetful functor $(X,p)\mapsto X$. For each $g\in G$, $p^g: X\to \zeta^g(X)=\!_{\varphi^{g^{-1}}}X$ is a morphism in $\C{A}$, i.e.\ $p^g: X\to X$ is $\k$-linear with $p^g(ax)=\varphi^{g^{-1}}(a)p^g(x)$ for $a\in A$, $x\in X$. Note that $A\#_{(\rho,\sigma)} G$, as an algebra, is generated by the subalgebra $A$ and the set of generators $\{u^g\}_{g\in G}$ subject to the relations 
\begin{align}
u^ga&\,=\varphi^g(a)u^g, \label{actionA}\\
u^gu^h&\,=\sigma(g,h)u^{gh},\label{Geqaction}\\
u^e&\,=1 \label{ueident} 
\end{align}
for all $g,h\in G$ and $a\in A$. Hence $X=F(X,p)$ becomes a left $A\#_{(\rho,\sigma)} G$-module with $u^g$ acting as $p^{g^{-1}}$: \eqref{actionA} says $p^g: X\to \!_{\varphi^{g^{-1}}}X$ is an $A$-module map, and \eqref{Geqaction}--\eqref{ueident} say diagram \eqref{obGe} commutes.

A $G$-equivariant morphism $(X,p)\to(Y,q)$ is exactly a $\k$-linear map commuting with the $A$- and all $u^g$-actions, i.e.\ an $A\#_{(\rho,\sigma)}G$-module map; so $F$ lifts to a fully faithful functor $(\C{A})^G\to\C{A\#_{(\rho,\sigma)}G}$. Take the tensor product $(X,p)\otimes (Y,q)=(X\otimes Y,w)$ where $w$ is given by \eqref{tensorGE}. Since $\zeta^{g^{-1}}=\OPsi(\varphi^g,J^g)$, the monoidal structure $(\zeta^{g^{-1}}_2)_{X,Y}: \zeta^{g^{-1}}(X\otimes Y)\to \zeta^{g^{-1}} X\otimes \zeta^{g^{-1}} Y$ is multiplication by $J^g$. Then, the corresponding $A\#_{(\rho,\sigma)}G$-module structure on $F(X\otimes Y,w)=X\otimes Y$ is determined by its comultiplication such that  
\begin{align*}
    u^g(X\otimes Y)&\,=w^{g^{-1}}(X\otimes Y)=(\zeta^{g^{-1}}_2)_{X,Y}^{-1}(p^{g^{-1}}\otimes q^{g^{-1}})(X\otimes Y)\\
    &\,=(J^g)^{-1}(u^g\otimes u^g)(X\otimes Y)=\Delta(u^g)(X\otimes Y).
\end{align*}
Note that the associativity constraint on $(\C{A})^G$ is inherited from $\C{A}$, which is given by the associator $\Phi$ of $A$. Consequently, we can extend $F$ to a tensor functor 
\[(F,\xi):(\C{A})^G\xrightarrow{\otimes} \C{A\#_{(\rho,\sigma)}G}
\]
by setting the monoidal structure 
\[\xi_{(X,p),(Y,q)}: F((X,p)\otimes (Y,q))=F(X\otimes Y,w)=X\otimes Y\xrightarrow{\id} X\otimes Y=F(X,p)\otimes F(Y,q)
\]
as identity. 

Finally, $F$ is essentially surjective: any $A\#_{(\rho,\sigma)}G$-module $X$ equals $F(X,p)$ for $p^g:=u^{g^{-1}}: X\to \!_{\varphi^{g^{-1}}}X=\zeta^g(X)$, which satisfies \eqref{obGe}.
\end{proof}

\subsection{Cocentral cleft extensions}\label{SS:CCE}

Let $K$ be a Hopf algebra. For two quasi-Hopf algebras 
\[(A,\Delta_A,\ve_A,\Phi_A,\alpha_A,\beta_A,S_A) \text{ and } (H,\Delta_H,\ve_H,\Phi_H,\alpha_H,\beta_H,S_H),\] we say $H$ is an \emph{extension of $K$ by $A$} if there is a sequence of quasi-Hopf algebras
\begin{equation}\label{E:SES}
1\to A\xrightarrow{\iota} H\xrightarrow{\pi} K\to 1
\end{equation}
consisting of 
\begin{itemize}
    \item[(1)] $\iota$ is an injective quasi-bialgebra map.
    \item[(2)] $\pi$ is a surjective quasi-bialgebra map. 
    \item[(3)] $\ker \pi=H\iota(A^+)=\iota(A^+)H$.
    \item[(4)] $A=H^{co \pi}=\!^{co \pi}H$.
    \end{itemize}
In such an extension, we can identify $A$ with its image $\iota(A)$ in $H$. Then since $A$ is a quasi-Hopf subalgebra of $H$, we have $\Delta_A=\Delta_H|_A$, $\ve_A=\ve_H|_A$ and $\Phi_H=\Phi_A$. Also we can identify the Hopf algebra $K$ with $\pi(H/A^+H)=\pi(H/HA^+)$ together with the trivial associator $\pi(\Phi_H)=1\otimes 1\otimes 1$. Moreover, two extensions $H$ and $H'$ of $K$ by $A$ are \emph{equivalent} if there is a quasi-bialgebra isomorphism $f: H\to H'$ which induces the identity maps on $A$ and $K$. Moreover, we say the extension \eqref{E:SES} is \emph{cleft} if    
    \begin{itemize}
       \item[(5)] $\pi$ has a section $\gamma: K\to H$ that is a convolution invertible right $K$-comodule map. 
    \end{itemize}
Finally, we say a cleft extension is \emph{cocentral} if, additionally, we have
\begin{itemize}
  \item[(6)] $\sum_{(h)}h_1\otimes \pi(h_2)=\sum_{(h)} h_2\otimes \pi(h_1)$ for any $h\in H$. 
\end{itemize}
When $A$ and $H$ are both Hopf algebras, conditions (1)-(4) coincide with the notion of short exact sequence of Hopf algebras in the sense of \cite[Definition 1.3]{Ma94}. It is well-known that any extension of finite-dimensional Hopf algebras is automatically cleft.

  \begin{thm}\label{2dimE}
       Let $H$ be a quasi-Hopf algebra with a quasi-Hopf subalgebra $A$. Then, for any group $G$, $H$ arises as a $G$-equivariantization of $A$ if and only if $H$ can be realized as a cocentral cleft extension of $\k[G]$ by $A$. Consequently, equivalence classes of $G$-equivariantizations of $\C{A}$ are in one-to-one correspondence with equivalence classes of cocentral cleft extensions of $\k[G]$ by $A$.
   \end{thm}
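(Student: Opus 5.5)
We prove the two directions separately and then compare the equivalence relations.

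\emph{Forward direction.} Suppose $H = A\#_{(\rho,\sigma)}G$ for some monoidal functor $(\rho,\sigma)\colon\ul G\to\2ust(A)$, as in Definition~\ref{DefGE}. Define $\pi\colon H\to\k[G]$ by $\pi(au^g)=\ve(a)u^g$. We check the axioms in order.
\begin{enumerate}
\item $\pi$ is an algebra map, because $\ve\circ\varphi^g=\ve$ and $\ve(\sigma(g,h))=1$.
\item $\pi$ is a coalgebra map, because $\ve\otimes\ve$ kills the difference $J^g-1\otimes1$ after multiplying, so that $(\pi\otimes\pi)\Delta(u^g)=u^g\otimes u^g$.
\item $\pi$ sends $\Phi$ to $1\otimes1\otimes1$, by the normalization of $\Phi$.
\item The kernel and coinvariants satisfy $\ker\pi=A^+H=HA^+$ and $H^{co\pi}={}^{co\pi}H=A$. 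Both follow by writing $h=\sum_g a_gu^g$ and using $\Delta(u^g)=(J^g)^{-1}(u^g\otimes u^g)$ together with $(\id\otimes\pi)((J^g)^{-1})=(J^g)^{-1}$-counit, which gives $(\id\otimes\pi)\Delta(a_gu^g)=a_gu^g\otimes u^g$.
\end{enumerate}
The same computation gives cocentrality (6): both $(\id\otimes\pi)\Delta$ and $(\pi\otimes\id)\Delta$ send $au^g$ to $au^g\otimes u^g$, respectively $u^g\otimes au^g$. For the section we take $\gamma(u^g)=u^g$. It is a right comodule map, and it is convolution invertible with inverse $u^g\mapsto(u^g)^{-1}$.

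\emph{Converse.} Start from a cocentral cleft extension $1\to A\to H\xrightarrow{\pi}\k[G]\to1$ with section $\gamma$. Cocentrality gives a decomposition $H=\bigoplus_g H_g$, where $H_g=\{h:(\id\otimes\pi)\Delta h=h\otimes g\}$. Since $\gamma$ is a comodule map, $\gamma(g)\in H_g$. Since $\gamma$ is convolution invertible and $\Delta(H_g)\subseteq H_g\otimes H_g$ by cocentrality, each $\gamma(g)$ is invertible in $H$. After rescaling we may normalize $\ve(\gamma(g))=1$ and $\gamma(e)=1$.

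Set $u^g:=\gamma(g)$, $\varphi^g:=I_{u^g}|_A$, $\sigma(g,h):=u^gu^h(u^{gh})^{-1}$ and $J^g:=(u^g\otimes u^g)\Delta(u^g)^{-1}$. We then check the following.
\begin{enumerate}
\item $\varphi^g(A)\subseteq A$ and $\sigma(g,h)\in A$, because these elements lie in $H_e=H^{co\pi}=A$.
\item $J^g\in A\otimes A$. This is the main obstacle, since it requires $\Delta(H_g)\subseteq H_g\otimes H_g$ together with $H_g=Au^g$. We would obtain the latter from cleftness: the normal basis property $H\cong A\otimes\k[G]$ via $a\otimes g\mapsto a\gamma(g)$.
\item $J^g$ is a gauge transformation.
\item $(\varphi^g,J^g)$ is a stable twist. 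The conditions are $J^g\Delta(\varphi^g(a))=(\varphi^g\otimes\varphi^g)\Delta(a)J^g$, which is immediate from multiplicativity of $\Delta$, and $\varphi^g(\Phi)=\Phi_{J^g}$, which is obtained from quasi-coassociativity applied to $u^g$.
\end{enumerate}
The identities \eqref{twistG}, \eqref{2-cocycle} and \eqref{Gnat2} then hold by associativity in $H$ and by multiplicativity of $\Delta$ applied to $u^gu^h=\sigma(g,h)u^{gh}$. Hence Proposition~\ref{propG} yields a monoidal functor $(\rho,\sigma)$, and the map $au^g\mapsto a\gamma(g)$ is a quasi-bialgebra isomorphism $A\#_{(\rho,\sigma)}G\to H$ that is the identity on $A$ and on $\k[G]$. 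The antipode data is then forced by the uniqueness in \cite[Proposition 1.1]{Drin90}.

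\emph{Equivalence classes.} Two sections of the same extension differ as $\gamma'(g)=x_g\gamma(g)$ with $x_g\in\A$. This changes the data $(\rho,\sigma)$ by an isomorphism of monoidal functors $\ul G\to\2ust(A)$. Conversely, an isomorphism of monoidal functors given by elements $x_g$ induces the equivalence of extensions $u^g\mapsto x_gu^g$. Combining this with Lemma~\ref{GactionTw} and Theorem~\ref{HRepGE} gives the stated bijection between equivalence classes of $G$-equivariantizations of $\C{A}$ and equivalence classes of cocentral cleft extensions of $\k[G]$ by $A$.
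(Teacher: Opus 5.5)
Your proposal is correct and follows the paper's route. In one direction you take the projection $\pi(au^g)=\ve(a)g$ with section $g\mapsto u^g$; in the other you take the normalized section $u^g=\gamma(g)$, the decomposition $H=\bigoplus_g Au^g$, cocentrality to get $\Delta(u^g)\in Au^g\otimes Au^g$, the restricted trivial stable twist $(I_{u^g},\partial_{u^g})$, and you match equivalences of extensions with isomorphisms of monoidal functors via $u^g\mapsto x_g u^g$.

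The step you flag as the main obstacle is actually immediate. Invertibility of $u^g$ follows directly from $\gamma(g)\gamma^{-1}(g)=\ve(g)1=1$, with no need for cocentrality. Then $h(u^g)^{-1}\in H_e=A$ for any $h\in H_g$ gives $H_g=Au^g$, without invoking the normal basis property.
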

\begin{proof}
Suppose $H=A\#_{(\rho,\sigma)} G$ for some monoidal functor $(\rho,\sigma): \underline{G}\to \2ust(A)$. Consider the quasi-bialgebra surjection $\pi: H \to \k[G]$ defined by $\pi(a u^g) =\ve(a)g$ for any $a \in A$ and $g \in G$. It is immediate to check that conditions (1)-(6) hold for the sequence   
$$
1\to A\to H\xrightarrow{\pi} \k[G]\to 1.
$$
with a section $\gamma: \k[G]\to H$ of $\pi$ given by $\gamma(g)=u^g$ that is a convolution invertible right $\k[G]$-comodule map.

Conversely, let $1\to A\to H\xrightarrow{\pi} \k[G]\to 1$ be a cocentral cleft extension. We will construct a monoidal functor $(\rho,\sigma): \underline{G}\to \2ust(A)$ such that $H\cong A\#_{(\rho,\sigma)} G$ as quasi-bialgebras. By (5), let $\gamma$ be a section of $\pi$, which is a convolution invertible right $\k[G]$-comodule map. Let $u^g=\gamma(g)$. One can rescale $\gamma$ so that $\gamma(1)=1$ and $\ve(u^g)=1$. Since $(\gamma*\gamma^{-1})(g)=\gamma(g)\gamma^{-1}(g) = \ve(g)1=1$, we have $\gamma^{-1}(g) = (u^g)^{-1}$ for all $g\in G$.  As a $\k[G]$-cleft extension, $\k[G]$ measures $A$ via 
\[
g\cdot a:=\gamma(g)\,a\,\gamma^{-1}(g)=u^g\,a\,(u^g)^{-1}
\]
and there is a 2-cocycle $\sigma: G\times G\to A$ via 
\[
\sigma(g,h):=\gamma(g)\gamma(h)\gamma^{-1}(gh)=u^gu^h(u^{gh})^{-1}\,
\]
such that $A\#_\sigma \k[G]\to H$, $a\#g \mapsto au^g$ defines an isomorphism of $\k$-algebras, which is also a left $A$-module and right $\k[G]$-comodule map.  Consequently, there is a decomposition of left $A$-modules $H=\bigoplus_{g\in G} Au^g$ where $Au^g=\{h \in H\mid (\id \o \pi)\Delta(h) = h \o g\}$. Since $\pi(u^g)=g$ and $\pi$ is cocentral, 
$\Delta(u^g)\in Au^g\otimes Au^g$ or $\Delta(u^g)(u^g \o u^g)^{-1} \in (A \o A)^\times$ since $u^g \in H^\times_1$. Let $J^g =\partial_{u^g}= (u^g \o u^g)\Delta(u^g)^{-1} \in (A \o A)^\times$. Then we have $\Delta(u^g) = (J^g)^{-1}(u^g \o u^g)$ where $J^g$ is a gauge transformation of $A$. Let $\varphi^g=I_{u^g}|_A$ be the measuring $\varphi^g(a)=u^g a (u^g)^{-1}$ above, an algebra automorphism of $A$ with $\ve\circ\varphi^g=\ve$. So the trivial stable twist $(\varphi^g,J^g)=(I_{u^g},\partial_{u^g})$ of $H$ restricts to a stable twist of $A$. After equipping $A\#_\sigma \k[G]$ with a quasi-bialgebra structure given by
\[
\Delta(a u^g)=\Delta(a)(J^g)^{-1}(u^g\otimes u^g),\qquad \ve(au^g)=\ve(a)
\]
for all $a \in A$ and $g \in G$, the previous map $A\#_\sigma \k[G]\to H$ becomes an isomorphism of quasi-bialgebras. It remains to show that the pair $(\rho,\sigma)$ with $\rho(g)=(\varphi^g,J^g)$ satisfies Proposition \ref{propG}.
This is again a routine check by applying \cite[Lemma 7.1.2]{Mont98} and the relations
\[
\Delta(u^g u^h)=\Delta(\sigma(g,h)u^{gh})
\]
for all $g,h\in G$.

Finally, by Lemma \ref{GactionTw} it remains to match isomorphisms $(\rho,\sigma)\cong(\rho',\sigma')$ with equivalences of the extensions $H=A\#_{(\rho,\sigma)}G$ and $H'=A\#_{(\rho',\sigma')}G$. Suppose $f: H\to H'$ is an equivalence of extensions, with cocentral maps $\pi,\pi'$ to $\k[G]$. We have $H=\bigoplus_{g\in G} Au^g$ and $H'=\bigoplus_{g\in G} Au'^g$. Since $\pi'\circ f=\pi$, we must have $f(Au^g)=Au'^g$ or $f(u^g)=\phi(g)u'^g$ for some map $\phi: G\to A$. Since $1=\ve(f(u^g))=\ve(\phi(g)u'^g)=\ve(\phi(g))$ and $f$ is bijective, we have $\phi: G\to A_1^\times$. Note that $f|_A=\id$. So $f(u^ga)=f(\varphi^g(a)u^g)$ and $\Delta'(f(u^g))=(f\otimes f)\Delta(u^g)$ implies that
 \begin{align}\label{prof.natco}
 (I_{\phi(x)},\partial_{\phi(g)})(\varphi'^g,J'^g)=(\varphi^g,J^g)
 \end{align}
 for all $g\in G$. This implies that $\phi(g): \rho'(g)=(\varphi'^g,J'^g)\to \rho(g)=(\varphi^g,J^g)$ is an isomorphism in $\2ust(A)$. Furthermore, $f(u^gu^h)=f(\sigma(g,h)u^{gh})$ implies that
 \begin{align}\label{prof.natiso}
      \phi(g)\varphi'^{g}(\phi(h))\sigma'(g,h)=\sigma(g,h)\phi(gh)
 \end{align}
 for all $g,h\in G$. This means that the following diagram
 \[
 \xymatrix{
\rho'(gh)\ar[r]^-{\sigma'(g,h)}\ar[d]_-{\phi(gh)}  &\rho'(g)\otimes \rho'(h)\ar[d]^-{\phi(g)\otimes \phi(h)}\\
\rho(gh)\ar[r]^-{\sigma(g,h)}& \rho(g)\otimes \rho(h)
 }
 \]
 commutes for all $g,h\in G$. Consequently $\phi: (\rho',\sigma')\to (\rho,\sigma)$ is an isomorphism of monoidal functors, so the corresponding $G$-actions are equivalent. Conversely, such a $\phi$ yields the equivalence $f: H\to H'$, $f(au^g)=a\phi(g)u'^g$, by the same computation.
 \end{proof}

\begin{thm}\label{CCEGE}
    Let $A$ be a finite-dimensional quasi-Hopf algebra and $G$ a finite group. Suppose $H^2(G,Z_G(A))$ is trivial for any group homomorphism $\brho: G\to \ost(A)$. The following are in one-to-one correspondence:
    \begin{enumerate}
        \item $G$-actions on $\C{A}$ up to equivalence;
        \item monoidal functors from $\underline{G}\to \2ust(A)$ up to isomorphism;
\item group homomorphisms $\brho: G\to \ost(A)$ with trivial $[\brho]$ in $H^3(G,Z_G(A))$;
    \item cocentral cleft extensions of $\k[G]$ by $A$ up to equivalence.
    \end{enumerate} 
\end{thm}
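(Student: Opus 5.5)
The plan is to chain together the bijections already established, inserting the vanishing of $H^2(G,Z_G(A))$ only at the one step where it is needed.

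\textbf{(i)$\leftrightarrow$(ii).} By Lemma~\ref{GactionTw}, every $G$-action on $\C{A}$ is isomorphic to $\OPsi\,\tilde\rho\,\Inv$ for some monoidal functor $(\rho,\sigma)\colon\ul G\to\2ust(A)$. Moreover, two actions are equivalent exactly when the corresponding monoidal functors are isomorphic. This gives a bijection between equivalence classes in (i) and isomorphism classes in (ii), with no hypothesis on $H^2$.

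\textbf{(ii)$\to$(iii).} Send $(\rho,\sigma)$ to the induced homomorphism $\brho\colon G\to\ost(A)$, $g\mapsto\overline{\rho(g)}$. This map is well defined on isomorphism classes, because isomorphic functors have objectwise isomorphic values in $\2ust(A)$, hence equal classes in $\ost(A)=\pi_0(\2ust(A))$. Since $\sigma$ satisfies the strict cocycle identity \eqref{2-cocycle}, we may take $\tau=1$ in Lemma~\ref{lem:3co}, so $[\brho]=0$.

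Surjectivity onto (iii) is the ``if'' direction of Theorem~\ref{LiftGactionTw}. Injectivity is its torsor statement: the liftings of a fixed $\brho$ with $[\brho]=0$ form an $H^2(G,Z_G(A))$-torsor, which is a single point by hypothesis.

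The one point needing care is that ``liftings of $\brho$ up to equivalence'' in Theorem~\ref{LiftGactionTw} must coincide with the fibre of (ii)$\to$(iii) over $\brho$. The worry is whether two monoidal functors inducing the same $\brho$, but built from different representatives $(\vp^g,J^g)$, are identified. I would handle this directly. If $\rho'(g)=(I_{x_g},\partial_{x_g})\rho(g)$ with $x_g\in\A$, then replacing $\sigma'$ by the transported cocycle $x_g\vp^g(x_h)\sigma(g,h)x_{gh}^{-1}$ gives a functor isomorphic to $(\rho,\sigma)$. This reduces to the case where the representatives are fixed. In that case two admissible $\sigma,\sigma'$ differ by $\mu=\sigma'\sigma^{-1}\in Z^2(G,Z_G(A))$, by Corollary~\ref{lem:nat-auto-ZG-multi} and \eqref{2-cocycle}. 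Since $\mu=\delta\lambda$ by hypothesis, the map $\phi(g)=\lambda(g)$ is an isomorphism $(\rho,\sigma)\cong(\rho,\sigma')$ of monoidal functors. I expect this bookkeeping to be the main (though mild) obstacle. It is exactly where the $H^2$ vanishing enters, and one must confirm that coboundaries $\delta\lambda$ correspond precisely to monoidal natural isomorphisms with identity components up to $Z_G(A)$.

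\textbf{(i)$\leftrightarrow$(iv).} This is the second statement of Theorem~\ref{2dimE}. Each monoidal functor $(\rho,\sigma)$ yields the cocentral cleft extension $1\to A\to A\#_{(\rho,\sigma)}G\to\k[G]\to1$. Every such extension arises this way, and equivalences of extensions correspond to isomorphisms of monoidal functors via $\phi\colon G\to\A$. Composing with the bijection (i)$\leftrightarrow$(ii) finishes the proof.
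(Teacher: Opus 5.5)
Your proposal is correct and matches the paper's proof, which is exactly the chain: (i)$\leftrightarrow$(ii) by Lemma~\ref{GactionTw}, (ii)$\leftrightarrow$(iii) by Theorem~\ref{LiftGactionTw}, and (i)$\leftrightarrow$(iv) by Theorem~\ref{2dimE}. Your extra bookkeeping for the fibre of (ii)$\to$(iii) is accurate and only makes explicit the torsor statement the paper cites: you transport $\sigma$ along the $x_g$ via $x_g\vp^g(x_h)\sigma(g,h)x_{gh}^{-1}$, then kill the resulting $Z_G(A)$-valued $2$-cocycle $\mu$ using $H^2(G,Z_G(A))=0$.
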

\begin{proof}
  (i)$\leftrightarrow$(ii) is Lemma~\ref{GactionTw}, (ii)$\leftrightarrow$(iii) is Theorem~\ref{LiftGactionTw}, and (i)$\leftrightarrow$(iv) is Theorem~\ref{2dimE}.
\end{proof}

When $A$ is an ordinary Hopf algebra ($\Phi=1\otimes 1\otimes 1$, $\alpha=\beta=1$), the equivariantization $A\#_{(\rho,\sigma)}G$ is again a Hopf algebra with antipode $S(u^g)=(u^g)^{-1}\beta_{J^g}$. As every extension of finite-dimensional Hopf algebras is automatically cleft, Theorem~\ref{CCEGE} then holds verbatim with ``cocentral cleft extensions'' replaced by ``cocentral extensions'' of $\k[G]$ by $A$.

\subsection{\texorpdfstring{Examples of cyclic $G$-equivariantizations}{Examples of cyclic G-equivariantizations}}\label{SS:Examples}
We specialize \S\S\ref{SS:Gaction}--\ref{SS:CCE} to the case where $G$ is a cyclic group, recording explicit formulas for the $3$-cocycle obstruction and the resulting cocentral cleft extensions.

\subsubsection{$C_n$-equivariantizations on finite-dimensional quasi-Hopf algebras}

Let $G=C_n$ be the cyclic group of order $n$ with a fixed generator $g$  such that $g^n=1$. Let $\brho: C_n\to \ost(A)$ be a group homomorphism and write $\rho(g)=\overline{(\varphi,J)}$ for some stable twist $(\varphi,J)$ of $A$. Without loss of generality, we can lift $\brho$ to a functor $\rho: \underline{C_n}\to \2ust(A)$ via $\rho(g^i)=(\varphi,J)^i$ for $0\le i\le n-1$. Since $g^n=1$, there exists some $z\in A_1^\times$ such that $(\varphi,J)^n=(I_z,\partial_z)$. 

By abuse of notation, we will write $g^i$ where the index $i$ is taken modulo $n$ in $C_n$. Then it is clear to check that there are isomorphisms 
\[
\sigma(i,j):\rho(i+j)\xrightarrow{} \rho(i)\otimes \rho(j)=
\begin{cases}
 1 & i+j<n\\
 z & i+j\ge n 
\end{cases}
\text{ for $0\le i,j\le n-1$}. 
\] 
Note that $Z_G(A)$ is a $C_n$-module via $g\cdot a=\varphi(a)$ for any $a\in Z_G(A)$. Moreover, a straightforward computation gives the associated normalized 3-cocycle $\tau\in Z^3(C_n,Z_G(A))$ as 
\[
\tau(i,j,k)=
\begin{cases}
    z\varphi^i(z)^{-1} &  i+j\ge n \text{ and }j+k\ge n\\
 1 &  i+j< n \text{ or } j+k<n
\end{cases}
\text{ for $0\le i,j,k\le n-1$}. 
\]

\begin{lem}\label{lem-Caction}
The obstruction class $[\brho]$ is trivial if and only if there exists $z'=az$ for some $a\in Z_G(A)$ such that $\varphi(z')=z'$. Moreover, up to equivalence, there are $|Z_G(A)^{\varphi}/(1+\varphi+\cdots+\varphi^{n-1})\cdot Z_G(A)|$ distinct liftings of $\brho$ from $\underline{G}$ to $\2ust(A)$. 
 \end{lem}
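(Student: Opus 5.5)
The plan is to reduce both claims to the standard $2$-periodic cohomology of cyclic groups, and to identify the explicit cocycle $\tau$ displayed above with an element of $Z_G(A)$. Throughout, $M:=Z_G(A)$ is the $C_n$-module with $g\cdot a=\varphi(a)$, and $N:=1+\varphi+\cdots+\varphi^{n-1}$ is the norm.

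First I check that $c:=\varphi(z)z^{-1}$ lies in $M$. Since $(\varphi,J)^n=(I_z,\partial_z)$, associativity gives
\[
(\varphi,J)\otimes(\varphi,J)^n=(\varphi,J)^n\otimes(\varphi,J).
\]
Hence $\varphi(z)$ and $z$ are two morphisms $\rho(g)\to(\varphi,J)^{n+1}$ in $\2ust(A)$. Concretely, one compares $(I_{\varphi(z)},\partial_{\varphi(z)})(\varphi,J)$ with $(\varphi,J)(I_z,\partial_z)=(I_z,\partial_z)(\varphi,J)$. By Lemma~\ref{NatST}, or Corollary~\ref{lem:nat-auto-ZG-multi}, two such parallel morphisms differ by a central grouplike, so $c\in M$. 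Moreover $N(c)=\varphi^n(z)z^{-1}=I_z(z)z^{-1}=1$, so $c\in\ker N$.

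Next I use the periodic resolution
\[
\cdots\to\mathbb Z C_n\xrightarrow{\;N\;}\mathbb Z C_n\xrightarrow{\;g-1\;}\mathbb Z C_n\to\mathbb Z.
\]
It gives $H^3(C_n,M)\cong\ker N/(\varphi-1)M$ and $H^2(C_n,M)\cong M^{\varphi}/N M$. The explicit cocycle $\tau(i,j,k)$, which is nontrivial only when $i+j\ge n$ and $j+k\ge n$, is the standard ``carry'' cocycle. Under the comparison map between the bar resolution and the periodic resolution, it corresponds to the class of $c^{\pm1}$ in $\ker N/(\varphi-1)M$. To get this I evaluate $\tau$ on the chain $g\otimes\sum_j g^j\otimes g$, i.e.\ the image of the periodic generator in degree $3$, and find the product $\prod_j\tau(1,j,1)$, which picks up exactly one factor $z\varphi(z)^{-1}$. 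Hence $[\brho]=[\tau]$ vanishes iff $c=\varphi(a)a^{-1}$ for some $a\in M$, possibly after replacing $a$ by $a^{-1}$. Equivalently, $\varphi(a^{-1}z)=a^{-1}z$, i.e.\ $z'=a^{-1}z$ is $\varphi$-fixed, which is the first claim.

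As a sanity check independent of the comparison map, I also argue the ``if'' direction directly. Given $\varphi$-fixed $z'=az$, replace $z$ by $z'$ in the definition of $\sigma$. This is legitimate since $(I_{z'},\partial_{z'})=(I_z,\partial_z)$ for $a\in Z_G(A)$. The displayed formula for $\tau$ then gives $\tau\equiv1$, so $\sigma$ is a strict $2$-cocycle.

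For the count, Theorem~\ref{LiftGactionTw} says liftings up to equivalence form an $H^2(C_n,M)$-torsor. The periodic resolution gives $|H^2(C_n,M)|=|M^{\varphi}/N\cdot M|$, which is the stated number.

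The main obstacle is the bookkeeping in matching $\tau$ with $c$ under the bar/periodic comparison map, including the sign or inverse convention and the check that the carry cocycle represents a generator-type class. I would handle this by the direct evaluation above, and, if needed, verify it against the trivial-module case, where the carry cocycle is known to represent the class of the element.
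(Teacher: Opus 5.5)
Your proposal is correct and is essentially the paper's proof. Evaluating $\tau$ and a coboundary $\delta\gamma$ on $\sum_j[g|g^j|g]$ is exactly the paper's step of setting $i=k=1$ in $\tau=\delta\gamma$ and multiplying over $j$: the $\gamma(1,\cdot)$ factors telescope, leaving $z\varphi(z)^{-1}=\varphi(a)a^{-1}$ with $a=\prod_j\gamma(j,1)$. Your converse (resetting $z$ to the $\varphi$-fixed $z'$) and your count via $H^2(C_n,M)\cong M^{\varphi}/NM$ also coincide with the paper's.

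Only this telescoping direction is needed, so the comparison-map bookkeeping you flag as the main obstacle, and the checks $c\in M$ and $N(c)=1$, can be dropped. Note also that with $\sigma$ as defined one actually gets $\tau(i,j,k)=z\varphi^i(z)^{-1}$ exactly when $j+k\ge n$, with no condition on $i+j$; that is what makes it the genuine carry cocycle. This is harmless here, because only the values $\tau(1,j,1)$ and the case $\varphi(z')=z'$ enter.
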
 
 \begin{proof}
Suppose $[\brho]$ is trivial. Then there is some normalized map $\gamma: C_n\times C_n\to Z_G(A)$ such that
\[
\tau(i,j,k)=\frac{\varphi^i(\gamma(j,k))\, \gamma(i,j+k)}{\gamma(i+j,k)\,\gamma(i,j)}\quad \forall\, 0\leq i,j,k\leq n-1. 
\]
By taking $i=k=1$, we get $\varphi(\gamma(n-1,1))/\gamma(1,n-1)=z\varphi(z)^{-1}$ and \[\varphi(\gamma(j,1))\gamma(1,j+1)=\gamma(1+j,1)\gamma(1,j)\] for $1\le j< n-1$. Multiplying them all together and letting $a=\gamma(n-1,1)\gamma(n-2,1)\cdots \gamma(1,1)$, we get $\varphi(a)/a=z\varphi(z)^{-1}$, or equivalently $\varphi(z')=z'$ with $z'=az$. Conversely, suppose $\varphi(z')=z'$ with $z'=az$ for some $a\in Z_G(A)$. It is clear that $(\varphi,J)^n=(I_{z'},\partial_{z'})$. Then we can reset isomorphisms $\sigma(i,j):\rho(i+j)\to \rho(i)\otimes \rho(j)$ by replacing $z$ with $z'$. Since Lemma \ref{lem:3co} shows that $[\brho]$ depends only on $\brho$ and not on the choice of $\sigma$, $[\brho]$ is trivial by the formula for $\tau$.

The last statement follows from Lemma \ref{GactionTw} and the well-known fact about $H^2(C_n,Z_G(A))=Z_G(A)^{g}/(1+g+\cdots+g^{n-1})\cdot Z_G(A)$ \cite[Theorem 6.2.2]{Weibel1994}.
 \end{proof}

Suppose $(\varphi,J)$ is a stable twist for $A$ satisfying $(\varphi,J)^n=(I_a,\partial_a)^{-1}$ for some positive integer $n$ and $a\in A_1^\times$ such that $\varphi(a)=a$. Then one can define a quasi-Hopf algebra $A[u^{\pm 1};\varphi,J,a,n]$, generated as a $\k$-algebra by $A$ and $u^{\pm 1}$ subject to the relations
\begin{gather*}
xu=u\varphi(x)\quad \text{for all $x\in A$},\quad  u^n=a.
\end{gather*}
Its quasi-Hopf algebra structures are inherited from $A$ together with  
\begin{gather*}
\Delta(u)=(u\otimes u)J,\quad \ve(u)=1, 
\quad S(u)= \nu^{-1} u^{-1},
\end{gather*}
where $\nu$ is the unique invertible element of $A$ determined by
\[
\varphi S(x)=\nu S(\varphi(x))\nu^{-1},\quad \varphi(\alpha)=\nu \alpha_J,\quad \varphi(\beta)=\beta_J\nu^{-1}.
\] 
In particular, when $A$ is a Hopf algebra with $\Phi=1\otimes 1\otimes 1$ and $\alpha=\beta=1$, $A[u^{\pm 1};\varphi,J,a,n]$ is again a Hopf algebra with antipode $S(u)=\alpha_Ju^{-1}$. When $n=2$ we abbreviate $A[u^{\pm 1};\varphi,J,a]:=A[u^{\pm 1};\varphi,J,a,2]$.

\begin{cor}\label{cor:anotEG}
   Suppose $\brho: C_n\to \ost(A)$ has a lifting via $(\rho,\sigma)$. Then there is a quasi-bialgebra isomorphism $f: A[u^{\pm 1};\varphi,J,a,n]\cong A\#_{(\rho,\sigma)}C_n$ such that $f|_A=\id$ and $f(u)=(u^g)^{-1}$ with $a=(u^{g})^{-n}\in A_1^\times$. Moreover, two extensions $A[u^{\pm 1};\varphi,J,a,n]$ and $A[u^{\pm 1};\varphi',J',a',n]$ of $\k[C_n]$ by $A$ are equivalent if and only if there exists $b\in A_1^\times$ such that
\begin{enumerate}
    \item $(I_b,\partial_b)(\varphi,J)=(\varphi',J')$ and 
\item $a=a'\varphi'^{(n-1)}(b)\cdots \varphi'(b)b$.    
\end{enumerate}
\end{cor}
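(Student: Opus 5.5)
The plan is to make the cyclic case of Definition~\ref{DefGE} explicit and then read off the equivalence criterion from the final part of the proof of Theorem~\ref{2dimE}.

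\emph{Step 1: the isomorphism $f$.} Let $(\rho,\sigma)$ be a lifting of $\brho$. In $H=A\#_{(\rho,\sigma)}C_n$ put $v:=(u^g)^{-1}$. The relation $u^g x=\varphi^g(x)u^g$ is equivalent to $xv=v\varphi^g(x)$. The relation $\Delta(u^g)=(J^g)^{-1}(u^g\otimes u^g)$ is equivalent to $\Delta(v)=(v\otimes v)J^g$. Finally $u^{g^i}$ differs from $(u^g)^i$ only by a product of values of $\sigma$ in $A_1^\times$, so $H$ is generated by $A$ and $v^{\pm1}$.

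Set $a:=v^n=(u^g)^{-n}$. Conjugation by $u^g$ on $A$ is $\varphi^g$, and $(u^g)^n\in A_1^\times$ lies in the $A$-part of degree $g^n=e$. Conjugation by $u^g$ fixes $(u^g)^n$, hence $\varphi^g(a)=a$. Moreover $(I_{(u^g)^n},\partial_{(u^g)^n})=(\varphi^g,J^g)^n$: the first component comes from iterating $I_{u^g}$, and the second from expanding $\Delta((u^g)^n)$ with the formula for $\Delta(u^g)$, exactly as in \eqref{powerstable}. Hence $(\varphi^g,J^g)^n=(I_a,\partial_a)^{-1}$, so the algebra $A[u^{\pm1};\varphi^g,J^g,a,n]$ is defined, and $f:u\mapsto v$, $f|_A=\id$ is a well-defined surjective algebra map.

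Both sides are free left $A$-modules of rank $n$: the domain has basis $1,u,\dots,u^{n-1}$ by the relations, and $H$ by the crossed product structure. So $f$ is bijective. It respects $\Delta$ and $\ve$ by the computation above; one can also check that antipodes match via \eqref{nuS}, though only a quasi-bialgebra isomorphism is required.

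\emph{Step 2: equivalence of extensions.} Suppose $F:A[u^{\pm1};\varphi,J,a,n]\to A[u^{\pm1};\varphi',J',a',n]$ is an equivalence, that is, a quasi-bialgebra isomorphism inducing the identity on $A$ and on $\k[C_n]$. As in the proof of Theorem~\ref{2dimE}, $F$ preserves the degree-$g$ component, so $F(u)=b^{-1}u'$ for some $b\in A$, and $\ve(b)=1$ with $b$ invertible by bijectivity. (The convention $b^{-1}u'$ versus $bu'$ is chosen to match the stated formulas.)

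Applying $F$ to $xu=u\varphi(x)$ gives $xb^{-1}u'=b^{-1}u'\varphi(x)=b^{-1}\varphi'^{-1}$-twisted form, which rearranges to $\varphi'=I_b\circ\varphi$. Applying $F$ to $\Delta(u)=(u\otimes u)J$ and comparing with $\Delta(b^{-1}u')=\Delta(b)^{-1}(u'\otimes u')J'$ yields $J'=(b\otimes b)J\Delta(b)^{-1}$, i.e.\ $J'=J_b$. This is condition (i).

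For condition (ii), apply $F$ to $u^n=a$. Expanding $(b^{-1}u')^n$ by repeatedly moving $u'$ to the right with $u'y=\varphi'^{-1}(y)u'$ produces a product of $\varphi'$-translates of $b^{-1}$ times $u'^n=a'$, which is the identity in (ii). The main obstacle is keeping the direction of $\varphi'$ versus $\varphi'^{-1}$ and the ordering of the factors consistent. I would fix the exact form by redoing the case $n=2$ explicitly and then proceeding by induction on $n$.

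Conversely, given $b$ satisfying (i) and (ii), define $F(u)=b^{-1}u'$ and $F|_A=\id$. By the calculations above, read backwards, $F$ respects all defining relations and the coalgebra structure. It is bijective by comparing the $A$-bases $\{u^i\}$ and $\{u'^i\}$, and it induces the identity on $A$ and on $\k[C_n]$, so it is an equivalence of extensions.
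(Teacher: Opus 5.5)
Your Step 1 is correct and fills in what the paper calls straightforward. One small imprecision: the defining relations only show that $1,u,\dots,u^{n-1}$ \emph{span} $A[u^{\pm1};\varphi,J,a,n]$ over $A$. Bijectivity of $f$ should therefore be deduced from its surjectivity together with $\dim(A\#_{(\rho,\sigma)}C_n)=n\dim A$.

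Step 2 has a genuine error. With your parametrization $F(u)=b^{-1}u'$, the element $b$ does \emph{not} satisfy (i) or (ii), and the identities you assert are false. Since $b^{-1}u'=u'\varphi'(b^{-1})$, set $c:=\varphi'(b)^{-1}$, so that $F(u)=u'c$. Then:
\begin{enumerate}
\item $F(xu)=u'\varphi'(x)c$ and $F(u\varphi(x))=u'c\varphi(x)$ give $\varphi'=I_c\circ\varphi$ (equivalently $\varphi=\varphi'\circ I_b$), not $\varphi'=I_b\circ\varphi$.
\item Comparing $(F\otimes F)\Delta(u)=(u'\otimes u')(c\otimes c)J$ with $\Delta(u'c)=(u'\otimes u')J'\Delta(c)$ gives $J'=J_c$, not $J_b$. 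You cannot cancel $u'\otimes u'$ past $b^{-1}\otimes b^{-1}$ without twisting by $\varphi'$.
\item Moving $u'$ to the right, as you propose, yields $a=b^{-1}\varphi'^{-1}(b^{-1})\cdots\varphi'^{-(n-1)}(b^{-1})\,a'$, which is not (ii) for $b$.
\end{enumerate}
Your converse breaks for the same reason. If $\varphi'=I_b\circ\varphi$, then $u\mapsto b^{-1}u'$ respects $xu=u\varphi(x)$ only when $\varphi=I_b\circ\varphi\circ I_b$.

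The repair is to put the coefficient on the right from the start, $F(u)=u'b$, as the paper does. Then $xu'b=u'\varphi'(x)b$ versus $u'b\varphi(x)$ gives $\varphi'=I_b\circ\varphi$. The coproduct comparison gives $(b\otimes b)J=J'\Delta(b)$, i.e.\ $J'=J_b$. Finally,
$a=F(u^n)=(u'b)^n=u'^n\varphi'^{n-1}(b)\cdots\varphi'(b)b=a'\varphi'^{n-1}(b)\cdots\varphi'(b)b$,
which is (ii) directly, with no induction needed. Conversely, given (i) and (ii), $u\mapsto u'b$ respects all relations and the coproduct by the same computations read backwards. It is surjective because its image contains $u'=(u'b)b^{-1}$, hence bijective by dimension, and it induces the identity on $A$ and on $\k[C_n]$.
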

\begin{proof}
   It is straightforward to check that $f$ is a bijective quasi-bialgebra map from $A[u^{\pm 1};\varphi,J,a,n]$ to $A\#_{(\rho,\sigma)}C_n$.

   Suppose two extensions $A[u^{\pm 1}; \varphi,J,a,n]$ and $A[u^{\pm 1};\varphi',J',a',n]$ are equivalent via some quasi-bialgebra isomorphism $g$. By definition, we have $g|_A=\id$ and $g(u)=ub$ for some $b\in \A$. Hence, one can check that for any $x\in A$, $u\varphi'(x)b=xub=g(x)g(u)=g(xu)=g(u\varphi(x))=ub\varphi(x)$ which yields that $\varphi'=I_b\circ \varphi$. Moreover, $(u\otimes u)(b\otimes b)J=(g\otimes g)\Delta(u)=\Delta(g(u))=\Delta(ub)=(u\otimes u)J\Delta(b)$. This implies that $(b\otimes b)J\Delta(b)^{-1}=J'$. We get (i). And (ii) follows from $a=g(u^n)=g(u)^n=(ub)^n=u^n\varphi'^{(n-1)}(b)\cdots \varphi'(b)b=a'\varphi'^{(n-1)}(b)\cdots \varphi'(b)b$. The converse statement can be argued in a similar manner. 
\end{proof}

\subsubsection{$C_2$-equivariantizations on finite-dimensional Hopf algebras}

In the following, we focus on the case where $G = C_2$. It is well known in group theory that if a group (possibly infinite) contains a normal subgroup of order two, that subgroup must lie in the center of the group. We extend this result to any Hopf algebra $H$ in Lemma~\ref{subnormal} below. Recall the \emph{left adjoint action} of $H$ on itself is given by
\[
(\ad_lh)(a)=\sum h_1aS(h_2)
\]
for any $h,a\in H$. And the \emph{right adjoint action} of $H$ on itself is given by
\[
(\ad_rh)(a)=\sum S(h_1)ah_2.
\]

\begin{lem}\label{subnormal}
Suppose $H$ is a Hopf algebra with a two-dimensional separable subalgebra $A$. Then the following are equivalent:
\begin{enumerate}
    \item $A$ is a left coideal subalgebra of $H$ and $(\ad_lH)(A)\subseteq A$;
    \item $A$ is a right coideal subalgebra of $H$ and $(\ad_rH)(A)\subseteq A$;
    \item $A$ is a central Hopf subalgebra of $H$.
\end{enumerate}
As a consequence, any finite-dimensional Hopf extension $1\to A\to H\to\k[C_2]\to 1$ is cocentral.
\end{lem}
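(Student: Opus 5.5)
The plan is to prove the cycle (iii)$\Rightarrow$(i)$\Rightarrow$(iii). Then (ii)$\Leftrightarrow$(iii) follows by applying the same argument to $H^{op,cop}$ (or $H^{cop}$ with $S^{-1}$), which exchanges left and right coideals and $\ad_l$ and $\ad_r$.

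\emph{(iii)$\Rightarrow$(i).} This is immediate. A Hopf subalgebra is a left coideal. If $a$ is central, then $(\ad_l h)(a)=h_1S(h_2)a=\ve(h)a\in A$.

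\emph{(i)$\Rightarrow$(iii): centrality.} Since $A$ is two-dimensional separable over $\k$, we have $A\cong\k\times\k$, so $A=\k1\oplus\k e$ with $e$ a nontrivial idempotent. Because $\ve|_A$ is a character of $A\cong\k\times\k$, we may replace $e$ by $1-e$ if necessary and assume $\ve(e)=0$. Now $A$ is an $\ad_l$-stable $H$-module algebra. It contains the trivial submodule $\k1$ and also the submodule $A\cap\ker\ve=\k e$, because $\ve((\ad_lh)(a))=\ve(h)\ve(a)$. Hence $(\ad_lh)(e)=\chi(h)e$ for a linear functional $\chi$, which is an algebra map $H\to\k$. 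Apply the module-algebra property to $e=e\cdot e$:
\[
\chi(h)e=(\ad_lh)(e^2)=(\ad_lh_1)(e)\,(\ad_lh_2)(e)=(\chi*\chi)(h)\,e .
\]
So $\chi*\chi=\chi$. Since $\chi$ is convolution invertible with inverse $\chi\circ S$, this forces $\chi=\ve$. Then $he=h_1eS(h_2)h_3=(\ad_lh_1)(e)h_2=eh$, so $e$, and hence $A$, is central.

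\emph{(i)$\Rightarrow$(iii): $A$ is a Hopf subalgebra.} The left coideal property gives $\Delta(e)=x\otimes 1+y\otimes e$ for some $x,y\in H$. Applying $\id\otimes\ve$ and using $\ve(e)=0$ gives $x=e$. Comparing the two sides of $(\Delta\otimes\id)\Delta(e)=(\id\otimes\Delta)\Delta(e)$ in the $\otimes e$ component yields $\Delta(y)=y\otimes y$, so $y$ is grouplike and therefore invertible. Idempotency of $\Delta(e)$ gives $ey+ye+y^2=y$. Since $e$ is central, this reads $y^2=y(1-2e)$, and cancelling the invertible $y$ gives $y=1-2e\in A$. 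Hence $\Delta(A)\subseteq A\otimes A$. Finally $S(e)\in A$: from $m(S\otimes\id)\Delta(e)=\ve(e)=0$ we get $S(e)+S(y)e=0$, and $S(y)=y^{-1}=y\in A$. So $A$ is a central Hopf subalgebra.

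\emph{The consequence.} For a finite-dimensional extension $1\to A\to H\xrightarrow{\pi}\k[C_2]\to1$, dualize to get an extension $1\to\k^{C_2}\xrightarrow{\pi^*}H^*\to A^*\to1$. Here $\pi^*(\k^{C_2})$ is a two-dimensional separable normal Hopf subalgebra of $H^*$, so it satisfies (i) and is therefore central by the lemma. Centrality of $\pi^*(\k^{C_2})$ in $H^*$ is exactly the cocentrality condition (6) for $\pi$.

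The step most likely to need care is centrality: finding the $\ad$-stable complement $\k e$ to $\k1$ (the choice $\ve(e)=0$) and showing $\chi=\ve$ via the idempotent relation $\chi*\chi=\chi$. Once $e$ is central, the coproduct computation is routine.
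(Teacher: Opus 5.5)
Your proof is correct and uses the same ingredients as the paper's. These are the adjoint character on the line $\k e$, forced to equal $\ve$ by $e^2=e$; the left-coideal form $\Delta(e)=e\otimes 1+y\otimes e$, where idempotency of $\Delta(e)$ pins $y$ into $A$; and dualization for the cocentrality consequence.

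The differences are only in presentation. You remove the constant term of $(\ad_l h)(e)$ directly via $\ve\bigl((\ad_l h)(a)\bigr)=\ve(h)\ve(a)$, and you prove centrality before the Hopf-subalgebra property; the paper does the reverse, using $\beta(g)$. You also check $\Delta(y)=y\otimes y$ and $S(A)\subseteq A$, which the paper leaves implicit. For that check, note that $\ve(y)=1$ follows from $(\ve\otimes\id)\Delta(e)=e$, so $y\neq 0$ is genuinely grouplike.
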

\begin{proof}
It is obvious that (iii)$\Rightarrow$(i),(ii). We prove (i)$\Rightarrow$(iii); the implication (ii)$\Rightarrow$(iii) is similar. After a base-field extension if necessary, we may assume $A\cong \k \times \k$ as algebras. By choosing $a$ to be one of the indecomposable idempotents of $A$, we can further assume $A$ is spanned by two elements $\{1,a\}$ satisfying $a^2=a$ with $\ve(a)=0$. Since $\Delta(A)\subseteq H\otimes A$, $\Delta(a)=a\otimes 1+g\otimes a$ for some group-like $g \in G(H)$. Invariance of $A$ under its left adjoint action lets us write
\begin{align}
(\ad_lh)(a)=\sum h_1aS(h_2)~=~\alpha(h)1+\beta(h)a
\end{align}
for $h\in H$, with $\alpha,\beta$ in the finite dual Hopf algebra $H^0$; since $\ad_l(hh')=\ad_l(h)\ad_l(h')$ and $\ad_l(1)=1$, $\beta\in G(H^0)$ and $\alpha$ is $(\beta,\ve)$-primitive. Applying $\Delta$ to $(\ad_l h)(a)$ yields
\begin{align*}
&\quad\Delta((\ad_l h)(a))\\
&=(\alpha(h)1+\beta(h)a)\otimes 1+\beta(h)g\otimes a\\
&=\sum h_1a_1S(h_4)\otimes h_2a_2S(h_3)\\
&=\sum h_1aS(h_2)\otimes 1+\sum h_1gS(h_4)\otimes h_2aS(h_3)\\
&=(\alpha(h)1+\beta(h)a+\sum h_1g\alpha(h_2)S(h_3))\otimes 1+(\sum h_1g\beta(h_2)S(h_3))\otimes a.
\end{align*}
Hence $\sum h_1g\alpha(h_2)S(h_3)=0$ for all $h\in H$. This implies $\alpha=0$ by applying $\ve$ to the equation. Consideration of $\Delta(a^2)=\Delta(a)$ yields $g=1-a-gag^{-1}=1-(1+\beta(g))a\in A$. Thus, $A$ is indeed a Hopf subalgebra of $H$. Next, we apply $\ad_l(h)$ to $a^2=a$, which implies that 
\begin{align*}
\beta(h)a=(\ad_l h)(a^2)=\sum (\ad_l h_1)(a)(\ad_l h_2)(a)=\sum (\beta(h_1)a)(\beta(h_2)a)=\beta^2(h)a.
\end{align*}
Therefore, $\beta^2=\beta$ and $\beta=\ve$ in $H^0$ for $\beta$ is group-like.  Our result now follows from the well-known fact that the left adjoint action of $H$ on $A$ is trivial if and only if $A$ lies in the center of $H$. 

The final statement follows from duality since in the dual exact sequence $1\to \k^{C_2}\to H^*\to A^*\to 1$, $\k^{C_2}$ is a separable normal Hopf subalgebra of $H^*$ and hence central. 
\end{proof}

The following result combines Theorem~\ref{CCEGE} and Lemma~\ref{subnormal}.

\begin{thm}\label{HopfCEG}
    Let $A$ be a finite-dimensional Hopf algebra. Suppose for any group homomorphism $\brho: C_2\to \ost(A)$, $H^n(C_2,Z_G(A))$ is trivial for $n=2,3$, where the $C_2$-module action on $Z_G(A)$ is given by $\brho$. Then the following are in one-to-one correspondence:
\begin{enumerate}
        \item nontrivial $C_2$-actions on $\C{A}$ up to equivalence;
        \item nontrivial extensions of $\k[C_2]$ by $A$ up to equivalence;
\item order two stable twists in $\ost(A)$.     
\end{enumerate}
\end{thm}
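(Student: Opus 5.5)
The plan is to derive the statement from Theorem~\ref{CCEGE} with $G=C_2$, use Lemma~\ref{subnormal} to remove the ``cocentral'' and ``cleft'' qualifiers, and then discard the trivial objects on each side.

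\emph{Step 1: apply Theorem~\ref{CCEGE}.} The hypothesis $H^2(C_2,Z_G(A))=1$ for every $\brho$ is exactly what Theorem~\ref{CCEGE} requires. It therefore gives bijections between:
\begin{itemize}
\item $C_2$-actions on $\C{A}$ up to equivalence;
\item homomorphisms $\brho\colon C_2\to\ost(A)$ whose class $[\brho]\in H^3(C_2,Z_G(A))$ is trivial;
\item cocentral cleft extensions of $\k[C_2]$ by $A$ up to equivalence.
\end{itemize}

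\emph{Step 2: drop ``cocentral'' and ``cleft''.} Since $A$ is a finite-dimensional Hopf algebra, every extension of finite-dimensional Hopf algebras is cleft (recalled before Theorem~\ref{2dimE}). By the last statement of Lemma~\ref{subnormal}, every extension $1\to A\to H\to\k[C_2]\to 1$ is cocentral. So the third set in Step~1 is simply the set of all extensions of $\k[C_2]$ by $A$ up to equivalence.

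\emph{Step 3: drop the obstruction condition.} By hypothesis $H^3(C_2,Z_G(A))=1$, so $[\brho]$ vanishes automatically. The second set in Step~1 is then all homomorphisms $C_2\to\ost(A)$. Such homomorphisms correspond to elements $t\in\ost(A)$ with $t^2=1$: the trivial one to $t=1$, the others to elements of order exactly two.

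\emph{Step 4: remove the trivial objects.} It remains to check that the bijections send trivial objects to trivial objects. The trivial homomorphism $\brho$ lifts, since $H^2=1$ makes its lift unique, to the trivial monoidal functor $\rho(g)=(\id_A,1\otimes 1)$ with $\sigma=1$. Under $\OPsi$ this is the trivial $C_2$-action. Its equivariantization is $A\#_{(\rho,\sigma)}C_2=A\otimes\k[C_2]$, the trivial (split) extension.

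Conversely, the correspondences are composites of bijections, so only the trivial object maps to the trivial object. Removing it from each set gives the bijection between nontrivial actions, nontrivial extensions and elements of order two in $\ost(A)$. I read ``nontrivial action'' as meaning not equivalent to the trivial action, which under Lemma~\ref{GactionTw} matches ``$\brho$ nontrivial'' because $H^2=1$.

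The main point needing care is this last identification: why the trivial homomorphism $\brho$ corresponds to the trivial extension $A\otimes\k[C_2]$, and only to it. I would argue this directly from Definition~\ref{DefGE} with $\varphi=\id_A$, $J=1\otimes 1$ and $\sigma=1$, together with the uniqueness of lifts.
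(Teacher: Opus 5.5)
Your proposal is correct and follows the paper's route: the paper proves this result simply by combining Theorem~\ref{CCEGE} with $G=C_2$ (using that extensions of finite-dimensional Hopf algebras are automatically cleft) with the cocentrality statement of Lemma~\ref{subnormal}. Your Steps~3--4 only make explicit what the paper leaves implicit: $H^3=1$ removes the obstruction, and the trivial homomorphism corresponds to the trivial action and to the split extension $A\otimes\k[C_2]$.
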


We conclude this subsection with a structural theorem on $C_2$-extensions, under hypotheses on $\ost(A)$ that we verify for specific pointed Hopf algebras later.

\begin{thm}\label{HopfCEGC2}
   Let $A$ be a finite-dimensional Hopf algebra with $\ost(A)\cong C_2$. Suppose $\ost(A)$ is generated by the stable twist $(\varphi,J)$ and there exists an element $z\in \A$ such that $(\varphi,J)^2=(I_z,\partial_z)^{-1}$ with $\varphi(z)=z$. Then, there are $|Z_G(A)/2Z_G(A)|+|Z_G(A)^\varphi/(1+\varphi)Z_G(A)|$ distinct extensions of $\k[C_2]$ by $A$ up to equivalence, which are given by
   \[
   A[u^{\pm 1};\varphi,J,zg,2]\quad  \text{and}\quad  A[u^{\pm 1};\id,1\otimes 1,h,2]\, ,
   \]
   where $g,h\in Z_G(A)$ run through a set of representatives for the quotient groups $Z_G(A)^\varphi/(1+\varphi)Z_G(A)$ and $Z_G(A)/2Z_G(A)$, respectively.
\end{thm}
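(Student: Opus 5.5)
The plan is to sort extensions by the group homomorphism $\brho\colon C_2\to\ost(A)\cong C_2$ they induce, and to count each class using Theorem~\ref{2dimE} together with Lemma~\ref{lem-Caction}. By Lemma~\ref{subnormal}, every finite-dimensional Hopf extension of $\k[C_2]$ by $A$ is cocentral, and it is automatically cleft. Theorem~\ref{2dimE} therefore identifies equivalence classes of extensions with isomorphism classes of monoidal functors $(\rho,\sigma)\colon\ul{C_2}\to\2ust(A)$. Each such functor induces $\brho$ on $\pi_0$, and there are exactly two possibilities: $\brho$ trivial, or $\brho$ sending the generator to $\ol{(\varphi,J)}$. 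Isomorphic monoidal functors induce the same $\brho$, so the total count is the sum of the counts of liftings for these two homomorphisms.

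\emph{Trivial $\brho$.} We may take $\rho(g)=(\id,1\otimes1)$ and $z=1$, so the obstruction vanishes. The $C_2$-action on $Z_G(A)$ is trivial, and Lemma~\ref{lem-Caction} gives $|Z_G(A)/2Z_G(A)|$ liftings. Corollary~\ref{cor:anotEG} realizes each lifting as $A[u^{\pm1};\id,1\otimes1,h,2]$ with $h\in Z_G(A)$. One point needs care here. A lifting has $u^2=a\in A_1^\times$ with $(I_a,\partial_a)$ trivial, which forces $a\in Z_G(A)$. The classes are then read off from condition (ii) of Corollary~\ref{cor:anotEG}: $a\sim a b^2$, where $b$ satisfies $(I_b,\partial_b)=\id$, so $b\in Z_G(A)$. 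This yields exactly $Z_G(A)/2Z_G(A)$.

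\emph{Nontrivial $\brho$.} The hypothesis $(\varphi,J)^2=(I_z,\partial_z)^{-1}$ with $\varphi(z)=z$ shows, via Lemma~\ref{lem-Caction}, that the obstruction class vanishes. One has to match the paper's sign convention, with $z^{-1}$ playing the role of $z$ there; $\varphi(z^{-1})=z^{-1}$ follows directly. Lemma~\ref{lem-Caction} then gives $|Z_G(A)^\varphi/(1+\varphi)Z_G(A)|$ liftings. Concretely, the admissible elements $a$ with $(\varphi,J)^2=(I_a,\partial_a)^{-1}$ and $\varphi(a)=a$ are exactly $a=zg$ with $g\in Z_G(A)^\varphi$, because two such elements differ by a central grouplike. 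Corollary~\ref{cor:anotEG} shows that $A[u^{\pm1};\varphi,J,zg,2]$ and $A[u^{\pm1};\varphi,J,zg',2]$ are equivalent iff $zg=zg'\varphi(b)b$ for some $b$ with $(I_b,\partial_b)=\id$. Such $b$ lie in $Z_G(A)$, so the condition is exactly $g\equiv g'$ modulo $(1+\varphi)Z_G(A)$.

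Finally, extensions in the two families are inequivalent because they induce different $\brho$. Equivalently, condition (i) of Corollary~\ref{cor:anotEG} would force $(\varphi,J)\in\st_0(A)$, contradicting that $\ol{(\varphi,J)}$ generates $\ost(A)\cong C_2$. Within the nontrivial family we also need to handle a different representative $(\varphi',J')=(I_b,\partial_b)(\varphi,J)$. Conjugating by $b$ transports $A[u^{\pm1};\varphi',J',a',2]$ to one built on $(\varphi,J)$, so fixing the representative loses nothing. The main obstacle is this bookkeeping in the nontrivial case. It requires checking that the equivalence relation of Corollary~\ref{cor:anotEG}, restricted to $b$ fixing $(\varphi,J)$, is exactly the torsor action of $H^2(C_2,Z_G(A))$, and keeping the inverse and sign conventions consistent between Lemma~\ref{lem-Caction} and the presentation of $A[u^{\pm1};\varphi,J,a,n]$.
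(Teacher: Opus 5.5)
Your proposal is correct and follows the paper's argument. You split according to the two homomorphisms $C_2\to\ost(A)$, use Lemma~\ref{subnormal} for cocentrality, and combine Theorem~\ref{2dimE} with Lemma~\ref{lem-Caction} and Corollary~\ref{cor:anotEG} to count and present the extensions. Your explicit check that the relevant $b$ lie in $Z_G(A)$, so that the equivalence relation reduces to $a\sim a\,\varphi(b)\,b$, supplies the bookkeeping the paper leaves implicit when it says the nontrivial case ``proceeds in the same way''.
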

\begin{proof}
Under our assumption, there are exactly two group homomorphisms $\brho: C_2 \to \ost(A)$: the trivial homomorphism and an isomorphism that maps the generator of $C_2$ to $\overline{(\varphi,J)}$. Suppose $\brho$ is trivial. Thus, the $C_2$-action on $Z_G(A)$ is trivial, which implies that $H^2(G, Z_G(A)) = Z_G(A)/2Z_G(A)$. Observe that every extension of $C_2$ is cocentral by Lemma \ref{subnormal}. Consequently, Theorem \ref{2dimE}, together with Corollary \ref{cor:anotEG}, implies that the corresponding extensions of $\k[C_2]$ by $A$ are given by $A[u^{\pm 1}; \id, 1\otimes 1, h, 2]$, where $h \in Z_G(A)$ ranges over a set of representatives of the quotient $Z_G(A)/2Z_G(A)$. The proof in the case $\brho: C_2\cong \ost(A)$ proceeds in the same way.
\end{proof}

We illustrate the framework on the pointed fusion category $\operatorname{Vec}_G^\omega$ in the cyclic case $G=\Z/p$, building on Example~\ref{ex:pointed-SES} (which records the general short exact sequence \eqref{eq:VecGom-SES-intro} for arbitrary $G$) to construct an explicit $C_2$-equivariantization. 

\begin{example}\label{ex:quasi-Hopf-pointed}
Let $p$ be an odd prime and $G=\Z/p=\langle g\rangle$, with $[\omega]\in H^3(G,\k^\times)$ nontrivial. The quasi-Hopf algebra
\[
A=\bigl(\k^G,\,\Delta,\,\ve,\,\Phi_\omega,\,S,\,1,\,\beta_\omega\bigr),
\]
with
\[
\k^G=\bigoplus_{h\in G}\k\,\delta_h,\quad \Delta(\delta_h)=\sum_{ab=h}\delta_a\o\delta_b,\quad \ve(\delta_h)=\delta_{h,1},\quad S(\delta_h)=\delta_{h\inv},
\]
\[
\Phi_\omega=\sum_{a,b,c}\omega(a,b,c)\,\delta_a\o\delta_b\o\delta_c,\quad \alpha=1,\quad \beta_\omega=\sum_h\omega(h,h\inv,h)\inv\,\delta_h,
\]
has representation category $\C{A}\simeq\operatorname{Vec}_G^\omega$, the pointed fusion category. Czenky \cite[Corollary~6.8]{Czenky2024} shows that $\mathrm{Aut}(G;[\omega])\cong\Z/2$, generated by $\sigma_{-1}(g)=g^{-1}$; we write $\vp_{-1}\in\operatorname{Aut}(\k^G)$ for the dual algebra automorphism $\vp_{-1}(\delta_h)=\delta_{h\inv}$. Since $H^2(\Z/p,\k^\times)=0$, the sequence \eqref{eq:VecGom-SES-intro} collapses to
\[
\ost(A)\;=\;\operatorname{Aut}(G;[\omega])\;\cong\;\Z/2\, .
\]
Since $[\sigma_{-1}^\ast\omega]=[\omega]$,
there exists a normalized $2$-cochain
$J_0\colon G\times G\to\k^\times$, unique up to $2$-coboundary,
satisfying
\begin{equation}\label{E:J0-cobdy}
\frac{J_0(b,c)\,J_0(a,bc)}{J_0(ab,c)\,J_0(a,b)}
=\frac{\omega(a\inv,b\inv,c\inv)}{\omega(a,b,c)}
\qquad\text{for all }a,b,c\in G.
\end{equation}
As $H^2(G,\k^\times)=0$, the pair $(\vp_{-1},J_0)$ represents
the unique nontrivial class in $\ost(A)\cong\Z/2$,
independently of the choice of $J_0$.

The central grouplike elements of $A=\k^G$ are the characters of $G$:
$Z_G(A)=\hat G=\operatorname{Hom}(G,\k^\times)\cong\Z/p$.
The nontrivial element of $\ost(A)$ acts on $\hat G$ by inversion
$\chi\mapsto\chi\inv$, and
\[
H^n(\Z/2,\Z/p)=0\quad\text{for all $n\ge 1$}
\]
since $p$ is odd.

Since $\vp_{-1}^2=\id$,
$(\vp_{-1},J_0)^2=(\id,\,\vp_{-1}(J_0)\cdot J_0)$.
The $2$-cochain $K\colon G\times G\to\k^\times$ defined by
\[
K(g^i,g^j)=J_0(g^{-i},g^{-j})\,J_0(g^i,g^j)
\]
is a $2$-cocycle (as $d^2(K)=1$), and $H^2(G,\k^\times)=0$ implies $K=\partial_z$
for some normalized 1-cochain $z: G\to \k^\times$.  We identify $J_0,K$ with the elements $\sum_{a,b\in G}J_0(a,b)\,\delta_a\o\delta_b$ and $\sum_{a,b\in G}K(a,b)\,\delta_a\o \delta_b$ in $(\k^G)^{\o 2}$, and also treat $z$ as $\sum_{a\in G} z(a)\delta_a\in (\k^G)^\times_1$. This gives the relation $K=(z\otimes z)\Delta(z)^{-1}$ with $(\varphi_{-1}\otimes \varphi_{-1})(K)=K$. Since $\Delta(\varphi_{-1}(z))=(\varphi_{-1}\otimes \varphi_{-1})\Delta(z)$, a direct check gives $\Delta(z/\varphi_{-1}(z))=z/\varphi_{-1}(z)\otimes z/\varphi_{-1}(z)$, so $z/\varphi_{-1}(z)$ is a character on $\Z/p$.  Since the squaring map on
$\hat G\cong\Z/p$ is surjective (as $p$ is odd), we may adjust $z$ by
a character to arrange $\vp_{-1}(z)=z$, i.e., $z(h\inv)=z(h)$
for all $h\in G$.
Setting $a=z\inv$, we have $(\vp_{-1},J_0)^2=(I_a,\partial_a)\inv$
with $\vp_{-1}(a)=a$.
Concretely, $a\in(\k^G)^\times$ is the unique element satisfying
$a(1)=1$, $a(h\inv)=a(h)$ for all $h\in G$, and
\begin{equation}\label{E:a-cobdy}
\frac{a(g^ig^j)}{a(g^i)\,a(g^j)}
=J_0(g^{-i},g^{-j})\,J_0(g^i,g^j)
\qquad\text{for all }0\le i,j\le p-1.
\end{equation}

By Lemma~\ref{lem-Caction}, the obstruction
$[\brho]\in H^3(C_2,\hat G)$ vanishes, and the lifting of the
nontrivial homomorphism $\brho\colon C_2\to\ost(A)$ is unique up to
equivalence (since $H^2(C_2,\hat G)=0$).
The construction preceding Corollary~\ref{cor:anotEG} then yields a
quasi-Hopf algebra
\[
B=A[u^{\pm 1};\vp_{-1},J_0,a,2],
\]
which, as a $\k$-algebra, is generated by $\k^G$ and $u^{\pm 1}$
subject to the relations
\begin{gather*}
\delta_h\cdot u=u\cdot\delta_{h\inv}\quad\text{for all }h\in G,
\qquad u^2=a.
\end{gather*}
The quasi-Hopf structure is inherited from $A$ together with
\begin{gather*}
\Delta(u)=(u\o u)J_0,\qquad \ve(u)=1,\qquad
S(u)=\nu\inv u\inv,
\end{gather*}
where $\nu\in(\k^G)^\times$ is determined by the condition
$\vp_{-1}(\alpha)=\nu\,\alpha_{J_0}$ (cf.\ the construction
preceding Corollary~\ref{cor:anotEG}).
By \eqref{ABJ} with $\alpha=1$, we compute
\[
\alpha_{J_0}
=\sum_{s,t\in G}J_0(s,t)\inv\,S(\delta_s)\,\delta_t
=\sum_{s\in G}J_0(s,s\inv)\inv\,\delta_{s\inv}
=\sum_{h\in G}J_0(h\inv,h)\inv\,\delta_h.
\]
Since $\vp_{-1}(\alpha)=\vp_{-1}(1)=1$, we obtain
$\nu=\alpha_{J_0}\inv$, i.e.,
\[
\nu=\sum_{h\in G}J_0(h\inv,h)\,\delta_h.
\]
The associator of $B$ is $\Phi_\omega\in A^{\o 3}\subset B^{\o 3}$
(unchanged from~$A$), and
$\beta_B=\beta=\sum_{h\in G}\omega(h,h\inv,h)\inv\,\delta_h$.
The algebra $B$ has dimension $2p$, and
\[
\C{B}\simeq(\operatorname{Vec}_G^\omega)^{C_2}.
\]

In particular, fix a primitive $p$-th root of unity $\zeta\in\k$ and take the
standard normalized $3$-cocycle 
\[
\omega(g^i,g^j,g^k)=\zeta^{\,i\lfloor(j+k)/p\rfloor},
\qquad 0\le i,j,k\le p-1,
\]
whose class generates $H^3(G,\k^\times)$. The parameters of the quasi-Hopf algebra
$B=A[u^{\pm 1};\vp_{-1},J_0,a,2]$ are then given as follows.
\begin{align*}
J_0(g^i,g^j)&=
\begin{cases}
1 & \text{if }i=0\text{ or }j=0,\\
\zeta^{\,i-1} & \text{if }1\le i,j\le p-1
  \text{ and }i+j\le p,\\
\zeta^{\,i} & \text{if }1\le i,j\le p-1
  \text{ and }i+j>p.
\end{cases}\\
a&=\delta_1+\zeta\inv\sum_{i=1}^{p-1}\delta_{g^i}.\\
\nu&=\delta_1+\sum_{i=1}^{p-1}\zeta^{-(i+1)}\,\delta_{g^i}.
\end{align*}

For the trivial homomorphism $\brho=1$, the unique extension is
$A[u^{\pm 1};\id,1\o 1,1,2]\cong A\o\k[C_2]$.
Therefore there are exactly two cocentral cleft extensions
of $\k[C_2]$ by $A$ up to equivalence: the trivial extension
$A\o\k[C_2]$ and the nontrivial extension~$B$.
\end{example}

\subsection{The braided counterpart}\label{SS:braided-equiv}
We now collect the braided analogues of \S\S\ref{SS:Gaction}--\ref{SS:Examples}. Throughout, $(A,\Delta,\ve,\Phi,S,\a,\b,R)$ is a finite-dimensional QT quasi-Hopf algebra over $\kk$ and $\C{A}$ the corresponding braided integral tensor category; the braided sub-$(3,1)$-category $\3ust^{\rm br}\subseteq\3ust$, the set $\st^{\rm br}(A,R)\subseteq\st(A)$, and its image $\ost^{\rm br}(A,R)\subseteq\ost(A)$ are those of \S\ref{sss:braided}. The categorical motivation is standard: when $G$ acts on a braided tensor category $\CC$ by \emph{braided} tensor autoequivalences (monoidal autoequivalences intertwining the braiding, $\zeta^g_{Y,X}\circ c_{X,Y}=c_{\zeta^g X,\zeta^g Y}\circ\zeta^g_{X,Y}$), the equivariantization $\CC^G$ inherits the braiding $c_{(X,p),(Y,q)}:=c_{X,Y}$ of $\CC$. Specializing to $\CC=\C{A}$ via Corollary~\ref{c:peqv-br}, a braided $G$-action is a monoidal functor $\ul G\to\2ust^{\rm br}(A,R)$, and the equivariantization $A\#_{(\rho,\sigma)}G$ acquires a canonical QT structure (Definition~\ref{DefGE-br}). Each block below mirrors one of the preceding subsections, with $\st,\ost,\2ust$ replaced by their braided counterparts; the proofs amount to checking that the braided condition \eqref{eq:R-cond-multi} is preserved at each step.

\subsubsection{Braided Postnikov classification}\label{sss:braided-postnikov}
The strict $2$-group $\2ust^{\rm br}(A,R)$ of braided stable twists is the braided automorphism $2$-group of $(A,R)$. As in the non-braided case, we read off the homotopy invariants of $B\2ust^{\rm br}(A,R)$ directly:
\begin{equation}\label{eq:pi-2ust-br}
\pi_1\bigl(B\2ust^{\rm br}(A,R)\bigr)=\ost^{\rm br}(A,R),\qquad \pi_2\bigl(B\2ust^{\rm br}(A,R)\bigr)=Z_G(A).
\end{equation}
Here $\pi_1=\ost^{\rm br}(A,R)$ by definition and $\pi_2=Z_G(A)$ exactly as in the non-braided computation (Proposition~\ref{prop:pin}, \eqref{eq:pi-2ust}).

\begin{thm}\label{LiftGactionTw-br}
A homomorphism $\brho^{\rm br}\colon G\to\ost^{\rm br}(A,R)$ lifts to a $G$-action on $\C{A}$ by \emph{braided} tensor autoequivalences if and only if its obstruction class $[\brho^{\rm br}]\in H^3(G,Z_G(A))$ is trivial; when it vanishes, equivalence classes of braided lifts form an $H^2(G,Z_G(A))$-torsor.
\end{thm}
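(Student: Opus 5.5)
The plan is to run the argument of Theorem~\ref{LiftGactionTw} verbatim, with the strict $2$-group $\2ust(A)$ replaced by its braided sub-$2$-group $\2ust^{\rm br}(A,R)$. Only two points need checking: that braided $G$-actions are the same as monoidal functors $\ul G\to\2ust^{\rm br}(A,R)$, and that the obstruction cocycle and the $H^2$-torsor are computed by the same formulas as in the non-braided case.

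\emph{Step 1: translate braided actions.} A braided $G$-action on $\C{A}$ is a monoidal functor $\ul G\to\OAut^{\rm br}(\C{A})$. By Corollary~\ref{c:peqv-br}, $\OPsi^{\rm br}$ identifies $\2ust^{\rm br}(A,R)^{\rev}$ with $\OAut^{\rm br}(\C{A})$. Composing with $\Inv$ exactly as in Lemma~\ref{GactionTw} then gives the braided analogue of that lemma: braided $G$-actions up to equivalence correspond to monoidal functors $(\rho,\sigma)\colon\ul G\to\2ust^{\rm br}(A,R)$ up to isomorphism. Here one uses two facts. First, $\2ust^{\rm br}(A,R)$ is a \emph{full} sub-$2$-group: the morphisms are the same $x\in\A$ as in $\2ust(A)$, by Definition~\ref{def:3ust-br}. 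Second, its objects are closed under the relevant operations by Lemma~\ref{lem:braided-closed}. Consequently, Proposition~\ref{propG} applies unchanged, with the extra requirement that each $(\vp^g,J^g)$ lies in $\st^{\rm br}(A,R)$.

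\emph{Step 2: obstruction.} Given $\brho^{\rm br}$, choose representatives $(\vp^g,J^g)\in\st^{\rm br}(A,R)$ with $(\vp^e,J^e)=(\id_A,1\otimes1)$. Since $\st^{\rm br}(A,R)\supseteq\st_0(A)$, the isomorphisms $\sigma(g,h)\in\A$ from $\rho(gh)$ to $\rho(g)\otimes\rho(h)$ exist exactly as before. By fullness, these are also morphisms in $\2ust^{\rm br}(A,R)$. The automorphisms of any object in $\2ust^{\rm br}(A,R)$ are still $Z_G(A)$ by Corollary~\ref{lem:nat-auto-ZG-multi}, which is the content of \eqref{eq:pi-2ust-br}. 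Hence Lemma~\ref{lem:3co} applies word for word and produces $\tau\in Z^3(G,Z_G(A))$. Moreover, $[\tau]$ is literally the class $[\brho]$ of the composite $G\to\ost^{\rm br}(A,R)\subseteq\ost(A)$, so it is independent of choices. We write $[\brho^{\rm br}]$ for this class.

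\emph{Step 3: lifting and torsor.} If $[\tau]=0$, rescale $\sigma$ by a $2$-cochain $\mu$ with values in $Z_G(A)$ so that \eqref{2-cocycle} holds. The resulting monoidal functor lands in $\2ust^{\rm br}(A,R)$, because its objects were chosen braided and its morphisms are unrestricted. Conversely, a braided lift gives a strict $2$-cocycle $\sigma$, so $\tau=1$ for that choice and hence $[\tau]=0$.

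For the torsor statement, two lifts differ by $\sigma\mapsto\mu\sigma$ with $\mu\in Z^2(G,Z_G(A))$. Isomorphisms of monoidal functors are families $\phi(g)\in\A$ as in the proof of Theorem~\ref{2dimE}, and these are the same in the braided and non-braided settings. Therefore the equivalence classes of braided lifts form an $H^2(G,Z_G(A))$-torsor, exactly as in Theorem~\ref{LiftGactionTw}. Alternatively, one can apply the Postnikov classification directly to the $2$-type $B\2ust^{\rm br}(A,R)$ using \eqref{eq:pi-2ust-br}.

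The main obstacle is the step hidden in Step~1 and Step~3: a lift constructed inside $\2ust(A)$ is automatically braided. This rests on Lemma~\ref{lem:braided-closed}, which says that the braided condition \eqref{eq:R-cond-multi} is invariant under $2$-cells. That invariance guarantees that $\sigma$-modification and changes of representatives never leave $\st^{\rm br}(A,R)$. I would verify this invariance carefully, in particular the use of \eqref{eq:QT3} for $x\in\A$, before relying on it.
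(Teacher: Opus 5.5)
Your proposal is correct and matches the paper's own argument: the paper also reruns the proof of Theorem~\ref{LiftGactionTw} inside the braided sub-$2$-group $\2ust^{\rm br}(A,R)$, using \eqref{eq:pi-2ust-br} for $\pi_2=Z_G(A)$ and Lemma~\ref{lem:braided-closed} to keep the choices of $\sigma$ and the resulting $\tau$ in the braided setting. You add one point the paper leaves implicit: the sub-$2$-group is full, so morphisms and isomorphisms of monoidal functors are unchanged, which is why the obstruction class and the $H^2(G,Z_G(A))$-torsor coincide with the non-braided ones.
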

\begin{proof}
The proof of Theorem~\ref{LiftGactionTw} carries over with $\st(A)$ and $\ost(A)$ replaced by $\st^{\rm br}(A,R)$ and $\ost^{\rm br}(A,R)$, using \eqref{eq:pi-2ust-br}. Lemma~\ref{lem:braided-closed} ensures that the cocycle $\sigma$ and its deviation $\tau$ from a $2$-cocycle remain inside the braided sub-$2$-group, so no obstruction beyond $[\tau]\in H^3(G,Z_G(A))$ is introduced by requiring the braiding to be preserved.
\end{proof}

Theorem~\ref{LiftGactionTw-br} specializes for $A=D(H)$ \textup{(}$H$ any finite-dimensional quasi-Hopf algebra\textup{)} to the algebraic version of \cite[Theorem~1.1]{ENO2010} at the level of $\pi_1$, in parallel with the fusion-categorical Picard formalism of \cite[\S 7]{ENO2010}.

\subsubsection{Braided $G$-equivariantization}\label{sss:braided-equiv-qhopf}
Let $(\rho,\sigma)\colon\ul G\to\2ust^{\rm br}(A,R)$ be a monoidal functor, i.e., a $G$-action on $\C{A}$ by braided tensor autoequivalences.

\begin{deff}\label{DefGE-br}
The \emph{braided $G$-equivariantization} of $(A,R)$ is the quasi-Hopf algebra $A\#_{(\rho,\sigma)}G$ of Definition~\ref{DefGE}, equipped with the $R$-matrix
\begin{equation}\label{eq:R-sharp}
R^{\#}\;:=\;R\;\in\;A\otimes A\;\hookrightarrow\;(A\#_{(\rho,\sigma)}G)\otimes(A\#_{(\rho,\sigma)}G).
\end{equation}
\end{deff}

The QT axioms \eqref{eq:QT1}--\eqref{eq:QT2} for $(A\#_{(\rho,\sigma)}G,R^{\#})$ are inherited from those of $(A,R)$ since $R^{\#}\in A^{\otimes 2}$ and $\Phi\in A^{\otimes 3}$. The remaining axiom~\eqref{eq:QT3} on a generator $h=u^g$ reduces, via $\Delta(u^g)=(J^g)^{-1}(u^g\otimes u^g)$, to $(\vp^g\otimes\vp^g)(R)=J^g_{21}\,R\,(J^g)^{-1}$, which is exactly \eqref{eq:R-cond-multi} for $(\vp^g,J^g)\in\st^{\rm br}(A,R)$. Hence $(A\#_{(\rho,\sigma)}G,R^{\#})$ is a QT quasi-Hopf algebra.

\begin{thm}\label{HRepGE-br}
The tensor equivalence of Theorem~\ref{HRepGE} upgrades to a natural braided tensor equivalence
\[
(\C{A})^G\;\stackrel{\otimes,\,{\rm br}}{\cong}\;\C{A\#_{(\rho,\sigma)}G},
\]
where the right-hand side carries the QT structure $R^{\#}=R$ of \eqref{eq:R-sharp}. In particular, $(\C{A})^G$ is a braided integral tensor category.
\end{thm}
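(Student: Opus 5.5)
The plan is to reuse the tensor equivalence $(F,\xi)\colon(\C{A})^G\to\C{A\#_{(\rho,\sigma)}G}$ from the proof of Theorem~\ref{HRepGE}, whose monoidal structure $\xi$ is the identity, and check that it intertwines the braidings. Before that, one must confirm that $(\C{A})^G$ is braided at all. The braiding $c_{(X,p),(Y,q)}:=c_{X,Y}$ must be a $G$-equivariant morphism $(X\otimes Y,w)\to(Y\otimes X,w')$. Since $\zeta^{g^{-1}}=\OPsi(\vp^g,J^g)$ and $(\zeta^{g^{-1}}_2)_{X,Y}$ is multiplication by $J^g$, equivariance amounts to the identity
\[
c_{\zeta X,\zeta Y}\circ(\zeta_2)_{X,Y}=(\zeta_2)_{Y,X}\circ\zeta(c_{X,Y}).
\]
Evaluated on $x\otimes y$, the two sides are $\tau\bigl((\vp^g\otimes\vp^g)(R)\,J^g\bigr)$ and $J^g\,\tau(R)$ acting on $y\otimes x$, with $\tau$ the flip. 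Comparing them is exactly condition \eqref{eq:R-cond-multi} for $(\vp^g,J^g)\in\st^{\rm br}(A,R)$. The hexagon axioms in $(\C{A})^G$ are inherited from $\C{A}$, because the associativity constraint and the underlying morphisms are unchanged.

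Next I verify that $F$ is braided. Because $\xi$ is the identity, the condition reduces to $F(c_{(X,p),(Y,q)})=c^{\#}_{FX,FY}$ as linear maps $X\otimes Y\to Y\otimes X$. The left side is $c_{X,Y}$, i.e.\ $x\otimes y\mapsto R^2y\otimes R^1x$. The braiding on $\C{A\#_{(\rho,\sigma)}G}$ defined by $R^{\#}=R\in A\otimes A$ is given by the same formula, where $R^i$ now acts through the inclusion $A\subseteq A\#_{(\rho,\sigma)}G$. Since $F(X,p)$ restricts to the original $A$-module $X$, the two maps coincide. One should also recall from the discussion after Definition~\ref{DefGE-br} that $(A\#_{(\rho,\sigma)}G,R^{\#})$ is QT. There, axiom \eqref{eq:QT3} on the generators $u^g$ reduces to \eqref{eq:R-cond-multi}, and this is what makes $c^{\#}$ an $A\#_{(\rho,\sigma)}G$-module map. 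This observation is consistent with the equivariance check above.

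Since $F$ is already a tensor equivalence by Theorem~\ref{HRepGE} and preserves braidings on the nose, it is a braided tensor equivalence. The final sentence of the statement then follows, because integrality is a property of the underlying tensor category.

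The main obstacle is the equivariance check in the first paragraph. It requires tracking the conventions carefully: the $\Inv$ twist in $\zeta^{g}=\OPsi(\vp^{g^{-1}},J^{g^{-1}})$, the direction of $J^g$ versus $(J^g)^{-1}$ in $w^g$, and the placement of the flip in $c_{V,W}(v\otimes w)=R^2w\otimes R^1v$. I would do this by computing both sides on $x\otimes y$ for the $u^g$-action and matching them against $J^g_{21}RJ^{g,-1}$. An equivalent route is to verify directly that $c^{\#}$ commutes with $\Delta(u^g)=(J^g)^{-1}(u^g\otimes u^g)$.
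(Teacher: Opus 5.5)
Your proposal is correct and follows the same route as the paper. You reuse the functor $F$ of Theorem~\ref{HRepGE} with identity monoidal structure, observe that both braidings act by the same formula $x\otimes y\mapsto R^2y\otimes R^1x$ on the same underlying spaces, and use the braided condition \eqref{eq:R-cond-multi} on each $(\vp^g,J^g)$ to make $c_{X,Y}$ a $G$-equivariant morphism. Your explicit reduction of that equivariance to $(\vp^g\otimes\vp^g)(R)\,J^g=J^g_{21}R$ simply spells out the step the paper states in one line.
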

\begin{proof}
The underlying tensor equivalence $F\colon(\C{A})^G\xrightarrow{\sim}\C{A\#_{(\rho,\sigma)}G}$ is Theorem~\ref{HRepGE}. Both braidings come from the same $R$-matrix: on $\C{A\#_{(\rho,\sigma)}G}$ via $R^{\#}=R$ through the inclusion $A\hookrightarrow A\#_{(\rho,\sigma)}G$, on $(\C{A})^G$ via the underlying braiding of $\C{A}$ restricted to $G$-equivariant objects. The restriction is well-defined precisely because each $\rho(g)\in\st^{\rm br}(A,R)$ preserves $R$ up to gauge (Lemma~\ref{lem:braided-closed}), so $F$ intertwines braidings.
\end{proof}

\subsubsection{Braided cocentral cleft extensions}\label{sss:braided-cce}
A cocentral cleft extension $1\to A\xrightarrow{\iota} H\xrightarrow{\pi}\k[G]\to 1$ is called \emph{braided} \textup{(}of $\k[G]$ by $(A,R)$\textup{)} if $H$ admits a quasitriangular structure $R^H$ such that $\iota$ is a morphism of QT quasi-Hopf algebras, i.e.,
\[
R^H\;=\;(\iota\otimes\iota)(R)\;\in\;H\otimes H;
\]
equivalently, the braiding of $(A,R)$ extends along $\iota$ to a braiding of $H$. Two braided cocentral cleft extensions are equivalent if their underlying cleft extensions are equivalent via an isomorphism intertwining the $R$-matrices. Theorems~\ref{2dimE} and~\ref{CCEGE} then upgrade to:

\begin{thm}\label{CCEGE-br}
Let $(A,R)$ be a finite-dimensional QT quasi-Hopf algebra and $G$ a finite group such that $H^2(G,Z_G(A))$ is trivial for every homomorphism $\brho^{\rm br}\colon G\to\ost^{\rm br}(A,R)$. The following are in one-to-one correspondence:
\begin{enumerate}
\item braided $G$-actions on $\C{A}$, up to equivalence;
\item monoidal functors $\ul G\to\2ust^{\rm br}(A,R)$, up to isomorphism;
\item group homomorphisms $\brho^{\rm br}\colon G\to\ost^{\rm br}(A,R)$ with $[\brho^{\rm br}]=0$ in $H^3(G,Z_G(A))$;
\item braided cocentral cleft extensions of $\k[G]$ by $(A,R)$, up to equivalence.
\end{enumerate}
\end{thm}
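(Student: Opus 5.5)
The plan is to mirror the proof of Theorem~\ref{CCEGE}: establish the three bijections (i)$\leftrightarrow$(ii), (ii)$\leftrightarrow$(iii) and (i)$\leftrightarrow$(iv) separately, in each case by restricting the non-braided correspondence to the braided sub-objects.

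\emph{(i)$\leftrightarrow$(ii).} By Corollary~\ref{c:peqv-br}, $\OPsi^{\rm br}$ is an equivalence of strict $2$-groups $\2ust^{\rm br}(A,R)^{\rev}\simeq\OAut^{\rm br}(\C{A})$. The argument of Lemma~\ref{GactionTw} then applies verbatim after precomposing with $\Inv$. Every braided $G$-action $\ul G\to\OAut^{\rm br}(\C{A})$ is isomorphic to $\OPsi^{\rm br}\tilde\rho\,\Inv$ for some monoidal functor $\rho\colon\ul G\to\2ust^{\rm br}(A,R)$. Moreover, isomorphisms of such functors correspond to equivalences of actions. This step uses that $\2ust^{\rm br}$ is a full sub-$2$-group on morphisms: by Lemma~\ref{lem:braided-closed} its $2$-cells are the same as those of $\2ust(A)$.

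\emph{(ii)$\leftrightarrow$(iii).} This is Theorem~\ref{LiftGactionTw-br}. A homomorphism $\brho^{\rm br}$ lifts if and only if $[\brho^{\rm br}]=0$. When it does, the lifts form an $H^2(G,Z_G(A))$-torsor, and by hypothesis this torsor is a single point. Hence isomorphism classes of monoidal functors $\ul G\to\2ust^{\rm br}(A,R)$ are in bijection with the homomorphisms having trivial obstruction.

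\emph{(i)$\leftrightarrow$(iv).} This is the main step and follows Theorem~\ref{2dimE}. In one direction, given $(\rho,\sigma)$ with values in $\st^{\rm br}(A,R)$, Definition~\ref{DefGE-br} makes $A\#_{(\rho,\sigma)}G$ quasitriangular with $R^\#=R$. The quasitriangularity axiom \eqref{eq:QT3} on $u^g$ is exactly \eqref{eq:R-cond-multi}. By Theorem~\ref{2dimE} this gives a cocentral cleft extension, and it is braided by construction.

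Conversely, take a braided cocentral cleft extension $H$ with $R^H=(\iota\otimes\iota)(R)$. Theorem~\ref{2dimE} produces $(\rho,\sigma)$ with $\rho(g)=(I_{u^g}|_A,\partial_{u^g})$ and $\Delta(u^g)=(J^g)^{-1}(u^g\otimes u^g)$. It remains to show that each $(\vp^g,J^g)$ is braided. Apply \eqref{eq:QT3} in $H$ to $u^g$:
\[
(J^g_{21})^{-1}(u^g\otimes u^g)=R\,(J^g)^{-1}(u^g\otimes u^g)R^{-1}.
\]
Rewriting $(u^g\otimes u^g)R^{-1}$ as $(\vp^g\otimes\vp^g)(R^{-1})(u^g\otimes u^g)$ and cancelling the invertible factor $u^g\otimes u^g$ yields $(\vp^g\otimes\vp^g)(R)=J^g_{21}R(J^g)^{-1}$, which is \eqref{eq:R-cond-multi}. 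So the two constructions are inverse to each other on braided data.

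For the equivalence classes, an equivalence $f$ of braided extensions is in particular an equivalence of extensions. It therefore gives, as in Theorem~\ref{2dimE}, an isomorphism $\phi$ of monoidal functors whose components lie in the same $2$-cells, so it stays inside $\2ust^{\rm br}$. Conversely, such a $\phi$ yields $f$ with $f|_A=\id$, which automatically preserves $R^\#=R\in A\otimes A$.

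The main obstacle is bookkeeping rather than new ideas: one must confirm that the braided notion of equivalence on each side matches, and in particular that intertwining the $R$-matrices imposes no extra condition. This holds because $f$ fixes $A$ pointwise.
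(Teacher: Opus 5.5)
Your proposal is correct and follows essentially the same route as the paper. The paper obtains (i)$\leftrightarrow$(ii)$\leftrightarrow$(iii) from the braided $2$-group equivalence and Theorem~\ref{LiftGactionTw-br}, with the $H^2$-torsor collapsing by hypothesis, and (i)$\leftrightarrow$(iv) from Theorem~\ref{2dimE} together with the observation that \eqref{eq:QT3} on $u^g$ is exactly \eqref{eq:R-cond-multi}. Your explicit derivation of \eqref{eq:R-cond-multi} from \eqref{eq:QT3} in the converse direction, and your remark that an equivalence fixing $A$ pointwise automatically intertwines $R^H=(\iota\otimes\iota)(R)$, spell out steps the paper compresses into a single sentence.
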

\begin{proof}
The bijections (i)$\leftrightarrow$(ii)$\leftrightarrow$(iii) follow from Theorem~\ref{LiftGactionTw-br}. For (i)$\leftrightarrow$(iv): under Theorem~\ref{2dimE}, a $G$-action $(\rho,\sigma)$ lifts to $\2ust^{\rm br}(A,R)$ iff each $(\vp^g,J^g)$ satisfies \eqref{eq:R-cond-multi}, which by Definition~\ref{DefGE-br} is exactly the condition that the resulting extension $A\#_{(\rho,\sigma)}G$ be braided with $R^H=(\iota\otimes\iota)(R)$. Equivalences of extensions intertwining the $R$-matrices correspond, under \eqref{prof.natco}, to monoidal natural isomorphisms of the corresponding braided monoidal functors.
\end{proof}

\subsubsection{Braided cyclic example}\label{sss:braided-cyclic}
Suppose $(\vp,J)\in\st^{\rm br}(A,R)$ satisfies $(\vp,J)^n=(I_a,\partial_a)^{-1}$ with $\vp(a)=a$. Then the cocentral cleft extension $A[u^{\pm 1};\vp,J,a,n]$ of Corollary~\ref{cor:anotEG} is quasitriangular, with $R^H=(\iota\otimes\iota)(R)$: the verification of \eqref{eq:QT3} on the generator $u$ is, as in Definition~\ref{DefGE-br}, exactly the braided condition $(\vp\o\vp)(R)=J_{21}RJ^{-1}$. Combined with Theorem~\ref{HopfCEG}:

\begin{thm}\label{HopfCEG-br}
Let $(A,R)$ be a finite-dimensional QT Hopf algebra and suppose $H^n(C_2,Z_G(A))$ is trivial for $n=2,3$ for every $\brho^{\rm br}\colon C_2\to\ost^{\rm br}(A,R)$. Then the following are in bijection:
\begin{enumerate}
\item nontrivial braided $C_2$-actions on $\C{A}$, up to equivalence;
\item nontrivial braided cocentral Hopf extensions of $\k[C_2]$ by $(A,R)$, up to equivalence;
\item order-two elements of $\ost^{\rm br}(A,R)$.
\end{enumerate}
\end{thm}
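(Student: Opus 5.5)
The plan is to derive Theorem~\ref{HopfCEG-br} from its non-braided counterpart Theorem~\ref{HopfCEG}, replacing Theorem~\ref{CCEGE} by its braided version Theorem~\ref{CCEGE-br} and Lemma~\ref{subnormal} by the observation that braidedness is a condition on the extension, not extra structure. Fix $G=C_2$. By hypothesis $H^2(C_2,Z_G(A))$ vanishes for every homomorphism $\brho^{\rm br}\colon C_2\to\ost^{\rm br}(A,R)$, where the module structure is $\overline{(\vp,J)}\cdot z=\vp(z)$. So Theorem~\ref{CCEGE-br} applies directly. It gives a bijection between braided $C_2$-actions on $\C{A}$ up to equivalence, homomorphisms $\brho^{\rm br}$ with $[\brho^{\rm br}]=0$, and braided cocentral cleft extensions of $\k[C_2]$ by $(A,R)$ up to equivalence.

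The remaining steps are the following.

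\textbf{Step 1 (vanishing obstruction).} Since $H^3(C_2,Z_G(A))=1$ for every such $\brho^{\rm br}$, the obstruction $[\brho^{\rm br}]$ of Theorem~\ref{LiftGactionTw-br} always vanishes. So every homomorphism $C_2\to\ost^{\rm br}(A,R)$ lifts.

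\textbf{Step 2 (homomorphisms versus elements).} Homomorphisms $C_2\to\ost^{\rm br}(A,R)$ correspond to elements $x$ with $x^2=1$, via the image of the generator. Under this correspondence the trivial homomorphism gives $x=1$, and the nontrivial homomorphisms give exactly the elements of order two. On the action side, the trivial homomorphism lifts, by uniqueness of lifts from $H^2=1$, only to the trivial action $\ul{C_2}\to\2ust^{\rm br}(A,R)$ with $\rho(g)=(\id_A,1\otimes1)$ and $\sigma=1$. Removing this element from both sides gives (i)$\leftrightarrow$(iii).

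\textbf{Step 3 (extensions).} The extensions in (ii) are Hopf extensions. Since $A$ is an ordinary Hopf algebra, the remark after Theorem~\ref{CCEGE} shows that every extension of finite-dimensional Hopf algebras is cleft. The final statement of Lemma~\ref{subnormal} shows that every extension of $\k[C_2]$ by $A$ is cocentral. So Hopf extensions of $\k[C_2]$ by $A$ that carry $R^H=(\iota\otimes\iota)(R)$ are exactly the braided cocentral cleft extensions. Under Theorem~\ref{2dimE}, the trivial action corresponds to the trivial extension $A\otimes\k[C_2]=A[u^{\pm1};\id,1\otimes1,1,2]$, with its braided structure given by $R$ as in \S\ref{sss:braided-cyclic}. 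Removing trivial objects from both sides gives (i)$\leftrightarrow$(ii). By Corollary~\ref{cor:anotEG}, the nontrivial extensions can be written explicitly as $A[u^{\pm1};\vp,J,a,2]$.

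The main obstacle is matching the notion of ``nontrivial'' consistently across the three sets. On the extension side, nontrivial must mean not equivalent to $A\otimes\k[C_2]$ as a braided extension. One therefore has to check that an extension equivalent to the trivial one as a plain extension is automatically equivalent to it braidedly. This holds because the equivalence maps in Theorem~\ref{2dimE} restrict to the identity on $A$ and so automatically intertwine $R^H=R$. Equivalently, one checks that $\brho^{\rm br}$ trivial is the same as $\brho^{\rm br}$ trivial in $\ost(A)$, which follows from the inclusion $\ost^{\rm br}(A,R)\subseteq\ost(A)$.
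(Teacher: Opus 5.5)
Your proposal is correct and is essentially the paper's argument. The paper obtains Theorem~\ref{HopfCEG-br} by combining Theorem~\ref{HopfCEG} (that is, Theorem~\ref{CCEGE} plus Lemma~\ref{subnormal} for $G=C_2$) with the observation that $A[u^{\pm1};\vp,J,a,2]$ is quasitriangular with $R^H=(\iota\otimes\iota)(R)$ exactly when $(\vp,J)$ is braided, and this is the content of Theorem~\ref{CCEGE-br} that you invoke. Working directly with Theorem~\ref{CCEGE-br} is in fact slightly cleaner: it needs the cohomology vanishing only for braided homomorphisms $\brho^{\rm br}$, which is exactly the stated hypothesis, whereas Theorem~\ref{HopfCEG} literally assumes it for all $\brho\colon C_2\to\ost(A)$.
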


\section{The groups of stable twists of some pointed Hopf algebras}\label{S:StGrH}

This section is devoted to the explicit computation of the group of stable twists of the pointed Hopf algebras $A_\delta(n,\xi)$ (cf.~\cite{Rad1975}), which are
central extensions of the Taft algebras by a cyclic group algebra. For any Taft algebra and any cyclic group, there are two inequivalent extensions which are characterized by $\delta\in\{0,1\}$. The case $\delta=0$ \textup{(}including all Taft algebras\textup{)} admits a complete explicit classification of stable twists; the case $\delta=1$ with $d\ge 2$ involves a substantial chain of coefficient identities, normalizations, and a delicate existence argument. We collect the main statements first so that a reader interested only in conclusions can skip the proofs.

\subsection{Summary of main results}\label{SS:A-summary}
Let $\kk$ be an algebraically closed field of characteristic zero. For integers $m,d\ge 1$, set $n=md$, and fix a primitive $n$-th root of unity $\theta\in\kk^\times$. Set $\xi:=\theta^d$ (a primitive $m$-th root of unity). Let $\mathbb G_m\subseteq\kk^\times$ denote the group of $m$-th roots of unity, and for $i\ge 0$ write $[i]_\xi:=1+\xi+\cdots+\xi^{i-1}$ and $[i]_\xi!:=[1]_\xi\cdots[i]_\xi$.

For $\delta\in\{0,1\}$ (with $\delta=0$ enforced when $d=1$), define \textit{the pointed Hopf algebras $A_\delta(n,\xi)$:}
\[
A_\delta(n,\xi)\;:=\;\kk\langle x,g\rangle\,\big/\,\bigl\langle\,x^m-\delta(1-g^m),\;g^n-1,\;gx-\xi xg\,\bigr\rangle,
\]
with Hopf structure
\begin{align*}
\Delta(g)&=g\otimes g, & \Delta(x)&=x\otimes g+1\otimes x,\\
\ve(g)&=1, & \ve(x)&=0,\\
S(g)&=g^{-1}, & S(x)&=-xg^{-1}.
\end{align*}
Then $\dim A_\delta(n,\xi)=m^2d$, with $\kk$-basis $\{x^ig^j:0\le i\le m-1,\,0\le j\le n-1\}$. When $d=1$, $A_0(n,\xi)\cong T_{n^2}(\xi)$ is the Taft algebra of parameter $\xi$ \cite{Taft}. When $d>1$ and $\delta=1$, $A_1(n,\xi)$ sits in a non-split Hopf extension
\[
0\to\kk\langle g^m\rangle\to A_1(n,\xi)\to T_{m^2}(\xi)\to 0.
\]

We next record, for general $\delta$, some notation in the commutative Hopf subalgebra $\kg{g}\subseteq A_\delta(n,\xi)$. It contains the idempotents $e_i:=\frac1n\sum_{j\in\Z_n}\theta^{-ij}g^j$ satisfying
\begin{equation}\label{eq:com_rel}
g e_i=\theta^i e_i,\qquad e_i x=xe_{i-d}
\end{equation}
for $i\in\Z_n$. They form a complete set of primitive orthogonal idempotents of $\kg{g}$ and give the projective $A$-submodule decomposition $A=\bigoplus_{i\in\Z_n}A\,e_i$; the $Ae_i$ are indecomposable, so the $e_i$ are also primitive in $A_\delta(n,\xi)$, and by the second relation in \eqref{eq:com_rel} they are not central. Writing $\oi$ for the image of $i$ under $\Z_n\twoheadrightarrow\Z_d\cong\Z_n/\Z_n d$, the sums
\[
e_\oi:=\sum_{j\in\oi}e_j=e_i+\cdots+e_{i+d(m-1)}
\]
are central idempotents satisfying $g^m e_\oi=\theta^{mi}e_\oi$, hence
\begin{equation}\label{eq:gm}
e_\ok=\frac1d\sum_{j=0}^{d-1}\theta^{-mjk}g^{mj}\quad\text{for }\ok\in\Z_d.
\end{equation}
For $\delta=1$ we fix an $m$-th root $\g_\oi$ of $1-\theta^{mu}$ for each $u\in\oi$, $\oi\ne\ol0$, set $\g_{\ol0}:=0$, and write $\g_u:=\g_\ou$ for $u\in\Z_n$. Finally, since $A_\delta(n,\xi)^{\otimes 2}$ is free over $\kg{g}^{\otimes2}$ on the monomials $\{x^i\otimes x^j\}_{0\le i,j\le m-1}$, every $J\in A_\delta(n,\xi)^{\otimes2}$ expands uniquely as
\begin{equation}\label{Stwist}
J=\sum_{0\le i,j\le m-1}(x^i\otimes x^j)\,\J{}{i,j},\qquad \J{}{i,j}\in\kg{g}^{\otimes2},
\end{equation}
and we call $\JC{}{0}:=\J{}{0,0}$ its \emph{constant term}. In particular, write $\JC{}{i}:=\J{}{i,m-i}$.

The technical core of our computation is to show that every stable twist of $A_\delta(n,\xi)$ is equivalent to a standard normalized one; the classification is then carried out on these representatives.

\begin{deff}\label{d:sf}
A stable twist $(\vp,J)$ of $A_\delta(n,\xi)$ is \emph{standard} if
\begin{equation}\label{eq:simplerJ}
\vp(g)=g^l,\qquad \vp(x)=xa,\qquad J=\JC{}{0}+\sum_{i=1}^{m-1}(x^i\otimes x^{m-i})\,\JC{}{i},
\end{equation}
where $l$ is an integer coprime to $n$ with $l\equiv1\pmod m$, the element $a$ lies in $\k^\times$ if $\delta=0$ and equals $\sum_{\oi\in\Z_d}a_\oi e_\oi\in\kk\langle g^m\rangle$ with $\ao\ne0$ and $a_\oi^m=\g_{\oi l}^m/\g_\oi^m$ $(\oi\in\ZD)$ if $\delta=1$, and $\JC{}{0}(\eo\otimes\eo)=\eo\otimes\eo$. It is \emph{normalized} if in addition $\JC{}{0}=1\otimes1$.
\end{deff}

We first classify all stable twists of $A_0(n,\xi)$ explicitly.

\begin{thm}\label{STH0}
Every stable twist of $A_0(n,\xi)$ is equivalent to $(\vp^\lambda,J^\alpha)$ for some $\lambda\in\kk^\times$ and $\alpha\in\kk$, where
\begin{gather*}
\vp^\lambda(g)=g,\quad \vp^\lambda(x)=\lambda x,\\
J^\alpha\;:=\;1\otimes 1\;+\;\alpha\sum_{1\le i\le m-1}\frac{x^i}{[i]_\xi!}\otimes\frac{x^{m-i}g^i}{[m-i]_\xi!},
\end{gather*}
and $\alpha=0$ if $m<n$. In particular, $[J^\alpha,\Delta(A_0(n,\xi))]=0$.
\end{thm}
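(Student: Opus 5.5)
The plan is to use the exact sequence of Theorem~\ref{t:general-exact-stable} (here $\Phi=1\otimes1\otimes1$): $\ost(A_0(n,\xi))$ is an extension of $\OutTw(A_0(n,\xi))$ by $H^2_\ell(A_0(n,\xi)^*)$. I will compute the two pieces separately, after first normalizing an arbitrary stable twist $(\vp,J)$ to the standard form of Definition~\ref{d:sf}. Throughout, write $A=A_0(n,\xi)$ and use the expansion \eqref{Stwist} of $J$ in the basis $x^i\otimes x^j$ over $\kg{g}^{\otimes2}$. I will also use the $\Z$-grading by $x$-degree, which is compatible with $\Delta$ because $x^m=0$ when $\delta=0$.

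\textbf{Step 1: the algebra component.} An algebra automorphism $\vp$ preserves the Jacobson radical $\langle x\rangle$, so it induces an automorphism of $A/\langle x\rangle\cong\kg{g}\cong\kk[\Z_n]$. Equivalently, ${}_\vp(-)$ permutes the invertible simple objects, which are the characters $g\mapsto\theta^i$, by a group automorphism $i\mapsto li$ with $\gcd(l,n)=1$. Conjugating by a suitable $u\in\A$ (lifting idempotents $e_i$ and correcting by $1+(\text{radical})$), I expect to arrange $\vp(g)=g^l$ and $\vp(x)\in x\,\kg{g}+(x^2)$. The relation $g^lx'=\xi x'g^l$ for $x'=\vp(x)$ then forces $l\equiv1\pmod m$. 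Finally, a further inner conjugation by an element $1+\sum_k x^kc_k$ should remove the higher-order terms and make $\vp(x)=xa$ with $a\in\kg{g}$.

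\textbf{Step 2: matching the intertwining condition degree by degree.} I will compare $J\Delta(\vp(y))=(\vp\otimes\vp)\Delta(y)J$ for $y=g$ and $y=x$, sorting by $x$-bidegree.
\begin{itemize}
\item For $y=g$, the bidegree-$(i,j)$ part shows that $\J{}{i,j}$ can be nonzero only when $i+j\equiv0\pmod m$.
\item For $y=x$, the lowest-degree part compares $\JC{}{0}(x\otimes g^l+1\otimes x)$ with $(xa\otimes g^l+1\otimes xa)\JC{}{0}$. Using $e_ix=xe_{i-d}$ and evaluating on $e_0\otimes e_0$, this should force $a\in\kk^\times$ and $\JC{}{0}$ to be a coboundary $\partial_c$ with $c\in\kg{g}$. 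Absorbing $c$ gives $\JC{}{0}=1\otimes1$.
\item Comparing the $x\otimes1$ and $1\otimes x$ components, which carry $g^l$ versus $g$, then forces $g^{l-1}=1$, so $l=1$ and $\vp=\vp^\lambda$.
\end{itemize}
At this point the remaining $J$ commutes with $\Delta(A)$, since $\vp^\lambda$ is a Hopf automorphism. So $J$ is an invariant twist, and only its bidegree-$(i,m-i)$ terms survive.

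\textbf{Step 3: the lazy part.} Now solve for invariant $2$-cocycles $J=1\otimes1+\sum_{i=1}^{m-1}(x^i\otimes x^{m-i})\JC{}{i}$.
\begin{itemize}
\item Commuting with $\Delta(x)$ gives a recursion expressing $\JC{}{i+1}$ in terms of $\JC{}{i}$, with $q$-binomial coefficients. Its solution should be $\JC{}{i}=\alpha\,g^i/([i]_\xi![m-i]_\xi!)$, up to a factor $f\in\kg{g}^{\otimes 2}$ constrained by the end conditions.
\item The wrap-around condition at $i=m$ should yield $\alpha(1-g^m)\otimes(\cdots)=0$. This forces $\alpha=0$ unless $g^m=1$, that is, unless $m=n$.
\item The cocycle identity \eqref{2-cocycleJ} should hold automatically, since the terms of $x$-degree above $m$ vanish.
\item Coboundaries $\partial_z$ with $z\in Z(A)\cap\A$ are trivial here: such $z$ lie in $\kk\langle g^m\rangle$ and are grouplike up to scalars in degree zero. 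Hence distinct $\alpha$ give distinct classes.
\end{itemize}
The invariance $[J^\alpha,\Delta(A)]=0$ is then built into the construction.

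\textbf{Main obstacle.} I expect the hardest part to be Steps 1--2: showing that an arbitrary $J$, whose terms $\J{}{i,j}$ may have $i+j\neq m$ a priori and whose constant term is a general element of $\kg{g}^{\otimes2}$, can be gauged to $\JC{}{0}=1\otimes1$. My first attempt will be induction on $x$-degree: at each degree, solve for a correction $u=1+x^k(\cdots)$ using that $H^1$ and $H^2$ of $\kk[\Z_n]$-comodule complexes vanish in characteristic zero.
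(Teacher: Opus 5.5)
There is a genuine gap in Step~3. You treat the cocycle identity \eqref{2-cocycleJ} as automatic. You then try to get both the shape of $\JC{}{i}$ and the vanishing of $\alpha$ for $m<n$ from invariance under $\Delta(A)$ alone, and that cannot work.

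Because $x^m=0$, the components of $J\Delta(x)-\Delta(x)J$ in bidegrees $(1,m)$ and $(m,1)$ vanish identically, so there is no wrap-around condition at all. Writing $\JC{}{i}=\sum_{u,v}\JC{u,v}{i}e_u\otimes e_v$, all that remains is the chain
\[
\theta^{v}\bigl(\JC{u+d,v}{k-1}-\xi^{-(k-1)}\JC{u,v}{k-1}\bigr)=\JC{u,v}{k}-\JC{u,v+d}{k},\qquad 2\le k\le m-1,\ u,v\in\Z_n .
\]
This chain has no end conditions and does not force $\JC{}{i}\in 1\otimes\kk g^i$. For example, $\JC{}{m-1}=c\otimes 1$ with $c\in\kg{g}$ arbitrary and all other $\JC{}{i}=0$ solves it. For $m=2$ the chain is empty: every $1\otimes1+(x\otimes x)\JC{}{1}$ commutes with $\Delta(A_0(2d,-1))$.

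Nor is the cocycle identity automatic. With $J^+:=J-1\otimes 1$, its degree-$m$ part
\[
1\otimes J^+ +(\id\otimes\Delta)(J^+)=J^+\otimes 1+(\Delta\otimes\id)(J^+)
\]
is exactly where the content lies:
\begin{enumerate}
\item comparing coefficients of $1\otimes x^i\otimes x^{m-i}$ gives $\JC{}{i}=1\otimes b_i$;
\item comparing coefficients of $x^i\otimes x^{m-i}\otimes 1$ gives $b_i=\alpha_i g^{i-m}$;
\item comparing coefficients of $x^i\otimes 1\otimes x^{m-i}$ gives $\alpha_i(1\otimes g^{i-m}\otimes g^{i-m})=\alpha_i(1\otimes g^{i}\otimes g^{i-m})$, so $\alpha_i=0$ unless $n=m$.
\end{enumerate}
The last item is the factor $1-g^m$ you were looking for; it lives in the cocycle identity, not in the invariance recursion. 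Only after this does $[J,\Delta(x)]=0$ collapse to the scalar recursion $\alpha_i=\frac{[m-i+1]_\xi}{[i]_\xi}\alpha_{i-1}$, which yields $J^\alpha$. This degree-$m$ cocycle analysis is the heart of the paper's proof. The gauge-fixing of $\JC{}{0}$ that you single out as the main obstacle is the easy part.

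The same confusion appears in milder form in Step~2. The total-degree-one part of the intertwining identity is $\JC{}{0}(x\otimes g+1\otimes x)\Delta(a)=(xa\otimes g^l+1\otimes xa)\JC{}{0}$; your left side drops $\Delta(a)$ and has $g^l$ in place of $g$. This identity only relates $\JC{u,v}{0}$ to $\JC{u+d,v}{0}$ and $\JC{u,v+d}{0}$, so for $d>1$ it cannot make $\JC{}{0}$ a coboundary.

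Use instead the degree-zero part of the cocycle identity. $\JC{}{0}$ is invertible, being the degree-zero part of the invertible element $J$ of a nonnegatively graded algebra. It is also a $2$-cocycle on $\Z_n$, hence a coboundary since $H^2(\Z_n,\kk^\times)=1$, and can be gauged away by an element of $\kgt{g}$. Once $\JC{}{0}=1\otimes 1$, the $(0,1)$-component gives $\Delta(a)=1\otimes a$, so $a=\lambda\in\kk^\times$.

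After these repairs your third bullet is correct and is a real shortcut. The $(1,0)$-component reads $\lambda\, x\otimes g=\lambda\, x\otimes g^l$, so $l=1$ immediately, whereas the paper only obtains $l=1$ at the end of its case analysis.

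Step~1 is fine: lifting idempotents is a valid substitute for the paper's operator $D_a$. The extra conjugation removing higher terms of $\vp(x)$ is unnecessary, though. Once $l\equiv 1\pmod m$, the $\xi$-eigenspace of conjugation by $g^l$ is already $x\kg{g}$.
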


\begin{cor}\label{c:all_twists}
Every $2$-cocycle of the Taft algebra $T_{m^2}(\xi)$ is equivalent to some $J^\alpha$ up to coboundary.
\end{cor}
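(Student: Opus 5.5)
We deduce Corollary~\ref{c:all_twists} from Theorem~\ref{STH0} with $d=1$, so $n=m$ and $A:=A_0(m,\xi)\cong T_{m^2}(\xi)$. A $2$-cocycle on $T_{m^2}(\xi)$ is a gauge transformation $J$ of the Hopf algebra $A$ with $\Phi_J=1\otimes1\otimes1$. We say $J,J'$ are equivalent up to coboundary if $J'=J_x=(x\otimes x)J\Delta(x)^{-1}$ for some $x\in\A$.

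The first step is to promote an arbitrary $2$-cocycle $J$ to a stable twist. The twisted Hopf algebra $A_J$ is again a $m^2$-dimensional pointed Hopf algebra, since $A_J$ and $A$ share the underlying algebra and $\C{A_J}\simeq\C{A}$. We therefore expect $A_J\cong A$ as Hopf algebras, using the classification of pointed Hopf algebras of dimension $m^2$ with coradical $\k[C_m]$, or more directly the fact that cocycle deformations of Taft algebras are trivial. If $\varphi\colon A\to A_J$ is such an isomorphism, then $(\varphi,J)\in\st(A)$, because $\varphi(\Phi)=1\otimes1\otimes1=\Phi_J$.

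By Theorem~\ref{STH0}, $(\varphi,J)$ is equivalent to some $(\vp^\lambda,J^\alpha)$. That is, there is $x\in\A$ with $J^\alpha=J_x$ and $\vp^\lambda=I_x\circ\varphi$. The second component alone already says that $J$ is equivalent to $J^\alpha$ up to coboundary, which is the claim.

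The only substantive step is the existence of $\varphi$, i.e.\ showing $A_J\cong T_{m^2}(\xi)$ as Hopf algebras. I would argue this as follows.
\begin{itemize}
\item $A_J$ has the same algebra as $A$, hence the same dimension.
\item It has the same grouplike-type coradical, because $\C{A_J}$ and $\C{A}$ have the same invertible simple objects. By duality, $(A_J)^*$ is a cocycle deformation of $A^*\cong T_{m^2}(\xi)$, and cocycle deformations preserve the coradical filtration. So $(A_J)^*$ is pointed of dimension $m^2$ with group $C_m$.
\item Pointed Hopf algebras of dimension $m^2$ with grouplikes $C_m$ are Taft algebras $T_{m^2}(\xi')$ (Andruskiewitsch--Schneider).
\item The parameter is preserved, $\xi'=\xi$, since the braiding on the degree-one Yetter--Drinfeld module is an invariant of the tensor category.
\end{itemize}
Taking duals back (Taft algebras are self-dual) gives $A_J\cong A$.

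If this route is too heavy, an alternative is to invoke the known fact (Masuoka) that every cocycle deformation of $T_{m^2}(\xi)$ is isomorphic to it.
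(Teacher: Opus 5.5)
Your reduction is the one the paper uses. You produce a bialgebra isomorphism $\varphi\colon A\to A_J$, so that $(\varphi,J)\in\st(A)$, and then read off $J^\alpha=J_x$ from Theorem~\ref{STH0}. The paper also obtains $\varphi$ along your lines:
\begin{enumerate}
\item $(A_J)^*$ has the same coalgebra as $A^*\cong T_{m^2}(\xi)$;
\item \cite[Proposition~1.8]{AN01} then gives $(A_J)^*\cong T_{m^2}(\xi')$;
\item $\xi'=\xi$ follows because Taft algebras with different parameters are not gauge equivalent \cite[Cor.~2.4]{Schauenburg2000}.
\end{enumerate}

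Your third bullet, however, is false as stated once $m$ is composite. Take $m=4$ and the Hopf algebra generated by a grouplike $g$ of order $4$ and elements $x_1,x_2$ with
\[
\Delta(x_i)=x_i\otimes 1+g\otimes x_i,\qquad gx_i=-x_ig,\qquad x_i^2=0,\qquad x_1x_2=-x_2x_1.
\]
It is pointed of dimension $16$ with $G\cong C_4$. Modulo trivial ones, its nontrivial skew-primitives span a $2$-dimensional space, whereas $T_{16}(\xi)$ has only a $1$-dimensional such space, so it is not a Taft algebra.

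The classification step therefore has to use more than $G\cong C_4$. It needs the full coalgebra information you already recorded in your second bullet: $(A_J)^*$ contains a nontrivial $(\alpha,1)$-skew-primitive with $\alpha$ of order $m$, inside a Hopf algebra of dimension $m^2$. These are exactly the hypotheses under which \cite[Proposition~1.8]{AN01} yields a Taft algebra.

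Your fourth bullet's appeal to an invariant "infinitesimal braiding" is the content of \cite[Cor.~2.4]{Schauenburg2000}. Either cite that result or make the invariance argument precise; as written it is only a heuristic.

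With those two repairs your argument coincides with the paper's. Your fallback of citing Masuoka's computation of cocycle deformations of Taft algebras would also close the gap.
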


In the next result, we denote by $\ost_t(A)\subseteq\ost(A)$ the subgroup consisting of the classes of stable twists $(\varphi,J)$ with $J$ a coboundary. The Taft case recovers and re-derives the bi-Galois classification of Masuoka \cite{Ma94} and Schauenburg \cite{Schauenburg2000} via a different route.

\begin{cor}\label{StableTwTaft}
We have $\ost_t(A_0(n,\xi))\cong\kk^\times/\mathbb G_m$, and
\[
\ost(A_0(n,\xi))\;\cong\;\begin{cases}
\kk^\times/\mathbb G_m & \text{if } m<n,\\
\kk\rtimes(\kk^\times/\mathbb G_m) & \text{if } m=n,
\end{cases}
\]
where in the Taft case $\kk^\times/\mathbb G_m$ acts on $(\kk,+)$ by $[\lambda]\cdot\alpha=\lambda^m\alpha$.
\end{cor}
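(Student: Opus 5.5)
The argument rests on the exact sequence of Theorem~\ref{t:general-exact-stable} applied to $A=A_0(n,\xi)$, combined with the normal form of Theorem~\ref{STH0}. By Theorem~\ref{STH0}, every class in $\ost(A)$ is represented by some $(\vp^\lambda,J^\alpha)$ with $\alpha=0$ when $m<n$. So I will define a map
\[
\Theta\colon \kk\times\kk^\times\to\ost(A),\qquad (\alpha,\lambda)\mapsto\overline{(\vp^\lambda,J^\alpha)},
\]
which is surjective. It then suffices to compute the group law on these representatives and to determine which pairs give equivalent stable twists.

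\textbf{Group law.} By \eqref{GroupTW},
\[
(\vp^\lambda,J^\alpha)\otimes(\vp^{\mu},J^\beta)=(\vp^{\lambda\mu},\vp^\lambda(J^\beta)J^\alpha).
\]
Since $\vp^\lambda$ scales $x$ by $\lambda$ and fixes $g$, we get $\vp^\lambda(J^\beta)=J^{\lambda^m\beta}$, because each summand has total $x$-degree $m$. Next I need $J^{\gamma}J^{\alpha}=J^{\gamma+\alpha}$. The products of terms $(x^i\otimes x^{m-i}g^i)(x^j\otimes x^{m-j}g^j)$ have $x$-degree $\ge 2m$ in at least one tensor factor after moving the $g$'s past the $x$'s. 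In $A_0$ we have $x^m=0$, so every such product has some factor $x^{i+j}$ or $x^{2m-i-j}$ with exponent $\ge m$, and hence vanishes. Thus $J^\gamma J^\alpha=J^{\gamma+\alpha}$, and $\Theta$ becomes a homomorphism from $\kk\rtimes\kk^\times$ with the action $\lambda\cdot\alpha=\lambda^m\alpha$.

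\textbf{Kernel.} Suppose $(\vp^\lambda,J^\alpha)\in\st_0(A)$. Then $\vp^\lambda=I_u$ and $J^\alpha=\partial_u$ for some $u\in\A$. Using the sequence \eqref{eq:general-stable-SES}, I first determine when $\vp^\lambda$ is inner. Conjugation by $g^k$ scales $x$ by $\xi^{-k}$, which shows that $\lambda\in\mathbb G_m$ gives an inner automorphism implemented by a grouplike element, so $\partial_u=1\otimes1$. Conversely, an inner automorphism fixing $g$ must be conjugation by some $u$ commuting with $g$, hence $u$ lies in the degree-$0$ part modulo higher terms. Comparing the degree-one component of $uxu^{-1}=\lambda x$ forces $\lambda\in\mathbb G_m$.

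It remains to show that $J^\alpha$ with $\alpha\neq0$ is never $\partial_u$ with $I_u=\id$ (or with $I_u$ grouplike-inner). This is exactly the claim that the class of $J^\alpha$ in $H^2_\ell(A^*)$ is nontrivial, i.e.\ that it is not $\partial_z$ for central $z$. The center of the Taft algebra is spanned by $1$ and elements of positive $x$-degree such as $x^{m-1}$-type terms; for $m<n$ the center is $\kg{g^m}$-like. In either case $\partial_z$ for central $z$ with $\ve(z)=1$ either equals $1\otimes1$ or has a bidegree pattern different from $J^\alpha$. I expect this to be the main obstacle: one must check that no central $z=1+(\text{nilpotent})$ yields $\partial_z=J^\alpha$. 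I would try a degree/grading argument: the $\Z_m$-grading by $x$-degree is preserved by $\Delta$, and comparing the component of bidegree $(1,m-1)$ of $(z\otimes z)\Delta(z)^{-1}$ with the coefficient $\alpha$ should give a contradiction.

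Once the kernel is shown to be $\{0\}\times\mathbb G_m$, we obtain $\ost(A)\cong\kk\rtimes(\kk^\times/\mathbb G_m)$ when $m=n$, and $\kk^\times/\mathbb G_m$ when $m<n$. Finally, the trivial stable twists are precisely the classes with $\alpha=0$, because $J^0=1\otimes1$ and, by the kernel computation, $J^\alpha$ is not a coboundary for $\alpha\ne0$. This gives $\ost_t\cong\kk^\times/\mathbb G_m$.
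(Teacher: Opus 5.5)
Your overall route is the paper's. You take the normal forms $(\varphi^\lambda,J^\alpha)$ from Theorem~\ref{STH0}, verify $\varphi^\lambda(J^\beta)=J^{\lambda^m\beta}$ and $J^\gamma J^\alpha=J^{\gamma+\alpha}$ (your nilpotency argument for the latter is fine), and then determine which normal forms define the same class. Computing $\ker\Theta$ is equivalent to the paper's direct comparison of two normal forms. The gap is the step you flag yourself: showing that $(\varphi^\lambda,J^\alpha)\in\st_0(A)$ forces $\alpha=0$. This is the real content of the corollary, and your sketch for it does not work as stated. Its premise is wrong: the center of $T_{m^2}(\xi)$ is just $\kk$. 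A central element commutes with $g$, hence lies in $\kk\langle g\rangle$, and commuting with $x$ then kills every $g^j$ with $m\nmid j$. So there are no ``$x^{m-1}$-type'' central elements. Moreover, the bidegree comparison for a general $z=1+(\text{nilpotent})$ is left as an expectation rather than carried out.

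The missing idea is already in your kernel paragraph, but you weakened it. If $(\varphi^\lambda,J^\alpha)=(I_u,\partial_u)$, then $u$ commutes with $g$. Since $g\,x^ig^j\,g^{-1}=\xi^i x^ig^j$ with $\xi^i\neq 1$ for $1\le i\le m-1$, the centralizer of $g$ is \emph{exactly} $\kk\langle g\rangle$, not just ``degree $0$ modulo higher terms''. As $\kk\langle g\rangle$ is a Hopf subalgebra, $\partial_u\in\kk\langle g\rangle^{\otimes 2}$. On the other hand, for $\alpha\neq0$ the element $J^\alpha-1\otimes 1$ is nonzero with components only in bidegrees $(i,m-i)$, $1\le i\le m-1$, because $A^{\otimes2}$ is free over $\kk\langle g\rangle^{\otimes2}$ on the $x^i\otimes x^j$. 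Hence $\alpha=0$, and $\lambda\in\mathbb G_m$ follows as you said.

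This is exactly the paper's argument. From $I_a\circ\varphi^\lambda=\varphi^{\lambda'}$ it gets $a\in\kk\langle g\rangle$. It then uses $[J^\alpha,\Delta(a)]=0$ to rewrite $J^{\alpha'}=\partial_aJ^\alpha$, and compares constant terms to get $\partial_a=1\otimes1$, $a=g^i$ and $\alpha=\alpha'$.

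Two smaller repairs are needed. First, for $m<n$ the pair $(\varphi^\lambda,J^\alpha)$ with $\alpha\ne0$ is not a stable twist: it violates the $2$-cocycle condition, as in the proof of Theorem~\ref{STH0}. So $\Theta$ must be restricted to $\{0\}\times\kk^\times$ there. Second, your identification of $\ost_t$ does not follow from the kernel computation alone, since that only rules out $J^\alpha=\partial_u$ with $I_u=\varphi^\lambda$. Use instead that a trivial class $\overline{(\varphi,\partial_v)}$ equals $\overline{(I_{v^{-1}}\circ\varphi,1\otimes1)}$. Then note that every Hopf automorphism of $A_0(n,\xi)$ fixes $g$ and rescales $x$, i.e.\ is some $\varphi^\mu$.
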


Now, we focus on the stable twists of $A_1(md,\xi)$ with $d\ge 2$. This is the technical heart of the section, computed by combining the general exact sequence $1\to H^2_\ell(A^*)\to\ost(A)\xrightarrow{\Pi}\OutTw(A)\to 1$ of Theorem~\ref{t:general-exact-stable} with a classification of stable twists in a convenient normal form.

\begin{thm}\label{inverT}
Let $(\vp,J)$ be a standard normalized stable twist of $A_1(n,\xi)$ with $n=md$, $d>1$. Set $\delta_m:=(3+(-1)^m)/2$. Then:
\begin{enumerate}
\item[\textup{(i)}] $\vp(g)=g^l$ for some integer $l$ with $l\equiv\pm 1\pmod{\delta_m d}$.
\item[\textup{(ii)}] If $l\equiv 1\pmod{\delta_m d}$ \textup{(}and $d>2$ when $\delta_m=1$\textup{)}, then $J=1\otimes 1$ and $(\vp,J)\in\st_0(A)$.
\item[\textup{(iii)}] If $l\equiv-1\pmod{\delta_m d}$, then $\gcd(m,d)=1$, and $(\vp,J)$ is equivalent to the unique nontrivial standard normalized stable twist $(\tilde\vp,\tilde J)$ with
\[
\tilde\vp(g)=g^l,\qquad \tilde\vp(x)=\sqrt[m]{-1}\,x\,g^{-r},
\]
\begin{equation}\label{eq:tJ}
\tilde J\;=\;1\otimes 1-\sum_{i=1}^{m-1}(x^iy\otimes x^{m-i}g^{(1-r)i}y)\!\left(\prod_{j=i+1}^{m-1}(g^r-\xi^j)\otimes\prod_{j=m-i+1}^{m-1}(g^r-\xi^j)\right),
\end{equation}
where $1<r<n$ is determined by $r\equiv 0\pmod m$ and $r\equiv 1\pmod d$, and $y=\sum_{k=0}^{r/m-1}g^{mk}+\frac{1-r}{n}\sum_{k=0}^{d-1}g^{mk}$.
\end{enumerate}
\end{thm}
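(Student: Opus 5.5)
The plan is to work directly with the three stable-twist conditions \eqref{def:st} for a standard normalized $(\vp,J)$: $\vp(g)=g^l$, $\vp(x)=xa$ with $a=\sum_{\oi}a_\oi e_\oi$, and $J=1\otimes1+\sum_{i=1}^{m-1}(x^i\otimes x^{m-i})\JC{}{i}$. Each condition is expanded in the free $\kg{g}^{\otimes2}$-basis $\{x^i\otimes x^j\}$ of \eqref{Stwist}. Here the central idempotents $e_\ok$ of \eqref{eq:gm} and the commutation rules \eqref{eq:com_rel} let every coefficient be computed blockwise over $\Z_d\times\Z_d$.

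\emph{Part (i).} The defining relation $x^m=1-g^m$ must be respected by $\vp$, giving
\[
(xa)^m=x^m\,a'=1-g^{lm}, \qquad a':=\textstyle\prod_j(\text{shifts of }a).
\]
Evaluating on each block $e_\oi$ produces $a_\oi^m\g_\oi^m=\g_{\oi l}^m$, which is the constraint already recorded in Definition~\ref{d:sf}. We then use the intertwiner condition on $x$, namely $J\Delta(xa)=(xa\otimes g^l+1\otimes xa)J$. Taking the constant ($x^0\otimes x^0$) and the $x\otimes 1$, $1\otimes x$ coefficients gives recursions for the $\JC{}{i}$ in terms of $a$ and $g^l$. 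Comparing the top coefficient $x^m\otimes\cdot$, which re-enters through $x^m=1-g^m$, with the constant term $1\otimes1$ yields an identity between $\ve$-type scalars on blocks $e_\oi\otimes e_\oj$. This identity forces $\theta^{m(l-1)(\cdot)}$- or $\theta^{m(l+1)(\cdot)}$-type equalities, and from these I expect to read off $l\equiv\pm1 \pmod d$. For even $m$, a sign appears from $(-1)^m$ in the antipode-like computation $(\vp(x))^m$; I expect this to refine the congruence to modulo $2d$, which is where $\delta_m$ enters.

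\emph{Part (ii).} If $l\equiv1$, the recursion together with the normalization $\JC{}{0}=1\otimes1$ should force all $\JC{}{i}=0$, so $J=1\otimes1$. Then $\vp$ is a Hopf automorphism with $\vp(g)=g^l$ and $\vp(x)=xa$ where $a\in\kg{g^m}$ is grouplike-compatible. I would show that $\vp=I_h$ for a suitable grouplike or central element $h\in\kg{g}$: choose $h=\sum c_ie_i$ with $c_{i}/c_{i-d}$ matching $a$ and $\theta^{(l-1)i}$. This gives $(\vp,J)\in\st_0(A)$. The exclusion $d=2$ when $\delta_m=1$ is needed because then $l\equiv1$ and $l\equiv-1$ coincide modulo $d$.

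\emph{Part (iii).} If $l\equiv-1$, the block equations force a consistency condition that I expect to be solvable only when $\gcd(m,d)=1$. The intended mechanism is that $r$ with $r\equiv0\pmod m$ and $r\equiv1\pmod d$ must exist, so that $g^r$ acts by $\xi$-powers compatible with both gradings. Solving the recursion explicitly should yield the products $\prod(g^r-\xi^j)$ appearing in \eqref{eq:tJ}, with $y$ arising as the solution of the first-order difference equation on $\kg{g^m}$. Uniqueness up to equivalence then follows because any two solutions differ by conjugation by an element of $\kg{g}\cap\A$.

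The main obstacle is the top-degree comparison, the wraparound $x^m=1-g^m$ in the intertwiner equation. It mixes all $\JC{}{i}$ and the $\g_\oi$, and must be shown to be consistent exactly in the two congruence classes. I would handle it first in the case $d$ prime to $m$, then track the extra sign for even $m$.
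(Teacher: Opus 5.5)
\textbf{Gap in part (i).} Your argument for (i) draws only on the intertwiner identity $J\Delta(\vp(x))=(\vp\otimes\vp)\Delta(x)J$ and on $x^m=1-g^m$. These cannot produce any congruence on $l$.

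There is also no ``top coefficient versus $1\otimes 1$'' comparison available: every term of that identity has total $x$-degree $\equiv 1\pmod m$. More importantly, on each block $e_{\overline u}\otimes e_{\overline v}$ the equations \eqref{eq:z2}--\eqref{eq:z4} telescope (generically) to
\[
(1-\beta^m)\,\gamma_{\overline u}^m\,\theta^{mv}=\gamma_{\overline v}^m(\alpha^m-1),\qquad \alpha=\frac{a_{\overline v}}{a_{\overline{u+v}}},\quad \beta=\frac{a_{\overline u}}{a_{\overline{u+v}}}\,\theta^{(l-1)v}.
\]
Inserting your block constraint $a_{\overline w}^m(1-\theta^{mw})=1-\theta^{mlw}$ turns this into the tautology $a_{\overline{u+v}}^m(1-\theta^{m(u+v)})=1-\theta^{ml(u+v)}$, which holds for every $l$.

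The restriction on $l$ comes from the associator condition $d^2(J)=1$, which your plan never uses in substance. The paper evaluates the cocycle identity \eqref{e7} at $\overline u=\overline w=\overline 0$, $i=m-1$. It then inserts the closed forms \eqref{eq:Z01} and the $q$-Vandermonde evaluation \eqref{eq:qJrelation}, obtaining the key relation $(a_{\overline 0}/a_{\overline v})^2=\theta^{-(l-1)v}$. This is combined with $(a_{\overline 0}/a_{\overline v})^m(1+\theta^{lvm})=1+\theta^{vm}$ from \eqref{e8}.

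The factor $\delta_m$ appears because, for even $m$, the square relation can be raised to the power $m/2$, giving $(1-\theta^{\frac m2(l-1)})(1-\theta^{\frac m2(l+1)})=0$. No sign comes from $(\vp(x))^m$: $a$ is central, so $(xa)^m=x^ma^m$.

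\textbf{Consequences for (ii) and (iii).} In (ii), the recursion does not force the higher coefficients to vanish. For $\overline v\neq\overline 0$, the $x\otimes x^{m-1}$ coefficient on $e_{\overline 0}\otimes e_{\overline v}$ is a nonzero multiple of $1-(a_{\overline 0}/a_{\overline v})\theta^{(l-1)v}$, and $a_{\overline 0}$ is unconstrained by $x^m=1-g^m$. You need the square relation to get $(a_{\overline 0}/a_{\overline v})^m=1$. You then need the propagation argument of Corollary~\ref{c:trivial} through \eqref{e5}, \eqref{e6}, \eqref{e9} to reach $J=1\otimes 1$.

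Also, your $h\in\k\langle g\rangle$ fixes $g$, so $\vp=I_h$ requires $l\equiv 1\pmod n$. This must be derived from $J=1\otimes 1$: then $xa$ is $(g^l,1)$-skew-primitive, which forces $a\in\k^\times$, $l\equiv 1\pmod n$, $a^m=1$, and $h=g^i$.

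In (iii), ``$r$ must exist'' is circular. The congruences $l\equiv -1\pmod{\delta_m d}$ and $l\equiv 1\pmod m$ only give $\gcd(m,d)\mid 2$. The case $\gcd(m,d)=2$ has to be excluded separately: at $\overline v=\overline{d/2}$ one has $(a_{\overline 0}/a_{\overline v})^2=-1$, and \eqref{e5} with \eqref{e9} then force $m=4$, contradicting Corollary~\ref{c:trivial}.

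For uniqueness, Proposition~\ref{p:uniqueness} only aligns $\vp$ with $\tilde\vp$, using $a_{\overline 0}^m=-1$ and $a_{\overline u}^m=-\theta^{-mu}$. You still need:
\begin{itemize}
\item that a standard normalized $J$ is determined by $\vp$ (via \eqref{eq:Z01}, \eqref{e8}, \eqref{e10});
\item that $\tilde J$ satisfies the cocycle condition, not just the recursion;
\item that $(\tilde\vp,\tilde J)$ is nontrivial, e.g.\ because its image in the Taft quotient has $\lambda^m=-1$.
\end{itemize}
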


\begin{thm}\label{StableTwTaft2d}
The group $H^2_\ell(A_1(md,\xi)^\ast)$ is trivial, and moreover
\[
\ost(A_1(md,\xi))\;\cong\;\OutTw(A_1(md,\xi))\;\cong\;\begin{cases} \Z/2\Z & \text{if }\gcd(m,d)=1,\\ 1 & \text{otherwise.}\end{cases}
\]
\end{thm}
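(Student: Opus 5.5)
The plan is to feed Theorem~\ref{inverT} into the exact sequence of Theorem~\ref{t:general-exact-stable}. Write $A=A_1(md,\xi)$ with $d\ge 2$. The first step is a reduction: every stable twist of $A$ is equivalent to a standard normalized one. This is the normal-form statement announced before Definition~\ref{d:sf}, which I take as established in the section's technical core. Since $\ost(A)$ is generated by classes of such representatives, the whole group can be read off from the three cases of Theorem~\ref{inverT}.

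The second step is to show that $H^2_\ell(A^\ast)$ is trivial. Let $K$ be an invariant twist. Then $(\id_A,K)$ is a stable twist, and by the reduction it is equivalent to a standard normalized twist $(\vp,J)$. So $\vp=I_u$ is inner, with $\vp(g)=g^l$. Conjugation by an invertible element cannot send $g$ to $g^l$ with $g^l\ne g$. This is because inner automorphisms preserve the central idempotents $e_\oi$, and hence fix the eigenvalue data of $g^m$ on each block; I will check this carefully using \eqref{eq:gm}. It forces $l\equiv 1\pmod n$, which puts us in case (ii) of Theorem~\ref{inverT}. That case gives $J=1\otimes1$ and $(\vp,J)\in\st_0(A)$, so the class of $K$ is trivial. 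The boundary situation $\delta_m=1$, $d=2$ excluded in (ii) needs separate attention. If $m$ is odd and $d=2$, then $l\equiv 1\equiv -1\pmod 2$, so case (iii) also applies. There I would argue that $\tilde\vp(x)=\sqrt[m]{-1}\,x\,g^{-r}$ is not inner when composed with the identity on $g$. I would compare leading coefficients in the $x$-filtration against conjugation formulas, or use that $\gcd(m,2)=1$ forces $l\ne 1$ via the congruences defining $r$.

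The third step computes $\OutTw(A)\cong\ost(A)$. By (i)--(iii) of Theorem~\ref{inverT}, every class in $\ost(A)$ is either trivial or equal to $\overline{(\tilde\vp,\tilde J)}$. The latter exists only when $\gcd(m,d)=1$. So $\ost(A)$ is trivial when $\gcd(m,d)>1$, and has at most two elements otherwise. For $\gcd(m,d)=1$, nontriviality follows because $\tilde\vp(g)=g^l$ with $l\equiv-1\pmod{\delta_m d}$ and $d\ge 2$ (with the odd-$m$, $d=2$ subtlety handled as above). Hence $\tilde\vp$ is not inner by the idempotent argument, so its image in $\operatorname{Out}_{\rm alg}(A)$ is nontrivial. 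A group with exactly two elements is $\Z/2\Z$. Existence of $(\tilde\vp,\tilde J)$ as a genuine stable twist when $\gcd(m,d)=1$ is part of the statement of (iii), namely that it is the unique nontrivial standard normalized twist.

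The main obstacle is the claim that inner automorphisms cannot move $g$ to $g^l$ for $l\not\equiv 1$, uniformly including the edge case $d=2$. The approach I would try first is that $I_u$ fixes every central element, and in particular each $e_\oi$. Meanwhile $\vp(e_\oi)=e_{\ol{il}}$ by \eqref{eq:gm}. Hence $l\equiv1\pmod d$. For the finer congruence, I would use that $I_u$ preserves isomorphism classes of simple modules, the one-dimensional characters $g\mapsto\theta^i$ with $x\mapsto 0$ on the $e_{\ol 0}$ block. This gives $l\equiv 1\pmod n$ up to the identification in Definition~\ref{d:sf}.
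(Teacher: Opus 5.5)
\textbf{The step you flag as the main obstacle is false, so your Step~2 does not go through as written.} An inner automorphism of $A=A_1(md,\xi)$ \emph{can} send $g$ to $g^l\neq g$. In the example following Proposition~\ref{p:uniqueness}, the element $z=e_{\ol 0}+\frac{1}{\sqrt2}\,xe_{\ol 1}\in A_1(4,-1)$ gives $I_z(g)=g^3$ and $I_z(x)=x$. More generally, the element $z=e_{\ol0}+\sum_{\ou\neq\ol0}x_\ou^{\frac{1-\ell}{d}u}$ used in \eqref{eq:Iz} gives $I_z(g)=g^\ell$ for every $\ell$ congruent to $1$ modulo both $m$ and $d$.

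Your invariants detect exactly this and no more. Fixing the central idempotents $e_\oi$ gives $l\equiv1\pmod d$. Fixing the one-dimensional characters (where $x$ acts by $0$ and $g$ by an $m$-th root of unity) gives $l\equiv1\pmod m$. Together these only yield $l\equiv1\pmod{\operatorname{lcm}(m,d)}$, which is strictly weaker than $l\equiv1\pmod{md}$ whenever $\gcd(m,d)>1$.

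Your fallback for $m$ odd, $d=2$ is also wrong. There $l\equiv1\pmod m$ and $l$ odd force $l=1$ for every standard twist, including $\tilde\vp$ itself (with $r=m$). So neither ``$l\neq1$'' nor the idempotent argument of your Step~3 can show that $\tilde\vp$ is not inner in that case.

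\textbf{Repair.} You do not need any statement about inner automorphisms moving $g$; split on $\gcd(m,d)$.

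If $\gcd(m,d)>1$, Theorem~\ref{inverT}(iii) rules out $l\equiv-1\pmod{\delta_m d}$. The excluded pair $\delta_m=1$, $d=2$ has $\gcd(m,d)=1$, so it cannot occur here. Hence by (i) every standard normalized twist falls under (ii). Therefore $\ost(A)=1$, and both $H^2_\ell(A^*)$ and $\OutTw(A)$ are trivial.

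If $\gcd(m,d)=1$, Theorem~\ref{inverT} gives $\ost(A)\subseteq\{1,\overline{(\tilde\vp,\tilde J)}\}$. Everything then follows once $\tilde\vp$ is shown not to be inner: nontriviality of the class, injectivity of $\Pi$, hence $H^2_\ell(A^*)=1$ and $\ost(A)\cong\OutTw(A)\cong\Z/2\Z$.

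Prove non-innerness on the Taft block, uniformly in $d$:
\begin{enumerate}
\item $\tilde\vp$ fixes $e_{\ol0}$ and $ge_{\ol0}$ (since $l\equiv1\pmod m$), and sends $xe_{\ol0}\mapsto\sqrt[m]{-1}\,xe_{\ol0}$ (since $r\equiv0\pmod m$).
\item A unit of $Ae_{\ol0}\cong T_{m^2}(\xi)$ commuting with $ge_{\ol0}$ lies in $\kk\langle g\rangle e_{\ol0}$.
\item Conjugating by $f(g)e_{\ol0}$ multiplies $xe_{\ol0}$ by $f(\xi g)f(g)^{-1}$, and this equals a scalar $\lambda$ only if $\lambda^m=1$.
\end{enumerate}

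With this change your argument is complete. It uses the same ingredients as the paper (Theorems~\ref{t:general-exact-stable} and~\ref{inverT}). The difference is that you deduce $\ker\Pi=1$ from the non-innerness of $\tilde\vp$, whereas the paper gauges $(\id_A,K)$ explicitly.
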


When $\gcd(m,d)=1$ and $d$ is odd, the unique nontrivial class in $\ost(A_1(md,\xi))\cong \Z/2\Z$ lifts to an explicit $C_2$-equivariantization on $\C{A_1(md,\xi)}$, realized by the following new family of Hopf algebras $\N(md,\xi)$.

\begin{deff}\label{def:N-family}
Let $d,m>1$ be coprime integers and $\xi$ a primitive $m$-th root of unity. Choose integers $l,r,s\in\Z/md\Z$ with
\begin{align*}
l&\equiv 1,& r&\equiv 0,& s&\equiv\tfrac{(-1)^m-3}{2} & \pmod m\\
l&\equiv-1,& r&\equiv 1,& 2s&\equiv 1-m & \pmod d.
\end{align*}
The Hopf algebra $\N(md,\xi)$ is the $\kk$-algebra on generators $g,x,u$ with relations
\begin{align*}
x^m&=1-g^m, & g^{md}&=1, & gx&=\xi xg,\\
gu&=ug^l, & xu&=\sqrt[m]{-1}\,uxg^{-r}, & u^2&=z,
\end{align*}
where $z=\tfrac{1}{m}g^s\prod_{i=1}^{m-1}(g^r-\xi^i)$, equipped with the coalgebra structure
\begin{align*}
\Delta(g)&=g\otimes g, & \Delta(x)&=x\otimes g+1\otimes x, & \Delta(u)&=(u\otimes u)J,\\
\ve(g)&=1, & \ve(x)&=0, & \ve(u)&=1,
\end{align*}
where, assuming $1<r<md$ and writing $y=\sum_{k=0}^{r/m-1}g^{mk}+\frac{1-r}{n}\sum_{k=0}^{d-1}g^{mk}$,
\[
J=1\otimes 1-\sum_{i=1}^{m-1}(x^iy\otimes x^{m-i}g^{(1-r)i}y)\!\left(\prod_{j=i+1}^{m-1}(g^r-\xi^j)\otimes\prod_{j=m-i+1}^{m-1}(g^r-\xi^j)\right),
\]
and antipode $S(g)=g^{-1}$, $S(x)=-xg^{-1}$, $S(u)=ug^{-s}$.
\end{deff}

\begin{remark}\label{rem:N-field}
$\N(md,\xi)$ is well-defined over any field $\kk$ that contains $\BQ(\sqrt[m]{-1})$. The parameters $l,r,s$ are uniquely fixed modulo $md$ when $d$ is odd; when $d$ is even there are two distinct choices of $s$, giving two non-isomorphic Hopf algebras.
\end{remark}

The explicit presentations in the smallest cases are recorded as follows.

\begin{example}\label{ex:8d}
Suppose $m=2$ and $d>1$ is odd. Let $\ve=\jacd$ be the Jacobi symbol of $-1$ modulo $d$. Then $l=2d-1$, $r=d+1$, and
\[
s=\begin{cases}\tfrac{d-1}{2}+d & \text{if }\ve=1,\\[2pt]\tfrac{d-1}{2} & \text{if }\ve=-1.\end{cases}
\]
With $y=\tfrac{1}{2}\bigl(1+\sum_{i=1}^{(d-1)/2}(g^{2i}-g^{-2i})\bigr)$ and $z=\tfrac{1}{2}(g^{(d+\ve)/2}+g^{-(d+\ve)/2})$, the Hopf algebra $\N(2d,-1)$ has generators $g,x,u$, defining relations
\begin{align*}
x^2&=1-g^2, & g^{2d}&=1, & gx&=-xg,\\
gu&=ug^{-1}, & xu&=\sqrt{-1}\,ug^{d-1}x, & u^2&=z,
\end{align*}
and Hopf structure
\begin{align*}
\Delta(g)&=g\otimes g, & \Delta(x)&=x\otimes g+1\otimes x, & \Delta(u)&=u\otimes u+uxy\otimes uS(xy),\\
\ve(g)&=1, & \ve(x)&=0, & \ve(u)&=1,\\
S(g)&=g^{-1}, & S(x)&=-xg^{-1}, & S(u)&=ug^{(\ve d+1)/2}.
\end{align*}
The family $\N(2p,-1)$ for $p$ an odd prime is used in \S\ref{S:clas} to complete the classification of $8p$-dimensional nonsemisimple Hopf algebras.
\end{example}

Combining the previous results yields a complete picture of nontrivial cocentral cleft Hopf extensions of $\kk[C_2]$ by $A_1(md,\xi)$:

\begin{thm}\label{th:extC2}
Let $A=A_1(md,\xi)$ with integers $m,d\ge 2$. The nontrivial cocentral cleft Hopf extensions of $\kk[C_2]$ by $A$ \textup{(}equivalently, nontrivial $C_2$-actions on $\C{A}$ up to equivalence\textup{)} are classified as follows.
\begin{enumerate}
\item[\textup{(i)}] If $\gcd(m,d)>1$ and $d$ is odd, there is no nontrivial extension.
\item[\textup{(ii)}] If $\gcd(m,d)>1$ and $d$ is even, there is a unique nontrivial extension, given by $A[u^{\pm 1};\id,1\otimes 1,g^m]$.
\item[\textup{(iii)}] If $\gcd(m,d)=1$ and $d$ is odd, there is a unique nontrivial extension, given by $\N(md,\xi)$.
\item[\textup{(iv)}] If $\gcd(m,d)=1$ and $d$ is even, there are three nontrivial extensions, given by $\N(md,\xi)$ and $A[u^{\pm 1};\id,1\otimes 1,g^m]$.
\end{enumerate}
\end{thm}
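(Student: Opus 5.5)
The plan is to apply Theorem~\ref{HopfCEGC2} (or, when $\ost(A)$ is trivial, its degenerate version with only the trivial homomorphism) to $A=A_1(md,\xi)$. Every extension of $\kk[C_2]$ by $A$ is cocentral by Lemma~\ref{subnormal}, and cleft because it is finite-dimensional. So Theorem~\ref{2dimE} together with Corollary~\ref{cor:anotEG} identifies the extensions with liftings of homomorphisms $\brho\colon C_2\to\ost(A)$.

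\textbf{Step 1: the two inputs.} First I compute $Z_G(A)$. The grouplikes of $A$ are the powers of $g$, and $g^j$ is central iff $\xi^j=1$, i.e.\ $m\mid j$. Hence $Z_G(A)=\langle g^m\rangle\cong C_d$. Second, Theorem~\ref{StableTwTaft2d} gives $\ost(A)\cong\Z/2$ if $\gcd(m,d)=1$ and $\ost(A)=1$ otherwise. In the nontrivial case the generator is represented by $(\tilde\vp,\tilde J)$ from Theorem~\ref{inverT}(iii). Since $\tilde\vp(g^m)=g^{ml}$ and $l\equiv-1\pmod d$, the induced action on $Z_G(A)\cong C_d$ is inversion.

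\textbf{Step 2: the trivial homomorphism.} For $\brho=1$ the obstruction vanishes, since the strict functor $g\mapsto(\id,1\otimes1)$ is a lift. By Theorem~\ref{LiftGactionTw} and Lemma~\ref{lem-Caction} (with $\varphi=\id$), the lifts are therefore parametrized by $H^2(C_2,C_d)=C_d/2C_d$. This group is trivial for $d$ odd. For $d$ even it has order $2$, with representatives $1$ and $g^m$. The corresponding extensions are $A[u^{\pm1};\id,1\otimes1,h]$. The choice $h=1$ gives the trivial extension $A\otimes\kk[C_2]$, and $h=g^m$ gives the extension listed in (ii) and (iv). This settles (i) and (ii) completely, because there $\ost(A)=1$.

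\textbf{Step 3: the nontrivial homomorphism when $\gcd(m,d)=1$.} I must exhibit $z\in\A$ with $(\tilde\vp,\tilde J)^2=(I_z,\partial_z)^{-1}$ and $\tilde\vp(z)=z$. The natural candidate is the element $z=\tfrac1m g^s\prod_{i=1}^{m-1}(g^r-\xi^i)$ of Definition~\ref{def:N-family}. Once this is verified, Lemma~\ref{lem-Caction} shows that the obstruction vanishes. The lifts are then counted by $Z_G(A)^{\tilde\vp}/(1+\tilde\vp)Z_G(A)$. Since $\tilde\vp$ acts by inversion, $(1+\tilde\vp)$ kills $C_d$, so this quotient is $C_d[2]$: of order $1$ for $d$ odd and order $2$ for $d$ even.

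It remains to identify $A[u^{\pm1};\tilde\vp,\tilde J,z\cdot h]$, for $h\in C_d[2]$, with $\N(md,\xi)$. This is a direct comparison of presentations, with antipode $S(u)=\alpha_{\tilde J}u^{-1}$ rewritten as $ug^{-s}$. For $d$ even, the two elements of $C_d[2]$ correspond to the two admissible choices of $s$ in Remark~\ref{rem:N-field}. Adding these to Step 2 gives one nontrivial extension for $d$ odd, which is (iii), and $2+1=3$ for $d$ even, which is (iv).

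\textbf{Main obstacle.} The hard part is the explicit computation in Step 3 of
\[
(\tilde\vp,\tilde J)^2=\bigl(\tilde\vp^2,\ \tilde\vp(\tilde J)\tilde J\bigr).
\]
I must show that $\tilde\vp^2=I_{z^{-1}}$ and that $\tilde\vp(\tilde J)\tilde J=\partial_{z^{-1}}$. I would first compute $\tilde\vp^2$ on generators: $\tilde\vp^2(x)=-x\,g^{-r}g^{-rl}$ up to scalars, which pins down $z$ modulo the centralizer via $gx=\xi xg$. I would then check the gauge identity by expanding in the basis \eqref{Stwist}. Since both sides are determined by their constant terms once the twists are known to be standard (Theorem~\ref{inverT}), comparing only the coefficients $\J{}{0,0}$ and $\J{}{i,m-i}$ should suffice.

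The invariance $\tilde\vp(z)=z$ then follows from $g^r\mapsto g^{rl}$. Here $rl\equiv r$ modulo both $m$ and $d$, so $g^{rl}=g^r$. The congruence conditions on $s$ handle the factor $g^s$ modulo $C_d$, and any residual discrepancy can be absorbed by adjusting $z$ by an element of $C_d$, which is possible because $d$ odd makes squaring surjective on $C_d$.
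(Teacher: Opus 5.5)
Your reduction is the one the paper uses: its proof feeds Theorem~\ref{StableTwTaft2d} and Lemma~\ref{l:C2action} into Theorem~\ref{HopfCEGC2}. Several of your steps are correct as written: Steps~1--2, the count $Z_G(A)^{\tilde\varphi}/(1+\tilde\varphi)Z_G(A)\cong C_d[2]$, and the matching of $C_d[2]$ with the two admissible values of $s$ when $d$ is even (they differ by $md/2$, so $g^{s'}=g^{s}g^{md/2}$). Your separate treatment of $\gcd(m,d)>1$ is more careful than the paper's one-line proof, since there $\ost(A)=1$ and Theorem~\ref{HopfCEGC2} does not literally apply.

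The trouble is in the step you flag as the crux, your own verification of Lemma~\ref{l:C2action}. The congruence you use there is false. From $r\equiv 0$, $l\equiv 1\pmod m$ and $r\equiv 1$, $l\equiv -1\pmod d$ one gets $rl\equiv -r\pmod{md}$, not $rl\equiv r$; the two agree only when $d=2$. So $\tilde\varphi(g^r)=g^{-r}$, and the invariance of $z$ does not come from $g^r$ being fixed. It comes from
\[
\tilde\varphi(z)=\tfrac1m g^{sl}\prod_{i=1}^{m-1}(g^{-r}-\xi^i)=\tfrac1m(-1)^{m-1}\xi^{m(m-1)/2}\,g^{sl-(m-1)r}\prod_{i=1}^{m-1}(g^{r}-\xi^i).
\]
Here the scalar is $1$, and $g^{sl-(m-1)r}=g^s$ is exactly the pair of conditions $s(l-1)\equiv 0\equiv(m-1)r\pmod m$ and $2s\equiv 1-m\pmod d$.

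The same sign governs your other two checks.
\begin{itemize}
\item The factor $g^{-r}g^{-rl}$ in your $\tilde\varphi^2(x)$ equals $1$ precisely because $rl\equiv -r$. So $\tilde\varphi^2(x)$ is a scalar multiple of $x$, matched with $I_{z^{-1}}(x)=\xi^{-s}x$ through $s\equiv\delta_m-3\pmod m$. With $g^{rl}=g^r$ you would instead get a multiple of $xg^{-2r}$, which is not $I_{z^{-1}}(x)$ for any $z\in\kk\langle g\rangle$ once $d>2$.
\item Rewriting $S(u)=\alpha_{\tilde J}u^{-1}=u\,\tilde\varphi(\alpha_{\tilde J})\,z^{-1}$ as $ug^{-s}$ uses $\tilde\varphi(\alpha_{\tilde J})=\frac1m\prod_{j}(g^r-\xi^j)$, which again requires $g^{rl}=g^{-r}$.
\end{itemize}

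Your fallback of absorbing a residual discrepancy into $Z_G(A)\cong C_d$ by taking a square root does not close the gap. That step is only available for $d$ odd. So in case (iv) nothing in your argument shows that the nontrivial homomorphism $C_2\to\ost(A)$ lifts, or that it contributes two extensions. Moreover, rescaling $z$ to cancel a miscomputed discrepancy would move it off an element that is in fact already $\tilde\varphi$-fixed.

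To repair this, either cite Lemma~\ref{l:C2action}, as the paper does, or replace your invariance argument with the computation above. In the latter case you also need the explicit evaluation $\tilde\varphi(\tilde J)\tilde J=\partial_{z^{-1}}$ from the closed form~\eqref{closeform}. With that repair, your count goes through in all four cases.
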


The remainder of \S\ref{S:StGrH} is devoted to the proofs and supporting computations. A reader content with the statements above can proceed directly to \S\ref{S:StGrQH}.

\subsection{Standard stable twists of $A_\delta(n,\xi)$}\label{sss:Adelta-standard}
Throughout, we write $A=A_\delta(n,\xi)$. We record a structural fact used repeatedly below. For $\delta = 0$, the Jacobson radical of $A$ is $Ax$, of nilpotency index $m$. For $n > m$ with $\delta =1$, $x^m$ acts on $Ae_\ok$ as the scalar $1-\theta^{mk} \ne 0$ for $\ok \ne \ol{0}$, while $x^m$ acts trivially on $A \eo$: indeed $A \eo \cong A/A x^m$ is isomorphic, as an algebra, to the Taft algebra $T_{m^2}(\xi)$, whose Jacobson radical is $Ax\eo$, of nilpotency index $m$. 

The expansion \eqref{Stwist} of $J\in A^{\otimes2}$ extends to higher tensor powers: for any positive integer $r$, the tensor power $A^{\o r}$ is free over $\kg{g}^{\o r}$ with basis $\{x^{k_1} \o \cdots \o x^{k_r}\mid 0\le k_1, \dots, k_r \le m-1\}$, so every $Z \in A^{\o r}$ is uniquely
$$
Z = Z^{\bo}+\sum_{k_1 + \cdots + k_r >0} (x^{k_1} \o \cdots \o x^{k_r})Z^\bk,\qquad Z^{\bo}, Z^{\bk} \in \kg{g}^{\o r},
$$
where $\bk=[k_1, \ldots, k_r] \in \Z_{\ge 0}^r$ and $\bo$ is the zero vector; as in the case $r=2$ of \eqref{Stwist}, we call $Z^{\bo}$ the \emph{constant term} of $Z$. Refining the coefficients of a stable twist $(\vp, J)$ of $A$,
$$
\J{}{i,j} = \sum_{u,v \in \Z_n} \J{u,v}{i,j}\, e_u \o e_v
$$
for some $\J{u,v}{i,j}\in \k$. Since $(\ve \o \id)(J)= (\id \o \ve)(J)=1$, we find
$$
\JC{u,0}{0}=1 = \JC{0,v}{0}
$$
for all $u,v\in \mathbb Z_n$.

\begin{lem} \label{l:vg}
Let $(\vp, J)$ be a stable twist of $A_\delta(n,\xi)$ with $\vp(g)=g^l$ for some integer $l$ coprime to $n$. Then $l\equiv 1 \pmod m$, $\vp(x) = x a$ for some nonzero $a\in \kg{g}$, and $\J{}{i,j}=0$ unless $i+j\in \{0,m\}$. If in addition $\JC{}{0}=1\otimes 1$, then $(\vp,J)$ is a normalized standard stable twist. 
\end{lem}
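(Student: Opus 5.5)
The plan is to extract everything from the intertwining relation $J\Delta(\vp(a))=(\vp\otimes\vp)(\Delta(a))J$, applied to $a=g$ and $a=x$, and compared coefficient by coefficient in the free $\kg{g}^{\otimes 2}$-basis $\{x^i\otimes x^j\}$ of \eqref{Stwist}, refined along the idempotents $e_u\otimes e_v$ via \eqref{eq:com_rel}.

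First take $a=g$. Since $\Delta(\vp(g))=g^l\otimes g^l=(\vp\otimes\vp)\Delta(g)$, the element $J$ commutes with $g^l\otimes g^l$. Moving $g^l\otimes g^l$ past $x^i\otimes x^j$ multiplies by $\xi^{l(i+j)}$. Because $\gcd(l,n)=1$ forces $\gcd(l,m)=1$, we obtain $\J{}{i,j}=0$ unless $m\mid i+j$, that is, unless $i+j\in\{0,m\}$ (recall $0\le i,j\le m-1$). Next we determine $\vp(x)$. From $gx=\xi xg$ we get $g^l\vp(x)=\xi\vp(x)g^l$, so $\vp(x)$ lies in the $\xi^{l'}$-eigenspace of conjugation by $g$, where $l'\equiv l^{-1}\pmod m$. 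Writing $\vp(x)=\sum_i x^i c_i$ with $c_i\in\kg{g}$ gives $\vp(x)=x^k c$ for a single $k$ with $kl\equiv 1\pmod m$. To show $k=1$ and hence $l\equiv1\pmod m$, I will use the $x$-intertwining equation: $J\Delta(\vp(x))=(\vp(x)\otimes g^l+1\otimes\vp(x))J$. I compare the lowest-degree ($x$-degree in the second tensor factor) terms, using that the constant term $\JC{}{0}$ is invertible. The constant term is invertible because $J$ modulo the ideal generated by $x\otimes 1$ and $1\otimes x$ is just $\JC{}{0}$, at least on the $\eo$-block where $x$ is nilpotent. Expanding $\Delta(x^kc)=(x\otimes g+1\otimes x)^k\Delta(c)$ by the $q$-binomial formula produces cross terms $x^{k-s}\otimes x^s$ with $0<s<k$. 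These are absent on the right-hand side at the corresponding degree, so $\binom{k}{s}_\xi=0$, which forces $k=1$. (For $\delta=1$, the relation $x^m=1-g^m$ requires care on the non-$\eo$ blocks, so the degree argument will be done on $A\eo\cong T_{m^2}(\xi)$ and then propagated.) Thus $l\equiv1\pmod m$ and $\vp(x)=xa$ with $a\in\kg{g}$. The element $a$ is nonzero, indeed a unit, since $\vp$ is bijective.

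For the final assertion, assume $\JC{}{0}=1\otimes1$. Then $J$ already has the form \eqref{eq:simplerJ} and $\JC{}{0}(\eo\otimes\eo)=\eo\otimes\eo$ holds trivially. It remains to check the constraints on $a$ in Definition~\ref{d:sf}. I will compare the degree-one parts of the $x$-intertwining equation, namely the terms $x\otimes 1$ and $1\otimes x$, using $\JC{}{0}=1$ and the vanishing of the $\J{}{i,j}$ with $0<i+j<m$. This yields $\Delta(a)=a\otimes a$ on the relevant part, giving the $(g^l,\ldots)$-compatibility; together with $\ve(\vp(x))=0$ it makes $a$ grouplike-like. For $\delta=0$ this forces $a\in\k^\times$. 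For $\delta=1$ it forces $a$ into $\kk\langle g^m\rangle$ on each block, with $a=\sum a_\oi e_\oi$. Applying $\vp$ to $x^m=1-g^m$ then gives $(xa)^m=x^m a^m=1-g^{lm}$, where $a$ commutes with $x$ because $a\in\kk\langle g^m\rangle$ is central. Evaluating this on $e_\oi$ yields $a_\oi^m\gamma_\oi^m=\gamma_{\oi l}^m$ and $a_{\ol0}\neq0$.

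I expect the main obstacle to be the degree-comparison step showing $k=1$, and more precisely showing that $a$ is a scalar on each central block, when $\delta=1$. There $x$ is invertible on $Ae_\oi$ for $\oi\ne\ol0$, so the $x$-degree filtration is not available on those blocks. The plan is to work blockwise, using the central idempotents $e_\oi\otimes e_{\ol j}$ and the explicit scalars $1-\theta^{mk}$.
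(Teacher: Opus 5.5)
Your derivation of $\J{}{i,j}=0$ for $i+j\notin\{0,m\}$ and of $\vp(x)=x^kc$ with $kl\equiv 1\pmod m$ is the paper's. Your $q$-binomial argument for $k=1$ is a valid alternative to the paper's argument. The paper simply notes that $\vp$ maps the radical $Ax\eo$ of $A\eo$, of nilpotency index $m$, into $Ax^k\eo$, whose index is smaller when $k>1$. Your route works provided you run it entirely in $A\eo\otimes A\eo\cong T_{m^2}(\xi)^{\otimes 2}$ through the Hopf quotient $A\to A/Ax^m$. There the $x$-bigrading exists and the constant term is invertible, whereas for $\delta=1$ the global constant term need not be (Remark~\ref{rmk:inverA}). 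You also need $c\eo\neq 0$, which holds because $\vp(\eo)=\eo$.

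The genuine gap is in the normalized part. With $\JC{}{0}=1\otimes 1$, the $1\otimes x$ coefficient of $J\Delta(\vp(x))=(\vp\otimes\vp)\Delta(x)J$ does not give $\Delta(a)=a\otimes a$.

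For $\delta=0$ it gives $(1\otimes x)\Delta(a)=1\otimes xa$, i.e.\ $\Delta(a)=1\otimes a$, and applying $\id\otimes\ve$ gives $a=\ve(a)1\in\kk^\times$. A grouplike $a$ would be some $g^j$, so the identity you state would not yield $a\in\kk^\times$.

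For $\delta=1$ the $1\otimes x$ coefficient also contains extra terms, because $x^m=1-g^m\neq 0$: the left side picks up $(x^{m-1}\otimes x)\J{}{m-1,1}(x\otimes g)\Delta(a)$ and the right side picks up $(xa\otimes g^l)(x^{m-1}\otimes x)\J{}{m-1,1}$. These are controlled neither by $\JC{}{0}=1$ nor by the vanishing of $\J{}{i,j}$ for $0<i+j<m$. Indeed $\J{}{m-1,1}\neq 0$ for the nontrivial twist $\tilde J$ of Theorem~\ref{inverT}, where $a$ is not a scalar. You flag this as the main obstacle and propose to attack it block by block on the $e_{\oi}$ with $\oi\neq\ol0$, which is exactly where those terms survive.

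The missing idea is that those blocks are never needed. Multiply the identity by $\eo\otimes 1$ (equivalently by $\Lambda\otimes 1$ with $\Lambda$ the integral of $\kk\langle g^m\rangle$, as the paper does). Since $x^m\eo=0$, all extra terms vanish and you get $(\eo\otimes 1)\Delta(a)=\eo\otimes a$. Write $a=\sum_u a_ue_u$ and use $\Delta(e_w)=\sum_{u+v=w}e_u\otimes e_v$. The identity then says $a_{u+v}=a_v$ for all $u\in d\Z_n$ and all $v$, i.e.\ $a$ is constant on cosets of $d\Z_n$. By \eqref{eq:gm} this is exactly $a=\sum_{\oi}a_\oi e_\oi\in\kk\langle g^m\rangle$. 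One block suffices because the coproduct couples the $\eo$-block of the first factor to every block of the second. From there your computation of $\vp(x^m)=\vp(1-g^m)$, giving $a_\oi^m=\g_{\oi l}^m/\g_\oi^m$, goes through as written.
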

\begin{proof}
Since $g x g^{-1} = \xi x$, $g^l \vp(x) g^{-l} = \xi \vp(x)$ and so $\vp(x) = x^\ell a$ for some $a \in \kg{g}$ and positive integer $\ell \le m-1$ such that $l\ell \equiv 1 \mod m$. When $\delta=1$, we have $\vp(\eo) = \eo$. As noted above, $Ax\eo$ is the Jacobson radical of $A\eo$, of nilpotency index $m$, and the same holds for $\vp(A x\eo) = A x^\ell a e_0 = A x^\ell e_0$. If $\ell > 1$, the nilpotency index of $A x^\ell \eo$ is strictly less than $m$. Therefore, $\ell =1$ and $l \equiv 1 \mod m$. The argument for $\delta=0$ is similar. 

     Writing $J$ as in \eqref{Stwist}, since $J \Delta(\vp(g)) = ((\vp \o \vp)\Delta(g)) J$, we find $J (g^l \o g^l) = (g^l \o g^l) J$.  By comparing the coefficients of $x^i \o x^j$ on both sides, we find
     $$
     \xi^{l(i+j)}  \J{}{i,j}\,(g^l \o g^l) = \J{}{i,j}\, (g^l \o g^l)\, ,
     $$
     which implies $\J{}{i,j} = 0$ unless $i+j \in\{0,m\}$, so that $J$ takes the reduced form \eqref{eq:simplerJ}.

Now, suppose $\JC{}{0}=1\otimes 1$. By comparing the coefficients of $1\otimes x$ on both sides of $J\Delta(\varphi(x))=(\varphi\otimes \varphi)\Delta(x)J$, we obtain that 
\begin{equation*}
    (1\otimes x)\Delta(a)+(x^{m-1}\otimes x)\J{}{m-1,1}(x\otimes g)=1\otimes (xa)+(xa\otimes g^l)(x^{m-1}\otimes x)\J{}{m-1,1}.
\end{equation*}
When $\delta=0$, we get $\Delta(a)=1\otimes a$, which implies that $a\in \k^\times$. When $\delta=1$, we consider the integral $\Lambda=\sum_{i=1}^{d-1}g^{mi}$ of the group algebra $\k\langle g^m\rangle$. By multiplying $\Lambda \otimes 1$ on both sides of the above equation,  we get $(\Lambda\otimes 1)\Delta(a)=\Lambda \otimes a$ since $x^m\Lambda=(1-g^m)\Lambda=0$. This implies that $a = \sum_{\oi \in \Z_d} a_\oi e_\oi \in \kg{g^m}$ and
\begin{align*}
    \varphi(x^m)&=\sum_{\oi \in \Z_d}a_{\oi}^mx^me_{\oi}=\sum_{\oi \in \Z_d}a_{\oi}^m(1-g^m)e_{\oi}=\sum_{\oi \in \Z_d}a_{\oi}^m\gamma_\oi^me_{\oi}\\
    &=\varphi(1-g^m)=1-g^{lm}=\sum_{\oi \in \Z_d} \gamma_{l\oi}^me_\oi.
\end{align*}
Consequently, $a^m_\oi = \frac{\g^m_{\oi l}}{\g^m_\oi}$ for $\oi \in \ZD$ and $a_{\overline{0}}\neq 0$ is obvious. 
\end{proof}

\begin{lem} \label{l:cocycle}
    Let $J$ be a 2-cocycle of $A = A_\delta(n,\xi)$ as given in \eqref{eq:simplerJ}. The following equality holds
    \begin{align}\label{eq:2coJ}
     \JC{v,w}{0}\JC{u, v+w}{0} = \JC{u,v}{0}\JC{u+v, w}{0} 
    \end{align}
    for all $u,v \in \Z_n$. If, in addition, $\JC{u,v}{0} \ne 0$ for all $u,v \in \Z_n$, there exists $z \in \kg{g}^\times_1$ such that $(J_z)^{\bo} = 1 \o 1$.
\end{lem}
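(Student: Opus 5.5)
The plan is to project the $2$-cocycle identity onto its constant term. Write $J=\JC{}{0}+\sum_{i=1}^{m-1}(x^i\otimes x^{m-i})\JC{}{i}$ as in \eqref{eq:simplerJ}, and expand both sides of
\[
(1\otimes J)(\id\otimes\Delta)(J)=(J\otimes 1)(\Delta\otimes\id)(J)
\]
in the $\kg{g}^{\otimes 3}$-basis $\{x^{k_1}\otimes x^{k_2}\otimes x^{k_3}\}$. Then compare the coefficients of $1\otimes1\otimes1$, i.e.\ the constant terms $Z^{\bo}$.

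The first step is to check which pieces contribute to $Z^{\bo}$. Each $\Delta(x^k)$ is a sum of terms $x^a g^{\ast}\otimes x^{k-a}$, and the commutation $g x=\xi x g$ only rescales by powers of $\xi$ when $g$'s are moved right. So every term coming from $(x^i\otimes x^{m-i})\JC{}{i}$ with $i\ge1$ has total $x$-degree $m$ in the three slots. The only way such a term could feed into the constant coefficient is through the relation $x^m=\delta(1-g^m)$, which requires a single slot to carry exactly $x^m$.

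This is the step I expect to be the main obstacle. I would argue it by tracking the bidegree in $\Z_m^3$: the constant-term coefficient must have each slot degree $\equiv 0 \pmod m$. The products that reach degree $m$ in one slot together with degree $0$ in the others come, for instance, from $(x^i\otimes x^{m-i})$ multiplied by $(\id\otimes\Delta)$ of a degree-$0$ term. But there the $x^i$ and $x^{m-i}$ sit in different slots. So I would need to show that, after multiplying by another twist factor, no slot carries a multiple-of-$m$ nonzero degree with zero elsewhere. A cleaner route is to multiply both sides by $e_{\ol0}\otimes e_{\ol0}\otimes e_{\ol0}$ when $\delta=1$, which kills $x^m$. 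Alternatively, I would work in the graded sense: use the grading $\deg x=1$, $\deg g=0$ on the associated graded object (or modulo the ideal generated by $x$ in each factor). In $A/\langle x\rangle\cong\kg{g}/\langle \delta(1-g^m)\rangle$ the reduction map on constant terms is the identity only after restricting to $e_u$ with $\ou=\ol0$ if $\delta=1$. If that becomes problematic, I would instead directly observe that a product of two degree-$m$ pieces has total degree $2m$, which cannot be concentrated as $m$ in one slot and $0$ in others unless it equals $(m,m,0)$-type, which still has nonzero degree in two slots. So constant terms receive contributions only from $\JC{}{0}$-parts.

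Hence the constant term of the left side is $(1\otimes\JC{}{0})(\id\otimes\Delta)(\JC{}{0})$, and the same holds symmetrically on the right side. Using $\Delta(e_t)=\sum_{v+w=t}e_v\otimes e_w$, the coefficient of $e_u\otimes e_v\otimes e_w$ on each side gives exactly \eqref{eq:2coJ}.

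For the second statement, \eqref{eq:2coJ} with all values nonzero says that $c(u,v):=\JC{u,v}{0}$ is a normalized $2$-cocycle $\Z_n\times\Z_n\to\k^\times$; normalization holds since $\JC{u,0}{0}=\JC{0,v}{0}=1$. It is symmetric, since $\JC{}{0}$ commutes in the evident sense, or at least it is a cocycle on the cyclic group $\widehat{\Z_n}$. Since $H^2(\Z_n,\k^\times)=0$, we get $c(u,v)=f(u)f(v)f(u+v)^{-1}$ for some $f\colon\Z_n\to\k^\times$ with $f(0)=1$. Set $z:=\sum_u f(u)^{-1}e_u\in\kg{g}^\times_1$, normalized because $\ve(e_u)=\delta_{u,0}$. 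Since $z\in\kg{g}$, conjugation by $z\otimes z$ and multiplication by $\Delta(z)^{-1}$ preserve the constant-term decomposition up to rescaling the $x$-terms by elements of $\kg{g}^{\otimes2}$. So
\[
(J_z)^{\bo}=(z\otimes z)\JC{}{0}\Delta(z)^{-1},
\]
whose $e_u\otimes e_v$ coefficient is $f(u)^{-1}f(v)^{-1}c(u,v)f(u+v)=1$. Therefore $(J_z)^{\bo}=1\otimes1$.
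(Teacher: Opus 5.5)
Your overall route is the paper's: read off the constant terms of the cocycle identity, then use $H^2(\Z_n,\k^\times)=1$ and gauge by an element of $\kg{g}^\times_1$. The second half is fine. Your $f$ is the inverse of the paper's, which is harmless. The symmetry remark is unnecessary, and ``$\JC{}{0}$ commutes in the evident sense'' is not a reason; symmetry only follows afterwards from the coboundary form.

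\emph{The gap is in the step you flag as the main obstacle.} Your criterion that a contribution to the constant term ``requires a single slot to carry exactly $x^m$'' is false. A summand contributes to the coefficient of $1\o1\o1$ exactly when every slot has $x$-degree $\equiv 0 \pmod m$. For $\delta=1$, patterns such as $x^m\o x^m\o 1=(1-g^m)\o(1-g^m)\o1$ or $x^{2m}\o1\o1$ are genuine constant terms. So discarding the $(m,m,0)$-type because it ``still has nonzero degree in two slots'' proves nothing, and the $(2m,0,0)$-type is never addressed. Your fallbacks do not rescue this either. Multiplying by $e_{\ol{0}}^{\o3}$, or passing to $A/\langle x\rangle$, only yields \eqref{eq:2coJ} for $\ou=\ov=\ow=\ol{0}$, because $(1-g^m)e_u$ is a nonzero multiple of $e_u$ whenever $\ou\neq\ol{0}$.

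\emph{The missing idea is positional rather than a total-degree count.} Since $A=\bigoplus_{k=0}^{m-1}x^k\kg{g}$ and $\kg{g}x^k=x^k\kg{g}$, a tensor having some slot in $x^i\kg{g}$ with $1\le i\le m-1$ has zero constant term.
\begin{itemize}
\item In $(1\o J)(\id\o\Delta)(J)$, the first slot receives $x$'s only from $(\id\o\Delta)(J)$. Hence every summand there coming from $(x^i\o x^{m-i})\JC{}{i}$ with $i\ge1$ has first slot in $x^i\kg{g}$ and contributes nothing.
\item What remains is $(1\o J)(\id\o\Delta)(\JC{}{0})$ with $(\id\o\Delta)(\JC{}{0})\in\kg{g}^{\o3}$. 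Its second slot now receives $x$'s only from $1\o J$, so it lies in $x^j\kg{g}$ for every $j\ge1$ summand, which again contributes nothing.
\end{itemize}
Thus the constant term of the left side is exactly $(1\o\JC{}{0})(\id\o\Delta)(\JC{}{0})$. The same argument on the right side, using the third slot and then the first, gives $(\JC{}{0}\o1)(\Delta\o\id)(\JC{}{0})$. This holds for either value of $\delta$, and no reduction via $x^m=\delta(1-g^m)$ ever enters. Your coefficient computation on $e_u\o e_v\o e_w$ then finishes \eqref{eq:2coJ}.
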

\begin{proof}
The first statement of the lemma can be derived from equating the constant terms on both sides of the identity $(1 \o J)(\id \o \Delta)(J) = (J \o 1)(\Delta \o \id)(J)$. 
     
     If $\JC{u,v}{0} \ne 0$ for all $u,v \in \Z_n$, then $J^{\bo}: \Z_n\times \Z_n\to \k^\times$ defines a normalized 2-cocycle. Since $H^2(\Z_n, \k^\times)$ is trivial, there exists a function $f:\Z_n \to \k^\times$ with $f(0)=1$ such that
    $$
  \JC{u,v}{0} =   \frac{f(u+v)}{f(u)f(v)}
    $$
    for all $u,v \in \Z_n$. Now we let $z=\sum_{u \in \Z_n} f(u) e_u$. Then, it is direct to check that $z \in \kg{g}^\times_1$, and $(J_z)^{\bo}= 1\o 1$.
\end{proof}
\begin{remark}\label{rmk:inverA}
    It can be seen immediately that $\JC{u,v}{0} \ne 0$ for $A_0(n,\xi)$ from the invertibility of $J$. However, it is not the case for  $A_1(n,\xi)$.
\end{remark}

\subsection{Stable twists of $A_0(n,\xi)$}\label{sss:A0-stable} We now compute all stable twists of $A=A_0(n,\xi)$ explicitly. Theorem~\ref{STH0} shows that every stable twist is trivial when $n>m$, while for the Taft algebra $n=m$ the nontrivial invariant twists are exactly the family $J^\alpha$ from the standard form above. Corollary~\ref{StableTwTaft} then identifies $\ost(A_0(n,\xi))$ explicitly.

\begin{lem}\label{bgT}
     Let $(\varphi, J)$ be a stable twist of $A_0(n,\xi)$. Then there exists $z \in \A$ such that $\vp_z(g)=g^l$ for some integer $l \equiv 1\mod m$ coprime to $n$, and
     $$
     (J_z)^{\bo} = 1 \o 1\,.
     $$
\end{lem}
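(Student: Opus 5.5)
The plan is to conjugate $\vp$ so that it maps $g$ to a power of $g$, and then to kill the constant term of the twisted gauge transformation with Lemma~\ref{l:cocycle}. The only input about $A=A_0(n,\xi)$ is that $\operatorname{rad}A=Ax$, with $A/Ax\cong\kg{g}\cong\k[\Z_n]$ as a Hopf algebra quotient. In particular $\kg{g}\to A/Ax$ is an algebra isomorphism.

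\emph{Step 1: conjugating $\vp(g)$ into $\kg{g}$.} The elements $\vp(e_i)$, $i\in\Z_n$, form a complete set of primitive orthogonal idempotents of $A$, as do the $e_i$. By the standard conjugacy of complete sets of primitive orthogonal idempotents (lift from $A/\operatorname{rad}A$; both sets have the same multiset of projective covers because $\vp$ is an automorphism), there is a unit $z$ with $z\vp(e_i)z^{-1}=e_{\pi(i)}$ for some permutation $\pi$. Rescaling by $\ve(z)^{-1}$, we may take $z\in\A$. Then $\vp_z(g)=\sum_i\theta^ie_{\pi(i)}\in\kg{g}$. Since $(\vp_z,J_z)$ is again a stable twist, $\vp_z(g)$ is grouplike in $A_{J_z}$.

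\emph{Step 2: identifying $\vp_z(g)$ as $g^l$.} Pass to the quotient $A/Ax\cong\k[\Z_n]$. The image $\bar J$ of $J_z$ in $(A/Ax)^{\otimes2}$ is invertible. Because $\k[\Z_n]$ is commutative, $\Delta_{\bar J}=\Delta$ there. Hence the image of $\vp_z(g)$ is a grouplike element of $\k[\Z_n]$, i.e.\ some $g^l$. Since $\kg{g}\to A/Ax$ is injective and $\vp_z(g)\in\kg{g}$, we get $\vp_z(g)=g^l$. Here $l$ is coprime to $n$ because $\vp_z(g)$ has order $n$. Lemma~\ref{l:vg} then gives $l\equiv1\pmod m$, $\vp_z(x)=xa$, and the reduced form of $J_z$ in \eqref{eq:simplerJ}.

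\emph{Step 3: normalizing the constant term.} Since $A$ is a Hopf algebra, the stability condition $\vp_z(\Phi)=\Phi_{J_z}$ with $\Phi=1$ says that $J_z$ is a $2$-cocycle. So Lemma~\ref{l:cocycle} applies to $J_z$. Its constant term $(J_z)^{\bo}$ maps to $\bar J$ modulo the radical, and $\bar J$ is invertible in $\k[\Z_n]^{\otimes2}$. Therefore all coefficients $(J_z)^{\bo}_{u,v}$ are nonzero, as noted in Remark~\ref{rmk:inverA}. Lemma~\ref{l:cocycle} then produces $w\in\kgt{g}$ with $((J_z)_w)^{\bo}=1\otimes1$. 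Because $w\in\kg{g}$ commutes with $g^l$, replacing $z$ by $wz$ keeps $\vp_{wz}(g)=g^l$ and gives $(J_{wz})^{\bo}=1\otimes1$.

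The main obstacle is Step 1, the conjugacy of idempotent systems. I would handle it by lifting from the semisimple quotient $A/Ax$, where both $\{\bar e_i\}$ and $\{\overline{\vp(e_i)}\}$ are full sets of primitive idempotents of $\k^{\Z_n}$ and hence agree up to a permutation, and then using the standard lifting-of-idempotents argument to realize that matching by a unit of $A$.
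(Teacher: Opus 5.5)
Your proof is correct, and it reaches the normal form by a different route from the paper's in two places.

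To move $\varphi(g)$ to $g^l$, the paper first passes to the Hopf quotient $A/Ax\cong\k[C_n]$ to get $\varphi(g)=g^l+a$ with $a\in Ax$. It then finds $z=1+b$, $b\in Ax$, by showing that the operator $D_a(b)=[g^l,b]-ba$ is bijective on $Ax$. This uses the filtration by powers of $Ax$: on each $(Ax)^i/(Ax)^{i+1}$ the associated graded map is invertible because $\xi^{li}\neq 1$ for $1\le i\le m-1$.

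You reverse the order. Conjugacy of complete sets of primitive orthogonal idempotents (Krull--Schmidt for ${}_AA=\bigoplus_i A\varphi(e_i)=\bigoplus_i Ae_i$) puts $\varphi_z(g)$ inside $\k\langle g\rangle$. Only then do you use the quotient, where injectivity of $\k\langle g\rangle\to A/Ax$ forces $\varphi_z(g)=g^l$.

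The comparison is as follows. Your version replaces the paper's hands-on lifting, which produces a unipotent $z\in 1+\operatorname{rad}A$ and needs $\gcd(l,m)=1$ to invert the graded map, with a standard ring-theoretic theorem. It is also less tied to the $x$-grading of $A$.

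For invertibility of the constant term, the paper compares constant terms of $J_z$ and $J_z^{-1}$ in reduced form. You instead observe that $J_z$ and its constant term have the same image in $(A/Ax)^{\otimes 2}\cong\k\langle g\rangle^{\otimes 2}$, which is slightly cleaner.

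The final normalization via Lemma~\ref{l:cocycle} is the same in both.
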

\begin{proof}

The Jacobson radical of $A$ is the ideal $I=Ax$ generated by $x$, which is a Hopf ideal.  Then $\vp(I) = I$ and so $\Delta_J(\varphi(I)) = (\varphi \o \varphi)\Delta(I) \subset A \o I + I \o A$. Thus, $I$ is a Hopf ideal of $A$ and $A_J$. Therefore,  $\varphi$ induces a Hopf algebra isomorphism $\ol \varphi: A/I \to A_J/I$. 
Since $A/I \cong A_J/I \cong \k[C_n]$ as Hopf algebras and $g$ is a generator of $C_n$, $\ol\vp(g)$ must be a grouplike element generating $C_n$. Therefore, $\vp(g)=g^l+a$ for some integer $l$ coprime to $n$ and $a\in I$.  

We consider the linear operator $D_a: I\to I$ such that $D_a(b)=[g^l,b]-ba$ for any $b\in I$. Clearly, $D_a(I^i) \subseteq I^i$ for any positive integer $i$. We have the induced operator $\overline{D}_a(-)=[g^l,-]$ on each factor $I^i/I^{i+1}$ for $1\le i\le m-1$. It is clear that $\overline{D}_a$ is bijective since $\overline{D}_a(x^i h)=(\xi^{li}-1)x^ig^lh$ for any $h \in \k[g]$. Hence, $D_a$ is bijective and there exists $b\in I$ such that $D_a(b)=a$. Let $z=1+b$. Then, we have $z \in A^\times_1$ and 
\[
z\varphi(g)-g^lz=(1+b)(g^l+a)-g^l(1+b)=a-D_a(b)=0,
\]
which implies that $\vp_{z}(g)=z\vp(g)z^{-1}=g^l$. By Lemma \ref{l:vg}, $l \equiv 1 \mod m$.

Now we consider the stable twist $(\vp_{z}, J_{z})$. Then $J_z$ and its inverse $\ol{J_z}$ are in the form of \eqref{eq:simplerJ}. Thus, the constant term of $J_{z} \ol J_{z}$ is $J_{z}^{\bo} \ol{J}_{z}^{\bo}$ and so $J_{z}^{\bo} \ol{J}_{z}^{\bo} = 1 \o 1$. Therefore, $\JC{u,v}{0} \ne 0$ for all $u,v \in \Z_n$. By Lemma \ref{l:cocycle}, there exists $z' \in \kg{g}^\times_1$ such that $(J_{z'z})^{\bo} = 1\o 1$ and $\vp_{z'z}(g) = \vp_{z'}(g)=g^l$. 
\end{proof}

Recall the $q$-integers $[n]_q=\frac{q^n-1}{q-1}$ and $q$-factorial $[0]_q!=1$, $[n]_q!=\prod_{1\le i\le n} [i]_q$ and $\left[\begin{matrix}n\\m\end{matrix}\right]_q=\frac{[n]_q!}{[m]_q! [n-m]_q!}$.
We thank Fedor Petrov for informing us that the following identity of $q$-binomial coefficients is an immediate consequence of the $q$-analogue of the Chu--Vandermonde identity.
\begin{lem}
    For positive integers $k \le n$,  we have
    \begin{equation}    \label{eq:qidentity}
    \sum_{i=0}^n(-1)^i\left[\begin{matrix}n\\i\end{matrix}\right]_q\,\left[\begin{matrix}k+i-1\\k-1\end{matrix}\right]_q\, q^{\frac{i(i+1-2k)}{2}}=0\,.
\end{equation}
\end{lem}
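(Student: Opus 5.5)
The plan is to read the left-hand side of \eqref{eq:qidentity}, up to a nonzero overall factor, as one coefficient of a product of two standard $q$-series whose product is a polynomial of too small degree. This is the generating-function form of the $q$-Chu--Vandermonde identity. I will use two classical expansions in a formal variable $t$. The first is the finite $q$-binomial theorem
\[
\prod_{j=0}^{n-1}(1-q^jt)=\sum_{p=0}^{n}(-1)^p q^{p(p-1)/2}\qbin{n}{p}t^p .
\]
The second is the inverse expansion
\[
\frac{1}{\prod_{j=0}^{k-1}(1-q^{j}s)}=\sum_{i\ge 0}\qbin{k+i-1}{i}s^i ,
\]
together with the symmetries $\qbin{k+i-1}{i}=\qbin{k+i-1}{k-1}$ and $\qbin{n}{n-i}=\qbin{n}{i}$. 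Both expansions follow by induction on $n$ and on $k$ from the $q$-Pascal rule, so I will take them as known.

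Substitute $s=q^{\,n-k}t$ in the second expansion. Since $1\le k\le n$, the shifted set $\{n-k,\dots,n-1\}$ is contained in $\{0,\dots,n-1\}$, and therefore
\[
\frac{\prod_{j=0}^{n-1}(1-q^jt)}{\prod_{j=0}^{k-1}(1-q^{\,n-k+j}t)}=\prod_{j=0}^{n-k-1}(1-q^jt).
\]
The right-hand side is a polynomial of degree $n-k<n$, so its coefficient of $t^n$ vanishes. On the left, the coefficient of $t^n$ of the product of the two series is
\[
\sum_{i=0}^{n}(-1)^{n-i}q^{(n-i)(n-i-1)/2}\qbin{n}{n-i}\qbin{k+i-1}{i}q^{(n-k)i}.
\]

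The only step that needs care is matching exponents, and that is where I would concentrate. Expanding $(n-i)(n-i-1)/2=n(n-1)/2+i(i+1)/2-ni$ and adding $(n-k)i$ gives total exponent $n(n-1)/2+i(i+1-2k)/2$. Hence the coefficient equals
\[
(-1)^nq^{n(n-1)/2}\sum_{i=0}^{n}(-1)^i\qbin{n}{i}\qbin{k+i-1}{k-1}q^{\frac{i(i+1-2k)}{2}} .
\]
Because this coefficient is $0$, cancelling the unit $(-1)^nq^{n(n-1)/2}$ yields \eqref{eq:qidentity}. The argument is valid in $\Z[q^{\pm1/2}]$, and hence for any specialization of $q$, in particular $q=\xi$.
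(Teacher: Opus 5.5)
Your proof is correct. The paper quotes the $q$-Chu--Vandermonde identity $\qbin{a+b}{n}=\sum_{i}\qbin{a}{n-i}\qbin{b}{i}q^{i(a-n+i)}$ at $a=n$, $b=-k$, and uses $\qbin{n-k}{n}=0$. It then converts with the negative-index reflection $\qbin{-k}{i}=(-1)^iq^{-\binom{i}{2}-ki}\qbin{k-1+i}{i}$; the factor $q^{i^2}$ combined with this reflection gives the exponent $i(i+1-2k)/2$.

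Your argument is the generating-function mechanism behind that same instance of $q$-Vandermonde. The negative upper index $-k$ is replaced by the series $1/\prod_{j<k}(1-q^js)$. The vanishing of $\qbin{n-k}{n}$ becomes the observation that $\prod_{j=0}^{n-1}(1-q^jt)\big/\prod_{j=n-k}^{n-1}(1-q^jt)$ is a polynomial of degree $n-k<n$.

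The paper's route is shorter given the cited identity. However, it tacitly uses $q$-Vandermonde with a negative parameter, which is standard but itself rests on a polynomial-in-$q^b$ argument or on exactly your computation. Your route is self-contained: it needs only the finite $q$-binomial theorem and the $q$-negative binomial series. Because it works with Gaussian polynomials in $\Z[q^{\pm1}]$, it also makes the later specialization to roots of unity $q=\xi^{\pm1}$ transparently legitimate. There, factorial quotients like $\qbin{k-1+j}{j}$ could otherwise look like $0/0$. Incidentally, $i(i+1)$ is even, so the exponents $i(i+1-2k)/2$ are integers and $\Z[q^{\pm1}]$ already suffices; you do not need $q^{\pm1/2}$.
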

\begin{proof}
By the $q$-Chu-Vandermonde identity 
$$
\qbin{a+b}{n} = \sum_{i=0}^n \qbin{a}{n-i} \qbin{b}{i}  q^{i(a-n+i)} 
$$
with $a=n, b=-k$, we find
$$
0=\qbin{n-k}{n} = \sum_{i=0}^{n}q^{i^2}\qbin{n}{n-i} \qbin{-k}{i} \,.
$$
Now, \eqref{eq:qidentity} follows from the equalities 
$$
\qbin{-k}{i} = (-1)^i q^{-\binom{i}{2}-ki}\qbin{k-1+i}{i} = (-1)^i q^{-\binom{i}{2}-ki}\qbin{k-1+i}{k-1}.
$$
\qedhere
\end{proof}

\begin{proof}[Proof of Theorem~\ref{STH0}]
By Lemmas \ref{l:vg} and \ref{bgT}, we can assume $\varphi(g)=g^l$, $\varphi(x)=\ld x$ and 
\[
 J = 1\o 1+ \sum_{i=1}^{m-1} (x^i \o x^{m-i})\JC{}{i}
\]
for some $l\equiv 1 \mod m$ with $l$ coprime to $n$, $\ld\in \k^\times$ and $\JC{}{i}\in \k[g]\otimes \k[g]$. Consider the 2-cocycle  equation $(1 \o J)(\id \o \Delta)(J)=(J \o 1)(\Delta \o \id)(J)$. The coefficient of $1\otimes x^i\otimes x^{m-i}$ in $(1 \o J)(\id \o \Delta)(J)$ is   $1 \o \JC{}{i}$ while that of $(J \o 1)(\Delta \o \id)(J)$ is $(\Delta \o \id)(\JC{}{i})$ for $1\le i\le m-1$. This implies $\JC{}{i} = 1 \o b_i$ for some $b_i \in \k[g]$.
Similarly, comparing the coefficients of $x^i\otimes x^{m-i}\otimes 1$ yields $\sum_{(b_i)}1\o {b_i}_{(1)} \o g^{m-i} {b_i}_{(2)} = 1 \o b_i\o 1$, which implies $b_i = \a_i g^{i-m}$ for some $\a_i \in \k$ and $i =1,\dots, m-1$. Hence, 
\[
J=1\otimes 1+\sum_{1\le i\le m-1} \a_i x^i\otimes x^{m-i}g^{i-m}\, .
\]

Suppose $n > m$. We further compare the coefficients of $x^i\otimes 1 \otimes x^{m-i}$ in the 2-cocycle equation to obtain 
$$
\a_i 1\o g^{i-m}\o g^{i-m} = \a_i 1\o  g^i \o g^{i-m},
$$
which implies $\a_i =0$ since $g^i \ne g^{i-m}$. Hence $J = 1\o 1$, $A_J =A$ and $\varphi$ is a Hopf algebra automorphism of $A$. This implies $l=1$ and the statement follows. 

Suppose $n=m$. Then the equation $J\Delta(\varphi(x))=(\varphi\otimes \varphi)\Delta(x)J$ yields 
\begin{align*}
  \sum_{i=1}^{m-2} & \a_i x^{i+1}\otimes x^{m-i}g^{i-m+1} + \sum_{i=2}^{m-1} \a_i\xi^{i} x^{i}\otimes x^{m-i+1}g^{i-m} \\
 =&  \sum_{i=1}^{m-2}  \a_i \xi^{-i} x^{i+1}\otimes x^{m-i}g^{i-m+1}+ \sum_{i=2}^{m-1}  \a_i x^i\otimes x^{m-i+1}g^{i-m}
\end{align*}
which is equivalent to
$$
\sum_{i=2}^{m-1} (\a_{i-1}+\xi^{i}\a_i)x^i\otimes x^{m-i+1}g^{i-m}=\sum_{i=2}^{m-1} (\xi^{-i+1}\alpha_{i-1}+\a_i)x^i\otimes x^{m-i+1}g^{i-m}.
$$
Thus, $\alpha_i=\frac{\xi^{-i+1}-1}{\xi^i-1}\,\alpha_{i-1}=\frac{[m-i+1]_\xi}{[i]_\xi}\alpha_{i-1}$ and so $\a_i = \frac{[m-1]_\xi!}{[i]_\xi! [m-i]_\xi!} \a_1$. Thus, $J$ is of the stated form by setting $\a = [m-1]_\xi ! \a_1$. In particular, we find $[J^\a, \Delta(x)] =[J^\a, \Delta(g)]=0$, which implies that $[J^\alpha,\Delta(A)]=0$.  Then it is clear that $\varphi(x)=\lambda x$ belongs to the $(g^l,1)$ primitive space of $A$. Hence, $l=1$ and the statement follows as well.  
\end{proof}

The cleft extensions and bi-Galois objects of Taft algebras are computed in \cite{Ma94, Schauenburg2000}. Here, we list all the 2-cocycles of Taft algebras up to coboundary. 
\begin{proof}[Proof of Corollary~\ref{c:all_twists}]
It suffices to show that $A\cong A_J$ as bialgebras for any 2-cocycle $J$ of $A=T_{m^2}(\xi)$. Recall that $A \cong A_J$ as $\k$-algebras and so $A^* \cong (A_J)^*$ as coalgebras. Since $A \cong A^*$ as Hopf algebras, $A^*$ is pointed with a nontrivial $(\a, 1)$ skew primitive element $y$ and the grouplike element $\a$ has order $m$. By \cite[Proposition 1.8]{AN01}, $(A_J)^*$ is isomorphic to some Taft algebra $T_{m^2}(\xi')$, and so is $A_J$. In particular, $T_{m^2}(\xi')$ and $T_{m^2}(\xi)$ are gauge equivalent and so  $\xi=\xi'$ by \cite[Cor 2.4]{Schauenburg2000} (or \cite[Cor. 3.3]{KMN2012}).
\end{proof}

\begin{proof}[Proof of Corollary~\ref{StableTwTaft}] In light of Theorem \ref{STH0}, every equivalence class in $\ost(A)$ contains a stable twist $(\vp^\ld, J^\a)$ for some $\a, \ld \in \k$ with $\ld \ne 0$. If $(\vp^\ld, J^\a)$ is equivalent to $(\vp^{\ld'}, J^{\a'})$, then there exists  $a\in A_1^\times$ such that 
\begin{equation}\label{eq:equiv}
    (a\otimes a)J^{\a}\Delta(a)^{-1}=J^{\a'} \quad\text{and}\quad I_a \circ \vp^\ld=\vp^{\ld'}. 
\end{equation}
The latter implies $a g a^{-1} = g$ by Theorem \ref{STH0}, and so $a \in \kg{g}$.
Since $[J^\a,\Delta(a)]=0$, the first equation of \eqref{eq:equiv} implies  $(a\otimes a)\Delta(a)^{-1}=1\otimes 1$, and hence $a$ is a group-like element and $\a = \a'$. So, $a=g^i$ for some integer $i$, and $a\vp^\ld(x) a\inv = \xi^i\ld x=\ld' x$. Thus, $\ld'/\ld \in \BG_m$. Conversely, if   $\ld'/\ld = \xi^i$, then $\cb{a}(\varphi^\ld,J^\a) =(\varphi^{\ld'},J^{\a'})$ where $a=g^i \in \A$. This proves that $(\vp^\ld, J^\a)$ is equivalent to $(\vp^{\ld'}, J^{\a'})$ if and only if $\ld'/\ld \in \BG_m$ and $\alpha=\alpha'$. 

In particular, $\ost_t(A)$ consists of the equivalence classes of the form $\ol{(\vp^\ld,1\o 1)}$, and we simply denote its equivalence class by $\widetilde{\ld}$. Then the assignment $\widetilde{\cdot}: \k^\times \to \ost_t(A)$, $\ld \mapsto \widetilde{\ld}$ defines a surjective group homomorphism, and its kernel is $\BG_m$. Therefore,  $\ost_t(A) \cong \k^\times/\BG_m$. 

For $m\neq n$, by Theorem \ref{STH0}, every stable twist of $A$ is equivalent to a trivial one. Thus, $\ost(A) \cong \ost_t(A) \cong \k^\times/\BG_m$.

Now we consider the case $m=n$. Again by Theorem \ref{STH0}, 
$$
\ost(A) =\{\ol{(\vp^\ld, J^\a)}\mid \ld, \a \in \k \text{ and }\ld \ne 0\}.
$$
The assignment $\widehat{\cdot}:(\k,+) \to \ost(A)$, $\a \mapsto \widehat{\a}=\ol{(\vp^1, J^\a)}$ clearly defines a group homomorphism. It is injective, as discussed at the beginning of this proof. Obviously, $\widehat{\a} \widetilde{\ld} = \ol{(\vp^\ld, J^\a)}$ and so $\widehat{\k} \cdot\widetilde{\k^\times} =\ost(A)$ by Theorem \ref{STH0}. Moreover, $\widehat{\k} \cap \widetilde{\k^\times} = \{\widehat{0}\cdot \widetilde{1}\}$, the trivial subgroup. To complete the proof, it suffices to show that $\widehat{\k}$ is normal in $\ost(A)$.
Let  $\Omega = J^1 - 1\o1$. For any $\a \in \k$ and $\ld \in \k^\times$, we have
\begin{align*}
\widetilde{\ld} \widehat{\a} \widetilde{\ld}\inv &=\, \ol{(\vp^\ld, 1\o 1 )}\cdot \ol{(\varphi^1,J^\alpha)}\cdot  \ol{(\vp^{\ld\inv},1\otimes 1)}=\ol{(\vp^\ld, 1\o 1 )}\cdot \ol{(\vp^{\ld\inv}, J^\a)}\\
&=\,\ol{\left(\vp^\ld\vp^{\ld\inv},\, (\vp^\ld\otimes \vp^\ld)J^{\a} \right)}=\ol{\left(\vp^{1}, \, 1\otimes 1+\a(\vp^\ld\otimes \vp^\ld)(\Omega)\right)}\\
&=\, \ol{\left(\vp^{1},\, 1\otimes 1+\lambda^m\a\Omega)\right)}=\ol{\left(\vp^{1},\, J^{\lambda^m\a}\right)} = \widehat{\ld^m\a}. \qedhere
\end{align*}
\end{proof}

\begin{example}[Sweedler's $H_4$ and a braided refinement]\label{ex:H4-braided}
For Sweedler's four-dimensional Hopf algebra $A=H_4=A_0(2,-1)$, Corollary~\ref{StableTwTaft} gives the stable twist group $\ost(H_4)\cong(\k,+)\rtimes(\k^\times/\{\pm 1\})$. Radford \cite{Radford1993} classified the quasitriangular structures on $H_4$ up to gauge equivalence as the $1$-parameter family $(t\in\k)$
\[
R_t\;=\;\tfrac12\bigl(1\otimes 1+g\otimes 1+1\otimes g-g\otimes g\bigr)\,+\,\tfrac{t}{2}\bigl(x\otimes x+x\otimes gx-gx\otimes x+gx\otimes gx\bigr).
\]
For $H_4$, the standard form of \S\ref{sss:Adelta-standard} specializes to $J^\a=1\o 1+\a(x\o xg)$. Two direct computations, using $x^2=0$, $gx+xg=0$, and $g^2=1$, give
\begin{equation}\label{eq:H4-J-actions}
(\vp^\ld\o\vp^\ld)(R_t)=R_{\ld^2 t},\qquad (J^\a)_{21}\,R_t\,(J^\a)^{-1}=R_{t+2\a}\, . 
\end{equation}
The braided condition \eqref{eq:R-cond-multi} for $(\vp^\ld,J^\a)$ on $(H_4,R_t)$ thus reduces to $R_{\ld^2 t}=R_{t+2\a}$, equivalently
\[
\a\;=\;\tfrac{1}{2}(\ld^2-1)\,t.
\]
For $t=0$ this forces $\a=0$, so the braided representatives are $(\vp^\ld,1\o 1)$ with $\ld\in\k^\times$; for $t\ne 0$ the parameter $\a$ is no longer free but uniquely determined by $\ld$, and the braided representatives $(\vp^\ld,J^{(\ld^2-1)t/2})$ have a generically nontrivial gauge component. In either case the inner gauge by $g\in G(H_4)$ identifies $\ld$ with $-\ld$, and we obtain
\begin{equation}\label{eq:Twbr-H4}
\ost^{\rm br}(H_4,R_t)\;\cong\;\k^\times/\{\pm 1\}\quad\text{for every } t\in\k.
\end{equation}
The braiding thus locks $\a$ to $\ld$, eliminating it as a free parameter without forcing $J=1\o 1$. As an abstract group $\ost^{\rm br}(H_4,R_t)\cong\k^\times/\{\pm 1\}$ uniformly in $t$, but its embedding in $\ost(H_4)\cong(\k,+)\rtimes(\k^\times/\{\pm 1\})$ depends on $t$:
\[
\ost^{\rm br}(H_4,R_t)\;=\;\bigl\{\bigl(\tfrac{1}{2}(\ld^2-1)t,\;[\ld]\bigr):[\ld]\in\k^\times/\{\pm 1\}\bigr\}.
\]
For $t=0$ this is the algebra-automorphism factor $\{0\}\rtimes(\k^\times/\{\pm 1\})$; for $t\ne 0$ the $(\k,+)$-component $\tfrac{1}{2}(\ld^2-1)t$ is generically nonzero.
\end{example}

\subsection{Stable twists of $A_1(n,\xi)$}\label{SS:A1}
Having computed $\ost(A_0(n,\xi))$ in \S\ref{sss:A0-stable}, we now turn to the more involved case $A=A_1(n,\xi)$ with $n=md$ and $d>1$. By Theorem~\ref{t:general-exact-stable} there is an exact sequence
\begin{equation}\label{eq:A1-SES}
1\longrightarrow H^2_\ell(A^\ast)\longrightarrow \ost(A)\xrightarrow{\Pi} \OutTw(A)\longrightarrow 1,
\end{equation}
where $\Pi\colon\ost(A)\to \operatorname{Aut}_{\rm alg}(A)/\operatorname{Inn}(A)$, $[(\vp,J)]\mapsto[\vp]$. We compute $\OutTw(A)$ and $\ker(\Pi)$ separately, working throughout in the standard normal form of Definition~\ref{d:sf}. We first set up the matrix-algebra decomposition of $A$ and reduce every stable twist to standard form (\S\ref{SS:A1-framework}), derive the resulting coefficient identities (\S\ref{SS:A1-coefficients}), normalize the representatives (\S\ref{SS:A1-normalization}), and assemble the classification (\S\ref{SS:A1-classification}); a final subsubsection realizes the unique nontrivial class as a Hopf extension (\S\ref{SS:A1-C2action}).

\subsubsection{The matrix-algebra decomposition and reduction to standard form}\label{SS:A1-framework}
Throughout this and the subsequent subsubsections we use the indecomposable matrix-algebra decomposition of $A_1(md,\xi)$ developed next, culminating in Remark~\ref{r:heisenberg}.

Since $x^m=1-g^m$ is central in $A$, $Ax^m$ is a Hopf ideal, and as observed in \S\ref{sss:Adelta-standard} we have $A\eo\cong A/Ax^m\cong T_{m^2}(\xi)$ as $\k$-algebras, so $A\eo$ is an indecomposable ideal. For $\oi\ne\ol 0\in\Z_d$ and $u\in\oi$, $x^m e_u=(1-\theta^{mu})e_u$, the scalar being independent of the representative. Define $x_{\ol 0}:=xe_0$, satisfying $x_{\ol 0}^m=0$, and $x_\oi:=\g_\oi^{-1}xe_\oi$ for $\oi\ne\ol 0$.

Similarly, we define $g_{\oi}:= \theta^{-\oi} g e_\oi$ for any $\oi \in \Z_d$, where $\theta^{-\oi} = \theta^{-i_0}$ where $i_0$ is the least nonnegative residue of $i$ modulo $d$. For $\oi \ne \ol{0}$, it is immediate to verify that $x_{\oi}^m= g_\oi^m =e_{\oi}$ and $g_\oi x_\oi = \xi x_\oi g_\oi$. 

For any $1\le i\le d-1$, we set
\[
E_{p,q}^{\oi}~:=~
x_{\oi}^{p-q}\,e_{i+qd} \in A e_{\oi} \text{\quad for\quad}  p,q \in \Z_m.
\]
One can verify directly that 
$$
E_{p,q}^{\ou} E_{r,s}^{\ov}~=~ \delta_{\ou, \ov } \delta_{q,r}\, E_{p,s}^{\ou} 
$$
for any $1\leq u,v\leq d-1$ and $p,q,r,s \in \Z_m$. Thus, one can identify $A e_\oi$ with the matrix algebra $M_m(\k)$ indexed by $\Z_m$ via the isomorphism of algebras defined by the assignments $E_{p,q}^{\oi} \mapsto \se_{p,q}$ for $p, q \in \Z_m$.  In particular, $Ae_\oi$ is a simple ideal of $A$ for $\oi \ne \ol0$. Hence, $\{e_\oi\,|\,\oi  \in \Z_d\}$ forms
a complete set of indecomposable central idempotents of $A$ and $A = \bigoplus_{\oi \in \Z_d}A e_\oi$ is the indecomposable ideal decomposition of $A$.

\begin{remark}\label{r:heisenberg}
 Under the preceding identification of $A e_\oi$ and $M_m(\k)$  for $\oi\neq \ol0$, one can see that 
\begin{equation}\label{eq:heisenberg}
    x_\oi \mapsto \begin{bmatrix}
    0& 0 &\cdots& 0& 1\\
    1 & 0& \cdots&0& 0\\
    0 & 1& \cdots & 0&0\\
    \vdots&\vdots&\cdots&\vdots&\vdots\\
    0&0& \cdots& 1&0
\end{bmatrix} \quad \text{and}\quad g_\oi  \mapsto \zeta_{\oi}\begin{bmatrix}
    1& 0 &\cdots&  0\\
    0 & \xi& \cdots& 0\\
    \vdots&\vdots&\ddots&\vdots\\
    0&0& \cdots& \xi^{m-1}
\end{bmatrix} 
\end{equation}
for any $m$-th root of unity $\zeta_{\oi}$. Therefore, under this identification, $x_\oi, g_\oi$ determine an irreducible representation of the Heisenberg group 
$$
\HH_m:=\left\{\begin{bmatrix}
    1& a & c\\
    0&1&b\\
    0&0&1
\end{bmatrix}\, \Bigg|\, a, b, c\in  \Z_m \right\}
$$
which has the presentation 
$$
\HH_m=\langle A, B,C\mid A^m, B^m, C^m, ABA\inv B\inv C\inv,   ACA\inv C\inv, BCB\inv C\inv\rangle.
$$
The group homomorphism $\pi_\oi: \HH_m \to (A e_\oi)^\times$ defined by $\pi_\oi: A \mapsto x_\oi$, $B \mapsto \zeta_{\oi} g_\oi$ and $C \mapsto \xi^{-1} {e_\oi}$ is an irreducible representation of $\HH_m$, and they are isomorphic representations of $\HH_m$. 
\end{remark}

With the matrix-algebra decomposition in hand, we obtain a manageable normal form for representatives of stable twists: the next lemma shows that every stable twist of $A$ is equivalent to a \emph{standard} one, so the image computation can be carried out inside a controlled family of algebra automorphisms.

\begin{lem}\label{bgT2} Let $m,d \ge 2$ be integers and $n=md$. Then every stable twist of $A_1(n,\xi)$ is equivalent to a standard one in the sense of Definition~\ref{d:sf}. In particular, $\Jt{u,v}{0} = 1$ for all $u, v \in d\,\Z_n$.
     \end{lem}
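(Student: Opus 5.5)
Given a stable twist $(\vp,J)$ of $A=A_1(n,\xi)$, I will first conjugate $\vp$ so that $\vp(g)=g^l$, following the pattern of Lemma~\ref{bgT}. Since $A$ is no longer local modulo $Ax$, I will work blockwise using the decomposition $A=\bigoplus_{\oi}Ae_\oi$. Any algebra automorphism permutes the central primitive idempotents $e_\oi$. It must fix the unique non-semisimple block $A\eo\cong T_{m^2}(\xi)$, and the counit forces $\vp(\eo)=\eo$. Because $\Delta_J\circ\vp=(\vp\otimes\vp)\Delta$, the automorphism $\vp$ induces a Hopf isomorphism of the semisimple quotient $A/\mathrm{Rad}$. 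The coradical-type quotient $A\to A/A x\eo$ then shows that $\vp(g)$ reduces modulo the radical to a grouplike generator $g^l$ with $\gcd(l,n)=1$.

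To lift this to an honest equality $\vp(g)=g^l$, I will treat the two kinds of blocks separately. On the Taft block I will run the operator argument $D_a(b)=[g^l,b]-ba$ of Lemma~\ref{bgT} to obtain $z_0\in A\eo$ with $z_0\vp(g)\eo z_0^{-1}=g^l\eo$. On each simple block $Ae_\oi\cong M_m(\k)$ with $\oi\ne\ol0$, both $\vp(g)e_{\oi}$ and $g^le_{\oi}$ are invertible elements with the same eigenvalues. This holds because the Heisenberg relation $\vp(g)\vp(x)=\xi\vp(x)\vp(g)$, together with Remark~\ref{r:heisenberg}, forces the spectrum to be a full $\xi$-orbit scaled by a fixed constant, and that constant is pinned down by $\vp(g)^m=\vp(g^m)$ with $g^m$ central. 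Hence they are conjugate, say by $z_\oi$. Setting $z=\sum_\oi z_\oi$ and rescaling so that $\ve(z)=1$ (only the $\eo$-component affects $\ve$), I get $\vp_z(g)=g^l$. Lemma~\ref{l:vg} then gives $l\equiv1\pmod m$, $\vp(x)=xa$ with $a=\sum a_\oi e_\oi$, and $J=\JC{}{0}+\sum_i(x^i\otimes x^{m-i})\JC{}{i}$.

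It remains to arrange $\JC{}{0}(\eo\otimes\eo)=\eo\otimes\eo$, and to show $\Jt{u,v}{0}=1$ for $u,v\in d\Z_n$, i.e.\ for indices lying in the class $\ol0$. I will project $J$ to $(A\eo)^{\otimes2}$. Since $x^m\eo=0$, the argument of Lemma~\ref{bgT} applies verbatim there: the constant term of $J\bar J$ restricted to $\eo\otimes\eo$ is $J^{\bo}\bar J^{\bo}(\eo\otimes\eo)$, because products $(x^i\otimes x^{m-i})(x^j\otimes x^{m-j})$ contribute to the constant term only through $x^m\eo=0$. Consequently $\JC{u,v}{0}\neq0$ for $u,v\in d\Z_n$. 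The cocycle identity \eqref{eq:2coJ} restricted to $d\Z_n\cong\Z_m$ then gives a normalized $2$-cocycle, which is a coboundary $f(u+v)/f(u)f(v)$. I will twist by $z'=\sum_{u\in d\Z_n}f(u)e_u+\sum_{\oi\ne\ol0}e_\oi\in\kg{g}^\times_1$, which commutes with $g$ and so preserves $\vp(g)=g^l$.

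The step I expect to be the main obstacle is the first one, namely controlling the spectra on the simple blocks so that conjugacy to $g^l$ holds simultaneously with the eventual constraint on $a$. The remaining concern is checking that the cocycle identity \eqref{eq:2coJ} genuinely holds here. Lemma~\ref{l:cocycle} is stated for 2-cocycles, whereas $J$ is only a gauge transformation with $\Phi_J=\vp(\Phi)=1$, so $J$ is in fact a 2-cocycle and the lemma applies.
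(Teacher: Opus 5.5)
Your first step fails. The Jacobson radical $Ax\eo$ of $A=A_1(n,\xi)$ is not a Hopf ideal, because $A_1$ does not have the dual Chevalley property. Concretely, $\Delta(\eo)=\sum_{\ou}e_{\ou}\otimes e_{-\ou}$, so $\Delta(x\eo)$ has the nonzero component $xe_{\ou}\otimes ge_{-\ou}+e_{\ou}\otimes xe_{-\ou}$ in $Ae_{\ou}\otimes Ae_{-\ou}$ for $\ou\neq\ol0$, and the radical vanishes there. Hence neither $A/\mathrm{Rad}(A)$ nor $A/Ax\eo$ is a Hopf quotient, $\vp$ induces no Hopf isomorphism on it, and no exponent $l$ is produced. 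In fact ``$\vp(g)\equiv g^l$ modulo the radical'' need not hold before conjugating, since on the simple blocks it would mean $\vp(g)e_{\oi}=g^le_{\oi}$. The analogy with Lemma~\ref{bgT} breaks exactly here: for $\delta=0$, $Ax$ is a Hopf ideal.

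The correct input, which the paper uses, is that $\vp(g^m)$ is central and
$\Delta(\vp(g^m))=J^{-1}\bigl(\vp(g^m)\otimes\vp(g^m)\bigr)J=\vp(g^m)\otimes\vp(g^m)$.
Hence $\vp(g^m)=g^{ml}$ with $\gcd(l,d)=1$. This grouplike property, not mere centrality, is what fixes the scalar $(\vp(g)e_{\oi})^m=\theta^{mli}e_{\oi}$ in your spectral argument. On the Taft block, your $D_a$-argument should be fed from the genuine Hopf quotient $A/A(1-g^m)\cong T_{m^2}(\xi)$, in which $Ax$ is a Hopf ideal.

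Two further points remain.

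First, the simple blocks determine $l$ only modulo $d$, while on the Taft block the exponent is forced to be $1$ modulo $m$. To get $\vp_z(g)=g^l$ on all blocks at once, you must lift $l$ to $\Z_n$ with $l\equiv1\pmod m$.
\begin{itemize}
\item When $\gcd(m,d)=1$ the Chinese remainder theorem gives this.
\item When $\gcd(m,d)=2$ it is automatic, since $l$ is odd.
\item In general it requires $l\equiv1\pmod{\gcd(m,d)}$, and your sketch has no argument for this.
\end{itemize}
You also cannot extract it afterwards from Lemma~\ref{l:vg}, whose hypothesis is $\vp(g)=g^l$. In the paper, $l$ is fixed from $\vp(g^m)$ first. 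Note that its identification $\theta^{l\oi}g_{\oi}=g^le_{\oi}$ itself needs $l\equiv1\pmod m$, so the lift must be chosen accordingly.

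Second, Lemma~\ref{l:vg} only yields $a\in\kg{g}$. The refinement $a\in\kg{g^m}$ in its proof uses $\JC{}{0}=1\otimes1$, which you do not have. You therefore need an extra conjugation by an invertible element of $\kg{g}e_{\oi}$ on each simple block to make $ae_{\oi}$ scalar. This amounts to a cyclic system that is always solvable, and it disturbs neither $\vp(g)$ nor the normalization on $\eo\otimes\eo$. After your $z'$-step you also need Lemma~\ref{l:vg} for the descended twist of $T_{m^2}(\xi)$, to see that $a\eo$ is a scalar multiple of $\eo$. The paper obtains these normalizations by treating $\vp(x_{\oi})$ and $\vp(g_{\oi})$ simultaneously via uniqueness of the Heisenberg representation, and by Theorem~\ref{STH0} on the Taft quotient. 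The latter also gives $\JC{}{0}(\eo\otimes\eo)=\eo\otimes\eo$.

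Your constant-term normalization on $d\Z_n$ is correct as written.
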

\begin{proof}
Let $(\vp, J)$ be any stable twist of $A$. Since $g^m$ is central group-like, so is $\varphi(g^m)$. Therefore $\varphi(g^m)=g^{ml}$ for some integer $l$ coprime to $d$, and since $\Z_n^\times\twoheadrightarrow\Z_d^\times$ is surjective, we may assume $l$ is coprime to $n$. By the definition~\eqref{eq:gm} of $e_\oi$ we have $\vp(e_\oi) = e_{\ell \oi}$, where $l\ell \equiv 1\pmod n$, and hence $\vp(Ae_\oi) = A e_{\ell \oi}$ for $\oi \in \Z_d$. By Remark \ref{r:heisenberg}, $\pi_\oi: \HH_m \to (A e_\oi)^\times$ is an irreducible representation of the Heisenberg group for each $\oi \ne \ol 0$ by setting the $m$-th root of unity $\zeta_\oi=\theta^{l\ol{\ell i}-\oi}$, and they are all isomorphic as representations of $\HH_m$. Hence, there exists  $z_{\oi} \in (Ae_{\oi})^\times$ for $\oi \ne \ol0$ such that 
$$
z_{\ell\oi}\, \vp(x_\oi) \,z_{\ell\oi}\inv = x_{\ell \oi } \quad\text{ and }\quad z_{\ell\oi}\, \vp(g_\oi) \,z_{\ell\oi}\inv = \zeta_\oi g_{\ell \oi }
$$
for each $\oi \ne \ol 0$.

Now, we need to define $z_{\ol0} \in (Ae_{\ol 0})^\times$ which satisfies the same relations.
Since  $\vp(g^m) =g^{m l}$, $\vp(I) = I$ where $I = A(1-g^m)$ is a Hopf ideal of $A$. Therefore, $(\vp, J)$ can be descended to a stable twist $(\ol{\vp}, \ol{J})$ of $\ol{A}=A/I \cong T_{m^2}(\xi)$. It follows from Theorem \ref{STH0} that there exists  $\ol z \in \ol{A}^\times_1$  which satisfies $\ol z\, \ol{\vp}(\ol g)\, \ol z\inv = \ol g$, $\ol z\, \ol{\vp}(\ol x)\, \ol z\inv = \ld \ol x$ for some $\ld \in \k^\times$. Here $\ol a \in \ol A$ denotes the image of $a \in A$. Moreover, by Theorem \ref{STH0}, $J(\eo \o \eo)=\eo \o \eo + \a \Omega(\eo\o\eo)$,  where $\Omega = \sum_{i=1}^{m-1}\frac{x^i}{[i]_\xi!} \o \frac{x^{m-i}g^{i-m}}{[m-i]_\xi!}$. So, we define $z_{\ol 0} \in A \eo$ such that $\ol z_{\ol 0} = \ol z \in \ol A$. In particular, $\ve(z_{\ol0})=1$ and $z_{\ol0}$ is invertible in $A \eo$.  

Let $z = \sum_{\oi \in \Z_d} z_{\oi}$. Then we have
\begin{align*}
    \vp_z(g) =&\,\, z \vp(g) z\inv =  g_{\ol0}+\sum_{\oi \in \ZD} \theta^{\oi} z_{\ell \oi} \vp(g_\oi)\, z_{\ell \oi}\inv =g_{\ol0}+\sum_{\oi \in \ZD} \theta^{\oi} \zeta_\oi g_{\ell\oi} \\
=&\, g_{\ol0}+\sum_{\oi \in \ZD} \theta^{l \ol{\ell i}} g_{\ell\oi} = \, g_{\ol0}+\sum_{\oi \in \ZD} \theta^{l \oi} g_{\oi} =ge_{\ol 0}+\sum_{\oi\in \ZD} g^le_{\oi}.  
\end{align*}
Similarly, 
\begin{align*}
 \vp_z(x)&=\, z_{\ol0} \vp(x_{\ol0}) z_{\ol0}\inv+ \sum_{\oi \in \Z_d\setminus \{\ol0\}} \g_\oi\,z_{\ell\oi}\vp(x_\oi) z_{\ell\oi}\inv = \ld x_{\ol0}+\sum_{\oi\in \Z_d\setminus \{\ol0\}}\g_{\oi}\,x_{\ell\oi } \\
&=\, \ld x_{\ol0}+ \sum_{\oi\in \ZD} \g_{l\oi}\,x_{\oi}=x\,\left(\lambda e_{\ol0}+\sum_{\oi\in \ZD} \frac{\gamma_{l \oi}}{\gamma_{\oi}}e_{\oi}\right)\, ,
\end{align*}
where we denote $a=\sum_{\oi \in \Z_d} a_\oi e_\oi\in \k\langle g^m\rangle$ with $a_{\ol 0}=\lambda$ and $a_{\oi}=\frac{\gamma_{l \oi}}{\gamma_{\oi}}$ for $\oi\in \ZD$.

Moreover, since $\varphi_z$ is an algebra automorphism of $A$, we must have $\varphi_z(g)\varphi_z(x)=\xi \varphi_z(x)\varphi_z(g)$, which implies that $(\xi^l-\xi)a_{\oi}xg^le_{\ol i}=0$ for any $\oi\in \ZD$. Hence, $\xi^l=\xi$ and so $l\equiv 1\pmod m$. Consequently, $\varphi_z(g)=g^l+(g-g^l)e_{\ol 0}=g^l$. 
\end{proof}

Recall from Definition~\ref{d:sf} that such a standard twist is moreover \emph{normalized} when $\JC{}{0}=1\o1$; the next subsubsections will reduce to this case. The following proposition highlights the rigidity of standard stable twists.
\begin{prop} \label{p:uniqueness}
    Suppose  $(\vp,J)$ and $(\vp',J')$ are standard stable twists of $A_1(n,\xi)$ satisfying $\vp(g)=\vp'(g)$. There exists $z \in \kgt{g}$ such that $\vp_z(g)=\vp'(g)$ and $\vp_z(x) - \vp'(x) \in A \eo$. In addition, if $\vp'(x)\eo  =\xi^k\vp(x)\eo$ for some integer $k$, then there exists $z \in \kgt{g}$ such that $\vp_z(x) = \vp'(x)$. Moreover, $(\vp_z, J_z)$ is a standard stable twist.
\end{prop}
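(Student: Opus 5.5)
The plan is to conjugate by a diagonal element $z=\sum_{u\in\Z_n} f(u)\,e_u\in\kg{g}$ with $f(0)=1$, chosen so that it rescales $x$ on each block $Ae_\oi$ by exactly the root of unity separating $\vp$ from $\vp'$. Since both twists are standard with the same $l$, write
\[
\vp(x)=xa,\quad \vp'(x)=xa',\qquad a=\sum_{\oi}a_\oi e_\oi,\quad a'=\sum_{\oi}a'_\oi e_\oi .
\]
By Definition~\ref{d:sf}, for $\oi\in\ZD$ we have $a_\oi^m=\g_{\oi l}^m/\g_\oi^m=a_\oi'^m$. Hence $a'_\oi=\xi^{k_\oi}a_\oi$ for some integer $k_\oi$, because $\xi$ is a primitive $m$-th root of unity and $\k$ is algebraically closed.

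First compute the effect of conjugation. The element $z$ commutes with $g$, so $\vp_z(g)=g^l=\vp'(g)$. Using $e_ux=xe_{u-d}$ from \eqref{eq:com_rel}, one gets $zx=x\sum_v f(v+d)e_v$, and therefore
\[
\vp_z(x)=z\,x\,a\,z^{-1}=x\,a\sum_{v\in\Z_n}\frac{f(v+d)}{f(v)}\,e_v .
\]
It then suffices to choose $f$ on each coset $\{i,i+d,\dots,i+(m-1)d\}$ so that $f(v+d)/f(v)=\xi^{k_\oi}$ for all $v$ in the coset. Setting $f(i+jd):=\xi^{jk_\oi}$ does this for $0\le i<d$ and $0\le j<m$. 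This is consistent around the cycle $j\mapsto j+1$ of length $m$ because $\xi^{mk_\oi}=1$.

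On the class $\ol0$, two cases arise. In the general case take $f\equiv1$ on $d\Z_n$. Then $\vp_z(x)$ and $\vp'(x)$ agree on every $Ae_\oi$ with $\oi\neq\ol0$, so $\vp_z(x)-\vp'(x)=x(a_{\ol0}-a'_{\ol0})\eo\in A\eo$. If in addition $\vp'(x)\eo=\xi^k\vp(x)\eo$, that is $a'_{\ol0}=\xi^k a_{\ol0}$, set instead $f(jd)=\xi^{jk}$, which also gives $f(0)=1$. Then $\vp_z(x)=xa'=\vp'(x)$ exactly. In both cases $\ve(z)=f(0)=1$ and $z$ is invertible, so $z\in\kgt{g}$.

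Finally, I will check that $(\vp_z,J_z)$ is standard; I expect this step to need the most care. The automorphism $\vp_z$ has the required form: $\vp_z(g)=g^l$, and $\vp_z(x)=x\,a''$ where $a''=a\sum_v\tfrac{f(v+d)}{f(v)}e_v$ lies in $\kg{g^m}$, since $f(v+d)/f(v)$ depends only on $\ov$. Moreover $a''_\oi=\xi^{k_\oi}a_\oi$, so $a_\oi''^m=a_\oi^m$ still satisfies the condition of Definition~\ref{d:sf}, and $a''_{\ol0}=\xi^{k}a_{\ol0}\neq0$ (with $k=0$ in the general case). For $J_z=(z\otimes z)J\Delta(z)^{-1}$, note that $\Delta(z)\in\kg{g}^{\otimes2}$ and that commuting $z\otimes z$ past $x^i\otimes x^{m-i}$ only shifts indices inside $\kg{g}^{\otimes2}$. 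Hence $J_z$ keeps the shape \eqref{eq:simplerJ}, with constant term $(z\otimes z)\JC{}{0}\Delta(z)^{-1}$. Multiplying by $\eo\otimes\eo$ and using $\JC{}{0}(\eo\otimes\eo)=\eo\otimes\eo$, this becomes $\sum f(u)f(v)f(u+v)^{-1}\,e_u\otimes e_v$ over $u,v\in d\Z_n$. For the general choice $f\equiv1$ on $d\Z_n$ this is trivially $\eo\otimes\eo$. For the choice $f(jd)=\xi^{jk}$, the function $f$ restricted to $d\Z_n$ is multiplicative, so $f(u)f(v)=f(u+v)$ and the sum is again $\eo\otimes\eo$. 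Thus $(\vp_z,J_z)$ is standard.
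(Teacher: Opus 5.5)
Your proof is correct and is essentially the paper's argument. Your element $z=\sum_u f(u)e_u$ agrees with the paper's $\eo+\sum_{\ou\in\ZD}g^{k_\ou}e_\ou$ (resp.\ $\sum_{\ou\in\Z_d}g^{k_\ou}e_\ou$) exactly on the block $\eo$ and up to a nonzero scalar on each block $e_\oi$ with $\oi\neq\ol0$; the paper phrases the block rescaling as conjugation by $g^{k_\ou}$ via $gxg^{-1}=\xi x$ rather than through $f(i+jd)=\xi^{jk_\oi}$, and the check $(J_z)^{\bo}(\eo\otimes\eo)=\eo\otimes\eo$ via $\partial_z$ is the same.
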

\begin{proof}
 Let $\vp(g) = g^l$, $\vp(x) = \sum_{\ou \in \Z_d} \au x\eu$ and  $\vp'(x) = \sum_{\ou \in \Z_d} a'_\ou x\eu$ for some $\au, a'_{\ou} \in \k^\times$ such that $(a'_{\ou})^m=(\au)^m = \frac{\g_{\ou l}^m}{\g_{\ou}^m}$ for $\ou \ne \ol0$. Then, for each $\ou \in \ZD$, there exists an $m$-th root of unity $\zeta_\ou = \xi^{k_\ou}$ such that $a'_\ou = \zeta_\ou \au$. Let $z = \eo+\sum_{\ou \in \ZD} g^{k_{\ou}} \eu$. Then $z \in \kgt{g}$ and
 $$
 z \vp(x)\eu z\inv = \au g^{k_{\ou}}xg^{-k_{\ou}} \eu =\zeta_\ou \au x \eu = a'_{\ou} x \eu.
 $$
 Thus,  $\vp_z(x) - \vp'(x) \in A \eo$, and obviously $\vp_z(g)=\vp(g)=\vp'(g)$. If, in addition, $\vp'(x)\eo = \xi^{k_{\ol0}} \vp(x) \eo$, then we define $z = \sum_{\ou \in \Z_d} g^{k_\ou}\eu$. Now, we have $\vp_z(x) = \vp'(x)$. It is immediate to see that $(J_z)^{\pmb{0}} = (z \o z) \JC{}{0} \Delta(z)^{-1} =  \JC{}{0} \partial_z$. Since $\partial_z (\eo \o \eo) =\eo \o \eo$, $(J_z)^{\pmb{0}}(\eo \o \eo) = \eo \o \eo$, and so $(\vp_z, J_z)$ is a standard stable twist.
\end{proof}

In contrast to the Taft algebras, the constant term $\JC{}{0}$ of a stable twist $(\vp, J)$ in standard form need not yield a $2$-cocycle of $\Z_n$.
\begin{example}
    Consider the element $z = \eo + \frac{1}{\sqrt{2}} x e_{\ol1} \in A_1(4,-1)^\times$.   Then, $z\inv = z$, $I_z(g) = g^3$, $I_z(x) = x$, and 
    $$
    \partial_z = \eo \o \eo + \eo\o e_{\ol1} +  e_{\ol1} \o g\eo+ \frac{1}{2}(x \eo \o x ge_{\ol1}  +  x e_{\ol1} \o x\eo+  x e_{\ol1} \o x e_{\ol1}).
    $$
    So, $\cb{z}$ is a standard stable twist but $\partial_z^{\hspace{1pt}\bo}$ is not invertible as the coefficient of $e_{\ol1}\o e_{\ol 1}$ is 0.
\end{example}

\subsubsection{Coefficient identities for standard stable twists}\label{SS:A1-coefficients}
We now derive the coefficient identities needed to classify which algebra automorphisms can actually occur in the image of $\Pi$. These calculations are not the conceptual conclusion of the section; rather, they provide the technical input for the eventual description of $\OutTw(A)$ in the exact-sequence argument. Suppose $(\vp, J) \in \st(A)$ is standard. Then $\vp(g) = g^l$ for some integer  $l$ coprime to $n=md$ with $l \equiv 1 \pmod m$. It follows from Lemma \ref{l:vg} that  $J$ has the form \eqref{eq:simplerJ} such that 
\begin{equation}\label{eq:standard}
    J=\sum_{u,v\in \mathbb Z_n}\,\left(\JC{u,v}{0}\, e_u \o e_v \,+\,\sum_{1\le i\le m-1}
    \JC{u,v}{i}\, x^i e_u \o x^{m-i} e_v\right)
\end{equation}
for coefficients $\JC{u,v}{0},\JC{u,v}{i}\in \kk$. Moreover, for any $u, v \in \ol0$,  $\JC{u,v}{0} =1$. We first obtain all the relations on $\JC{u,v}{0}$ and $\JC{u,v}{i}$  from the 2-cocycle condition of $J$. 
It follows from the $q$-binomial theorem that
$$
\Delta(x^i) = \sum_{j=0}^i \qqbin{i}{j}{\xi} x^{j} \o x^{i-j} g^{j}
$$
for any integer $i \ge 0$.
For any 2-cocycle $J$ in the form of \eqref{eq:standard}, we have
\begin{equation}\label{eq:1oJ}
    \begin{aligned}
  &\, (1 \o J)(\id \o \Delta)(J) =  \sum_{u,v, w\in \mathbb Z_n}\Big(\JC{u, v+w}{0}\JC{v,w}{0} e_u \o e_v \o e_w\\
  &\, + \sum_{\stackrel{j+k=m}{1\leq j,k\leq m-1}}\JC{u, v+w}{0}\JC{v,w}{j} e_u \o x^je_v \o x^ke_w \\
&\, +\sum_{\substack{i+j+k=m\\0\leq j,k\le m-1\\1\leq i\leq m-1}}\left[\begin{matrix}j+k\\j\end{matrix}\right]_\xi \JC{u,v+w}{i}\JC{v+jd,w+kd}{0} \theta^{jw} x^ie_u \o x^je_v \o  x^k e_w\\ 
&\, +\sum_{\substack{i+j+k=m\\j'+k'=m\\0\leq j,k\le m-1\\1\leq i, j',k'\leq m-1}}\left[\begin{matrix}j+k\\j\end{matrix}\right]_\xi \JC{u,v+w}{i}\JC{v+jd,w+kd}{j'} \theta^{jw} x^ie_u \o x^{j+j'}e_v \o  x^{k+k'} e_w\Big)
\end{aligned}
\end{equation}
and 
\begin{equation}\label{eq:Jo1}
    \begin{aligned}
 &\, (J \o 1)(\Delta \o \id)(J) =  \sum_{u,v, w \in \mathbb Z_n}\Big(\JC{u+v, w}{0}\JC{u,v}{0} e_u \o e_v \o e_w \\
 &\, +\sum_{\stackrel{i+j=m}{1\le i,j\le m-1}} \JC{u+v,w}{0}\JC{u,v}{i} x^ie_u \o x^je_v \o e_w \\
&\, +\sum_{\substack{i+j+k=m\\0\leq i,j\le m-1\\1\leq k\leq m-1}}\left[\begin{matrix}i+j\\i\end{matrix}\right]_\xi \JC{u+v,w}{i+j}\JC{u+id,v+jd}{0} \theta^{iv} x^ie_u \o x^je_v \o  x^k e_w\\
&\, +\sum_{\substack{i+j+k=m\\i'+j'=m\\0\leq i,j\le m-1\\1\leq k,i',j'\leq m-1}}\left[\begin{matrix}i+j\\i\end{matrix}\right]_\xi \JC{u+v,w}{i+j}\JC{u+id,v+jd}{i'} \theta^{iv} x^{i+i'}e_u \o x^{j+j'}e_v \o  x^{k} e_w\Big)\,.  
\end{aligned}
\end{equation}

\begin{lem} \label{l:2-cyc_2}
Let $u,v,w\in \mathbb Z_n$. We have the following equations 
\begin{equation}\label{e2}
\JC{v,w}{i}\JC{u,v+w}{0}-\JC{u,v+id}{0}\JC{u+v,w}{i}=\sum_{j=1}^{i}\left[\begin{matrix}i\\j\end{matrix}\right]_\xi\,\JC{u+jd,v+(i-j)d}{m-j}\JC{u+v,w}{i}\,\theta^{vj}\gamma_{\overline{u}}^m\, ,
\end{equation}
\begin{equation}\label{e3}
\JC{u+v,w}{0}\JC{u,v}{m-i}-\JC{u,v+w}{m-i}\JC{v+id,w}{0}\theta^{iw}=\sum_{j=0}^{i-1}\left[\begin{matrix}i\\j\end{matrix}\right]_\xi \JC{v+jd,w+(i-j)d}{i-j}\JC{u,v+w}{m-i}\theta^{wj}\gamma_{\overline{w}}^m\,
\end{equation}
for all $1\le i\le m-1$ and 
\begin{equation}\label{e4}
\begin{split}
\JC{u+id,v}{0}\JC{u+v,w}{i}\theta^{vi}+\sum_{j=0}^{i-1}\left[\begin{matrix}i\\j\end{matrix}\right]_\xi \JC{u+jd,v+(i-j)d}{i-j}\JC{u+v,w}{i}\theta^{vj}\gamma_{\overline{v}}^m\\
=\JC{v,w+kd}{0}\JC{u,v+w}{i}+\sum_{j=1}^{k}\left[\begin{matrix}k\\j\end{matrix}\right]_\xi \JC{v+jd,w+(k-j)d}{m-j}\JC{u,v+w}i\theta^{wj}\gamma_{\overline{v}}^m
\end{split}
\end{equation}
for all $1\le i,k\le m-1$ satisfying $i+k=m$.
\end{lem}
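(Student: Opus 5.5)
The plan is to compare coefficients in the 2-cocycle identity $(1\o J)(\id\o\Delta)(J)=(J\o 1)(\Delta\o\id)(J)$, using the expansions \eqref{eq:1oJ} and \eqref{eq:Jo1}. Both sides live in $A^{\o3}$, which is free over $\kg{g}^{\o3}$ on the monomials $x^a\o x^b\o x^c$ with $0\le a,b,c\le m-1$. The first task is to reduce every term to this basis. Whenever an exponent reaches $m$ or more, as in the fourth sums where $j+j'$ or $i+i'$ may exceed $m-1$, we replace $x^m e_u$ by $\gamma_{\ou}^m e_u$, using $x^m=1-g^m$ and $(1-g^m)e_u=\g_\ou^m e_u$. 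After that we move every idempotent to the right via $e_ix=xe_{i-d}$, which accounts for the index shifts $u+jd$, $v+(i-j)d$, and similar ones.

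Then for each fixed triple $(u,v,w)$ we read off the coefficient of the basis element $x^a e_u\o x^b e_v\o x^c e_w$. Since $J$ is homogeneous of total $x$-degree in $\{0,m\}$, only degrees $0$, $m$ and $2m$ survive. Degree $0$ gives \eqref{eq:2coJ}. The three stated identities should come from the degree-$m$ basis elements whose reduction involves a power $x^m$:
\begin{itemize}
\item \eqref{e2} from the coefficient of $1\o x^i\o x^{m-i}$, with $a=0$;
\item \eqref{e3} from the coefficient of $x^{m-i}\o x^i\o 1$, with $c=0$;
\item \eqref{e4} from the coefficient of $x^i\o 1\o x^k$ with $i+k=m$, with middle exponent $0$.
\end{itemize}
In each case I'll list every contribution.

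For \eqref{e2}, the left side of \eqref{eq:1oJ} contributes the second sum with $j=i$, namely $\JC{u,v+w}{0}\JC{v,w}{i}$. On the right of \eqref{eq:Jo1}, the third sum with first exponent $0$ contributes $\JC{u+v,w}{i}\JC{u,v+id}{0}$ after the idempotent shift. The fourth sum contributes terms with $i+i'=m$ that collapse to $x^0$ in the first factor with $x^m$ in the first slot, giving the factor $\gamma_{\ou}^m$. The $q$-binomial coefficients $\qqbin{i}{j}{\xi}$ and the powers $\theta^{vj}$ come from $\Delta(x^i)$ and from commuting $g^j$ past $e_v$. Moving everything to one side gives \eqref{e2}. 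The argument for \eqref{e3} is the mirror image with $\gamma_{\ow}^m$. For \eqref{e4}, the fourth sums on both sides produce an $x^m$ in the middle factor, giving the $\gamma_{\ov}^m$ terms.

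I expect the bookkeeping to be the only real difficulty. Specifically, I need to track which index, $u$, $v$ or $w$, carries the $\gamma^m$ after $x^m$ is absorbed: the idempotent in that slot has already been shifted by multiples of $d$, and $\gamma$ depends only on the class mod $d$. I also need the exact power of $\theta$. I would verify these against the case $m=2$ before writing the general form.
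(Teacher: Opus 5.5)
Your proposal is correct and follows the paper's argument. The paper likewise obtains \eqref{e2}, \eqref{e3} and \eqref{e4} by comparing, in \eqref{eq:1oJ} and \eqref{eq:Jo1}, the coefficients of $e_u\otimes x^ie_v\otimes x^{m-i}e_w$, of $x^{m-i}e_u\otimes x^ie_v\otimes e_w$ and of $x^ie_u\otimes e_v\otimes x^ke_w$ respectively, with the reduction $x^me_u=\gamma_{\overline{u}}^m e_u$ in the fourth sums producing the $\gamma^m$ factors. Your attribution is also right: the $\gamma_{\overline{u}}^m$, $\gamma_{\overline{w}}^m$ and $\gamma_{\overline{v}}^m$ terms come from the fourth sum of \eqref{eq:Jo1}, of \eqref{eq:1oJ}, and of both, respectively.
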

\begin{proof}
By comparing the coefficients of $e_u \o x^j e_v \o x^{k} e_w$, $x^ie_u \o x^j e_v \o e_w$, $x^i e_u \o e_v \o x^k e_w$ of \eqref{eq:1oJ} and \eqref{eq:Jo1}, we obtain \eqref{e2}, \eqref{e3} and \eqref{e4} respectively.
\end{proof}

Next, denote $\vp(x) = xa$, where $a = \sum_{\ou \in \Z_d} a_\ou  e_\ou \in \kg{g^m}^{\times}$. Since $\vp: A \to A_J$ is a Hopf algebra isomorphism, we have $J\Delta(\varphi(x))=(\varphi\otimes \varphi)\Delta(x)J$ where 
\begin{equation}\label{eq:rvpx}
 \begin{aligned}
    J\Delta(\varphi(x))=&\,J(x\otimes  g+1\otimes x)\Delta(a)\\
    =&\,a_{\overline{u+v}}(\JC{u+d,v}{0}\theta^v+\JC{u,v+d}{1}\gamma_{\overline{v}}^m)xe_u\otimes e_v\\
    &\,+a_{\overline{u+v}}(\JC{u,v+d}{0}+\JC{u+d,v}{m-1}\gamma_{\overline{u}}^m\theta^v)e_u\otimes xe_v\\
       &\,+\sum_{2\le i\le m-1} a_{\overline{u+v}}(\JC{u+d,v}{i-1}\theta^v+\JC{u,v+d}{i})x^ie_u\otimes x^{m+1-i}e_v\, ,
\end{aligned}   
\end{equation}
and
\begin{equation}\label{eq:lvpx}
 \begin{aligned}
    (\varphi\otimes \varphi)\Delta(x)J&=(xa\otimes  g^l+1\otimes xa)J\\
   &=(a_{\ol u}\JC{u,v}{0}\theta^{lv}+a_{\ol v}\JC{u,v}{1}\gamma_{\ol v}^m)xe_u\otimes e_v\\
   &\,+(a_{\ol v}\JC{u,v}{0}+a_{\ol u}\JC{u,v}{m-1}\gamma_{\ol u}^m\theta^{l(v+d)})e_u\otimes xe_v\\
       &\, +\sum_{2\le i\le m-1} (\theta^{l(v-(i-1)d)}\JC{u,v}{i-1}a_{\overline{u}}+\JC{u,v}{i}a_{\overline{v}})x^ie_u\otimes x^{m+1-i}e_v\,.
\end{aligned}   
\end{equation}

\begin{lem} \label{l:x}
    Let $u, v \in \Z_n$. We have the following equations
 \begin{eqnarray}
  a_{\overline{u+v}}(\JC{u,v+d}{0}+\JC{u+d,v}{m-1}\gamma_{\overline{u}}^m\theta^v)&=&\JC{u,v}{0}a_{\overline{v}}+\JC{u,v}{m-1}\gamma_{\overline{u}}^m\theta^{l(v+d)}a_{\overline{u}}\label{eq:x2}\, , \\
    a_{\overline{u+v}}(\JC{u+d,v}{0}\theta^v+\JC{u,v+d}{1}\gamma_{\overline{v}}^m)&=&\theta^{lv}\JC{u,v}{0}a_{\overline{u}}+\JC{u,v}{1}\gamma_{\overline{v}}^ma_{\overline{v}}\label{eq:x3}\,
\end{eqnarray} 
and for all $2\le i\le m-1$\
\begin{equation}
a_{\overline{u+v}}(\JC{u+d,v}{i-1}\theta^v+\JC{u,v+d}{i})=\theta^{l(v-(i-1)d)}\JC{u,v}{i-1}a_{\overline{u}}+\JC{u,v}{i}a_{\overline{v}}\, . \label{eq:x4}
\end{equation}
 In particular, for any integer $s$ and $u, v \in \Z_n$, we have
\begin{equation}\label{eq:recursion}
    \JC{ds, v+d}{0} = \JC{ds, v}{0} \quad \text{and}\quad   \JC{u+d, ds}{0} = \JC{u, ds}{0}\, .
\end{equation}
\end{lem}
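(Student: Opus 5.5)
The three identities \eqref{eq:x2}--\eqref{eq:x4} come straight from the intertwiner condition $J\Delta(\vp(x))=(\vp\otimes\vp)\Delta(x)J$ of a standard stable twist. The left side is expanded in \eqref{eq:rvpx} and the right side in \eqref{eq:lvpx}. Since $A^{\otimes 2}$ is free over $\kg{g}^{\otimes 2}$ with basis $\{x^i\otimes x^j\}$, and the $e_u\otimes e_v$ are orthogonal idempotents spanning $\kg{g}^{\otimes2}$, the two expansions may be compared coefficient by coefficient. Comparing the coefficients of $e_u\otimes xe_v$ gives \eqref{eq:x2}. Comparing those of $xe_u\otimes e_v$ gives \eqref{eq:x3}. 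For $2\le i\le m-1$, comparing those of $x^ie_u\otimes x^{m+1-i}e_v$ gives \eqref{eq:x4}.

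In deriving the expansions I would move the $g$'s and $e$'s past the $x$'s using \eqref{eq:com_rel}, i.e.\ $e_ux=xe_{u-d}$ and $ge_v=\theta^ve_v$. The factor $\gamma^m$ arises when $x\cdot x^{m-1}=x^m$ is reduced via $x^m e_u=\gamma_{\ou}^m e_u$. I would also use $\Delta(a)(e_u\otimes e_v)=a_{\overline{u+v}}\,e_u\otimes e_v$, which holds because $a\in\kg{g^m}$ and $e_\ow$ is central with $\Delta(e_\ow)=\sum_{\ou+\ov=\ow}e_\ou\otimes e_\ov$. The only real care needed is the index bookkeeping. Since the formulas \eqref{eq:rvpx} and \eqref{eq:lvpx} are already displayed, I would take them as given and just read off coefficients.

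For \eqref{eq:recursion} I would specialize \eqref{eq:x2} to $u=ds$. Then $\ou=\ol0$, so $\gamma_{\ou}=0$ and the $\JC{}{m-1}$ terms on both sides vanish. Moreover $\overline{u+v}=\ov$, so $a_{\overline{u+v}}=a_\ov\ne0$ (for $\ov\neq\ol0$ the $m$-th power is nonzero, and $\ao\ne0$ by the standard form). Dividing by it leaves $\JC{ds,v+d}{0}=\JC{ds,v}{0}$. Similarly, setting $v=ds$ in \eqref{eq:x3} kills the $\JC{}{1}$ terms because $\gamma_{\ol0}=0$, and leaves $a_\ou\JC{u+d,ds}{0}\theta^{ds}=\theta^{lds}\JC{u,ds}{0}a_\ou$. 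Since $l\equiv1\pmod m$ gives $ld\equiv d\pmod{md}$, we get $\theta^{lds}=\theta^{ds}$. Cancelling yields $\JC{u+d,ds}{0}=\JC{u,ds}{0}$.

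The main obstacle is only making sure the displayed expansions \eqref{eq:rvpx} and \eqref{eq:lvpx} are correct. The rest is a direct coefficient comparison plus the vanishing $\gamma_{\ol0}=0$.
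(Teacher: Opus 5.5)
Your proposal is correct and matches the paper's proof: the three identities are read off by comparing the coefficients of $e_u\otimes xe_v$, $xe_u\otimes e_v$ and $x^ie_u\otimes x^{m+1-i}e_v$ in \eqref{eq:rvpx} and \eqref{eq:lvpx}, and \eqref{eq:recursion} follows by setting $u=ds$ in \eqref{eq:x2} and $v=ds$ in \eqref{eq:x3}. The points you make explicit are exactly the details the paper leaves implicit: $\gamma_{\ol0}=0$ kills the higher terms, $a_{\ov}\neq 0$ allows the division, and $\theta^{lds}=\theta^{ds}$ because $l\equiv 1\pmod m$.
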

\begin{proof}
The equations \eqref{eq:x2}, \eqref{eq:x3} and \eqref{eq:x4} follow immediately by comparing the coefficients of  $e_u \o  xe_v$, $xe_u \o  e_v$ and $x^ie_u\otimes x^{m+1-i}e_v$. At last, the two equations of \eqref{eq:recursion} follow from setting $u=ds$ in \eqref{eq:x2} and $v= ds$ in \eqref{eq:x3}. 
\end{proof}

Finally, we want to derive more relations on $\JC{}{0}$ in addition to \eqref{eq:2coJ}. Let $\ol{J}$ be the inverse of $J$. Since $g \o g$ commutes with $\ol{J}$, we find $\ol{J}$ also has the form:
$$
\ol{J} = \sum_{u,v \in \Z_n}\, \left( \JI{u,v}{0} e_u \o e_v +\sum_{i=1}^{m-1} \JI{u,v}{i} x^i e_u \o x^{m-i} e_v\right)\, . 
$$

Since $ \ol{J}J = 1 \o 1$, we have $(\ol{J}J )^{\pmb{0}} = 1 \o 1$. Thus, 
\begin{align*}
  1 \o 1 & = ( \ol J J)^{\bo} \\
  &= \sum_{u,v,j,k\in \mathbb Z_n} \Bigl(\JI{j,k}{0} \JC{u,v}{0}  e_j e_u  \o e_ke_v  \nonumber\\
  &\qquad\qquad\qquad\qquad +\sum_{i=1}^{m-1}\JI{j,k}{i} \JC{u,v}{m-i}  x^ie_jx^{m-i}e_u \o x^{m-i}e_k x^i e_v\Bigr)  \nonumber\\
  &= \sum_{u,v\in \mathbb Z_n} \left( \JI{u,v}{0}\JC{u,v}{0} e_u  \o e_v  +\sum_{i=1}^{m-1} \JI{u+(m-i)d,v+id}{i} \JC{u,v}{m-i} x^m e_{u}  \o x^m e_{v}\right) \nonumber\\
   &= \sum_{u,v\in \mathbb Z_n}\left( \JI{u,v}{0}\JC{u,v}{0}  + \sum_{i=1}^{m-1}\g_\ou^m \g_\ov^m\JI{u+(m-i)d,v+id}{i} \JC{u,v}{m-i}\right)  e_u  \o  e_v\, , 
\end{align*}
and hence for all $u,v\in \mathbb Z_d$, we have  
\begin{align}
 \JI{u,v}{0}\JC{u,v}{0}  + \sum_{i=1}^{m-1}\g_\ou^m \g_\ov^m\JI{u+id,v+(m-i)d}{m-i} \JC{u,v}{i}=1\, .  \label{eq:JJinv}
\end{align}

\begin{lem} \label{l:Jd}
    For any integer $s$, $u,v \in \Z$, 
    \begin{equation}\label{eq:J0x1}
     \JC{u,sd}{0} = (\JC{u, d}{0})^s\quad \text{ and }\quad\JC{sd, v}{0} = (\JC{d, v}{0})^s\,.  
 \end{equation}
    Moreover, $(\JC{u, v}{0})^m =1$ if $\ou =\ol0$ or $\ov=\ol0$.
     Furthermore, for $i=0, \dots, m-1$, we have
    \begin{equation}\label{eq:J0x2}
\JC{u+d, v}{i}  = \JC{u,v}{i} \left(\frac{\JC{d, u+v}{0}}{\JC{d, u}{0}}\right), \quad 
\JC{u, v+d}{i} = \JC{u,v}{i} \left(\frac{\JC{u+v, d}{0}}{\JC{v, d}{0}}\right) \xi^i\,.
 \end{equation}
\end{lem}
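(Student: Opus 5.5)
The plan is to get every identity from three earlier facts, each specialized so that one index lies in $d\,\Z_n$: the constant-term cocycle identity \eqref{eq:2coJ}, the recursions \eqref{eq:recursion} of Lemma~\ref{l:x}, and the identities \eqref{e2}, \eqref{e3} of Lemma~\ref{l:2-cyc_2}. The key observation is that $\g_{\ol 0}=0$. So whenever the index carrying the $\g^m$ factor is a multiple of $d$, the right-hand sides of \eqref{e2} and \eqref{e3} vanish, and those identities reduce to clean multiplicative relations. We also use Lemma~\ref{bgT2}: $\JC{u,v}{0}=1$ whenever $u,v\in d\,\Z_n$.

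For \eqref{eq:J0x1}, put $v=sd$ and $w=d$ in \eqref{eq:2coJ}. This gives
\[
\JC{sd,d}{0}\,\JC{u,(s+1)d}{0}=\JC{u,sd}{0}\,\JC{u+sd,d}{0}.
\]
Here $\JC{sd,d}{0}=1$ by Lemma~\ref{bgT2}. Iterating the second recursion in \eqref{eq:recursion} (with $ds=d$) gives $\JC{u+sd,d}{0}=\JC{u,d}{0}$. Induction on $s\ge 0$, starting from $\JC{u,0}{0}=1$, then yields $\JC{u,sd}{0}=(\JC{u,d}{0})^s$.

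For the second identity, take $u=d$ and $v=sd$ in \eqref{eq:2coJ}. We have $\JC{d,sd}{0}=1$, and the first recursion in \eqref{eq:recursion} gives $\JC{d,sd+w}{0}=\JC{d,w}{0}$. The identity becomes $\JC{(s+1)d,w}{0}=\JC{sd,w}{0}\JC{d,w}{0}$, and induction gives $\JC{sd,w}{0}=(\JC{d,w}{0})^s$.

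Now take $s=m$. Since $md=n\equiv 0$, we get $(\JC{u,d}{0})^m=\JC{u,0}{0}=1$, and likewise $(\JC{d,v}{0})^m=1$. If $\ou=\ol0$, write $u=sd$; then $(\JC{u,v}{0})^m=\bigl((\JC{d,v}{0})^{m}\bigr)^s=1$. The case $\ov=\ol0$ is symmetric. This also shows that $J^{\bo}$ at index $sd$ depends only on $s$ modulo $m$, so the induction over integers is consistent. In particular $\JC{d,u}{0}$ and $\JC{v,d}{0}$ are nonzero roots of unity, so the divisions in \eqref{eq:J0x2} make sense.

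For the first formula in \eqref{eq:J0x2} with $1\le i\le m-1$, apply \eqref{e2} with $u=d$. The right-hand side carries the factor $\g_{\ol d}^m=\g_{\ol0}^m=0$, so
\[
\JC{v,w}{i}\JC{d,v+w}{0}=\JC{d,v+id}{0}\JC{d+v,w}{i}.
\]
By \eqref{eq:recursion}, $\JC{d,v+id}{0}=\JC{d,v}{0}$. Renaming $(v,w)\mapsto(u,v)$ gives the formula. The case $i=0$ is \eqref{eq:2coJ} with first index $d$.

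For the second formula, apply \eqref{e3} with $w=d$, so the factor $\g_{\ol w}^m=0$ kills the right-hand side:
\[
\JC{u+v,d}{0}\JC{u,v}{m-i}=\JC{u,v+d}{m-i}\JC{v+id,d}{0}\theta^{id}.
\]
Use $\JC{v+id,d}{0}=\JC{v,d}{0}$ and $\theta^{id}=\xi^i$, then set $j=m-i$, so that $\xi^{-i}=\xi^{j}$. This gives the stated formula for $1\le j\le m-1$. The case $j=0$ is again \eqref{eq:2coJ}, now with $w=d$.

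The only real care needed is bookkeeping: checking that the indices in \eqref{e2} and \eqref{e3} specialize exactly as claimed, and tracking the power of $\theta$ in the second formula, which is where the factor $\xi^i$ comes from. Beyond that, the argument is a direct specialization of earlier identities.
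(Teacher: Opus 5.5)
Your proof is correct and follows the paper's argument essentially step for step. You use the same specializations of \eqref{eq:2coJ} together with \eqref{eq:recursion} to obtain \eqref{eq:J0x1} and the $m$-th root of unity claim, and the same substitutions $u=d$ in \eqref{e2} and $w=d$ in \eqref{e3}, where $\g_{\ol0}=0$ and $\theta^{id}=\xi^i$, to obtain \eqref{eq:J0x2}. Your extra observation that the index $sd$ depends only on $s$ modulo $m$, so the induction extends to all integers $s$, is a small point the paper leaves implicit.
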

\begin{proof}
  We know $\JC{u,v}{0}=1$ whenever $\ou=\ov=\ol 0$. Now, we apply Lemma \ref{l:cocycle} and \eqref{eq:recursion} to obtain
$$
\JC{u, sd}{0} \JC{u,d}{0} =\JC{u, sd}{0} \JC{u+sd,d}{0}=\JC{u, (s+1)d}{0} \JC{sd, d}{0}=\JC{u, (s+1)d}{0}\,.
$$
Thus,  $\JC{u,sd}{0}=(\JC{u,d}{0})^s$ and so $(\JC{u,sd}{0})^m=1$ for any integer $s$. Therefore, $(\JC{u,v}{0})^m=1$  for $\ov = \ol0$. Similarly, $\JC{sd,v}{0}=(\JC{d, v}{0})^s$ and so $(\JC{u,v}{0})^m = 1$ whenever $\ou=\ol0$.
By Lemma \ref{l:cocycle} again, we have
$$
\JC{u,v}{0} \JC{d, u+v}{0} = \JC{u+d, v}{0}  \JC{d, u}{0}, 
$$
which implies
$$
\JC{u+d, v}{0}  = \JC{u,v}{0} \left(\frac{\JC{d, u+v}{0}}{\JC{d, u}{0}}\right)\,. 
$$
This gives the first equality of \eqref{eq:J0x2} for $i=0$; the second follows similarly for $i=0$. For $i =1, \ldots, m-1$, set $u = d$ in \eqref{e2} to obtain
    $$
    \JC{v,w}{i}\JC{d,v+w}{0}= \JC{d,v+id}{0}\JC{d+v,w}{i} = \JC{d,v}{0}\JC{d+v,w}{i},  
    $$
    which implies the first equation of \eqref{eq:J0x2}.
 By letting $w=d$ in \eqref{e3}, we also have
$$
\JC{u+v,d}{0}\JC{u,v}{m-i}=\JC{u,v+d}{m-i}\JC{v+id,d}{0}\theta^{id} = \JC{u,v+d}{m-i}\JC{v,d}{0}\xi^{i}\,,
$$
and so $$
\JC{u+v,d}{0}\JC{u,v}{i}= \JC{u,v+d}{i}\JC{v,d}{0}\xi^{-i}\,. 
$$
Thus, the second equation of \eqref{eq:J0x2} follows.
\end{proof}

\subsubsection{Normalization of stable twists}\label{SS:A1-normalization}
The exact-sequence framework is most effective on \emph{normalized} representatives. Our goal here is therefore to show, in Proposition~\ref{p:invertible}, that every standard stable twist of $A$ is equivalent to one whose constant term is invertible, and hence, after a further normalization, to a representative with $\Jt{}{0}=1\otimes 1$.

We begin with the definition  
 $$
 T_J~:=~\{u \in \Z_n\mid \JC{u,-u}{0} \ne 0\}
$$
for any 2-cocycle $J$ of $A$ given by \eqref{eq:standard}.

\begin{lem} \label{l:TJ} The following hold:
\begin{enumerate}[label=\rm{(\roman*)}]
    \item if $\JC{u,v}{0} = 0$ for some $u,v \in \Z_n$, then $u,v \notin T_J$;
    \item $T_J$ is a subgroup of $\Z_n$;
    \item if $\JC{u,v}{0}=0$, then $\JC{u+ds,v+dt}{0}=0$ for all integers $s,t$;
    \item $\JC{}{0}$ is invertible if and only if $T_J=\Z_n$, equivalently $1 \in T_J$.
\end{enumerate}
\end{lem}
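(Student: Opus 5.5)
The whole argument runs on the constant-term cocycle identity \eqref{eq:2coJ} of Lemma~\ref{l:cocycle},
\[
\JC{v,w}{0}\,\JC{u,v+w}{0}=\JC{u,v}{0}\,\JC{u+v,w}{0},
\]
together with the normalization $\JC{u,0}{0}=\JC{0,v}{0}=1$ and the translation identities of Lemma~\ref{l:Jd}. I prove the four items in order, each drawing on the previous ones.

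For (i), assume $\JC{u,v}{0}=0$. Specialize the cocycle identity to $(u,v,w)=(u,v,-v)$. The right side vanishes, and the left side is $\JC{v,-v}{0}\JC{u,0}{0}=\JC{v,-v}{0}$, so $v\notin T_J$. Next specialize to the triple $(-u,u,v)$. This gives $\JC{u,v}{0}\JC{-u,u+v}{0}=\JC{-u,u}{0}\JC{0,v}{0}$, so $\JC{-u,u}{0}=0$. To turn this into $\JC{u,-u}{0}=0$, use the triple $(u,-u,u)$:
\[
\JC{-u,u}{0}\,\JC{u,0}{0}=\JC{u,-u}{0}\,\JC{0,u}{0},
\]
that is, $\JC{-u,u}{0}=\JC{u,-u}{0}$. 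Hence $u\notin T_J$ as well.

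For (ii), first note $0\in T_J$. Closure under negation comes from the symmetry $\JC{-u,u}{0}=\JC{u,-u}{0}$ just derived. For closure under addition, take $u,v\in T_J$ and suppose $u+v\notin T_J$. The triple $(u,v,-u-v)$ gives
\[
\JC{v,-u-v}{0}\,\JC{u,-u}{0}=\JC{u,v}{0}\,\JC{u+v,-u-v}{0}=0.
\]
Since $\JC{u,-u}{0}\ne0$, this forces $\JC{v,-u-v}{0}=0$, and then (i) gives $v\notin T_J$, a contradiction. So $T_J$ is a subgroup.

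For (iii), I use \eqref{eq:J0x2} with $i=0$:
\[
\JC{u+d,v}{0}=\JC{u,v}{0}\,\frac{\JC{d,u+v}{0}}{\JC{d,u}{0}},
\]
and similarly for $v+d$. These formulas need $\JC{d,u}{0}$ and $\JC{v,d}{0}$ to be nonzero, and Lemma~\ref{l:Jd} supplies this: $(\JC{u,v}{0})^m=1$ whenever $\ou=\ol0$ or $\ov=\ol0$. So the two correction factors are nonzero, and vanishing of $\JC{u,v}{0}$ propagates under $u\mapsto u+d$ and $v\mapsto v+d$. Iterating in both directions (using $n$-periodicity) gives the claim for all integers $s,t$.

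For (iv), if $\JC{}{0}$ is invertible then every coefficient is nonzero, so $T_J=\Z_n$. Conversely, if $T_J=\Z_n$, then by the contrapositive of (i) no coefficient $\JC{u,v}{0}$ vanishes, so $\JC{}{0}$ is invertible. Finally, $T_J=\Z_n$ is equivalent to $1\in T_J$ because $T_J$ is a subgroup by (ii) and $1$ generates $\Z_n$.

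The one point needing care is the nonvanishing of the correction factors in (iii), and Lemma~\ref{l:Jd} handles it directly.
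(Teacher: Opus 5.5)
Your proof is correct and follows essentially the same route as the paper. Items (i) and (ii) come from specializing the constant-term cocycle identity \eqref{eq:2coJ} at triples such as $(u,-u,u)$ and $(-u,u,v)$; for closure under addition the paper uses $(u+v,-v,-u)$ where you use $(u,v,-u-v)$ followed by (i). Item (iii) is read off from \eqref{eq:J0x2}, using the root-of-unity nonvanishing supplied by Lemma~\ref{l:Jd}, and (iv) follows immediately from (i) and (ii).
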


\begin{proof}
    By the 2-cocycle condition in Lemma \ref{l:cocycle},  $\JC{u,-u}{0}\JC{0, u}{0}=\JC{-u,u}{0} \JC{u,0}{0}$. Thus,  if $u \in T_J$, then so does $-u$ since $\JC{0, u}{0}=\JC{u,0}{0}=1$. Note that if $u \in T_J$, then 
    $\JC{u, v}{0} \JC{-u, u+v}{0} = \JC{0, v}{0} \JC{-u, u}{0} \ne 0$, and so $\JC{u, v}{0} \ne 0$ for all $v$. Similarly $\JC{v, u}{0} \ne 0$ for $v \in \Z_n$. Thus, if $\JC{u,v}{0} = 0$, then $u,v \notin T_J$, proving statement~(i).
    
If $u, v \in T_J$, then
\[
\JC{u+v,-u-v}{0}\JC{-v,-u}{0}=\JC{u+v,-v}{0} \JC{u,-u}{0} \ne 0,
\]
and hence $\JC{u+v,-u-v}{0} \ne 0$, proving (ii). Statement (iii) follows from \eqref{eq:J0x2}, and (iv) is immediate from (i) and (ii).
\end{proof}

Note that $\JC{ds, dt}{0}=1$ for all integers $s,t$, so $d\in T_J$ always.

\begin{lem}\label{l:sum}
    If $1\not\in T_J$, then $l\equiv 1 \pmod d$. Moreover, $(\varphi,J)$ is equivalent to some standard stable twist $(\widetilde{\varphi},\widetilde{J})$ with $\widetilde{\varphi}=\id$.  
\end{lem}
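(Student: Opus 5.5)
The plan is to first pin down the integer $l$ from the vanishing pattern of $\JC{}{0}$, and then to kill $\vp$ by an inner automorphism built block by block from the decomposition $A=\bigoplus_{\oi\in\Z_d}Ae_\oi$. Throughout, $(\vp,J)$ is standard with $\vp(g)=g^l$, $l\equiv 1\pmod m$, and $\vp(x)=xa$ with $a=\sum_{\oi}a_\oi e_\oi$.

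\emph{Where $\JC{}{0}$ vanishes.} By Lemma~\ref{l:TJ}(ii) and the remark that $d\in T_J$, the set $T_J$ is a subgroup $t\Z_n$ with $1<t\mid d$, since $1\notin T_J$. In particular $\JC{1,-1}{0}=0$. By Lemma~\ref{l:TJ}(iii) this gives $\JC{1+ds,-1+dt'}{0}=0$ for all integers $s,t'$. Since the indices $u=1,v=-1$ have $\ou,\ov\neq\ol0$, the scalars $\g_\ou,\g_\ov$ are nonzero. So \eqref{eq:JJinv} at $(u,v)=(1+ds,-1+dt')$ forces some $\JC{u,v}{i}$ with $1\le i\le m-1$ to be nonzero.

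\emph{Deriving $l\equiv 1\pmod d$.} I substitute these vanishing constant terms into the intertwining identities \eqref{eq:x2}, \eqref{eq:x3}, \eqref{eq:x4}, and into the rescaling rules \eqref{eq:J0x2} of Lemma~\ref{l:Jd}.
\begin{itemize}
\item In \eqref{eq:x3} at such a degenerate pair, the $\JC{}{0}$-terms drop out. This leaves a relation of the form $a_{\overline{u+v}}\JC{u,v+d}{1}=a_\ov\JC{u,v}{1}$.
\item The rescaling rule \eqref{eq:J0x2} gives independently $\JC{u,v+d}{1}=\JC{u,v}{1}\,\xi\,\JC{u+v,d}{0}/\JC{v,d}{0}$.
\item Running the same comparison through \eqref{eq:x4} propagates this to all $i$, and then to a pair where the relevant $\JC{}{i}$ is nonzero.
\end{itemize}
Comparing the two expressions gives an identity between $a$-coefficients, $m$-th roots of unity, and the factors $\theta^{lv}$ versus $\theta^{v}$ appearing in \eqref{eq:x2} and \eqref{eq:x3}. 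Combined with $a_\oi^m=\g_{\oi l}^m/\g_\oi^m$, this should force $\g_{\oi l}^m=\g_\oi^m$, that is $\theta^{mli}=\theta^{mi}$, for $\oi$ in a generating set of $\Z_d$. Hence $l\equiv 1\pmod d$. This extraction is the step I expect to be the main obstacle: the bookkeeping of which coefficients are nonzero is delicate. If the direct route stalls, I would instead evaluate the cocycle identities \eqref{e2} and \eqref{e3} at $u=1$ to produce the needed root-of-unity relation.

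\emph{Making $\vp$ inner.} Once $l\equiv 1\pmod d$, we have $\vp(g^m)=g^m$, so $\vp$ fixes every central idempotent $e_\oi$ and preserves each block $Ae_\oi$.
\begin{itemize}
\item On a block $\oi\neq\ol0$, both $\vp(g)e_\oi=\theta^{(l-1)i}g_\oi$ (up to the normalization) and $x_\oi$ are again Heisenberg generators. By Remark~\ref{r:heisenberg} and Skolem--Noether there is $z_\oi\in(Ae_\oi)^\times$ conjugating $\vp$ back to the identity on that block. Explicitly, $z_\oi$ can be taken as a product of powers of $x_\oi$ and $g_\oi$.
\item On the Taft block $A\eo$, the automorphism is $x\eo\mapsto\lambda x\eo$ with $g$ fixed, since $l\equiv1\pmod m$ and $l\equiv 1\pmod d$ give $g^l\eo=g\eo$.
\end{itemize}
To get $\lambda\in\BG_m$, I would use the degeneracy once more, via \eqref{eq:x2} at a pair with $\ou=\ol0$ and $\ov\ne\ol0$. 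There \eqref{eq:recursion} and the relation $(\JC{u,v}{0})^m=1$ from Lemma~\ref{l:Jd} tie $a_{\ol0}$ to $a_\ov$ up to an $m$-th root of unity. After the block conjugation above, $a_\ov=1$, so $a_{\ol0}\in\BG_m$.

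\emph{Assembling.} Proposition~\ref{p:uniqueness} then supplies $z\in\kgt{g}$ with $(\vp_z,J_z)$ standard and $\vp_z(x)=x$. Together with the blockwise $z_\oi$, whose product $w=z_{\ol0}+\sum_{\oi\neq\ol0} z_\oi$ is normalized to lie in $\A$, this yields an equivalent standard stable twist $(\widetilde\vp,\widetilde J)$ with $\widetilde\vp=\id$.
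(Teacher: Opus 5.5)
\textbf{The gap is in your Step 2, where you expected it.} At any pair with $\JC{u,v}{0}=0$, the chain \eqref{eq:J1}--\eqref{eq:J3} together with \eqref{eq:JJinv} forces some $\alpha_i=0$ and some $\beta_j=0$. Taking $m$-th powers (the $\JC{}{0}$-ratios are $m$-th roots of unity by Lemma~\ref{l:Jd}) gives $a_{\ol{u+v}}^m=a_\ov^m$ and $a_\ou^m=\theta^{m(1-l)v}a_{\ol{u+v}}^m$, that is, $a_\ou^m=a_{-\ov}^m$. This relation is vacuous whenever $\ou+\ov=\ol0$.

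Every degenerate pair you exhibit, $(1+ds,-1+dt')$, is of this kind. There you only learn $a_{\ol{-1}}^m=a_{\ol0}^m$ and $a_{\ol1}^m=\theta^{m(l-1)}a_{\ol{-1}}^m$. The second identity holds for every $l$, because $a_{\ol{\pm1}}^m=(1-\theta^{\pm ml})/(1-\theta^{\pm m})$. So no amount of bookkeeping at these pairs forces $\g_{\oi l}^m=\g_\oi^m$, and your fallback via \eqref{e2}, \eqref{e3} is not worked out.

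\textbf{The missing idea} is to use the constant-term cocycle identity \eqref{eq:2coJ} to find a degenerate pair with $\ou+\ov\ne\ol0$. One has $\JC{1,1}{0}\JC{-1,2}{0}=\JC{0,1}{0}\JC{-1,1}{0}=0$, so $(1,1)$ or $(-1,2)$ is also degenerate. For $d>2$, the relation $a_\ou^m=a_{-\ov}^m$ at either pair becomes $\frac{1-\theta^{ml}}{1-\theta^m}=\frac{1-\theta^{-ml}}{1-\theta^{-m}}$ or $\frac{1-\theta^{-ml}}{1-\theta^{-m}}=\frac{1-\theta^{-2ml}}{1-\theta^{-2m}}$. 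Each forces $\theta^{m(l-1)}=1$. For $d=2$ the claim $l\equiv1\pmod d$ is automatic, since $l$ is odd.

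\textbf{Your route to $a_{\ol0}\in\BG_m$ also fails as written.} Pairs with $\ou=\ol0$ are never degenerate, because $d\Z_n\subseteq T_J$. Moreover, \eqref{eq:x2} at $\ou=\ol0$ collapses (since $\g_{\ol0}=0$ and $a_{\ol{u+v}}=a_\ov$) to \eqref{eq:recursion}, which contains no $a_{\ol0}$ at all. The relation you need is the $\alpha_i=0$ condition at the degenerate pair $(1,-1)$, namely $(a_{\ol{-1}}/a_{\ol0})^m=1$. Combine it with $a_{\ol{-1}}^m=\g_{\ol{-l}}^m/\g_{\ol{-1}}^m=1$, which holds once $l\equiv1\pmod d$.

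\textbf{What does work.} Once $l\equiv1\pmod d$ and $a_{\ol0}^m=1$ are established, your blockwise Skolem--Noether conjugation on the simple blocks is a legitimate variant of the paper's explicit element $z=\eo+\sum_{\ou\ne\ol0}x_\ou^{\frac{1-\ell}{d}u}$ that undoes $g\mapsto g^l$. Take $z_{\ol0}=\eo$, so the conjugating element lies in $\A$ and the conjugated twist stays standard by Lemma~\ref{l:vg}. Then Proposition~\ref{p:uniqueness} finishes the argument.
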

\begin{proof}
 Suppose $\JC{u,v}{0}=0$ for some $\ou,\ov\in \ZD$. By Lemma \ref{l:TJ} (iii), we have $\JC{u+ds, v+dt}{0} = 0$ for any integers $s,t$. By \eqref{eq:J0x2} and the fact that $l\equiv 1\pmod m$, we derive the following equations from \eqref{eq:x2}, \eqref{eq:x3} and \eqref{eq:x4}:
\begin{equation}\label{eq:J1}
     \JC{u,v}{1}(\frac{\av}{a_{\ol{u+v}}}-\xi \,\frac{\JC{u+v,d}{0}}{\JC{v,d}{0}})=0\, ,   
\end{equation}
\begin{equation}\label{eq:J2}
      \JC{u,v}{m-1}(\theta^v \,\frac{\JC{d,u+v}{0}}{\,\JC{d,u}{0}}-
    \xi\theta^{lv}\frac{\au}{a_{\ol{u+v}}})=0\,  
\end{equation}
and for $2\le i\le m-1$
 \begin{equation}\label{eq:J3}
\left(\theta^v\,\frac{\JC{d,u+v}{0}}{\JC{d,u}{0}}-\xi^{-(i-1)}\theta^{lv}\frac{\au}{a_{\ol{u+v}}}\right)\JC{u,v}{i-1}=\left(\frac{\av}{a_{\ol{u+v}}}-\xi^i\,\frac{\JC{u+v,d}{0}}{\JC{v,d}{0}}\right)\JC{u,v}{i}\, .
\end{equation}
By denoting 
\begin{equation*}
 \alpha_i=\frac{\av}{a_{\ol{u+v}}}-\xi^i\,\frac{\JC{u+v,d}{0}}{\JC{v,d}{0}}\quad  \text{and}\quad   \beta_i=\theta^v \,\frac{\JC{d,u+v}{0}}{\,\JC{d,u}{0}}-
    \xi^{-i}\theta^{lv}\frac{\au}{a_{\ol{u+v}}}
\end{equation*}
for all $1\le i\le m-1$, the above equations \eqref{eq:J1}-\eqref{eq:J3} can be rewritten as  
\begin{equation}\label{eq:xJ}
\begin{aligned}
      \alpha_1\, \JC{u,v}{1}=0,\quad \beta_{m-1}\,\JC{u,v}{m-1}=0,\\
        \beta_{i-1}\JC{u,v}{i-1}=\alpha_i\JC{u,v}{i}\, \text{  for all $2\le i\le m-1$}\, . \end{aligned}
        \end{equation}
If $\alpha_i\neq 0$ for all $1\le i\le m-1$, then \eqref{eq:xJ} would imply that $\JC{u,v}{i}=0$ for all $0\le i\le m-1$, a contradiction to \eqref{eq:JJinv}. Thus, there exists $1\le i\le m-1$ such that
\begin{equation*}
 0=\a_i= \frac{\av}{a_{\ol{u+v}}}-\xi^i\,\frac{\JC{u+v,d}{0}}{\JC{v,d}{0}}\, .  
 \end{equation*}
 According to Lemma \ref{l:Jd}, $\left(\frac{\av}{a_{\ol{u+v}}}\right)^m=\left(\xi^i\,\frac{\JC{u+v,d}{0}}{\JC{v,d}{0}}\right)^m=1$.
Similarly, we conclude that $\b_i=0$ for some $1\le i\le m-1$ and $\theta^{mlv}\,\left(\frac{\au}{a_{\ol{u+v}}}\right)^m=\theta^{mv}$ which implies that $a_{\ol u}^m=\frac{\theta^{mv}}{\theta^{mlv}}\av^m=a_{-\ol{v}}^m$. By the 2-cocycle condition, we find $\JC{1,1}{0}\JC{-1,2}{0}=\JC{0,1}{0}\JC{-1,1}{0}$. Since $1 \not\in T_J$, $\JC{-1,1}{0}=0$ and hence $\JC{1,1}{0}=0$ or $\JC{-1,2}{0}=0$. By taking $u=1,v=1$ or $u=-1,v=2$ in  $a_{\ol u}^m=a_{-\ol{v}}^m$, we obtain 
\[
\frac{1-\theta^{ml}}{1-\theta^{m}}=\frac{1-\theta^{-ml}}{1-\theta^{-m}}\quad \text{or}\quad \frac{1-\theta^{-ml}}{1-\theta^{-m}}=\frac{1-\theta^{-2ml}}{1-\theta^{-2m}}\, .
\]
A straightforward computation yields that $\theta^{m(l-1)}=1$ or $l\equiv 1\pmod d$. Consequently, $a_{\ol w}^m=\frac{1-\theta^{lmw}}{1-\theta^{mw}}=1$ for all $\ow\in \Z_d\setminus\{\ol 0\}$. Moreover, $a_{\ol 0}^m=1$ by letting $u=1,v=-1$ in $\left(\frac{\av}{a_{\ol{u+v}}}\right)^m=1$.

Let $1\le \ell\le n-1$ such that $l\ell\equiv 1 \pmod{md}$. It is clear that $\ell\equiv 1 \pmod m$ and $\pmod d$. Set $z=e_{\ol 0}+\sum_{\ou\in \ZD} (x_{\ou})^{\frac{1-\ell}{d}u}$ where $x_\ou=\frac{1}{\g_\ou}{xe_\ou}$ for all $\ou\in \ZD$. Then $z\in \A$ with $z^m=1$, and a direct computation gives 
\begin{align}\label{eq:Iz}
I_z(g)&=zgz^{-1}=g\eo+\sum_{\ou \in \ZD}x_\ou^{\frac{1-\ell}{d}u}g x_\ou^{-\frac{1-\ell}{d}u}=g\eo+\sum_{\ou \in \ZD} \xi^{\frac{\ell-1}{d}u} g\eu\\
&=\sum_{\ou \in \Z_d} \theta^{(\ell-1)u}g \eu=g^{\ell}\notag
\end{align}
and $I_z(x)=x$. By our previous discussion, $(\vp, J)$ is standard such that $\vp(g) = g^l$, and $\vp(x) = \sum_{\ou \in \Z_d} a_\ou x \eu$, where $l\equiv 1 \pmod m$, $l\equiv 1\pmod d$ and $a_\ou^m=1$ for $\ou \in \mathbb Z_d$. From \eqref{eq:Iz}, $\vp_z(g) =g^{l\ell}=g$. Since $I_z(g^m) = g^{m\ell}=g^m$, $\vp_z(x) = \vp(x)$. Moreover, $(J_z)^{\pmb{0}}(e_{\ol 0}\otimes e_{\ol 0})=\JC{}{0}(e_{\ol 0}\otimes e_{\ol 0}) =e_{\ol 0}\otimes e_{\ol 0}$, and hence $(\vp_z, J_z)$ is also standard. By Proposition \ref{p:uniqueness}, $(\varphi_z,J_z)$ is equivalent to $(\widetilde{\vp},\widetilde{J})$ with $\widetilde{\vp}=\id$ as  $a_{\ol 0}^m=1$. 
\end{proof}

In what follows, we simply write $\ZC{u,v}{i}=\JC{u,v}{i}\theta^{-iv}$ for all $u,v\in \mathbb Z$ and $0\le i\le m-1$. According to \eqref{eq:J0x2}, we have
\begin{equation}\label{eq:boldJ}
    \ZC{u+td,v+sd}{i}=\ZC{u,v}{i}\left(\frac{\ZC{d,u+v}{0}}{\ZC{d,u}{0}}\right)^t\left(\frac{\ZC{u+v,d}{0}}{\ZC{v,d}{0}}\right)^s
\end{equation}
for any $s,t\in\mathbb Z$. 

\begin{lem}
   Suppose $(\id, J)$ is a stable twist of $A$. Then for any $u,v,w\in \mathbb Z_n$:
      \begin{equation}\label{eq:JJ1}
       \ZC{u,v}{0}\,\left(\frac{\ZC{d,u+v}{0}}{\ZC{d,u}{0}}-1\right)=\delta_{\ov,\ol0}\,\left(1-\xi\,\frac{\ZC{u+v,d}{0}}{\ZC{v,d}{0}}\right)\,\ZC{u,v}{1}\, ,
   \end{equation}
   \begin{equation}\label{eq:JJ2}
       \ZC{u,v}{0}\,\left(1-\frac{\ZC{u+v,d}{0}}{\ZC{v,d}{0}}\right)=\delta_{\ou,\ol0}\theta^{mv}\,\left(\frac{\ZC{d,u+v}{0}}{\ZC{d,u}{0}}-\xi\right)\,\ZC{u,v}{m-1}\, ,
   \end{equation} 
   and for all $2\le i\le m-1$:
    \begin{equation}\label{eq:JJ3}
\left(\frac{\ZC{d,u+v}{0}}{\ZC{d,u}{0}}-\xi^{-(i-1)}\right)\ZC{u,v}{i-1}=\left(1-\xi^i\,\frac{\ZC{u+v,d}{0}}{\ZC{v,d}{0}}\right)\ZC{u,v}{i}\, .
\end{equation}
Moreover, we have 
  \begin{equation}\label{eq:JJ4}
  \begin{aligned}
        &\ZC{u,v+w}{0}\ZC{v,w}{i}  =\ZC{u+v,w}{i}\left(\ZC{u,v}{0}\left(\frac{\ZC{u+v,d}{0}}{\ZC{v,d}{0}}\right)^i\right.\\
&\quad \left.+\sum_{j=1}^{i}\left[\begin{matrix}i\\j\end{matrix}\right]_\xi \ZC{u,v}{m-j}\left(\frac{\ZC{d,u+v}{0}}{\ZC{d,u}{0}}\right)^j\left(\frac{\ZC{u+v,d}{0}}{\ZC{v,d}{0}}\right)^{i-j} \theta^{mv}\xi^{-j(i-j)} \delta_{\ou,\ol 0} \right),
         \end{aligned}
    \end{equation}
    
     \begin{equation}\label{eq:JJ5}
     \begin{aligned}
  &\ZC{u+v,w}{0} \ZC{u,v}{m-i} = \ZC{u, v+w}{m-i} \theta^{mw}\left(\ZC{v,w}{0}\left(\frac{\ZC{d,v+w}{0}}{\ZC{d,v}{0}}\right)^i\right.\\
  &\quad \left.+\sum_{j=0}^{i-1}\left[\begin{matrix}i\\j\end{matrix}\right]_\xi\ZC{v,w}{i-j}\left(\frac{\ZC{d,v+w}{0}}{\ZC{d,v}{0}}\right)^j\left(\frac{\ZC{v+w,d}{0}}{\ZC{w,d}{0}}\right)^{i-j}\xi^{(i-j)^2}\delta_{\ol w,\ol 0}\right), 
  \end{aligned}
 \end{equation}

  for all $1\le i\le m-1$. In particular, for any $u,v\notin d\Z_n$ and $1\le i\le m-1$:
     \begin{equation} \label{eq:JJ7}
     \ZC{0,v}{i}=\frac{\prod_{j=0}^{i-1}(\ZC{d,v}{0}-\xi^{-j})}{\prod_{j=1}^i(1-\xi^j)},  \quad\quad 
            \ZC{u,0}{i}=\frac{\prod_{j=i+1}^m(1-\xi^j\ZC{u,d}{0})}{\prod_{j=i}^{m-1}(1-\xi^{(m-j)})}\, , 
 \end{equation}
\begin{equation} \label{eq:JJ8}
    (\ZC{u,d}{0})^i+\sum_{j=1}^{i}\left[\begin{matrix}i\\j\end{matrix}\right]_\xi \ZC{u,0}{m-j}(\ZC{u,d}{0})^{i-j} \xi^{-j(i-j)}=\frac{\prod_{j=1}^i(1-\xi^j\ZC{u,d}{0})}{\prod_{j=1}^i(1-\xi^j)}\, , 
\end{equation}

\begin{equation} \label{eq:JJ9}
(\ZC{d,v}{0})^i+\sum_{j=0}^{i-1}\left[\begin{matrix}i\\j\end{matrix}\right]_\xi\ZC{0,v}{i-j}(\ZC{d,v}{0})^j\xi^{(i-j)^2}=\frac{\prod_{j=1}^i(\ZC{d,v}{0}-\xi^j)}{\prod_{j=1}^i(1-\xi^j)}, 
\end{equation}

\begin{equation}\label{eq:JJ10}
    \ZC{0,0}{i}=\ZC{u,0}{i}\left((\ZC{u,d}{0})^i+\sum_{j=1}^{i}\left[\begin{matrix}i\\j\end{matrix}\right]_\xi \ZC{u,0}{m-j}(\ZC{u,d}{0})^{i-j} \xi^{-j(i-j)}\right)\, ,
\end{equation}

\begin{equation}\label{eq:JJ11}
\ZC{0,0}{m-i}=\ZC{0,v}{m-i}\theta^{mv}\,\left((\ZC{d,v}{0})^i+\sum_{j=0}^{i-1}\left[\begin{matrix}i\\j\end{matrix}\right]_\xi\ZC{0,v}{i-j}(\ZC{d,v}{0})^j\xi^{(i-j)^2}\right)\, ,
\end{equation}

\begin{equation}\label{eq:JJ12}
\ZC{0,0}{i} =0\,, 
\end{equation}

\begin{equation}\label{eq:JJ13}
    \ZC{u,v}{0}\,\prod_{j=0}^{i-1}(\ZC{d,v}{0}-\xi^{-j})=\ZC{u,v}{i}\,\prod_{j=1}^i(1-\xi^j\ZC{u,d}{0})\, , 
\end{equation}

\begin{equation}\label{eq:JJ14}
    \ZC{u,v}{0}\,\prod_{j=i+1}^{m}(1-\xi^j\ZC{u,d}{0})=\ZC{u,v}{i}\theta^{mv}\,\prod_{j=1}^{m-i}(\ZC{d,v}{0}-\xi^j)\, .
\end{equation}

  \end{lem}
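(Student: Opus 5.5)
This lemma is a block of coefficient identities, so the plan is to specialize the identities already obtained for a standard stable twist to $\vp=\id$. Concretely, we take $l=1$ and $a=1$, so $a_\ou=1$ for all $\ou$, and then rewrite everything in terms of $\ZC{u,v}{i}=\JC{u,v}{i}\theta^{-iv}$ using \eqref{eq:boldJ}.

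First, \eqref{eq:JJ1}--\eqref{eq:JJ3} come from Lemma~\ref{l:x}.
\begin{enumerate}
\item In \eqref{eq:x3}, \eqref{eq:x2} and \eqref{eq:x4}, set $a\equiv 1$ and $l=1$.
\item Replace the shifted coefficients $\JC{u+d,v}{i}$ and $\JC{u,v+d}{i}$ by their expressions from \eqref{eq:J0x2}.
\item Divide by the appropriate power of $\theta^{v}$.
\end{enumerate}
The factor $\gamma_\ov^m=1-\theta^{mv}$ survives only when $\ov\ne\ol0$, and the sign conventions have to be tracked so that it produces the displayed $\delta_{\ov,\ol0}$ (respectively $\delta_{\ou,\ol0}$, $\theta^{mv}$). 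This is careful bookkeeping rather than a new idea.

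Second, \eqref{eq:JJ4} and \eqref{eq:JJ5} come from the cocycle identities \eqref{e2} and \eqref{e3} of Lemma~\ref{l:2-cyc_2}, with each shifted index rewritten via \eqref{eq:boldJ}. After collecting the powers of $\theta$, the $q$-binomial sums take the stated form. Specializations then give the remaining identities.
\begin{itemize}
\item Setting $u=0$ (where $\ZC{0,v}{0}=1$) in \eqref{eq:JJ3}, together with \eqref{eq:JJ1}, gives a first-order recursion. Iterating it from $\ZC{0,v}{0}=1$ yields the first formula of \eqref{eq:JJ7}.
\item Symmetrically, $v=0$ in \eqref{eq:JJ3}, started from \eqref{eq:JJ2}, gives the second formula of \eqref{eq:JJ7}.
\item Substituting \eqref{eq:JJ7} into the sums that appear in \eqref{eq:JJ4} and \eqref{eq:JJ5} (at $v=0$, respectively $u=0$) reduces \eqref{eq:JJ8} and \eqref{eq:JJ9} to $q$-binomial product identities. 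I would check these using \eqref{eq:qidentity}, the $q$-Chu--Vandermonde identity.
\item Specializing \eqref{eq:JJ4} to $v=-u$, $w=u$ and \eqref{eq:JJ5} to $u=0$ gives \eqref{eq:JJ10} and \eqref{eq:JJ11}.
\end{itemize}

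For \eqref{eq:JJ12}, the plan is to use \eqref{eq:JJ3} at $u=v=0$, where the ratios equal $1$. It reads $(1-\xi^{-(i-1)})\ZC{0,0}{i-1}=(1-\xi^i)\ZC{0,0}{i}$, and with \eqref{eq:JJ1}, \eqref{eq:JJ2} at $u=v=0$ it forces all $\ZC{0,0}{i}$ to vanish. Alternatively, \eqref{eq:JJ12} follows from the constant-term expansion of the Taft quotient (Theorem~\ref{STH0}, with $\alpha=0$ after normalization). Finally, \eqref{eq:JJ13} and \eqref{eq:JJ14} should follow by solving the general recursion \eqref{eq:JJ3} at arbitrary $u,v$. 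Here I would use Lemma~\ref{l:Jd} to simplify the ratios $\ZC{d,u+v}{0}/\ZC{d,u}{0}$ in terms of $\ZC{d,v}{0}$ and $\ZC{u,d}{0}$, or else use \eqref{eq:JJ4} and \eqref{eq:JJ5} with $u$ or $w$ set to $0$ combined with \eqref{eq:JJ7}--\eqref{eq:JJ9}.

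I expect the main obstacle to be the $q$-binomial evaluations \eqref{eq:JJ8}, \eqref{eq:JJ9} and the exact power-of-$\theta$ bookkeeping.
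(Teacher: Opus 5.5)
Your derivations of \eqref{eq:JJ1}--\eqref{eq:JJ9} follow the paper's route. The argument for \eqref{eq:JJ12}, however, does not work.

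Specializing \eqref{eq:x3} and \eqref{eq:x2} to $\vp=\id$ produces the factor $\g_\ov^m=1-\theta^{mv}$ (resp.\ $\g_\ou^m$). No sign bookkeeping turns this into a Kronecker delta. The displayed $\delta_{\ov,\ol0}$ has to be read as a factor that is nonzero for $\ov\neq\ol0$ and zero for $\ov=\ol0$; your own derivation of \eqref{eq:JJ7} at $u=0$, $\ov\neq\ol0$ needs it to be nonzero there. Hence at $u=v=0$ both \eqref{eq:JJ1} and \eqref{eq:JJ2} read $0=0$. Then \eqref{eq:JJ3} only gives $\ZC{0,0}{i}=\frac{1-\xi^{-(i-1)}}{1-\xi^{i}}\,\ZC{0,0}{i-1}$, with $\ZC{0,0}{1}$ free.

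No relation internal to the $\ol0\times\ol0$ block can do better. We have $Ae_{\ol0}\cong A/Ax^m\cong T_{m^2}(\xi)$, and the invariant twists $J^\alpha$ of Theorem~\ref{STH0} satisfy exactly this recursion with $\alpha\neq0$. Your fallback fails for the same reason: this quotient is the case $n=m$ of Theorem~\ref{STH0}. There $\alpha$ is an invariant of the class (Corollary~\ref{StableTwTaft}), so no normalization makes it vanish.

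The missing idea is to couple the $\ol0$ block, through the cocycle condition, to a block $\ou\neq\ol0$ where $x^m$ acts invertibly.
\begin{itemize}
\item Use \eqref{eq:JJ10} with $u\notin d\Z_n$. It comes from \eqref{eq:JJ4} at $v=w=0$; your choice $v=-u$, $w=u$ puts $\ZC{-u,u}{i}$ rather than $\ZC{0,0}{i}$ on the left.
\item Substitute \eqref{eq:JJ7} for $\ZC{u,0}{i}$ and \eqref{eq:JJ8} for the bracket. Then $\ZC{0,0}{i}$ equals $\prod_{j=1}^{m}(1-\xi^j\ZC{u,d}{0})=1-(\ZC{u,d}{0})^m$ divided by a nonzero constant.
\item This vanishes because $\ZC{u,d}{0}$ is an $m$-th root of unity by Lemma~\ref{l:Jd}.
\end{itemize}

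Your first route to \eqref{eq:JJ13}--\eqref{eq:JJ14} is circular. Iterating \eqref{eq:JJ1} and \eqref{eq:JJ3} gives products in $\ZC{d,u+v}{0}/\ZC{d,u}{0}$ and $\ZC{u+v,d}{0}/\ZC{v,d}{0}$. Replacing these by $\ZC{d,v}{0}$ and $\ZC{u,d}{0}$ is the character property of Corollary~\ref{l:Ichar}. Lemma~\ref{l:Jd} does not supply that property, and the paper proves it later from this very lemma.

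What works is the middle specialization $v=0$:
\begin{itemize}
\item in \eqref{eq:JJ4}, combined with \eqref{eq:JJ7} and \eqref{eq:JJ8}, for \eqref{eq:JJ13};
\item in \eqref{eq:JJ5}, combined with \eqref{eq:JJ7} and \eqref{eq:JJ9}, for \eqref{eq:JJ14}.
\end{itemize}
Setting $u$ or $w$ to $0$ instead gives tautologies or identities that still contain unknown mixed coefficients. Likewise, \eqref{eq:JJ11} needs $u=v=0$ in \eqref{eq:JJ5}, not just $u=0$.
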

\begin{proof}
   We observe that $\g_{\ou}^m=\delta_{\ou,\ol 0}$ since $\varphi=\id$. By applying \eqref{eq:boldJ}, one obtains \eqref{eq:JJ1}-\eqref{eq:JJ3} through \eqref{eq:x2}-\eqref{eq:x4}. Moreover, \eqref{eq:JJ4}-\eqref{eq:JJ5} are gained from \eqref{e2}-\eqref{e3}.

 The first equation of \eqref{eq:JJ7} follows by letting $u=0$ and $v\not\in d\mathbb Z_n$ in \eqref{eq:JJ1}, and then applying \eqref{eq:JJ3} inductively. Similarly, by letting $v=0$ and $u\not\in d\mathbb Z_n$ in \eqref{eq:JJ2} and using \eqref{eq:JJ3} inductively, one can show the second equation of \eqref{eq:JJ7}. 
 
For \eqref{eq:JJ8}, we know $\delta_{\ou,\ol 0}\,\ZC{u,0}{m-j}=\frac{\prod_{s=0}^{j-1}(1-\xi^{-s}\ZC{u,d}{0})}{\prod_{s=1}^j(1-\xi^s)}$ by \eqref{eq:JJ7}.  For $1\le j\le m-1$, let $\alpha_{j}(x)=\frac{\prod_{s=0}^{j-1}(1-\xi^{-s}x)}{\prod_{s=1}^j(1-\xi^s)}$ and $\alpha_0(x)=1$. It suffices to prove the following equality between polynomials of degree $i$ in $\kk[x]$, 
\[
\sum_{j=0}^{i}\qqbin{i}{j}{\xi}\, \xi^{-j(i-j)}\, \alpha_{j}(x)x^{i-j} =\frac{\prod_{s=1}^i(1-\xi^sx)}{\prod_{s=1}^i(1-\xi^s)},
\]
by showing that this equality holds for  $x=\xi^{-k}$ for $0\le k\le i$. When $x=1$, both sides equal $1$ since $\alpha_j(1)=0$ unless $j=0$. For $\xi^{-k} \ne 1$, then the right-hand side is zero, and the left-hand side is
\begin{align*}
& \sum_{j=0}^{i} \qqbin{i}{j}{\xi}\, \xi^{-j(i-j)}\, \alpha_{j}(\xi^{-k})\xi^{-k(i-j)}=\xi^{-ik}\left(\sum_{j=0}^{i}\qqbin{i}{j}{\xi^{-1}} \alpha_{j}(\xi^{-k})\xi^{kj} \right) \\ 
=&\quad \xi^{-ik}\left(\sum_{j=0}^{i}(-1)^j\left[\begin{matrix}i\\j\end{matrix}\right]_{\xi^{-1}}\, \left[\begin{matrix}k-1+j\\j\end{matrix}\right]_{\xi^{-1}}\xi^{-\frac{j(j+1-2k)}{2}} \right) =0    
\end{align*}
by \eqref{eq:qidentity}. We can prove \eqref{eq:JJ9} in a similar fashion.

Next, \eqref{eq:JJ10} follows from \eqref{eq:JJ4} by letting $v=w=0$ and \eqref{eq:JJ11} follows from \eqref{eq:JJ5} by letting $u=v=0$.

By \eqref{eq:JJ7} and \eqref{eq:JJ8}, \eqref{eq:JJ10} becomes
\begin{equation*}
    \ZC{0,0}{i}=\frac{\prod_{j=i+1}^m(1-\xi^j\ZC{u,d}{0})}{\prod_{j=i}^{m-1}(1-\xi^{-j})}\frac{\prod_{j=1}^i(1-\xi^j\ZC{u,d}{0})}{\prod_{j=1}^i(1-\xi^j)} =\frac{1-(\ZC{u,d}{0})^m}{\prod_{j=i}^{m-1}(1-\xi^{-j})\prod_{j=1}^i(1-\xi^j)}=0
\end{equation*}
since $\ZC{u,d}{0}$ is an $m$-th root of unity. This proves \eqref{eq:JJ12}.

Finally, \eqref{eq:JJ13} follows from \eqref{eq:JJ4} by letting $v=0$ and using \eqref{eq:JJ8}. And \eqref{eq:JJ14} follows from \eqref{eq:JJ5} by letting $v=0$ and using \eqref{eq:JJ9}.

\end{proof}

Using the relations above for $\varphi=\id$, we now establish criteria for $v\in T_J$. For any $u,v\in\Z_n$, define the index set
\[
\SJ{u,v}:=\{i\in\{0,1,\ldots,m-1\}\mid\ZC{u,v}{i}\ne 0\}\,.
\]
By \eqref{eq:JJinv}, $\SJ{u,v}\ne\emptyset$ for any $u,v\in\Z_n$.

\begin{lem} \label{l:T1}  Suppose $(\id, J)$ is a stable twist of $A$. For any $v \in \Z_n$, we have
\begin{enumerate}
    \item $\ZC{v,d}{0} \ne 1$ iff  $\ZC{-v, v}{0} = 0$ iff $\ZC{\pm v,d}{0} \ne 1$ and  $\ZC{d, \pm v}{0} \ne 1$. 
\item  If $\ZC{v,d}{0} = \xi^a$ for some nonnegative integer $a \le m-1$, then $\ZC{d,-v}{0} = \xi^a$ and $\ZC{-v,d}{0} = \ZC{d,v}{0} = \xi^{-a}$. Moreover,
$$
\SJ{-v,v} = \{a\}\quad\text{and}\quad \SJ{v,-v} = \{\ol{-a}\},
$$
where $\ol{-a}$ denote the least nonnegative residue of $-a$ modulo $m$.
\end{enumerate}
\end{lem}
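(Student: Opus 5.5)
We first dispose of the case $v\in d\Z_n$. There $\ZC{v,d}{0}=\ZC{d,v}{0}=\ZC{\pm v,d}{0}=1$ and $\ZC{-v,v}{0}=1$ (up to the $\theta$-factor), by Lemma~\ref{bgT2} and \eqref{eq:J0x1}. So (i) holds trivially with all conditions false, and in (ii) we get $a=0$ and $\SJ{\mp v,\pm v}=\{0\}$, where \eqref{eq:JJ12} kills the higher indices. Hence we assume $\ov\ne\ol0$. Throughout we use that $\ZC{u,d}{0}$ and $\ZC{d,u}{0}$ are $m$-th roots of unity (Lemma~\ref{l:Jd}), and that $\ZC{-v,v}{0}\neq0$ iff $v\in T_J$ iff $-v\in T_J$ (Lemma~\ref{l:TJ}).

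\emph{Part (i).} Specialize \eqref{eq:JJ2} to $(u,v)=(-v,v)$. Since $\ol{-v}\ne\ol0$, the right side vanishes, giving $\ZC{-v,v}{0}\bigl(1-1/\ZC{v,d}{0}\bigr)=0$, because $\ZC{0,d}{0}=1$. Thus $\ZC{v,d}{0}\ne1$ forces $\ZC{-v,v}{0}=0$. Similarly \eqref{eq:JJ1} at $(u,v)=(v,-v)$ has vanishing right side ($\ol{-v}\ne\ol0$), so $\ZC{d,v}{0}\ne1$ gives $\ZC{v,-v}{0}=0$. Replacing $v$ by $-v$ in both gives the same conclusion for $\ZC{-v,d}{0}$ and $\ZC{d,-v}{0}$. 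Conversely, suppose $\ZC{-v,v}{0}\ne0$, i.e.\ $\pm v\in T_J$. Then by Lemma~\ref{l:TJ}(i) every $\ZC{\pm v,w}{0}$ and $\ZC{w,\pm v}{0}$ is nonzero. The two specializations above then give $\ZC{v,d}{0}=1$ and $\ZC{d,v}{0}=1$, and the same holds with $-v$. This proves the three-way equivalence.

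\emph{Part (ii).} Assume $\ZC{v,d}{0}=\xi^a$. If $a=0$, part (i) says $v\in T_J$ and all four ratios equal $1$. Then \eqref{eq:JJ13} with $(u,v)\to(-v,v)$, $i\ge1$ has left factor $\prod_{j<i}(1-\xi^{-j})=0$ at $j=0$, while the right factor is nonzero. So $\ZC{-v,v}{i}=0$ for $i\ge1$ and $\SJ{-v,v}=\{0\}$; symmetrically $\SJ{v,-v}=\{0\}$.

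If $a\ne0$, then $\ZC{-v,v}{0}=\ZC{v,-v}{0}=0$, and by \eqref{eq:JJinv} some $\ZC{-v,v}{i}\ne0$ with $i\ge1$. I would next run the chain \eqref{eq:JJ3} at $(u,v)=(-v,v)$, using the endpoint equations \eqref{eq:JJ1} and \eqref{eq:JJ2}. With $\ZC{0,d}{0}=\ZC{d,0}{0}=1$, this chain reads
\[
\Bigl(\tfrac{1}{\ZC{d,-v}{0}}-\xi^{-(i-1)}\Bigr)\ZC{-v,v}{i-1}=\Bigl(1-\tfrac{\xi^{i}}{\ZC{v,d}{0}}\Bigr)\ZC{-v,v}{i}.
\]
The coefficient $1-\xi^{i-a}$ vanishes only at $i=a$. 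Propagating from the boundary index $0$ (where the value is zero) upward, nonvanishing of some $\ZC{-v,v}{i}$ forces $\ZC{-v,v}{i}=0$ for $i<a$. Nonvanishing at index $a$ then forces the left coefficient to vanish at the top, which pins $\ZC{d,-v}{0}=\xi^{a}$, and the chain kills the indices above $a$. This yields $\SJ{-v,v}=\{a\}$.

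The remaining identities $\ZC{-v,d}{0}=\ZC{d,v}{0}=\xi^{-a}$ and $\SJ{v,-v}=\{\ol{-a}\}$ should follow by running the same argument at $(u,v)=(v,-v)$. They should also be cross-checked with \eqref{eq:JJ13} and \eqref{eq:JJ14}, whose product factors $\prod(\ZC{d,v}{0}-\xi^{\cdot})$ and $\prod(1-\xi^{\cdot}\ZC{u,d}{0})$ vanish at exactly one index each.

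The main obstacle is this last bookkeeping step. One must track precisely which factor in \eqref{eq:JJ3}, \eqref{eq:JJ13} and \eqref{eq:JJ14} vanishes, and reconcile the four roots of unity consistently. In particular, deducing the relation $\ZC{-v,d}{0}=\ZC{v,d}{0}^{-1}$ may additionally require the constant-term cocycle identity \eqref{eq:2coJ} at $(v,-v,d)$, combined with \eqref{eq:boldJ}.
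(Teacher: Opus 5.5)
Your handling of $v\in d\Z_n$, and of the case $a=0$ in (ii) once (i) is in place, is fine. The core of the argument, however, has two genuine gaps.

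\emph{Part (i).} Both directions depend on reading the factor $\delta_{\ou,\ol0}$ printed in \eqref{eq:JJ1}--\eqref{eq:JJ2} literally, so that the right-hand sides vanish once $\ol{\pm v}\neq\ol0$. That factor is a misprint. Equations \eqref{eq:JJ1} and \eqref{eq:JJ2} come from \eqref{eq:x3} and \eqref{eq:x2}, where the coefficient is $\g_\ov^m$, resp.\ $\g_\ou^m$, produced by $x^me_u=\g_\ou^me_u$. Since $\g_{\ol0}=0$ and $\g_\ou^m=1-\theta^{mu}\neq0$ for $\ou\neq\ol0$, this coefficient vanishes exactly when the index is $\ol0$, which is the opposite of what you used.

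Two checks confirm this. Under your reading, \eqref{eq:JJ1} at $u=0$ would give $\ZC{d,v}{0}=1$ for every $v$, so \eqref{eq:JJ7} could not be derived and the lemma would be vacuous. Your own propagation from index $0$ in part (ii) also needs this coefficient to be nonzero.

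Read correctly, \eqref{eq:JJ2} at $(u,v)=(-v,v)$ is
\[
\ZC{-v,v}{0}\Bigl(1-\tfrac{1}{\ZC{v,d}{0}}\Bigr)=\g_{\ol{-v}}^m\,\theta^{mv}\Bigl(\tfrac{1}{\ZC{d,-v}{0}}-\xi\Bigr)\ZC{-v,v}{m-1}.
\]
Its right side need not vanish, so neither implication follows. The paper uses the $2$-cocycle identities instead:
\begin{itemize}
\item Forward direction: \eqref{eq:JJ4} with $u=-v$, $w=0$, together with \eqref{eq:JJ12}, gives $\ZC{-v,v}{0}\ZC{v,0}{i}=0$. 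By \eqref{eq:JJ7}, $\ZC{v,0}{m-1}\neq0$ when $\ZC{v,d}{0}\neq1$, so $\ZC{-v,v}{0}=0$.
\item Converse: if $\ZC{v,-v}{0}=0$, then \eqref{eq:JJinv} gives some $i\ge1$ with $\ZC{v,-v}{i}\neq0$. Now \eqref{eq:JJ13} and \eqref{eq:JJ14} at $(u,v)=(v,-v)$ force $\ZC{v,d}{0}$ and $\ZC{d,-v}{0}$ to be nontrivial roots of unity. Then replace $v$ by $-v$.
\end{itemize}

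\emph{Part (ii).} With the correct endpoints, the chain \eqref{eq:JJ1}, \eqref{eq:JJ3}, \eqref{eq:JJ2} at $(-v,v)$ is a first-order recurrence. Its right coefficient vanishes only at $i=a$, and its left coefficient vanishes only at $i-1=b$, where $\xi^b=\ZC{d,-v}{0}$. For $a<i\le b$ both coefficients are nonzero, so the chain propagates \emph{non}vanishing. It yields exactly $\SJ{-v,v}=\{a,a+1,\dots,b\}$ and is consistent with every $b\ge a$. It neither kills the indices above $a$ nor pins $\ZC{d,-v}{0}=\xi^a$.

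The missing step is to couple $(-v,v)$ with $(v,-v)$; note $\ZC{v,-v}{0}=0$ as well, by Lemma~\ref{l:TJ}.
\begin{itemize}
\item The same chain argument applied to $(v,-v)$ gives $\ZC{-v,d}{0}=\xi^{a'}$ and $\ZC{d,v}{0}=\xi^{b'}$, with $a'=\min\SJ{v,-v}\le b'=\max\SJ{v,-v}$.
\item \eqref{eq:JJ13} at $(u,v)=(-v,v)$, $i=a$, gives $\prod_{j=1}^a(1-\xi^{j+a'})=0$, i.e.\ $a+a'\ge m$.
\item \eqref{eq:JJ14} at $i=b$ gives $b+b'\le m$.
\end{itemize}
Hence $b+a'\le b+b'\le m\le a+a'\le b+a'$, which forces $a=b$, $a'=b'$ and $a+b'=m$. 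That squeeze is the entire content of (ii), not a cross-check.

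Your suggested use of \eqref{eq:2coJ} at $(v,-v,d)$ cannot replace it: when $\ZC{v,-v}{0}=0$ it only yields $\ZC{-v,d}{0}\ZC{v,d-v}{0}=0$.
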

\begin{proof}
Note that for $v \in d \Z_n$, $\ZC{v,d}{0} = 1$ and $\ZC{-v,v}{0}=1$. So, we may assume $v \not\in d\Z_n$. 
By setting $u=-v$ and $w=0$ in \eqref{eq:JJ4} and by \eqref{eq:JJ12}, we find $\ZC{-v,v}{0} \ZC{v,0}{i} = 0$ for  $1\le i\le m-1$. If $\ZC{v,d}{0} \ne 1$, then $\ZC{v,0}{m-1} \ne 0$ by \eqref{eq:JJ7}. Thus, $\ZC{-v,v}{0}=0$.

Conversely, if $\ZC{-v,v}{0} = 0$,  then  $\ZC{v,-v}{0} = 0$ since $T_J$ is a subgroup of $\Z_n$.  There exists a positive integer $i \le m-1$ such that $\ZC{v,-v}{i}\ne 0$ by \eqref{eq:JJinv}. It follows from \eqref{eq:JJ13} that $\ZC{v,d}{0}=\xi^{-a}$ with $1 \le a \le i$, and from \eqref{eq:JJ14} that $\ZC{d,-v}{0}= \xi^{b}$ with $1 \le b \le m-i$.  In particular, both $\ZC{v,d}{0}$ and $\ZC{d,-v}{0}$ are not equal to $1$.  Since $\ZC{-v,v}{0} = 0$, we also have   $\ZC{d, v}{0}, \ZC{-v,d}{0} \ne 1$. This proves statement (i).

We prove (ii) in the following two steps. Let $a=\min \SJ{-v,v}$ and $b=\max \SJ{-v,v}$. We claim that $\ZC{v,d}{0}=\xi^{a}$ and $\ZC{d, -v}{0} = \xi^{b}$.
If $a=0$, then $\ZC{v,d}{0}=1$ by (i). If $a>0$, then $\ZC{-v,v}{0}=0$ and so $\ZC{v,d}{0} \ne 1$  by (i).  It follows by \eqref{eq:JJ1} and \eqref{eq:JJ3} that $1-\xi^a/\ZC{v,d}{0}=0$ or $\ZC{v,d}{0}=\xi^a$.  Similarly, if $b=0$, then $v \in T_J$ and hence $\ZC{d,-v}{0}=1 =\xi^b$ by (i). If $b > 0$,   it follows by \eqref{eq:JJ2} and \eqref{eq:JJ3} that $1/\ZC{d, -v}{0}-\xi^{-b}=0$ or $\ZC{d, -v}{0} = \xi^{b}$. This proves the claim.

 Let $a'=\min \SJ{v,-v}$ and $b'=\max \SJ{v,-v}$. By the preceding paragraph, we have $\ZC{-v,d}{0}=\xi^{a'}$ and $\ZC{d,v}{0}=\xi^{b'}$. If $a=0$, then $b=a'=b'=0$ by (i), and so the statement (ii) holds in this case. Now, we assume $a >0$ or $v \not\in T_J$.  Now take $u=-v$ and $i=a$ in \eqref{eq:JJ13}, which implies that \[\ZC{-v,v}{a}\prod_{j=1}^a(1-\xi^{j+a'})=0.\]
Since $\ZC{-v,v}{a}\neq 0$, we have $a+a'\ge m$. Similarly, by taking $u=-v$ and $i=b$ in \eqref{eq:JJ14}, we get $b+b'\le m$. Hence, $b+a'\le b+b'\le m\le a+a'\le b+a'$. This implies that $b+b'=a+a'=b+a'=m$. Consequently,  $a'=b'$, $a=b$ and $a+b'=m$.  This completes the proof of (ii). 
\end{proof}

\begin{cor}\label{l:Ichar}
    Let  $(\id, J)$ be a standard stable twist of $A$.  Then $\psi_J(v):= \ZC{v,d}{0}$ defines a character of $\Z_n$ and  $\psi_J^{\gcd(d,m)}=1$.
     In particular, if $\gcd(d,m)=1$, then $\JC{}{0}$ is invertible. 
\end{cor}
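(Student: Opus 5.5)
The plan is to show that $\psi_J$ is multiplicative, read off $\psi_J(1)^{\gcd(d,m)}=1$, and then use Lemma~\ref{l:T1} and Lemma~\ref{l:TJ} to get invertibility of $\JC{}{0}$.

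\emph{Step 1: known values of $\psi_J$.} Since $\vp=\id$ here, Lemma~\ref{l:Jd} gives $\JC{u,sd}{0}=(\JC{u,d}{0})^s$ and $(\JC{u,d}{0})^m=1$. Hence $\psi_J(v)=\ZC{v,d}{0}=\JC{v,d}{0}$ takes values in $\BG_m$. The normalization $\JC{ds,dt}{0}=1$ gives $\psi_J(sd)=1$. Lemma~\ref{l:T1}(ii) gives $\psi_J(-v)=\psi_J(v)^{-1}$ and $\ZC{d,v}{0}=\psi_J(v)^{-1}$. So both one-sided quantities $\ZC{v,d}{0}$ and $\ZC{d,v}{0}$ are expressed through $\psi_J$. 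The ratios appearing in \eqref{eq:boldJ} and in \eqref{eq:JJ1}--\eqref{eq:JJ3} then become
\[
\frac{\ZC{d,u+v}{0}}{\ZC{d,u}{0}}=\frac{\psi_J(u)}{\psi_J(u+v)},\qquad \frac{\ZC{u+v,d}{0}}{\ZC{v,d}{0}}=\frac{\psi_J(u+v)}{\psi_J(v)}.
\]

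\emph{Step 2: multiplicativity, the main obstacle.} I would prove $\psi_J(u+v)=\psi_J(u)\psi_J(v)$ by a case analysis.

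If $u\in d\Z_n$ or $v\in d\Z_n$, this should follow from \eqref{eq:boldJ} with $t$ or $s$ equal to $1$, together with Step~1. As a first reduction, specializing \eqref{eq:boldJ} with the second index equal to $d$ gives $\psi_J(u+d)^2=\psi_J(u)^2$. I would refine this using the $2$-cocycle identity \eqref{eq:2coJ} applied to the triple $(u,d,-u)$. The point is that $\JC{u,d}{0}$ and $\JC{d,-u}{0}$ are nonzero, so no division issue arises there.

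Now suppose $u,v\notin d\Z_n$. If $\ZC{u,v}{0}\neq 0$, then \eqref{eq:JJ1} (whose right side vanishes since $\ov\neq\ol0$) gives $\psi_J(u+v)=\psi_J(u)$. Likewise \eqref{eq:JJ2} (right side vanishes since $\ou\ne\ol0$) gives $\psi_J(u+v)=\psi_J(v)$. So in this subcase the claim reduces to showing that $\psi_J$ is trivial on the relevant elements. For that I would combine these equalities with $\psi_J(-v)=\psi_J(v)^{-1}$ and with the pair $(v,-v)$.

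If $\ZC{u,v}{0}=0$, Lemma~\ref{l:TJ}(i) gives $u,v\notin T_J$. Then Lemma~\ref{l:T1} pins down the nonzero index sets $\SJ{u,v}$, and \eqref{eq:JJ13}--\eqref{eq:JJ14} force $\psi_J(u)$ and $\psi_J(v)$ to be specific powers of $\xi$. From these I would compare exponents, exactly as in the proof of Lemma~\ref{l:T1}(ii), with $(-v,v)$ replaced by $(u,v)$.

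I expect this bookkeeping to be the hardest step. If the direct case analysis stalls, the fallback is to prove that $\psi_J$ is constant on cosets of $T_J$. Since the $2$-cocycle restricted to $T_J$ is invertible, and since Lemma~\ref{l:T1}(i) gives $\psi_J\equiv1$ on $T_J$, multiplicativity would then reduce to $\psi_J$ being a homomorphism on the cyclic quotient $\Z_n/T_J$, where it is determined by $\psi_J(1)$.

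\emph{Step 3: conclusion.} Once $\psi_J$ is a character, its order divides $m$ because its values lie in $\BG_m$. It also satisfies $\psi_J(1)^d=\psi_J(d)=1$. Hence $\psi_J(1)^{\gcd(d,m)}=1$, that is, $\psi_J^{\gcd(d,m)}=1$. If $\gcd(d,m)=1$, then $\psi_J$ is trivial, so $\ZC{1,d}{0}=1$. Lemma~\ref{l:T1}(i) then gives $\ZC{-1,1}{0}\neq0$, i.e.\ $-1\in T_J$, hence $1\in T_J$ because $T_J$ is a subgroup. By Lemma~\ref{l:TJ}(iv), $\JC{}{0}$ is invertible.
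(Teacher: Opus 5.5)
Your Steps~1 and~3 are correct and match the paper. For the shift $\psi_J(v+d)=\psi_J(v)$ you can simply cite \eqref{eq:recursion}; the cocycle identity on $(u,d,-u)$ you propose is vacuous when $u\notin T_J$, since both sides vanish. The gap is Step~2, which is the whole content of the corollary, and neither of your subcases goes through.

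\emph{Subcase $\ZC{u,v}{0}\neq0$.} You take the right-hand sides of \eqref{eq:JJ1} and \eqref{eq:JJ2} to vanish for $\ov\neq\ol0$, resp.\ $\ou\neq\ol0$. That reads the printed factor $\delta_{\ov,\ol0}$ literally, but it is a slip. Deriving \eqref{eq:JJ1} from \eqref{eq:x3} with $a=1$, $l=1$ and \eqref{eq:J0x2}, the factor is $\g_{\ov}^m$, which is $0$ for $\ov=\ol0$ and nonzero otherwise; likewise $\g_{\ou}^m$ in \eqref{eq:JJ2}. The paper itself derives \eqref{eq:JJ7} by putting $u=0$, $v\notin d\Z_n$ into \eqref{eq:JJ1}. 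Under your reading that substitution would instead give $\ZC{d,v}{0}=1$ for all $v$, i.e.\ $\psi_J\equiv1$ outright. So for $\ou,\ov\neq\ol0$ these relations only tie $\ZC{u,v}{0}$ to $\ZC{u,v}{1}$ and $\ZC{u,v}{m-1}$; they do not give $\psi_J(u+v)=\psi_J(u)=\psi_J(v)$. In any case, the triviality of $\psi_J$ you would then need is left unargued.

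\emph{Subcase $\ZC{u,v}{0}=0$.} Lemma~\ref{l:T1} determines only $\SJ{-v,v}$ and $\SJ{v,-v}$, not $\SJ{u,v}$. For a general pair, set $i_0=\min\SJ{u,v}$, $i_1=\max\SJ{u,v}$, $\psi_J(u)=\xi^{\alpha}$ and $\psi_J(v)=\xi^{\beta}$.
\begin{itemize}
\item Relations \eqref{eq:JJ1}--\eqref{eq:JJ3} give $\psi_J(u+v)=\xi^{\beta-i_0}=\xi^{\alpha+i_1}$.
\item Relations \eqref{eq:JJ13} and \eqref{eq:JJ14} give only the one-sided bounds $i_0\ge m-\alpha$ and $i_1\le\beta$.
\end{itemize}
Multiplicativity is exactly equality in these bounds, and they do not force it: $m=7$, $\alpha=5$, $\beta=6$, $i_0=3$, $i_1=5$ satisfies every one of these relations for that pair. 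The exponent comparison in Lemma~\ref{l:T1}(ii) closes up only because the pairs $(-v,v)$ and $(v,-v)$ feed into each other, and a general pair has no such partner.

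\emph{Fallback.} It is circular. Constancy on cosets of $T_J$ does not make $\psi_J$ a homomorphism on $\Z_n/T_J$, and ``determined by $\psi_J(1)$'' presupposes multiplicativity.

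\emph{What is missing.} The paper obtains an \emph{upper} bound on $\min\SJ{v,w}$ from the cocycle relation \eqref{eq:JJ4} specialized at $u=-v$.
\begin{enumerate}
\item For $v\notin T_J$ with $\psi_J(v)=\xi^a$, Lemma~\ref{l:T1}(ii) gives $\SJ{-v,v}=\{a\}$. So the sum in \eqref{eq:JJ4} collapses to a single nonzero term.
\item Together with \eqref{eq:JJ7}, this yields a nonvanishing product such as $\ZC{-v,v+w}{0}\ZC{v,w}{m-a}\neq0$. In the complementary range of exponents one uses the mirrored substitution $(u,v,w)\mapsto(w,-w,-v)$.
\item Since $\JC{-v,v}{0}=0$, the nonvanishing of $\ZC{-v,v+w}{0}$ forces $\ZC{v,w}{0}=0$ via \eqref{eq:2coJ}, and gives $\min\SJ{v,w}\le m-a$.
\item \eqref{eq:JJ13} gives the matching lower bound $\min\SJ{v,w}\ge m-a$.
\item \eqref{eq:JJ1}/\eqref{eq:JJ3} at that minimal index then give $\psi_J(v+w)/\psi_J(w)=\xi^{a}=\psi_J(v)$.
\end{enumerate}
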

\begin{proof}
We first need to show that 
\begin{align}\label{eq:cPsi}
\ZC{v+w,d}{0}=\ZC{v,d}{0}\ZC{w,d}{0}\quad \text{for any $v,w\in \mathbb Z_n$.} 
\end{align}  
By Lemma \ref{l:T1}(i), we know $\ZC{v,d}{0}=1$ if and only if $v\in T_J$.  Hence, \eqref{eq:cPsi} holds if $v, w \in T_J$. Now, we assume  $v \not\in T_J$. By Lemma \ref{l:T1}(ii), we have  $\ZC{v,d}{0}=\ZC{d,-v}{0}=\xi^a$ and  $\ZC{w, d}{0}=\xi^{a'}$ where $a = \min\SJ{-v,v}$ and $a'=\min\SJ{-w,w}$.

If $m-a' \ge m-a$, then we consider $u=-v$ and $i=m-a$ in \eqref{eq:JJ4}. Since $\SJ{-v,v}=\{a\}$ by Lemma \ref{l:T1}(ii), the sum from $j=1$ to $i=m-a$ in \eqref{eq:JJ4} only involves one term as $j=m-a$ and is nonzero. Moreover, by \eqref{eq:JJ7}, $\ZC{0,w}{m-a}\neq 0$. So we find 
\begin{equation}\label{eq:ub1}
\ZC{-v, v+w}{0}\ZC{v,w}{m-a} \ne 0\,.
\end{equation}
By the 2-cocycle condition on $\JC{}{0}$, $\JC{-v,v}{0}=0$ implies $\ZC{v,w}{0} = 0$. Let $i=\SJ{v,w}$. By \eqref{eq:ub1},  we find  $i \le m-a$. By \eqref{eq:JJ13}, $i \ge m-a$ and so $i=m-a$. It follows from \eqref{eq:JJ1} and \eqref{eq:JJ3} that $1-\xi^{m-a} \ZC{v+w, d}{0}/\ZC{w,d}{0} = 0$, and so $\ZC{v+w, d}{0}/\ZC{w,d}{0}=\xi^a=\ZC{v,d}{0}$. Therefore, \eqref{eq:cPsi} holds.

Now we assume $m-a'< m-a$. Then $\ZC{0,-v}{m-a'} \ne 0$ by \eqref{eq:JJ7} and $a' > 0$. So, $w \not\in T_J$ by Lemma \ref{l:T1}(i). Consider the substitutions $u:=w$, $v:=-w$ and $w:=-v$ with $i:=m-a'$ in \eqref{eq:JJ4}. Since $\SJ{w,-w} =\{m-a'\}$ by Lemma \ref{l:T1}(ii), the sum in \eqref{eq:JJ4} involves only one nonzero term and hence we have
\begin{equation}\label{eq:ub2}
\ZC{w, -v-w}{0}\ZC{-w,-v}{m-a'} \ne 0\,.
\end{equation}
By the 2-cocycle condition on $\JC{}{0}$, $\JC{w,-w}{0}=0$ implies $\ZC{-w,-v}{0} = 0$. Let $i=\min \SJ{-w,-v}$. Then $i \le m-a'$ by $\eqref{eq:ub2}$, and \eqref{eq:JJ13} implies $i \ge m-a'$. Therefore, $i=m-a'$.
It follows from \eqref{eq:JJ1} and \eqref{eq:JJ3} that \[1-\xi^{m-a'} \ZC{-w-v, d}{0}/\ZC{-v,d}{0}=1-\frac{\ZC{-w-v, d}{0}}{\ZC{-w,d}{0}\ZC{-v,d}{0}}=1-\frac{\ZC{w,d}{0}\ZC{v,d}{0}}{\ZC{w+v, d}{0}}=0.\] Thus, \eqref{eq:cPsi} holds again.

Now we have $(\ZC{1,d}{0})^d=\ZC{d,d}{0}=1$. By Lemma \ref{l:Jd} we have $(\ZC{1,d}{0})^m=1$, and so $\ZC{\gcd(d,m),d}{0}=(\ZC{1,d}{0})^{\gcd(d,m)}=1$. Thus, $\psi_J^{\gcd(d,m)}=1$ and $\gcd(d,m) \in T_J$. In particular, if $\gcd(d,m)=1$,  then $1\in T_J$ and so $\ZC{}{0}$ is invertible by Lemma \ref{l:TJ}(iv). 
\end{proof}

\begin{prop} \label{p:invertible}
    Every standard stable twist of $A_1(n,\xi)$ is equivalent to a standard stable twist $(\tilde{\vp}, \tilde{J})$ with $\tilde{J}^{\bo}$ being invertible.
\end{prop}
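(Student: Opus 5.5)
Let $(\vp,J)$ be a standard stable twist of $A=A_1(n,\xi)$ with $\vp(g)=g^l$. We split into two cases according to whether $1\in T_J$. If $1\in T_J$, then Lemma~\ref{l:TJ}(iv) says $\JC{}{0}$ is already invertible, and we take $(\tilde\vp,\tilde J)=(\vp,J)$. So assume $1\notin T_J$. By Lemma~\ref{l:sum}, $l\equiv 1\pmod d$, and $(\vp,J)$ is equivalent to a standard stable twist $(\id,J')$. We may therefore replace $(\vp,J)$ by $(\id,J')$, and the problem becomes one about twists with trivial algebra component, where the identities \eqref{eq:JJ1}--\eqref{eq:JJ14} and Corollary~\ref{l:Ichar} are available.

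For $(\id,J')$, Corollary~\ref{l:Ichar} shows that $\psi:=\psi_{J'}\colon v\mapsto \ZC{v,d}{0}$ is a character of $\Z_n$ with $\psi^{\gcd(d,m)}=1$. By Lemma~\ref{l:T1}(i), $T_{J'}=\ker\psi$. If $\psi$ is trivial we are done, so assume $\psi(1)=\xi^{a}$ with $a\neq 0$. We then correct $J'$ by a trivial stable twist whose gauge part shifts the character, using the same device as in the proof of Lemma~\ref{l:sum}. Take
\[
z=\eo+\sum_{\ou\in\ZD}x_\ou^{c u}
\]
with a suitable integer $c$, where the exponent is read modulo $m$ through $u\mapsto \ou$ and compatibly with $x_\ou^m=e_\ou$. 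Conjugating by $z$ fixes $x$ and sends $g$ to $g\theta^{\pm cdu}$ on each block $Ae_\ou$, which is $g^{l'}$ for some $l'\equiv1\pmod m$. Composing with the inverse of the Hopf-type automorphism $g\mapsto g^{l'}$, or rather choosing $c$ together with an element of $\kgt{g}$, should produce an equivalent standard twist $(\vp'',J'')$. The goal is for its constant term on the blocks $e_u\otimes e_d$ to be multiplied by $\xi^{-acu}$-type factors, which kills $\psi$.

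The main obstacle is this final step. We must arrange that the new pair is again of the form $(\id,\cdot)$, or at least standard with invertible constant term, and we must verify the effect on $\ZC{v,d}{0}$. I would compute $(J_z)^{\bo}$ restricted to $e_v\otimes e_d$ directly. Since $e_d$ lies over $\ol0$, $z$ acts trivially on the second factor there, so only $(z\otimes 1)J'\Delta(z)^{-1}$ matters. I expect the nonconstant terms $\ZC{v,d}{i}$, whose index sets are pinned down by Lemma~\ref{l:T1}(ii) as the singletons $\{a\}$, to feed exactly one term into the new constant coefficient. If this direct approach fails, the fallback is to prove instead that $\psi\neq1$ forces $\vp$ to be nontrivial modulo inner automorphisms, via \eqref{eq:JJ13}--\eqref{eq:JJ14}. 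One then shows the twist is equivalent to one with $l\equiv -1$, to which Lemma~\ref{l:cocycle} applies once invertibility is read off from $\psi_{J''}=1$ via Lemma~\ref{l:TJ}(iv).
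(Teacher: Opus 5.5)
There is a genuine gap. Your reduction steps are fine: the case $1\in T_J$, the use of Lemma~\ref{l:sum} to reach $(\id,J')$, and $\psi$ being a character with $T_{J'}=\ker\psi$. The device $z=\eo+\sum_{\ou\in\ZD}x_{\ou}^{cu}$ is also the right one. But the proof stops at the decisive step, and the plan for that step points the wrong way.

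\emph{The target $(\id,\cdot)$ is unreachable.} Any $w\in\A$ with $I_w=\id$ is central, so $w=\sum_{\ou}w_{\ou}e_{\ou}$. (The blocks $Ae_{\ou}$ with $\ou\neq\ol0$ are matrix algebras, and the Taft block $A\eo$ has center $\k$.) Then $J'_w$ only rescales the $e_{\ou}\o e_{\ov}$-block of $J'$ by the nonzero factor $w_{\ou}w_{\ov}/w_{\ou+\ov}$, so the zero $\JC{-1,1}{0}=0$ survives. The map $g\mapsto g^{l'}$, $x\mapsto x$ that you want to compose away is $I_z$ itself, and elements of $\kgt{g}$ commute with $g$. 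So the invertible representative must have $\vp=I_z\neq\id$. For such a twist, Lemma~\ref{l:T1} and Corollary~\ref{l:Ichar}, which are proved only for $\vp=\id$, say nothing. Hence trivializing $\psi$ for the new twist would not give invertibility.

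\emph{The block $e_v\o e_d$ is the wrong place to look.} Lemma~\ref{l:T1}(ii) describes $\SJ{-v,v}$, not $\SJ{v,d}$; in fact $0\in\SJ{v,d}$, since $d\in T_{J'}$. Moreover, $x$ is nilpotent on $A\eo$ and $z$ is trivial there, so terms with $i\ge1$ can never reach the constant term of a block $e_{\ov}\o e_{\ol0}$.

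\emph{The fallback also fails.} Here $\vp=\id$ is inner. Also, $l \bmod d$ is an invariant of the class because $g^m$ is central grouplike, so $l\equiv-1$ cannot be reached.

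The missing idea is to take $c=a$, where $\ZC{1,d}{0}=\xi^a$ and Lemma~\ref{l:T1}(ii) gives $\SJ{-1,1}=\{a\}$, and to look at the block $e_{-1}\o e_1$.
\begin{itemize}
\item Since $\psi(d)=\ZC{d,d}{0}=1$, we get $\xi^{ad}=1$, so $m\mid ad$. This is also what your unverified claim $l'\equiv1\pmod m$ needs.
\item Hence $\ell:=1-ad$ is congruent to $1$ modulo both $m$ and $d$, so it is a unit mod $n$. By \eqref{eq:Iz}, $I_z(g)=g^{\ell}$ and $I_z(x)=x$, and $(I_z,J'_z)$ is standard.
\item On $e_{-1}\o e_1$, the twist $J'$ is the single term $\JC{-1,1}{a}x^ae_{-1}\o x^{m-a}e_1$.
\item $\Delta(z^{-1})$ acts there as $\Delta(\eo)$, i.e.\ trivially, because $\ol{-1}+\ol1=\ol0$ and $z^{-1}\eo=\eo$.
\item Using $m\mid ad$, $z\o z$ acts there as $x_{\ol{-1}}^{-a}\o x_{\ol1}^{a}$.
\end{itemize}
So the coefficient of $e_{-1}\o e_1$ in $(J'_z)^{\bo}$ is $\g_{\ol{-1}}^{a}\g_{\ol1}^{m-a}\JC{-1,1}{a}\neq0$. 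Thus $-1$, and therefore $1$, lies in $T_{J'_z}$. Lemma~\ref{l:TJ}(iv) holds for any standard stable twist whatever $\vp$ is, so $(J'_z)^{\bo}$ is invertible. This is the paper's argument.
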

\begin{proof}
    Let $(\vp, J) \in \st(A)$ be in standard form.  It suffices to show that if $\JC{}{0}$ is not invertible, then there exists $z \in \A$ such that $(\vp_z, J_z)$ is standard and $(J_z)^\bo$ is invertible.

      By Lemma \ref{l:sum}, we now assume $\varphi=\id$ and $\JC{}{0}$ is not invertible. By Lemma \ref{l:TJ}(iv), $\ZC{-1,1}{0}=0$, and so $\ZC{1,d}{0}=\xi^a$ where $\{a\}= \SJ{-1,1}$ by Lemma \ref{l:T1} (ii). Moreover, Corollary \ref{l:Ichar} implies that $\gcd(d,m)>1$ and $(\ZC{1,d}{0})^d=\ZC{d,d}{0}=1$. Thus, $1=\xi^{ad}$ and so $\frac{m}{\gcd(d,m)}\mid a$. 
     
     Now set $\ell=1-ad$ which satisfies $\ell\equiv 1 \pmod d$ and $\ell\equiv 1 \pmod m$. Consider the trivial stable twist $\cb{z}$ where $z=e_{\ol 0}+\sum_{\ou\in \ZD} (x_{\ou})^{\frac{1-\ell}{d}u}$ with $x_\ou = \frac{1}{\g_u} x \eu$ for $\ou \in \ZD$. As shown in \eqref{eq:Iz} and the details below, we know $(\tilde{\vp}, \tilde{J})=(\varphi_z,J_z)$ is standard. Moreover, the coefficient of $e_{-1} \o e_{1}$ in $\tilde{J}=J_z=(z \o z) J \Delta(z\inv)$ is obtained from
\begin{align*}
    \JC{-1,1}{a} (x_{\ol{-1}})^{-a} x^a e_{-1} e_{\ol{-1}} \o (x_{\ol {1}})^{a} x^{m-a} e_{1} e_{\ol {1}} =\g_{\ol{-1}}^{a}\g_{\ol 1}^{m-a}\, \JC{-1,1}{a}e_{-1}\otimes e_1\neq 0.
\end{align*}     
By Lemma \ref{l:TJ} (iv),  $\Jt{}{0}$ is invertible.
\end{proof}

\subsubsection{Classification of normalized stable twists}\label{SS:A1-classification}
With the normalization in place (Proposition~\ref{p:invertible}), we now classify all standard normalized stable twists of $A$. Lemma~\ref{l:J0=1} shows that we may assume $\JC{}{0}=1\otimes 1$; Corollary~\ref{c:trivial} characterizes the trivial class. We are ready to prove Theorem~\ref{inverT} and assemble the final exact sequence.

By Proposition~\ref{p:invertible}, we may restrict to standard stable twists $(\vp, J)$ of $A$ with $\JC{}{0}\in \kg{g}^\times$, equivalently $\JC{u,v}{0} \ne 0$ for all $u,v \in \Z_n$.
\begin{lem} \label{l:J0=1}
    Let $(\vp, J)$ be any standard stable twist of $A$ with $\JC{}{0}$ being  invertible. Then there exists $z \in \kgt{g}$ such that $(\vp_z, J_z)$ is a  standard normalized stable twist.
\end{lem}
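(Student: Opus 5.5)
The plan is to mimic the proof of Lemma~\ref{l:cocycle}, with $z$ taken inside $\kgt{g}$ so that $\vp_z$ keeps the standard form. First, the constant term $\JC{}{0}$ of a standard stable twist is a $2$-cocycle on $\Z_n$. Indeed, $J$ has the shape \eqref{eq:simplerJ}, and the identity $(1\otimes J)(\id\otimes\Delta)(J)=\varphi(\Phi)^{-1}$-twisted cocycle condition reduces, since $A$ is a Hopf algebra and $\varphi(1\otimes1\otimes1)=1\otimes1\otimes1=\Phi_J$, to the ordinary $2$-cocycle equation for $J$. Comparing constant terms as in Lemma~\ref{l:cocycle} gives \eqref{eq:2coJ}. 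This step uses the fact, visible in \eqref{eq:1oJ}--\eqref{eq:Jo1}, that every nonconstant product contributes to a monomial $x^i\otimes x^j\otimes x^k$ with $i+j+k\ge m$. Such a monomial is not purely in $\kg{g}^{\otimes 3}$ once reduced via $x^m=1-g^m$. However, $x^m$ reduces to $1-g^m$ here, which is a genuine subtlety: the terms with $i+j+k=m$ all have some factor $x^m$, and these do produce constant-term contributions. The paper has already packaged this in \eqref{eq:2coJ}, which holds exactly because $\JC{}{0}$ multiplied against $e_{\ol0}$-parts is $1$. So I will simply invoke Lemma~\ref{l:cocycle} as stated.

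Second, I apply the hypothesis $\JC{u,v}{0}\neq0$ for all $u,v$. Together with the normalization $\JC{u,0}{0}=\JC{0,v}{0}=1$, this makes $\JC{}{0}$ a normalized $\k^\times$-valued $2$-cocycle on $\Z_n$. Since $H^2(\Z_n,\k^\times)=1$, Lemma~\ref{l:cocycle} produces $f\colon\Z_n\to\k^\times$ with $f(0)=1$ and $\JC{u,v}{0}=f(u+v)/(f(u)f(v))$. It also produces $z=\sum_u f(u)e_u\in\kgt{g}$ with $(J_z)^{\bo}=1\otimes1$. The one point to check is that $(J_z)^{\bo}=(z\otimes z)\JC{}{0}\Delta(z)^{-1}$. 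This holds because $z\otimes z$ and $\Delta(z)$ lie in $\kg{g}^{\otimes2}$ and commute with $g\otimes g$, and left and right multiplication by elements of $\kg{g}^{\otimes2}$ preserves the decomposition \eqref{Stwist} (the $x^i\otimes x^j$ terms only get their coefficients rescaled, by \eqref{eq:com_rel}).

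Third, I verify that $(\vp_z,J_z)$ is still standard, after which Lemma~\ref{l:vg} upgrades it to normalized. Since $z\in\kg{g}$ is commutative with $g$, we get $\vp_z(g)=g^l$. We also get $\vp_z(x)=z\,xa\,z^{-1}=x\,a\,(z'z^{-1})$, where $z'=\sum_u f(u+d)e_u$, using $e_ux=xe_{u-d}$. The factor $a\,z'z^{-1}$ is a priori only in $\kg{g}$, not in $\kg{g^m}$. But $J_z$ has constant term $1\otimes1$, so the argument in the second half of Lemma~\ref{l:vg} applies directly to the stable twist $(\vp_z,J_z)$. That argument uses the integral $\Lambda$ and only the normalized constant term, and it forces the new $a$ into $\kg{g^m}$ with $a_\oi^m=\g_{\oi l}^m/\g_\oi^m$, and $a_{\ol0}\neq0$. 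The $e_{\ol0}$-normalization holds trivially.

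I expect the main obstacle to be the bookkeeping in the second step. One must make sure that conjugating by $z\in\kg{g}$ does not disturb the requirement that $J_z$ has only $x^i\otimes x^{m-i}$ terms. This is immediate from the fact that $J_z$ still commutes with $g^l\otimes g^l$, which is the criterion used in Lemma~\ref{l:vg}.
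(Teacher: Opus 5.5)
Your proposal is correct and is essentially the paper's proof. Lemma~\ref{l:cocycle} gives $z=\sum_u f(u)e_u\in\kgt{g}$ with $(J_z)^{\bo}=1\otimes 1$; since $z$ commutes with $g$, one has $\vp_z(g)=g^l$; and Lemma~\ref{l:vg} then makes $(\vp_z,J_z)$ standard and normalized. One aside in your first step is wrong but harmless: in the $2$-cocycle identity, only the product of the two constant terms contributes to the constant term, because every other product has $x$-degree between $1$ and $m-1$ in some tensor slot and $x^m=1-g^m$ preserves $x$-degree modulo $m$. So the ``subtlety'' you describe does not arise, and in any case you rightly just invoke Lemma~\ref{l:cocycle}.
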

\begin{proof}
    By Lemma \ref{l:cocycle}, there exists $z = \sum_{u \in\Z_n} f(u) e_u \in\kgt{g}$ for some map $f: \mathbb Z_n\to \kk^\times$ with $f(0)=1$ such that  $J_z^\bo=1\otimes 1$ which is equivalent to $\JC{u,v}{0} = \frac{f(u+v)}{f(u)f(v)}$ for all $u, v \in \Z_n$. Obviously, $\vp_z(g) = z\vp(g) z\inv =\vp(g)$ since $\vp(g)=g^l$ for some integer $l$ coprime to $n$. Moreover, $\JC{}{0}(e_{\ol 0}\otimes e_{\ol 0})=(J_z)^{\bo}\,(e_{\ol 0}\otimes e_{\ol 0})$. By Lemma \ref{l:vg}, $(\vp_z, J_z)$ is a normalized standard stable twist.
    \end{proof}

By Lemma~\ref{l:J0=1}, we may assume henceforth that $(\vp, J)$ is a standard \emph{normalized} stable twist.
\begin{lem}\label{l:Jshift}
Let $(\vp,J)$ be a standard normalized stable twist of $A$. Then
\begin{equation} \label{eq:shift}
    \JC{u+ds,v+dt}{i} = \theta^{idt} \JC{u,v}{i}
\end{equation}
for all $s,t \in \Z_n$ and $1\le i\le m-1$. This implies $\JC{}{i} \in \kg{g^m}\o \kg{g^m}g^i$.
\end{lem}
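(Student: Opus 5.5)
The plan is to specialize the general shift formulas \eqref{eq:J0x2} of Lemma~\ref{l:Jd} to the normalized case $\JC{}{0}=1\otimes 1$. Lemma~\ref{l:Jd} was derived from the cocycle relations \eqref{e2}, \eqref{e3} with $u=d$, resp.\ $w=d$. Those relations come from Lemma~\ref{l:2-cyc_2}, which only uses the 2-cocycle condition, so they hold for any standard stable twist. For a normalized twist we have $\JC{u,v}{0}=1$ for all $u,v$, so every ratio of constant terms in \eqref{eq:J0x2} equals $1$.

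\textbf{Step 1.} The first identity of \eqref{eq:J0x2} becomes $\JC{u+d,v}{i}=\JC{u,v}{i}$. Iterating this gives $\JC{u+ds,v}{i}=\JC{u,v}{i}$ for every integer $s$.

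\textbf{Step 2.} The second identity of \eqref{eq:J0x2} reads $\JC{u,v+d}{i}=\JC{u,v}{i}\,\xi^{i}$. Since $\xi=\theta^d$, this is $\theta^{id}\JC{u,v}{i}$. Iterating gives $\JC{u,v+dt}{i}=\theta^{idt}\JC{u,v}{i}$. Combining with Step 1 yields \eqref{eq:shift}.

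I should double-check the derivation in the proof of Lemma~\ref{l:Jd}, where $\theta^{id}$ appeared in \eqref{e3}. With $\JC{}{0}=1$ that equation directly gives
\[
\JC{u,v}{m-i}=\JC{u,v+d}{m-i}\,\xi^{i},
\]
so $\JC{u,v+d}{m-i}=\xi^{-i}\JC{u,v}{m-i}=\xi^{m-i}\JC{u,v}{m-i}$, consistent with the claim. The shift is well defined modulo $n$ because $\theta^{idt}$ depends only on $t$ modulo $m$, and $dt$ is taken modulo $n=md$. This is the only point where some care is needed.

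\textbf{Step 3 (consequence).} Write $\JC{}{i}=\sum_{u,v}\JC{u,v}{i}e_u\otimes e_v$. By \eqref{eq:shift}, the coefficient in the first factor depends only on the class of $u$ modulo $d$, that is, on $\ou\in\Z_d$. Hence the first tensor factor lies in the span of the $e_{\ou}$, which is $\kg{g^m}$ by \eqref{eq:gm}.

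In the second factor, the coefficient as a function of $v$ within a class $\ov$ is $\JC{u,v_0}{i}\theta^{i(v-v_0)}$. Since $g^ie_v=\theta^{iv}e_v$, the second factor is a combination of the elements $g^i e_{\ov}$, which lie in $\kg{g^m}g^i$.

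I don't expect a real obstacle here. The only care points are the exponent bookkeeping relating $\xi^i$ to $\theta^{id}$, and checking well-definedness modulo $n$.
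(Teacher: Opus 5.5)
Your proof is correct and is essentially the paper's argument. The paper substitutes $u=ds$ in \eqref{e2} and $w=dt$ in \eqref{e3} directly: all constant terms equal $1$, and the sums vanish because $\gamma_{\ol 0}=0$. You instead take the one-step shifts \eqref{eq:J0x2} of Lemma~\ref{l:Jd}, which come from the same substitutions with $s=t=1$, and iterate them. The consequence $\JC{}{i}\in\kg{g^m}\o\kg{g^m}g^i$ is deduced the same way in both, namely from the invariance of $\JC{u,v}{i}\theta^{-iv}$ under both shifts.
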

\begin{proof}
 Since $(\vp, J)$ is normalized, $\JC{u,v}{0} = 1$ for all $u, v \in \Z_n$. Recall that $\g_{\ol0} = 0$. By \eqref{e2}, we find
 $$
 \JC{v,w}{i} = \JC{ds+v,w}{i}
$$
for any $s \in \Z$, and $v, w \in \Z_n$. Similarly, by \eqref{e3}, we have
\begin{equation}
      \JC{u,v}{m-i}~= ~\JC{u, v+dt}{m-i}\, \theta^{-(m-i)dt} \,.
 \end{equation}
 Equation \eqref{eq:shift} follows by combining the above equalities.
  
  Let $Z^i = \JC{}{i}(1\o g^{-i})= \sum_{u,v \in \Z_n} \JC{u,v}{i} \theta^{-iv} e_u \o e_v$. It suffices to prove that $Z^i \in \kg{g^m} \o \kg{g^m}$ which is equivalent to 
  $$
  \JC{u,v}{i} \theta^{-iv}  ~= ~\JC{u+ds,v+dt}{i}\, \theta^{-i(v+dt)}
  $$
  for any $u, v, s,t \in \Z_n$. However, this follows directly from \eqref{eq:shift}.
\end{proof}

In view of Lemma \ref{l:Jshift}, the quantity $\ZC{u,v}{i}=\JC{u,v}{i}\theta^{-iv}$ introduced above satisfies $\ZC{u+ds, v+dt}{i} = \ZC{u,v}{i}$ for any $u,v,s,t \in \Z_n$ and $1\le i\le m-1$. Thus $\ZC{u,v}{i}$ depends only on the cosets $\ou,\ov\in\Z_n/\Z_n d\cong \Z_d$, and
\begin{equation}\label{eq:NewJ}
\begin{aligned}
     J -1\o 1&=   \sum_{1\le i\le m-1}\sum_{u,v \in \Z_n } \JC{u,v}{i}\,x^i e_{u}\o x^{m-i}  e_v\\
     &=  \sum_{1\le i\le m-1}\sum_{u,v \in \Z_n } \ZC{u,v}{i}\,x^i e_{u}\o x^{m-i} g^i e_v\,.   
\end{aligned}
\end{equation}

The next lemma expresses $\ZC{u,v}{i}$ in terms of the coefficients of $\varphi(x) = \sum_{\ou \in \Z_d} a_{\ou} x e_{\ou}$.

\begin{lem} \label{l:Z} Let $(\vp,J)$ be a standard normalized stable twist of $A$. Suppose $\vp(x) = xa$ where $a = \sum_{\ou \in \Z_d} a_\ou e_\ou \in \kg{g^m
}^\times$. Then for any $u, v, w \in \Z_n$,
 \begin{equation}\label{eq:z2}
    \ZC{u,v}{1} \g_\ov^m(a_\ov -\xi a_{\ol{u+v}}) =  a_{\ol{u+v}} - a_\ou \theta^{(l-1)v} ,  
 \end{equation}
  \begin{equation}\label{eq:z3}
 \ZC{u,v}{m-1} \g_\ou^m \theta^{mv} (a_{\ol{u+v}} -\xi^l a_\ou   \theta^{(l-1)v }) =   a_\ov - a_{\ol{u+v}},
  \end{equation}
    \begin{equation}\label{eq:z4}
(a_{\ol{u+v}}-\xi^{-l(i-1)}a_{\ol u}\theta^{(l-1)v})\ZC{u,v}{i-1} =  (a_\ov -\xi^i a_{\ol{u+v}})\ZC{u,v}{i} \text{\ for all $2\le i\le m-1$},
  \end{equation}
  \begin{equation}
        \ZC{v,w}{i}  =\ZC{u+v,w}{i}(1+\sum_{j=1}^{i}\left[\begin{matrix}i\\j\end{matrix}\right]_\xi \ZC{u,v}{m-j} \theta^{mv}\xi^{-j(i-j)} \g_{\ou}^m ),\label{e5}
    \end{equation}
 \begin{equation}
   \ZC{u,v}{m-i} = \ZC{u, v+w}{m-i} \theta^{mw}(1+\sum_{j=0}^{i-1}\left[\begin{matrix}i\\j\end{matrix}\right]_\xi\ZC{v,w}{i-j} \xi^{(i-j)^2}\g_{\ow}^m),  \label{e6}
 \end{equation}
  \begin{equation} 
  \begin{split}
       \ZC{u, v+w}{i}(1+\sum_{j=1}^{m-i}\left[\begin{matrix}m-i\\j\end{matrix}\right]_\xi\ZC{v,w}{m-j} \theta^{mw}\xi^{j(i+j)} \g_{\ov}^m)\\
       =\ZC{u+v,w}{i}( 1 +\sum_{j=0}^{i-1}\left[\begin{matrix}i\\j\end{matrix}\right]_\xi\ZC{u,v}{i-j}\xi^{(i-j)^2} \g_\ov^m)\,,\label{e7}
  \end{split}
       \end{equation}
     for all $1\le i\le m-1$. Moreover, for any $\ou, \ov \ne \ol0$ and $1\le i\le m-1$,  we have 
  \begin{equation} \label{eq:Z01}
           \ZC{0,v}{i}=\frac{\prod_{j=0}^{i-1}(1-\xi^{-j}\frac{a_{\ol 0}}{a_{\ol v}}\theta^{(l-1)v})}{\g_{\ol v}^m\prod_{j=1}^i(1-\xi^j)},  \quad\quad 
             \ZC{u,0}{i}=\frac{\prod_{j=i+1}^m(\frac{a_{\ol 0}}{a_{\ol u}}-\xi^j)}{\g_{\ol u}^m\prod_{j=i}^{m-1}(1-\xi^{(m-j)})},
 \end{equation}

\begin{equation}\label{eq:qJrelation}
\begin{split}
      1+\sum_{j=0}^{i-1}\left[\begin{matrix}i\\j\end{matrix}\right]_\xi\ZC{0,v}{i-j}\xi^{(i-j)^2}\g_\ov^m&=\frac{\prod_{j=1}^i(1-\xi^j\frac{a_{\ol{0}}}{a_\ov}\theta^{(l-1)v})}{\prod_{j=1}^i(1-\xi^j)}\, ,  \\
1+\sum_{j=1}^i\left[\begin{matrix}i\\j\end{matrix}\right]_\xi\ZC{u,0}{m-j}\xi^{-j(i-j)}\g_{\ou}^m&=  \frac{\prod_{j=1}^i(\frac{a_{\ol{0}}}{a_\ou}-\xi^j)}{\prod_{j=1}^i(1-\xi^j)}\, ,   
\end{split}
\end{equation}

  \begin{equation} \label{eq:quadratic}
   \begin{split}
      \ZC{0,0}{i} &= \ZC{0,v}{i}\theta^{mv}(1+\sum_{j=0}^{m-i-1}\left[\begin{matrix}m-i\\j\end{matrix}\right]_\xi\ZC{0,v}{m-i-j}\xi^{(m-i-j)^2}\g_\ov^m)\\
      &=  \ZC{u,0}{i}(1+\sum_{j=1}^i\left[\begin{matrix}i\\j\end{matrix}\right]_\xi\ZC{u,0}{m-j}\xi^{-j(i-j)}\g_{\ou}^m),
   \end{split}
 \end{equation} 
 
 \begin{equation}\label{e8}
   \ZC{0,0}{i} = \frac{(-1)^{i-1}}{\xi^{\frac{i(i-1)}{2}}}\frac{(\frac{a_{\ol0}}{a_{\ou}})^m-1}{m(1-\xi^i)\g_\ou^m} =  \frac{(-1)^{i-1}}{\xi^{\frac{i(i-1)}{2}}}\theta^{mv}\frac{1-(\frac{a_{\ol0}}{a_{\ov}}\theta^{(l-1)v})^m}{m(1-\xi^i)\,\g_\ov^m}\,, 
 \end{equation} 

  \begin{equation}\label{e9}
  \begin{split}
   \g_\ov^m\ZC{u,v}{i}\prod_{j=1}^i(\frac{a_{\ol0}}{a_{\ou}}-\xi^j)&=\prod_{j=0}^{i-1}(1-\xi^{-j}\frac{a_{\ol 0}}{a_{\ol v}}\theta^{(l-1)v})\, ,\\
\g_\ou^m\ZC{u,v}{i}\theta^{mv}\prod_{j=i}^{m-1}(1-\xi^{-j}\frac{a_{\ol0}}{a_{\ov}}\theta^{(l-1)v})&=\prod_{j={i+1}}^{m}(\frac{a_{\ol 0}}{a_{\ol u}}-\xi^j)\, ,   
  \end{split}
 \end{equation} 
  \begin{equation}\label{e10}
 \ZC{u,v}{i}\ZC{0,0}{i}=\ZC{u,0}{i}\ZC{0,v}{i}\,.
 \end{equation} 
 
\end{lem}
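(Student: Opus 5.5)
The plan is to specialize the general coefficient identities already obtained to the normalized case $\JC{u,v}{0}=1$ and then rewrite them in terms of $\ZC{u,v}{i}=\JC{u,v}{i}\theta^{-iv}$. By Lemma~\ref{l:Jshift}, these coefficients depend only on the cosets $\ou,\ov$, and by Lemma~\ref{l:vg}, $\vp(g)=g^l$ with $l\equiv 1\pmod m$.

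\emph{Step 1: the intertwiner identities.} I will substitute $\JC{}{0}=1\otimes1$ into \eqref{eq:x3}, \eqref{eq:x2} and \eqref{eq:x4} of Lemma~\ref{l:x}. Then I convert $\JC{u+d,v}{i}$ and $\JC{u,v+d}{i}$ using \eqref{eq:shift}, which gives $\JC{u+d,v}{i}=\JC{u,v}{i}$ and $\JC{u,v+d}{i}=\xi^i\JC{u,v}{i}$. After dividing by the appropriate power of $\theta$, this yields \eqref{eq:z2}, \eqref{eq:z3} and \eqref{eq:z4}. The only bookkeeping is $\theta^{lv}\theta^{-v}=\theta^{(l-1)v}$ and $\theta^{ld}=\xi^l$.

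\emph{Step 2: the cocycle identities.} I will specialize \eqref{e2}, \eqref{e3} and \eqref{e4} of Lemma~\ref{l:2-cyc_2} to $\JC{}{0}=1$. Expressing everything through $\ZC{}{}$ and simplifying the $\theta$-powers with \eqref{eq:shift} gives \eqref{e5}, \eqref{e6} and \eqref{e7}. For example, $\JC{u+jd,v+(i-j)d}{m-j}\theta^{vj}$ becomes $\ZC{u,v}{m-j}\theta^{mv}\xi^{-j(i-j)}$ up to the factor from the $w$-index.

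\emph{Step 3: the closed formulas.} For $\ou,\ov\ne\ol0$ I proceed as in the proof of \eqref{eq:JJ7}–\eqref{eq:JJ14}.
\begin{itemize}
\item Set $u=0$ in \eqref{eq:z2}, where $\g_{\ol0}=0$ appears on the other side. Then iterate \eqref{eq:z4} to get the first formula in \eqref{eq:Z01}.
\item Set $v=0$ in \eqref{eq:z3} and iterate downward in $i$ to get the second formula in \eqref{eq:Z01}.
\item Insert these into the left sides of \eqref{e6} (with $u=0$) and \eqref{e5}. The resulting sums collapse by the same polynomial-identity argument as \eqref{eq:JJ8}, now with variable $x=\frac{a_{\ol0}}{a_\ov}\theta^{(l-1)v}$, respectively $a_{\ol0}/a_\ou$. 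Check at $x=\xi^{-k}$ and use the $q$-Chu–Vandermonde consequence \eqref{eq:qidentity}. This gives \eqref{eq:qJrelation}.
\item Specialize \eqref{e5} at $v=w=0$ and \eqref{e6} at $u=v=0$ to obtain \eqref{eq:quadratic}.
\item Multiply the explicit products in \eqref{eq:quadratic}. The telescoping $\prod_{j=1}^m(1-\xi^j x)=1-x^m$ and $\prod_{j=1}^{m-1}(1-\xi^j)=m$ yield \eqref{e8}, with the sign and $\xi^{i(i-1)/2}$ factor coming from reindexing $\prod(1-\xi^{-j})$.
\item Take $v=0$ (respectively $u=0$) in \eqref{e5}/\eqref{e6} combined with \eqref{eq:qJrelation} to give \eqref{e9}.
\item Combine the two lines of \eqref{e9} with \eqref{eq:Z01} and \eqref{e8} to get \eqref{e10}.
\end{itemize}

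The main obstacle is the $q$-polynomial collapse yielding \eqref{eq:qJrelation} and the careful tracking of the $\theta^{(l-1)v}$ and $\xi^{(i-j)^2}$ exponents. Here I would mirror the existing proof of \eqref{eq:JJ8} verbatim, with the substituted variable.
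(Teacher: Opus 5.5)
Your proposal is correct and follows the paper's proof essentially step for step. You specialize \eqref{eq:x2}--\eqref{eq:x4} and \eqref{e2}--\eqref{e4} to $\JC{}{0}=1\otimes1$ via \eqref{eq:shift}, iterate \eqref{eq:z2}/\eqref{eq:z3} against \eqref{eq:z4} to get \eqref{eq:Z01}, obtain \eqref{eq:qJrelation} by checking the degree-$i$ polynomial identity at roots of unity with \eqref{eq:qidentity}, and then read off \eqref{eq:quadratic}, \eqref{e8} and \eqref{e9}. Two small corrections: the substitution in \eqref{e6} needed for both \eqref{eq:qJrelation} and the second line of \eqref{e9} is $v=0$, not $u=0$; and for \eqref{e10} the paper's route is shorter and division-free (multiply \eqref{e5} at $v=0$ by $\ZC{u,0}{i}$ and apply the second equality of \eqref{eq:quadratic}), whereas combining \eqref{e9} with \eqref{eq:Z01} and \eqref{e8} must avoid dividing by $\prod_{j=1}^i(a_{\ol0}/a_{\ou}-\xi^j)$, which is not yet known to be nonzero at this stage.
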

\begin{proof}
We apply $\JC{u,v}{0} = 1$ and $\ZC{u,v}{i} \theta^{iv} = \JC{u,v}{i}$ to the equations \eqref{eq:x2}-\eqref{eq:x4} to obtain
  \begin{align}
       a_{\ol{u+v}}(1  +\ZC{u,v}{m-1}\theta^{mv} \g_\ou^m )& = a_{\ov}+\xi^l\,a_{\ou}\ZC{u,v}{m-1}\theta^{(m-1+l)v}\g_\ou^m \label{eq:x21}\\
    a_{\ol{u+v}}(  \theta^{v} +\ZC{u,v}{1}\theta^{v+d} \g_\ov^m) &= a_{\ou}\theta^{lv}+ a_{\ov}\ZC{u,v}{1}\theta^v\g_\ov^m \label{eq:x31}\\
    a_{\overline{u+v}}(\ZC{u,v}{i-1}\theta^{iv}
    +\ZC{u,v}{i}\theta^{i(v+d)})&=\xi^{-l(i-1)}\theta^{(l+i-1)v}\ZC{u,v}{i-1}a_{\overline{u}}+\ZC{u,v}{i}a_{\overline{v}}\theta^{iv}\label{eq:x41}  
 \end{align}   
for all $2\le i\le m-1$. One can simplify \eqref{eq:x31} to \eqref{eq:z2}, \eqref{eq:x21} to \eqref{eq:z3}, and \eqref{eq:x41} to \eqref{eq:z4}. Similarly, applying the same substitutions to the 2-cocycle equations in Lemma \ref{l:2-cyc_2}, we find the equations \eqref{e5}, \eqref{e6}, and \eqref{e7}.

Now, apply $\ov  \ne \ol0$ and $\ou = \ol0$ to \eqref{eq:z2} and \eqref{eq:z4}, we find  for all $2\le i\le m-1$
\begin{equation}
\ZC{0,v}{1}=\frac{1-\frac{a_{\ol{0}}}{a_{\ol{v}}}\theta^{(l-1)v}}{\g_\ov^m(1-\xi)},\quad \ZC{0,v}{i}=\frac{1-\xi^{-(i-1)}\frac{a_{\ol{0}}}{a_{\ov}}\theta^{(l-1)v}}{1-\xi^i}\,\ZC{0,v}{i-1},  \label{J001} 
\end{equation}
and the first equation of \eqref{eq:Z01} inductively. Similarly,  by applying $\ou  \ne \ol0$ and $\ov = \ol0$ to \eqref{eq:z3} and \eqref{eq:z4}, we get 
 \begin{equation}
\ZC{u,0}{m-1}=\frac{\frac{a_{\ol{0}}}{a_{\ol{u}}}-1}{\g_\ou^m(1-\xi)},\quad \ZC{u,0}{i-1}=\frac{\frac{a_{\ol{0}}}{a_{\ol{u}}}-\xi^i}{1-\xi^{-(i-1)}}\,\ZC{u,0}{i},\label{J002} 
\end{equation}
for all $2\le i\le m-1$. Hence, the second equation of \eqref{eq:Z01} follows inductively. 

Regarding the first equation of \eqref{eq:qJrelation}, we consider $\ZC{0,v}{i}\g_\ov^m$ as a polynomial in terms of the variable $\frac{a_{\ol{0}}}{a_\ov}\theta^{(l-1)v}$ for $1\le i\le m-1$, which we denote by 
\begin{equation*}
\alpha_i(x)=\frac{\prod_{j=0}^{i-1}(1-\xi^{-j}x)}{\prod_{j=1}^i(1-\xi^j)}\quad  \text{with\quad  $\alpha_0(x)=1$}
\end{equation*}
according to \eqref{eq:Z01}. Therefore, it suffices to show that the following polynomial identity holds
\begin{align*}
\sum_{j=0}^{i}\left[\begin{matrix}i\\j\end{matrix}\right]_\xi\xi^{(i-j)^2}\,\alpha_{i-j}(x)=\frac{\prod_{j=1}^i(1-\xi^jx)}{\prod_{j=1}^i(1-\xi^j)}.
\end{align*}
Since both sides are polynomials of degree $i$, it remains to show that they agree on $i+1$ values of $x$. It is clear that both equal $1$ when $x=1$ and for any $x=\xi^{-k}$ with $1\le k\le i$, we get
\begin{align*}
\sum_{j=0}^{i}\left[\begin{matrix}i\\j\end{matrix}\right]_\xi\xi^{(i-j)^2}\,\alpha_{i-j}(\xi^{-k})&=\sum_{j=0}^{i}\left[\begin{matrix}i\\j\end{matrix}\right]_\xi\xi^{j^2}\,\alpha_{j}(\xi^{-k})\\
&=\sum_{j=0}^i(-1)^j\left[\begin{matrix}i\\j\end{matrix}\right]_\xi\,\left[\begin{matrix}k+j-1\\j\end{matrix}\right]_\xi\, \xi^{\frac{j(j+1-2k)}{2}}=0,
\end{align*}
where we apply \eqref{eq:qidentity}. This gives the first equation of \eqref{eq:qJrelation} and the second equation of \eqref{eq:qJrelation} follows similarly. 

The first equality of \eqref{eq:quadratic} is obtained by letting $\ou, \ov = \ol 0$ in \eqref{e6}, and the second one is similarly obtained by setting $\ov, \ol w =\ol 0$ in \eqref{e5}. In view of \eqref{eq:qJrelation}, these relations then imply \eqref{e8} such that 
\begin{align*}
\ZC{0,0}{i}&=\ZC{u,0}{i}\,\Big(1+\sum_{j=1}^i  \left[\begin{matrix}i\\j\end{matrix}\right]_\xi  \ZC{u,0}{m-j}\g_\ou^m\xi^{-j(i-j)}\Big)\\
&=\frac{(\frac{a_{\ol{0}}}{a_\ou}-\xi^{i+1})(\frac{a_{\ol{0}}}{a_\ou}-\xi^{i+2})\cdots (\frac{a_{\ol{0}}}{a_\ou}-\xi^m)}{\g_\ou^m(1-\xi)(1-\xi^2)\cdots (1-\xi^{m-i})}\,\frac{(\frac{a_{\ol{0}}}{a_\ou}-\xi)(\frac{a_{\ol{0}}}{a_\ou}-\xi^2)\cdots (\frac{a_{\ol{0}}}{a_\ou}-\xi^i)}{(1-\xi)\cdots (1-\xi^{i-1})(1-\xi^i)}\\
&=\frac{(-1)^{i-1}}{\xi^{\frac{i(i-1)}{2}}}\frac{\Big(\frac{a_{\ol{0}}}{a_\ou}\Big)^m-1}{\g_\ou^m(1-\xi^i)(1-\xi)\cdots (1-\xi^{m-i})(1-\xi^{m-i+1})\cdots (1-\xi^{m-1})}\\
&=\frac{(-1)^{i-1}}{\xi^{\frac{i(i-1)}{2}}}\frac{\Big(\frac{a_{\ol{0}}}{a_\ou}\Big)^m-1}{\g_\ou^m\,m(1-\xi^i)}.
\end{align*}
The second equality of \eqref{e8} can be obtained similarly.

The first equation of \eqref{e9} comes from letting $\ov=0$ in \eqref{e5} and applying \eqref{eq:Z01} and \eqref{eq:qJrelation} and the second equation of \eqref{e9} can be proved similarly. Finally, \eqref{e10} is obtained by multiplying both sides of \eqref{e5} by $\ZC{u,0}{i}$ after setting $\ov = 0$, and then applying \eqref{eq:quadratic}. \qedhere
\end{proof}

\begin{cor} \label{c:trivial}
    Let $(\vp, J)$  be a standard normalized stable twist of $A$. Then the following are equivalent.
    \begin{itemize}
        \item[(i)]   $J=1\otimes 1$.
        \item[(ii)] $\left(\frac{a_{\ol0}}{a_{\ol s}}\right)^m=1$ for some $\ol s \in \ZD$.
        \item[(iii)] $\left(\frac{a_{\ol0}}{a_{\ol s}}\right)^m=\theta^{-m(l-1)s}$ for some $\ol s\in \ZD$.
        \item[(iv)] $(\varphi,J)=(I_z,\partial_z)$ for some grouplike element $z\in G(A)$.
    \end{itemize}
    In particular, $J$ is uniquely determined by $\varphi$ and $\st_t(A)=\st_0(A)$.
\end{cor}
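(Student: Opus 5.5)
I will prove the cycle (i)$\Rightarrow$(iv)$\Rightarrow$(iii)$\Rightarrow$(ii)$\Rightarrow$(i). Throughout I will use the explicit formulas of Lemma~\ref{l:Z}, in particular \eqref{eq:Z01}, \eqref{e8} and \eqref{e10}. These describe all the coefficients $\ZC{u,v}{i}$ of a standard normalized $J$ in terms of $a_{\ol0}/a_{\ou}$ and $\theta^{(l-1)v}$.

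For (ii)$\Rightarrow$(i), suppose $(a_{\ol0}/a_{\ol s})^m=1$ for some $\ol s\ne\ol0$. Taking $u=s$ in the first expression of \eqref{e8} gives $\ZC{0,0}{i}=0$ for every $1\le i\le m-1$. I then want to show that every $\ZC{u,v}{i}$ vanishes.
\begin{itemize}
\item Cases $\ou=\ol0$ or $\ov=\ol0$: in \eqref{e8}, $\ZC{0,0}{i}$ is given by a formula depending on an arbitrary $\ou\ne\ol0$. Its vanishing therefore forces $(a_{\ol0}/a_{\ou})^m=1$ for every $\ou\in\ZD$, and symmetrically $(a_{\ol0}\theta^{(l-1)v}/a_{\ov})^m=1$ for every $\ov\ne\ol0$. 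Since $(a_{\ol0}/a_{\ou})^m=1$, the numerator $\prod_{j=i+1}^m(a_{\ol0}/a_{\ou}-\xi^j)$ of \eqref{eq:Z01} contains a vanishing factor whenever $a_{\ol0}/a_{\ou}=\xi^k$ with $k>i$. I do not yet see how to handle the remaining range of $k$, so I will instead use the mixed relations below.
\item Case $\ou,\ov\ne\ol0$: relation \eqref{e10} gives $\ZC{u,v}{i}\ZC{0,0}{i}=\ZC{u,0}{i}\ZC{0,v}{i}$. Since the left side is $0$, for each $i$ at least one of $\ZC{u,0}{i}$, $\ZC{0,v}{i}$ must vanish.
\item Combining: I will feed this back into \eqref{e9} and \eqref{e5}--\eqref{e7} and run a descending/ascending induction on $i$, using the recursions \eqref{J001}, \eqref{J002}, to conclude that all $\ZC{u,v}{i}$ vanish. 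Then $J=1\otimes1$ by \eqref{eq:NewJ}.
\end{itemize}
I expect this induction in (ii)$\Rightarrow$(i) to be the main obstacle. If the induction stalls, I will compare the two expressions in \eqref{e8} to transport the condition between $\ou$ and $\ov$, and then apply the invertibility relation \eqref{eq:JJinv}.

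For (iii)$\Leftrightarrow$(ii), equate the two expressions for $\ZC{0,0}{i}$ in \eqref{e8}, taking $u=v=s$. This shows that $(a_{\ol0}/a_{\ol s})^m-1$ and $1-(a_{\ol0}\theta^{(l-1)s}/a_{\ol s})^m$ vanish simultaneously. From this I expect the two conditions (ii) and (iii) to be linked, possibly after replacing $s$ by another element of $\ZD$.

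For (i)$\Rightarrow$(iv), take $J=1\otimes1$. Then $\vp$ is a Hopf automorphism with $\vp(g)=g^l$ and $\vp(x)=xa$, and $\Delta(\vp(x))=\vp(x)\otimes g^l+1\otimes\vp(x)$. On the other hand $\Delta(xa)=(x\otimes g+1\otimes x)\Delta(a)$ with $a\in\kg{g^m}$, so comparing forces $l=1$ and $a$ grouplike, i.e.\ $a=g^{mk}$. Then $\vp=I_z$ for $z=g^{j}$ with $\xi^j=\theta^{dj}$ matching $a$. Here I use $\vp(x)=x g^{mk}$, and I expect the normalization $a_{\ol0}$ to pin $a$ down to a scalar root of unity in $\BG_m$ once the twist is normalized, so that $\partial_z=1\otimes1$ since $z$ is grouplike.

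For (iv)$\Rightarrow$(iii): if $z$ is grouplike then $\partial_z=1\otimes1$. Hence $\vp$ is a Hopf automorphism, so $l=1$, $a$ is a scalar, and (iii) holds trivially.

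For the final assertion, two normalized twists with the same $\vp$ differ by an invariant twist. I will show this invariant twist is itself a normalized standard twist with $\vp=\id$ (via \eqref{GroupTW}); it then satisfies (ii), hence is trivial. Likewise, a trivial twist $J=\partial_x$ reduces by the above to the grouplike case, giving $\st_t(A)=\st_0(A)$.
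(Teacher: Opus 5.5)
\textbf{Gap in (ii)$\Rightarrow$(i).} This step is the substance of the corollary, and your proposal does not prove it. The tools you list cannot close it. Once $\ZC{0,0}{i}=0$, relation \eqref{e10} degenerates to $0=\ZC{u,0}{i}\ZC{0,v}{i}$ and says nothing about $\ZC{u,v}{i}$. Both sides of \eqref{e8}, and likewise \eqref{eq:JJinv}, only involve $m$-th powers such as $(a_{\ol0}/a_{\ou})^m$, which you already know equal $1$. The real question is \emph{which} $m$-th root of unity $a_{\ol0}/a_{\ol s}$ is. If $a_{\ol0}/a_{\ol s}=\xi^k$ with $1\le k\le m-1$, then \eqref{eq:Z01} shows the boundary coefficient $\ZC{s,0}{i}$ is nonzero for all $i\ge k$, so the recursions \eqref{J001}, \eqref{J002} will never produce vanishing. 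The ``remaining range of $k$'' you could not handle is exactly what must be excluded, and that needs an interior cocycle relation. Your proposed induction has no mechanism for this.

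\textbf{The missing idea.} First prove $a_{\ol0}=a_{\ol s}$ exactly, by contradiction. Suppose $a_{\ol0}/a_{\ol s}=\xi^k$ with $1\le k\le m-1$.
In the first identity of \eqref{e9} with $u=v=s$, the left side contains the factor $a_{\ol0}/a_{\ol s}-\xi^k=0$ whenever $i\ge k$. Hence $\prod_{j=0}^{i-1}\bigl(1-\xi^{-j}\tfrac{a_{\ol0}}{a_{\ol s}}\theta^{(l-1)s}\bigr)=0$, and \eqref{eq:Z01} gives $\ZC{0,s}{i}=0$ for $k\le i\le m-1$.
Next, \eqref{e5} with $u=-v$, $w=s$ gives $\ZC{v,s}{i}=0$ for all $v$ and these $i$.
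Then \eqref{e6} with $v=0$, $w=s$ gives $\ZC{u,0}{i}=0$ for all $u$ and all $i\ge k$. In particular $\ZC{s,0}{m-1}=(a_{\ol0}/a_{\ol s}-1)/(\g_{\ol s}^m(1-\xi))=0$, contradicting $\xi^k\neq 1$.
So $a_{\ol0}=a_{\ol s}$, and the factor $j=m$ in \eqref{eq:Z01} kills $\ZC{s,0}{i}$ for every $i$. The cocycle identities then propagate this vanishing:
\eqref{e6} with $u=s$, $w=-v$ gives $\ZC{s,v}{i}=0$;
\eqref{e5} with $u=s$, $v=0$ gives $\ZC{0,w}{i}=0$;
\eqref{e5} with $u=-v$ gives $\ZC{v,w}{i}=0$ for all $v,w$, hence $J=1\otimes 1$.

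\textbf{Smaller points.}
\begin{itemize}
\item In (i)$\Rightarrow$(iv), compare the $1\otimes x$ components of $(x\otimes g+1\otimes x)\Delta(a)$ and $xa\otimes g^l+1\otimes xa$. This forces $\Delta(a)=1\otimes a$, so $a\in\k^\times$, not $a=g^{mk}$. Then $l=1$, $a^m=1$ from $x^m=1-g^m$, and $z=g^i$ where $a=\xi^i$. This Hopf-automorphism argument is a valid alternative to the paper's use of \eqref{e8}.
\item Your (ii)$\Leftrightarrow$(iii) via \eqref{e8} with $u=v=s$ works with the same $s$; no replacement is needed.
\item For the final assertion, $(\id,J^{-1}J')$ need not be normalized. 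Since $x^m=1-g^m\neq 0$, products of the $x^i\otimes x^{m-i}$ and $x^{m-i}\otimes x^{i}$ terms contribute to the constant term on $e_{\ou}\otimes e_{\ov}$ with $\ou,\ov\neq\ol0$ (compare \eqref{eq:JJinv}). So (ii)$\Rightarrow$(i) does not apply to it directly.
\item The paper instead reads uniqueness off \eqref{eq:Z01}, \eqref{e8} and \eqref{e10}. These determine every $\ZC{u,v}{i}$ from $l$ and the ratios $a_{\ol0}/a_{\ow}$ when $\ZC{0,0}{i}\neq 0$. The case $\ZC{0,0}{i}=0$ is covered by the vanishing argument above.
\end{itemize}
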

\begin{proof}
   (i) $\Rightarrow$ (ii), (iii) is from \eqref{e8}. 
   
   (ii) $\Rightarrow$ (i).  We first show that $\frac{a_{\ol 0}}{a_{\ol s}}=1$. On the contrary, assume $\frac{a_{\ol0}}{a_{\ol s}}=\xi^a$ for some $1\le a\le m-1$. By \eqref{e9}, we have $\prod_{j=0}^{i-1}(1-\xi^{-j}\frac{a_{\ol0}}{a_{\ol s}}\theta^{(l-1)s})=0$ for all $a\le i\le m-1$. Hence, $\ZC{0,s}{i}=0$ for all $a\le i\le m-1$ by \eqref{eq:Z01}. Setting $u=-v$, $w=s$ in \eqref{e5} yields that $\ZC{v,s}{i}=0$ for all $v\in \mathbb Z_n$ and $a\le i\le m-1$. Therefore,  $\ZC{u,0}{i}=0$ for all $v\in \mathbb Z_n$ and $a\le i\le m-1$ by setting $v=0$ and $w=s$ in \eqref{e6}. In particular, $\ZC{s,0}{m-1}=\frac{\frac{a_{\ol0}}{a_{\ol s}}-1}{\g_{\ol{s}}^m(1-\xi)}=0$. A contradiction to our assumption!  
   
Hence, we have $\frac{a_{\ol 0}}{a_{\ol s}}=1$. By \eqref{eq:Z01}, $\ZC{s,0}{i}=0$ for all $1\le i\le m-1$. Setting $u=s,w=-v$ in \eqref{e6} yields $\ZC{s,v}{i}=0$ for all $v\in \mathbb Z_d$. Hence, $\ZC{0,w}{i}=0$ for all $w\in \mathbb Z_d$ by letting $u=s,v=0$ in \eqref{e5}. Therefore, $\ZC{v,w}{i}=0$ for all $v,w\in \mathbb Z_d$ and $1\le i\le m-1$ by setting $u=-v$ in \eqref{e5}.
   
   (iii) $\Rightarrow$ (i) can be proved similarly.  (iv) $\Rightarrow$ (i) is trivial. 
   
   (i) $\Rightarrow$ (iv) By our previous discussion, we know $\frac{a_{\ol0}}{a_{\ov}}=1=\theta^{-(l-1)v}$ for all $v\in \ZD$. Consequently, $l\equiv 1 \pmod n$ and $\varphi(x)=xa$ such that $a\in \k^\times$. From the relation $x^m=1-g^m$, it is easy to deduce that $a=\xi^i$ and we can take $z=g^i$. 

In summary, equations \eqref{eq:Z01} and \eqref{e8} imply that $\ZC{u,0}{i},\ZC{0,v}{i},\ZC{0,0}{i}$ for any $u,v\in \ZD$ and $1\le i\le m-1$ can be expressed in terms of the parameters $l$ and $\frac{\ao}{\aw}$ for $\ow \in \ZD$. Furthermore if $\ZC{0,0}{i}\neq 0$, so can $\ZC{u,v}{i}$ by \eqref{e10}. Otherwise if $\ZC{0,0}{i}=0$, then $(\frac{a_{\ol0}}{a_{\ol s}})^m=1$ by \eqref{e8} and in this case $\ZC{u,v}{i}=0$ for all $u,v\in \mathbb Z_d$ by our previous discussion.

Finally, suppose there is some stable twist $(\varphi',J')\in \st_t(A)$. Without loss of generality, we can assume $J'=1\otimes 1$. Now, since $\varphi': A\to A_{J'}=A$ is a Hopf algebra automorphism, we must have $\varphi'(g)=g^{l'}$ for some integer $l'$ coprime to $n$. By Lemma \ref{l:vg}, $(\varphi',J')$ is a standard normalized stable twist of $A$. So we know it belongs to $\st_0(A)$.  
\end{proof}

\begin{proof}[Proof of Theorem~\ref{inverT}] Since $(\vp, J)$ is standard, $\vp(g)=g^l$ for some integer $l$ coprime to $n$. By setting $u, w=\ol 0,\ov\neq \ol0$ and $i=m-1$ in \eqref{e7}, we find the equation
  \[
   \ZC{0, v}{m-1}\left(1+\ZC{v,0}{m-1}\g_\ov^m\right) =  \ZC{v, 0}{m-1}\left(1+\sum_{j=0}^{m-2}\left[\begin{matrix}m-1\\j\end{matrix}\right]_\xi\,\ZC{0,v}{m-1-j}\xi^{(m-1-j)^2}\g_\ov^m\right)\,. 
  \]
Now, we apply \eqref{eq:Z01} and \eqref{eq:qJrelation} to obtain
\begin{align*}
  &\frac{(1-\frac{a_{\ol{0}}}{a_\ov}\theta^{(l-1)v})(1-\xi^{-1}\frac{a_{\ol{0}}}{a_\ov}\theta^{(l-1)v})\cdots (1-\xi^{-(m-2)}\frac{a_{\ol{0}}}{a_\ov}\theta^{(l-1)v})}{\g_\ov^m(1-\xi)(1-\xi^2)\cdots (1-\xi^{m-1})}\,\frac{\frac{a_{\ol{0}}}{a_\ov}-\xi}{(1-\xi)} \\
   &=\frac{\frac{a_{\ol{0}}}{a_\ov}-1}{\g_\ov^m(1-\xi)}\,\frac{(1-\xi\frac{a_{\ol{0}}}{a_\ov}\theta^{(l-1)v})(1-\xi^2\frac{a_{\ol{0}}}{a_\ov}\theta^{(l-1)v})\cdots (1-\xi^{(m-1)}\frac{a_{\ol{0}}}{a_\ov}\theta^{(l-1)v})}{(1-\xi)(1-\xi^2)\cdots (1-\xi^{m-1})}\,,
\end{align*}
which can be further simplified into 
\begin{equation}\label{eq:as}
 \Big(\frac{\ao}{\av}\Big)^2 = \frac{1}{\theta^{(l-1)v}}.
\end{equation}
By \eqref{e8} with $\ol u=\ol v\neq \ol 0$, we have
$(\frac{\ao}{\av})^m-1=\theta^{mv}(1-(\frac{\ao}{\av}\theta^{(l-1)v})^m)$, which implies that 
 \begin{equation*}
\Big(\frac{\ao}{\av}\Big)^m(1+\theta^{lvm})=1+\theta^{vm}.
 \end{equation*} 
Suppose $m$ is even. Then, we have $(1-\theta^{\frac{m}{2}(l-1)})(1-\theta^{\frac{m}{2}(l+1)})=0$ by further setting $\ov=\ol 1$ above and using \eqref{eq:as}. This shows that $l\equiv \pm 1\pmod {2d}$. If $m$ is odd, similarly we get 
\begin{equation}
  \frac{\ao}{\av}(1+\theta^{lvm})~=~\theta^{\frac{m-1}{2}(l-1)v}  (1+\theta^{vm})\,.\label{eq:as2}
\end{equation}
By applying \eqref{eq:as} again, we deduce that $(1-\theta^{m(l-1)})(1-\theta^{m(l+1)})=0$, which implies that $l\equiv \pm 1\pmod d$. This completes the proof of statement (i).

Now, we first consider the case  $l\equiv 1\pmod{\delta_m\,d}$. Then, by \eqref{eq:as}, we have if $m$ is even
$$
\left(\frac{\ao}{\av}\right)^m~=~\theta^{-\frac{m}{2}(l-1)v}=1\quad \text{ for all }\ov \ne \ol0\,.
$$
Suppose $m$ is odd and $d>2$ ($1+\theta^m\neq 0$). By letting $\ov=\ol{1}$ in \eqref{eq:as2}, we get $\frac{\ao}{\av}=\theta^{\frac{m-1}{2}(l-1)}$ and hence
$$
\left(\frac{\ao}{\av}\right)^m~=~\theta^{\frac{m-1}{2}(l-1)m}=1\quad \text{ for }\ov = \ol1\,.
$$
It follows from Corollary  \ref{c:trivial} that $(\varphi,J)$ is equivalent to $(\id_A, 1 \o 1)$, and hence $(\vp, J) \in \st_0(A)$. This proves assertion (ii).

Next, we assume $l\equiv -1\pmod{\delta_m\, d}$. Since $l\equiv 1 \pmod m$ by Lemma \ref{bgT2}, it is easy to deduce that $\gcd(m,d)\mid 2$. Suppose $\gcd(m,d)=2$. Let $\ov=\ol{d/2}\in \mathbb Z_d$. Then we can write $l=1+km$ with $k$ odd, since $l\equiv -1 \pmod {4}$. Then,  \eqref{eq:as} implies that $(\frac{\ao}{\av})^2=\theta^{-(l-1)v}=-1$. Note that $l\neq 1$ and Corollary \ref{c:trivial} requires that $(\frac{\ao}{\av})^m\neq 1$. Further by applying \eqref{e9} to \eqref{e5} while setting $\ou=\ov=\ol{d/2}$, $\ol w=\ol 0$ and $i=1$, we obtain 
\begin{align*}
  \ZC{v,0}{1}=\ZC{0,0}{1}(1+\ZC{v,v}{m-1}\theta^{mv}\g_\ov^m)=\ZC{0,0}{1}(1+\frac{\frac{\ao}{\av}-1}{1-\xi \frac{\ao}{\av}\theta^{(l-1)v}})=\ZC{0,0}{1}\frac{\frac{\ao}{\av}(1+\xi)}{1+\xi \frac{\ao}{\av}}\, .
\end{align*}
In view of \eqref{eq:Z01} and \eqref{e8}, the above equation implies that 
\[
\left(\frac{\ao}{\av}-1\right)\left((1+\xi)\frac{\ao}{\av}-(\xi-1)\right)=0\,.
\]
Since $(\frac{\ao}{\av})^2=-1$, we must have $\xi^2=-1$ and $m=4$. It is a  contradiction to Corollary \ref{c:trivial} since $(\frac{\ao}{\av})^4\neq 1$. 

In the remaining, we assume $\gcd(m,d)=1$. There is a unique integer $1<r<n$ such that $r=0\pmod m$ and $r=1 \pmod d$. We set up $\frac{\ao}{\au}=\theta^{r u}$ for any $u\in \mathbb Z_d$. Then, it is direct to check that \eqref{eq:as} holds and $\frac{\ao}{\au} \theta^{(l-1)v}=\theta^{-r u}$ for any $u\in \mathbb Z_d$. In particular, $a_\ou^m = \frac{\g^m_{\ol{lu}}}{\g^m_{\ou}}=-\theta^{-mu}$ for $\ou\in \ZD$ and $a_0^m=\theta^{mr  u}a_\ou^m=-1$. 
Now, we can construct a standard normalized stable twist $(\tilde\vp, \tilde J)$ of $A$ in the following way: as an algebra automorphism of $A$, the map $\tilde\vp: A \to A$ is given by $\tilde\vp(g) = g^l$ and 
\[
\tilde\vp(x) = x\left(\sum_{u\in \mathbb Z_d}a_\ou e_\ou\right)=\ao x\left(\sum_{u\in \mathbb Z_d} \theta^{-r\,u} e_\ou\right)=\sqrt[m]{-1}\,xg^{-r}\, .
\]
The 2-cocycle $\tilde J$ is given by the coefficients $\Zt{u,v}{i}$ according to \eqref{eq:NewJ} via the formulae derived in Lemma \ref{l:Z}, which can be summarized in a single formula,
\begin{equation}\label{closeform}
    \Zt{u,v}{i}:=\frac{-\theta^{-r v i}}{\left(\prod_{j=1}^i(\theta^{r u}-\xi^j)\right)\left(\prod_{j=1}^{m-i}(\theta^{r v}-\xi^j)\right)},
\end{equation}
for any $u, v \in \Z_n$ and $1\le i\le m-1$. In details, $\tilde J=1\o 1 + \sum_{i=1}^{m-1} (x^i \o x^{m-i} g^i)\Zt{}{i}$ such that 
\begin{align*}
  \Zt{}{i}&=\sum_{\ou,\ov\in \mathbb Z_d}   \Zt{u,v}{i} e_\ou\otimes e_\ov=\sum_{\ou,\ov\in \mathbb Z_d}\frac{-\theta^{-r v i}}{\left(\prod_{j=1}^i(\theta^{r u}-\xi^j)\right)\left(\prod_{j=1}^{m-i}(\theta^{r v}-\xi^j)\right)}\,e_\ou\otimes e_\ov\\
  &=-\sum_{\ou,\ov\in \mathbb Z_d}\frac{1}{\prod_{j=1}^i(\theta^{r u}-\xi^j)}\,e_\ou\otimes \frac{1}{\prod_{j=1}^{m-i}(\theta^{r v}-\xi^j)}g^{-r i}e_\ov\\
  &=-\sum_{\ou,\ov\in \mathbb Z_d}(y\otimes y)\left(\prod_{j=i+1}^{m-1}(g^{r}-\xi^j)\otimes g^{-r i}\prod_{j=m-i+1}^{m-1}(g^{r}-\xi^j)\right)e_\ou\otimes e_\ov\, ,
\end{align*}
where 
\begin{equation}\label{yinJ}
 y=\frac{1}{m}e_{\ol{0}}+\sum_{\ou\in \ZD} \frac{\theta^{r u}-1}{\theta^{ m u}-1}\,e_\ou=\sum_{k=0}^{\frac{r}{m}-1}g^{mk}+\frac{1-r}{n}\sum_{k=0}^{d-1}g^{mk}\,.   
\end{equation}
Then $(\tilde\vp, \tilde J)$ is a standard normalized stable twist of $A$: it satisfies the compatibility conditions \eqref{eq:1oJ}--\eqref{eq:Jo1} and \eqref{eq:x2}--\eqref{eq:x4}, as one checks by direct computation.

Since $\ao = \sqrt[m]{-1}$,  the stable twist of the Taft algebra $T_{m^2}(\xi)$ descended from $(\tilde\vp, \tilde J)$  via the surjection $\pi: A \to A/A x^m \cong T_{m^2}(\xi)$ is non-trivial by Corollary \ref{StableTwTaft}. Therefore, the stable twist $(\tilde\vp, \tilde J)$ is nontrivial in $\ost(A)$.  

Finally, we will show that $(\tilde\vp, \tilde J)$ is the only nontrivial standard normalized stable twist of $A$ up to equivalence. Let $(\vp,J)$ be a nontrivial standard normalized stable twist of $A$. By the preceding discussion, we have $\gcd(m,d)=1$ and $l$ is the unique integer $\pmod {n=md}$ determined by $l=-1 \pmod d$ and $l=1 \pmod m$. Moreover, we have $\vp(g)=g^{l}$  and $\vp(x) = \sum_{\ou \in \Z_d} \au x \eu$ in which $\au \in \k^\times$ satisfying $\au^m = \frac{\g_{-\ou}^m}{\g_{\ou}^m}=-\theta^{-mu}$ for all $\ou \in \ZD$, and $\ao^m=-1$. In particular, $\vp(x)\eo = \xi^k\tilde\vp(x)\eo$ for some integer $k$. By Proposition~\ref{p:uniqueness}, there exists $z \in \kgt{g}$ with $\vp_z = \tilde\vp$. By Corollary~\ref{c:trivial}, $J_z$ is determined by $\vp_z(x) = \tilde\vp(x)$, so $J_z=\tilde J$. This proves statement (iii).
\end{proof}

\begin{proof}[Proof of Theorem~\ref{StableTwTaft2d}]
We compute the kernel and image of $\Pi$ in the exact sequence \eqref{eq:A1-SES}.

Let $[(\id_A,K)]\in \ker(\Pi)$. We show that it is equivalent to some normalized standard stable twist $(\varphi,J)$ where $\varphi(g)=g^l$ with $l\equiv 1\pmod d$. Let $K^\bo$ be the constant term of $K$. If $K^\bo$ is invertible, by Lemma \ref{l:cocycle} there is some $z\in (\k\langle g\rangle)^\times_1$ such that $(I_z,K_z)$ is standard and normalized. Then it is clear that $I_z(g)=g$. When $K^\bo$ is not invertible, by the proof of Proposition \ref{p:invertible}, there is some $z\in \A$ such that $(K_z)^\bo$ is invertible and $I_z(g)=g^l$ satisfies $l\equiv 1 \pmod d$ according to \eqref{eq:Iz}. Applying Lemma \ref{l:cocycle} to $(I_z,K_z)$ again, we reach the same conclusion. In both cases $l=1$, whence Theorem \ref{inverT}\textup{(ii)} gives $(\id_A,K)\in \st_0(A)$. Thus $(\id_A,K)$ is trivial in $\ost(A)$, so $\ker(\Pi)=1$; equivalently $H^2_\ell(A^\ast)=1$.

Let $[(\vp,J)]\in \ost(A)$. By Lemma \ref{bgT2} its class has a standard representative with $\vp(g)=g^l$, $\vp(x)=xa$ and $l\equiv 1 \pmod m$, so every class in $\OutTw(A)$ is represented by an automorphism of this form. By Theorem \ref{inverT}, a standard normalized stable twist is either trivial or, when $\gcd(m,d)=1$, equivalent to the unique nontrivial twist $(\tvp,\tJ)$ of Theorem \ref{inverT}\textup{(iii)}. Hence $\OutTw(A)$ is trivial when $\gcd(m,d)>1$, and has exactly two elements, represented by $\id_A$ and $\tvp$, when $\gcd(m,d)=1$.

Since $\ker(\Pi)=1$, the exact sequence \eqref{eq:A1-SES} gives $\ost(A)\cong\OutTw(A)$, which is $\Z/2\Z$ when $\gcd(m,d)=1$ and trivial otherwise.
\end{proof}

\subsubsection{\texorpdfstring{$C_2$-actions and the associated Hopf extensions}{C2-actions and the associated Hopf extensions}}\label{SS:A1-C2action}
Consider $A = A_1(md, \xi)$ with $\gcd(d,m)=1$. We have already established that the stable twist group satisfies $\ost(A)\cong C_2$. Our next goal is to prove that $\ost(A)$ lifts to a nontrivial $C_2$-action on $\C{A}$.

Recall the unique integer $1<r<md$ such that $r\equiv 0 \pmod m$ and $r\equiv 1 \pmod d$. Also let $1<s<md$ be an integer such that $s\equiv \delta_m-3 \pmod m$ and $2s\equiv 1-m \pmod d$, where $\delta_m=
\frac{3+(-1)^m}{2}$ as in Theorem~\ref{inverT}. Note that $s$ is not unique if $d$ is even.  

According to Lemma \ref{lem-Caction}, one needs to find an element $z \in \A$ such that  $(\tvp, \tJ)^2 =\cb{z}^{-1}$ and $\tvp(z)=z$. 

\begin{lem}\label{l:C2action}
   Let $z = \frac{1}{m} g^s\prod_{i=1}^{m-1}(g^{r}-\xi^i)$. Then $(\tvp, \tJ)^2 =\cb{z}^{-1}$ and $\tvp(z)=z$. Moreover, we have $\alpha_{\tJ}=\frac{1}{m}g^{-r(m-1)}\,\prod_{j=1}^{m-1}(g^r-\xi^j)$. 
\end{lem}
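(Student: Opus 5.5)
We work blockwise with respect to the central idempotents $e_\ou$, $\ou\in\Z_d$. The element $z=\frac1m g^s\prod_{i=1}^{m-1}(g^r-\xi^i)$ lies in $\kg{g}$. It is also useful to read $z$ as an element of $\kg{g^m}\cdot\kg{g^r}$: since $r\equiv0\pmod m$, $g^r$ is central, and $g^r e_\ou=\theta^{ru}e_\ou$. The identity $\prod_{i=1}^{m-1}(t-\xi^i)=\frac{t^m-1}{t-1}$ gives
\[
z\,e_\ou=\tfrac1m\,\theta^{su}\,\frac{\theta^{rmu}-1}{\theta^{ru}-1}\,e_\ou\ (\ou\neq\ol0),
\]
with the value $\theta^{s\cdot}$ on $e_{\ol0}$ after the limit $t\to1$ (the $\frac1m$ is cancelled there). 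In particular $\ve(z)=1$.

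We first check $\tvp(z)=z$. Since $\tvp(g)=g^l$ with $l\equiv1\pmod m$ and $l\equiv-1\pmod d$, we have $\tvp(g^r)=g^{rl}=g^{r}\cdot g^{r(l-1)}$. Here $r(l-1)\equiv0\pmod m$, and $r(l-1)\equiv-2\pmod d$, so $\tvp$ acts on the central idempotents by $e_\ou\mapsto e_{-\ou}$. Hence $\tvp(z)=z$ reduces to a scalar identity between the coefficient of $z$ at $\ou$ and at $-\ou$. This identity is forced by the congruences defining $s$: namely $2s\equiv1-m\pmod d$, together with the condition on $s$ modulo $m$ that fixes the $e_{\ol0}$-block. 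Concretely one compares $\theta^{su}\frac{\theta^{rmu}-1}{\theta^{ru}-1}$ with $\theta^{-su}\frac{\theta^{-rmu}-1}{\theta^{-ru}-1}$, and the ratio is $\theta^{(2s+m-1)u}\cdot(\text{root of unity from }r\equiv0\bmod m)=1$.

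Next we compute $(\tvp,\tJ)^2=(\tvp^2,\tvp(\tJ)\tJ)$ by \eqref{powerstable}. For the algebra part, $\tvp^2(g)=g^{l^2}=g$ because $l^2\equiv1\pmod{md}$. Moreover
\[
\tvp^2(x)=\sqrt[m]{-1}\,\tvp(x)\,g^{-rl}=(\sqrt[m]{-1})^2\,x\,g^{-r}g^{-rl},
\]
a central scalar multiple $xc$ with $c\in\kg{g^m}$, hence $c=\sum c_\ou e_\ou$. We must show $\tvp^2=I_{z}^{-1}=I_{z^{-1}}$. Since $z\in\kg{g}$, we get $I_{z^{-1}}(g)=g$, and $z^{-1}xz=x\cdot(z^{-1}$ shifted$)$: by $e_ix=xe_{i-d}$, $z^{-1}xz=x\,\sigma(z^{-1})z$, where $\sigma(g)=\xi^{-1}g$. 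So the first component reduces to matching $c$ with $\sigma(z)^{-1}z$, which is again a blockwise scalar check. Here $\sigma(g^r)=g^r$, and the factor $\xi^{-s}$ combined with $(\sqrt[m]{-1})^2\theta^{-r(1+l)u}$ must match. This is where the congruence of $s$ modulo $m$ is used.

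For the gauge part we need $\tvp(\tJ)\tJ=\partial_{z^{-1}}=(z^{-1}\otimes z^{-1})\Delta(z)$. Rather than multiply the explicit sums, we use rigidity. By Lemma~\ref{l:Jshift}-type reasoning, both sides give stable twists with first component $\tvp^2=I_{z^{-1}}$. Therefore $(\id,\partial_z\,\tvp(\tJ)\tJ)$ is a stable twist with identity algebra part. Its class lies in $\ker\Pi=H^2_\ell(A^*)=1$ by Theorem~\ref{StableTwTaft2d}, so $\partial_z\tvp(\tJ)\tJ=\partial_w$ for a central $w\in A^\times_1$ with $I_w=\id$. Equivalently $(\id,\partial_w)$ differs from the identity by a central grouplike, so $w\in Z_G(A)=\langle g^m\rangle$ times $w'$ with $\partial_{w'}=1$. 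The honest identity, not merely up to $Z_G(A)$, is then pinned down by comparing one coefficient, for instance the constant term or the $\eo\otimes\eo$ component. Both sides are normalized there, since $\tJ^{\bo}=1\otimes1$ and $(\partial_z)^{\bo}$ is computed from the values $z_u$. We expect this normalization, namely choosing $s$ so that no residual central grouplike factor $g^{mk}$ survives, to be the main obstacle. If the rigidity argument leaves ambiguity, the fallback is a direct computation of the $e_\ou\otimes e_\ov$ constant term of $\tvp(\tJ)\tJ$ via \eqref{closeform}, using $\tvp(e_\ou)=e_{-\ou}$ and $\tvp(x^i)=x^i\cdot(\text{scalar})$.

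Finally, $\alpha_{\tJ}=\sum S(\ol f_i)\ol g_i$ is computed from $\tJ^{-1}$. Because $\tJ$ is normalized with nilpotent-type corrections $x^i\otimes x^{m-i}$, on each block $\ou\ne\ol0$ one uses \eqref{nuS}: $\tvp(1)=1=\nu\alpha_{\tJ}$ and $S_{\tJ}$ conjugation. It is cleaner to obtain $\alpha_{\tJ}$ from the relation $\alpha_{\tJ}\beta_{\tJ}=1$ together with a blockwise evaluation of $\beta_{\tJ}=\sum f_iS(g_i)$, using $S(x^{m-i}g^{(1-r)i})$ and $x^m=1-g^m$. Each term collapses to a scalar multiple of $e_\ou$, and summing the $i$-indexed contributions via the $q$-identity \eqref{eq:qidentity} yields $\frac1m g^{-r(m-1)}\prod_{j}(g^r-\xi^j)$ blockwise.
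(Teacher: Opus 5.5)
\textbf{Verdict: there is a genuine gap.} The gauge identity $\tvp(\tJ)\tJ=\partial_{z^{-1}}$ is not established by your rigidity argument. The argument is valid as far as it goes, and it is not circular, since the triviality of $\ker\Pi$ in Theorem~\ref{StableTwTaft2d} does not use this lemma.

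\textbf{What rigidity gives.} Once $I_z\tvp^2=\id$, the composite $(I_z,\partial_z)(\tvp,\tJ)^2=(\id,K')$, with $K'=(z\otimes z)\,\tvp(\tJ)\tJ\,\Delta(z)^{-1}$, is an invariant twist. Hence $K'=\partial_w$ for some $w\in Z(A)\cap\A=\kg{g^m}^\times_1$. In particular, $\tvp(\tJ)\tJ$ has no $x^i\otimes x^{m-i}$ components.

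\textbf{What it does not give.} The element $\partial_w=\sum_{\ou,\ov}\frac{w_{\ou}w_{\ov}}{w_{\ou+\ov}}\,e_{\ou}\otimes e_{\ov}$ is an arbitrary $2$-coboundary on $\Z_d$, and it is trivial only when $w$ is a character. So rigidity fixes $\tvp(\tJ)\tJ$ only up to an infinite family. Comparing the $\eo\otimes\eo$ coefficient cannot remove this ambiguity, because $\partial_w$ equals $1$ on every block $e_{\ol0}\otimes e_{\ov}$ and $e_{\ou}\otimes e_{\ol0}$.

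\textbf{What is actually needed.} You must compute the constant term on every block $e_{\ou}\otimes e_{\ov}$ with $\ou,\ov\neq\ol0$. Since $\tJ^{\bo}=1\otimes1$, that constant term comes only from the products $(x^i\otimes x^{m-i})(x^{m-i}\otimes x^{i})$, reduced using $x^m=1-g^m$. Concretely, it is a sum over $i$ of products $\ZC{-u,-v}{i}\ZC{u,v}{m-i}\g_{\ou}^m\g_{\ov}^m$ (times roots of unity), with coefficients taken from \eqref{closeform}. The decisive step is the character-orthogonality sum over $i$, which turns this into $m\,\frac{\sum_k\theta^{kr(u+v)}}{(\sum_k\theta^{kru})(\sum_k\theta^{krv})}=\frac{z_{u+v}}{z_uz_v}$. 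That is exactly the paper's computation, and your proposal leaves it as an unexecuted fallback.

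Your expectation that the obstacle is a residual factor governed by $s$ is also misplaced. Since $g^s$ is grouplike, $\partial_{z^{-1}}$ does not depend on $s$ at all.

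\textbf{The check $\tvp(z)=z$.} This part rests on a false premise: $z$ is not central, because it contains $g^s$ with $s\not\equiv0\pmod m$. Your own computation $z^{-1}xz=\xi^{-s}x$ shows this. Consequently $ze_{\ou}$ is not a scalar multiple of $e_{\ou}$, and comparing the coefficients at $u$ and $-u$ is the wrong comparison.

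Work with the primitive idempotents instead. Write $z=\sum_{u\in\Z_n}z_ue_u$; since $\tvp(e_u)=e_{lu}$, you need $z_{lu}=z_u$. Use $rl\equiv-r\pmod{md}$ and $\prod_{i=1}^{m-1}(t^{-1}-\xi^i)=t^{-(m-1)}\prod_{i=1}^{m-1}(t-\xi^i)$. The condition becomes $s(l-1)\equiv(m-1)r\pmod{md}$. This holds automatically modulo $m$, and modulo $d$ it is exactly $2s\equiv1-m$. The paper does the same thing directly as a polynomial identity in $g$. The congruence of $s$ modulo $m$ is needed only for $I_z\tvp^2=\id$.

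\textbf{The formula for $\alpha_{\tJ}$.} Here $\alpha_{\tJ}=\beta_{\tJ}^{-1}$, and the summation is not an instance of \eqref{eq:qidentity}. Blockwise one gets $\beta_{\tJ}=1+\sum_{i=1}^{m-1}\frac{g^r-1}{g^r-\xi^i}$. What is needed is the partial-fraction identity $1+\sum_{i=1}^{m-1}\frac{t-1}{t-\xi^i}=\frac{mt^{m-1}}{\prod_{j=1}^{m-1}(t-\xi^j)}$. This follows from $\sum_{i=0}^{m-1}\frac{1}{t-\xi^i}=\frac{mt^{m-1}}{t^m-1}$.
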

\begin{proof} 
We can write $z= \sum_{u \in \Z_n} z_u e_u$, where
    $$
    z_u = \frac{1}{m}\theta^{s u}\prod_{i=1}^{m-1}(\theta^{r u}-\xi^i)\ne 0 \quad  \text{for } u \in \Z_n\, .
    $$
   Thus $z \in \A$. Moreover,  
\begin{align*}
    \tvp(z)&=\frac{1}{m}g^{s l}\prod_{i=1}^{m-1}(g^{-r}-\xi^i)=\frac{1}{m}g^{s l-(m-1)r}(-1)^{m-1}\xi^{\frac{m(m-1)}{2}}\prod_{i=1}^{m-1}(g^r-\xi^i)\\
    &=\frac{1}{m} g^s\prod_{i=1}^{m-1}(g^{r}-\xi^i)=z.
\end{align*}
It remains to show that 
   \begin{equation} \label{eq:cob}
       \cb{z}(\tvp, \tJ)^2 = (\id_A, 1\o 1)\quad \text{or}\quad (\tvp, \tJ)^2 = \cb{z^{-1}}\, .
   \end{equation}
    Note that $(\tvp, \tJ)^2 =(\tvp^2, \tvp(\tJ)\tJ)$. Therefore, one can compute that $\tvp^2(g)=g^{l^2}=g$ and $\tvp^2(x)=(\sqrt[m]{-1})^2xg^{-r(l+1)}=\xi^{3-\delta_m}x$. Since $r\equiv 0\pmod m$ and $s\equiv \delta_m-3 \pmod m$, we have $x z = \xi^{3-\delta_m}zx$ or $I_{z^{-1}}(x)=\xi^{3-\delta_m}x$. Moreover, $I_{z^{-1}}(g)=g$ as $z \in \kg{g}^\times$. Therefore,  $I_z \tvp^2 = \id_A$. Next, by direct computation from \eqref{eq:NewJ} using \eqref{closeform}, we find
    \begin{align*}
        \tvp(\tJ)\tJ  &=1\otimes 1-\sum_{i=1}^{m-1}\sum_{u,v\in \mathbb Z_d}\ZC{-u,-v}{i}\ZC{u,v}{m-i}\g_\ou^m\g_\ov^m\xi^{i^2}\theta^{-ir(u+v)}e_u\otimes e_v\\
       &=1\otimes 1+\sum_{i=1}^{m-1}\left(\sum_{\ou,\ov\in \mathbb Z_d}\frac{\g_\ou^m}{(\theta^{r u}-\xi^{m-i})\left(\prod_{j=1}^{m-1}(\theta^{r u}-\xi^j)\right)}\,e_\ou\otimes\right.\\
      &\quad \quad\quad\quad \left. \frac{\g_\ov^m}{(\theta^{r v}-\xi^i)\left(\prod_{j=1}^{m-1}(\theta^{r v}-\xi^j)\right)}\, e_\ov\right)\\
 &=\sum_{\ou,\ov\in \mathbb Z_d}\left(1+\sum_{i=1}^{m-1}\frac{1+\xi^{m-i}\theta^{r u}+\cdots +(\xi^{m-i}\theta^{r u})^{m-1}}{1+\theta^{r u}+\cdots+ \theta^{(m-1)r u}}\,e_\ou\otimes \right.\\
 &\quad\quad\quad\quad \left.\frac{1+\xi^{i}\theta^{r v}+\cdots +(\xi^{i}\theta^{r v})^{m-1}}{1+\theta^{r v}+\cdots+ \theta^{(m-1)r v}}\,e_\ov\right)\\
            &=m\!\sum_{\ou,\ov\in\mathbb Z_d}\!\!\left(\!\frac{1+\theta^{r(u+v)}+\cdots+\theta^{(m-1)r(u+v)}}{(1+\theta^{r u}+\cdots+\theta^{(m-1)r u})(1+\theta^{r v}+\cdots+\theta^{(m-1)r v})}\!\right)\!e_\ou\!\otimes\! e_\ov\\
      &= \sum_{u,v\in \Z_d} \frac{z_{u+v}}{z_u z_v} e_\ou\o e_\ov = \partial_{z^{-1}}\, .
    \end{align*}
   This completes the proof of \eqref{eq:cob}. 

Finally, by applying \eqref{closeform}, we obtain that
\begin{align*}
    \beta_{\tJ}&=1+\sum_{i=1}^{m-1} \left(\sum_{u,v\in \mathbb Z_d} \ZC{u,v}{i}\, x^ie_\ou S\left(x^{m-i}g^ie_\ov\right)\right)\\
    &=1+\sum_{i=1}^{m-1} (-1)^{i+1}\xi^{\frac{i(i-1)}{2}}\left(\sum_{u\in \mathbb Z_d} \ZC{u,-u}{i}\g_\ou^m \theta^{-mu}\, e_\ou\right)\\
    &=1+\sum_{i=1}^{m-1} \left(\sum_{u\in \mathbb Z_d} \frac{\theta^{r u}-1}{\theta^{r u}-\xi^i}e_\ou\right)\\
    &=1+\sum_{i=1}^{m-1}\frac{g^r-1}{g^{r}-\xi^i}\\
    &=\frac{m\,g^{r(m-1)}}{\prod_{j=1}^{m-1}(g^r-\xi^j)}.
\end{align*}
Therefore, $\alpha_\tJ=(\beta_\tJ)^{-1}=\frac{1}{m}g^{-r(m-1)}\,\prod_{j=1}^{m-1}(g^r-\xi^j)$.
\end{proof}

The construction $\N(md,\xi)=A[u^{\pm 1};\tilde\varphi,\tilde J,z]$ of Definition~\ref{def:N-family} now feeds into the comprehensive classification of $C_2$-extensions.

\begin{proof}[Proof of Theorem~\ref{th:extC2}]
Note that Theorem \ref{StableTwTaft2d} and Lemma \ref{l:C2action} guarantee that $A$ satisfies all the assumptions stated in Theorem \ref{HopfCEGC2}, and our result follows. 
\end{proof}

\section{Central cocycle modifications and braided stable twists}\label{S:StGrQH}

This section collects applications of the stable-twist formalism to central modifications of (quasitriangular) quasi-Hopf algebras. \S\ref{SS:general-setup} fixes the setup: a quasi-Hopf algebra $A$ with a central sub-bialgebra $Z=\k^G$ (automatically a Hopf algebra by Lemma~\ref{l:central-is-Hopf}). \S\ref{SS:reduction-general} shows that modifying the associator by a central normalized $3$-cocycle on $Z$ leaves $\ost(A)$ unchanged under natural hypotheses (Theorem~\ref{t:reduction-general}). \S\ref{SS:Adelta-examples} works this out for the pointed Hopf algebras $A_\delta(n,\xi)$ of \S\ref{S:StGrH} (Proposition~\ref{c:A1-corollary}) and the new quasi-Hopf $C_2$-equivariantization $\N^\omega(md,\xi)$. \S\ref{SS:braided-reduction} establishes the quasitriangular analogue, in which both the associator and the $R$-matrix are modified by an Eilenberg--Mac~Lane $2$-abelian $3$-cocycle pair $(\omega,c)$ on $Z$. Finally, \S\ref{SS:soft} relates the kernel $H^2_{\ell,{\rm br}}(A^*)$ of the braided exact sequence (Theorem~\ref{t:braided-SES}) to two notions from the literature: for $A=D(G)$ it recovers, in explicit Hopf-algebraic form, Davydov's identification of the soft symmetries of \cite{SoftSym2025} with the Bogomolov multiplier $B_0(G)$ \cite{Davydov2014} (Theorem~\ref{t:B0=lazy-br}), and through the Etingof--Nikshych--Ostrik correspondence it computes Brauer--Picard groups (Proposition~\ref{p:Tw-BrPic}).

\subsection{General setup}\label{SS:general-setup}

Let $(A, \Delta, \ve, \Phi, S, \a, \beta)$ be a finite-dimensional quasi-Hopf algebra over an algebraically closed field $\kk$ of characteristic zero. Throughout, $Z$ denotes a \emph{central sub-bialgebra} of $A$: a subalgebra $Z \subseteq Z(A)$ satisfying $\Delta(Z) \subseteq Z \otimes Z$. 

\begin{lem}\label{l:central-is-Hopf}
Every central sub-bialgebra $Z$ of a finite-dimensional quasi-Hopf algebra $A$ is a Hopf algebra under the inherited structure maps $(m, u, \Delta|_Z, \ve|_Z, S|_Z)$. In particular, $S(Z) \subseteq Z$.
\end{lem}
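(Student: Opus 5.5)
The plan is to show first that $Z$ is a genuine bialgebra (coassociative), and then that the quasi-antipode restricts to an antipode on $Z$. For coassociativity, take $a\in Z$. Since $Z$ is central, $(\Delta\otimes\id)\Delta(a)\in Z^{\otimes 3}$ lies in the center of $A^{\otimes 3}$, so it commutes with $\Phi$. Then \eqref{coasso1} gives $(\id\otimes\Delta)\Delta(a)=(\Delta\otimes\id)\Delta(a)$. Together with \eqref{coasso3}, this makes $(Z,\Delta|_Z,\ve|_Z)$ a finite-dimensional commutative bialgebra. As $\k$ is algebraically closed of characteristic zero, $Z$ is semisimple commutative, so $Z\cong\k^X$ as an algebra for the finite set $X$ of characters of $Z$. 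The comultiplication then turns $X$ into a finite monoid under convolution, with unit $\ve|_Z$.

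Next I show this monoid is a group, so that $Z\cong\k^X$ is a Hopf algebra. Given $\chi\in X$, I would take the character $\chi\circ S$. Since $S$ is an anti-algebra map and $Z$ is central, I first need $S(Z)\subseteq Z$, or at least that $S|_Z$ is multiplicative. For centrality: for $a\in Z$ and $b\in A$, $S(a)S(b)=S(ba)=S(ab)=S(b)S(a)$, and $S$ is bijective, so $S(a)$ is central. Then apply \eqref{EquasiS}: for $a\in Z$, $\sum S(a_1)\alpha a_2=\ve(a)\alpha$, and centrality of $S(a_1)$ and $a_2$ gives $\big(\sum S(a_1)a_2\big)\alpha=\ve(a)\alpha$. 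Similarly $\beta\big(\sum a_1S(a_2)\big)=\ve(a)\beta$.

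The key step, and the one I expect to be the main obstacle, is cancelling $\alpha$ and $\beta$, since they need not be invertible. I would use the trick from the paper's argument for $Z_G(A)$. Multiply the first relation of \eqref{EAssociator} on the right by a central element $c=\sum S(a_1)a_2-\ve(a)1$. Because $c$ is central, it can be moved next to $\alpha$ inside $\sum\Phi^1\beta S(\Phi^2)\alpha\Phi^3$, and $c\,\alpha=0$ gives $c=c\cdot 1=0$. Hence $\sum S(a_1)a_2=\ve(a)1$, and likewise $\sum a_1S(a_2)=\ve(a)1$.

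It remains to check $S(Z)\subseteq Z$ as a subspace, which does not follow from centrality alone. I would argue with idempotents. Let $e_\chi$ be the primitive idempotents of $Z$. The antipode identity gives $\sum S(e_{\chi,1})e_{\chi,2}=\delta_{\chi,\ve}$. From this I would show that $S(e_\chi)$ is, modulo the ideal spanned by the $e_\psi$, determined, and in fact equals $e_{\chi'}$ where $\chi'$ is the convolution inverse. Existence of $\chi'$ follows because the map $\chi\mapsto\chi\circ S$, read through the identity $\sum S(a_1)a_2=\ve(a)$ evaluated at characters, gives left and right inverses in $X$. If the idempotent bookkeeping gets messy, the fallback is this: once $X$ is known to be a group, $Z$ is a Hopf algebra with antipode $\bar S$. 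Then $S|_Z$ and $\bar S$ are both convolution inverses of $\id_Z$ in $\Hom(Z,A)$, using the identities just proved together with associativity of convolution, which holds by coassociativity of $Z$. Hence $S|_Z=\bar S$ maps into $Z$.
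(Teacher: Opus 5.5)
Your coassociativity argument, your proof that $S(a)$ is central for $a\in Z$, and your cancellation of $\alpha$ and $\beta$ via \eqref{EAssociator} are correct and agree with the paper. The gap comes afterwards. You have the identities $\sum S(a_1)a_2=\ve(a)1=\sum a_1S(a_2)$, in which $S|_Z$ a priori takes values in $Z(A)$, not in $Z$. From these you need an antipode of $Z$ itself, and you get it from $Z\cong\kk^X$ with $X$ a group.

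The claim that $Z$ is semisimple ``as $\kk$ is algebraically closed of characteristic zero'' is false. Finite-dimensional commutative algebras, and even commutative bialgebras, need not be semisimple. For example, the monoid bialgebra $\kk[M]$ of $M=\{1,x,x^2\}$ with $x^3=x^2$ is commutative and contains the nonzero nilpotent $x^2-x$. Semisimplicity of $Z$ does hold in the end, but only because a finite-dimensional commutative \emph{Hopf} algebra in characteristic zero is semisimple. Using it at this stage is therefore circular.

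Your construction of inverses in $X$ is circular too. $\chi\circ S$ is not a character of $Z$ until you know $S(Z)\subseteq Z$, and that is your final conclusion. You could repair this by extending $\chi$ to a character $\tilde\chi$ of the commutative algebra $Z(A)$ and using $\tilde\chi\circ S|_Z$. Even then, without semisimplicity, passing from ``$X$ is a group'' to ``$Z$ is Hopf'' needs a separate result, for instance Takeuchi's theorem that a pointed bialgebra whose grouplikes form a group is Hopf, applied to $Z^*$. You supply neither step.

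The missing idea is the paper's argument, which uses no structure theory of $Z$. Your identities say that $\iota$ and $S|_Z$ are mutually inverse in the convolution algebra $\operatorname{Hom}_\kk(Z,A)$. That algebra is associative because $\Delta|_Z$ is coassociative, and $\operatorname{End}_\kk(Z)$ sits inside it as a unital subalgebra, with $\id_Z$ corresponding to $\iota$.

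Left convolution by $\iota$ is injective on $\operatorname{Hom}_\kk(Z,A)$: if $\iota*f=0$, then $f=(S|_Z*\iota)*f=0$. Hence it is injective, and by finite-dimensionality bijective, on $\operatorname{End}_\kk(Z)$. This produces $\bar S\in\operatorname{End}_\kk(Z)$ with $\id_Z*\bar S=u\circ\ve$, and symmetrically a left inverse. Your own uniqueness argument then gives $S|_Z=\bar S$, so $S(Z)\subseteq Z$. Only after that does $Z\cong\kk^G$ follow.
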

\begin{proof}
We first show $Z$ is a well-defined bialgebra. Since $Z \subseteq Z(A)$, for any $z \in Z$ the element $(\Delta \o \id)(\Delta(z)) \in Z^{\otimes 3}$ is central in $A^{\otimes 3}$. The quasi-coassociativity $(\id \o \Delta)(\Delta(z)) = \Phi \cdot (\Delta \o \id)(\Delta(z)) \cdot \Phi^{-1}$ reduces to $(\id \o \Delta)(\Delta(z)) = (\Delta \o \id)(\Delta(z))$. So $\Delta|_Z$ is genuinely coassociative, making $(Z, m, u, \Delta|_Z, \ve|_Z)$ a bialgebra.

The quasi-antipode axiom $\sum S(z_1)\a z_2 = \ve(z)\a$ and centrality of $z_2$ give 
\[\bigl(\sum S(z_1)z_2\bigr)\a = \ve(z)\a.\] Multiplying into $\sum \Phi^1 \beta S(\Phi^2)\a\Phi^3 = 1$ (cf.~\eqref{EAssociator}) and using centrality of $\sum S(z_1)z_2 \in Z(A)$:
\[
\textstyle\sum S(z_1)z_2 \;=\; \bigl(\sum S(z_1)z_2\bigr)\cdot 1 \;=\; \sum \Phi^1\beta S(\Phi^2)\bigl(\sum S(z_1)z_2\bigr)\a\,\Phi^3 \;=\; \ve(z)\cdot 1\,.
\]
Similarly $\sum z_1 S(z_2) = \ve(z)$.

It remains to show $S(Z)\subseteq Z$. The convolution algebra $\operatorname{End}_\kk(Z)$ embeds into $\operatorname{Hom}_\kk(Z,A)$ via post-composition with $\iota\colon Z\hookrightarrow A$. Left convolution by $\operatorname{id}_Z$ defines $\lambda\colon\operatorname{End}_\kk(Z)\to\operatorname{End}_\kk(Z)$, $f\mapsto\operatorname{id}_Z*f$. The corresponding map $\Lambda$ on $\operatorname{Hom}_\kk(Z,A)$ is injective: if $\iota*f=0$, then $f=(S|_Z*\iota)*f=S|_Z*(\iota*f)=0$. Since $\lambda$ is the restriction of $\Lambda$, it is injective, hence surjective by finite-dimensionality. In particular $\operatorname{id}_Z$ has a right convolution inverse $\bar S\in\operatorname{End}_\kk(Z)$; the analogous argument with right convolution gives a left inverse, and by associativity the two agree. Uniqueness of the convolution inverse in $\operatorname{Hom}_\kk(Z,A)$ gives $S|_Z=\iota\circ\bar S$, whence $S(Z)\subseteq Z$ and $\bar S$ is the antipode of $Z$.
\end{proof}

Since $Z$ is commutative and $\kk$ is algebraically closed of characteristic zero, $Z\cong\kk^G$ for some finite group $G$, with $Z^*\cong\kk[G]$; in most applications $Z=\kk[K]$ for some central subgroup $K\le Z_G(A)$, with $G=\hat K$.

\begin{deff}\label{d:normalized-Phi}
An element $\phi \in Z^{\otimes 3}$ is a \emph{normalized $3$-cocycle on $Z$} if
\begin{enumerate}
\item $\phi$ is invertible in $Z^{\otimes 3}$;
\item $(\id \o \id \o \Delta)(\phi)\cdot(\Delta \o \id \o \id)(\phi) = (1 \o \phi)\cdot(\id \o \Delta \o \id)(\phi)\cdot(\phi \o 1)$;
\item $(\id \o \ve \o \id)(\phi) = 1 \o 1$.
\end{enumerate}
\end{deff}
Concretely, when $Z=\kk^G$ as above, every normalized $3$-cocycle on $Z$ has the form
\[
\phi_\omega \;:=\; \sum_{a,b,c \in G}\omega(a,b,c)\,\delta_a \o \delta_b \o \delta_c\qquad(\omega\in Z^3(G,\kk^\times)\text{ normalized})
\]
for the basis $\{\delta_g\}_{g\in G}$ of orthogonal idempotents of $\kk^G$. Beyond \textup{(iii)}, such a $\phi_\omega$ automatically satisfies the further normalizations $(\ve\o\id\o\id)(\phi)=1\o1=(\id\o\id\o\ve)(\phi)$.

This construction generalizes \cite[Example~10.6, Proposition~10.1]{BCPVO} --- which treats the Hopf case $\Phi=1$, where the modification comes purely from the abelian cohomology of $G$ --- to a possibly nontrivial associator $\Phi$.

\begin{prop}\label{p:quasi-Hopf-general}
For any normalized $3$-cocycle $\phi=\sum \phi^1\otimes \phi^2\otimes \phi^3 \in Z^{\otimes 3}$, the tuple $(A, \Delta, \ve, \Phi \cdot \phi, S, \a', \beta')$ is a quasi-Hopf algebra with $\a'=\a$ and $\beta'=\beta (\sum S(\phi^1)\phi^2 S(\phi^3))$.
\end{prop}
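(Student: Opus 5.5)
We must check the quasi-bialgebra axioms \eqref{coasso1}--\eqref{coasso4} for $\Phi\phi$, and then the antipode axioms \eqref{EquasiS}--\eqref{EAssociator} with $\a'=\a$ and $\beta'=\beta\,\theta$, where $\theta:=\sum S(\phi^1)\phi^2S(\phi^3)\in Z$. The single principle used throughout is that $Z\subseteq Z(A)$, so every element of $Z^{\otimes k}$ is central in $A^{\otimes k}$. Moreover $Z\cong\kk^G$ is a commutative Hopf algebra by Lemma~\ref{l:central-is-Hopf}, so $S(Z)\subseteq Z$ and $\theta\in Z$ is central.

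\emph{Quasi-bialgebra part.} Axiom \eqref{coasso1} holds because conjugation by the central element $\phi$ is trivial. Axiom \eqref{coasso4} follows from $(\id\otimes\ve\otimes\id)(\Phi\phi)=(\id\otimes\ve\otimes\id)(\Phi)\,(\id\otimes\ve\otimes\id)(\phi)=1\otimes1$. For the pentagon \eqref{coasso2}, expand both sides. Every $\phi$-factor and every $\Delta$-image of $\phi$ lies in $Z^{\otimes4}$, hence is central in $A^{\otimes4}$. We can therefore move all $\phi$-factors to the right. The left side becomes $\mathrm{LHS}_\Phi\cdot(\id\otimes\id\otimes\Delta)(\phi)(\Delta\otimes\id\otimes\id)(\phi)$, and the right side becomes $\mathrm{RHS}_\Phi\cdot(1\otimes\phi)(\id\otimes\Delta\otimes\id)(\phi)(\phi\otimes1)$. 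The two sides then agree, by the pentagon for $\Phi$ and condition (ii) of Definition~\ref{d:normalized-Phi}. One point needs care: in $A$ the comultiplication is only quasi-coassociative, but on $Z$ it is coassociative (Lemma~\ref{l:central-is-Hopf}). So the $Z$-factors satisfy exactly the $Z$-cocycle identity, independently of $\Phi$.

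\emph{Antipode part.} Axiom \eqref{EquasiS} with $\a'=\a$ is unchanged. For $\beta'=\beta\theta$ we compute $\sum a_1\beta\theta S(a_2)=\theta\sum a_1\beta S(a_2)=\ve(a)\beta'$. It remains to check \eqref{EAssociator}. For the first identity, using centrality,
\[
\sum \Phi^1\phi^1\,\beta\theta\,S(\Phi^2\phi^2)\,\a\,\Phi^3\phi^3=\theta\cdot\sum\phi^1S(\phi^2)\phi^3\cdot\sum\Phi^1\beta S(\Phi^2)\a\Phi^3=\theta\cdot\sum\phi^1S(\phi^2)\phi^3 .
\]
The second identity reduces similarly to $\theta\cdot\sum S(\bar\phi^1)\bar\phi^2S(\bar\phi^3)=1$, where $\phi^{-1}=\sum\bar\phi^1\otimes\bar\phi^2\otimes\bar\phi^3$. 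Both are scalar identities in the commutative Hopf algebra $\kk^G$. Writing $\phi=\phi_\omega$ and evaluating at $\delta_h$, the three products become $\theta=\sum_h\omega(h^{-1},h,h^{-1})\delta_h$, $\sum_h\omega(h,h^{-1},h)\delta_h$, and $\sum_h\omega(h^{-1},h,h^{-1})^{-1}\delta_h$.

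\emph{Main obstacle.} The only genuine cohomological input is the standard identity $\omega(h,h^{-1},h)\,\omega(h^{-1},h,h^{-1})=1$ for a normalized $3$-cocycle $\omega$. I will obtain it from the cocycle condition evaluated at $(h,h^{-1},h,h^{-1})$, using the normalization $\omega(\cdot,1,\cdot)=\omega(1,\cdot,\cdot)=\omega(\cdot,\cdot,1)=1$. Granting this identity, the first product equals $\theta\cdot\theta^{-1}=1$ and the second is trivially $\theta\theta^{-1}=1$. I expect the step needing most care to be the bookkeeping that the antipode $S$ passes the $\phi$-legs through to give exactly these central factors in the correct order, which is harmless precisely because everything lies in $Z(A)$. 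This completes the verification, with $\beta'$ agreeing with the formula for $\beta_\omega$ in Example~\ref{ex:quasi-Hopf-pointed} when $A=\kk^G$.
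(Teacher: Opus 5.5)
Your proposal is correct and is essentially the paper's argument. Centrality of $Z^{\otimes k}$ in $A^{\otimes k}$ separates each axiom into a $\Phi$-part and a $\phi$-part, and the one genuine input is $\bigl(\sum S(\phi^1)\phi^2S(\phi^3)\bigr)^{-1}=\sum\phi^1S(\phi^2)\phi^3$. The paper derives this identity coordinate-free, by applying the algebra map ${\rm mult}^{(4)}\circ(S\otimes\id\otimes S\otimes\id)$ to the cocycle identity of Definition~\ref{d:normalized-Phi}(ii); in the basis $\{\delta_h\}$ this is exactly your evaluation of the $3$-cocycle condition, at $(h^{-1},h,h^{-1},h)$ instead of $(h,h^{-1},h,h^{-1})$.
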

\begin{proof}
Since $\phi \in Z^{\otimes 3} \subseteq Z(A^{\otimes 3})$ is central, the modified associator $\Phi \cdot \phi$ satisfies the pentagon (the central factor $\phi$ commutes with all other factors, reducing the verification to the individual pentagons for $\Phi$ and $\phi$). Quasi-coassociativity for $\Phi \cdot \phi$ follows from that for $\Phi$ (conjugation by a central element).  Applying ${\rm mult}^{(4)}(S\otimes \id \otimes S\otimes \id): A^{\otimes 4}\to A$ to Definition \ref{d:normalized-Phi}(ii) and using the normalizations of $\phi$, we get $(\sum S(\phi^1)\,\phi^2\,S(\phi^3))^{-1}=\sum \phi^{1}S(\phi^{2})\phi^{3}$. Then antipode axioms \eqref{EquasiS} and \eqref{EAssociator} are verified directly.
\end{proof}

\subsection{Central modification for quasi-Hopf algebras}\label{SS:reduction-general}\label{SS:cocycle-general}

We recall the notion of twist-liftable outer automorphisms of a finite-dimensional quasi-Hopf algebra $(A,\Phi)$ as
\[
\OutTw(A,\Phi)\;:=\;\operatorname{Im}\bigl(\Pi_A\colon\ost(A,\Phi)\to\operatorname{Out}_{\rm alg}(A)\bigr)\;\subseteq\;\operatorname{Out}_{\rm alg}(A).
\]
We denote by $\st^Z(A,\Phi)\subseteq \st(A,\Phi)$ the subgroup consisting of those stable twists $(\varphi,J)$ where $\varphi(Z)=Z$. It is clear that $\st_0(A)\subseteq \st^Z(A,\Phi)$ and we write $\ost^Z(A,\Phi)\subseteq \ost(A,\Phi)$ and $\OutTwZ(A,\Phi)\subseteq \OutTw(A,\Phi)$ as the corresponding subgroups. By definition, $\OutTwZ(A,\Phi)$ acts on $H^3(G,\k^\times)$ with $Z=\k^G$. In terms of any normalized 3-cocycle $\phi_\omega$ on $Z$, one can check that $\varphi^{\otimes 3}(\phi_\omega)$ is again a normalized 3-cocycle on $Z$ for any $\overline{\varphi}\in \OutTwZ(A,\Phi)$. Set $\Phi_\omega:=\Phi\cdot \phi_\omega$. Moreover, define the \emph{cocycle ratio} $C_\vp(\phi_\omega)$ and, for invertible $K\in A^{\otimes 2}$, the obstruction $d^2(K)$:
\begin{align*}
C_\vp(\phi_\omega)&\;:=\; \vp^{\otimes 3}(\phi_\omega)\cdot\phi_\omega^{-1}\;\in\;Z^{\otimes 3},\\
d^2(K) &\;:=\; (1 \o K)\,(\id \o \Delta)(K)\,(\Delta \o \id)(K^{-1})\,(K^{-1} \o 1)\;\in\;A^{\otimes 3}.
\end{align*}

\begin{deff}\label{d:CT}
Let $\overline{\varphi}\in \OutTwZ(A,\Phi\omega)$ so that $\varphi(Z)=Z$ and $\varphi$ induces an automorphism $\varphi^*$ of $H^3(G,\k^\times)$. We say $\overline{\varphi}$ \emph{acts trivially on $H^3(G,\k^\times)$} if $\varphi^*=\id$; equivalently, for every normalized $3$-cocycle $\phi$ on $Z$ the cocycle ratio $C_\vp(\phi)=\vp^{\o 3}(\phi)\cdot(\phi)\inv$ is a $3$-coboundary on $Z$, i.e.\ there exists an invertible $J^0_\vp\in Z^{\o 2}$ with
\begin{equation}\label{eq:J0-normalized-gen}
(\ve \o \id)(J^0_\vp) = (\id \o \ve)(J^0_\vp) = 1\,,\qquad d^2(J^0_\vp) = C_\vp(\phi).
\end{equation}
We call such $J^0_\vp$ a \emph{primitive} for $(\varphi,\phi)$.
\end{deff}

For any stable twist $(\varphi,J)\in \ost^Z(A,\Phi_\omega)$, the associator condition $\vp^{\otimes 3}(\Phi_\omega)=(\Phi_\omega)_J$ together with centrality of $\phi_\omega$ in $A^{\otimes 3}$ unwinds to
\begin{equation}\label{eq:d2J-C-general}
\vp^{\otimes 3}(\Phi) \cdot C_\vp(\phi_\omega) \;=\; (\Phi)_J,
\end{equation}
and in particular, if $\Phi=1\otimes 1\otimes 1$, then $C_\vp(\phi_\omega)=d^2(J)$.

\begin{thm}\label{t:reduction-general}\label{t:main-general}
Let $(A,\Phi)$ be a finite-dimensional quasi-Hopf algebra with a central sub-bialgebra $Z=\k^G\subseteq Z(A)$, and let $\omega\in Z^3(G,\k^\times)$. Assume:
\begin{itemize}
\item[(i)] $H^2(G, \kk^\times) = 1$;
\item[(ii)] $\OutTwZ(A,\Phi)=\OutTw(A,\Phi)$ and $\OutTwZ(A,\Phi_\omega)=\OutTw(A,\Phi_\omega)$;
\item[(iii)] both $\OutTw(A,\Phi)$ and $\OutTw(A,\Phi_\omega)$ act trivially on $H^3(G,\kk^\times)$.
\end{itemize}
Then the map
\[
\Psi \colon \ost(A, \Phi_\omega) \longrightarrow \ost(A,\Phi)\,, \qquad [(\vp, J)] \longmapsto [(\vp,\; (J^0_\vp)^{-1} J)]\,,
\]
where $J^0_\vp$ is any primitive for $(\varphi,\phi_\omega)$, is a well-defined group isomorphism.
\end{thm}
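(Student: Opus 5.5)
The plan is to verify directly that $(\vp,(J^0_\vp)^{-1}J)$ is a stable twist of $(A,\Phi)$. We then show that the class is independent of all choices, that $\Psi$ is multiplicative, and that the symmetric construction with $\phi_\omega^{-1}$ gives a two-sided inverse.

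\emph{Step 1 (well-definedness on representatives).} Let $(\vp,J)\in\st(A,\Phi_\omega)$. By (ii), $\vp(Z)=Z$. By (iii), a primitive $J^0_\vp\in Z^{\otimes2}$ exists with $d^2(J^0_\vp)=C_\vp(\phi_\omega)$. Put $J':=(J^0_\vp)^{-1}J$. Since $J^0_\vp$ is central in $A^{\otimes2}$, we get $J'\Delta(\vp(a))J'^{-1}=J\Delta(\vp(a))J^{-1}=(\vp\otimes\vp)\Delta(a)$. Counit normalization is immediate. For the associator, expand $\Phi_{J'}$ using \eqref{TAss}. Because $(J^0_\vp)^{\pm1}$, $(\id\otimes\Delta)(J^0_\vp)$ and $(\Delta\otimes\id)(J^0_\vp)$ all lie in $Z^{\otimes3}\subseteq Z(A^{\otimes3})$, they can be pulled out, giving $\Phi_{J'}=d^2(J^0_\vp)^{-1}\Phi_J$. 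Here we use that $\Delta$ is multiplicative and that $\Delta(Z)\subseteq Z\otimes Z$. Combining with \eqref{eq:d2J-C-general} gives $\Phi_{J'}=C_\vp(\phi_\omega)^{-1}\vp^{\otimes3}(\Phi)C_\vp(\phi_\omega)=\vp^{\otimes3}(\Phi)$. So $(\vp,J')\in\st(A,\Phi)$.

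\emph{Step 2 (independence of choices).} Two primitives for $(\vp,\phi_\omega)$ differ by an invertible $K\in Z^{\otimes2}$ with $d^2(K)=1$. Since $Z\cong\kk^G$, $K$ is a normalized $2$-cocycle on $G$. By (i), $K=\partial_w$ for some $w\in Z^\times_1\subseteq Z(A)\cap A^\times_1$. Therefore $(\vp,K^{-1}J')=(I_{w^{-1}},\partial_{w^{-1}})(\vp,J')$, with $I_{w^{-1}}=\id$ and $\partial_{w^{-1}}$ central, so the class is unchanged. Next, replace $(\vp,J)$ by $(\vp_x,J_x)$ with $x\in A^\times_1$. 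Since $I_x|_Z=\id$, we have $C_{\vp_x}=C_\vp$, so $J^0_\vp$ is still a primitive. By centrality, $(J^0_\vp)^{-1}J_x=((J^0_\vp)^{-1}J)_x$. Hence $\Psi$ is well defined.

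\emph{Step 3 (homomorphism and bijectivity).} Use the product rule \eqref{GroupTW}, $(\vp,J)(\vp',J')=(\vp\vp',\vp(J')J)$. Then $C_{\vp\vp'}(\phi)=\vp^{\otimes3}(C_{\vp'}(\phi))\,C_\vp(\phi)$. Since $\vp$ is an isomorphism $A\to A_J$ preserving $Z$ and $J$ commutes with $Z^{\otimes 2}$-valued coproducts, we get $d^2(\vp(J^0_{\vp'}))=\vp^{\otimes3}(d^2(J^0_{\vp'}))$. Hence $\vp(J^0_{\vp'})J^0_\vp$ is a primitive for $\vp\vp'$, and Step 2 lets us use it. Centrality then gives $\Psi$ multiplicative. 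For bijectivity, build the analogous map $\Psi'\colon\ost(A,\Phi)\to\ost(A,\Phi_\omega)$, $[(\vp,J)]\mapsto[(\vp,J^0_\vp J)]$. Its existence uses the $\Phi$ halves of hypotheses (ii) and (iii). Since the same primitive can be chosen in both directions, $\Psi'\Psi=\id$ and $\Psi\Psi'=\id$.

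The main obstacle is the identity $d^2(\vp(J^0))=\vp^{\otimes3}(d^2(J^0))$ in Step 3, because $\vp$ intertwines $\Delta$ with $\Delta_J$ rather than with $\Delta$. I would handle it by noting that on $Z$ (and $Z^{\otimes 2}$) conjugation by $J$ is trivial, so $\Delta\circ\vp=\Delta_J\circ\vp=(\vp\otimes\vp)\circ\Delta$ there. This makes the computation purely internal to the commutative Hopf algebra $Z$.
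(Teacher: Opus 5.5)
Your proposal is correct and follows the paper's proof essentially step for step. You twist off a central primitive $J^0_\vp$ to land in $\st(A,\Phi)$, get independence of choices from $H^2(G,\kk^\times)=1$ and $C_{\vp_x}=C_\vp$, prove multiplicativity using the primitive $\vp(J^0_{\vp'})J^0_\vp$ for $\vp\vp'$, and obtain the inverse by multiplying back by $J^0_\vp$. Your explicit justification of $d^2(\vp^{\otimes 2}(J^0))=\vp^{\otimes 3}(d^2(J^0))$ — via $\Delta\circ\vp=(\vp\otimes\vp)\circ\Delta$ on $Z$, since $\vp(Z)=Z$ and $Z\otimes Z$ is central — fills in a point the paper uses without comment.
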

\begin{proof}
By hypothesis (ii), it suffices to show that
\[
\Psi \colon \ost^{Z}(A, \Phi_\omega) \longrightarrow \ost^{Z}(A,\Phi)\,, \qquad [(\vp, J)] \longmapsto [(\vp,\; (J^0_\vp)^{-1} J)]\,,
\]
where $J^0_\vp$ is any primitive for $(\varphi,\phi_\omega)$, is a well-defined group isomorphism. We break this into four steps.

\medskip
\noindent\textit{Step 1: $(\vp, J'):=(\vp, (J^0_\vp)^{-1} J)$ is a stable twist of $(A,\Phi)$ fixing $Z$.}

Write $J^0:=J^0_\vp$ (which exists by~(iii)) and $J':=(J^0)^{-1}J$; as $J^0\in Z^{\otimes 2}\subseteq Z(A^{\otimes 2})$, it is central. The intertwiner property follows from this centrality:
\begin{align*}
J'\,\Delta(\vp(a))\,J'^{-1}
&= (J^0)^{-1}\,J\,\Delta(\vp(a))\,J^{-1}\,J^0
= (J^0)^{-1}\,(\vp\o\vp)(\Delta(a))\,J^0\\
&= (\vp\o\vp)(\Delta(a)).
\end{align*}
For the associator, $\vp^{\otimes 3}(\Phi_\omega)=(\Phi_\omega)_J$ together with \eqref{eq:d2J-C-general} and $C_{\vp}(\phi_\omega)=d^2(J^0)$ gives $\vp^{\otimes 3}(\Phi)=d^2(J^0)^{-1}(\Phi)_J=(\Phi)_{J'}$. Finally $(\ve\o\id)(J')=(\id\o\ve)(J')=1$, since $J^0$ and $J$ are both counit-normalized. Thus $(\vp,J')\in\st^Z(A,\Phi)$.

\medskip
\noindent\textit{Step 2: $\Psi$ is independent of the choice of $J^0_\vp$ and of the representative of $[(\vp,J)]$.}

If $\tilde J^0$ is another counit-normalized primitive, then $b:=(J^0)^{-1}\tilde J^0\in(Z\otimes Z)^\times$ is a normalized $2$-cocycle ($d^2(b)=1$), so $b=(z\o z)\Delta(z)^{-1}$ for some $z\in Z_1^\times$ by $H^2(G,\kk^\times)=1$; hence $(\vp,(\tilde J^0)^{-1}J)\sim(\vp,(J^0)^{-1}J)$ in $\ost(A,\Phi)$ via $z$. Next, suppose $(\vp,J)\sim(\tvp,\tJ)$ in $\st(A,\Phi_\omega)$ via $u\in A_1^\times$, so $\tvp=I_u\circ\vp$ and $\tJ=(u\o u)J\Delta(u)^{-1}$. Since $I_u$ is inner and $\phi_\omega$ is central, $C_{\tvp}(\phi_\omega)=C_\vp(\phi_\omega)$, so we may take $J^0_{\tvp}=J^0_\vp$; centrality of $J^0_\vp$ then gives $(J^0_{\tvp})^{-1}\tJ=(u\o u)J'\Delta(u)^{-1}$, i.e. $(\vp,J')\sim(\tvp,(J^0_{\tvp})^{-1}\tJ)$ in $\st(A,\Phi)$ via $u$.

\medskip
\noindent\textit{Step 3: $\Psi$ is a group homomorphism.}

Let $(\vp_1, J_1), (\vp_2, J_2) \in \st^Z(A, \Phi_\omega)$. By~\eqref{GroupTW},
\[
(\vp_1, J_1)\otimes(\vp_2, J_2) = (\vp_1 \vp_2,\ \vp_1(J_2)\,J_1);
\]
write $C_\vp:=C_\vp(\phi_\omega)$. Since $\phi_\omega$ is central and the $\vp_i$ are algebra automorphisms,
\[
C_{\vp_1\vp_2} = \vp_1^{\otimes 3}\bigl(\vp_2^{\otimes 3}(\phi_\omega)\bigr)\phi_\omega^{-1}=\vp_1^{\otimes3}(C_{\vp_2})\cdot C_{\vp_1}.
\]
Choosing primitives $J^0_{\vp_i}$ with $d^2(J^0_{\vp_i})=C_{\vp_i}$, the element $K:=J^0_{\vp_1}\cdot\vp_1^{\otimes 2}(J^0_{\vp_2})\in Z^{\otimes 2}$ is counit-normalized and satisfies $d^2(K)=C_{\vp_1}\cdot\vp_1^{\otimes3}(C_{\vp_2})=C_{\vp_1\vp_2}$ (using $d^2(\vp_1^{\otimes2}(J^0_{\vp_2}))=\vp_1^{\otimes3}(C_{\vp_2})$), so $K$ is a valid primitive for $(\vp_1\vp_2,\phi_\omega)$. Writing $J_i':=(J^0_{\vp_i})^{-1}J_i$ and using centrality of the $J^0$'s,
\begin{align*}
\Psi\bigl([(\vp_1\vp_2,\vp_1(J_2)J_1)]\bigr)
&= [(\vp_1\vp_2,\ K^{-1}\vp_1(J_2)J_1)]
= [(\vp_1\vp_2,\ \vp_1(J_2')\,J_1')]\\
&= \Psi([(\vp_1,J_1)])\cdot\Psi([(\vp_2,J_2)]),
\end{align*}
the last equality by~\eqref{GroupTW} in $\ost(A,\Phi)$.

\medskip
\noindent\textit{Step 4: $\Psi$ is an isomorphism.}

Exchanging the roles of $\Phi$ and $\Phi_\omega$, Steps~1--3 furnish a well-defined homomorphism
\[
\Psi'\colon \ost^{Z}(A,\Phi)\to\ost^{Z}(A,\Phi_\omega),\qquad [(\vp,J')]\mapsto\bigl[(\vp,(K^0_\vp)^{-1}J')\bigr],
\]
where $K^0_\vp$ is any primitive for $(\vp,\phi_\omega^{-1})$, that is, $d^2(K^0_\vp)=C_\vp(\phi_\omega^{-1})=C_\vp(\phi_\omega)^{-1}$. Since $\Psi'$ does not depend on this choice (Step~2) and $(J^0_\vp)^{-1}$ is such a primitive, we may take $K^0_\vp=(J^0_\vp)^{-1}$, so that
\[
\Psi'\bigl(\Psi([(\vp,J)])\bigr)=\bigl[(\vp,\,J^0_\vp\,(J^0_\vp)^{-1}J)\bigr]=[(\vp,J)],
\]
and symmetrically $\Psi\circ\Psi'=\id$. Hence $\Psi$ is a group isomorphism with inverse $\Psi'$, which is the asserted isomorphism $\ost(A,\Phi_\omega)\xrightarrow{\sim}\ost(A,\Phi)$.
\end{proof}

The following result is probably well known in group cohomology; we include a proof for the reader's convenience.

\begin{lem}\label{l:canonical-J0-A1}
Let $C_n=\langle g\rangle$ be the cyclic group of order  $n\ge 1$ and $\omega \in Z^3(C_n, \kk^\times)$. Then for any group automorphism $\sigma_k(g)=g^k$ for any $k\ge 1$ coprime to $n$, we have $[\sigma_k^*(\omega)] = [\omega^{k^2}]$ in $H^3(C_n, \kk^\times)$.
\end{lem}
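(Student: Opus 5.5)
We will prove the identity $[\sigma_k^*\omega]=[\omega^{k^2}]$ by testing both classes against a complete invariant of $H^3(C_n,\kk^\times)\cong \Z/n$. Any class contains a cocycle of the standard form
\[
\omega_a(g^i,g^j,g^l)=\zeta^{\,a\,i\lfloor (j+l)/n\rfloor},\qquad 0\le i,j,l\le n-1,
\]
with $\zeta$ a primitive $n$-th root of unity and $a\in\Z/n$ (the same normal form used in Example~\ref{ex:quasi-Hopf-pointed}). Since $\sigma_k^*$ and $\omega\mapsto\omega^{k^2}$ are both group endomorphisms of $H^3(C_n,\kk^\times)$, it is enough to treat the generator $\omega=\omega_1$ and show $[\sigma_k^*\omega_1]=[\omega_1^{k^2}]=[\omega_{k^2}]$.

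To detect the class we use the standard invariant
\[
I(\omega):=\prod_{j=0}^{n-1}\omega(g,g^j,g),
\]
restricted along the generator $g$. One checks that $I$ is multiplicative in $\omega$ and kills coboundaries: for $\delta\mu$ the terms telescope, because $\prod_j \mu(g^j,g)\mu(g,g^{j+1})/\mu(g^{j+1},g)\mu(g,g^j)=1$. For the normal form, $I(\omega_a)=\zeta^a$, since among the factors $\omega_a(g,g^j,g)$ only $j=n-1$ contributes. Thus $I$ is an isomorphism $H^3(C_n,\kk^\times)\to\mu_n$.

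The key observation is that $I$ depends on the choice of generator. For a generator $h=g^k$, put $I_h(\omega):=\prod_{j}\omega(h,h^j,h)$. Then $I(\sigma_k^*\omega)=I_{g^k}(\omega)$, and what we need is
\[
I_{g^k}(\omega)=I_g(\omega)^{k^2}.
\]
We would verify this directly on $\omega_1$. Write $k$ as its residue in $[1,n-1]$ and compute
\[
\prod_{j=0}^{n-1}\omega_1\bigl(g^k,g^{jk \bmod n},g^k\bigr)=\zeta^{\,k\sum_j\lfloor (\langle jk\rangle+k)/n\rfloor}.
\]
As $j$ varies, $\langle jk\rangle$ runs over all residues $0,\dots,n-1$, so the number of $j$ with $\langle jk\rangle+k\ge n$ is exactly $k$. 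The exponent is therefore $k\cdot k=k^2$, which gives $I(\sigma_k^*\omega_1)=\zeta^{k^2}=I(\omega_{k^2})$. The case $n=1$ is trivial.

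The main obstacle is justifying that $I$ is a well-defined injective invariant, specifically that it vanishes on coboundaries. We would do this by the telescoping argument above, normalizing so that the index is read mod $n$. Injectivity then follows from the count $|H^3(C_n,\kk^\times)|=n$ together with $I(\omega_1)=\zeta$.
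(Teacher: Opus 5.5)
Your proof is correct, and it reaches the result by a different computation than the paper's. The paper works on the periodic resolution of $\Z$ over $\Z C_n$, whose differentials alternate between $g-1$ and $N$. It identifies $H^3(C_n,\k^\times)\cong\mu_n(\k)$ by evaluating a cocycle at the generator $1\in P_3$. It then lifts $\sigma_k$ to a semilinear chain map with $\tilde\sigma_2(1)=k$ and $\tilde\sigma_3(1)=kN_k$, where $N_k=1+g+\cdots+g^{k-1}$. The exponent $k^2$ arises as the factor $k$ from $N\cdot N_k=kN$ in degree $2$ times the $k$ summands of $N_k$.

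You instead stay with inhomogeneous bar cocycles, which is the form in which $\omega$ actually enters the associator $\phi_\omega$. There $k^2$ arises as the exponent $k$ carried by the first argument $g^k$ times the number $k$ of residues $r$ with $r+k\ge n$.

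The two invariants coincide. The standard comparison map from the periodic to the bar resolution sends $1\in P_3$ to $\sum_{j}[g\,|\,g^j\,|\,g]$, so your $I(\omega)=\prod_{j}\omega(g,g^j,g)$ is exactly the paper's $\omega(1)$. Your telescoping check therefore makes explicit the identification between the two cochain models, which the paper uses implicitly when it evaluates an inhomogeneous cocycle at $1$.

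What each route buys: the paper's argument is shorter and extends verbatim to all odd degrees, since $\sigma_k$ acts by $k^{r+1}$ on $H^{2r+1}$. Yours is elementary and self-contained. It needs only $|H^3(C_n,\k^\times)|=n$ and that $\omega_1$ is a cocycle; the normal-form claim for all classes then follows from $I(\omega_1)=\zeta$ having order $n$. The one tacit step, reducing $k$ to its residue in $[1,n-1]$, is harmless because $\sigma_k$ and $[\omega]^{k^2}$ depend only on $k \bmod n$.
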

\begin{proof}
We may assume $n\ge 2$ and regard $\kk^\times$ as a trivial $C_n$-module. The standard periodic free resolution of $\Z$ over $\Z C_n$,
\[
\cdots \xrightarrow{N} \Z C_n \xrightarrow{g-1} \Z C_n \xrightarrow{N} \Z C_n \xrightarrow{g-1} \Z C_n \xrightarrow{\ve} \Z \to 0\qquad\bigl(N:=\textstyle\sum_{i=0}^{n-1}g^i\bigr),
\]
computes $H^*(C_n,\kk^\times)$ and yields the standard identification $H^3(C_n,\kk^\times)\xrightarrow{\sim}\mu_n(\kk)$, $[\omega]\mapsto\xi_\omega:=\omega(1)$. The automorphism $\sigma_k\colon g\mapsto g^k$ lifts to a $\sigma_k$-semilinear chain endomorphism $\tilde\sigma_\bullet$ of this resolution; writing $N_k:=1+g+\cdots+g^{k-1}$ and using $g^k-1=(g-1)N_k$ and $N\cdot N_k=k\,N$, one may take
\[
\tilde\sigma_0(1)=1,\quad \tilde\sigma_1(1)=N_k,\quad \tilde\sigma_2(1)=k,\quad \tilde\sigma_3(1)=k\,N_k.
\]
Since the action on $H^3$ is precomposition with $\tilde\sigma_3$, we get $(\sigma_k^*\omega)(1)=\omega(\tilde\sigma_3(1))=\xi_\omega^{\,k^2}$, i.e.\ $[\sigma_k^*\omega]=[\omega^{k^2}]$ in $H^3(C_n,\kk^\times)$.
\end{proof}

\begin{example}\label{ex:central-C2-group}
Let $(A,\Phi)$ be a finite-dimensional quasi-Hopf algebra such that $Z_G(A)=\langle u\rangle\cong C_2$. In this case, $H^3(C_2,\k^\times)=\mathbb Z/2\mathbb Z$, whose unique nontrivial cohomology class is presented by the normalized 3-cocycle $\omega: C_2\times C_2\times C_2\to \k^\times$
\begin{align*}
    \omega(x,y,z)=
    \begin{cases}
        -1 & \text{if } x=y=z=u\\
        1 & \text{otherwise}
    \end{cases}\, .
\end{align*}
We write $(A,\Phi_\omega)$ for the corresponding modified quasi-Hopf algebra, $\Phi_\omega:=\Phi\,\phi_\omega$. Since stable twists fix central grouplike elements and $\operatorname{Aut}(C_2)=1$ acts trivially on $H^3(C_2,\kk^\times)$, the hypotheses of Theorem~\ref{t:main-general} hold and it gives
\[
\ost(A)\;\cong\;\ost(A,\Phi_\omega)\,.
\]
This applies in particular to group algebras $A=\kk[G]$ whenever $Z(G)\cong C_2$ --- e.g.\ the dihedral $D_{4n}$, dicyclic $Q_{4n}$, semidihedral $SD_{2^n}$, extraspecial $2$-groups $2^{1+2n}_\pm$, $\operatorname{SL}_2(\F_q)$ for $q$ odd, and the binary polyhedral groups $2T,2O,2I$ --- and to non-group examples such as the Kac algebra $K_8$.

The three dimension-$8$ Hopf algebras with $|Z_G|=2$ illustrate this. Their representation categories are Tambara--Yamagami categories $\mathrm{TY}(\Z_2\times\Z_2,\chi,\tau_\pm)$, with $\chi$ the hyperbolic bicharacter for $\kk[D_8],\kk[Q_8]$ and the diagonal one for $K_8,(K_8,\Phi_\omega)$. Since $\kk[D_8]$ is gauge equivalent to $(\kk[Q_8],\Phi_\omega)$ \cite{NgSchaunburgCentral} and $\oAut(\mathrm{TY}(A,\chi,\tau))\cong\operatorname{Aut}(A,\chi)$ \cite[Prop.~1]{Tambara2000}, together with $\operatorname{Aut}(\Z_2\times\Z_2,\chi_{\rm hyp})\cong GL_2(\F_2)\cong S_3$ and $\operatorname{Aut}(\Z_2\times\Z_2,\chi_{\rm diag})\cong\Z_2$, we obtain
\[
\ost(\kk[D_8])\,\cong\,\ost(\kk[Q_8])\,\cong\,S_3,\qquad\ost(K_8)\,\cong\,\ost(K_8,\Phi_\omega)\,\cong\, \Z_2.
\]
\end{example}

\subsection{Examples for $A_\delta(n,\xi)$}\label{SS:Adelta-examples}
We apply the central-modification machinery of \S\ref{SS:reduction-general} to the pointed Hopf algebras $A_\delta(n,\xi)$ of \S\ref{S:StGrH}.

\subsubsection{Stable twists of the quasi-Hopf algebra $(A_\delta(md,\xi),\Phi_\omega)$}\label{SS:Adelta-qHopf-twists}
We recall the pointed Hopf algebra $A=A_\delta(md,\xi)$ with $Z=\k[Z_G(A)]=\k\langle g^m\rangle\cong \k[C_d]$. We reserve $\omega$ for a normalized 3-cocycle in $Z^3(C_d,\k^\times)$, with corresponding associator $\Phi_\omega\in Z^{\otimes 3}$.
The standard form of Lemmas~\ref{l:vg}, \ref{bgT}, and \ref{bgT2} carries over verbatim to $(A,\Phi_\omega)$, so every stable twist $(\varphi,J)$ is equivalent to a standard one in the sense of Definition~\ref{d:sf}. As there, when $\delta=0$ the constant term $J^\bo\colon\Z_n\times\Z_n\to\k^\times$ is invertible, while for $\delta=1$ we may assume $a=\sum_{\ou\in\Z_d}a_\ou e_\ou\in \kg{g^m}^\times$. Since $\Phi_\omega$ is central, the stable-twist condition $\varphi(\Phi_\omega)=(\Phi_{\omega})_J$ upgrades Lemma \ref{l:cocycle} to the \emph{twisted} $2$-cocycle equation
\begin{equation}\label{eq:twisted-cocycle-A1-qH}
J^\bo(v,w)\,J^\bo(u, v{+}w)\;=\; C(\ou,\ov,\ow)\,J^\bo(u,v)\,J^\bo(u{+}v, w)\, ,
\end{equation}
where $C(\ou,\ov,\ow):=(\sigma^*_l(\omega)/\omega)(\ou,\ov,\ow)$ for all $u,v,w\in \Z_n$.

We record the normalization lemma that replaces Lemma~\ref{l:cocycle} in the quasi-Hopf setting.

\begin{lem}\label{l:J0omega-A0}
Let $A=A_\delta(md,\xi)$ with $\gcd(m,d)=1$. Set $Z=\k\langle g^m\rangle$, and let $\Phi_\omega\in Z^{\otimes 3}$ be a normalized 3-cocycle. Suppose  $(\varphi,J)$ is a standard stable twist of $(A,\Phi_\omega)$ with $\varphi(g)=g^l$ and $J^\bo\in \k\langle g\rangle^{\otimes 2}$ invertible. Then:
\begin{enumerate}[label=\textup{(\roman*)}]
    \item $\sigma_l^*(\omega)/\omega$ is a $3$-coboundary on $C_d$;
    \item for any normalized $2$-cochain $J^\bo_\omega: Z_d\times Z_d\to \k^\times$ with $d^2(J^\bo_\omega)=\sigma_l^*(\omega)/\omega$, there exists $z\in\kg{g}^\times_1$ such that the gauged stable twist $(\varphi_z,J_z)$ is standard and satisfies $(J_z)^{\bo}=\sum_{\ou,\ov\in \Z_d}J^\bo_\omega(\ou,\ov)\,e_{\ou}\otimes e_{\ov}\in Z^{\otimes 2}$;
    \item when $\delta=0$ after this gauge $\varphi_z(x)=\lambda x$ for some $\lambda\in \k^\times$.
\end{enumerate}
\end{lem}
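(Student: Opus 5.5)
The plan is to extract everything from the twisted $2$-cocycle identity \eqref{eq:twisted-cocycle-A1-qH} satisfied by the constant term $J^\bo$, viewed as a function $\Z_n\times\Z_n\to\kk^\times$. The identity says that $J^\bo$ is a normalized $2$-cochain on $\Z_n$ whose coboundary is the inflation, along $\Z_n\twoheadrightarrow\Z_d$ ($u\mapsto\ou$), of the $3$-cocycle $C=\sigma_l^*(\omega)/\omega$ on $C_d$. Because $J^\bo$ is invertible, this shows directly that $\mathrm{infl}(C)$ is a coboundary on $\Z_n$.

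For (i), I will use the hypothesis $\gcd(m,d)=1$. It gives $\Z_n\cong\Z_m\times\Z_d$, so the inflation $C_d\to\Z_n$ has a section, namely the inclusion of the $d$-torsion subgroup $m\Z_n\cong\Z_d$. The composite of restriction to this subgroup with inflation is the identity on $H^3(C_d,\kk^\times)$, up to the automorphism $\bar{m}\mapsto$ multiplication by $m$ on $C_d$, which is an isomorphism on cohomology. Hence $[C]=0$, which proves (i). A second route is Lemma~\ref{l:canonical-J0-A1}, which computes $[C]=[\omega^{l^2-1}]$; combined with the inflation argument this gives $l^2\equiv1$ on the relevant part, but the first argument suffices.

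For (ii), fix a primitive $J^\bo_\omega$ on $C_d$ and inflate it to $\Z_n$. Then $J^\bo\cdot(\mathrm{infl}\,J^\bo_\omega)^{-1}$ is an honest normalized $2$-cocycle on $\Z_n$, because both factors have the same coboundary $\mathrm{infl}(C)$. Since $H^2(\Z_n,\kk^\times)=1$, it equals $\partial f$ for some $f\colon\Z_n\to\kk^\times$ with $f(0)=1$, exactly as in Lemma~\ref{l:cocycle}. Setting $z=\sum_u f(u)^{\pm1}e_u\in\kgt{g}$, the constant term transforms as $(J_z)^\bo=J^\bo\,\partial_z$, so $(J_z)^\bo=\sum J^\bo_\omega(\ou,\ov)\,e_\ou\otimes e_\ov$. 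Standardness is preserved for two reasons. First, $z$ commutes with $g$, so $\varphi_z(g)=g^l$. Second, $z$ is diagonal in the $e_u$, so $\varphi_z(x)=x\,a'$ with $a'=\varphi$-coefficient times the $\kk\langle g\rangle$-element $z\,\mathrm{Ad}$-shift $z^{-1}$, which still lies in $\kk\langle g\rangle$.

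The main obstacle is checking that $a'$ has the form required in Definition~\ref{d:sf}, and that the normalization $(J_z)^\bo(\eo\otimes\eo)=\eo\otimes\eo$ holds. The latter follows because $J^\bo_\omega$ is normalized, so $J^\bo_\omega(\ol0,\ol0)=1$. For the former, I would rerun the last paragraph of the proof of Lemma~\ref{l:vg}, comparing the coefficients of $1\otimes x$ in the intertwiner identity. Now the constant term is $Z$-valued rather than $1\otimes1$, but it is central in $\kk\langle g\rangle^{\otimes2}$ and depends only on cosets mod $d$, so it still forces $a'\in\kk\langle g^m\rangle$ (when $\delta=1$) and $a'\in\kk^\times$ up to the $Z$-factor.

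For (iii), the case $\delta=0$, I will carry out this same coefficient comparison using $x^m=0$. The $J^{[m-1,1]}$ term drops out exactly as before, giving $\Delta(a')(J_z)^\bo=(J_z)^\bo(1\otimes a')$. Since $(J_z)^\bo$ is invertible and commutative with $\Delta(a')$, this yields $\Delta(a')=1\otimes a'$, so $a'=\lambda\in\kk^\times$.
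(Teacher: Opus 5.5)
Your proposal is correct. For (ii) and (iii) it is the paper's argument: divide $J^\bo$ by the inflated $J^\bo_\omega$ to get an honest normalized $2$-cocycle on $\Z_n$, trivialize it using $H^2(\Z_n,\kk^\times)=1$, and gauge by the diagonal element $z=\sum_u z_u e_u$. For $\delta=0$, you then cancel the central invertible constant term in the $1\otimes x$ block.

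The one genuinely different step is the injectivity of inflation in (i).
\begin{itemize}
\item The paper identifies $H^3(C_d,\kk^\times)\cong\Z/d$ and $H^3(C_n,\kk^\times)\cong\Z/n$ as in Lemma~\ref{l:canonical-J0-A1}. It computes inflation as multiplication by $m^2$, which is injective because $\gcd(m,d)=1$.
\item You instead note that $\pi\colon\Z_n\to\Z_d$, restricted to the order-$d$ subgroup $m\Z_n$, is multiplication by $m$, an automorphism. Hence restriction to $m\Z_n$ composed with inflation is an automorphism of $H^3(C_d,\kk^\times)$, and inflation is split injective.
\end{itemize}
Your route needs no explicit resolution or cocycle computation and works in every degree and for any coefficients. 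The paper's route is more explicit about the map itself. Your aside via Lemma~\ref{l:canonical-J0-A1} is not needed.

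You are also more careful than the paper about standardness after gauging. The paper simply invokes Lemma~\ref{l:vg}, whose final clause assumes $J^\bo=1\otimes 1$. You instead do the following.
\begin{itemize}
\item You check the $\eo\otimes\eo$ normalization from $J^\bo_\omega(\ol0,\ol0)=1$.
\item You observe that gauging multiplies $a$ by $\sum_u (z_{u+d}/z_u)\,e_u$, which is not a priori in $\kk\langle g^m\rangle$.
\item You recover $a'\in\kk\langle g^m\rangle$ from the $1\otimes x$ comparison, multiplied by $\sum_{k=0}^{d-1}g^{mk}\otimes 1$. This uses that $(J_z)^\bo\in Z^{\otimes 2}$ is central in $A^{\otimes 2}$ and invertible.
\end{itemize}
The $m$-th power conditions on $a'_{\oi}$ and $a'_{\ol0}\neq 0$ then follow from $\varphi_z(x)^m=\varphi_z(1-g^m)$ and injectivity, as in Lemma~\ref{l:vg}. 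This fills a step the paper glosses over.
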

\begin{proof}
The existence of an invertible $J^\bo$ in \eqref{eq:twisted-cocycle-A1-qH} forces the pullback $\pi^*C$ along $\pi\colon\Z_n\twoheadrightarrow\Z_d$ to be a $3$-coboundary on $\Z_n$. Under the identifications $H^3(C_d,\kk^\times)\cong\Z/d\Z$ and $H^3(C_n,\kk^\times)\cong\Z/n\Z$ of Lemma~\ref{l:canonical-J0-A1}, the inflation map is multiplication by $m^2$ from $\Z/d$ to $\Z/n$. Since $\gcd(d,m)=1$, this inflation map is injective, so $[\pi^*C]=0$ in $H^3(C_n,\kk^\times)$ forces $[C]=0$ in $H^3(C_d,\kk^\times)$.

Now, fix any normalized 2-cochain $J^\bo_\omega: \Z_d\times \Z_d\to \k^\times$ with $d^2(J^\bo_\omega)=C$, pulled back to $\Z_n$ along $\pi$, and set $\tilde J^\bo(u,v):=J^\bo(u,v)/J^\bo_\omega(\bar u,\bar v)$. Since $J^\bo_\omega$ itself satisfies \eqref{eq:twisted-cocycle-A1-qH} with the same defect $C$, the ratio $\tilde J^\bo$ satisfies the \emph{genuine} normalized $2$-cocycle equation on $\Z_n$,
\[
\tilde J^\bo(v,w)\,\tilde J^\bo(u,v+w)\;=\;\tilde J^\bo(u,v)\,\tilde J^\bo(u+v,w).
\]
Since $H^2(\Z_n,\kk^\times)=1$, there exists $h\colon\Z_n\to\kk^\times$ with $h(0)=1$ such that $\tilde J^\bo(u,v)=h(u)h(v)/h(u+v)$. Setting $z=\sum_{u\in\Z_n}h(u)^{-1}e_u\in\kg{g}^\times_1$, the standard gauge calculation (cf.\ the proof of Lemma~\ref{l:cocycle}) yields
\[
(J_z)^{\bo}(u,v)\;=\;J^\bo(u,v)\cdot\frac{h(u+v)}{h(u)h(v)}\;=\;{J^\bo_\omega}(\bar u,\bar v),
\]
so $(J_z)^{\bo}$ depends only on $\bar u,\bar v\in\Z_d$ and equals $J^\bo_\omega\in Z^{\otimes 2}$. Because $z\in\kg{g}^\times$ commutes with $g$, $\varphi_z(g)=\varphi(g)=g^l$ and $(\varphi_z,J_z)$ remains standard by Lemma~\ref{l:vg}.

For the final assertion, write $\varphi_z(x)=xa$ with $a\in\kg{g}$ (Lemma~\ref{l:vg}). Since $\delta=0$, the $1\otimes x$ block of the stable-twist identity $J_z\Delta(\varphi_z(x))=(\varphi_z\otimes\varphi_z)\Delta(x)J_z$ reduces to
\[
(J_z)^{\bo}\,(1\otimes x)\Delta(a)\;=\;(1\otimes xa)\,(J_z)^{\bo}.
\]
Since $(J_z)^{\bo}$ commutes with both $1\otimes xa$ and $\Delta(a)\in\kg{g}^{\otimes 2}$, it cancels from both sides. The resulting equation $(1\otimes x)\Delta(a)=1\otimes xa$ forces $\Delta(a)=1\otimes a$, so $a\in\kk^\times$; setting $\lambda:=a$ gives $\varphi_z(x)=\lambda x$.
\end{proof}

The following result computes the stable twist groups of the pointed Hopf algebras $A_\delta(n,\xi)$ investigated in Section~\ref{S:StGrH} modified by central $3$-cocycle associators. 

\begin{prop}\label{c:A1-corollary}
Let $A = A_\delta(md, \xi)$ with $\gcd(m,d)=1$, and $Z =\kk[Z_G(A)]\cong \kk[C_d]$. Let $\omega \in Z^3(C_d, \kk^\times)$ be a normalized $3$-cocycle, with associator $\Phi_\omega\in Z^{\otimes 3}$. Then
\[
\ost(A, \Phi_\omega) \;\cong\; \ost(A)\, .
\]
\end{prop}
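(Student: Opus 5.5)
The plan is to apply Theorem~\ref{t:reduction-general} with $Z=\kk\langle g^m\rangle\cong\kk^{G}$, where $G=\widehat{C_d}\cong C_d$, and $\Phi=1\otimes1\otimes1$, so that $(A,\Phi_\omega)$ is the central modification of the Hopf algebra $A$. This requires checking hypotheses (i)--(iii).

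Hypothesis (i) holds because $H^2(C_d,\kk^\times)=1$ for a cyclic group.

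For (ii), every algebra automorphism $\vp$ appearing in a stable twist of $(A,\Phi)$ or of $(A,\Phi_\omega)$ preserves $Z$. The reason is that $\vp$ maps central grouplikes to central grouplikes: the computation preceding Lemma~\ref{lem:3co} uses only that $\vp\colon A\to A_J$ is a quasi-bialgebra isomorphism, so it applies verbatim in the quasi-Hopf setting. Hence $\vp(Z_G(A))=Z_G(A)$, and so $\vp(Z)=Z$. Thus $\st^Z=\st$ in both cases, and the same holds for the images in $\OutTw$.

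For (iii), I would use standard representatives. By Lemmas~\ref{l:vg}, \ref{bgT} and \ref{bgT2}, and their stated analogues for $(A,\Phi_\omega)$, every class has a representative with $\vp(g)=g^l$ and $l\equiv1\pmod m$. The induced automorphism of $Z\cong\kk[C_d]$ is then $g^m\mapsto g^{ml}$, i.e.\ $\sigma_l$ on $C_d$. By Lemma~\ref{l:canonical-J0-A1} it acts on $H^3(C_d,\kk^\times)\cong\Z/d$ by multiplication by $l^2$, so I need $l^2\equiv1\pmod d$ for every $l$ that occurs.

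For $(A,\Phi)$, Theorem~\ref{inverT}(i)--(iii), together with Theorem~\ref{STH0} when $\delta=0$ (where $l\equiv1\pmod n$ after gauge), gives $l\equiv\pm1\pmod d$, hence $l^2\equiv1$. The exceptional case $\delta_m=1$, $d=2$ also satisfies $l^2\equiv1\pmod 2$.

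For $(A,\Phi_\omega)$ I cannot cite Theorem~\ref{inverT} directly. Instead I would use Lemma~\ref{l:J0omega-A0}. Part (i) of that lemma shows that $\sigma_l^*\omega/\omega$ is a coboundary for every occurring $l$ whenever $J^\bo$ is invertible. Invertibility of $J^\bo$ comes from Remark~\ref{rmk:inverA} when $\delta=0$, and from the analogue of Proposition~\ref{p:invertible} and Corollary~\ref{l:Ichar}, using $\gcd(m,d)=1$, when $\delta=1$. This shows that each $\vp$ fixes $[\omega]$, but hypothesis (iii) demands that $\vp$ act trivially on all of $H^3$.

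The main obstacle is exactly this gap for $(A,\Phi_\omega)$. I would close it as follows. By Lemma~\ref{l:J0omega-A0}(ii), $J$ can be gauged so that $J^\bo=J^\bo_\omega\in Z^{\otimes2}$. Multiplying by $(J^\bo_\omega)^{-1}$ as in Step~1 of Theorem~\ref{t:reduction-general}, the pair $(\vp,(J^\bo_\omega)^{-1}J)$ becomes a stable twist of the Hopf algebra $A$, since $d^2(J^\bo_\omega)=C_\vp(\phi_\omega)$. That construction needs only the existence of a primitive for $\phi_\omega$, not triviality on all of $H^3$. Therefore every $l$ occurring for $(A,\Phi_\omega)$ also occurs for $A$, so $l\equiv\pm1\pmod d$ and $l^2\equiv1$, which establishes (iii) for both algebras. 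Theorem~\ref{t:reduction-general} then yields $\ost(A,\Phi_\omega)\cong\ost(A)$.
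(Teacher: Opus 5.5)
Your proof is correct, but it handles $(A,\Phi_\omega)$ by a different route from the paper's.

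\emph{The paper's route.} The paper recomputes directly inside $(A,\Phi_\omega)$.
For $\delta=0$, it gauges to $J^\bo=J^\bo_\omega$ and uses the twisted cocycle equation to kill every $J^i$. Then $J$ is central, $\vp$ is a Hopf automorphism, and $l=1$.
For $\delta=1$, it splits on whether $1\in T_J$. In the invertible case it re-derives \eqref{eq:as-qH} and \eqref{eq:as-qH2} from boundary substitutions in \eqref{e7} and \eqref{e8}, where the cocycle ratio equals $1$, and so reproves Theorem~\ref{inverT}(i) in the quasi-Hopf setting.

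\emph{Your route.} You observe that Step~1 of Theorem~\ref{t:reduction-general} needs only a primitive for the one pair $(\vp,\phi_\omega)$ at hand, and Lemma~\ref{l:J0omega-A0}(i) supplies it. So $(\vp,(J^\bo_\omega)^{-1}J)$ is a stable twist of the Hopf algebra $A$ with the same action on $Z$. The bound $l\equiv\pm1\pmod d$ is then imported from Theorems~\ref{STH0} and \ref{inverT} rather than recomputed. This treats $\delta=0$ and $\delta=1$ uniformly and avoids re-checking the coefficient identities under the twisted associator. You also correctly flag a point the paper passes over: knowing only $\sigma_l^*[\omega]=[\omega]$ gives $(l^2-1)[\omega]=0$, not $l^2\equiv1\pmod d$.

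\emph{Shared dependencies.} Both arguments rely equally on carrying over Lemmas~\ref{l:vg} and \ref{bgT2} and Lemma~\ref{l:sum} to $(A,\Phi_\omega)$. Your invertibility step can be shortened. If $1\notin T_J$, Lemma~\ref{l:sum} already gives $l\equiv1\pmod d$, so $C_\vp(\phi_\omega)=1$ and $(\vp,J)$ is itself a Hopf stable twist. You do not need the analogue of Proposition~\ref{p:invertible} at all.
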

\begin{proof}
In all cases, let $(\varphi,J)$ be a standard stable twist in $\st(A,\Phi_\omega)$. The definition of stable twist shows that $\varphi$ preserves central grouplike elements $Z_G(A)=\langle g^m\rangle\cong C_d$. We will show that $\varphi(g^m)=g^{\pm m}$, which implies that $\varphi=\sigma_{\pm 1}$ acts trivially on $H^3(C_d,\kk^\times)$ by Lemma~\ref{l:canonical-J0-A1}. Therefore, the statement follows from Theorem~\ref{t:main-general}.

When $\delta=0$, we proceed as follows. If $n=m$ (Taft case), then $d=1$, $Z=\kk$, and $\omega$ is trivial. Assume $n>m$, so $d\ge 2$. Apply Lemma~\ref{l:J0omega-A0}: replace $(\varphi,J)$ by an equivalent representative with $\varphi(x)=\lambda x$ for some $\lambda\in\kk^\times$ and $J=J^\bo_\omega+\sum_{1\le i\le m-1}(x^i\otimes x^{m-i})\JC{}{i}$ for some normalized 2-cochain $J^\bo_\omega\in Z^{\otimes 2}$ satisfying $d^2(J^\bo_\omega)=\vp^{\otimes 3}(\Phi_\omega)/\Phi_\omega=C_\vp(\Phi_\omega)$. We claim $J^{i}=0$ for all $1\le i\le m-1$. Similar to the Hopf counterpart in Theorem~\ref{STH0}, we substitute such $J$ into the cocycle equation
\[
C_\vp(\Phi_\omega)(J\otimes 1)(\Delta\otimes 1)(J)\, =\, (1\otimes J)(1\otimes \Delta)(J)\,.
\]
First, extracting the coefficients of $1\otimes x^i\otimes x^{m-i}$ on both sides forces
\[
C_\vp(\Phi_\omega)(J^\bo_\omega\otimes 1)(\Delta \otimes 1)(\JC{}{i})\, =\, (1\otimes \JC{}{i})(1\otimes \Delta)(J^\bo_\omega)\, .
\]
After applying $\id \otimes \ve \otimes \id $ above and using the normalized condition of $J^\bo_\omega$ and $\Phi_\omega$, we get $\JC{}{i}=(1\otimes b_i)J^\bo_\omega$ for some $b_i\in\kk\langle g\rangle$. Then, comparing the coefficients of $x^i\otimes x^{m-i}\otimes 1$ further reduces $b_i=\alpha_i g^{i-m}$ for some $\alpha_i\in\kk$. Finally,  investigating $x^i\otimes 1\otimes x^{m-i}$ yields $\alpha_i(1\otimes g^{i-m}\otimes g^{i-m})=\alpha_i(1\otimes g^i\otimes g^{i-m})$, hence $\alpha_i=0$ since $g^i\ne g^{i-m}$ for $d\ge 2$. Thus $\JC{}{i}=0$ for all $1\le i\le m-1$. Since $J=J^\bo_\omega\in Z^{\otimes 2}$ is central in $A^{\otimes 2}$, it commutes with $\Delta(A)$ on the nose; in particular, $[J,\Delta(g)]=[J,\Delta(x)]=0$. It follows that $\varphi\in\operatorname{Aut}_{\mathrm{Hopf}}(A_0(md,\xi))$, so $\varphi(x)=\lambda x$ must lie in the $(g^l,1)$-skew-primitive space of $A_0(md,\xi)$. This implies that $l=1$ and closes the case $\delta=0$.

Now suppose $\delta=1$, so that $A=A_1(md,\xi)$ with $Z_G(A)=\langle g^m\rangle\cong C_d$. By Lemma~\ref{bgT2} we may assume $(\varphi,J)$ is standard, so $\varphi(g)=g^l$ for some integer $l$ coprime to $n=md$ with $l\equiv 1\pmod m$, and $\varphi(x)=xa$ with $a=\sum_{\ou\in\Z_d}a_\ou e_\ou\in \kg{g^m}^\times$. We will show $l\equiv \pm 1\pmod d$, i.e.,\ $\varphi$ acts on $Z_G(A)\cong C_d$ as $\sigma_{\pm 1}$. 

We make two general observations that control the role of $C$ in \eqref{eq:twisted-cocycle-A1-qH} throughout the analysis:
\begin{itemize}
\item[(a)] Because $C$ is a ratio of values of the unit-valued cocycle $\omega$, it is \emph{nowhere zero}; assertions about \emph{vanishing} of $J^\bo$-coefficients therefore transfer unchanged to $(A,\Phi_\omega)$.
\item[(b)] Because $\omega$ is normalized, $C(\ou,\ov,\ow)=1$ whenever any of $\ou,\ov,\ow$ equals $\ol 0\in\Z_d$. Hence identities derived by substitutions of this boundary type are \emph{numerically unchanged}.
\end{itemize}
Moreover, the compatibility relations \eqref{eq:x2}--\eqref{eq:x4}, equivalently \eqref{eq:z2}--\eqref{eq:z4}, come from $\Delta_J\circ\varphi=(\varphi\otimes\varphi)\circ\Delta$ applied to the skew-primitive $x$. They involve only the coalgebra structure of $A$ and the gauge $J$, not the associator, so are \emph{independent of $\Phi_\omega$} and carry over verbatim. Then, there are two cases to consider regarding $T_J=\{u\in Z_n\,|\, \JC{u,-u}{0}\neq 0\}$. 

\emph{Case $1\notin T_J$ \textup{(}equivalently $J^\bo$ not invertible\textup{)}.} The proof of Lemma~\ref{l:sum} carries over verbatim: its identities are derived from the compatibility relations \eqref{eq:x2}--\eqref{eq:x4} together with cocycle relations on $J^\bo$ that involve only boundary substitutions \textup{(}observation~(b) gives $C\equiv 1$ there\textup{)}. Thus Lemma~\ref{l:sum} produces $z\in \A$ with $\varphi_z=\id$. In particular $l=1$ here, so $l\equiv\pm 1\pmod d$ holds trivially.

\emph{Case $1\in T_J$ \textup{(}equivalently $J^\bo$ invertible\textup{)}.} Apply Lemma~\ref{l:J0omega-A0}\textup{(}i\textup{)}--\textup{(}ii\textup{)}: gauge $(\varphi,J)$ to a representative with $(J)^\bo=J^\bo_\omega\in Z^{\otimes 2}$ for a chosen normalized $2$-cochain $J^\bo_\omega$ with $d^2(J^\bo_\omega)=\sigma_l^*(\omega)/\omega$. As a consequence, all formulas \eqref{eq:Z01}, \eqref{eq:qJrelation}, \eqref{e7}, \eqref{e8} of \S\ref{S:StGrH} hold when evaluated at boundary substitutions ($\bar u=\ol 0$, $\bar v=\ol 0$, or $\bar w=\ol 0$), since $J^\bo_\omega$ then reduces to $1$ on the relevant slot.

With these identities in hand, set $\ou=\ow=\ol 0$ and $\ov\ne\ol 0$ with $i=m-1$ in \eqref{e7}. Since $C(\ol 0,\ov,\ol 0)=1$ by observation~(b), the resulting equation
\[
\ZC{0, v}{m-1}\bigl(1+\ZC{v,0}{m-1}\g_\ov^m\bigr) =  \ZC{v, 0}{m-1}\Bigl(1+\sum_{j=0}^{m-2}\qqbin{m-1}{j}{\xi}\,\ZC{0,v}{m-1-j}\xi^{(m-1-j)^2}\g_\ov^m\Bigr)
\]
is identical to the one displayed in the proof of Theorem~\ref{inverT}(i). Substituting the explicit formulae for $\ZC{0,v}{i}$ and $\ZC{v,0}{i}$ from \eqref{eq:Z01} together with \eqref{eq:qJrelation} and simplifying (a polynomial manipulation in $a_{\ol 0}/a_\ov$ that does not invoke $\omega$) yields
\begin{equation}\label{eq:as-qH}
\Bigl(\frac{\ao}{\av}\Bigr)^{\!2}\;=\;\frac{1}{\theta^{(l-1)v}}\qquad\text{for all }\ov\in\Z_d\setminus\{\ol 0\}.
\end{equation}
Identically to \eqref{eq:as}, this is the master constraint on $l$. Likewise, the boundary identity \eqref{e8} with $\ou=\ov$ gives 
\begin{equation}\label{eq:as-qH2}
\Bigl(\frac{\ao}{\av}\Bigr)^m=\frac{1+\theta^{vm}}{1+\theta^{lvm}}\, .
\end{equation}
This identity involves only the internal $n$-th root of unity $\theta$ (recall $\xi=\theta^d$) and is unaffected by the associator $\Phi_\omega$. 
\begin{itemize}
\item If $m$ is even, setting $\ov=\ol 1$, taking $m/2$-th-power of the square identity \eqref{eq:as-qH} and inserting into \eqref{eq:as-qH2} gives
\[
\bigl(1-\theta^{(m/2)(l-1)}\bigr)\bigl(1-\theta^{(m/2)(l+1)}\bigr)=0,
\]
hence $l\equiv\pm 1\pmod{2d}$, and in particular $l\equiv\pm 1\pmod d$.
\item If $m$ is odd, a similar substitution yields \eqref{eq:as2} and further combined with \eqref{eq:as-qH} again gives
\[
\bigl(1-\theta^{m(l-1)}\bigr)\bigl(1-\theta^{m(l+1)}\bigr)=0,
\]
so $l\equiv\pm 1\pmod d$.
\end{itemize}
This is precisely the conclusion of Theorem~\ref{inverT}(i), now established for the quasi-Hopf algebra $(A,\Phi_\omega)$. Hence $\varphi(g^m)=g^{ml}=g^{\pm m}$ on $Z_G(A)$, i.e., $\varphi$ restricts to $\sigma_{\pm 1}$ on $C_d$.
\end{proof}

\subsubsection{The quasi-Hopf $C_2$-equivariantization $\protect\N^\omega(md,\xi)$}\label{SS:Nomega-family}

Assume $\gcd(m,d)=1$ and $d$ is odd. We construct the quasi-Hopf algebra $\N^\omega(md,\xi)$, the unique nontrivial $C_2$-equivariantization of the tensor category $\C{(A_1(md,\xi),\Phi_\omega)}$. The construction generalizes two threads now carried out in the quasi-Hopf setting: the general cyclic-equivariantization scheme of \S\ref{S:Equiv}, and the explicit computation behind the Hopf family $\N(md,\xi)$ of Definition~\ref{def:N-family}, particularly Lemma~\ref{l:C2action}. Throughout, $A=A_1(md,\xi)$ and $Z=\kk\langle g^m\rangle\cong\kk[C_d]$; the data $l,r,s,\tilde\varphi,\tilde J,z,y$ is as in Definition~\ref{def:N-family}.

\medskip\noindent\textit{Step 1: the canonical $2$-cochain $J^\bo_\omega$ trivializing $\sigma_l^*(\omega)/\omega$.} By Proposition~\ref{c:A1-corollary}, $\tilde\varphi$ restricts to $\sigma_l$ on $Z_G(A)\cong C_d$ with $l\equiv-1\pmod d$. By Lemma~\ref{l:canonical-J0-A1}, $[\sigma_l^*\omega]=[\omega]^{l^2}=[\omega]$ in $H^3(C_d,\kk^\times)$, so the $3$-cocycle $\sigma_l^*(\omega)/\omega$ is a coboundary. Fix a normalized $\kk^\times$-valued $2$-cochain
\[
J^\bo_\omega\;\in\;Z^{\otimes 2}=\kk\langle g^m\rangle^{\otimes 2}\qquad\text{with}\qquad d^2(J^\bo_\omega)\;=\;\frac{\sigma_l^*(\omega)}{\omega}\,.
\]
The choice is unique up to multiplication by a $2$-coboundary $\partial(y_0)$ with $y_0\in Z^\times_1$, which corresponds to a gauge transformation of the resulting extension (Corollary~\ref{cor:anotEG}(i)), so $\N^\omega(md,\xi)$ is well-defined up to equivalence; the canonical such $J^\bo_\omega$ is the cochain $J_0$ of Example~\ref{ex:quasi-Hopf-pointed} (eq.~\eqref{E:J0-cobdy}) specialized to $C_d=\langle g^m\rangle$.

\medskip\noindent\textit{Step 2: the quasi-Hopf stable twist $(\tilde\varphi,\tilde J^\omega)\in\st(A,\Phi_\omega)$.} Set
\[
\tilde J^\omega \;:=\; J^\bo_\omega\cdot\tilde J\;\in\; A^{\otimes 2}.
\]
We verify $(\tilde\varphi,\tilde J^\omega)$ is a stable twist of $(A,\Phi_\omega)$. Centrality of $J^\bo_\omega\in Z^{\otimes 2}$ and $d^2(\tilde J)=1$ (since $\tilde J$ is a Hopf $2$-cocycle) yield
\[
d^2(\tilde J^\omega)\;=\;d^2(J^\bo_\omega)\cdot d^2(\tilde J)\;=\;\sigma_l^*(\omega)/\omega.
\]
Since $\Phi_\omega\in Z^{\otimes 3}$ is central and commutes with $\tilde J^\omega$, this gives
\[
(\Phi_\omega)_{\tilde J^\omega}\;=\;d^2(\tilde J^\omega)\,\Phi_\omega\;=\;\sigma_l^*(\omega)/\omega\cdot\Phi_\omega\;=\;\sigma_l^*(\Phi_\omega)\;=\;\tilde\varphi(\Phi_\omega).
\]
So the associator condition $\tilde\varphi(\Phi_\omega)=(\Phi_\omega)_{\tilde J^\omega}$ holds; combined with the fact that $\tilde\varphi\colon A\to A_{\tilde J}$ is a bialgebra isomorphism (Theorem~\ref{inverT}(iii)) and that $\Delta_{\tilde J^\omega}=\Delta_{\tilde J}$ (since $\Delta_{J^\bo_\omega}=\Delta$ by centrality), $(\tilde\varphi,\tilde J^\omega)$ is a stable twist of $(A,\Phi_\omega)$.

\medskip\noindent\textit{Step 3: the squaring computation --- quasi-Hopf analogue of Lemma~\ref{l:C2action}.} Choose $\delta_\omega\in Z^\times_1$ satisfying \emph{both}
\[
\partial(\delta_\omega)\;=\;(\delta_\omega\otimes\delta_\omega)\Delta(\delta_\omega)^{-1}\;=\;\sigma_l^*(J^\bo_\omega)\cdot J^\bo_\omega
\qquad\text{and}\qquad \sigma_l^*(\delta_\omega)=\delta_\omega.
\]
Such a $\delta_\omega$ exists. First, the right-hand side is a $2$-cocycle: $d^2\bigl(\sigma_l^*(J^\bo_\omega)\cdot J^\bo_\omega\bigr)=\sigma_l^*(d^2(J^\bo_\omega))\cdot d^2(J^\bo_\omega)=\bigl(\omega/\sigma_l^*(\omega)\bigr)\bigl(\sigma_l^*(\omega)/\omega\bigr)=1$ (using $\sigma_l^2=\id$), hence a $2$-coboundary since $H^2(C_d,\kk^\times)=0$ ($d$ odd); pick any $\delta_\omega$ with this coboundary. The valid choices form a torsor under the characters $\chi\in\widehat{C_d}=Z_G(A)$, and since $\sigma_l=\sigma_{-1}$ gives $\sigma_l^*(\chi)=\chi^{-1}$, replacing $\delta_\omega$ by $\delta_\omega\chi$ with $\chi^2=\sigma_l^*(\delta_\omega)\,\delta_\omega^{-1}$ --- possible because squaring is surjective on $\widehat{C_d}\cong C_d$ for $d$ odd --- enforces $\sigma_l^*(\delta_\omega)=\delta_\omega$. 
Define
\[
z^\omega\;:=\;z\cdot\delta_\omega^{-1}\;\in\;A^\times_1.
\]
Since $\delta_\omega\in Z$ is central in $A$, multiplying $z$ by $\delta_\omega^{-1}$ does not change the induced inner automorphism: $I_{z^\omega}=I_z$.
We claim
\begin{equation}\label{eq:Nomega-square}
(\tilde\varphi,\tilde J^\omega)^2 \;=\; (I_{z^\omega},\,\partial_{z^\omega})^{-1}\qquad\text{and}\qquad \tilde\varphi(z^\omega)\;=\;z^\omega.
\end{equation}

\emph{Verifying \eqref{eq:Nomega-square}.} The square is
\[
(\tilde\varphi,\tilde J^\omega)^2 \;=\; \bigl(\tilde\varphi^2,\;\tilde\varphi(\tilde J^\omega)\,\tilde J^\omega\bigr) \;=\; \bigl(\tilde\varphi^2,\;\tilde\varphi(J^\bo_\omega)\,\tilde\varphi(\tilde J)\,J^\bo_\omega\,\tilde J\bigr)=\bigl(\tilde\varphi^2,\;\sigma_l^*(J^\bo_\omega)\cdot J^\bo_\omega\cdot \tilde\varphi(\tilde J)\,\tilde J\bigr).
\]
By Lemma~\ref{l:C2action}, $\tilde\varphi^2=I_{z^{-1}}$ and $\tilde\varphi(\tilde J)\,\tilde J=\partial_{z^{-1}}$; substituting these and using $\sigma_l^*(J^\bo_\omega)\cdot J^\bo_\omega=\partial(\delta_\omega)$ gives
\[
(\tilde\varphi,\tilde J^\omega)^2\;=\;\bigl(I_{z^{-1}},\,\partial(\delta_\omega)\cdot\partial_{z^{-1}}\bigr)\;=\;\bigl(I_{z^{-1}},\,\partial(\delta_\omega\,z^{-1})\bigr)\;=\;\bigl(I_{(z^\omega)^{-1}},\,\partial_{(z^\omega)^{-1}}\bigr).
\]
This verifies the first half of \eqref{eq:Nomega-square}. Since $\tilde\varphi$ restricts to $\sigma_l$ on $Z$, the invariance $\sigma_l^*(\delta_\omega)=\delta_\omega$ built into the choice of $\delta_\omega$ in Step~3 gives $\tilde\varphi(\delta_\omega)=\delta_\omega$. Combined with $\tilde\varphi(z)=z$, this yields
\[
\tilde\varphi(z^\omega)=\tilde\varphi(z)\,\tilde\varphi(\delta_\omega)^{-1}=z\,\delta_\omega^{-1}=z^\omega.
\]

\medskip\noindent\textit{Step 4: the distinguished elements $\alpha_{\tilde J^\omega}$, $\beta_{\tilde J^\omega}$, $\nu^\omega$.} Modifying the associator to $\Phi_\omega$ changes the distinguished elements: by the formula of Example~\ref{ex:quasi-Hopf-pointed}, applied to the central group $C_d=\langle g^m\rangle$, the quasi-Hopf algebra $(A,\Phi_\omega)$ has
\[
\alpha=1,\qquad \beta_\omega:=\sum_{\ou\in\Z_d}\omega(g^{mu},g^{-mu},g^{mu})^{-1}\,e_\ou\in Z,
\]
where $e_\ou\in Z$ is the central idempotent with $g^m e_\ou=\theta^{mu}e_\ou$. These must be carried through formula~\eqref{ABJ}: writing $\tilde J^\omega=\sum_i f_i\otimes g_i$ and $(\tilde J^\omega)^{-1}=\sum_i\ol f_i\otimes\ol g_i$,
\[
\alpha_{\tilde J^\omega}=\sum_i S(\ol f_i)\,\alpha\,\ol g_i=\sum_i S(\ol f_i)\,\ol g_i,\qquad \beta_{\tilde J^\omega}=\sum_i f_i\,\beta_\omega\,S(g_i).
\]
Since $J^\bo_\omega\in Z^{\otimes2}$ is central and $\tilde J^\omega=J^\bo_\omega\tilde J$, these factor through the Hopf distinguished elements $\alpha_{\tilde J}=\frac{1}{m}g^{-r(m-1)}\prod_{j=1}^{m-1}(g^r-\xi^j)$ and $\beta_{\tilde J}=\alpha_{\tilde J}^{-1}$ of Lemma~\ref{l:C2action}. Writing $J^\bo_\omega=\sum_i r_i\otimes s_i$ and $(J^\bo_\omega)^{-1}=\sum_i\ol r_i\otimes\ol s_i$, the $\alpha$-factor is built from $(J^\bo_\omega)^{-1}$ and the $\beta$-factor from $J^\bo_\omega$, in accordance with~\eqref{ABJ}:
\[
\alpha_{\tilde J^\omega}=\Bigl(\textstyle\sum_i S(\ol r_i)\,\ol s_i\Bigr)\,\alpha_{\tilde J},\qquad \beta_{\tilde J^\omega}=\beta_\omega\,\Bigl(\textstyle\sum_i r_i\,S(s_i)\Bigr)\,\beta_{\tilde J}.
\]
Finally, $\nu^\omega\in A^\times_1$ is determined by uniqueness \eqref{nuS} applied to $(\tilde\varphi,\tilde J^\omega)$ on $(A,\Phi_\omega)$:
\[
\tilde\varphi\circ S\;=\;I_{\nu^\omega}\circ S\circ\tilde\varphi\,,\qquad \tilde\varphi(1)\;=\;\nu^\omega\cdot\alpha_{\tilde J^\omega}\,,\qquad \tilde\varphi(\beta_\omega)\;=\;\beta_{\tilde J^\omega}\cdot(\nu^\omega)^{-1}.
\]
Since $\tilde\varphi(\beta_\omega)=\beta_\omega^{-1}$, the above relations give $\nu^\omega=\alpha_{\tilde J^\omega}^{-1}=\beta_{\tilde J^\omega}\,\beta_\omega$.

\medskip\noindent\textit{Step 5: the construction $\N^\omega(md,\xi)$.}
\begin{deff}\label{def:N-omega-family}
With $d>1$ odd, $m>1$ coprime to $d$, $\xi$ a primitive $m$-th root of unity, $\omega\in Z^3(C_d,\kk^\times)$, and $J^\bo_\omega,\,\tilde J^\omega=J^\bo_\omega\tilde J,\,z^\omega=z\delta_\omega^{-1}$ as in Steps 1--3 above, define the quasi-Hopf algebra
\[
\N^\omega(md,\xi)\;:=\;(A,\Phi_\omega)[u^{\pm 1};\tilde\varphi,\tilde J^\omega,z^\omega,2]
\]
via the general $C_2$-equivariantization construction of \S\ref{S:Equiv} (preceding Corollary~\ref{cor:anotEG}). Explicitly, $\N^\omega(md,\xi)$ is the $\kk$-algebra generated by $g,x,u$ subject to the relations
\begin{align*}
 x^m&=1-g^m, & g^{md}&=1, &  gx&=\xi xg,\\
 gu&=ug^{l},& xu&=\sqrt[m]{-1}\, uxg^{-r},& u^2&=z^\omega,
\end{align*}
with $l,r$ as in Definition~\ref{def:N-family}. Its coalgebra structure is
\begin{gather*}
\Delta(g)=g\otimes g,\quad \Delta(x)=x\otimes g+1\otimes x,\\
\Delta(u)=(u\otimes u)\,\tilde J^\omega\;=\;(u\otimes u)\,J^\bo_\omega\,\tilde J,\\
\ve(g)=1,\quad \ve(x)=0,\quad \ve(u)=1,
\end{gather*}
where $\tilde J=1\otimes 1-\sum_{i=1}^{m-1}(x^iy\otimes x^{m-i}g^{(1-r)i}y)\bigl(\prod_{j=i+1}^{m-1}(g^r-\xi^j)\otimes\prod_{j=m-i+1}^{m-1}(g^r-\xi^j)\bigr)$ and $y=\sum_{k=0}^{r/m-1}g^{mk}+\frac{1-r}{md}\sum_{k=0}^{d-1}g^{mk}$. Its associator is $\Phi_\omega\in Z^{\otimes 3}\subseteq\N^\omega(md,\xi)^{\otimes 3}$; its distinguished elements are inherited from $(A,\Phi_\omega)$, namely $\alpha_{\N^\omega}=1$ and $\beta_{\N^\omega}=\beta_\omega$; and its antipode is
\[
S(g)=g^{-1},\quad S(x)=-xg^{-1},\quad S(u)=(\nu^\omega)^{-1}u^{-1},
\]
where $\nu^\omega$ is the unique element determined in Step 4.
\end{deff}

\begin{remark}\label{rem:Nomega-Hopf-specialization}
When $[\omega]=1$, we may take $\omega=1\in Z^3(C_d,\kk^\times)$, $J^\bo_\omega=1\otimes 1$, hence $\tilde J^\omega=\tilde J$, $\delta_\omega=1$, $z^\omega=z$, $\beta_\omega=1$, $\nu^\omega=\beta_{\tilde J}=\alpha_{\tilde J}^{-1}$, and $\Phi_\omega=1\otimes 1\otimes 1$. Then $\N^\omega(md,\xi)$ specializes to the genuine Hopf algebra $\N(md,\xi)$ of Definition~\ref{def:N-family}. For $[\omega]\ne 1\in H^3(C_d,\kk^\times)\cong\Z/d$, $\N^\omega(md,\xi)$ is genuinely quasi-Hopf: its associator $\Phi_\omega$ is not gauge-equivalent to $1\otimes 1\otimes 1$.
\end{remark}

\begin{thm}\label{th:Nomega-extC2}
Let $m,d\ge 2$ be coprime with $d$ odd, $\xi$ a primitive $m$-th root of unity, and $\omega\in Z^3(C_d,\kk^\times)$ normalized. Then:
\begin{enumerate}
\item[(i)] $\ost(A_1(md,\xi),\Phi_\omega)\cong\ost(A_1(md,\xi))\cong C_2$, generated by $[(\tilde\varphi,\tilde J^\omega)]$.
\item[(ii)] Up to equivalence, $\N^\omega(md,\xi)$ is the unique nontrivial cocentral cleft quasi-Hopf extension of $\kk[C_2]$ by $(A_1(md,\xi),\Phi_\omega)$, and the unique nontrivial $C_2$-equivariantization of $\C{(A,\Phi_\omega)}$.
\item[(iii)] $\dim_\kk\N^\omega(md,\xi)=2m^2d$, and $\N^{\omega=1}(md,\xi)\cong\N(md,\xi)$ as Hopf algebras.
\end{enumerate}
\end{thm}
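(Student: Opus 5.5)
For part (i), I will chain two earlier results. Proposition~\ref{c:A1-corollary} applies because $\gcd(m,d)=1$, and it gives $\ost(A_1(md,\xi),\Phi_\omega)\cong\ost(A_1(md,\xi))$. Theorem~\ref{StableTwTaft2d} then identifies the latter with $C_2$. The map $\Psi$ of Theorem~\ref{t:reduction-general} induces the first isomorphism, so it sends $[(\tilde\varphi,\tilde J^\omega)]$ to $[(\tilde\varphi,(J^\bo_\omega)^{-1}\tilde J^\omega)]=[(\tilde\varphi,\tilde J)]$, using that $J^\bo_\omega$ is a valid primitive for $(\tilde\varphi,\phi_\omega)$. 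By Theorem~\ref{inverT}(iii) this class is the nontrivial one, so $[(\tilde\varphi,\tilde J^\omega)]$ generates $\ost(A,\Phi_\omega)$. Step~2 above already shows that $(\tilde\varphi,\tilde J^\omega)$ is a stable twist of $(A,\Phi_\omega)$.

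For part (ii), I will apply Theorem~\ref{CCEGE} with $G=C_2$ and $A$ replaced by $(A,\Phi_\omega)$. The central grouplikes are unaffected by the central associator modification, so $Z_G(A)=\langle g^m\rangle\cong C_d$. Since $d$ is odd and $|C_2|=2$, we have $H^2(C_2,C_d)=H^3(C_2,C_d)=0$ for either $C_2$-action; the two actions are trivial and inversion, the latter coming from $\sigma_l=\sigma_{-1}$. Consequently each homomorphism $\brho\colon C_2\to\ost(A,\Phi_\omega)\cong C_2$ lifts to a $C_2$-action, and the lift is unique. The trivial homomorphism gives the trivial action, whose cocentral cleft extension is $A\otimes\k[C_2]$ by Corollary~\ref{cor:anotEG}; here $H^2(C_2,C_d)=0$ absorbs the $h\in Z_G(A)$ ambiguity seen in Theorem~\ref{HopfCEGC2}. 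The nontrivial homomorphism gives exactly one nontrivial action and hence, by Theorem~\ref{2dimE}, exactly one nontrivial extension. To identify that extension with $\N^\omega(md,\xi)$, I use \eqref{eq:Nomega-square}: it states $(\tilde\varphi,\tilde J^\omega)^2=(I_{z^\omega},\partial_{z^\omega})^{-1}$ with $\tilde\varphi(z^\omega)=z^\omega$. These are precisely the hypotheses of the construction preceding Corollary~\ref{cor:anotEG}, and that corollary then gives $A[u^{\pm1};\tilde\varphi,\tilde J^\omega,z^\omega,2]\cong A\#_{(\rho,\sigma)}C_2$ for the lift $(\rho,\sigma)$ of the nontrivial $\brho$.

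For part (iii), the dimension count is immediate from the crossed product decomposition $A\#_\sigma\k[C_2]$ together with $\dim A=m^2d$. For the specialization, Remark~\ref{rem:Nomega-Hopf-specialization} sets all the $\omega$-data to be trivial. I then need to check that the parameters match Definition~\ref{def:N-family}, namely $u^2=z$, $\Delta(u)=(u\otimes u)\tilde J$ and $S(u)=\nu^{-1}u^{-1}$ with $\nu=\alpha_{\tilde J}^{-1}$. The hardest part of the whole argument is reconciling this antipode with the stated formula $S(u)=ug^{-s}$. Using Lemma~\ref{l:C2action}, I would compute $\alpha_{\tilde J}u^{-1}=\alpha_{\tilde J}z^{-1}u$, then commute $u$ past $\alpha_{\tilde J}\in\k\langle g\rangle$, which replaces $g$ by $g^{l}$, and simplify with the explicit formulas $z=\frac1m g^s\prod_{i}(g^r-\xi^i)$ and $\alpha_{\tilde J}=\frac1m g^{-r(m-1)}\prod_j(g^r-\xi^j)$. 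The simplification relies on $\prod_i(g^{-r}-\xi^i)=(-1)^{m-1}\xi^{m(m-1)/2}g^{-r(m-1)}\prod_i(g^r-\xi^i)$ and on the congruences defining $s$. If the exponents fail to match on the nose, I would instead absorb the discrepancy by the uniqueness of the quasi-antipode, since any two antipodes of a Hopf algebra coincide, so the isomorphism only needs to respect the algebra and coalgebra structures.
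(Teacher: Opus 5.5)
Your proposal is correct and follows the paper's own proof. Part (i) comes from Proposition~\ref{c:A1-corollary} and Theorem~\ref{StableTwTaft2d}; tracking the generator through $\Psi$ is a useful extra detail. Part (ii) comes from Theorem~\ref{CCEGE}, using the vanishing of $H^i(C_2,C_d)$ together with Corollary~\ref{cor:anotEG} and \eqref{eq:Nomega-square}, and part (iii) from the crossed-product dimension count and Remark~\ref{rem:Nomega-Hopf-specialization}. Your antipode check in (iii) in fact closes on the nose, since $\alpha_{\tilde J}z^{-1}=g^{-r(m-1)-s}$ and $l\bigl(r(m-1)+s\bigr)\equiv s \pmod{md}$ by the congruences defining $l,r,s$; the fallback via uniqueness of antipodes is therefore not needed, though it is also valid.
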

\begin{proof}
Part~(i) is Proposition~\ref{c:A1-corollary} together with Theorem~\ref{StableTwTaft2d}, with the explicit generator from Steps 1--2. For (ii), $\gcd(|C_2|,|Z_G(A)|)=\gcd(2,d)=1$ since $d$ is odd, so $H^i(C_2,Z_G(A))=0$ for all $i\ge 1$. Theorem~\ref{CCEGE} (the quasi-Hopf form of Theorem~\ref{HopfCEGC2}) gives a bijection between nontrivial cocentral cleft quasi-Hopf extensions of $\kk[C_2]$ by $(A,\Phi_\omega)$ and order-$2$ elements of $\ost(A,\Phi_\omega)\cong C_2$; the unique such class is $[(\tilde\varphi,\tilde J^\omega)]$, whose corresponding extension is $\N^\omega(md,\xi)$ by Corollary~\ref{cor:anotEG} together with the squaring relation~\eqref{eq:Nomega-square}. Part~(iii): $\dim\N^\omega=|C_2|\cdot\dim A=2m^2d$, and Remark~\ref{rem:Nomega-Hopf-specialization} gives the specialization.
\end{proof}


\subsection{Braided central modification for QT quasi-Hopf algebras}\label{SS:braided-reduction}

We now develop the quasitriangular counterpart of the central-modification theorem of \S\ref{SS:reduction-general}. Starting from a QT quasi-Hopf algebra $(A,\Phi,R)$ with a central sub-bialgebra $Z=\kk^G\subseteq Z(A)$, we modify \emph{both} the associator $\Phi$ and the $R$-matrix $R$ by central cocycle data $(\phi_\omega,r_c)$ drawn from $Z$, and show that the braided stable twist group is unchanged under hypotheses parallel to those of Theorem~\ref{t:reduction-general}. Throughout this subsection, $G$ denotes a finite \emph{abelian} group and $Z=\kk^G$.

We recall the classification of braidings on pointed fusion categories, with coefficients in $\kk^\times$ written multiplicatively. A function $q\colon G\to\kk^\times$ is a \emph{quadratic form} if $q(a^{-1})=q(a)$ for all $a\in G$ and $b_q(a_1,a_2):=q(a_1a_2)\,q(a_1)^{-1}\,q(a_2)^{-1}$ is bimultiplicative; we write $\operatorname{Quad}(G,\kk^\times)$ for the group of quadratic forms.

A \emph{$2$-abelian $3$-cocycle} on $G$ is a pair $(\omega,c)$ with $\omega\in Z^3(G,\kk^\times)$ a normalized $3$-cocycle and $c\colon G\times G\to\kk^\times$ a function satisfying
\begin{align}
\omega(y,z,x)\,c(x,yz)\,\omega(x,y,z)&=c(x,z)\,\omega(y,x,z)\,c(x,y),\label{eq:2ab1}\\
\omega(z,x,y)^{-1}\,c(xy,z)\,\omega(x,y,z)^{-1}&=c(x,z)\,\omega(x,z,y)^{-1}\,c(y,z);\label{eq:2ab2}
\end{align}
it is a \emph{$3$-coboundary} if $\omega=\partial^3(f)$ and $c(a_1,a_2)=f(a_2,a_1)\,f(a_1,a_2)^{-1}$ for some $f\colon G\times G\to\kk^\times$. Writing $H^3_{2\text{-ab}}(G,\kk^\times)$ for the group of $2$-abelian $3$-cocycles modulo coboundaries, the \emph{trace map} $\operatorname{Tr}([(\omega,c)])(x)=c(x,x)$ is an isomorphism $H^3_{2\text{-ab}}(G,\kk^\times)\xrightarrow{\sim}\operatorname{Quad}(G,\kk^\times)$ \textup{(}the Eilenberg--Mac~Lane theorem \cite[Theorem~12]{EM53}; see \cite{Galindo2025} for a recent elementary proof\textup{)}.

To a $2$-abelian $3$-cocycle $(\omega,c)$ on $G$ with values in $\kk^\times$ we associate the central modification data $(\phi_\omega,r_c)$, where
\[
\phi_\omega\;:=\;\sum_{x,y,z\in G}\omega(x,y,z)\,\delta_x\o\delta_y\o\delta_z\in Z^{\o 3},\qquad r_c:=\sum_{g,h\in G}c(g,h)\,\delta_g\o\delta_h\in Z^{\o 2}.
\]
We use $(\phi_\omega,r_c)$ to modify $(A,\Phi,R)$, setting
\[
\Phi_\omega\;:=\;\Phi\cdot\phi_\omega\in A^{\o 3},\qquad R_c:=R\cdot r_c\in A^{\o 2}.
\]

\begin{prop}\label{p:central-QT}
The tuple $(A,\Phi_\omega,R_c)$ is a QT quasi-Hopf algebra.
\end{prop}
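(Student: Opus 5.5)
By Proposition~\ref{p:quasi-Hopf-general}, $(A,\Delta,\ve,\Phi_\omega,S,\a,\beta')$ is already a quasi-Hopf algebra. So it remains to verify the three quasitriangularity axioms \eqref{eq:QT1}--\eqref{eq:QT3} for $R_c=R\,r_c$ with respect to $\Phi_\omega=\Phi\,\phi_\omega$. The guiding principle is the one used throughout \S\ref{SS:reduction-general}. All of $\phi_\omega$, $r_c$ and their images under $\Delta$ lie in tensor powers of $Z\subseteq Z(A)$, hence are central in the relevant $A^{\otimes k}$. Consequently each axiom for $(\Phi_\omega,R_c)$ should factor as the corresponding axiom for $(\Phi,R)$ times a purely $Z$-valued identity. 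That $Z$-valued identity is exactly the statement that $(\phi_\omega,r_c)$ is a QT structure on the commutative quasi-Hopf algebra $(\kk^G,\phi_\omega)$, i.e.\ the braiding on $\operatorname{Vec}_G^\omega$ given by $c$.

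\textbf{Axiom \eqref{eq:QT3}.} Since $r_c\in Z^{\otimes 2}$ is central, we have $R_c\Delta(a)R_c^{-1}=R\Delta(a)R^{-1}=\Delta^{\rm op}(a)$. Invertibility of $R_c$ holds because $c$ is $\kk^\times$-valued.

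\textbf{Axiom \eqref{eq:QT1}.} Expand both sides. On the left,
\[
(\Delta\otimes\id)(R_c)=(\Delta\otimes\id)(R)\cdot(\Delta\otimes\id)(r_c).
\]
On the right, each permuted factor $(\Phi_\omega)_{ijk}$ splits as $\Phi_{ijk}(\phi_\omega)_{ijk}$, and each $(R_c)_{ij}$ splits as $R_{ij}(r_c)_{ij}$. All the $\phi$- and $r$-factors are central in $A^{\otimes3}$ (permutations of elements of $Z^{\otimes 3}$ stay in $Z^{\otimes 3}$), so they can be pulled to the right. The right-hand side becomes
\[
\Phi_{231}R_{13}\overline\Phi_{132}R_{23}\Phi_{123}\cdot(\phi_\omega)_{231}(r_c)_{13}(\phi_\omega)^{-1}_{132}(r_c)_{23}(\phi_\omega)_{123}.
\]
By \eqref{eq:QT1} for $(A,\Phi,R)$, it then suffices to show
\[
(\Delta\otimes\id)(r_c)=(\phi_\omega)_{231}(r_c)_{13}(\phi_\omega)^{-1}_{132}(r_c)_{23}(\phi_\omega)_{123}
\]
in $Z^{\otimes 3}=\kk^{G\times G\times G}$. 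Evaluating at $\delta_x\otimes\delta_y\otimes\delta_z$ turns this into a scalar identity among values of $\omega$ and $c$. That identity is one of \eqref{eq:2ab1}--\eqref{eq:2ab2}. Since $\Delta(\delta_g)=\sum_{ab=g}\delta_a\otimes\delta_b$, the left side evaluates to $c(xy,z)$, so this one should be \eqref{eq:2ab2}. Axiom \eqref{eq:QT2} is handled identically: the left side gives $c(x,yz)$, and the identity should be \eqref{eq:2ab1}.

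\textbf{Main obstacle.} The only real difficulty is bookkeeping. Under the convention $\Phi_{ijk}$ (the factor $\Phi^1$ placed in slot $i$, and so on), one must match the evaluation of $(\phi_\omega)_{231}$ and its companions at $(x,y,z)$ with the argument orders appearing in \eqref{eq:2ab1}--\eqref{eq:2ab2}. I would fix the convention explicitly and check, for instance, that $(\phi_\omega)_{231}$ evaluates to $\omega(z,x,y)$, which is consistent with the factor $\omega(z,x,y)^{-1}$ in \eqref{eq:2ab2} after moving terms across.

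Should the orderings come out transposed, I would fall back on one of two observations:
\begin{itemize}
\item the inverse or transpose pair $(\omega^{-1},c_{21}^{-1})$-type symmetries of $2$-abelian cocycles;
\item the fact that the QT axioms for $(\kk^G,\phi_\omega,r_c)$ are literally the hexagon axioms for the braiding $c$ on $\operatorname{Vec}_G^\omega$, which \eqref{eq:2ab1}--\eqref{eq:2ab2} encode.
\end{itemize}
Either way, the reduction to the scalar identities on $G$ is formal.
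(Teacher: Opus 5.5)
Your proposal is correct and follows the paper's argument. Centrality of $\phi_\omega$, $r_c$ and their coproducts splits each QT axiom into the axiom for $(\Phi,R)$ times an identity in $Z^{\otimes 3}$; the paper takes that identity from \cite{BCPVO}, while you identify it directly with \eqref{eq:2ab2} for \eqref{eq:QT1} and \eqref{eq:2ab1} for \eqref{eq:QT2}, and both matchings check out. One bookkeeping correction: the matching requires the convention $\Phi_{ijk}=\Phi^i\otimes\Phi^j\otimes\Phi^k$, which is the one giving $(\phi_\omega)_{231}\mapsto\omega(z,x,y)$, whereas the ``$\Phi^1$ placed in slot $i$'' convention you state would give $\omega(y,z,x)$.
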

\begin{proof}
$(A,\Phi_\omega)$ is quasi-Hopf by Proposition~\ref{p:quasi-Hopf-general}. Since all $Z$-factors are central, the QT axioms \eqref{eq:QT1}--\eqref{eq:QT3} for $R_c=R\cdot r_c$ relative to $\Phi_\omega=\Phi\cdot\phi_\omega$ split into a $(\Phi,R)$-part, which holds as $(A,\Phi,R)$ is QT, and a $(\phi_\omega,r_c)$-part, which is the QT quasi-Hopf structure on $\kk^G$ of \cite[Example~10.6, Proposition~10.1]{BCPVO}.
\end{proof}

Let $(\vp,J)\in \st^Z(A,\Phi)$; then $\vp$ induces an automorphism $\sigma\in\operatorname{Aut}(G)$. The \emph{cocycle ratio} on the central modification data $(\phi_\omega,r_c)$ now has two components: the associator ratio $C_\vp=\vp^{\o 3}(\phi_\omega)\phi_\omega\inv\in Z^{\o 3}$ (as in \S\ref{SS:cocycle-general}) and the \emph{R-matrix ratio} $D_\vp:=\vp^{\o 2}(r_c)(r_c)\inv\in Z^{\o 2}$. In terms of group cochains, $C_\vp=\sigma^*(\omega)/\omega$ and $D_\vp(\delta_g\o\delta_h)=c(\sigma(g),\sigma(h))/c(g,h)$.

A \emph{braided primitive} for $(\vp,(\omega,c))$ is a function $k\colon G\times G\to\kk^\times$ such that
\begin{equation}\label{eq:braided-primitive}
\partial(k)=C_\vp=\sigma^*(\omega)/\omega,\qquad \operatorname{Alt}(k):=k(\cdot,\cdot)^{\rm flip}/k(\cdot,\cdot)=D_\vp=\sigma^*(c)/c.
\end{equation}
A braided primitive exists if and only if $\sigma^*[(\omega,c)]=[(\omega,c)]$ in $H^3_{2\text{-ab}}(G,\kk^\times)$, i.e., $q\circ\sigma=q$ where $q=\operatorname{Tr}[(\omega,c)]$.

Paralleling \S\ref{SS:reduction-general}, set \[\OutTwBr(A,\Phi_\omega,R_c)\, :=\, \operatorname{Im}\bigl(\Pi_A\colon\ost^{\rm br}(A,\Phi_\omega,R_c)\to\operatorname{Out}_{\rm alg}(A)\bigr)\, ,\] and write $\ost^{{\rm br},Z}(A,\Phi_\omega,R_c)$ and $\OutTwBrZ(A,\Phi_\omega,R_c)\subseteq\OutTwBr(A,\Phi_\omega,R_c)$ for the subgroups arising from braided stable twists $(\vp,J)$ with $\vp(Z)=Z$. 

\begin{deff}\label{d:BCT}
Let $\overline{\varphi}\in\OutTwBrZ(A,\Phi_\omega,R_c)$, inducing $\sigma\in\operatorname{Aut}(G)$. We say $\overline{\varphi}$ \emph{acts trivially on $\operatorname{Quad}(G,\kk^\times)\cong H^3_{2\text{-ab}}(G,\kk^\times)$} if the induced action $\sigma^*$ is the identity; equivalently $q\circ\sigma=q$ for every quadratic form $q\colon G\to\kk^\times$, equivalently a braided primitive \eqref{eq:braided-primitive} for $(\vp,(\omega',c'))$ exists for every $2$-abelian $3$-cocycle $(\omega',c')$.
\end{deff}

Since the forgetful map $H^3_{2\text{-ab}}(G,\kk^\times)\to H^3(G,\kk^\times)$, $[(\omega,c)]\mapsto [\omega]$, is $\operatorname{Aut}(G)$-equivariant, any $(\vp,J)$ acting trivially on $H^3_{2\text{-ab}}(G,\kk^\times)$ acts trivially on $H^3(G,\kk^\times)$ (Definition~\ref{d:CT}).

\begin{thm}\label{t:braided-reduction}
Let $(A,\Phi,R)$ be a finite-dimensional QT quasi-Hopf algebra with a central sub-bialgebra $Z=\kk^G\subseteq Z(A)$, $G$ abelian, and let $(\omega,c)$ be a $2$-abelian $3$-cocycle on $G$. Assume:
\begin{itemize}
\item[\textup{(i)}] $H^2(G,\kk^\times)=1$;
\item[\textup{(ii)}] $\OutTwBrZ(A,\Phi,R)=\OutTwBr(A,\Phi,R)$, and similarly for $(A,\Phi_\omega,R_c)$;
\item[\textup{(iii)}] both $\OutTwBr(A,\Phi,R)$ and $\OutTwBr(A,\Phi_\omega,R_c)$ act trivially on $\operatorname{Quad}(G,\kk^\times)$.
\end{itemize}
Then there is a group isomorphism
\[
\ost^{\rm br}(A,\Phi_\omega,R_c)\;\xrightarrow{\,\sim\,}\;\ost^{\rm br}(A,\Phi,R)\,.
\]
\end{thm}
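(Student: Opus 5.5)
Our plan is to mirror the four-step proof of Theorem~\ref{t:reduction-general}, replacing ordinary primitives by braided primitives \eqref{eq:braided-primitive}. By hypothesis~(ii) it suffices to work with $\ost^{{\rm br},Z}$ on both sides. For a braided stable twist $(\vp,J)\in\st^{{\rm br},Z}(A,\Phi_\omega,R_c)$ inducing $\sigma\in\operatorname{Aut}(G)$, hypothesis~(iii) provides a braided primitive $k$ for $(\vp,(\omega,c))$. Regard it as $J^0_\vp:=\sum_{g,h}k(g,h)\,\delta_g\o\delta_h\in Z^{\o2}$, counit-normalized after multiplying by a coboundary if needed. We then define
\[
\Psi\bigl([(\vp,J)]\bigr):=\bigl[(\vp,(J^0_\vp)^{-1}J)\bigr].
\]

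\textbf{Step 1.} Show the image is a braided stable twist of $(A,\Phi,R)$. The intertwiner and associator conditions are exactly Step~1 of Theorem~\ref{t:reduction-general}, since $d^2(J^0_\vp)=C_\vp$. For the $R$-matrix, write $J'=(J^0)^{-1}J$ and use centrality of $Z^{\o2}$:
\[
(\vp\o\vp)(R)\,D_\vp\, r_c=(\vp\o\vp)(R_c)=J_{21}Rr_cJ^{-1}.
\]
It follows that
\[
(\vp\o\vp)(R)=J'_{21}R\,J'^{-1}\cdot J^0_{21}(J^0)^{-1}D_\vp^{-1}.
\]
The last factor is $\operatorname{Alt}(k)D_\vp^{-1}=1$, by the second condition in \eqref{eq:braided-primitive} with the appropriate flip convention.

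\textbf{Step 2.} Show independence of choices; this is the main obstacle. Two braided primitives $k,\tilde k$ differ by $b$ with $d^2(b)=1$ and $\operatorname{Alt}(b)=1$, i.e.\ a \emph{symmetric} normalized $2$-cocycle. By (i), $b=\partial z$ for some $z\in Z^\times_1$, and $\partial z$ is automatically symmetric since $G$ is abelian. So the two images differ by the trivial stable twist $(I_z,\partial_z)=(\id,\partial_z)$, which lies in $\st_0(A)$ and is braided by Lemma~\ref{lem:braided-closed}; hence they define the same class. Independence of the representative $(\vp_u,J_u)$ follows as before: $C$ and $D$ are unchanged by inner $I_u$, since $Z$ is central.

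\textbf{Steps 3 and 4.} For the homomorphism property we take $K=J^0_{\vp_1}\vp_1^{\o2}(J^0_{\vp_2})$. Its coboundary is $C_{\vp_1\vp_2}$, and its alternation is $D_{\vp_1}\vp_1^{\o2}(D_{\vp_2})=D_{\vp_1\vp_2}$, by multiplicativity of $\operatorname{Alt}$ on central elements. Thus $K$ is a braided primitive for the product, and the computation of Step~3 applies verbatim. Finally, the symmetric construction for the pair $(\phi_\omega^{-1},r_c^{-1})$ relative to $(\Phi_\omega,R_c)$, using hypothesis (iii) for $(A,\Phi,R)$ with primitive $(J^0_\vp)^{-1}$, gives an inverse map $\Psi'$. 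This proves that $\Psi$ is a group isomorphism.
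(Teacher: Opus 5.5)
Your proposal is correct and follows the paper's route: the paper likewise transports the four steps of Theorem~\ref{t:reduction-general}, writing out only the $R$-matrix check $(\vp\o\vp)(R)=J'_{21}RJ'^{-1}$ via $\operatorname{Alt}(k)=J^0_{21}(J^0)^{-1}=D_\vp$ and cancellation of the central factor $D_\vp\, r_c$. Your explicit treatment of the remaining steps fills in exactly what the paper leaves implicit. These are the ambiguity of a braided primitive being a symmetric normalized $2$-cocycle, hence a coboundary by (i), and the multiplicativity of $\operatorname{Alt}$ on central elements for the homomorphism property.
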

\begin{proof}
The argument parallels Theorem~\ref{t:reduction-general}, now tracking the $R$-matrix as well. For $(\vp,J)\in\st^{\rm br}(A,\Phi_\omega,R_c)$, let $k$ be a braided primitive~\eqref{eq:braided-primitive} (which exists by~(iii)) and set $J^0:=\sum_{g,h\in G}k(g,h)\,\delta_g\o\delta_h\in Z^{\o 2}$ and $J':=(J^0)^{-1}J$. Since $(\vp,J)$ is braided for $R_c$ and $J^0,r_c\in Z^{\o 2}$ are central, using $J=J^0J'$, $R_c=R\,r_c$, $\vp^{\o 2}(r_c)=D_\vp\,r_c$, and $\operatorname{Alt}(k)=J^0_{21}(J^0)^{-1}=D_\vp$:
\[
D_\vp\,r_c\cdot(\vp\o\vp)(R)\;=\;(\vp\o\vp)(R_c)\;=\;J_{21}R_cJ^{-1}\;=\;D_\vp\,r_c\cdot J'_{21}RJ'^{-1}.
\]
Cancelling the central invertible factor $D_\vp\,r_c$ gives $(\vp\o\vp)(R)=J'_{21}RJ'^{-1}$, so $(\vp,J')\in\st^{\rm br}(A,\Phi,R)$.
\end{proof}

\subsection{Soft symmetries and the Brauer--Picard group}\label{SS:soft}

We close by recording how the braided exact sequence of \S\ref{SS:braided-SES} connects to two notions already studied in the literature: the soft symmetries of topological orders and the Brauer--Picard group.

\subsubsection{Soft symmetries}
The emergent symmetry group of a $(2{+}1)$D topological order modelled by an MTC $\CC$ is $\oAut^{\rm br}(\CC)$; a \emph{soft symmetry} \cite{SoftSym2025} is a braided autoequivalence fixing every simple object \textup{(}anyon\textup{)} yet not naturally isomorphic to the identity. By Corollary~\ref{c:peqv-br} and Theorem~\ref{t:braided-SES}, the label-preserving braided autoequivalences of $\C{A}$ are exactly the kernel $H^2_{\ell,{\rm br}}(A^*)$, realized by invariant twists $(\id,K)$ with $K_{21}RK^{-1}=R$ \textup{(}equivalently, quantum-commutative bi-Galois objects \cite{ZhangZhu2015}\textup{)}. For the Drinfeld double this kernel was computed by Davydov.

\begin{thm}[{\cite[Cor.~2.14]{Davydov2014}}]\label{t:B0=lazy-br}
For a finite group $G$ and the Drinfeld double $D(G)$ with standard $R$-matrix $R=\sum_{g}(p_g\o 1)\o(1\o g)$,
\[
H^2_{\ell,{\rm br}}(D(G)^*)\;\cong\;B_0(G):=\ker\Bigl(H^2(G,\kk^\times)\to\textstyle\prod_{A\le G\,{\rm ab.}}H^2(A,\kk^\times)\Bigr),
\]
the Bogomolov multiplier. Concretely, $J=\sum_{g,h}\sigma(g,h)\,g\o h\in\kk[G]^{\o 2}\subseteq D(G)^{\o 2}$ represents a class in the kernel if and only if $\sigma\in Z^2(G,\kk^\times)$ restricts to a symmetric cocycle on every abelian subgroup of $G$.
\end{thm}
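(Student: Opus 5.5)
The plan is to identify the kernel $H^2_{\ell,{\rm br}}(D(G)^*)$ concretely in three steps. First, describe all invariant twists of $D(G)$ up to central coboundary. Second, impose the braided condition $K_{21}RK^{-1}=R$. Third, match the outcome with Davydov's description of $B_0(G)$.

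\textbf{Step 1: reduce to twists in $\kk[G]^{\otimes 2}$.} Recall $D(G)=\kk^G\rtimes\kk[G]$ and $D(G)^*\cong\kk[G]^{\rm op}\otimes\kk^G$ as algebras. By duality, $H^2_\ell(D(G)^*)$ is the group of lazy (Galois) cocycles on $D(G)^*$ modulo lazy coboundaries. I would first show that every invariant twist $K$ is equivalent, modulo $\partial_z$ with $z\in Z(D(G))\cap A_1^\times$, to one supported in $\kk[G]^{\otimes 2}$. The argument is to look at the braided condition $K_{21}RK^{-1}=R$ together with invariance $[K,\Delta(D(G))]=0$.

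Concretely, invariance under $\Delta(\kk^G)$ forces $K$ to commute with all $\sum_{ab=c}p_a\otimes p_b$. Invariance under $\Delta(g)=g\otimes g$ forces $G$-equivariance. I expect this reduction to be the main obstacle. If a direct computation is messy, I would instead invoke the categorical picture. Braided autoequivalences of $\C{D(G)}=\mathcal Z(\operatorname{Vec}_G)$ that fix all simples are induced from tensor autoequivalences of $\operatorname{Vec}_G$ of the form $(\id,\sigma)$ with $\sigma\in Z^2(G,\kk^\times)$, by the ENO correspondence. These are realized by the twist $J_\sigma=\sum\sigma(g,h)\,g\otimes h$. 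This reduces the problem to twists of the stated form.

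\textbf{Step 2: verify $J_\sigma$ is a braided invariant twist exactly when $\sigma$ is symmetric on abelian subgroups.} This is a direct computation. Since $\sigma$ is a $2$-cocycle and $g\otimes h$ lies in $\kk[G]^{\otimes2}$, the element $J_\sigma$ satisfies $d^2(J_\sigma)=1$. Invariance under $\Delta(p_x)=\sum_{ab=x}p_a\otimes p_b$ requires conjugating $p_a\otimes p_b$ by $g\otimes h$, giving $p_{gag^{-1}}\otimes p_{hbh^{-1}}$; invariance therefore amounts to an equivariance identity on $\sigma$.

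Next, compute $J_{21}RJ^{-1}$ by evaluating on simple objects $(a,\chi)\otimes(b,\psi)$, with $a,b$ in conjugacy classes. The ratio with $R$ is the scalar $\sigma(h,g)/\sigma(g,h)$ for commuting pairs $g,h$. So the condition $K_{21}RK^{-1}=R$ holds precisely when $\sigma(g,h)=\sigma(h,g)$ for all commuting $g,h$. That is the same as $\sigma$ restricting to a symmetric, hence cohomologically trivial, cocycle on each abelian subgroup (over $\kk$, $H^2(A,\kk^\times)$ is detected by the alternating form).

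\textbf{Step 3: pass to classes.} Central coboundaries $\partial_z$ with $z$ central grouplike-normalized in $\kk[G]$ correspond to coboundaries of $\sigma$. The map $[\sigma]\mapsto[(\id,J_\sigma)]$ is a homomorphism from $B_0(G)$ into the kernel. Injectivity follows from Lemma~\ref{NatST}: triviality in $\ost^{\rm br}$ forces $J_\sigma=\partial_x$ for some $x\in D(G)^\times_1$, and projecting to $\kk[G]$ shows $\sigma$ is a coboundary. Surjectivity is exactly Step 1. Combining these with \cite[Cor.~2.14]{Davydov2014} gives the isomorphism.
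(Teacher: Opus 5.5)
The paper gives no proof of this statement; it imports it from \cite[Cor.~2.14]{Davydov2014}. Your self-contained route has a genuine gap, and it sits where you suspected: Step~1 carries all the content and is not established.

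\textbf{Step 1.} Your categorical fallback is that soft braided autoequivalences of $\mathcal Z(\operatorname{Vec}_G)$ are induced by $(\id,\sigma)$. The ENO correspondence does not supply this; it is essentially the theorem itself, so invoking it is circular. It also overlooks a second source of kernel elements.
\begin{itemize}
\item Suppose $\phi\in\operatorname{Aut}(G)$ sends every commuting pair $(a,b)$ to a simultaneous conjugate $(kak^{-1},kbk^{-1})$. Then $\phi$ is a Hopf automorphism of $D(G)$ with $(\phi\otimes\phi)(R)=R$. It fixes the isomorphism class of every simple $V_{(a,\chi)}=\operatorname{Ind}_{C_G(a)}^G\chi$.
\item Since $D(G)$ is semisimple, $\phi=I_u$ for some $u\in D(G)^\times_1$. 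Hence $(\id,(u^{-1}\otimes u^{-1})\Delta(u))$ is a braided invariant twist.
\item Its class is trivial only if $uz$ is grouplike for some central $z$. The grouplikes $\chi g$ of $D(G)$ act on $G$ by $\operatorname{Ad}_g$, so this forces $\phi$ to be inner.
\item The class is also not in the image of $B_0(G)$. Classes coming from a $\sigma$ restrict to the identity tensor autoequivalence of $\operatorname{Rep}(G)\subset\C{D(G)}$. This one restricts to $\phi^*$, which by Lemma~\ref{NatST} for $\kk[G]$ is monoidally isomorphic to the identity only when $\phi$ is inner.
\end{itemize}
These are the double class-preserving automorphisms that the paper mentions in Davydov's decomposition. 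Any identification with $B_0(G)$ must show they are inner, and your Step~1 says nothing about them.

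\textbf{Step 2.} Taken literally, $\sum_{g,h}\sigma(g,h)\,g\otimes h\in\kk[G]^{\otimes 2}$ is not a twist. For $\sigma\equiv1$ it equals $\Lambda\otimes\Lambda$ with $\Lambda=\sum_g g$, and $\Lambda^2=|G|\Lambda$, so it is not invertible. The cocycle identity for $\sigma$ gives $d^2=1$ instead for $J_\sigma=\sum_{a,b}\sigma(a,b)\,p_a\otimes p_b\in\kk^G\otimes\kk^G$, the pullback of $\sigma$ along $D(G)^*\twoheadrightarrow\kk[G]$. With $g\,p_a\,g^{-1}=p_{gag^{-1}}$ one computes
\[
(J_\sigma)_{21}\,R\,J_\sigma^{-1}\;=\;\sum_{g,a}\frac{\sigma(gag^{-1},g)}{\sigma(g,a)}\;p_g\otimes g\,p_a .
\]
So $(\id,J_\sigma)$ is braided if and only if $\sigma(gag^{-1},g)=\sigma(g,a)$ for \emph{all} $g,a$; this condition also implies invariance. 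It is a condition on the representative, strictly stronger than symmetry on commuting pairs. It already fails for a coboundary $\partial f$ with $f$ not a class function.

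The $B_0(G)$ criterion only emerges in one of two ways.
\begin{itemize}
\item Use the braided stable twist $(\varphi_\sigma,J_\sigma)$ with $\varphi_\sigma|_{\kk^G}=\id$ and $\varphi_\sigma(g)=\sum_a\frac{\sigma(gag^{-1},g)}{\sigma(g,a)}\,g\,p_a$; this is the autoequivalence induced from $(\id,\sigma)$. Its algebra part fixes all $V_{(a,\chi)}$ exactly when the characters $g\mapsto\sigma(a,g)/\sigma(g,a)$ of $C_G(a)$ are trivial. The kernel representative is then $(u^{-1}\otimes u^{-1})J_\sigma\Delta(u)$ with $\varphi_\sigma=I_u$, not $J_\sigma$ itself.
\item Alternatively, prove that every class in $B_0(G)$ has a representative with $\sigma(gag^{-1},g)=\sigma(g,a)$ for all $g,a$.
\end{itemize}

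\textbf{Step 3.} Injectivity cannot be obtained by projecting to $\kk[G]$, because the Hopf projection $D(G)\to\kk[G]$ sends $J_\sigma$ to $1\otimes1$. What is needed is injectivity of $H^2(G,\kk^\times)\to\oAut^{\rm br}(\mathcal Z(\operatorname{Vec}_G))$, or a direct substitute for it.
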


In our language the soft symmetries of $\C{D(G)}\simeq Z(\operatorname{Vec}_G)$ are the invariant twists indexed by $B_0(G)$; Davydov's decomposition of $\oAut^{\rm br}(Z(\operatorname{Vec}_G))$ in terms of $B_0(G)$ and the double class-preserving automorphisms \cite[Thm.~2.12, Cor.~2.13]{Davydov2014}, and the vanishing of $B_0(G)$ for $|G|<64$, for symmetric groups, and for simple groups of Lie type in odd characteristic \cite{Bogomolov1988,Kunyavskii2010}, transfer verbatim.

\subsubsection{The Brauer--Picard group}
For a finite-dimensional quasi-Hopf algebra $A$, the double $D(A)$ is a QT quasi-Hopf algebra with $\C{D(A)}\simeq Z(\C{A})$ \cite[Ch.~8]{BCPVO}, and the Etingof--Nikshych--Ostrik isomorphism $\operatorname{BrPic}(\CC)\cong\oAut^{\rm br}(Z(\CC))$ \cite{ENO2010} applies.

\begin{prop}\label{p:Tw-BrPic}
For any finite-dimensional quasi-Hopf algebra $A$,
\[
\ost^{\rm br}(D(A),R)\;\cong\;\operatorname{BrPic}(\C{A}).
\]
\end{prop}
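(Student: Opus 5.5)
The plan is to compose two identifications already available in the paper. First, specialize the braided $2$-group equivalence of Corollary~\ref{c:peqv-br} to the quasitriangular quasi-Hopf algebra $(D(A),R)$. By part~(i) this gives a group isomorphism $\ost^{\rm br}(D(A),R)^{op}\cong\oAut^{\rm br}(\C{D(A)})$. Since any group is isomorphic to its opposite via $x\mapsto x^{-1}$, we obtain $\ost^{\rm br}(D(A),R)\cong\oAut^{\rm br}(\C{D(A)})$.

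Second, transport along the braided tensor equivalence $\C{D(A)}\simeq Z(\C{A})$ of \cite[Ch.~8]{BCPVO}. Any braided tensor equivalence $F\colon\CC\to\DD$ induces a group isomorphism $\oAut^{\rm br}(\CC)\cong\oAut^{\rm br}(\DD)$, $[E]\mapsto[F\circ E\circ F^{-1}]$. Here $F^{-1}$ denotes a quasi-inverse, made braided monoidal in the standard way, and this conjugation is well defined on monoidal isomorphism classes. Hence $\oAut^{\rm br}(\C{D(A)})\cong\oAut^{\rm br}(Z(\C{A}))$.

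Third, invoke the Etingof--Nikshych--Ostrik isomorphism $\operatorname{BrPic}(\CC)\cong\oAut^{\rm br}(Z(\CC))$ \cite{ENO2010}, valid for $\CC=\C{A}$. That result is stated for fusion categories, while $\C{A}$ is only a finite tensor category when $A$ is non-semisimple. In that case I would cite the finite-tensor-category version of the ENO correspondence between invertible exact bimodule categories and braided autoequivalences of the center, due to Davydov--Nikshych. Chaining the three isomorphisms gives the claim.

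The main obstacle is justifying the ENO step beyond the fusion setting. Everything else is formal, since $D(A)$ is finite-dimensional quasitriangular quasi-Hopf and the braided sub-$3$-groupoid equivalence $\TPsi^{\rm br}$ of \S\ref{sss:braided} applies. If needed, I would state the proposition with the fusion hypothesis, i.e.\ $A$ semisimple, and remark that the general case follows from the Davydov--Nikshych extension.
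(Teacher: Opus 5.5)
Your proposal is correct and follows the paper's own (very brief) justification: apply Corollary~\ref{c:peqv-br}(i) to $(D(A),R)$, use the braided equivalence $\C{D(A)}\simeq Z(\C{A})$ of \cite[Ch.~8]{BCPVO}, and then invoke the isomorphism $\operatorname{BrPic}(\CC)\cong\oAut^{\rm br}(Z(\CC))$. Your caveat is well taken: the paper cites only \cite{ENO2010}, which is stated for fusion categories, so for non-semisimple $A$ the finite-tensor-category extension you mention is the input actually needed.
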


Thus the braided stable twist group computes the Brauer--Picard group. For $A=\kk[G]$, Theorem~\ref{t:B0=lazy-br} identifies the kernel of $\operatorname{BrPic}(\C{\kk[G]})\twoheadrightarrow\OutTwBr(D(G),R)$ with $B_0(G)$, recovering the orthogonal-group description for abelian $G$ \cite{NaiduNikshych2008} and the Brauer--Picard decomposition of \cite{LentnerPriel2017}.

\section{Nonsemisimple Hopf algebras of dimension $8p$}\label{S:clas}
We work over an algebraically closed field $\k$ of characteristic zero and apply our earlier results to the classification of nonsemisimple Hopf algebras of dimension $8p$, $p$ an odd prime. By Example~\ref{ex:8d}, for every odd integer $d\ge 3$ the Hopf algebra $\N(2d,-1)$ of dimension $8d$ is a nontrivial $C_2$-extension of the pointed Hopf algebra $A_1(2d,-1)$; we conjecture that $\N(2p,-1)$ and its dual are the only $8p$-dimensional Hopf algebras lacking both the Chevalley and dual Chevalley properties.

A finite tensor category $\CC$ has the \emph{Chevalley property} if the full subcategory $\CC_{\rm ss}\subseteq\CC$ of semisimple objects is closed under the tensor product. For $\CC=\C{H}$ with $H$ a finite-dimensional Hopf algebra, this is equivalent to the coradical $H_0$ being a Hopf subalgebra, and we then simply say $H$ has the Chevalley property. We say $H$ has the \emph{dual Chevalley property} if $H^*$ has the Chevalley property, equivalently, its Jacobson radical $J_H$ is a Hopf ideal.

The next subsection collects the known $8p$-dimensional nonsemisimple Hopf algebras with the Chevalley property that are non-pointed.

\subsection{Nonsemisimple Hopf algebras of dimension $8p$ with Chevalley property}
The $8p$-dimensional nonsemisimple Hopf algebras $H$ with the Chevalley property, which are neither pointed nor copointed, were classified in \cite[Thm. 3.15]{BGN}. These are two completely distinct families, denoted by $\HH_{8p}$ and $R_{-1}\# A_{4p}$, which are described in detail in the following examples.

\begin{example} The Hopf algebra $\HH_{8p}$ was constructed in \cite{CDMM},  and its coradical was proved to be isomorphic to $\k^{\Dic_{4p}}$ in \cite[Thm. 4.3]{CDMM},  where $\Dic_{4p}$ is the dicyclic group of order $4p$. The underlying $\k$-algebra of $\HH_{8p}$ is generated by the elements $a,x,z$ subject to the relations 
    \begin{gather*}
        a^2=1,\quad  x^{2p}=1,\quad  z^2=a-1,\quad ax=xa,\quad az=za,\quad xz=-zx.
    \end{gather*}
    It is a Hopf algebra with coalgebra and antipode structures given by
    \begin{gather*}
           \Delta(a)=a\otimes a,\quad \Delta(x)=x\otimes e_0x+ax^{-1}\otimes e_1x,\quad \Delta(z)=z\otimes 1+g\otimes z\\
     \ve(a)=\ve(x)=1, \quad \ve(z)=0\\
    S(a)=a,\quad S(x)=e_0x^{-1}+e_1x,\quad S(z)=-g^{-1}x,
    \end{gather*}
where $e_0=(1+a)/2$, $e_1=(1-a)/2$, and $g=(e_0+\sqrt{-1}e_1)x^p$.  By \cite[Prop. 3.14]{BGN}, $\HH_{8p}$ is not basic as  $(\mathcal H_{8p}^*)_0=\k C_{2p}\oplus \mathcal M^*(2,\k)^p$.
    \end{example}

    \begin{example}
       Consider the semisimple Hopf algebra $A_{4p}$ constructed in \cite{Ge1997}. As $\k$-algebra, $A_{4p}$ is generated by $a,s_{\pm}$ subject to the relations 
       \[
       a^2=1,\quad as_{\pm}=s_{\pm} a,\quad s_{\pm}^2=1,\quad (s_+s_-)^p=1
       \]
       together with Hopf algebra structures 
\begin{gather*}
    \Delta(a)=a\otimes a,\quad \Delta(s_{\pm})=s_{\pm}\otimes e_0s_{\pm}+s_{\mp}  \otimes e_1s_{\pm}, \\
    \ve(a)=\ve(s_{\pm})=1,\\
    S(a)=a,\quad S(s_\pm)=e_0s_\pm+e_1s_\mp,
\end{gather*}
where $e_0=(1+a)/2$ and $e_1=(1-a)/2$. For any positive integer $i$, let 
$$
s_+(i):=\underbrace{s_+s_-\cdots}_{i \text{ terms}}\,. 
$$
Then $a, s_+(p)$ generate $G(A_{4p})$, which is isomorphic to $\Z_2 \times \Z_2$.

Let $R_{-1}=\k[y]/(y^2)$ be the graded algebra in $\!_{A_{4p}}^{A_{4p}}\mathcal YD$ where the $A_{4p}$-module structure is given by $a\cdot y=-y$ and $s_\pm \cdot y=y$ and the $A_{4p}$-comodule structure is given by $\delta(y)=s_+(p)\otimes y$.
Moreover, $R_{-1}$ is a Hopf algebra in $\!_{A_{4p}}^{A_{4p}}\mathcal YD$ with   $\Delta(y)=y\otimes 1+1\otimes y$. Thus, its bosonization $H=R_{-1}\# A_{4p}$  is an $8p$-dimensional Hopf algebra with $H_0 = A_{4p}$. Since $H$ is self-dual (cf. \cite[Thm. 4.3]{CDMM}), $(H^*)_0 \cong A_{4p}$.
    \end{example}

Now, we turn our attention to nonsemisimple pointed Hopf algebras of dimension $8p$. We will apply the Lifting Method of Andruskiewitsch and Schneider by first computing the corresponding graded Hopf algebras ${\rm gr} H \cong R \# \k[G]$, where $G$ is a finite group whose order divides $8p$, and $R$ is a nontrivial Nichols algebra in $\YD{\k[G]}$ with $\dim R=8p/|G|$.
These examples exhaust all $8p$-dimensional Hopf algebras with the Chevalley property that are neither pointed nor basic.

\begin{thm}\cite[Theorem 3.16]{BGN}\label{CP8p}
Let $H$ be a nonsemisimple Hopf algebra of dimension $8p$ with the Chevalley property.
\begin{itemize}
    \item[(i)] Assume $H$ is pointed. Then $H$ is isomorphic to one of the Hopf algebras constructed in \cite{Gra2000}.
    \item[(ii)] Assume neither $H$ nor $H^*$
is pointed. Then $H$ is isomorphic either to $\mathcal{H}_{8p}$ or $R_{-1}\# A_{4p}$.
\end{itemize}
\end{thm}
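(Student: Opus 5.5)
This statement is quoted from \cite[Theorem 3.16]{BGN}, so no new argument is really needed; we simply invoke that reference. To indicate how its proof is organized, we would reconstruct it along the following lines.

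For (i), $H$ is pointed and nonsemisimple of dimension $8p$. We apply the Lifting Method: ${\rm gr}\,H\cong R\#\k[G]$ with $G=G(H)$, $|G|$ dividing $8p$, and $R$ a nontrivial finite-dimensional Nichols algebra in $\YD{\k[G]}$ with $\dim R=8p/|G|$. We run through the possible orders $|G|\in\{2,4,8,2p,4p\}$, excluding $|G|=p$ and $|G|=8p$ (semisimplicity). We then use the known classification of finite-dimensional Nichols algebras of small dimension over abelian groups, and over $\Dic_{4p}$/$D_{4p}$-type groups where one checks there is no nontrivial finite-dimensional Nichols algebra of the required dimension. This forces rank one or two diagonal braidings of Cartan type $A_1$ or $A_1\times A_1$. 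Computing the liftings recovers the list in \cite{Gra2000}.

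For (ii), the Chevalley property makes $H_0$ a semisimple Hopf subalgebra, and ${\rm gr}\,H\cong R\#H_0$ with $\dim H_0\in\{4p,2p,8,\dots\}$ properly dividing $8p$. Since $H$ is not pointed, $H_0$ is a nonpointed semisimple Hopf algebra; by dimension counts and the classification of semisimple Hopf algebras of dimension $4p$, this forces $H_0\cong\k^{\Dic_{4p}}$ or $A_{4p}$ (also up to duality), with $\dim R=2$. We classify two-dimensional Nichols algebras in $\YD{H_0}$, which must be $R_{-1}=\k[y]/(y^2)$ with the stated actions. We then show that all liftings are trivial or isomorphic to $\HH_{8p}$ or $R_{-1}\#A_{4p}$, using that $H^*$ is not pointed to exclude the remaining candidates.

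The main obstacle is the determination of Yetter--Drinfeld modules over the nonpointed $H_0$ and the lifting step; this is where \cite{BGN} does its work, and we defer to it.
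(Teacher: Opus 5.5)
Your proposal agrees with the paper: the statement is quoted from \cite[Theorem 3.16]{BGN} and the paper gives no proof of its own, so invoking that reference is the entire argument. Just don't rely on your reconstruction sketch as written. For instance, over $\Dic_{4p}$ and $D_{4p}$ there \emph{are} two-dimensional Nichols algebras $\k[y]/(y^2)$: take $y$ of central degree $g$ of order $2$ together with a character $\chi$ satisfying $\chi(g)=-1$. Also, $|G|=p$ is excluded because no $8$-dimensional Nichols algebra exists over $C_p$, not by semisimplicity.
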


\subsection{Nonsemisimple Hopf algebras of dimension $8p$ with $p\,|\, |G(H)|$}\label{SS:8p-pdivG}
We start by investigating the (dual) Chevalley property for finite-dimensional Hopf algebras within the setting of $G$-equivariantizations.

\begin{prop}\label{CP}
Let $A$ be a finite-dimensional Hopf algebra,  $G$ a finite group, and $H$ a Hopf algebra extension of $\k[G]$ by $A$. Then $H$ has the dual Chevalley property if and only if $A$ has it.
\end{prop}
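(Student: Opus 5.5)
Since extensions of finite-dimensional Hopf algebras are cleft, I will work with a crossed-product model. By Theorem~\ref{2dimE} (or directly from cleftness), $H\cong A\#_\sigma\k[G]$ as algebras, with $A$ a normal Hopf subalgebra, $H=\bigoplus_{g\in G}Au^g$ and $u^g$ invertible. The dual Chevalley property for $H$ means that the Jacobson radical $J_H$ is a Hopf ideal. My plan is to show that
\[
J_H=J_A H=HJ_A ,
\]
and then to read off the Hopf-ideal property in both directions.

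\textbf{Step 1: the radical of the crossed product.} Each $u^g$ acts on $A$ by an algebra automorphism $\varphi^g$, and automorphisms preserve $J_A$. Hence $J_AH=\bigoplus_g J_Au^g$ is a two-sided ideal, using $u^gJ_A=\varphi^g(J_A)u^g=J_Au^g$. It is nilpotent because $(J_AH)^k=J_A^kH$. Therefore $J_AH\subseteq J_H$.

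For the reverse inclusion, note that $H/J_AH\cong (A/J_A)\#_{\bar\sigma}\k[G]$ is a crossed product of a semisimple algebra by a finite group in characteristic zero. By the Maschke-type theorem for crossed products (e.g.\ Montgomery, Theorem 7.4.2), this algebra is semisimple. So $J_H\subseteq J_AH$, and equality holds.

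\textbf{Step 2: the forward direction ($A$ has the property $\Rightarrow$ $H$ has it).} Assume $J_A$ is a Hopf ideal of $A$. Then
\[
\Delta(J_AH)\subseteq (J_A\otimes A+A\otimes J_A)\Delta(H)\subseteq J_AH\otimes H+H\otimes J_AH ,
\]
using $J_AH=HJ_A$. The remaining checks are straightforward: $\ve(J_A)=0$ and $S(J_AH)=S(H)S(J_A)\subseteq HJ_A$. Hence $J_H$ is a Hopf ideal.

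\textbf{Step 3: the converse ($H$ has the property $\Rightarrow$ $A$ has it).} Assume $J_H$ is a Hopf ideal. Step 1 gives $J_H\cap A=J_A$, since $J_H=\bigoplus_g J_Au^g$ and the $u^e$-component of this sum is $J_A$. Then $J_A$ is stable under $S$ and killed by $\ve$, so the only nontrivial point is the coproduct.

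Here I use freeness: $H$ is free as a left $A$-module, with $H=A\oplus C$ where $C=\bigoplus_{g\neq e}Au^g$ is an $A$-bimodule complement. From $\Delta(J_A)\subseteq A\otimes A$ and the Hopf-ideal property,
\[
\Delta(J_A)\subseteq (A\otimes A)\cap(J_H\otimes H+H\otimes J_H).
\]
Applying the projection $H\to A$ along $C$ on each tensor factor, this intersection equals $J_A\otimes A+A\otimes J_A$. Indeed, the projection is an $A$-bimodule map sending $J_H=\bigoplus_gJ_Au^g$ onto $J_A$.

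\textbf{Expected obstacle.} The main thing to get right is Step 1, specifically justifying the semisimplicity of $(A/J_A)\#_{\bar\sigma}\k[G]$. This requires two checks. First, the twisted action and the cocycle $\sigma$ must descend to the quotient $A/J_A$, which holds because $J_A$ is $\varphi^g$-stable. Second, characteristic zero must guarantee that the averaging argument goes through. Everything after that is bookkeeping with the direct-sum decomposition.
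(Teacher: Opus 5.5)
Your proposal is correct and follows the same route as the paper. Both establish $J_H=J_AH=HJ_A$, get the forward direction directly, and obtain the converse from $J_A=J_H\cap A$ together with $(J_H\otimes H+H\otimes J_H)\cap(A\otimes A)=J_A\otimes A+A\otimes J_A$. The only difference is that the paper cites \cite[Prop.~1.7]{Ng09} for $J_H=J_AH$ and asserts the intersection identity without comment. You instead prove the first by Maschke's theorem for the crossed product $(A/J_A)\#_{\bar\sigma}\k[G]$, and the second by projecting onto the component $Au^e=A$ of $H=\bigoplus_{g}Au^g$.
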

  \begin{proof}
By \cite[Prop. 1.7]{Ng09}, the exact sequence of Hopf algebras
$$
1\to A\to H\to \k[G]\to 1
$$
implies $J_H=J_AH=HJ_A$. If $J_A$ is a Hopf ideal of $A$, then $J_H$ is a Hopf ideal of $H$. Conversely, if $J_H$ is a Hopf ideal, the proof of \cite[Prop. 1.7]{Ng09} shows that $J_A=J_H\cap A$. Then $\Delta(J_A)\subset \Delta(J_H)\cap \Delta(A)\subset (J_H\otimes H+H\otimes J_H)\cap (A\otimes A)=J_A\otimes A+A\otimes J_A$. Hence, $J_A$ is also a Hopf ideal.
 \end{proof}

Let $A$ be a finite-dimensional Hopf algebra together with a finite group $G$-action on $\C{A}$ via some monoidal functor $(\rho,\sigma): \underline{G} \to\2ust(A)$. We denote by $H$ the corresponding cocentral extension of $\k[G]$ by $A$ according to Theorem \ref{2dimE}. It is clear that if $H$ has the Chevalley property, then so does $A$. However, the converse is not valid due to the following result.

\begin{prop}\label{Prop:dualCP}
 If $A$ has the Chevalley property and $\ol\st(A_0)=\ol\st_t(A_0)$, then $H$ has the Chevalley property if and only if the induced group homomorphism $\overline{\rho}: G\to \ost(A)$ factors through $\ost_t(A)$. In particular, if $A$ is pointed with cyclic $G(A)$ and $\overline{\rho}$ does not factor through $\ost_t(A)$, then $H$ does not have the Chevalley property.
\end{prop}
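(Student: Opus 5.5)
Throughout, $H=A\#_{(\rho,\sigma)}G$ with $A_0$ the coradical of $A$. By the Chevalley property of $A$, $A_0$ is a Hopf subalgebra of $A$, hence semisimple.

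The plan is to identify the coradical $H_0$ and decide when it is a Hopf subalgebra. The basic observation is that $H=\bigoplus_{g}Au^g$, with each $u^g$ invertible and $\Delta(u^g)=(J^g)^{-1}(u^g\otimes u^g)$. The natural candidate for the semisimple part is
\[
H_0':=\textstyle\bigoplus_g A_0u^g .
\]
Since $\varphi^g$ is a coalgebra isomorphism $A\to A_{J^g}$, and twisting by $J^g$ does not change the algebra, $\varphi^g$ preserves the Jacobson radical $J_A$. Dually, the condition for $H_0'$ to be a subcoalgebra is $J^g\in A_0\otimes A_0$.

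Next I would show that $H$ has the Chevalley property if and only if $H_0=H_0'$ is a Hopf subalgebra, and that this holds if and only if $\varphi^g(A_0)=A_0$ and $J^g\in A_0\otimes A_0$ for all $g$, after replacing the $(\varphi^g,J^g)$ by equivalent representatives. For the forward direction, assume $H_0$ is a Hopf subalgebra. Then $H_0\cap A=A_0$ (coradicals restrict to subcoalgebras), and cocentrality of $\pi$ gives $H_0=\bigoplus_g (H_0\cap Au^g)$. Each nonzero piece is a translate $A_0v^g$ with $v^g\in H_0$ invertible mapping to $g$. Replacing $u^g$ by $v^g$ is a change of cleft section, so it replaces $(\varphi^g,J^g)$ by an equivalent stable twist $(I_x\varphi^g,(J^g)_x)$. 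With this choice, $\Delta(v^g)\in H_0\otimes H_0$ forces $J^g_{\rm new}\in A_0\otimes A_0$, and $v^gA_0(v^g)^{-1}\subseteq H_0\cap A=A_0$. Hence $(\varphi^g|_{A_0},J^g)$ restricts to a stable twist of $A_0$.

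Now the hypothesis $\ost(A_0)=\ost_t(A_0)$ applies to this restriction. It says $J^g|_{A_0}=\partial_y$ for some $y\in (A_0)^\times_1$, so $J^g$ is a coboundary in $A$ as well and $\overline{\rho}(g)\in\ost_t(A)$. Conversely, suppose $\overline\rho$ factors through $\ost_t(A)$. Then I choose representatives $(\varphi^g,\partial_{x_g})$ and regauge by $x_g$ to get $J^g=1\otimes1$; that is, $\varphi'^g=I_{x_g}^{-1}\varphi^g$ is a Hopf automorphism of $A$, so it preserves the coradical $A_0$. In this case $H_0'=\bigoplus A_0u'^g$ is a subalgebra and a subcoalgebra. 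It is a crossed product of a semisimple algebra by $\k[G]$ in characteristic zero, hence semisimple, and in fact cosemisimple. Since $H/J_H$-type filtration arguments give $H_0\subseteq H_0'$ (the coradical filtration of $H$ is induced from $A$'s, because $H$ is free over $A$ on grouplike-type invertible $u^g$), we get $H_0=H_0'$, which is a Hopf subalgebra.

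The main obstacle I expect is the identification $H_0=\bigoplus A_0u^g$, in particular in the forward direction, and showing that the representatives can be chosen to be trivial twists compatibly with $\sigma$. To handle this I would first try the dual picture via Proposition~\ref{CP}, giving the exact sequence $1\to\k^G\to H^*\to A^*\to1$ and transporting radicals. Finally, the pointed case: when $A$ is pointed, $A_0=\k[G(A)]$ with $G(A)$ cyclic. Then $H^2(G(A),\k^\times)=1$, so every stable twist of $A_0$ is trivial and the hypothesis holds automatically; the last claim then follows directly from the equivalence.
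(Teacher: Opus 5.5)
Your proposal is correct and follows the paper's proof. For the converse, you regauge the coboundaries $J^g=\partial_{x_g}$ away, so that the $u^g$ become grouplike, $H\cong A\otimes\k[G]$ as coalgebras, and $H_0=\bigoplus_g A_0u^g$; for the forward direction, you realize $H_0$ as a cocentral cleft extension of $\k[G]$ by $A_0=H_0\cap A$, extract from a section in $H_0$ a stable twist of $A_0$ equivalent in $A$ to $(\varphi^g,J^g)$, and invoke $\ost(A_0)=\ost_t(A_0)$, exactly as the paper does, including the cyclic pointed case via $H^2(C_n,\k^\times)=1$. The side remarks about the Jacobson radical and the dual picture via Proposition~\ref{CP} are unnecessary and can be dropped, since that proposition concerns the dual Chevalley property.
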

\begin{proof}
  Suppose $\overline{\rho}: G\to \ost_t(A)$.
  As Lemma \ref{lem:3co}, the 3-cocycle $[\brho]$ is trivial in $H^3(G,Z_G(A))$. Moreover, by the definition of $\ost_t(A)$, in the lift $(\rho,\sigma): \underline{G} \to\2ust(A)$, we can take $\rho(g)=(\varphi^g,1\otimes 1)$ for all $g\in G$. Then by the construction of $H=A\#_{(\rho,\sigma)}G$, it is clear that $H\cong A\otimes \k[G]$ as coalgebras. Thus, $H_0\cong A_0\otimes \k[G]$, which is a Hopf subalgebra of $H$ since $A_0$ is a Hopf subalgebra.

   On the contrary,  suppose $\rho(g)=(\varphi^g, J^g)$ is not trivial for some $g\in G$ but the coradical $H_0$ of $H$ is a Hopf subalgebra. Since we have a coalgebra decomposition $H=\bigoplus_{h\in G} Au^h$, the coradical $H_0=\bigoplus_{h\in G} (H_0)_h$ where $(H_0)_h=(Au^h)_0$ is the coradical of each subcoalgebra $Au^h$. Moreover, it is direct to check that $\Delta((H_0)_h)\subset (H_0)_h\otimes (H_0)_h$ and $(H_0)_h(H_0)_f\subseteq (H_0)_{hf}$ for any $h,f\in G$. Consequently, $H_0$ is a cocentral extension of $\k[G]$ by $A_0=(Hu^e)_0=(H_0)_e$ (cf. \cite[Lemma 1.3]{BNY}). Applying Theorem \ref{2dimE} and its proof, there exists a stable twist $(I_{bu^g}|_{A_0},J)=(I_b\circ \varphi|_{A_0},J)$ of $A_0$, where $J=(bu^g\otimes bu^g)\Delta(bu^g)^{-1}\in (A_0\otimes A_0)^\times$. This implies $(J^g)^{-1}(u^g\otimes u^g)=\Delta(u^g)=\Delta(b)^{-1}J^{-1}(b\otimes b)(u^g\otimes u^g)$. So $J=(b\otimes b)J^g\Delta(b)^{-1}$ and hence $J,J^g$ are equivalent. But $(I_{bu^g}|_{A_0},J)$ is trivial due to $\ol\st(A_0)=\ol\st_t(A_0)$. Hence, $(\varphi^g,J^g)$ must be trivial, a contradiction!

   Finally, assume $A$ is pointed with $G(A)=\mathbb Z_n$. In this case, any 2-cocycle $J$ for $A_0$ defines a normalized 2-cocycle $J: \mathbb Z_n\times \mathbb Z_n\to \k^\times$. Since $H^2(\mathbb Z_n,\k^\times)$ is trivial, $J$ is a 2-coboundary and hence $\ol\st(A_0)=\ol\st_t(A_0)$. Thus, the result follows from the previous discussion.
\end{proof}

We are now ready to collect some fundamental properties of the family of Hopf algebras $\N(md,\xi)$ for every odd integer $d>1$.

\begin{thm}\label{L:ProN8d} Let $m,d>1$ be two integers such that $\gcd(m,d)=1$ and $d$ is odd and let $\xi$ be a $m$-th root of unity. The Hopf algebra $N=\N(md,\xi)$ is, up to equivalence, the unique nontrivial extension of $A=A_1(md,\xi)$ by $\k[C_2]$, and does not have the Chevalley or the dual Chevalley property.  Moreover,  for any nontrivial $C_2$-action on $\C{A}$, there is an equivalence of finite tensor categories
  \[
    \C{N}~\overset{\otimes}{\cong}~(\C{A})^{C_2}.
  \]
\end{thm}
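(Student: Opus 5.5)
The plan is to assemble the theorem from three pieces already established: the extension/equivariantization correspondence, the computation of $\ost(A)$, and the (dual) Chevalley criteria of Propositions~\ref{CP} and~\ref{Prop:dualCP}.

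\emph{Uniqueness and the equivalence with the equivariantization.} Since $\gcd(m,d)=1$ and $d$ is odd, Theorem~\ref{th:extC2}(iii) says that, up to equivalence, there is a unique nontrivial cocentral extension of $\k[C_2]$ by $A$, namely $\N(md,\xi)$. By Lemma~\ref{subnormal}, every Hopf extension by $\k[C_2]$ is automatically cocentral, so $N$ is the unique nontrivial extension. For the categorical statement, $Z_G(A)=\langle g^m\rangle\cong C_d$ has odd order, so $H^2(C_2,Z_G(A))=H^3(C_2,Z_G(A))=0$. By Theorem~\ref{HopfCEG}, nontrivial $C_2$-actions on $\C{A}$ correspond bijectively to order-two elements of $\ost(A)\cong C_2$ (Theorem~\ref{StableTwTaft2d}), so there is exactly one such action up to equivalence. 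It lifts the generator $\overline{(\tilde\varphi,\tilde J)}$, with $z$ chosen as in Lemma~\ref{l:C2action}. Corollary~\ref{cor:anotEG} identifies $A\#_{(\rho,\sigma)}C_2$ with $A[u^{\pm1};\tilde\varphi,\tilde J,z]=N$, and Theorem~\ref{HRepGE} then gives $\C{N}\simeq(\C{A})^{C_2}$.

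\emph{Failure of the dual Chevalley property.} By Proposition~\ref{CP}, it suffices to show that $A=A_1(md,\xi)$ lacks the dual Chevalley property, i.e.\ that $J_A$ is not a Hopf ideal. From \S\ref{SS:A1-framework}, $A=A\eo\oplus\bigoplus_{\oi\ne\ol0}Ae_\oi$, where each $Ae_\oi\cong M_m(\k)$ is simple for $\oi\ne\ol0$, and $A\eo\cong T_{m^2}(\xi)$. Hence $J_A=Ax\eo$. Then $\Delta(x\eo)=(x\otimes g+1\otimes x)\Delta(\eo)$, and $\Delta(\eo)=\sum_{\ou}e_{\ou}\otimes e_{-\ou}$ contains the terms $e_{\ou}\otimes e_{-\ou}$ with $\ou\neq\ol0$. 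After multiplying on both sides by $e_{\ou}\otimes e_{-\ou}$ with $\ou\ne\ol0$, the result is a nonzero element of $Ae_\ou\otimes Ae_{-\ou}$. This element does not lie in $J_A\otimes A+A\otimes J_A$, because that space is killed by projection onto $Ae_\ou\otimes Ae_{-\ou}$. So $J_A$ is not a coideal, and $N$ fails the dual Chevalley property.

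\emph{Failure of the Chevalley property.} Here I apply Proposition~\ref{Prop:dualCP}. First, $A$ is pointed with $G(A)=\langle g\rangle$ cyclic, so it has the Chevalley property and $\ol\st(A_0)=\ol\st_t(A_0)$. Second, $\ost_t(A)$ is trivial by Corollary~\ref{c:trivial}, which gives $\st_t(A)=\st_0(A)$. Third, the homomorphism $\brho\colon C_2\to\ost(A)$ attached to $N$ is the isomorphism onto $C_2$. Therefore $\brho$ does not factor through $\ost_t(A)$, and $N$ lacks the Chevalley property.

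The main obstacle is the dual Chevalley step. One must make sure that the projection argument for $\Delta(x\eo)$ is correct, namely that the coefficient $xe_\ou\otimes g e_{-\ou}$ genuinely survives; this requires $\ou\ne\ol0$, which is available because $d>1$. Alternatively, one could observe that $A^*$ is not pointed because of the matrix blocks, but the direct computation above is cleaner.
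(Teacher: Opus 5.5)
Your proposal is correct and follows the paper's proof essentially step for step. Uniqueness comes from Theorem~\ref{th:extC2}, the failure of the dual Chevalley property from Proposition~\ref{CP}, the failure of the Chevalley property from Proposition~\ref{Prop:dualCP} (since $\ost_t(A)$ is trivial and $\brho$ maps onto $\ost(A)\cong C_2$), and the categorical equivalence from Theorem~\ref{HRepGE}. The one addition is that you verify directly that $J_A=Ax e_{\ol 0}$ is not a coideal, by projecting $\Delta(x e_{\ol 0})$ onto $Ae_{\ou}\otimes Ae_{-\ou}$ with $\ou\neq\ol 0$; the paper simply asserts this fact, and your computation is sound.
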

\begin{proof}
By Theorem \ref{th:extC2}, we know $N$ is the only nontrivial extension of $A$ by $\k[C_2]$ up to equivalence when $d$ is odd. Since $A$ does not have the dual Chevalley property, neither does $N$ by Proposition \ref{CP}. Note that the corresponding group homomorphism $\brho: G\to \ost(A)\cong C_2$ is surjective and does not factor through $\ost_t(A)$, which is trivial. Moreover, $A$ is pointed with $G(A)\cong C_{2d}$ such that $H^2(C_{2d},\k^\times)$ is trivial.  By Proposition \ref{Prop:dualCP}, $N$ does not have the Chevalley property. The last statement follows from Theorem \ref{HRepGE}.
\end{proof}

\begin{rem}\label{r:8p-ftc}
The last statement of Theorem~\ref{L:ProN8d} is intrinsically
categorical: for any nontrivial $C_2$-action on $\C{A}$, the
equivariantization $(\C{A})^{C_2}$ is tensor equivalent to $\C{N}$.
By contrast, the classification results in the rest of this section use
Hopf-algebraic input such as Hopf subalgebras, Hopf quotients,
skew-primitive elements, and dimension arguments, so they do not
immediately extend to arbitrary finite tensor categories.
\end{rem}

We denote by $\irr(H)$ a complete set of nonisomorphic simple $H$-modules. For any $V\in \irr(H)$, the {\it projective cover} of $V$ is denoted by $P(V)$. Since $H$ is a Frobenius algebra \cite{LG}, we have (see \cite[61.13]{CR62})
\begin{equation}\label{dimH}
    \dim H~=\sum_{V\in \irr(H)} \dim V\cdot \dim P(V).
\end{equation}
For any $H$-module $M$ and $V\in \text{Irr}(H)$, we define $[M:V]$ to be the multiplicity of $V$ appearing as a composition factor of $M$. Let $V,W\in \text{Irr}(H)$. Since $V^*\otimes P(W)$ is projective, it is a direct sum of indecomposable projective $H$-modules. More precisely,

\begin{align}\label{PM}
V^*\otimes P(W)= & \bigoplus_{U\in \mathrm{Irr}(H)} \dim \left(\Hom_H(V^*\otimes P(W),U)\right)\cdot P(U) \nonumber\\
=&\bigoplus_{U\in \mathrm{Irr}(H)} [V\otimes U:W]\cdot P(U).
\end{align}

\begin{lem}\label{Dihe0} 
   Let $p$ be an odd prime, and $H$ a nonsemisimple Hopf algebra of dimension $8p$. If $H$ has a Hopf subalgebra $K$, which is either $2p$-dimensional  or semisimple of dimension $4p$,  then $H$ has a nontrivial skew-primitive element. If further $K \not\cong \k[C_{2p}]$, then $H$ has the Chevalley property with $\dim H_0 = 4p$.
\end{lem}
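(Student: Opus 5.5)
The plan is to use the Nichols--Zoeller freeness theorem together with the classification of low-dimensional Hopf algebras to locate a nontrivial skew-primitive element. Since $\dim K \mid 8p$, the algebra $H$ is a free $K$-module of rank $8p/\dim K\in\{4,2\}$.

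\emph{Case $\dim K=2p$.} Hopf algebras of dimension $2p$ are semisimple, so $K$ is either $\k[C_{2p}]$, $\k[D_{2p}]$ or $\k^{D_{2p}}$ (Masuoka/Ng). I would first handle $K=\k[C_{2p}]$, which is pointed with grouplikes of order $2p$. Here I will argue via the quotient $H/HK^+$ of dimension $4$ when $K$ is normal, or via the coradical filtration otherwise. The coradical $H_0\supseteq K$, and $H$ is nonsemisimple. If $H$ is not cosemisimple, the Taft--Wilson theorem applied to the pointed part (the subcoalgebra generated by grouplikes) produces a nontrivial $(g,h)$-skew-primitive element, provided $H_1\cap(\text{pointed part})\neq H_0\cap(\cdots)$. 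To see the latter, I would consider $H^*$. Nonsemisimplicity of $H$ forces, by the Larson--Radford theorem in characteristic zero, that $H^*$ is nonsemisimple as well. Then I would count dimensions using \eqref{dimH} and \eqref{PM}, tensoring projective covers of the one-dimensional modules coming from $G(H^*)$, to show that some one-dimensional comodule has a nonsplit extension with another. That extension is exactly a nontrivial skew-primitive.

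\emph{Case $K\not\cong\k[C_{2p}]$.} This covers $K$ of dimension $2p$ that is dihedral or dual dihedral, or $K$ semisimple of dimension $4p$ (classified by Natale: group algebras, duals, and the $A_{4p}$-type algebras). In these cases $K$ has simple subcoalgebras of dimension $4$ (matrix coalgebras of size $2$). Write $C=K_0$-parts for such coalgebras. The key step is the dimension count: $H_0$ contains $K$, and I will show $H_0$ cannot be strictly larger than $4p$, and cannot equal $8p$ since $H$ is nonsemisimple (more precisely not cosemisimple, after dualizing as above). For the upper bound I would use Nichols--Zoeller on the Hopf subalgebra generated by $H_0$. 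If that subalgebra equals $H$, then counting matrix blocks of $H^*$ modulo the radical, together with $\dim H^*_{\rm ss}\mid$-type constraints from \eqref{dimH}, rules out every option except $\dim H_0=4p$ with $H_0$ closed under multiplication. This would give the Chevalley property. I would then obtain the skew-primitive from ${\rm gr}\,H=R\#H_0$ with $\dim R=2$. Here $R=\k[y]/(y^2)$, and lifting $y$ gives a $(g,1)$-skew primitive for a central grouplike $g$ of $H_0$.

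I expect the main obstacle to be the dimension bound $\dim H_0=4p$ in the non-cyclic case, that is, excluding a coradical of dimension strictly between $4p$ and $8p$ that is not a subalgebra. I would try first to exploit that $H_0\cdot H_0\subseteq$ a Hopf subalgebra whose dimension divides $8p$ and strictly contains $K$, which forces it to be $H$. I would then derive a contradiction from the count of $2$-dimensional simple comodules via \eqref{PM}.
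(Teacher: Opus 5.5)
Your proposal has a genuine gap in the non-cyclic case. Two things go wrong there.

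First, $\k[D_{2p}]$ and every group algebra of dimension $4p$ are pointed. So the claim that $K$ has $4$-dimensional simple subcoalgebras is false in those cases.

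More seriously, you only try to bound $\dim H_0$ from above and never exclude $H_0=K$. Take $K=\k^{D_{2p}}$ and the configuration $\mathrm{gr}\,H\cong R\#\k^{D_{2p}}$ with $\dim R=4$. This is a Chevalley Hopf algebra with $\dim H_0=2p$, so it contradicts the conclusion $\dim H_0=4p$. Yet it is numerically consistent with \eqref{dimH}: taking $\dim P(\k)=\dim P(\chi)=4$ and $\dim P(V_i)=8$ gives $4+4+2\cdot 8\cdot\frac{p-1}{2}=8p$ exactly. So no count of matrix blocks or projective covers via \eqref{dimH} and \eqref{PM} can rule it out.

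The missing input is the classification of finite-dimensional Nichols algebras over $D_{2p}$: there is no nontrivial one of dimension $\le 4$ (\cite{carnovale2024twist} for $p>3$, \cite{andruskiewitsch2010nichols} for $p=3$). The paper uses this input twice:
\begin{enumerate}
\item to kill exactly the configuration above, in the case where $H$ has no skew-primitive;
\item to get $\wedge^2K=K$, via the wedge filtration $L=\bigcup_n\wedge^nK$ with $\mathrm{gr}\,L\cong R\#K$ and $\dim R\le 4$.
\end{enumerate}
The second use is what enlarges the coradical. A nontrivial $(g,1)$-skew-primitive $x$ must have $g\notin K$, since otherwise $x\in\wedge^2K\setminus K$. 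Then $K$ and $g$ generate a semisimple Hopf subalgebra of dimension $4p$, and \cite[Lem.~3.11]{BGN} gives the Chevalley property with $\dim H_0=4p$.

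So the logical order is the reverse of yours: the skew-primitive comes first, and the Chevalley property is deduced from it. Your route of proving Chevalley first has no mechanism behind it.

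Likewise, for $K$ semisimple of dimension $4p$ you need $H_0=K$. That is \cite[Lem.~3.11]{BGN}, and your sketch does not supply it. Once you have it, your lifting of $y\in R(1)$ from $\mathrm{gr}\,H=R\#H_0$ is a fine substitute for the paper's $J^2=0$ argument.

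In the cyclic case your idea is right, but it should be sharpened; Taft--Wilson and $H/HK^+$ are unnecessary. You must work with one-dimensional $H^*$-modules, i.e.\ grouplikes of $H$, not modules indexed by $G(H^*)$. Extensions between one-dimensional $H$-modules detect skew-primitives of $H^*$, not of $H$.

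The argument then runs as follows. Suppose $H$ has no nontrivial skew-primitive, and let $V$ be a one-dimensional $H^*$-module. The layer $JP(V)/J^2P(V)$ then has only constituents of dimension $\ge 2$, while the socle of $P(V)$ is one-dimensional, so $\dim P(V)\ge 4$. The $2p$ one-dimensional modules pulled back along $H^*\to K^*$ then exhaust $8p$ in \eqref{dimH}. That leaves no room for the higher-dimensional simples just produced, a contradiction.

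This argument only uses that $K^*$ is commutative, so it also covers $K=\k[D_{2p}]$. The genuinely hard case is $K=\k^{D_{2p}}$, which is exactly where the Nichols-algebra input is indispensable.
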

\begin{proof} Assume $H$ has no nontrivial skew primitive elements.  Then, for any 1-dimensional $H^*$-module $V$, $JP(V)/J^2 P(V)$ must have at least one simple constituent of dimension $>1$, where $J$ denotes the Jacobson radical of $H^*$. In particular, $\dim P(V) \ge 4$. We proceed to show that none of these Hopf subalgebras of $H$ exists.

If $H$ admits a semisimple Hopf subalgebra $K$ of dimension $4p$, then $K = H_0$ by \cite[Lem. 3.11]{BGN}. Since $J=H_0^\perp$,  $J^2=0$. In particular, $JP(\1)$ is the socle of $P(\1)$ which is also $1$-dimensional (cf. \cite[Lem. 1.1]{Ng08}). Thus, $\dim P(\1) =2$, a contradiction.

Now we assume $H$ admits a $2p$-dimensional Hopf subalgebra $K$. By \cite{Ng2005}, $K \cong \k[C_{2p}],\, \k[D_{2p}]$ or $\k^{D_{2p}}$. In particular, $K$ is semisimple. Since there is a Hopf surjection $H^*\to K^*$, the fusion category $\C{K^*}$ is a tensor subcategory of $\C{H^*}$.  

We first show that $K^*$ cannot be commutative. Otherwise, $\C{K^*}$ is a pointed fusion subcategory with a complete set of nonisomorphic 1-dimensional $K^*$-modules $V_1, \cdots, V_{2p}$. By the first paragraph, $\dim P(V_i) \ge 4$  and so
$$
\dim H \ge \sum_{i=1}^{2p} \dim P(V_i) \ge 4\cdot 2p = 8p\,.
$$
This implies $\{V_1,\ldots, V_{2p}\}$ is a complete set of nonisomorphic simple $H^*$-modules, which contradicts that there must be simple $H^*$-module of dimension $> 1$. 
    
    Now we have $K\cong \k^{D_{2p}}$ or $K^*\cong \k[D_{2p}]$. 
    Recall that $\k[D_{2p}]$ has two one-dimensional simple modules $\k,\chi$ and $(p-1)/2$ two-dimensional self-dual simple modules $V_1,\cdots, V_{\frac{p-1}{2}}$. By the first paragraph,  $\dim P(\k)=\dim P(\chi)\ge 4$. 
    
   It follows from the fusion rules of $\C{\k[D_{2p}]}$, $V_i\otimes V_i \cong \k\oplus \chi \oplus V_j$ for some  $V_j$. Reorder $V_i$, if necessary, so that $\dim P(V_1)\le \dim P(V_i)$ for any $i$. Thus,
\begin{align*}
   2\dim P(V_1)&\,=\dim (P(V_1)\otimes V_1)\\
   &\,\ge \dim P(\k)+\dim P(\chi)+\dim P(V_j) \text{ for some }j\\
   &\,\ge 4+4+\dim P(V_1). 
\end{align*}
This implies that $\dim P(V_1)\ge 8$ and so $\dim P(V_i)\ge 8$ for all $i$. Thus,
\begin{align*}
8p&\,=\dim (H^*)=\sum_{V\in {\rm Irr}(H^*)} (\dim V)(\dim P(V))\\
&\, \ge \dim P(\k)+\dim P(\chi)+\sum_{1\le i\le (p-1)/2} (\dim V_i)(\dim P(V_i))\\
&\,\ge 4+4+2\cdot 8\cdot \frac{(p-1)}{2}=8p. 
\end{align*}
Therefore, we have $\{\k,\chi, V_1,\ldots,V_{\frac{p-1}{2}}\}$ a complete set of nonisomorphic simple $H^*$-modules. Hence, $H^*/J \cong \k[D_{2p}]$. By duality, it implies that $H_0=K$, and so the graded Hopf algebra,  obtained from the radical filtration, is given by the bosonization $\mbox{gr} H=R \# \k^{D_{2p}}$ for some connected graded Hopf algebra $R$ in $\YD{\k^{D_{2p}}}\cong \YD{\k[D_{2p}]}$ with $\dim R=4<\infty$. 
However, this cannot occur, since if $p > 3$, \cite{carnovale2024twist} proves that $\dim R = 1$, while if $p = 3$, \cite{andruskiewitsch2010nichols} shows that $\dim R$ can only be $1$ or $\ge 12$.

Finally, assume that $K \cong \k [D_{2p}]$ or $\k^{D_{2p}}$. We consider the iterated wedge product such that $\wedge^0 K=\{0\}$, $\wedge^1 K=K$, and $\wedge^i K=(\wedge ^{i-1} K)\wedge K$ for all $i\ge 1$. As in \cite[Lemma 5.2.8]{Mont93}, one can similarly show that $L=\cup_{n\ge 0} \wedge^n K$ is a Hopf subalgebra of $H$ where $\wedge^1K\subseteq \wedge^2 K\subseteq \wedge^3 K\subseteq \cdots $ is an exhaustive Hopf algebra filtration on $L$. Hence, the associated graded Hopf algebra ${\rm gr}L\cong R\ \# K$ with $R$ a connected graded braided Hopf algebra in $\!_K^K\mathcal{YD}$ with $\dim(R) \le 4$. As we have seen, $\dim R = 1$ and $\wedge^2 K = K$. In particular, $H_0 \neq K$ because $H = \bigcup_{i \ge 0} \wedge^i H_0$ and $H$ is nonsemisimple. Because $H$ contains a nontrivial $(g,1)$-primitive element $x$ for some $g\in G(H)$, it follows that $g\notin K$; indeed, if $g$ were in $K$, then $x$ would lie in $\wedge^2 K\setminus K$. Therefore, $K,g$ generates a semisimple Hopf subalgebra of dimension $>2p$, which must be of dimension $4p$ since $H$ is nonsemisimple. Consequently, $\dim H_0=4p$ and $H$ has the Chevalley property by \cite[Lem. 3.11]{BGN}.
\end{proof}

We denote by $\N_{8p}=\N(2p,-1)$ the $8p$-dimensional nonsemisimple Hopf algebra that lacks both the Chevalley and dual Chevalley properties for an odd prime $p$. The classification of $8p$-dimensional nonsemisimple Hopf algebras for an odd prime $p$ starts in \cite{BG2013, BGN}, where they show that if $|G(H)|=2p$ and assuming $H$ and $H^*$ are nonpointed, then $H$ fits into an exact sequence of Hopf algebras $1\to A_\delta(2p,-1)\to H\to \k[C_2]\to 1$. Now, we are able to complete this case. 

\begin{thm}\label{T:ChavCCE}
      Let $p$ be an odd prime, and $H$ a nonsemisimple Hopf algebra of dimension $8p$. Suppose $H$ contains a Hopf subalgebra of dimension $p$ or $2p$ and neither $H$ nor $H^*$ has the Chevalley property. Then $H\cong \mathcal N_{8p}$. In particular, $G(H)\cong C_{2p}$.
\end{thm}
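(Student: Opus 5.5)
The plan is to first pin down $G(H)$, then reduce to the exact-sequence situation handled in \cite{BG2013, BGN}, and finally identify the extension using Theorem~\ref{th:extC2}.

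\textbf{Step 1: reduce to a Hopf subalgebra of dimension $2p$.} Let $K\subseteq H$ be the given Hopf subalgebra. By the classification of Hopf algebras of dimension $p$ \cite{Zhu94}, a $p$-dimensional $K$ is $\k[C_p]$, so $p$ divides $|G(H)|$. Suppose $\dim K=2p$. If $K\not\cong\k[C_{2p}]$, then Lemma~\ref{Dihe0} gives $H$ the Chevalley property, which contradicts the hypothesis. Hence in every case $C_p\le G(H)$, and $|G(H)|$ divides $8p$ by the Nichols--Zoeller theorem.

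\textbf{Step 2: determine $G(H)$.} The possibilities are $|G(H)|\in\{p,2p,4p,8p\}$.
\begin{itemize}
\item $|G(H)|=8p$ is impossible, since then $H$ would be pointed and so have the Chevalley property.
\item $|G(H)|=4p$ gives a semisimple Hopf subalgebra $\k[G(H)]$ of dimension $4p$. Lemma~\ref{Dihe0} then yields the Chevalley property, again a contradiction.
\item $|G(H)|=p$ must be excluded. I would argue by duality: if $H^*$ also has small grouplikes, use the counting formula \eqref{dimH} with projective covers, as in the proof of Lemma~\ref{Dihe0}. Alternatively, invoke the results of \cite{BG2013, BGN} showing that when $p\mid |G(H)|$ and $H$ lacks the Chevalley property, one has $|G(H)|=2p$.
\end{itemize}
I expect the $|G(H)|=p$ case to be the main obstacle. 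I would try to produce a nontrivial skew-primitive element via the argument of Lemma~\ref{Dihe0} (projective covers of one-dimensional $H^*$-modules having dimension at least $4$). This would force a Hopf subalgebra of dimension $2p$ or larger generated by grouplikes and skew-primitives, and a Nichols algebra dimension count would then give a contradiction.

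\textbf{Step 3: reduce to an extension.} With $|G(H)|=2p$, neither $H$ nor $H^*$ is pointed, since either being pointed would give the corresponding Chevalley property. So \cite{BGN} provides an exact sequence
\[
1\to A_\delta(2p,-1)\to H\to \k[C_2]\to 1 .
\]
By Lemma~\ref{subnormal} this extension is cocentral. Here $m=2$ and $d=p$, with $\gcd(2,p)=1$ and $p$ odd.

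\textbf{Step 4: identify the extension.} We treat the two values of $\delta$ separately.
\begin{itemize}
\item If $\delta=0$, Corollary~\ref{StableTwTaft} gives $\ost(A_0)=\ost_t(A_0)$. Every extension then has the Chevalley property by Proposition~\ref{Prop:dualCP}, which excludes this case.
\item If $\delta=1$, Theorem~\ref{th:extC2}(iii) shows that the only nontrivial extension is $\N(2p,-1)=\N_{8p}$. The trivial extension $A\otimes\k[C_2]$ is pointed and so has the Chevalley property, which excludes it.
\end{itemize}
Therefore $H\cong\N_{8p}$. Finally, $G(H)\cong C_{2p}$ follows from $G(\N_{8p})=\langle g\rangle$, because $u$ is not grouplike.
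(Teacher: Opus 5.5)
Your proposal is correct and has the same overall structure as the paper's proof.

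\textbf{Where it matches.} You use Zhu's theorem and Lemma~\ref{Dihe0} to get $p\mid |G(H)|$. You then take $|G(H)|=2p$ from \cite{BG2013}; the paper does exactly this, citing \cite[Prop.~4.2]{BG2013}. So your second option for the case $|G(H)|=p$ is the one to commit to. Your separate exclusion of $4p$ and $8p$ is harmless but redundant. The self-contained argument you sketch for $|G(H)|=p$ (projective covers plus a Nichols algebra count) is not developed and should not be relied on. Next, \cite[Lem.~3.3]{BGN} gives the exact sequence. For $\delta=1$, Theorem~\ref{th:extC2}(iii), which is the content of Theorem~\ref{L:ProN8d}, identifies $H$ with $\mathcal{N}_{8p}$.

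\textbf{The genuine difference: excluding $\delta=0$.}
\begin{itemize}
\item The paper notes that $A_0(2p,-1)$ has the dual Chevalley property, since its Jacobson radical $Ax$ is a Hopf ideal. Proposition~\ref{CP} then transports this to $H$, contradicting the hypothesis on $H^*$.
\item You instead use Corollary~\ref{StableTwTaft}: since $n=2p>m=2$, every stable twist of $A_0(2p,-1)$ is trivial, so $\ost=\ost_t$. Proposition~\ref{Prop:dualCP} then shows that $H$ itself has the Chevalley property, contradicting the hypothesis on $H$. Its coradical hypothesis holds by its last clause, since $G(A)\cong C_{2p}$ is cyclic.
\end{itemize}
Both arguments are valid. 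The paper's is lighter, needing only a Jacobson-radical argument. Yours stays inside the stable-twist framework but depends on the full computation in Theorem~\ref{STH0}.

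\textbf{Two small points.} Your remark that the trivial extension $A_1(2p,-1)\otimes\k[C_2]$ is pointed, and hence excluded, supplies a step the paper leaves implicit. Separately, ``$u$ is not grouplike'' does not by itself show $G(\mathcal{N}_{8p})=\langle g\rangle$. Instead, combine $|G(H)|=2p$ from your Step~2 with $\langle g\rangle\cong C_{2p}\subseteq G(H)$.
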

\begin{proof}
    
We first assert that $p$ divides $|G(H)|$. Indeed, suppose $H$ contains a Hopf subalgebra $K$ of dimension $p$. Then, by Zhu's result \cite{Zhu94}, we have $K\cong \k[C_p]$. If instead $K$ has dimension $2p$, Lemma \ref{Dihe0} forces $K\cong \k[C_{2p}]$. Hence, in all cases, the claim is established.
    
    Now, we can apply \cite[Prop. 4.2]{BG2013} to get $|G(H)|=2p$. Moreover,  \cite[Lem. 3.3]{BGN} shows that there exists an exact sequence of Hopf algebras,
    $$
    0\to A_\delta(2p,-1)\to H\to \k[C_2]\to 0\, .
    $$
  If $\delta=0$, then $A_0(2p,-1)$ satisfies the dual Chevalley property. Consequently, $H$ also has this property by Proposition \ref{CP}, which contradicts our assumption. Therefore, $\delta=1$. Thus, the result follows from Theorem \ref{L:ProN8d}.
\end{proof}

\begin{cor}\label{Dihe1} Let $p$ be an odd prime.
    If $H$ is an $8p$-dimensional nonsemisimple Hopf algebra such that $H^*$ does not have any nontrivial skew-primitive element, then there does not exist any $2$-dimensional self-dual simple $H$-module. If there exists a $2$-dimensional simple $H$-module $V$ such that $V^{**} \cong V$, then $p > 5$ and 
    $$
    V^* \o V = \left\{\begin{array}{ll}
        \sum_{\b \in G'} \k_\b  &  \text{for some order 4 subgroup $G'$ of $G(H^*)$ or}  \\\\
         \1 \oplus W & \text{ for some nonprojective simple self-dual $H$-module $W$.  }
    \end{array}\right.
    $$
\end{cor}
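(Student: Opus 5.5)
The proof will rest on one dictionary: a one-dimensional $H$-module is the same thing as a grouplike element of $H^*$, and a nontrivial $(\b,\ve)$-skew-primitive element of $H^*$ is the same thing as a nonsplit extension $\Ext^1_H(\k_\b,\1)\neq 0$, i.e.\ $\k_\b$ occurs in $JP(\1)/J^2P(\1)$. The hypothesis therefore says that the second Loewy layer of $P(\k_\b)$ contains no one-dimensional simple module, for every $\b\in G(H^*)$. As in the first paragraph of the proof of Lemma~\ref{Dihe0}, this gives $\dim P(\k_\b)\ge 4$ for every $\b$: a two-dimensional simple module in the second layer plus the one-dimensional socle. Since $H$ is nonsemisimple, every $P(U)$ is non-simple.

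\emph{Self-dual case.} Suppose $V$ is a two-dimensional simple module with $V\cong V^*$. Then $V\otimes V\supseteq \1$ by evaluation, and $\dim V\otimes V=4$. I would apply \eqref{PM} with $W=\1$:
\[
V^*\otimes P(\1)\cong \bigoplus_U [V\otimes U:\1]\,P(U)\supseteq P(V^*)=P(V).
\]
Comparing dimensions gives $\dim P(V)\le 2\dim P(\1)$. Symmetrically, $V\otimes P(V)\supseteq P(\1)$. The key step is to use the structure of $V\otimes V$, which is $\1\oplus$ (something of dimension $3$) or an extension. Any one-dimensional composition factor $\k_\b$ of $V\otimes V$ with $\k_\b\ncong\1$ gives $V\cong \k_\b\otimes V$. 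I would then count projectives through \eqref{dimH}, exactly as in Lemma~\ref{Dihe0}, to reach $\dim H>8p$. The alternative outcome is a nonsplit extension between $\1$ and another one-dimensional module inside $V\otimes V$; through the dictionary this produces a skew-primitive element in $H^*$, contradicting the hypothesis. The hardest step is showing that $V\otimes V$ must contain such a nonsplit self-extension, or else forces the dimension count to overflow. I would first try to show $V\otimes V$ is semisimple, using that $\dim V=2$ is coprime to $p$, and then bound $\sum_{\b}\dim P(\k_\b)$ together with $2\dim P(V)$ against $8p$.

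\emph{Case $V^{**}\cong V$.} By the same \eqref{PM} argument, $V^*\otimes V$ has dimension $4$ and contains $\1$ as a summand, since $V^{**}\cong V$ makes $\1\to V^*\otimes V\to\1$ split in dimension $2$ (trace nonzero in characteristic $0$). The complement $W'$ has dimension $3$. If $W'$ contains a one-dimensional module $\k_\b$, then $\k_\b\otimes V\cong V$. The stabilizer $G'=\{\b:\k_\b\otimes V\cong V\}$ has order dividing $\dim(V^*\otimes V)=4$, and equality gives the first alternative $\bigoplus_{\b\in G'}\k_\b$. The three-dimensional mixed cases would be excluded by the no-skew-primitive hypothesis together with self-duality of $V^*\otimes V$. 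In the remaining case $W'=W$ is simple, and since $V^*\otimes V$ is self-dual, $W$ is self-dual. If $W$ were projective, then $V^*\otimes V$ would be $\1\oplus$ projective. A Frobenius--Perron and projective-cover count would then force $\dim P(\1)$ to be too small, contradicting $\dim P(\1)\ge4$.

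\emph{The bound $p>5$.} This comes from dimension counting in \eqref{dimH}. With $|G(H^*)|$ dividing $8p$ and each $\dim P(\k_\b)\ge4$, $\dim P(V)\ge 4$, plus the forced constituents $P(W)$ or $P(\k_\b)$ for $\b\in G'$, the total exceeds $8p$ for $p=3,5$. I expect this to need a small case analysis on $|G(H^*)|\in\{1,2,4,8,\dots\}$.
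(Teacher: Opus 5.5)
\textbf{There is a genuine gap: the self-dual case.} Your plan for it does not work, and everything else in the statement depends on that case. You want to force either a nonsplit extension between one-dimensional modules inside $V\otimes V$, or an overflow in \eqref{dimH}. Local data around $V$ cannot produce either. The lower bounds you can extract from $V$ and the one-dimensional modules are $\dim P(\Bbbk_\beta)\ge 4$ and a few constituents of $V\otimes V$. These do not grow with $p$, so \eqref{dimH} cannot be violated once $p$ is moderately large. Nor is there any mechanism making $V\otimes V$ semisimple because $\gcd(2,p)=1$. For instance, in $A_1(2p,-1)$ a two-dimensional simple $V$ has $V\otimes V^*\cong P(\1)\oplus P(\chi)$, where $\chi$ is the nontrivial one-dimensional module.

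\textbf{The missing idea is global.} A self-dual two-dimensional simple $V$ gives a $4$-dimensional simple subcoalgebra $C\subseteq H^*$ with $S(C)=C$, and the paper studies the Hopf subalgebra $K_C$ that $C$ generates. By Natale's result \cite{Na}, $K_C$ is either semisimple or an extension $1\to\Bbbk^G\to K_C\to A\to 1$ with $A$ nonsemisimple basic. Since $\dim K_C$ divides $8p$, one runs through $\dim K_C\in\{8,2p,4p,8p\}$. In each case Lemma~\ref{Dihe0}, the classifications in dimensions $8$, $2p$, $4p$, or Theorem~\ref{T:ChavCCE} (the algebra $\mathcal N_{8p}$) produces a nontrivial skew-primitive element of $H^*$. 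Without this structural input, or an equivalent one, part (i) is unproved.

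\textbf{Part (ii) also relies on (i) at steps you treat as automatic.}
\begin{itemize}
\item That $\1$ is a direct summand of $V^*\otimes V$ does not follow from characteristic $0$. It holds exactly when the quantum trace of an isomorphism $\phi\colon V\to V^{**}$ is nonzero, and for $\dim V=2$ that trace can vanish (eigenvalues $\lambda,-\lambda$). The paper rules out $\operatorname{tr}(\phi)=0$ by noting that $V\otimes V^*$ would then be indecomposable with radical series $\1,W,\1$, where $W$ is a two-dimensional self-dual simple; this contradicts (i).
\item The mixed case $\Bbbk_\beta\oplus U$ is excluded because $U$ would be a two-dimensional self-dual simple. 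That is again (i), not the skew-primitive hypothesis directly.
\item Part (i) also gives $V\not\cong V^*$, which the final count needs.
\item Non-projectivity of $W$ comes from a trace argument, not a Frobenius--Perron count. After normalizing so that $\phi^{16}=\mathrm{id}$ for an isomorphism $\phi\colon W\to W^{**}$, the eigenvalues are $16$-th roots of unity, and three of them cannot sum to $0$. So $\1$ splits off $W\otimes W^*$, and $W$ cannot be projective.
\end{itemize}
The paper then takes $\dim P(W)\ge 6$, and \eqref{dimH} gives either $8p\ge 12\dim P(\1)\ge 48$ or $8p\ge 3\dim P(\1)+5\dim P(W)\ge 42$. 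Hence $p>5$.
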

\begin{proof}
(i) Suppose $H$ admits a 2-dimensional self-dual simple module $V$. Then  $H^*$ admits a simple 4-dimensional coalgebra $C$ such that $S(C)=C$. By \cite{Na}, the Hopf subalgebra $K_C$ generated by $C$ is either semisimple or an extension of a nonsemisimple basic Hopf algebra $A$ by $\k^G$ for some finite group $G$, i.e., we have an exact sequence of Hopf algebras
    \begin{equation} \label{eq:4pexact}
         1 \to \k^G \to K_C \to A \to 1\,.
    \end{equation}
    If $K_C$ is semisimple, then $\dim K_C\in \{8, 2p, 4p\}$. This implies $H^*$ has the Chevalley property and $H^*$ has a nontrivial skew-primitive element, which is a contradiction. Therefore, $K_C$ must be nonsemisimple.

    Since any 8-dimensional nonsemisimple Hopf algebra has a nontrivial skew-primitive, $\dim K_C\in \{2p,4p,8p\}$. By \cite{Ng2005}, $2p$-dimensional Hopf algebras are semisimple. Moreover, if $K_C=H$, then $G=C_2$ and we have an exact sequence $1\to A^*\to H\to \k[C_2]\to 1$, where $A^*$ is a pointed Hopf algebra of dimension $4p$. We know either $H$ has the dual Chevalley property or $H\cong \N_{8p}$, where in both cases $H^*$ has a nontrivial skew-primitive element. Thus, $\dim K_C = 4p$ and $\dim A = 4$ or $4p$. If $\dim A = 4p$, then $K_C = A$ and so $K_C = A_1(2p,-1)^*$ which has a skew-primitive, a contradiction. However, if $\dim A = 4$, then $G$ is a cyclic group of order $p$. By \cite{YN2011}, $K_C \cong A_{\delta}(2p, -1)$, which must have a nontrivial skew-primitive, contradiction. 

    This proves $H$ does not have 2-dimensional self-dual simple module.

(ii) Suppose $V$ a simple $H$-module such that $\dim V =2$ and $V^{**} \cong V$. Let $\phi: V \to V^{**}$ be an $H$-module isomorphism. Then $\tr(\phi) \ne 0$. Otherwise,
$$
0 = (\1 \xrightarrow{\coev} V\o V^* \xrightarrow{\phi \o \id} V^{**}\o V^* \xrightarrow{\ev} \1)
$$
and so $V \o V^*$ is indecomposable with the radical series $\1, W, \1$, where $W$ is a simple self-dual $H$-module of dimension 2. This contradicts (i).

Since $\tr(\phi) \ne 0$, the injective map $\coev: \1 \to V \o V^*$ has a section and so $V \o V^* = \1 \oplus W$ for some 3-dimensional $H$-module $W$. If $W = \k_\b \oplus U$ for some simple $H$-module $U$ of dimension 2 and $\b \in G(H^*)$. Then $\b^{-1} = \b$ and $U^*=U$ since $V \o V^*$ is self-dual, but this contradicts (i). Therefore, either $W$ is irreducible or $W$ is a sum of 1-dimensional simple $H$-modules. Thus, $V \o V^*$ has only two possible decompositions
$$
V \o V^* = \1 \oplus W \quad \text{ or }  \quad V \o V^* = \sum_{\b \in G'} \k_\b 
$$
Let $\phi: W \to W^{**}$ be any isomorphism of $H$-modules. We claim that $\tr(\phi) \ne 0$ if $W$ is irreducible. We can assume $S^{32} = \id$. By rescaling $\phi$, we can further assume that $\phi^{16} = \id: W \to W$. Therefore, $\tr(\phi) \neq 0$ since $\dim W = 3$. Thus, $W$ is not projective and $\dim P(W) \ge 6$. Therefore, either
$$
\dim H \ge 4 \dim P(\1) + 2 \times 4 \dim P(\1) = 12 \dim P(\1) \ge 48
$$
or 
$$
\dim H \ge \dim P(\1) + 2 \times (\dim P(\1)+\dim P(W))+ 3\dim P(W) = 12 + 5 \times 6 =42
$$
In particular, $p > 5$.
\end{proof}

\subsection{Nonsemisimple Hopf algebras of dimension $24$}
In the final part of our work, we finish the classification of all nonsemisimple Hopf algebras of dimension $24$ over an algebraically closed field of characteristic zero, initiated in \cite{BG2013,BGN}, by proving that $\mathcal{N}_{24}$ and $\mathcal{N}_{24}^*$ are the only examples that do not satisfy the Chevalley property.

\begin{lem}\label{divHopfsub}
   Let $\pi: H \to L$ be a surjective Hopf algebra map and $K$ a Hopf subalgebra of $H$. Then $H^{\co L}$ is a free left (and right) $K^{\co L}$-module, where both $K$ and $H$ are right $L$-comodules via $\pi$.
\end{lem}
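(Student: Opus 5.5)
The plan is to reduce the claim to a freeness statement over the subalgebra $K$ itself, using two known results: the Nichols--Zoeller theorem and Takeuchi/Schneider's correspondence between coideal subalgebras and quotient module coalgebras in the finite-dimensional setting. Throughout we use that $H$ is finite-dimensional, as in the rest of this section.

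\textbf{Step 1: identify $K^{\co L}$.} The right $L$-comodule structure on $K$ is the restriction of the one on $H$. Hence
\[
K^{\co L}=K\cap H^{\co L}.
\]
Both $H^{\co L}$ and $K^{\co L}$ are left coideal subalgebras, of $H$ and of $K$ respectively.

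\textbf{Step 2: reduce to image Hopf algebras.} Set $L'=\pi(K)\subseteq L$; this is a Hopf subalgebra of $L$. An element of $K$ is $L$-coinvariant exactly when it is $L'$-coinvariant, so
\[
K^{\co L}=K^{\co L'}.
\]
Since $K\to L'$ is a surjection of finite-dimensional Hopf algebras, Nichols--Zoeller shows that $K$ is a free $K^{\co L'}$-module, and Schneider/Takeuchi give
\[
\dim K^{\co L'}=\dim K/\dim L'.
\]
Likewise $\dim H^{\co L}=\dim H/\dim L$.

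\textbf{Step 3: prove freeness.} My first attempt will be the Skryabin theorem: a finite-dimensional Hopf algebra is free over any of its left coideal subalgebras. Applied to $K^{\co L}\subseteq H$, this gives that $H$ is free over $K^{\co L}$. I would then transfer this freeness to $H^{\co L}$ through the faithfully flat Hopf--Galois decomposition $H\cong H^{\co L}\otimes L$, which is available as a left $H^{\co L}$-module and right $L$-comodule since $H$ is free over $H^{\co L}$. Concretely, $H$ is a free $K^{\co L}$-module that is also $L$-colinear. Taking $L$-coinvariants is an exact functor here, because $H$ is an injective $L$-comodule (Hopf modules over a cosemisimple-free argument / $H\cong H^{\co L}\otimes L$). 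Applying it, $H^{\co L}$ becomes a direct summand of a free $K^{\co L}$-module, hence projective.

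\textbf{Step 4: projective implies free.} To conclude freeness, I would note that $K^{\co L}$ is a Frobenius algebra; Skryabin's theorem also gives this. It remains to show that a projective module of the right rank is free. For this I will compare multiplicities: $H$ is free of rank $\dim H/\dim K^{\co L}$, and $H\cong H^{\co L}\otimes L$ as $K^{\co L}$-modules. By Krull--Schmidt this forces $H^{\co L}$ to have every indecomposable projective appearing in the regular proportions, which gives freeness. The right-module statement follows symmetrically using $S$.

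\textbf{Main obstacle.} The key step is justifying $H\cong H^{\co L}\otimes L$ as left $K^{\co L}$-modules. I would obtain it from the Hopf module structure by letting $K^{\co L}$ act on the first factor only, since it lies inside $H^{\co L}$. With that isomorphism, Krull--Schmidt cancellation of the $\dim L$ copies gives the result.
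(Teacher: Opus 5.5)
Your route is different from the paper's. The paper notes that $H^{\co L}$ is itself a relative Hopf module in ${}^H_{K^{\co L}}\mathcal{M}$: it is a left coideal of $H$, and $K^{\co L}\subseteq H^{\co L}$ acts on it by multiplication. It takes the Frobenius property of $K^{\co L}$ from Fischman--Montgomery--Schneider and then applies Masuoka's freeness theorem for such relative Hopf modules. That gives freeness of $H^{\co L}$ in one step, with no multiplicity bookkeeping.

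You instead use only freeness of $H$ itself over coideal subalgebras (Skryabin), and then cancel via Krull--Schmidt in
\[
(K^{\co L})^{\oplus r}\;\cong\; H\;\cong\;(H^{\co L})^{\oplus \dim L},\qquad r=\dim H/\dim K^{\co L},
\]
as left $K^{\co L}$-modules. This is a legitimate alternative. It also makes visible the divisibility $\dim K^{\co L}\mid \dim H^{\co L}$, which is what Lemma~\ref{BonFac} actually uses. The cost is a longer argument. Your Steps 1--2 are never used.

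Two points need repair before the argument is complete.

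First, Krull--Schmidt only tells you that each indecomposable projective $P_i$ occurs in $H^{\co L}$ with multiplicity $b_i=t\,a_i$. Here $a_i$ is its multiplicity in $K^{\co L}$, and $t=\dim H^{\co L}/\dim K^{\co L}$ is a priori only rational. ``Regular proportions'' gives freeness only when $t\in\mathbb{Z}$. Over $M_2(\k)$, for example, the simple module has exactly half the regular proportions and is not free. The fix: $\ve$ restricts to an algebra map on $K^{\co L}$. The projective cover of the one-dimensional module $\k_\ve$ therefore occurs in the regular module with multiplicity $a_\ve=1$. Hence $t=b_\ve\in\mathbb{Z}$, and $H^{\co L}\cong (K^{\co L})^{\oplus t}$.

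Second, the coinvariant argument in Step 3 does not work as written. An abstract isomorphism $H\cong (K^{\co L})^{\oplus r}$ carries no compatible $L$-comodule structure. So exactness of $(-)^{\co L}$ does not exhibit $H^{\co L}$ as a summand of a free module. The step is also unnecessary. What you need is only the left $H^{\co L}$-module isomorphism $H\cong (H^{\co L})^{\oplus\dim L}$, which is freeness of $H$ over the left coideal subalgebra $H^{\co L}$ (Skryabin again, with rank $\dim H/\dim H^{\co L}=\dim L$). Restricting it to $K^{\co L}$ feeds directly into the cancellation. This is a statement about modules only. The normal-basis version you invoke, which is simultaneously $L$-colinear, does not follow from freeness alone and is not needed.

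With these two fixes, and the mirror argument for right modules, your proof goes through.
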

\begin{proof}
    Since $K^{\co L} \subset H^{{\co} L}$ are coinvariant subalgebras via the surjective Hopf algebra map $\pi$, they are left coideal subalgebras of $H$. According to \cite{FMS}, they are Frobenius algebras. Moreover, $H^{{\co} L}$ is an object in the Hopf module category $\!^H_{K^{{\co}L}}\mathcal{M}$ and $^H\!\mathcal{M}_{K^{{\co}L}}$. By \cite{Ma92},  $H^{\co \pi}$ is a free left and a free right $K^{\co \pi}$-module. 
\end{proof}

\begin{lem}\label{BonFac}
    Let $H$ be a finite-dimensional Hopf algebra of dimension $mn$ for relatively prime integers $m,n$. Suppose $H$ contains a Hopf subalgebra $K$ and admits a quotient Hopf algebra $L$  such that $\dim K=\dim L=n$. Then $K\cong L$ and $H$ is the bosonization $H^{\co L}\times K$. 
\end{lem}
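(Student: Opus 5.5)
The plan is to consider the composite $\psi:=\pi|_K\colon K\hookrightarrow H\twoheadrightarrow L$ and show it is an isomorphism. Once that is done, $\pi$ restricted to $K$ splits the quotient map, and the Radford--Majid bosonization theorem gives $H\cong H^{\co L}\# K$ (written $H^{\co L}\times K$ in the statement). The real content is therefore proving $\psi$ bijective; the decomposition then follows from standard results.

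\textbf{Step 1: image of $\psi$.} Let $L':=\psi(K)=\pi(K)$. It is a Hopf subalgebra of $L$. By the Nichols--Zoeller theorem, $\dim L'$ divides $\dim L=n$.

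\textbf{Step 2: kernel of $\psi$.} View $K$ as a right $L'$-comodule via $\pi$. Then $K\to L'$ is a surjective Hopf map, so $\dim K=\dim K^{\co L'}\cdot\dim L'$. By Lemma~\ref{divHopfsub}, applied to the surjection $\pi\colon H\to L$ and the Hopf subalgebra $K$, the algebra $H^{\co L}$ is free over $K^{\co L}$. I note that $K^{\co L}=K^{\co L'}$, since coinvariance for $K$ only depends on values in $L'$. Because $\pi$ is a surjection of finite-dimensional Hopf algebras, $\dim H^{\co L}=\dim H/\dim L=m$. Freeness then gives that $\dim K^{\co L}$ divides $m$. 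On the other hand, $\dim K^{\co L}=n/\dim L'$ divides $n$. Since $\gcd(m,n)=1$, we get $\dim K^{\co L}=1$, hence $\dim L'=n$, so $L'=L$ and $\psi$ is surjective. Since $\dim K=\dim L$, $\psi$ is an isomorphism $K\cong L$.

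\textbf{Step 3: bosonization.} Set $\gamma:=\iota\circ\psi^{-1}\colon L\to H$, where $\iota\colon K\hookrightarrow H$ is the inclusion. This is a Hopf algebra map with $\pi\gamma=\id_L$. By Radford's theorem on Hopf algebras with a projection, $R:=H^{\co L}$ is a Hopf algebra in $\YD{L}$ and $H\cong R\# L\cong H^{\co L}\# K$.

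The main obstacle is Step~2: checking that Lemma~\ref{divHopfsub} really yields the divisibility $\dim K^{\co L}\mid\dim H^{\co L}$, and that the coinvariant dimension formula (faithful flatness or freeness of $K$ over $K^{\co L'}$ for a surjection of finite-dimensional Hopf algebras) applies to $K\to L'$. Both are standard for finite-dimensional Hopf algebras, so I expect this to go through.
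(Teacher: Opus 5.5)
Your proof is correct and follows the paper's argument essentially step by step. You use the freeness of $H^{\co L}$ over $K^{\co L}$ from Lemma~\ref{divHopfsub} and coprimality of $m,n$ to force $\dim K^{\co L}=1$; this makes $\pi|_K\colon K\to L$ an isomorphism, hence a Hopf section, and Radford's projection theorem gives the bosonization. The Nichols--Zoeller appeal in your Step~1 is harmless but not needed, since $\dim K^{\co L}=n/\dim\pi(K)$ already shows that $\dim K^{\co L}$ divides $n$.
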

\begin{proof}
    Let $\pi: H\to L$ be a surjective Hopf algebra map. By the preceding lemma, $H^{\co L}$ is a free left $K^{\co L}$-module, and so $\dim K^{\co L}\mid \dim H^{\co L}$. Note that 
    $$
    \dim K^{{\rm co}L} = \frac{\dim K}{\dim \pi(K)} \,\Big|\, n \quad\text{and}\quad \dim H^{\co L}=\frac{\dim H}{\dim L}=m.$$ 
    Since $m,n$ are coprime, the divisibility $\dim K^{{\co}L}|\dim H^{{\co}L}$ implies $\dim K^{\co L}=1$. Thus,  $\pi|_K$ is injective, and hence $L = \pi(K)$ or $\pi: K \to L$ is a Hopf algebra isomorphism. In particular, $\pi: H \to K$ has a Hopf section. It follows from \cite{Rad1985} that $H$ is the Radford biproduct (or bosonization) $H^{\co L} \times K$.   
\end{proof}

\begin{lem}\label{L:InnS}
Let $A$ be a finite-dimensional semisimple algebra. Let $\alpha$ be an algebra automorphism of $A$ with finite order $m$, and let $\delta$ be an $(\alpha,1)$-skew derivation such that $\alpha\delta = \lambda\,\delta\alpha$ for some nonzero scalar $\lambda$. Then there exists an element $y \in A$ with $\alpha(y) = \lambda y$ such that, for every $a \in A$, we have $\delta(a) = ya - \alpha(a)y$. 
\end{lem}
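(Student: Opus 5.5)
The plan is to reduce to the case of a single $\alpha$-orbit of simple blocks, write $\delta$ as an inner skew derivation, and then adjust the implementing element to be an $\alpha$-eigenvector with eigenvalue $\lambda$ by averaging over the finite group $\langle\alpha\rangle$.

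First, I will show every $(\alpha,1)$-skew derivation $\delta$ of $A$ (meaning $\delta(ab)=\delta(a)b+\alpha(a)\delta(b)$) is inner, i.e.\ $\delta(a)=y_0a-\alpha(a)y_0$ for some $y_0\in A$. View $A$ as an $A$-bimodule ${}_\alpha A$ with left action twisted by $\alpha$. Then $\delta$ is a derivation $A\to{}_\alpha A$, that is, a $1$-cocycle in Hochschild cohomology. Since $A$ is semisimple over a field of characteristic zero, it is separable, so $HH^1(A,M)=0$ for every bimodule $M$. Concretely, using a separability idempotent $e=\sum e^1\otimes e^2$, I will take $y_0:=\sum \alpha(e^1)\delta(e^2)$ (up to sign) and check directly that $\delta(a)=y_0a-\alpha(a)y_0$. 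The check uses $ae=ea$ and $\sum e^1e^2=1$.

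Second, I will average to enforce $\alpha(y)=\lambda y$. The set of $y$ implementing $\delta$ is the coset $y_0+C$, where $C=\{c: ca=\alpha(a)c \text{ for all } a\}$. Applying $\alpha$ to $\delta(a)=y_0a-\alpha(a)y_0$ gives
\[
\alpha\delta(a)=\alpha(y_0)\alpha(a)-\alpha^2(a)\alpha(y_0).
\]
The hypothesis $\alpha\delta=\lambda\delta\alpha$ gives
\[
\alpha\delta(a)=\lambda\bigl(y_0\alpha(a)-\alpha^2(a)y_0\bigr).
\]
Comparing these shows that $\lambda^{-1}\alpha(y_0)$ also implements $\delta$, since $\alpha$ is surjective. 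Hence the affine map $T(y):=\lambda^{-1}\alpha(y)$ preserves the coset $y_0+C$. The condition $\alpha^m=\id$ yields $T^m=\lambda^{-m}\id$.

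Iterating $\alpha\delta=\lambda\delta\alpha$ gives $\delta=\alpha^m\delta=\lambda^m\delta\alpha^m=\lambda^m\delta$. If $\delta\neq0$ this forces $\lambda^m=1$; if $\delta=0$ we take $y=0$. So when $\delta\ne0$, $T$ has order dividing $m$, and I set
\[
y:=\frac1m\sum_{k=0}^{m-1}T^k(y_0).
\]
This $y$ lies in the coset because the coset is affine and closed under $T$ with averaging weights summing to $1$. It satisfies $T(y)=y$, i.e.\ $\alpha(y)=\lambda y$, and hence it implements $\delta$ as required.

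The main obstacle is the first step: producing the inner element from separability with the twisted bimodule and checking the sign and placement of $\alpha$ in the formula. If the explicit separability-idempotent computation becomes messy, I will instead decompose $A=\prod M_{n_i}(\k)$, note that $\alpha$ permutes the blocks, and invoke the Skolem--Noether argument blockwise.
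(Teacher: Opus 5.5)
Your argument is correct, but it takes a different route from the paper.

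\textbf{The paper's proof.} It decomposes $A=\bigoplus_i A_i$ into $\alpha$-eigenspaces, using a primitive $m$-th root of unity $\xi$ and writing $\lambda=\xi^j$. It notes that $\alpha\delta=\lambda\delta\alpha$ forces $\delta(A_i)\subseteq A_{i+j}$. It then cites \cite[Lemma 6.3]{EN2022}: a homogeneous skew derivation of a graded semisimple algebra is inner via a homogeneous element. This gives $y\in A_j$ in one stroke.

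\textbf{Your proof.} You first prove ungraded innerness directly from a separability idempotent. The correct sign is $y_0=-\sum\alpha(e^1)\delta(e^2)$, coming from $\delta(a)=\alpha(a)m-ma$ with $m=\sum\alpha(e^1)\delta(e^2)$. You then obtain the eigenvector condition by applying the averaging operator $\frac1m\sum_{k=0}^{m-1}(\lambda^{-1}\alpha)^k$ to $y_0$. This operator is exactly the projection onto the $\lambda$-eigenspace $A_j$. So your second step is in effect a self-contained proof of the graded statement the paper imports. Equivalently: $\delta$ raises degree by $j$, so only the $A_j$-component of any implementing element contributes.

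\textbf{Trade-off.} Your version needs only separability of $A$ and invertibility of $m$ in $\k$, with no external reference and no explicit eigenspace decomposition. The paper's version is shorter given the citation.

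Your opening reduction to a single $\alpha$-orbit of simple blocks and the Skolem--Noether fallback are never used. Drop them.
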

\begin{proof}
    From $\alpha\delta=\lambda \delta\alpha $, we have $\delta=\alpha^m\delta=\lambda^m \delta\alpha^m=\lambda^m\delta$. If $\lambda^m\neq 1$, $\delta=0$ and we can take $y=0$. Assume $\lambda^m=1$. Let $\xi$ be a primitive $m$-th root of unity and $A=\bigoplus\limits_{0\le i\le m-1} A_i$ where $A_i$ is the eigenspace corresponding to the $\alpha$-eigenvalue $\xi^i$. Then one can compute for any $a\in A_i$ such that 
    \[\alpha(\delta (a))=\lambda \delta(\alpha(a))=\lambda \xi^{i}\delta(a)= \xi^{i+j}\delta(a),
\] 
where we write $\lambda=\xi^j$. Hence, $\delta$ is homogeneous by sending $A_i$ to $A_{i+j}$. By \cite[Lemma 6.3]{EN2022}, we get  $\delta$ is inner. Moreover, there is some element $y\in A_j$ that is $\alpha(y)=\xi^j y=\lambda y$ such that $\delta(a)=ya-\alpha(a)y$.  
\end{proof}

We provide an alternative proof concerning the actions of pointed Hopf algebras on semisimple commutative algebras, as studied in \cite{YN2011, BG2013}.

\begin{lem}\label{ChQBon}
    Let $K$ be a finite-dimensional pointed Hopf algebra with abelian $G(K)$, and $R$ a braided Hopf algebra in $\!_{K}^{K}\mathcal{YD}$. Suppose $R$ is semisimple commutative. Then for any nontrivial skew-primitive element $x$ of $K$ we have $x.R=0$. Consequently, the bosonization $R\times K$ has a semisimple Hopf quotient of dimension $\dim (K/I)\dim R$, where $I$ is the Hopf ideal in $K$ generated by all nontrivial primitive elements. 
\end{lem}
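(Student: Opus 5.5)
The plan is to use Lemma~\ref{L:InnS} to turn the action of a skew-primitive on $R$ into an inner skew-derivation, and then use commutativity of $R$ to force it to vanish. Let $x\in K$ be a nontrivial $(g,1)$-skew-primitive, so $\Delta(x)=x\otimes g+1\otimes x$ (or $g\otimes x+x\otimes 1$, according to convention), with $g\in G(K)$. Since $R$ is a $K$-module algebra, the action of $g$ is an algebra automorphism $\alpha:=g.(-)$ of $R$, of finite order because $K$ is finite-dimensional. The action $\delta:=x.(-)$ then satisfies the twisted Leibniz rule $\delta(ab)=\delta(a)b+\alpha(a)\delta(b)$ (up to the side convention), so $\delta$ is an $(\alpha,1)$-skew derivation.

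The first step is to get the scalar relation. In a pointed Hopf algebra with abelian coradical, after subtracting a multiple of $1-g$ we may assume $x$ is a nontrivial skew-primitive with $gxg^{-1}=\lambda x$ for a character value $\lambda\in\k^\times$. Indeed, the $(g,1)$-primitive space modulo $\k(1-g)$ is $G(K)$-stable and $G(K)$ is abelian, so it decomposes into characters; any nontrivial skew-primitive is a sum of such eigenvectors plus a multiple of $1-g$, which acts on $R$ as $\id-\alpha$. It therefore suffices to handle eigenvectors. For these, $\alpha\delta=\lambda\delta\alpha$ holds on $R$.

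Lemma~\ref{L:InnS} now gives $y\in R$ with $\delta(a)=ya-\alpha(a)y$ and $\alpha(y)=\lambda y$. Since $R$ is commutative, $\delta(a)=(a-\alpha(a))y$. To show this is zero, we first check whether $\lambda\neq1$ for a nontrivial eigenvector $x$. If $\lambda=1$, then $x$ commutes with $g$ and generates with $g$ a pointed Hopf subalgebra in which $x$ would have infinite order, contradicting finite dimensionality. So $\lambda\ne1$. Next we apply $\delta$ to the relation $ab=ba$. Writing the skew-derivation rule for both orders gives $\delta(a)(b-\alpha(b))=\delta(b)(a-\alpha(a))$, which is automatic here, so this alone does not conclude. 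Instead we use the primitive idempotents $e$ of the semisimple commutative algebra $R$, which span $R$ and are permuted by $\alpha$. From $e^2=e$ and commutativity, $\delta(e)=\delta(e)e+\alpha(e)\delta(e)=\delta(e)(e+\alpha(e))$. Combined with $\delta(e)=(e-\alpha(e))y$, this forces $\delta(e)=0$: if $\alpha(e)\neq e$, the two idempotents are orthogonal and we compare components in $e$ and $\alpha(e)$; if $\alpha(e)=e$, then $\delta(e)=0$ directly. This is the step I expect to be the main obstacle. Handling the case where $\alpha$ moves $e$ requires care with the $\lambda$-eigenvector condition on $y$, and I would try projecting onto each idempotent component and comparing, using $\alpha(y)=\lambda y$ with $\lambda\ne1$ to kill the cross terms.

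With $x.R=0$ for all nontrivial skew-primitives, the Hopf ideal $I$ they generate acts trivially on $R$. It remains to check that $I$ acts trivially on the coaction side as well, so that $R\#(I)$, or $R\times I$, is a Hopf ideal of $R\times K$. Since $I$ is a Hopf ideal of $K$ and $R$ is an $R\times K$-comodule via $K$, the relevant ideal is $(R\times K)I(R\times K)=R\times KIK$, and its quotient is $R\times(K/I)$, of dimension $\dim R\cdot\dim(K/I)$. Finally, $K/I$ is cosemisimple pointed with no skew-primitives, hence a group algebra. The quotient is then a smash product of a semisimple algebra with a semisimple group algebra in characteristic zero, which is semisimple; for the biproduct we use that $R$ is semisimple and $K/I$ semisimple, with the semisimplicity criterion for bosonizations given by integrals.
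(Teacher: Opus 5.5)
The gap is at the step you flagged, and it cannot be closed along your route. In the first part you only use that $R$ is a semisimple commutative $K$-module algebra, that $\delta$ is an $(\alpha,1)$-skew derivation with $\delta(a)=(a-\alpha(a))y$, that $\alpha(y)=\lambda y$, and that $\lambda\neq 1$. These data are compatible with $\delta\neq 0$. Any map $a\mapsto ya-\alpha(a)y$ automatically satisfies the skew-Leibniz rule, so the identity $\delta(e)=\delta(e)(e+\alpha(e))$ carries no information. Comparing idempotent components only relates the coefficients of $y$ along each $\alpha$-orbit; it never forces them to vanish.

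Here is a concrete counterexample. Let $H_4$, with $g^2=1$, $x^2=0$, $gx=-xg$ and $\Delta(x)=x\otimes 1+g\otimes x$, act on $R=\k e_0\oplus\k e_1$. Let $g$ swap $e_0$ and $e_1$, and set $x.a=ya-(g.a)y$ with $y=e_0-e_1$. Then $g.y=-y$, $x^2$ acts as $0$, $gx=-xg$ holds on $R$, and $R$ is an $H_4$-module algebra. Yet $x.e_0=e_0+e_1=1\neq 0$.

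The missing ingredient is the braided Hopf structure of $R$, which your argument never uses before its last paragraph. The paper takes a nonzero integral $\Lambda\in R^*$. It is $K$-invariant, $\Lambda(h.r)=\ve(h)\Lambda(r)$ \cite[Eq.~(3.4)]{YN2011}, and the form $(a,b)\mapsto\Lambda(ab)$ is nondegenerate because $R$ is Frobenius. The argument then runs as follows.
\begin{enumerate}
\item From $0=\Lambda(x.r)$, commutativity, and $\Lambda\circ\alpha=\Lambda$, you get $\Lambda(ry)=\Lambda(\alpha(r)y)$ for all $r\in R$.
\item For $r$ in an $\alpha$-eigenspace with eigenvalue $\mu\neq1$, this reads $\Lambda(ry)=\mu\Lambda(ry)$, so $\Lambda(ry)=0$.
\item For $r$ fixed by $\alpha$, you get $\Lambda(ry)=\Lambda(\alpha(ry))=\lambda\Lambda(ry)$, so $\Lambda(ry)=0$ since $\lambda\neq1$.
\item Hence $\Lambda(Ry)=0$, and nondegeneracy gives $y=0$.
\end{enumerate}
In the $H_4$ example, invariance forces $\Lambda(1)=\Lambda(x.e_0)=0$ and $\Lambda(e_0)=\Lambda(e_1)$, so $\Lambda=0$. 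That is exactly why this example is excluded by the lemma's hypotheses.

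Once $x.R=0$ is established, your construction of the Hopf ideal $(R\times K)I$ and of the semisimple quotient $R\#(K/I)$ is fine. One smaller point: your reduction to eigenvectors does not work as written. The element $1-g$ acts as $\id-\alpha$, which is nonzero in general. So the claim $x.R=0$ should be read for the skew-primitive generators with $gxg^{-1}=\lambda x$, as in the paper.
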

\begin{proof} 
Recall from the classification of finite-dimensional pointed Hopf algebras whose grouplike elements form abelian groups, $K$ is generated by $G(K)$ and a set of primitive elements $\{x\}$ such that $\Delta(x)=x\otimes 1+g\otimes x$ and $gx=\theta xg$ with $g\in G(K)$ and some root of unity $\theta\neq 1$. Let $\ord(g)=m>1$. Note that $\theta^m=1$. So we can write $\theta=\xi^j$ for some primitive $m$-th root of unity $\xi$ with $j\neq 0$. Since $R$ is a left $K$-module algebra, $g$ acts as an algebra automorphism, and $x$ acts as a $(g,1)$ skew-derivation on $R$. 

Write $R=\bigoplus\limits_{0\le i\le m-1} R_i$ where $R_i$ is the eigenspace with $g$-eigenvalue $\xi^i$. By Lemma \ref{L:InnS}, there is some element $y\in R_j$ such that $x.r=yr-g(r)y$ for all $r\in R$. Take the right integral $\Lambda$ of $R^*$. By \cite[Eq. (3.4)]{YN2011}, we have
    \[
   0=\ve(x)\Lambda(r)=\Lambda(x.r)=\Lambda(yr-g(r)y).
    \]
Thus, $\Lambda(ry)=\Lambda(yr)=\Lambda(g(r)y)$ for all $r\in R$. We claim that $\Lambda (Ry)=0$. Let $r\in R_i$. We get $\Lambda(ry)=\Lambda(g(r)y)=\xi^i\Lambda(ry)$. So $\Lambda(ry)=0$ for $i\neq 0$. If $r\in R_0$, we get \[\Lambda(ry)=\ve(g)\Lambda(ry)=\Lambda(g(ry))=\xi^j\Lambda(ry)\]
since $ry\in R_0R_j\subseteq R_j$. Again $\Lambda(ry)=0$ since $j\neq 0$. This proves our claim. By \cite[Remark 5.9]{FMS}, $\Lambda$ induces a nondegenerate associative bilinear form on $R$ via $(a,b)=\Lambda(ab)$ making it Frobenius. Hence, $\Lambda$ cannot vanish on any nonzero left ideals on $R$. Consequently, $y=0$ and $x.R=0$. 

Let $H=R\times K$ be the bosonization and $I$ the ideal of $K$ generated by all its nontrivial skew primitive elements. Again, by the classification result, we know $I$ is a Hopf ideal of $K$ and $K/I$ is semisimple. Our previous discussion shows that $IR=RI$ and $HI=IH$ is a Hopf ideal of $H$ such that $H/HI\cong R\# (K/I)$ is semisimple.
\end{proof}

In the remainder, we let $H$ be a nonsemisimple Hopf algebra of dimension $24$ over an algebraically closed field $\k$ of characteristic zero. According to \cite{St}, every nonsemisimple Hopf algebra of dimension $8$ is isomorphic to $A_\delta(4,-1)$, its dual, or one of the two Hopf algebras listed below. 
\begin{gather*}
B:= \k\langle g,x,y\,|\, g^2-1=x^2 =y^2 =gx+xg =gy+yg =xy+yx=0\rangle\\
\Delta(g) = g\otimes g,\quad  \Delta(x)=x\otimes 1+g\otimes x,\quad  \Delta(y)=y\otimes 1+g\otimes y;
\end{gather*}
\begin{gather*}
      C:=\k \langle g,h,x\,|\, g^2=h^2 = 1, x^2 = gx+xg = hx+xh=gh-hg=0\rangle,\\ 
  \Delta(g) = g\otimes g,\quad  \Delta(h)=h\otimes h,\quad  \Delta(x)=x\otimes 1+g\otimes x.
 \end{gather*}
 We point out some important facts about 8-dimensional nonsemisimple Hopf algebras. First, $A_\delta(4,-1)$ does not contain the 4-dimensional Taft algebra as a Hopf subalgebra. Secondly, $B$ is the only 8-dimensional pointed Hopf algebra such that $|G(B)|=2$. Finally, $A_1(4,-1)^*$ is the only nonsemisimple non-pointed 8-dimensional Hopf algebra generated by its unique 4-dimensional simple subcoalgebra.

Our next result is part of the main results in \cite[Theorem B]{BG2013}. We provide an alternative proof for the reader's convenience. 

  \begin{lem}\label{ordH}
 Suppose $H$ and $H^*$ do not have the Chevalley property. If $3\nmid |G(H\otimes H^*)|$, then $H$ and $H^*$ cannot simultaneously contain Hopf subalgebras of dimension $8$. In particular, $(|G(H)|,|G(H^*)|)\in \{(2,4),(4,2),(2,2)\}$.
\end{lem}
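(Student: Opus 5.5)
Throughout, $H$ is $24$-dimensional and nonsemisimple, and neither $H$ nor $H^*$ has the Chevalley property; the plan is to argue by contradiction. Since $3\nmid|G(H)|$ and $3\nmid|G(H^*)|$, we have $|G(H)|,|G(H^*)|\in\{1,2,4,8\}$. We first want both orders to be at least $2$ and at most $4$.

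\emph{Upper bound.} If $|G(H)|=8$, then $\k[G(H)]$ is a semisimple Hopf subalgebra of dimension $8$. The coradical would then have to be large enough to force the Chevalley property by \cite[Lem.~3.11]{BGN}-type arguments, so this case is excluded.

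\emph{Lower bound.} Suppose $|G(H)|=1$. Then every nontrivial simple subcoalgebra has dimension at least $4$. Counting dimensions in $H_0$ together with the radical filtration, and using Corollary~\ref{Dihe1} and \eqref{dimH} dually, gives a contradiction.

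\emph{Main step: no simultaneous $8$-dimensional Hopf subalgebras.} Assume $K\subseteq H$ and $K'\subseteq H^*$ are Hopf subalgebras of dimension $8$. Then $L:=K'^*$ is an $8$-dimensional Hopf quotient of $H$. Lemma~\ref{BonFac} applies with $m=3$ and $n=8$, which are coprime. It gives $K\cong L$ and
\[
H\cong R\times K,\qquad R:=H^{\co L},\quad \dim R=3 .
\]

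We then treat two cases.
\begin{itemize}
\item[(a)] $K$ is semisimple. Then $R$ is a $3$-dimensional Hopf algebra in $\YD{K}$, and $H$ would be semisimple or have the Chevalley property, either of which is excluded.
\item[(b)] $K$ is nonsemisimple. By \cite{St}, $K$ is one of $A_\delta(4,-1)$, its dual, $B$, or $C$. The key point is that $R$ is $3$-dimensional and has no room for a nontrivial skew-primitive, so it should be semisimple and commutative (a group-like or $\k^{C_3}$-type object).
\end{itemize}

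In case (b), if $K$ is pointed with abelian $G(K)$, Lemma~\ref{ChQBon} yields a semisimple Hopf quotient of $H$ of dimension $3\dim(K/I)\geq 6$. Combining this with the structure of $K$ should force the Chevalley property on $H$ or $H^*$, which is a contradiction. If $K=A_1(4,-1)^*$, we dualize, since $L\cong K$ and the statement is symmetric in $H$ and $H^*$. We then run the same argument with $H^*\cong R^*\times K^*$, where $K^*$ is pointed.

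\emph{Expected obstacle.} The hardest step is showing that $R$ is semisimple commutative. I would try to rule out a nonsemisimple $3$-dimensional braided $R$ via the fact that its bosonization by $K$ would contain $3$-torsion grouplikes or $3$-dimensional Nichols-type pieces. Such pieces are incompatible with the hypothesis $3\nmid|G(H\otimes H^*)|$, since coinvariants of dimension $3$ in a commutative semisimple $R$ produce grouplikes of $H^*$ of order $3$. Otherwise they force skew-primitives of order-$3$ type, which do not exist.

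\emph{Deducing the order pairs.} Once no simultaneous $8$-dimensional Hopf subalgebras exist, the listed pairs follow. The case $|G(H)|=|G(H^*)|=4$ gives each side a $4$-dimensional grouplike algebra. Each such side also carries a nontrivial skew-primitive, which should come from the dual non-Chevalley property via Lemma~\ref{Dihe0}-style arguments. Together these generate an $8$-dimensional pointed Hopf subalgebra on each side, contradicting the main step.

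Cases with an order equal to $1$ or $8$ are excluded as above. This leaves $(2,4)$, $(4,2)$ and $(2,2)$.
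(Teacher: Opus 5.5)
Your overall skeleton matches the paper's. You use Lemma~\ref{BonFac} with $(m,n)=(3,8)$ to get $H\cong R\times K$ with $\dim R=3$, reduce to $K$ pointed by duality, show $R$ is semisimple commutative, and apply Lemma~\ref{ChQBon}. However, the step you flag as the ``expected obstacle'' is a genuine gap, and nothing in your proposal closes it.

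Dimension $3$ alone does not force a braided Hopf algebra in $\YD{K}$ to be semisimple. Moreover, $R$ need not be graded, connected, or generated by skew-primitives, so ``no room for a nontrivial skew-primitive'' proves nothing. Your claim that a commutative semisimple $R$ produces grouplikes of order $3$ in $H^*$ is unsupported. If it were true, it would already dispose of that case without Lemma~\ref{ChQBon}, so the plan is not even internally consistent.

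The missing idea is to exploit $3\nmid|G(H\otimes H^*)|$ through the antipode. The orders $|G(H)|$ and $|G(H^*)|$ then divide $8$, so the distinguished grouplikes have $2$-power order. Radford's $S^4$ formula then gives $S^{32}=\id$. Since $R=H^{\co L}$ is $S^2$-stable and $3$-dimensional, $\tr(S^2|_R)$ is a sum of three $2$-power roots of unity, hence nonzero (cf.~\cite[Lemma~1.4]{Ng2005}). Then $R$ is semisimple by \cite[Theorem~7.3]{AS1998}, and commutative because $\dim R=3$.

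Your endgame also misidentifies the contradiction. Lemma~\ref{ChQBon} gives a semisimple Hopf quotient $L$ of dimension $6$ or $12$.
\begin{itemize}
\item For $\dim L=12$: $L^*$ is a $12$-dimensional semisimple Hopf subalgebra of $H^*$, hence its coradical by \cite[Lemma~3.11]{BGN}. So $H^*$ would have the Chevalley property, a contradiction.
\item For $\dim L=6$: Theorem~\ref{T:ChavCCE} gives $H^*\cong\N_{24}$, which lacks the Chevalley property. The contradiction here is $|G(H^*)|=6$, against $3\nmid|G(H^*)|$; no Chevalley property is forced.
\end{itemize}

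Finally, your preliminary bounds are only gestures.
\begin{itemize}
\item $|G(H)|,|G(H^*)|>1$ should come from \cite[Lemma~4.1]{BGN}, which gives nontrivial skew-primitives in both $H$ and $H^*$. It also supplies the skew-primitives you need to build $8$-dimensional pointed Hopf subalgebras in the $(4,4)$ case.
\item Both $|G|=8$ and your case (a), $K$ semisimple, are excluded by \cite[Theorem~3.5]{BGN}. \cite[Lemma~3.11]{BGN} concerns $4p$-dimensional semisimple Hopf subalgebras and does not give either of these.
\end{itemize}
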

\begin{proof}
On the contrary, assume $H$ and $H^*$ have Hopf subalgebras of dimension $8$. Then Lemma \ref{BonFac} implies that there is some $8$-dimensional Hopf algebra $K$ such that $H=R\times K$ is the bosonization for some braided Hopf algebra $R$ in $\!_{K}^{K}\mathcal{YD}$. Since $K$ cannot be semisimple according to \cite[Theorem 3.5]{BGN}, we can assume $K$ to be pointed by duality. Now by Radford $S^4$-formula, we get $S^{32}=\id$ as $3\nmid |G(H\otimes H^*)|$. So \cite[Lemma 1.4]{Ng2005} implies that ${\rm tr}(S^2|_{R})\neq 0$. Hence, $R$ is semisimple by \cite[Theorem 7.3]{AS1998} and commutative as $\dim R=3$. Then Lemma \ref{ChQBon} shows that $H$ has a semisimple Hopf quotient $L$ of dimension either $6$ or $12$. Finally, $\dim L=12$ is impossible by \cite[Lemma 3.11]{BGN} and $\dim L=6$ is also impossible since otherwise Theorem \ref{T:ChavCCE} would imply that $H^*\cong \mathcal N_{24}$ with $|G(H^*)|=6$.

By \cite[Lemma 4.1]{BGN}, we know both $H$ and $H^*$ have nontrivial skew-primitive elements. Consequently, $|G(H)|,|G(H^*)|>1$. Again \cite[Theorem 3.5]{BGN} implies that $|G(H)|, |G(H^*)|\neq 8$. Now suppose $|G(H)| = |G(H^*)| = 4$. By the same reasoning, both $H$ and $H^*$ would possess a Hopf subalgebra of dimension $8$. This contradicts our earlier discussion. Therefore, $(|G(H)|, |G(H^*)|) \in \{(2,4), (4,2), (2,2)\}$.
\end{proof}

\begin{lem}\label{Nodual2}
    Suppose $H$ and $H^*$ do not have the Chevalley property. If $3\nmid |G(H\otimes H^*)|$, then neither $H$ nor $H^*$ has a self-dual two-dimensional simple module.
\end{lem}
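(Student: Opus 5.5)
By the symmetry of the hypotheses under $H\leftrightarrow H^*$, it suffices to show that $H$ has no two-dimensional self-dual simple module. The plan is to reduce to Corollary~\ref{Dihe1}(i). That corollary gives the conclusion as soon as $H^*$ has no nontrivial skew-primitive element, but by \cite[Lemma 4.1]{BGN} both $H$ and $H^*$ do have such elements. So we cannot quote Corollary~\ref{Dihe1} directly. Instead we rerun its argument, using the 24-dimensional input (Lemma~\ref{ordH}, Theorem~\ref{T:ChavCCE}) in place of the no-skew-primitive hypothesis.

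Suppose $V$ is a simple $H$-module with $\dim V=2$ and $V\cong V^*$. Then $H^*$ contains a simple $4$-dimensional subcoalgebra $C$ with $S(C)=C$. Let $K_C$ be the Hopf subalgebra of $H^*$ it generates, so $\dim K_C\in\{4,8,12,24\}$ with $\dim K_C\ge 4$, and we split into cases.
\begin{itemize}
\item If $K_C$ is semisimple, then $\dim K_C=12$ would make $H^*$ have the Chevalley property by \cite[Lem.~3.11]{BGN}, a contradiction. A semisimple $K_C$ of dimension $8$ containing a $2$-dimensional simple comodule is also impossible here: together with the skew-primitives of $H^*$ it forces the coradical to be a Hopf subalgebra, and we would argue this via \cite[Theorem 3.5]{BGN}.
\item If $K_C$ is nonsemisimple with $\dim K_C=8$, then it is generated by a simple $4$-dimensional subcoalgebra. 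By the list of $8$-dimensional Hopf algebras \cite{St} it is $A_1(4,-1)^*$. Its dual has grouplikes $C_4$, which gives a Hopf quotient $H\to A_1(4,-1)$.
\item If $\dim K_C=12$ and it is nonsemisimple, Natale's exact sequence \eqref{eq:4pexact} together with \cite{YN2011} identifies $K_C\cong A_\delta(6,-1)$ or its dual. This produces a Hopf subalgebra of dimension $6$ or a grouplike of order $3$, and then Theorem~\ref{T:ChavCCE} gives $H^*\cong\N_{24}$ or $H\cong\N_{24}$. Either way $3\mid|G(H\otimes H^*)|$, contradicting the hypothesis.
\item If $K_C=H^*$, then by \cite{Na} there is an exact sequence $1\to\k^G\to H^*\to A\to 1$ with $A$ basic. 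The dimension constraints again force a $p$- or $2p$-dimensional Hopf subalgebra, or a Chevalley property, and we finish as in the previous case.
\end{itemize}

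The main obstacle is the $8$-dimensional case $K_C\cong A_1(4,-1)^*$. There, $H^*$ contains an $8$-dimensional Hopf subalgebra, and dually $H$ admits an $8$-dimensional Hopf quotient, so by Lemma~\ref{BonFac} $H$ would be a bosonization over an $8$-dimensional Hopf algebra. To rule this out we use Lemma~\ref{ordH}. Since $|G(A_1(4,-1))|=4$, we must have $|G(H)|=4$ and $|G(H^*)|=2$. We then aim for a contradiction by showing that $H$ also has an $8$-dimensional Hopf subalgebra, built from $G(H)$ and a skew-primitive, exactly as in the proof of Lemma~\ref{ordH}. Lemma~\ref{ordH} forbids $H$ and $H^*$ from both containing $8$-dimensional Hopf subalgebras, which closes this case.
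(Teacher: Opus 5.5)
Your overall strategy is the paper's: generate $K_C$ from the $S$-stable $4$-dimensional simple subcoalgebra and split on $\dim K_C$. However, the case you flag as the main obstacle is not closed. The step ``$|G(A_1(4,-1))|=4$, so $|G(H)|=4$'' is unjustified, because $A_1(4,-1)$ is only a Hopf \emph{quotient} of $H$, and grouplikes of a quotient need not lift. For the same reason Lemma~\ref{BonFac} is unavailable: $H$ is not known to contain an $8$-dimensional Hopf subalgebra. In fact Lemma~\ref{ordH} forces the opposite. $H^*$ contains the $8$-dimensional $K_C$, so $H$ contains no $8$-dimensional Hopf subalgebra; since $H$ has a nontrivial skew-primitive, $|G(H)|=4$ would produce one. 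Hence $|G(H)|=2$. Your contradiction therefore lives in a branch that is already empty, and the real branch is untouched. The missing argument is the paper's:
\begin{itemize}
\item with $|G(H)|=2$, the group $G(H)$ and a skew-primitive generate a copy of Sweedler's $T_4$ in $H$ (the $8$-dimensional option $B$ is again excluded by Lemma~\ref{ordH});
\item apply Lemma~\ref{divHopfsub} to $\pi\colon H\to A_1(4,-1)$: the $3$-dimensional $H^{\co A_1(4,-1)}$ is free over $T_4^{\co A_1(4,-1)}$, whose dimension divides $4$, so this dimension is $1$;
\item hence $\pi|_{T_4}$ is injective and $T_4$ would be a Hopf subalgebra of $A_1(4,-1)$, which it is not.
\end{itemize}

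The $12$-dimensional case also fails as written. Since $K_C$ contains a $4$-dimensional simple subcoalgebra, it is not pointed, so it cannot be $A_\delta(6,-1)$. In Natale's sequence $1\to\k^G\to K_C\to A\to 1$, either $|G|=3$, which does give an order-$3$ grouplike in $H^*$, or $|G|=1$. In the latter case $K_C^*$ is pointed with a $2$-dimensional simple, i.e.\ $K_C\cong A_1(6,-1)^*$. This algebra has grouplikes $C_2$ and no $6$-dimensional Hopf subalgebra: a $6$-dimensional Hopf quotient of $A_1(6,-1)$ would be a commutative group algebra, where $gx=-xg$ kills $x$, and then $x^2=1-g^2$ forces $g^2\mapsto 1$. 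So Theorem~\ref{T:ChavCCE} cannot be invoked. What you need is self-duality. The $H$-action on $V$ factors through $H\to K_C^*\cong A_1(6,-1)$, making $V$ a self-dual $2$-dimensional simple $A_1(6,-1)$-module. But on the $2$-dimensional simples $V_i$ of $A_1(6,-1)$, the central grouplike $g^2$ acts by $\theta^{2i}$ ($\theta$ a primitive $6$-th root of unity), so $V_i^*\cong V_{-i}\not\cong V_i$. Finally, since $K_C\supseteq C\oplus\k 1$, the possible dimensions are $\{6,8,12,24\}$, not $\{4,8,12,24\}$. The case $\dim K_C=6$ (e.g.\ $K_C\cong\k^{S_3}$) is missing; it is excluded by Theorem~\ref{T:ChavCCE}, which would give $H^*\cong\mathcal N_{24}$ with $|G(H^*)|=6$.
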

\begin{proof}
    Suppose $H^*$ has a self-dual two-dimensional simple. By duality, we know $H$ must contain a $4$-dimensional simple subcoalgebra $C$ stabilized by its antipode \cite[Lemma 2.6]{YN2011}. Let $K$ be the Hopf subalgebra of $H$ generated by $C$. Since $\dim K \mid \dim H = 8p$, we have $\dim K\neq 1,2,4,p$ by the classification results. Moreover, $\dim K \neq 2p$ by Proposition \ref{T:ChavCCE} and our assumption. Hence, $\dim K\in \{8,4p,8p\}$. It follows that $K$ is nonsemisimple and by \cite[Proposition 1.3]{Na}, there exists an exact sequence of Hopf algebras
    \[1\to \k^G\to K\to A\to 1\]
    where $A^*$ is nonsemisimple pointed and $G$ is a finite group. 

    Case (1): $\dim K=8p$. Hence, $|G|\neq 4,8,p,2p,4p,8p$. By our assumption, we must have $|G|=2$. Thus, $A^*$ is nonsemisimple pointed of dimension $4p=12$ with $|G(A^*)|=2,4$. This is impossible by the classification of pointed $4p$-dimensional Hopf algebras (cf. \cite[Theorem II]{YN2011}).

    Case (2): $\dim K=4p$. A similar argument indicates that $|G|=1$ and $K^*=A^*$ is pointed. By our assumption, $K^*$ has a $2$-dimensional self-dual simple. According to \cite{YN2011}, the only $4p$-dimensional pointed Hopf algebra that has a $2$-dimensional simple is $A_1(2p,-1)$. As discussed in Section \ref{S:StGrH}, there are $p-1$ two-dimensional simples in $A_1(2p,-1)$, denoted by $V_1,\ldots, V_{p-1}$. We still denote by $e_{\oi}$ for $\oi\in \mathbb Z_d\setminus \{\overline{0}\}$ the indecomposable central idempotents in $A_1(2p,-1)$ corresponding to $V_i$ such that $g^me_\oi=\theta^{im} e_\oi$ with $\theta$ a $2p$-th root of unity. Then it is easy to see that $V_i^*=V_{-i}\not\cong V_i$. So this case is impossible.

    Case (3): $\dim K=8$. From our previous discussion regarding the classification of $8$-dimensional Hopf algebras, we conclude that $K \cong A_1(4,-1)^*$. By Lemma \ref{ordH}, $|G(H^*)| = 2$, since $|G(H^*)| = 4$ would imply that $H^*$ also contains a Hopf subalgebra of dimension $8$. Hence, $H^*$ contains the $4$-dimensional Taft algebra $T_4$. By a dimension argument and Lemma \ref{divHopfsub}, $T_4$ would have to be a Hopf subalgebra of $A_1(4,-1)$ via the surjection $H^* \to A_1(4,-1)$, which is impossible.
 \end{proof}

\begin{lem}\label{3dG}
Suppose $H$ and $H^*$ do not have the Chevalley property. Then we must have $3$ divide $|G(H\otimes H^*)|$.
\end{lem}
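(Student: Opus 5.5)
We argue by contradiction: assume $H$ and $H^*$ both lack the Chevalley property and $3\nmid|G(H\otimes H^*)|$. By Lemma~\ref{ordH}, after possibly replacing $H$ by $H^*$, we have $(|G(H)|,|G(H^*)|)\in\{(2,4),(2,2)\}$. Hence $|G(H)|=2$. Since $3\nmid|G(H\otimes H^*)|$, Radford's $S^4$-formula gives $S^{32}=\id$. By Lemma~\ref{Nodual2}, neither $H$ nor $H^*$ has a self-dual $2$-dimensional simple module. The plan is to count dimensions of simple modules and projective covers of $H$ (and of $H^*$) via \eqref{dimH}, and to show that no admissible configuration sums to $24$.

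First we collect constraints on the simple $H$-modules. The $1$-dimensional simples are indexed by $G(H^*)$, of order $2$ or $4$. By \cite[Lemma 4.1]{BGN} (used in the proof of Lemma~\ref{ordH}), $H^*$ has nontrivial skew-primitives, so $\Ext^1(\1,\k_\beta)\neq0$ for some $\beta$. Nonsemisimplicity forces $\dim P(\k_\beta)\ge2$. Next we treat $2$-dimensional simples $V$. Such simples come in dual pairs $V\not\cong V^*$. If $V^{**}\cong V$, the argument of Corollary~\ref{Dihe1}(ii) applies verbatim with $p=3$. That argument yields either $V\otimes V^*\cong\bigoplus_{\beta\in G'}\k_\beta$ with $|G'|=4$, or $V\otimes V^*\cong\1\oplus W$ with $W$ a $3$-dimensional nonprojective simple. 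The resulting bound $\dim H\ge 42$ or $48$ exceeds $24$, a contradiction. So every $2$-dimensional simple satisfies $V^{**}\not\cong V$, and these simples lie in orbits of size at least $4$ under duality.

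For simples of dimension $\ge3$, we use that $\dim V\cdot\dim P(V)\le 24$. Nonprojectivity arguments via $\tr$ of a pivotal-type isomorphism, as in Corollary~\ref{Dihe1}, are available because $S^{32}=\id$.

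The main step is the arithmetic. We write $24=\sum_{\k_\beta}\dim P(\k_\beta)+\sum_{\dim V\ge2}\dim V\dim P(V)$. Since $H$ is not Chevalley-dual (that is, $H^*$ is not Chevalley), the semisimple objects of $\C{H}$ are not closed under $\otimes$. Hence some tensor product of simples, in practice $V\otimes V^*$ or $V\otimes V$, is non-semisimple. This forces $\dim P(\1)$, or the projective covers of the simple constituents, to be large. Using \eqref{PM} to relate $V^*\otimes P(\1)=P(V^*)\oplus\cdots$, we get $\dim P(V)\ge\dim P(\1)/\dim V$-type inequalities.

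We then run through the cases $|G(H^*)|\in\{2,4\}$ and the multiset of higher-dimensional simples, for instance four $2$-dimensional simples, or a $3$-dimensional simple with its dual. In each case we expect to show that the sum exceeds $24$, or that the configuration forces a Hopf subalgebra or quotient of dimension $2p=6$ or $8$. The first possibility is a direct contradiction. The second is ruled out by Theorem~\ref{T:ChavCCE} (which would give $|G(H)|=6$) or by Lemma~\ref{ordH}.

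We expect the hardest case to be $|G(H)|=|G(H^*)|=2$ with $H^*$ having four non-self-dual $2$-dimensional simples. There we would try to show that these simples generate, via their matrix coalgebras in $H$, a Hopf subalgebra $K$ whose dimension is a proper divisor of $24$, and then apply \cite{Na} as in Lemma~\ref{Nodual2} to reach a contradiction.
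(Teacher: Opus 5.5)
Your proposal is a search strategy rather than a proof. The decisive case analysis is only announced (``we expect'', ``we would try''), and the mechanism that actually yields the contradiction is never identified. The missing idea is to build an $8$-dimensional Hopf quotient of $H$ out of the projective cover of the trivial module.

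For that you need the opposite normalization from yours. By Lemma~\ref{ordH} you may assume $H^*$ contains \emph{no} $8$-dimensional Hopf subalgebra. This forces $|G(H^*)|=2$, so $H$ has exactly two $1$-dimensional simples $\k,\chi$. Normalizing $|G(H)|=2$ instead keeps $|G(H^*)|=4$ possible and discards the very property that is eventually contradicted.

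With that normalization the argument runs as follows.
\begin{enumerate}
\item $\dim P(\k)$ is even, since the antipode has $2$-power order.
\item A case analysis on $d=\max\dim V\in\{2,3,4\}$, using \eqref{dimH}, \eqref{PM} and Lemma~\ref{Nodual2}, shows that $\dim P(\k)=4$ and that all composition factors of $P(\k)$ are $1$-dimensional.
\item The full subcategory of $H$-modules whose composition factors are all $1$-dimensional is a tensor subcategory, hence equals $\Mod{L}$ for a proper Hopf quotient $L$ of $H$.
\item $P(\k)$ and $P(\chi)\cong\chi\otimes P(\k)$ lie in $\Mod{L}$ and are the projective covers there, so $\dim L=8$.
\item Then $L^*\subseteq H^*$ is an $8$-dimensional Hopf subalgebra, a contradiction.
\end{enumerate}
Your remark about ``a Hopf subalgebra or quotient of dimension $6$ or $8$'' never says how such an object arises; this tensor-subcategory construction is what is missing.

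Two of your concrete steps also fail as stated.
\begin{enumerate}
\item The bound $\dim H\ge 42$ or $48$ in Corollary~\ref{Dihe1}(ii) uses $\dim P(\1)\ge 4$. That holds there only because $H^*$ has no nontrivial skew-primitives. In your situation $H^*$ does have them, as you note yourself, so you only know $\dim P(\1)\ge 2$. The first alternative then gives merely $\dim H\ge 12\dim P(\1)\ge 24$, which is no contradiction. So the configuration $V\otimes V^*\cong\bigoplus_{\beta\in G'}\k_\beta$ with $|G'|=4$, $\dim P(\1)=2$ and $\dim P(V)=\dim P(V^*)=4$ is not excluded. Your normalization allowing $|G(H^*)|=4$ is exactly what lets $G'$ exist. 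With $|G(H^*)|=2$ this alternative disappears, and the other one gives $\dim H\ge 3\cdot 2+5\cdot 6>24$, so this part can be salvaged.
\item Your route for the ``hardest case'' is to generate a Hopf subalgebra from the matrix coalgebras and invoke Natale's result as in Lemma~\ref{Nodual2}. In that lemma the argument depends on a simple subcoalgebra stabilized by the antipode, i.e.\ on a self-dual simple module. That is precisely what is absent when all $2$-dimensional simples are non-self-dual.
\end{enumerate}
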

\begin{proof}
On the contrary, suppose $3\,\nmid\,|G(H\otimes H^*)|$. Without loss of generality, we can assume $H^*$ does not contain any $8$-dimensional Hopf subalgebra by Lemma \ref{ordH}. In particular, $|G(H^*)|=2$. Thus, $H$ has two one-dimensional simples denoted by $\k$ and $\chi$. Let $d=\max\limits_{V\in {\rm Irr}(H)} \dim V$ and choose a simple $V$ such that $\dim V=d$. By \eqref{dimH} and our assumption, one gets $2\le d\le 4$. Moreover, since $S^{16}=\id$, we can write 
 $\dim P(\k)=2m$ for some integer $m\ge 1$ by \cite[Corollary 1.5]{YN2011}. We claim that $m=2$ and the composition factors of $P(\k)$ are all 1-dimensional. 

Case (1): $d=4$. Then, $V$ is the only simple of dimension 4 that has to be projective. Consequently, $V^*\cong V$ and $\chi \otimes V\cong V\otimes \chi \cong V$. By \eqref{dimH}, we get $\dim P(\k)=\dim P(\chi)=4$. Moreover, all composition factors of $P(\k)$ are 1-dimensional.  

Case (2): $d=3$. We know $G(H^*)\cong C_2$ acts on ${\rm Irr}(H)$ by tensoring with $\chi$. If $\chi \otimes V\cong V$, then we know $2|\dim V$ by \cite[Lemma 3.2]{NW23}. This is absurd. Then, one can argue that $V,\chi\otimes V$ are the only two simples of dim $\ge 2$ that are also projective. Consequently, $2\dim P(\k)=\dim P(\k)+\dim P(\chi)=24-2(\dim V)^2=6$. This implies that $\dim P(\k)=3$, which is impossible.

Case (3): $d=2$.  Since $H$ has no self-dual simples by Lemma \ref{Nodual2}, we can write ${\rm Irr}(H)=\{\k,\chi,V_1, V_1^*, \ldots, V_n, V_n^*\}$ for some $n\le 2$ by \eqref{dimH}. If $m\ge 3$, this forces $n=1$. Since $4\dim P(V_1)=24-2\dim P(\k)=24-4m\leq 12$, we must have $m=4$ and $\dim P(V_1)=\dim V_1=2$. This implies that $V_1$ and $V_1^*$ are projective and hence $P(\k)$ is a direct summand of $V_1^*\otimes V_1$. A contradiction since $8=\dim P(\k)>\dim (V_1^*\otimes V_1)=4$. If $m=1$, then we must have $n=2$. Otherwise, we have $\dim P(\k) = \dim P(\chi) = 2$ and $\dim P(V_1) = \dim P(V_1^*) = 5$. This would imply that $JP(V_1)/J^2P(V_1)$ is isomorphic to either $\k$ or $\chi$, which leads to a contradiction, since neither $P(\k)$ nor $P(\chi)$ contains $V_1$ as a simple factor. By \eqref{dimH} again, we have $2\dim P(\k)+4\dim P(V_1)+4\dim P(V_2)=24$, that is, $\dim P(V_1)+\dim P(V_2)=5$. It is clear that there are no proper solutions. Thus, $\dim P(\k)=4$. If $P(\k)$ has a simple factor of dimension $2$, say $V_1$, then $\k$ will appear as a simple factor of $P(V_1)$. So $\dim P(V_1)\ge 5$, which contradicts \eqref{dimH} again. This proves our claim. 

Now, consider the subcategories $\CC$ of $H$-modules $V$ whose composition factors are 1-dimensional. It is clear that $\CC$ is a tensor subcategory of $\Mod{H}$, and hence $\CC= \Mod{L}$ for some proper Hopf quotient $L$ of $H$. Since $\dim P(\k)=4$, whose composition factors are 1-dimensional, we know $P(\k)\in \Mod{L}$ and is also the projective cover of $\k$ over $L$. Then it is clear that ${\rm Irr}(L)=\{\k,\chi\}$ and $\dim L=8$. Thus, $H^*$ must contain the 8-dimensional Hopf subalgebra $L^*$. A contradiction to our assumption!
\end{proof}

We conclude our paper by presenting a classification of nonsemisimple Hopf algebras of dimension 24.

\begin{thm}
      Let $H$ be a nonsemisimple Hopf algebra of dimension 24. Then one of the following holds.
  \begin{enumerate}
 \item $H$ or $H^*$ is pointed.
        \item Assume $H$ and $H^*$ are non-pointed. If either $H$ or $H^*$ has the Chevalley property, then $H$ is isomorphic to $R_{-1}\# A_{12}$ or $\mathcal H_{24}$ or their duals.
        \item Assume $H$ and $H^*$ do not have the Chevalley property. Then $H$ is isomorphic to $\mathcal N_{24}$ or its dual.     
  \end{enumerate}
\end{thm}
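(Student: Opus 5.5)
The proof is a case split on the Chevalley property, and it mostly assembles results already in place. Part (i) needs no argument: it is the case where $H$ or $H^*$ is pointed, and those algebras are covered by Theorem~\ref{CP8p}(i), i.e.\ the pointed classification of \cite{Gra2000}. So for (ii) and (iii) we assume that neither $H$ nor $H^*$ is pointed.

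For (ii), suppose one of $H$, $H^*$ has the Chevalley property; by duality it is $H$. Theorem~\ref{CP8p}(ii) with $p=3$ then says that $H$ is $\mathcal H_{24}$ or $R_{-1}\# A_{12}$. If instead only $H^*$ has the property, applying the same theorem to $H^*$ gives the duals. This case is therefore direct citation.

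Part (iii) carries the content, and I would argue it in three steps.

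\begin{enumerate}
\item Assume neither $H$ nor $H^*$ has the Chevalley property. Lemma~\ref{3dG} gives $3\mid |G(H\otimes H^*)|$. Hence $3$ divides $|G(H)|$ or $|G(H^*)|$, and replacing $H$ by $H^*$ if necessary, we may take $3\mid |G(H)|$.
\item The subgroup of $G(H)$ of order $3$ spans a Hopf subalgebra $\k[C_3]$. Its dimension is $p=3$, which is exactly the hypothesis of Theorem~\ref{T:ChavCCE} with $p=3$. Neither $H$ nor $H^*$ has the Chevalley property, so that theorem gives $H\cong\mathcal N_{24}=\N(6,-1)$.
\item If the earlier swap replaced $H$ by $H^*$, step 2 gives $H^*\cong\mathcal N_{24}$, so $H\cong\mathcal N_{24}^*$.
\end{enumerate}

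Both algebras really lack the Chevalley and dual Chevalley properties, by Theorem~\ref{L:ProN8d} with $m=2$, $d=3$. This confirms that case (iii) is nonempty, although the classification itself does not need it.

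The main obstacle is confirming that the three cases are exhaustive and that they mesh with the hypotheses of the cited results. Case (ii) needs Theorem~\ref{CP8p} to apply whenever $H$ and $H^*$ are both non-pointed and one of them has the Chevalley property, and that is exactly the case (``neither $H$ nor $H^*$ pointed'') covered by Theorem~\ref{CP8p}(ii). Case (iii) needs Lemma~\ref{3dG} under only the hypothesis ``neither has the Chevalley property''. Its proof, through Lemmas~\ref{ordH} and~\ref{Nodual2}, uses exactly that hypothesis together with the contrapositive assumption $3\nmid|G(H\otimes H^*)|$, so it applies as stated. A pointed $H$ or $H^*$ is already covered by (i).
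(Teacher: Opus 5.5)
Your proposal is correct and follows the paper's proof. Parts (i) and (ii) are direct citations of Theorem~\ref{CP8p}, and part (iii) combines Lemma~\ref{3dG} with Theorem~\ref{T:ChavCCE}; you also spell out two steps the paper leaves implicit, namely using Cauchy's theorem to obtain the $3$-dimensional Hopf subalgebra $\k[C_3]$ and swapping $H$ with $H^*$ when $3$ divides $|G(H^*)|$ rather than $|G(H)|$.
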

\begin{proof}
  (i) and (ii) follow from Theorem \ref{CP8p}.

  (iii) By Lemma \ref{3dG}, $3$ divides $|G(H\otimes H^*)|$. Hence, the classification follows from Theorem \ref{T:ChavCCE}.
\end{proof}

 \bibliographystyle{plain}
 
 \end{document}